\definecolor{light-gray}{gray}{0.7}
\DeclareMathAlphabet{\mathcalligra}{T1}{calligra}{m}{n}
\DeclareFontShape{T1}{calligra}{m}{n}{<->s*[1.5]callig15}{}
\newtheorem{theorem}{Theorem}[section]
\newtheorem*{theoremstar}{Theorem}
\newtheorem{lemma}[theorem]{Lemma}
\newtheorem{proposition}[theorem]{Proposition}
\newtheorem{corollary}[theorem]{Corollary}
\theoremstyle{definition}
\newtheorem{definition}[theorem]{Definition}
\newtheorem{construction}[theorem]{Construction}
\newtheorem{example}[theorem]{Example}
\newtheorem{remark}[theorem]{Remark}
\newtheorem{theorem-definition}[theorem]{Theorem-Definition}
\newtheorem{lemma-definition}[theorem]{Lemma-Definition}
\newtheorem{variant}[theorem]{Variant}
\newtheorem{notation}[theorem]{Notation}
\numberwithin{equation}{section}
\renewcommand{\AA} {\mathbb{A}}
\newcommand{\CC} {\mathbb{C}}
\newcommand{\DD} {\mathbb{D}}
\newcommand{\EE} {\mathbb{E}}
\newcommand{\FF} {\mathbb{F}}
\newcommand{\GG} {\mathbb{G}}
\newcommand{\KK} {\mathbb{K}}
\newcommand{\LL} {\mathbb{L}}
\newcommand{\NN} {\mathbb{N}}
\newcommand{\PP} {\mathbb{P}}
\newcommand{\QQ} {\mathbb{Q}}
\newcommand{\RR} {\mathbb{R}}
\newcommand{\VV} {\mathbb{V}}
\newcommand{\ZZ} {\mathbb{Z}}
\newcommand {\shC} {\mathcal{C}}
\newcommand {\shD} {\mathcal{D}}
\newcommand {\shE} {\mathcal{E}}
\newcommand {\shJ} {\mathcal{J}}
\newcommand {\shL} {\mathcal{L}}
\newcommand {\shR} {\mathcal{R}}
\newcommand {\shS} {\mathcal{S}}
\newcommand {\shP} {\mathcal{P}}
\newcommand {\sE} {\mathscr{E}}
\newcommand {\sF} {\mathscr{F}}
\newcommand {\sH} {\mathscr{H}}
\newcommand {\sI} {\mathscr{I}}
\newcommand {\sK} {\mathscr{K}}
\newcommand {\sL} {\mathscr{L}}
\newcommand {\sM} {\mathscr{M}}
\newcommand {\sO} {\mathscr{O}}
\newcommand {\sP} {\mathscr{P}}
\newcommand {\sQ} {\mathscr{Q}}
\newcommand {\sR} {\mathscr{R}}
\newcommand {\sV} {\mathscr{V}}
\newcommand {\sW} {\mathscr{W}}
\newcommand {\foA} {\mathfrak{A}}
\newcommand {\foC} {\mathfrak{C}}
\newcommand {\foS} {\mathfrak{S}}
\newcommand {\bL} {\mathbf{L}}
\newcommand {\bS} {\mathbf{S}}
\newcommand {\bdd} {\mathbf{d}}
\DeclareMathAlphabet{\mathcalligra}{T1}{calligra}{m}{n}
\DeclareFontShape{T1}{calligra}{m}{n}{<->s*[1.5]callig15}{}
\newcommand{\blank}{\underline{\hphantom{A}}}
\newcommand {\Coker} {\operatorname{Coker}}
\newcommand {\cone} {\operatorname{cone}}
\newcommand {\D} {\operatorname{D}}
\newcommand {\Ext} {\operatorname{Ext}}
\newcommand{\sExt}{\mathscr{E} \kern -1pt xt}
\newcommand {\Gr} {\operatorname{Gr}}
\renewcommand {\H} {\operatorname{H}}
\newcommand {\Hom} {\operatorname{Hom}}
\newcommand {\sHom}{\mathscr{H}\kern-5pt\mathcalligra{om}}
\newcommand {\id} {\operatorname{id}}
\newcommand {\Id} {\operatorname{Id}}
\renewcommand {\Im} {\operatorname{Im}}
\renewcommand {\ker } {\operatorname{Ker}}
\newcommand {\Ker} {\operatorname{Ker}}
\newcommand {\N} {\operatorname{N}}
\newcommand {\Pic} {\operatorname{Pic}}
\newcommand {\Proj} {\operatorname{Proj}}
\newcommand{\pr} {\mathrm{pr}}
\newcommand {\rank} {\operatorname{rank}}
\newcommand {\Spec} {\operatorname{Spec}}
\newcommand {\Sym} {\operatorname{Sym}}
\newcommand {\Tor} {\operatorname{Tor}}
\newcommand{\sTor}{\mathscr{T} \kern -3pt or}
\newcommand {\Vect} {\operatorname{Vect}}
\newcommand{\Perf} {\operatorname{Perf}}
\newcommand{\Set}{\operatorname{Set}} 
\newcommand{\op}{\mathrm{op}}
\newcommand{\Dqc}{\mathrm{D}_{\mathrm{qc}}}
\newcommand{\bDelta}{\operatorname{{\bf \Delta}}}
\newcommand{\cn}{\mathrm{cn}}
\newcommand{\Cat}{\shC\mathrm{at}}
\newcommand{\Fun}{\mathrm{Fun}}
\newcommand{\Shv}{\shS\mathrm{hv}}
\newcommand{\Map}{\mathrm{Map}}
\newcommand{\CAlg}{\operatorname{CAlg}}
\newcommand{\CAlgDelta}{\operatorname{CAlg}^\Delta}
\newcommand{\Mod}{\operatorname{Mod}}
\newcommand{\Modcn}{\operatorname{Mod}^{\rm cn}}
\newcommand {\perf}{\mathrm{perf}}
\newcommand{\Ch}{\operatorname{Ch}}
\newcommand{\SCRMod}{\operatorname{SCRMod}}
\newcommand{\SCRModcn}{\operatorname{SCRMod}^{\rm cn}}
\newcommand{\cl}{\mathrm{cl}}
\newcommand{\fib}{\operatorname{fib}}
\newcommand{\cofib}{\operatorname{cofib}}
\newcommand{\QCoh}{\operatorname{QCoh}}
\newcommand{\QCohcn}{\operatorname{QCoh}^{\rm cn}}
\newcommand {\T} {\operatorname{T}}
\newcommand {\sign} {\mathrm{sign}}
\newcommand {\bSym} {\mathbf{S}}
\newcommand {\bwedge} {\mathbf{\Lambda}}
\newcommand {\Schur}{\operatorname{S}}
\newcommand{\Weyl}{\operatorname{W}}
\newcommand{\bSchur} {\bL}
\newcommand {\dSchur}{\mathbb{S}}
\newcommand {\dWeyl}{\mathbb{W}} 
\newcommand{\Kos}{\operatorname{Kos}}
\newcommand*\bigcdot{\mathpalette\bigcdot@{.5}}
\newcommand*\bigcdot@[2]{\mathbin{\vcenter{\hbox{\scalebox{#2}{$\m@th#1\bullet$}}}}}
\newcommand*\bigdot{\mathpalette\bigdot@{.5}}
\newcommand*\bigdot@[2]{\mathbin{\hbox{\scalebox{#2}{$\m@th#1\bullet$}}}}
\newcommand{\Grass}{\operatorname{Grass}}
\newcommand{\Grasscl}{\operatorname{Grass}^{\rm cl}}
\newcommand{\Flag}{\operatorname{Flag}}
\title[Derived Grassmannians and Schur Functors]{Derived Grassmannians and Derived Schur Functors}
\author[Q.Y.\ JIANG]{Qingyuan Jiang}
\address{School of Mathematics, University of Edinburgh, James Clerk Maxwell Building, Peter Guthrie Tait Road, Edinburgh EH9 3FD, United Kingdom.}
\email{qingyuan.jiang@ed.ac.uk}
\begin{document}

\begin{abstract} 
This paper develops two theories, the geometric theory of derived Grassmannians (and flag schemes) and the algebraic theory of derived Schur (and Weyl) functors, and establishes their connection, a derived generalization of the Borel--Weil--Bott theorem. 
More specifically:
\begin{enumerate}
	\item  
	The theory of derived Grassmannians and flag schemes is the natural extension of the theory of derived projectivizations \cite{J22a} and generalizes Grothendieck's theory of Grassmannians and flag schemes of sheaves to the case of complexes. We establish their fundamental properties and study various natural morphisms among them.
	\item 
	The theory of derived Schur and Weyl functors extends the classical theory of Schur and Weyl module functors studied in $\mathrm{GL}_n(\mathbb{Z})$-representation theory to the case of complexes. We show that these functors have excellent functorial properties and satisfy derived generalizations of classical formulae such as Cauchy's decomposition formula, direct-sum decomposition formula and Littlewood--Richardson rule. We also generalize various results from the case of derived symmetric powers to derived Schur functors, such as Illusie--Lurie's d{\'e}calage isomorphisms.
	\item  
	These two theories are connected by a derived version of the Borel--Weil--Bott theorem, which generalizes the classical Borel--Weil--Bott theorem and calculates the derived pushforwards of tautological perfect complexes on derived flag schemes 
	in terms of derived Schur functors when the complexes have perfect-amplitude $\le 1$ and positive ranks.
	\end{enumerate}
\end{abstract}

\maketitle
\tableofcontents
 
\section{Introduction}

Grassmannians are ubiquitous across mathematics and have numerous applications. The first objective of this paper is to develop the counterpart of the theory of Grassmannian and flag varieties in the context of derived algebraic geometry; see \S \ref{sec:intro:dGrass.dflag}.

Schur and Weyl functors play a fundamental role in ${\rm GL}_n$-representation theory. The second objective of this paper is to extend the classical theory of Schur and Weyl functors of modules to a theory of derived Schur and Weyl functors of complexes; see \S \ref{sec:intro:dSchur}. 

The Borel--Weil--Bott theorem provides a fundamental link between algebraic geometry and representation theory. This paper's third objective is to establish a derived generalization of the Borel--Weil--Bott theorem, which connects the geometry of derived Grassmannians and flag schemes and the algebra of derived Schur functors; see \S \ref{sec:intro:Bott}. 
For this generalized version of the Borel--Weil--Bott theorem, it is crucial to consider derived constructions, since the corresponding statement for classical constructions only holds under certain conditions.

\subsection{Derived Grassmannians and Derived Flag Schemes}
\label{sec:intro:dGrass.dflag}

The classical Grassmannian variety parametrizes $d$-dimensional linear subspaces of an $n$-dimensional vector space $V$ over a field $\kappa$. 
Grothendieck extends this construction and defines Grassmannian as a moduli functor parametrizing vector bundle quotients of a given sheaf.
Concretely, let $X$ be a scheme and $\sE$ a quasi-coherent sheaf on $X$, then Grothendieck defines Grassmannian as a functor
	$$\Grasscl_{d}(\sE) \colon (\mathrm{Scheme}/X)^{\rm op} \to \shS\mathrm{et}$$
which carries every $X$-scheme $\eta \colon T \to X$ to the set of isomorphism classes of quotients $q \colon \eta_{\cl}^* (\sE) \twoheadrightarrow \sP$, where $\sP$ is a vector bundle on $T$ of rank $d$ (see Definition \ref{def:Grass:classical}). Here, we use the symbol ``$\cl$" to mean ``classical", e.g., ``$\eta_{\cl}^*$" means the classical pullback of sheaves. 

Grothendieck's theory of Grassmannians has the advantages of generalizing to the case of all (quasi-coherent) sheaves over schemes and having good functorial properties. It also plays a crucial role in the development of modern moduli theory (see \cite{FGA, FGAexplained}).

The theory of derived Grassmannians naturally extends Grothendieck's Grassmannian theory to the context of derived algebraic geometry. 

In this subsection, we assume for simplicity that $X$ is a classical scheme; the entire theory is set up in this paper for any (derived) prestack $X$. 
A complex $\sE = [\cdots \to \sE^{i} \to \sE^{i+1} \to \cdots ]$ on $X$ is said to be {\em connective} if its homotopy sheaves satisfy $\pi_i(\sE)=0$ for $i<0$, or equivalently, its cohomology sheaves satisfy $\sH^{j}(\sE) (= \pi_{-j}(\sE))=0$ for $j>0$. 
Let $\sE$ be any connective quasi-coherent complex on $X$ and $d \ge 1$ an integer. The {\em derived Grassmannian} is the functor
	$$\Grass_{d}(\sE) \colon (\mathrm{dScheme}/X)^{\rm op} \to \shS := \{\mathrm{Spaces}\} = \{\text{$\infty$-groupoids}\}$$
which carries every morphism $(\eta \colon T \to X)  \in \mathrm{dScheme}/X$,  where $T$ is a {derived} scheme,  to the space 
$\Grass_{d}(\sE)(\eta)$ of morphisms $\eta^* \sE \to \sP$ which are surjective on $\pi_0$, where $\sP$ is a vector bundle on $T$ of rank $d$ (see \S \ref{sec:defn:dGrass}). If $T$ is classical, the condition ``$\eta^* \sE \to \sP$ is surjective on $\pi_0$" means that the induced map on zeroth cohomology sheaves is surjective.

If $d=1$, then $\Grass_1(\sE) = \PP(\sE)$ is the {\em derived projectivization} that we studied in \cite{J22a}.

The main differences between the definitions of the derived Grassmannian functor $\Grass_d(\sE)$ and its classical counterpart $\Grasscl_d(\sE)$ are:
\begin{enumerate}[label=(\alph*), ref=\alph*]
	\item \label{intro:dGrass:def.point-1}
	We test the ``value" of the Grassmannian $\Grass_d(\sE)$ not only on classical schemes $T$ but also on {\em derived} schemes $T$ (i.e., ``schemes" $T$ whose ``structure sheaves" $\sO_T$ are allowed to have nonzero higher homotopy groups $\pi_i(\sO_T)$ for $i>0$).
	\item \label{intro:dGrass:def.point-2}
Given any testing derived scheme $T$ and a map $\eta \colon T\to X$, the ``value" $\Grass_d(\sE)(\eta)$ is a {\em space} (i.e., an $\infty$-groupoid) rather than a set. Informally, this means that $\Grass_d(\sE)(\eta)$ remembers not only the isomorphism classes of quotients $\eta^* \sE \to \sP$ but also homotopies between those isomorphisms, and homotopies between homotopies, etc. 
\end{enumerate}

Roughly speaking, point \eqref{intro:dGrass:def.point-1} contributes to the derived structure of $\Grass_d(\sE)$ while point \eqref{intro:dGrass:def.point-2} to its ``stacky structure". Regarding point \eqref{intro:dGrass:def.point-2}, the representability result below (Proposition \ref{prop:Grass:represent}) implies that the ``stacky structure" is trivial along the natural projection $\pr \colon \Grass_d(\sE) \to X$. Regarding point \eqref{intro:dGrass:def.point-1}, the derived structures of $\Grass_d(\sE)$ are often nontrivial and record important ``higher" information, as illustrated in the following example:

\begin{example}[{Cf. \cite[Example 4.23]{J22a}}]
Let $d=1$, $X = \Spec \ZZ$, $\sE = [\ZZ \xrightarrow{0} \ZZ] = \ZZ[1] \oplus \ZZ$. If we test the value of $\Grass_1(\sE) = \PP(\sE)$ on a classical affine scheme $\eta \colon T = \Spec R \to \Spec \ZZ$, then the space $\Grass_1(\sE)(\eta)$ is homotopy equivalent to a single point $\{R[1] \oplus R \xrightarrow{(0,\id)} R\}$.
In other words, the values of $\Grass_1(\sE)$ on classical rings $R$ could not ``see" the contribution of the summand $\ZZ[1] \subseteq \sE$. 
However, if we test the value of $\Grass_1(\sE)$ on ``derived rings" (more precisely, simplicial commutative rings) $A$, 
the summand $\ZZ[1] \subseteq \sE$ generally contributes nontrivially. 
For example, let $A = \Sym_\ZZ^*(\ZZ[1]) = \ZZ[\varepsilon]$ be the ``derived ring of derived dual numbers" (where $\varepsilon$ denotes a generator of cohomological degree $-1$ which necessarily satisfy $\varepsilon^2=0$), and let $\eta \colon \Spec A \to \Spec \ZZ$ be the natural map, then $\Grass_1(\sE)(\eta) \simeq \ZZ$ rather than a point. In fact, $\Grass_1(\sE) \simeq \Spec \Sym^*_\ZZ(\ZZ[1]) $; see Example \ref{eg:dGrass:Z[1]oplusZ} for more details.
\end{example}

The theory of derived Grassmannians also generalizes to derived flag schemes (\S \ref{sec:flag}). Concretely, instead of a single integer $d$, we consider any increasing sequence of positive integers $\bdd = (d_1, \ldots, d_k)$. 
The {\em derived flag scheme} of $\sE$ of type $\bdd$ over $X$ is the functor
	$$\Flag_{\bdd}(\sE) = \Flag(\sE;\bdd) \colon (\mathrm{dScheme}/X)^{\rm op} \to \shS$$
which carries every map $\eta \colon T \to X$ of derived schemes to the space of sequence of quotients 
	 $$\sE_T = \eta^*(\sE) \xrightarrow{\phi_{k,k+1}} \sP_{k} \xrightarrow{\phi_{k-1,k}} \sP_{k-1} \to \cdots \xrightarrow{\phi_{1,2}} \sP_{1},$$
where $\sP_{i}$ are vector bundles on $T$ of rank $d_i$; see Definition \ref{def:dflag}. If $\bdd = \underline{n}: = (1,2,\ldots,n)$ for an integer $n \ge 1$, then $\Flag(\sE;\underline{n})$ generalizes the classical complete flag varieties.

Derived Grassmannians and derived flag schemes have excellent functorial properties (\S \ref{sec:dGrass.dFlag}). Here is a (partial) list of their fundamental properties established in this work:

\begin{enumerate}
	 \setcounter{enumi}{-1}
	\item
	\label{intro:dGrass:property-0}
	({\em Underlying classical schemes}). The underlying classical scheme of $\Grass_d(\sE)$ is equivalent to Grothendieck's  Grassmannian scheme of the classical truncation;
	see Proposition \ref{prop:Grass-classical}. A similar result holds for general derived flag schemes Proposition \ref{prop:Flag:classical}.
	
	\item
	\label{intro:dGrass:property-1}
	({\em Representability}). The functor $\Grass_d(\sE)$ (resp. $\Flag_{\bdd}(\sE)$) is representable by a relative derived scheme over $X$; see Proposition  \ref{prop:Grass:represent} (resp. Proposition \ref{prop:Flag:rep}). 
	
	\item
	\label{intro:dGrass:property-2}
	({\em Functoriality}). The formation of derived Grassmannians $\Grass_{d}(\sE)$ (or more generally, of derived flag schemes $\Flag_{d}(\sE)$) commutes with arbitrary derived base change and tensoring with line bundles; see Proposition \ref{prop:Grass-4,5} (or Proposition \ref{prop:flag:functorial}).
	Consequently, the formation of derived categories of derived flag schemes is compatible with any base change, a property that is not usually true for classical constructions.
	
	\item
	\label{intro:dGrass:property-3}
	({\em Finiteness properties}). If $\sE$ is locally of finite type, then 
	$\pr \colon \Grass_d(\sE) \to X$ and $\pr \colon \Flag_{\bdd}(\sE) \to X$ are proper. If $\sE$ furthermore pseudo-coherent to order $n$ for some $n \ge 0$ (resp. almost perfect, perfect), then $\pr \colon \Grass_d(\sE) \to X$ and $\pr \colon \Flag_{\bdd}(\sE) \to X$ are locally of finite generation to order $n$ (resp. locally almost of finite presentation, locally of finite presentation); see Propositions \ref{prop:Grass:finite} and \ref{prop:Flag:rep}, respectively.
		
 	\item
	\label{intro:dGrass:property-4}
	({\em Closed immersions induced by surjective morphisms of complexes}). If $\varphi'' \colon \sE \to \sE''$ is a map of connective complexes that is surjective on $\pi_0$, then there is a closed immersion $\iota_{\varphi''} \colon \Grass_d(\sE'') \hookrightarrow \Grass_d(\sE)$ which is the derived zero locus of a cosection of a complex; see Proposition \ref{prop:Grass:PB} for details. A similar result holds for all derived flag schemes $\Flag_{\bdd}(\sE)$; see Proposition \ref{prop:dflag:immersion}. 
	
	\item
	\label{intro:dGrass:property-5}
	({\em Relative cotangent complexes}). The projection $\pr \colon \Grass_d(\sE) \to X$ admits a connective relative cotangent complex $\LL_{\Grass_d(\sE)/X}$ which can be described by the formula 
			$$\LL_{\Grass_d(\sE)/X}  \simeq \sQ^\vee \otimes \fib\big( \pr^*(\sE) \xrightarrow{\rho} \sQ\big),$$
		where $\rho \colon \pr^*(\sE)  \to \sQ$ is the tautological quotient morphism; see Theorem \ref{thm:Grass:cotangent}. 
	
		In general, the derived flag scheme $\pr \colon \Flag_{\bdd}(\sE) \to X$ admits a connective relative cotangent complex $\LL_{\Flag_{\bdd}(\sE)/X}$  concretely described by Theorem \ref{thm:dflag:cotangent}.
		
	\item
	\label{intro:dGrass:property-6}
	 ({\em The Pl{\"u}cker morphisms and Segre morphisms \S \ref{sec:dGrass:Plucker.Segre}}). There is a {\em Pl{\"u}cker closed immersion} (\S \ref{sec:dGrass:Plucker}) from the derived Grassmannian $\Grass_{d}(\sE)$ to the derived projectivization $\PP(\bigwedge\nolimits^d \sE)$; see Proposition \ref{prop:Plucker} for more properties of this morphism. The derived generalization of Segre morphisms is studied in \S \ref{sec:dGrass:Segre}.
	
	\item
	\label{intro:dGrass:property-7}
	({\em Natural morphisms among derived flag schemes}). 
	There are many natural morphisms among various derived flag schemes:
		\begin{itemize}
			\item (Forgetful morphisms among different types $\bdd$; \S \ref{sec:dflag:forget}). If $\bdd'$ is a subsequence of $\bdd$, then there is a natural forgetful morphism $\pi_{\bdd',\bdd} \colon \Flag_{\bdd}(\sE) \to \Flag_{\bdd'}(\sE)$  (\S \ref{sec:dflag:forget}) whose behavior is described by Lemma \ref{lem:dflag:forget} and Corollary \ref{cor:dflag:forget}.
			\item  (Closed immersions into products of Grassmannians; \S \ref{sec:closed.dflag.to.dGrass}) For a given sequence $\bdd$, there is a closed immersion of $\Flag_{\bdd}(\sE)$ into the fiber product $\prod_{i} \Grass_{d_i}(\sE)$ over $X$ whose behavior is described by Proposition \ref{prop:dflag:into.Grass}.
\end{itemize}
\end{enumerate}

The properties listed above, particularly \eqref{intro:dGrass:property-4},\eqref{intro:dGrass:property-5} and \eqref{intro:dGrass:property-7}, demonstrate the advantages of using the derived framework to study Grassmannians and flag schemes.
For instance, the above property \eqref{intro:dGrass:property-5} provides explicit descriptions of the relative cotangent complex for {\em all} derived Grassmannians $\Grass_d(\sE) \to X$ and derived flag schemes $\Flag_{\bdd}(\sE) \to X$, whereas, as far as the author is aware, there is no general description of the relative cotangent complexes for classical Grassmannians and flag schemes of sheaves.

The derived framework of Grassmannians and flag schemes has many applications to classical situations as well as moduli problems; some of these applications are discussed in \S \ref{sec:intro:related.fields}.

\subsection{Derived Schur and Weyl Functors}
\label{sec:intro:dSchur}
\subsubsection{Overview}
Classical Schur and Weyl functors are generalizations of symmetric, exterior and divided power functors and are of fundamental importance in the representation theory. 
The characteristic-zero theory of Schur and Weyl functors has been developed by
Schur, Young, Frobenius, Weyl and others at the beginning of the twentieth century (see \cite{Weyl}). 
Later, in the 1970s and 1980s, Carter--Lusztig \cite{CL}, Akin--Buchsbaum--Weyman \cite{ABW} and others established the characteristic-free theory (see also Green's book \cite{Green} and Towber's papers \cite{Tow1,Tow2}). 
We refer readers to books \cite{FH, Ful, Wey, Jan, Green, Martin} and papers \cite{CL, AB, ABW} and references therein for relevant background and detailed expositions (see also \cite{Bra, BMT21} for recent perspectives).

This work extends the classical theory of Schur and Weyl functors of finite free modules to a theory of derived Schur and Weyl functors of complexes.  There are two precursors of such a theory: Dold, Puppe, and Quillen's construction \cite{DP, Qui2} via simplicial resolutions (see also \cite{TW}), and Akin, Buchsbaum, and Weyman's constructions of Schur complexes \cite{ABW} (in the case of two-term complexes). 
Both of these approaches have advantages and limitations. 
We will take an approach which uses Lurie's theory of non-abelian derived functors \cite{HTT, SAG} (along with the classical theory on Schur and Weyl functors and Schur complexes mentioned in the preceding paragraphs) and extends our methods in \cite[\S2, \S3]{J22a}. 
This framework generalizes both approaches and benefits from their best features, such as being functorial, applicable to all connective complexes, and computationally effective.

\subsubsection{Classical Schur and Weyl functors}
Regarding classical Schur and Weyl functors, 
this paper will primarily follow Akin, Buchsbaum, and Weyman's approach in  \cite{ABW}, which we will review in \S \ref{sec:Schur}. The characteristic-free and functorial nature of their approach allows us to extend the theory to the derived setting in \S \ref{sec:dSchur}.

Let $R$ be a commutative ring, and let $V$ be a finite free $R$-module. Let $N$ be a positive integer, and let $\lambda=(\lambda_1, \ldots, \lambda_n)$ be a partition of $N$.

 If $R= \CC$ is the field of complex numbers (or any field of characteristic zero), then the Schur and Weyl modules for $V$ coincide and are defined as the image $\Schur^{\lambda}(V)= \Weyl^{\lambda}(V) = \Im (c_{\lambda}|V^{\otimes N})$, where $c_{\lambda} \in \CC \foS_{N}$ is the Young symmetrizer 
 associated with $\lambda$ (\cite[\S 4.1]{FH}). Moreover, Schur modules $\Schur^{\lambda}(V)$ are irreducible representations of the symmetric group $\foS_N$ and the general linear group ${\rm GL}_N(\CC)$ (see \cite[Theorem 6.3, Proposition 16.47]{FH}).

If $R$ has positive or mixed characteristic, then the above constructions using Young symmetrizers
 are no longer suitable. Akin, Buchsbaum, and Weyman \cite{ABW} circumvented the need of Young symmetrizers by using the Hopf algebra structures of symmetric and exterior algebras. The Schur module $\Schur_R^{\lambda}(V)$ is defined as the image of the composite map
	$$\bigwedge\nolimits_R^{\lambda_1^t} (V) \otimes \cdots \otimes \bigwedge\nolimits_R^{\lambda_p^t} (V) \xrightarrow{\otimes \Delta_j }  V^{\otimes N} \xrightarrow{\otimes  m_i} \Sym_R^{\lambda_1} (V)  \otimes \cdots \otimes \Sym_R^{\lambda_n}(V),$$
where $\Delta_j$'s are the anti-symmetrization maps and $m_i$'s are the multiplication maps. The Weyl module $\Weyl_R^{\lambda}(V)$ is defined via a dual manner; 
see Remark \ref{rem:Schur.Weyl.as.images} for details. 
Then the constructions $V \mapsto \Schur_R^{\lambda}(V)$ and $V \mapsto \Weyl_R^{\lambda}(V)$ define functors 
	$$\Schur^{\lambda} ~\text{and}~ \Weyl^{\lambda} \colon \{\text{Finite free $R$-modules}\} \to  \{\text{Finite free $R$-modules}\}
	$$
which are referred to as the {\em (classical) Schur and Weyl functors} (\cite[Definition II.1.3]{ABW}), respectively.
Since the formation of Schur and Weyl functors commutes with usual base change  
(see Theorem \ref{thm:Schur:free}), 
they extend to the geometric context and give functors
	$$\Schur^{\lambda} ~\text{and}~ \Weyl^{\lambda} \colon \{\text{Vector bundles on $X$}\} \to  \{\text{Vector bundles on $X$}\}, 
	$$
where $X$ is a scheme. These functors are also called {\em (classical) Schur and Weyl functors}.

\subsubsection{Derived Schur and Weyl functors} 
This paper extends the above theory to a theory of derived Schur and Weyl functors of complexes.
The derived Schur and Weyl functors are generalizations of derived symmetric, exterior and divided power functors studied in \cite[Chapter 25]{SAG}, \cite[\S 2, \S 3]{J22a}, and are the derived functors of classical Schur and Weyl functors in the sense of Lurie's non-abelian derived theory (\cite{HTT}) which we will review in \S \ref{sec:non-abelian}. 

The derived theory extends the classical theory as follows.
Let $X$ be a derived scheme (or more generally, prestack), then the {\em derived Schur and Weyl functors} are functors
 $$\dSchur^{\lambda} \colon \QCoh(X)^\cn \to \QCoh(X)^\cn \quad \text{and}  \quad \dWeyl^{\lambda} \colon \QCoh(X)^\cn \to \QCoh(X)^\cn$$
which carries a connective quasi-coherent complex $\sE$ on $X$ to connective quasi-coherent complexes $\dSchur_X^{\lambda}(\sE)$ and $\dWeyl_X^{\lambda}(\sE)$, respectively (Definition \ref{def:dSchurWeyl}). Here,
	$$\QCoh(X)^\cn := \text{the $\infty$-category of connective quasi-coherent complexes on $X$}$$
is the ``$\infty$-categorical enhancement" of the usual derived category $\Dqc(X)^\cn = \Dqc(X)^{\le 0}$ of connective quasi-coherent complexes on $X$. 
If $X$ is a classical scheme, then objects of $\QCoh(X)^\cn$ are complexes $\sE = [\cdots \to \sE^{i} \to \sE^{i+1} \to \cdots]$ on $X$ whose cohomology sheaves $\sH^i(\sE)(=\pi_{-i}(\sE))$ are quasi-coherent for all $i$ and satisfy $\sH^i(\sE) = 0$ for all $i>0$.

If $\sE = \sV$ is a vector bundle, then $\dSchur^{\lambda}(\sV)$ and $\dWeyl^{\lambda}(\sV)$ coincide with the classical Schur and Weyl functors $\Schur^{\lambda}(\sV)$ and $\Weyl^{\lambda}(\sV)$, respectively. Generally, $\dSchur^{\lambda}(\sE)$ and $\dWeyl^{\lambda}(\sE)$ are complexes rather than sheaves even when $\sE$ is a discrete sheaf on $X$.

This paper also develops the theory of derived Schur and Weyl functors, $\dSchur^{\lambda/\mu}$ and $\dWeyl^{\lambda/\mu}$ respectively, for general {\em skew partitions} $\lambda/\mu$, i.e., pairs of partitions $\mu \subseteq \lambda$; see Definition \ref{def:dSchurWeyl}.

The fundamental properties of derived Schur and Weyl functors are investigated in \S \ref{sec:dSchur}. For readers' convenience, we summarize here many of their important properties that established in this work. For simplicity, in the following list we assume $X = \Spec A$ for a ``derived ring" $A$ and $\sE = M$ is a connective complex over $A$.

\begin{enumerate}
	\item ({\em Base change; Proposition \ref{prop:dSchur:basechange}}). 
	The formation of derived Schur and Weyl functors commutes with arbitrary base change of ``derived rings". 
	\item ({\em Freeness; Proposition \ref{prop:dSchur:free}}). 
	If $M$ is finite free (resp. locally finite free, flat) over $A$, then so are $\dSchur_A^{\lambda/\mu}(M)$ and $\dWeyl_A^{\lambda/\mu}(M)$. 
	\item ({\em Flatness; Proposition \ref{prop:dSchur:flat}}).  If $A=R$ is classical  and $M$ is flat, then $\dSchur_R^{\lambda/\mu}(M)$ and $\dWeyl_R^{\lambda/\mu}(M)$ are equivalent to the classical modules $\Schur_R^{\lambda/\mu}(M)$ and $\Weyl_R^{\lambda/\mu}(M)$, respectively. 
	\item ({\em Classical truncations; Proposition \ref{prop:dSchur:classical}}) 
	The classical truncations of the derived Schur and Weyl functors are equivalent to the (generalized) classical Schur and Weyl functors applied to their classical truncations. 
	\item ({\em Connectivity; Corollary \ref{cor:dSchur:connective}}). If $M$ is $m$-connective for some integer $m \ge 0$, then $\dWeyl^{\lambda/\mu}_A(M)$ is $m$-connective, and if $m \ge 1$, $\dSchur^{\lambda/\mu}_A(M)$ is $(m-1+ |\lambda|-|\mu|)$-connective. 
	\item ({\em Finiteness; Proposition \ref{prop:dSchur:pc}}). If $M$ is pseudo-coherent to order $m$ for some $m \ge 0$ (resp. almost perfect), then so are $\dSchur_A^{\lambda/\mu}(M)$ and $\dWeyl_A^{\lambda/\mu}(M)$.
	\item ({\em Tor-amplitudes and perfectness; Proposition \ref{prop:dSchur:Tor-amp}}). If $M$ has Tor-amplitude $\le m$ for some integer $m \ge 0$, then $\dSchur_A^{\lambda/\mu}(M)$ and $\dWeyl_A^{\lambda/\mu}(M)$ have Tor-amplitude $\le m (|\lambda| - |\mu|)$. If $M$ is a perfect complex, then $\dSchur_A^{\lambda/\mu}(M)$ and $\dWeyl_A^{\lambda/\mu}(M)$ are perfect complexes. 
\end{enumerate}

The theory of derived Schur and Weyl functors also extends various fundamental formulae and rules (\cite{ABW, Wey, Kou, Bo1, Bo2}) from its classical counterpart. Let $A$ be a ``derived ring", $M, M'$ connective complexes over $A$,  $\lambda/\mu$ a skew partition and $\nu$ any partition.

\begin{enumerate}
	\item ({\em Cauchy Decomposition Formula; Theorem \ref{thm:fil:dsym_otimes}}). 
	The derived $n$th symmetric power $\Sym_A^n(M \otimes_A M')$ 
	admits a canonical ``filtration" whose ``subquotients" are given by derived tensor products of derived Schur  functors $\dSchur_A^{\lambda^i}(M) \otimes_A \dSchur_A^{\lambda^i}(M')$, where $\lambda^i$ are partitions of $n$.
	A similar formula holds for the derived $n$th exterior power $\bigwedge_A^n(M \otimes M')$. 
	\item ({\em Direct-Sum Decomposition Formula; Theorem \ref{thm:fib:dSchur_oplus}}). 
	$\dSchur^{\lambda/\mu}(M\oplus M')$ canonically splits into a direct sum $\bigoplus_{k=0}^{N} \dSchur_A^{\lambda/\mu}(M,M')_{(k,N-k)}$, where $N = |\lambda| - |\mu|$, and $\dSchur_A^{\lambda/\mu}(M,M')_{(k,N-k)}$ are the bi-degree $(k,N-k)$ ``homogeneous components" of $\dSchur_A^{\lambda/\mu}(M \oplus M')$ (Remark \ref{rmk:fil:dSchur_oplus:(k,N-k)}). Furthermore, each $\dSchur_A^{\lambda/\mu}(M,M')_{(k,N-k)}$ admits a canonical ``filtration" whose ``subquotients" are given by derived tensor products of the form 
	$\dSchur_A^{\gamma_{(k)}^i/\mu}(M) \otimes_A \dSchur_A^{\lambda/\gamma^i_{(k)}}(M')$, where $\mu \subseteq \gamma_{(k)}^i \subseteq \lambda$
are partitions. A similar formula holds for derived Weyl functors. 
	\item ({\em Littlewood--Richardson Rule; Corollary \ref{cor:fil:dSchur_LR} 
}). 
The derived tensor product $\dSchur^{\lambda}(M) \otimes \dSchur^{\nu}(M)$ admits a canonical ``filtration" whose ``subquotients" are of the form $\dSchur_A^{\nu^i}(M)$, where $\nu^i$ are partitions such that $|\nu^i| = |\lambda| + |\nu|$ and the Littlewood--Richardson number $c_{\lambda,\nu}^{\nu^i} \ne 0$. A similar formula holds for $\dWeyl^{\lambda}(M) \otimes \dWeyl^{\nu}(M)$.
	\item ({\em Decomposition Rule for Skew Partitions; Theorem \ref{thm:fil:dSchur_LR}}). The Littlewood--Richardson Rule is a special case of a decomposition rule for skew partitions, which associates each derived Schur functor $\dSchur_A^{\lambda/\mu}(M)$ a canonical ``filtration" whose ``subquotients" are of the form $\dSchur_A^{\tau^i}(M)$, where $\tau^i$ are partitions such that $|\tau^i| = |\lambda| - |\mu|$ and the Littlewood--Richardson number $c_{\mu,\tau^i}^{\lambda} \ne 0$. A similar rule holds for $\dWeyl_A^{\lambda/\mu}(M)$.
	\item ({\em Koszul-type Sequences Associated with Derived Schur Functors; Theorem \ref{thm:fib:dbSchur}}). We also obtain canonical sequence of morphisms of Koszul type associated with derived Schur functors of a morphism $\rho \colon M \to M'$ between connective complexes, which are generalizations of the Koszul complexes for classical symmetric powers.
\end{enumerate}
In the above formulations, by a canonical ``filtration" we mean a canonical sequence $F_m \to \cdots \to F_1 \to F_0$ of objects in the $\infty$-category $\QCoh(X)^\cn$ for some integer $m \ge 0$, which we should regard as a {\em complex} in the prestable $\infty$-category $\QCoh(X)^\cn$ in the sense of Lurie \cite[Definition 1.2.2.2]{HA} (see Remark \ref{rmk:fib:dSchur_oplus}). By ``subquotients" we mean the cofibers $\cofib(F_{i+1} \to F_{i})$ of the morphisms $F_{i+1} \to F_{i}$ in the above sequence, which correspond to the mapping cones $\cone(F_{i+1} \to F_{i})$ in the homotopy category $\Dqc(X)^{\le 0}$. 

\medskip
Additionally, various results for derived symmetric and exterior powers generalize to the case of derived Schur and Weyl functors:

\begin{enumerate}
	\item ({\em Generalized Illusie--Lurie's d{\'e}calage isomorphisms; Theorem \ref{thm:Illusie--Lurie} and Corollary \ref{cor:dSchur.decalage}}). There is a canonical equivalence $\dSchur^{\lambda/\mu}_{A}(M[1]) \simeq \dWeyl_A^{\lambda^t/\mu^t}(M)[|\lambda|- |\mu|].$ Consequently, in characteristic zero, we obtain ``periodicity" equivalences: for an integer $m \ge 0$,
	$\dSchur_{A}^{\lambda/\mu} (M[m]) \simeq \dSchur_{A}^{\lambda^t/\mu^t} (M) [m \, (|\lambda| - |\mu|)]$ if $m$ is odd, and 
	 $\dSchur_{A}^{\lambda/\mu} (M[m]) \simeq \dSchur_{A}^{\lambda/\mu} (M) [m \, (|\lambda| - |\mu|)]$ if $m$ is even.
	This theorem generalizes Illusie--Lurie's isomorphism for derived symmetric and exterior powers \cite[Proposition 25.2.4.2]{SAG}, and is a categorification of the d{\'e}calage isomorphisms of Quillen \cite{Qui} and Bousfield \cite{Bous} for symmetric and exterior functors, and Touz{\'e} \cite{Tou} for Schur and Weyl functors.
	\item ({\em Generalized Illusie's equivalences; Proposition \ref{prop:dSchur_vs_bSchur}}). In the case where $A=R$ is an ordinary commutative ring, and $M$ is represented by a morphism $P_1 \to P_0$ of finite projective $R$ modules, then the Schur complex $\bSchur_R^{\lambda/\mu}(\rho)$ (\S \ref{sec:bSchur}) canonically represents the derived Schur functor $\dSchur_R^{\lambda/\mu}(M)$. This result generalizes Illusie's equivalences for derived symmetric and exterior powers \cite{Ill}
	(see also \cite[Corollary 2.30 \& Variant 2.31]{J22a}). 
\end{enumerate}

All the above-mentioned results hold for general prestacks $X$ and connective quasi-coherent complexes $\sE$, even though we only mention the derived affine case in this introduction; see \S \ref{sec:dSchurdWeyl.prestacks}. 

\subsection{Derived Generalization of the Borel--Weil--Bott Theorem}
\label{sec:intro:Bott}
The classical Borel--Weil--Bott theorem (\cite{Bott, Dem}) computes cohomologies of tautological bundles on flag varieties in terms of Schur modules studied in the $\mathrm{GL}_n$-representation theory. 

In this introduction, we shall only address the version for line bundles on complete flag manifolds, as other classical versions of the Borel--Weil--Bott theorem are consequences of this. 

Let $n \ge 1$ be an integer, $R$ a commutative ring, and let $\Flag_R(R^n; \underline{n}) = {\rm GL}_n(R)/ B_R$ denote the complete flag manifold over $R$ (where $B_R$ is the Borel subgroup of the Chevalley group scheme ${\rm GL}_n(R)$ over $R$; see \S \ref{sec:tautological.complexes}). Let $\lambda = (\lambda_1, \lambda_2, \ldots, \lambda_n)$ be a sequence of non-negative integers; we can identify $\lambda$ as an element of the character group $X(T_R)$ (where $T_R \subseteq B_R$ is a maximal $R$-torus). Let $\sL(\lambda)$ denote the associated line bundle on $\Flag_R(R^n; \underline{n})$. 

The classical Borel--Weil--Bott theorem \cite{Bott, Dem1, Dem, Lurie, Wey} together with Kempf's vanishing theorem \cite{Kempf} computes the cohomology modules of the line bundles $\sL(\lambda)$:

\begin{theoremstar}[{\bf Borel--Weil, Bott, Kempf}; see {\cite[Theorems 4.1.4 \&  4.1.10]{Wey}}] In the above situation, we have:
\begin{enumerate}
	\item (Characteristic-free). If $\lambda$ is dominant (i.e., a partition), then $\H^i\big(\Flag_R(R^n; \underline{n}); \sL(\lambda)\big) \simeq 0$ for $i >0$ and $\H^0\big(\Flag_R(R^n; \underline{n}); \sL(\lambda)\big) \simeq \Schur_R^{\lambda}(R^n)$.
	\item (Characteristic-zero). If $R$ is a $\QQ$-algebra, then for any $\lambda \in \ZZ_{\ge 0}^n$, one of the following two mutually exclusive cases occurs:
	\begin{enumerate}
	\item There exists a pair of integers $1 \le i < j \le n-1$ such that $\lambda_i  - \lambda_j = i - j$. In this case,  
		$\H^i\big(\Flag_R(R^n; \underline{n}); \sL(\lambda)\big) \simeq 0$ for all $i$.
	\item There exists a unique permutation $w \in \foS_n$ such that $w \bigdot \lambda$ is non-increasing. In this case,
		$\H^i\big(\Flag_R(R^n; \underline{n}); \sL(\lambda)\big) = 0$ for all $i \neq \ell(w)$, and there is an isomorphism $\H^{\ell(w)}\big(\Flag_R(R^n; \underline{n}); \sL(\lambda)\big) \simeq \Schur_R^{w \bigdot \lambda}(R^n)$. (Here, $\ell(w)$ denotes the length of $w$.)
\end{enumerate}
\end{enumerate}
\end{theoremstar}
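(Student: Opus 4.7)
The plan is to reduce the computation to a tower of $\PP^1$-bundles via the forgetful morphisms $\pi_i \colon \Flag_R(R^n;\underline{n}) \to \Flag_R(R^n;\underline{n}\setminus\{i\})$ discussed in property~\eqref{intro:dGrass:property-7} of the excerpt. Each $\pi_i$ is a $\PP^1$-bundle, and a direct fiberwise computation yields the following trichotomy: setting $a_i := \lambda_i - \lambda_{i+1}$, the derived pushforward $R\pi_{i*}\sL(\lambda)$ is a line bundle on $\Flag_R(R^n;\underline{n}\setminus\{i\})$ concentrated in cohomological degree zero if $a_i \geq 0$; it vanishes identically if $a_i = -1$; and it is isomorphic to $\sL(s_i\bigdot\lambda)[-1]$ if $a_i \leq -2$, where $s_i \in \foS_n$ is the simple transposition. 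This single computation, iterated along reduced expressions and combined with the Leray spectral sequence for composites $\pi_{i_r}\circ\cdots\circ\pi_{i_1}$, drives both parts of the theorem.

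For part~(1), the identification $\H^0(\Flag_R(R^n;\underline{n});\sL(\lambda)) \simeq \Schur_R^\lambda(R^n)$ for dominant $\lambda$ is obtained by constructing a natural map from the Akin--Buchsbaum--Weyman presentation of $\Schur^\lambda$ recalled in \S\ref{sec:Schur} to the module of global sections, and then verifying it is an isomorphism after base change to a field, where one uses the freeness of both sides (Theorem~\ref{thm:Schur:free}) together with the classical characteristic-zero Borel--Weil identification. The higher-cohomology vanishing, i.e.\ Kempf's theorem, requires a genuinely global input in positive or mixed characteristic: the cleanest approach is to invoke Frobenius splitting of $\Flag_R(R^n;\underline{n})$, or equivalently standard monomial theory and the existence of good filtrations; given this input, $\H^i(\Flag_R;\sL(\lambda)) = 0$ for $i > 0$ follows uniformly over any $R$ for dominant $\lambda$.

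For part~(2) in characteristic zero, one iterates the $\PP^1$-pushforward calculation while tracking the dot action of $\foS_n$. If at any stage some $a_i = -1$, then $R\pi_{i*}\sL(\lambda) = 0$ and Leray forces global vanishing, which is case~(a) since the condition $a_i = -1$ is exactly the existence of $i<j$ with $\lambda_i - \lambda_j = i - j$ after a $\foS_n$-translation. Otherwise, as long as $\lambda$ is not dominant some $a_i \leq -2$ exists; applying $R\pi_{i*}$ replaces $\lambda$ by $s_i\bigdot\lambda$ and shifts cohomological degree by one, strictly decreasing the number of inversions. Continuing until the weight becomes dominant produces the unique $w \in \foS_n$ appearing in case~(b) with length equal to the total number of shifts, and part~(1) then identifies the resulting cohomology with $\Schur_R^{w\bigdot\lambda}(R^n)$. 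The main obstacle is the characteristic-free Kempf vanishing half of part~(1): the $\PP^1$-bundle induction alone is insufficient in positive characteristic, and one must genuinely import Frobenius splitting (or cite it from the literature) to obtain the full characteristic-free conclusion.
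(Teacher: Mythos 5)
Your overall strategy matches what the paper does when it reproves these statements in the more general derived setting (the paper itself merely cites Weyman and Kempf for the statement as displayed, but Theorems~\ref{thm:Bott:flagbundle} and~\ref{thm:BBW:dflag} recover it as a special case, and the engine is the same $\PP^1$-bundle reduction via Corollary~\ref{cor:P1:flag}). Two issues in your write-up are worth flagging.

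First, a minor slip: when $a_i := \lambda_i - \lambda_{i+1} \geq 0$, the pushforward $R\pi_{i*}\sL(\lambda)$ along the $\PP^1$-bundle $\pi_i$ is \emph{not} a line bundle; the projection formula and Serre's computation give a vector bundle of rank $a_i+1$, namely $\Sym^{a_i}(\sV_i)$ twisted by a line bundle, where $\sV_i$ is the rank-two bundle defining the $\PP^1$-bundle. This does not affect the structure of your argument, since all you actually use is that the pushforward is a nonzero vector bundle concentrated in degree zero, but the claim as written is incorrect.

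Second, and more substantively, your justification for the identification $\H^0\bigl(\Flag_R(R^n;\underline{n});\sL(\lambda)\bigr) \simeq \Schur_R^{\lambda}(R^n)$ in part~(1) has a gap. You construct the natural map from the Akin--Buchsbaum--Weyman presentation and then propose to ``verify it is an isomorphism after base change to a field, where one uses the freeness of both sides together with the classical characteristic-zero Borel--Weil identification.'' Over a residue field of positive characteristic, the characteristic-zero Borel--Weil identification gives you equality of \emph{dimensions} of the two vector spaces (both sides are free $R$-modules whose ranks can be computed after $\otimes_R \QQ$), but a linear map between vector spaces of the same dimension need not be an isomorphism. You must separately establish surjectivity (or injectivity) of the natural map after base change to $\FF_p$, and this is precisely where the Pl\"ucker relations enter: the paper invokes (in its Proposition~\ref{prop:Plucker.relation:flag}, following Weyman's \S 3.1 and \S 4.2) the fact that the defining ideal of the flag scheme in the product of projective spaces is generated by the classical shuffle relations, which is exactly what makes the composite $\bigwedge^{\lambda^t}(R^n) \to \H^0$ surjective and what makes it factor through $\Schur^\lambda$. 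Your mention of standard monomial theory is in the right direction, but you invoke it only for the Kempf vanishing in your write-up, whereas it (or the Pl\"ucker-relations result, equivalently) is also the input needed for the surjectivity in the identification step. With that input supplied, your argument goes through; for the vanishing itself, citing Kempf directly (as the paper does) is cleaner than going through Frobenius splitting, since Frobenius splitting requires an additional semicontinuity/lifting argument to pass from characteristic $p$ fibers to the full statement over $\ZZ$, though both are legitimate.
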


In the literature, the vanishing result of assertion $(1)$ is known as Kempf's vanishing theorem, and assertion $(2)$ is commonly referred to as Bott's theorem or the Borel--Weil--Bott theorem. The explicit presentation of cohomology modules using Schur functors is established in Weyman's book {\cite[\S 4]{Wey}}.
{\em To keep language simple, this paper will collectively refer to all the assertions of the above theorem as ``the Borel--Weil--Bott theorem".}
However, this terminology is misleading because it fails to mention Kempf's vanishing result \cite{Kempf} and Weyman's presentation via Schur functors \cite{Wey}, both of which are crucial components of its content.

This paper establishes a derived generalization of the above Borel--Weil--Bott theorem. Concretely, let $X$ be a derived scheme (or more generally, a prestack), $\sE$ a connective quasi-coherent complex on $X$, and $\pr \colon \Flag_X(\sE; \underline{n}) \to X$ the derived complete flag scheme of $\sE$ considered in \S \ref{sec:intro:dGrass.dflag}. There is a tautological sequence of quotients 
	$$\pr^*(\sE) \xrightarrow{\varphi_{n}} \sQ_{n} \xrightarrow{\phi_{n-1,n}} \sQ_{n-1} \to \cdots \xrightarrow{\phi_{1,2}} \sQ_{1}  $$
over $\Flag_X(\sE; \underline{n})$, where $\sQ_i$ are vector bundles of rank $i$, $1 \le i \le n$,  and we agree by convention that $\sQ_{0} = 0$. Let $\sL_i = \Ker(\phi_{i-1,i} \colon \sQ_{i} \to \sQ_{i-1})$. Then for each ``weight" $\lambda = (\lambda_1 , \ldots, \lambda_n) \in \ZZ_{\ge 0}^n$, there is an associated line bundle
	$$\sL(\lambda) : = \sL_1^{\otimes \lambda_1} \otimes \sL_2^{\otimes \lambda_2} \otimes \cdots \otimes \sL_n^{\otimes \lambda_n} \in \Pic(\Flag_X(\sE; \underline{n}))$$
whose construction generalizes the above classical one; see Construction \ref{constr:dflag}. 

\begin{theoremstar}[{{\bf Borel--Weil--Bott Theorem for Derived Complete Flag Schemes}; Theorems \ref{thm:Bott:dflag} and \ref{thm:BBW:dflag}}]
In the situation of the preceding paragraph, assume furthermore that $\sE$ is a perfect complex of rank $n \ge 1$ and Tor-amplitude in $[0,1]$ over $X$. Then the following are true:
\begin{enumerate}
	\item (Characteristic-free). If $\lambda$ is dominant (i.e., a partition),  $\pr_*(\sL(\lambda)) \simeq \dSchur^{\lambda}(\sE)$.
	\item (Characteristic-zero). If $X$ is defined over the field $\QQ$ of rational numbers, then for any $\lambda \in \ZZ_{\ge 0}^n$, one of the following two mutually exclusive cases occurs:
	\begin{enumerate}
	\item There exists a pair of integers $1 \le i < j \le n-1$ such that $\lambda_i  - \lambda_j = i - j$. In this case, $\pr_* (\sL(\lambda)) \simeq 0.$
	\item There exists a unique permutation $w \in \foS_n$ such that $w \bigdot \lambda$ is non-increasing. In this case,  there is a canonical equivalence
		$\pr_* (\sL(\lambda)) \simeq \dSchur^{w \bigdot \lambda}(\sE) [- \ell(w)].$
	\end{enumerate}
\end{enumerate}
\end{theoremstar}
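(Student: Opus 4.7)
My plan is to prove both parts of the theorem by induction on $n$, exploiting the iterated derived projectivization structure of the derived complete flag scheme. Using the forgetful maps from property (\ref{intro:dGrass:property-7}) of \S\ref{sec:intro:dGrass.dflag}, I would first identify each intermediate forgetful map
$$\pi_k \colon \Flag_X(\sE; \underline{k}) \to Y_{k-1} := \Flag_X(\sE; \underline{k-1})$$
with the derived projectivization $\PP_{Y_{k-1}}(\sF_k)$, where $\sF_k := \fib(\pr^*\sE \to \sQ_{k-1})$ is perfect of Tor-amplitude $\le 1$ and rank $n - k + 1 \ge 1$ (these properties follow inductively from the hypotheses on $\sE$ together with the fact that the intermediate quotients $\sQ_{k-1}$ are vector bundles). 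Under this identification, the tautological line bundle $\sL_k$ is the relative $\sO(-1)$, so one may apply the derived projective bundle formula of \cite{J22a} at every stage of the fibration.

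For part (1), with $\lambda$ a partition, I would push forward step by step along $\pi_n, \pi_{n-1}, \ldots, \pi_1$, invoking the derived projection formula and the pushforward computation $\pi_{k*}(\sL_k^{\otimes m}) \simeq \Sym^m(\sF_k)$ for $m \ge 0$ from \cite{J22a}. The fiber sequences $\sL_k \to \sF_{k-1} \to \sF_k$ together with the derived Pieri/Littlewood--Richardson rule (Corollary \ref{cor:fil:dSchur_LR} and Theorem \ref{thm:fil:dSchur_LR}) would then be used to repackage the accumulated iterated expression into the single derived Schur functor $\dSchur^\lambda(\sE)$. A potentially cleaner alternative is a universal argument: the base-change property (\ref{intro:dGrass:property-2}) and Proposition \ref{prop:dSchur:basechange} together imply that both sides of the claimed equivalence are computed by pullback from a universal prestack classifying rank-$n$ perfect complexes of Tor-amplitude $[0,1]$, so the equivalence need only be verified there; one can then exploit the Pl{\"u}cker immersion (property (\ref{intro:dGrass:property-6})), Proposition \ref{prop:dSchur:classical}, and the classical Borel--Weil--Bott--Kempf--Weyman theorem.

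For part (2) in characteristic zero, I would invoke a derived version of Bott's reflection principle. The key input is a derived $\PP^1$-bundle calculation: for $\sG$ perfect of rank $2$ and Tor-amplitude $[0,1]$, one has $\pr_*\sO(-1) \simeq 0$ and, via derived Serre duality, $\pr_*\sO(a) \simeq \det(\sG)^\vee \otimes \Sym^{-a-2}(\sG^\vee)[-1]$ for $a \le -2$. Applied at the composite $\pi_{i+1} \circ \pi_i$ of adjacent flag steps associated to each simple reflection $s_i$, this either produces a vanishing in the Bott-singular case (where, after iteration, some adjacent weights satisfy $\lambda_i = \lambda_{i+1} - 1$, leading to case (a)) or exchanges $\lambda_i \leftrightarrow \lambda_{i+1} - 1$ with a single cohomological shift $[-1]$. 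Iterating through the unique reduced word bringing $\lambda$ to the dominant representative $w \bigdot \lambda$ accumulates the total shift $[-\ell(w)]$, after which part (1) completes the argument.

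The main obstacle I expect lies in the characteristic-free case of part (1): showing that the iterated pushforward canonically assembles into $\dSchur^\lambda(\sE)$ as an object of $\QCoh(X)^\cn$, and not merely up to the associated graded of some filtration, requires verifying that the boundary data of the derived Pieri/Littlewood--Richardson filtrations from Theorem \ref{thm:fil:dSchur_LR} match the differentials implicit in the iterated projective-bundle pushforwards. This issue is essentially finessed in characteristic zero by semisimplicity, but in the general characteristic-free setting one will likely need either a careful simplicial-resolution argument or to lift the flagged Schur complex construction of \cite{ABW} to the derived setting via Proposition \ref{prop:dSchur_vs_bSchur}.
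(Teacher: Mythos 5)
Your treatment of part (2) is essentially the paper's: the relevant input is a rank-two projective bundle calculation (Proposition \ref{prop:P1:isom}) applied along the single-step forgetful maps $\Flag(\sE;\underline{n}) \to \Flag(\sE;\underline{n}\setminus\{i\})$ (Corollary \ref{cor:P1:flag}), followed by induction through a reduced word for $w$. One small simplification you miss: the relevant rank-two object is always a genuine vector bundle $\sV=\Ker(\sQ_{i+1}\twoheadrightarrow\sQ_{i-1})$ on the partial flag, so you do not need a derived Serre duality for perfect complexes of Tor-amplitude $[0,1]$ -- the purely classical $\PP^1$-bundle calculation over an arbitrary prestack suffices.

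For part (1), however, you correctly identify the fatal obstruction in your own iterated-projectivization approach and do not resolve it: pushing down one $\PP$-bundle at a time produces a filtration of the answer with associated graded given by derived Pieri pieces, and assembling these extensions into $\dSchur^\lambda(\sE)$ without characteristic-zero semisimplicity is precisely the hard part. The paper's proof of Theorem \ref{thm:Bott:dflag} sidesteps this entirely by factoring through the top-rank Grassmannian rather than through the tower of projectivizations: $\pr$ is written as $\Flag(\sE;\underline{n}) \xrightarrow{\pi_{(n),\underline{n}}} \Grass(\sE;n) \xrightarrow{\pr_{\Grass}} X$. The first map exhibits $\Flag(\sE;\underline{n})$ as the \emph{classical} complete flag bundle of the vector bundle $\sQ_{\Grass}$, so $\pi_{(n),\underline{n}*}(\sL(\lambda)) \simeq \Schur^\lambda(\sQ_{\Grass})$ follows from the classical Borel--Weil--Bott theorem for flag bundles over a prestack (Theorem \ref{thm:Bott:flagbundle}), without any filtration bookkeeping. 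The remaining pushforward $\pr_{\Grass*}\Schur^\lambda(\sQ_{\Grass}) \simeq \dSchur^\lambda(\sE)$ is Theorem \ref{thm:Bott:dGrass}, which (as you glimpse in your ``cleaner alternative'') reduces by base change to the universal local case $X=|\sHom_\ZZ(\ZZ^m,\ZZ^n)|$; but the decisive tool there is not the Pl\"ucker embedding or Proposition \ref{prop:dSchur:classical} alone, but a direct comparison (Lemma \ref{lem:Koszul_vs_bSchur}, Corollary \ref{cor:Koszul_vs_bSchur}) between a Koszul-type resolution $\Kos_*^{\lambda}(X;\rho)$ built from geometry of the Grassmannian bundle and the Schur complex $\bSchur^{\lambda}(\rho)$ of Akin--Buchsbaum--Weyman, using Proposition \ref{prop:dSchur_vs_bSchur} to identify the latter with $\dSchur^\lambda(\sE)$. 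This comparison in turn rests on cohomology vanishings on the classical Grassmannian (Lemma \ref{lem:vanish:G:koszul}) and a combinatorial short exact sequence of skew Schur modules (Lemma \ref{lem:lambda/1tolambda}) that you would need to discover. So while your approach is not wrong in spirit, it is aimed at exactly the hard extension problem that the paper's factorization avoids, and the missing ingredient is the Koszul/Schur complex comparison in the universal local situation.
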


The derived version of Borel--Weil--Bott theorem reflects Kontsevich's philosophy of ``hidden smoothness", and extends its classical counterpart in the following ways:

\begin{itemize}
	\item  It extends from the case of a classical scheme $X=\Spec R$ to any prestack $X$.
	\item It extends from the case where $\sE=R^n$ is a vector bundle (on $X=\Spec R$) to any perfect complex $\sE$ of Tor-amplitude in $[0,1]$ of rank $n$ (on a prestack $X$).
	\item The cohomology $\H^*(\sL(\lambda))$ is replaced by the derived pushforward $\pr_*(\sL(\lambda))$.
	\item The classical Schur functor $\Schur_R^{\lambda}(R^n)$ is replaced by the derived Schur functor $\dSchur_X^{\lambda}(\sE)$. 
\end{itemize}

We emphasize that the functors in this paper are {\em derived}, so that assertion $(1)$ already includes {\em Kempf's vanishing theorem} as part of its formulation, i.e., the the equivalence $\pr_*(\sL(\lambda)) \simeq \dSchur^{\lambda}(\sE)$ of assertion (1) implies that $\RR^i \pr_*(\sL(\lambda)) \simeq \pi_{-i}(\dSchur^{\lambda}(\sE)) \simeq 0$ for all $i >0$. However, if $\sE$ is not a vector bundle, $\pr_*(\sL(\lambda))$ could have nonzero negative cohomology sheaves even when $X$ is classical and $\lambda$ is dominant. In this case, these cohomology sheaves can be computed by combining the equivalence $\pr_*(\sL(\lambda)) \simeq \dSchur^{\lambda}(\sE)$ of assertion $(1)$, the generalized Illusie's equivalence (Proposition \ref{prop:dSchur_vs_bSchur}) and the theory of Schur complexes \cite{ABW, Wey, BHL+}. 

\begin{remark}[Characteristics] 
If $R$ has characteristic-zero, the Schur modules $\Schur^{\lambda}_R(R^n)$ are irreducible representations of ${\rm SL}_n$. 
In this case, the classical Borel--Weil--Bott theorem consists of two parts: first, the line bundles $\sL(\lambda)$ have nonzero cohomologies precisely in a single degree, and second, Bott's algorithm (assertion $(2ii)$) completely determines these cohomology modules. These statements are no longer true in positive characteristics. In general,  determining the vanishing behaviors and computing the cohomology modules of the line bundles $\sL(\lambda)$ in positive characteristics are difficult and important questions; we refer readers to \cite{Jan} for more information, \cite{AMRW, RW} for recent advances, and \cite{And} for a nice survey.

The situation is similar for the derived version of Borel--Weil--Bott theorem. Since this paper uses a similar strategy as the classical one (\cite{Dem, Wey, Lurie}) to reduce the non-dominant weight cases to dominant weight cases, the characteristic issues for the derived Borel--Weil--Bott theorem are the same as for the classical one. 
\end{remark}

This paper also obtains derived generalizations of the Borel--Weil--Bott theorem for Grassmannians and partial flag schemes.
For example, in the above situation, let $\pr \colon \Grass(\sE;d) \to X$ denote the derived Grassmannian,  $\sR \to \pr^*(\sE) \to \sQ$ the universal fiber sequence, and let $\alpha = (\alpha_1, \ldots , \alpha_{d})$ and $\beta = (\beta_1, \ldots, \beta_{n-d})$ be two partitions. 
The derived version of Borel--Weil--Bott theorem computes the derived pushforward of the perfect complex 
	$$\sV(\alpha,\beta) = \dSchur^{\alpha}(\sQ) \otimes \dSchur^{\beta}(\sR)$$
via a formula similar to the above Borel--Weil--Bott theorem for derived flags schemes, with the role of $\lambda$ replaced by $(\alpha,\beta)$ and $\pr_*(\sL(\lambda))$ by $\pr_*(\sV(\alpha,\beta))$; see Corollaries \ref{cor:Bott:dGrass:dominant} and \ref{cor:BBW:dGrass}. 

Generally, the Borel--Weil--Bott theorem for derived (partial) flag schemes computes  pushforwards of perfect complexes $\sV(\lambda)$  \eqref{eqn:flag:Vlambda} on $\Flag_{\bdd}(\sE)$; see Corollaries \ref{cor:BBW:dpflag} and \ref{cor:Bott:dpflag}. 

In addition to the results mentioned above, we also obtain a variant of the above  Borel--Weil--Bott theorem for positive characteristics (Variant \ref{variant:BBW:dflag}), a characteristic-free vanishing result (Proposition \ref{prop:dflag:vanishing}), and a version of derived Borel--Weil--Bott theorem for {\em skew} partitions (Corollary \ref{cor:Bott:dGrass}). The last result appears to have escaped the literature even in the classical situation for vector bundles.

\subsection{The Framework of Derived Algebraic Geometry}
This work is based on the framework of derived algebraic geometry (shortened as DAG) developed by Lurie \cite{DAG, HTT, DAGV, HA, SAG, kerodon}, To{\"e}n and Vezzosi \cite{HAGI, HAGII, ToenDAG} and many others (see also \cite{BFN10, GR, TVa07, PTVV} and references therein). 

The derived framework is essential in this work on both the geometric and algebraic side. Geometrically, the framework of DAG is crucial for extending Grothendieck's theory \cite{EGAI} to any base (prestack) and any connective complex (\S \ref{sec:dGrass.dFlag}). Algebraically, Lurie's non-abelian derived theory is crucial for our study of derived Schur and Weyl functors (\S \ref{sec:dSchur}). 

The use of the derived framework has both practical and theoretical advantages. From a practical perspective, it requires no conditions on base spaces or  complexes, greatly broadening the application range of Grothendieck's already powerful framework (see also \S \ref{sec:intro:related.fields}).

From a theoretical perspective, the framework of DAG has the benefit 
of overcoming classical limitations of non-functoriality of exact triangles, possessing excellent base change properties, converting algebraic geometry problems into algebraic topology problems (see, e.g., Theorem \ref{thm:Grass:cotangent}), and revealing phenomena that are not readily visible in the classical context. For a more detailed explanation, we direct readers to \cite[\S 1.5]{J22a}.

\subsection{Related Works and Further Directions}
\label{sec:intro:related.fields}
As derived extensions of Grothendieck's theory of Grassmannians \cite{EGAI} and the classical theory of Schur and Weyl functors, we anticipate a wide range of applications for the theories presented in this paper.


One of the main applications of this paper's framework is the proof of the Quot formula conjecture studied in  \cite{JL18, J20, J21}. Recently, Yukinobu Toda \cite{Tod6} provided an elegant proof of the conjecture using categorified Hall products, in the case where the base space $X$ is a smooth quasi-projective variety over $\CC$. In \cite{J23}, we offer an alternative approach based on the framework presented in this paper. Our method of using derived flag schemes and derived Schur functors has the advantage of not requiring any conditions on the base space $X$ and providing explicit descriptions of Fourier--Mukai kernels in terms of derived Schur functors.

We expect the framework and approach presented in this paper to be useful in the homological study of algebraic geometry in general. Recent decades have witnessed the development of many important tools for producing derived equivalences and semiorthogonal decompositions, such as the framework of homological projective duality \cite{Kuz07,Pe19, KP21, JLX17, JL18, JL18b}, the GIT method and beyond \cite{BFK, HL15, HLS20, HL20}, the method of categorical actions \cite{Ca,CKL, H21}, and categorical wall-crossing \cite{Tod1, Tod2, Tod3, Tod4, Tod5, Tod6}.
In general, it is a difficult question to provide explicit descriptions of Fourier--Mukai kernels beyond vector bundle cases and study their properties.
Derived Schur functors (and their variants) of this paper greatly enlarge the class of objects that can be used as Fourier-Mukai kernels for which we now have good knowledge of their homological properties.

Another situation in which this framework will be useful is the study of Hecke correspondences and Hall algebras \cite{NegW, NegHecke, NegShuffle, Z21, PS}; see \cite[\S 8]{J22a} for applications of the framework of derived projectivizations. This paper's framework provides effective tools for dealing with the typically highly singular moduli spaces that arise when studying Hecke correspondences.

In a forthcoming work \cite{J22b}, we will study Abel maps for integral curves from their derived Hilbert schemes to their compactified Jacobians, using the framework of derived projectivizations \cite{J22a} and the framework of this paper. 

The geometry that we study in the derived version of the Borel--Weil--Bott theorem is closely related to the situation studied in Donaldson--Thomas theory \cite{RT, PT, MT, JT, BBBBJ} (see also the above-mentioned papers of Toda). For example, if $\sE$ has perfect amplitude in $[0,1]$ and the stack $X$ has a perfect obstruction theory in the sense of \cite{LT,BF}, then the derived flag schemes $\Flag_{\bdd}(\sE)$ carries a natural perfect obstruction theory (Corollary \ref{cor:dpflag:perfect.amp<=1}).

There are many further interesting questions about the structures of derived Grassmannians, such as relative exceptional sequences on derived Grassmannians (\cite{Kap85, Kap88, BLV, Ef}),  their morphisms algebras and mutation theory (see \cite[Proposition 6.19]{J22a}), analogues of Beilinson's relations (\cite{Be}, \cite[Corollary 6.21]{J22a}, and analogues of the Borel--Weil--Bott theorem for negative ranks and ``negative" line bundles (see \cite[Theorem 5.6 (2)]{J22a} for the cases of projectivizations).

We show in \cite{J21} that the underived Grassmannian framework unifies many birational geometry constructions, including blowups along local complete intersection ideals and determinantal ideals, standard flips and flops, and Grassmannian-type flips and flops; it also provides an effective approach to the degeneracy loci theory. The framework of this paper will enable us to extend these theories to the derived setting. We expect these study to be closely related to the derived birational geometry studied in \cite{KR18, He21}. 

Akin--Buchbaum--Weyman's theory of Schur complexes \cite{ABW, Wey} plays an important role in this paper and provides a chain-complex model of derived Schur functors (under certain conditions in the classical situation). The results of this paper, in turn, are beneficial for studying Schur complexes. For example, this paper provides two alternative descriptions of Schur complexes -- as complexes obtained from direct-sum decomposition formula for derived Schur functors (Remarks \ref{rmk:fib:dSchur_oplus} and \ref{rem:bSchur_k=Schur_oplus_k}) and as Koszul-type complexes obtained from geometry (Remark \ref{rmk:Koszul.v.s.Schur.complexes}) -- as well as poses many interesting questions on Schur complexes. 

The study of strict polynomial functors \cite{CL, Green, Martin} is closely related to the characteristic-free theory of Schur functors. This paper's strategy could be used to investigate the derived functors of strict polynomial functors in general.
It would be interesting to look into how the structure of the category of strict polynomial functors \cite{EH, Tou, Kr} interacts with the theory of derived Schur and Weyl functors studied in this paper.

\subsection{Notations and Conventions}  
\label{sec:Notations} 
In order to eliminate possible confusion among different readers, we begin by listing a number of notations and conventions that are sometimes used differently in the literature:
\begin{enumerate}
	\item ({\bf Notations for Schur and Weyl functors}).  This paper uses the notations $\Schur^{\lambda}(E)$ and $\Weyl^{\lambda}(E)$ to denote the classical Schur and Weyl modules associated with a partition $\lambda$ (Definition \ref{def:SchurWeyl}), respectively, where $E$ is a finite free $R$-module. This convention agrees with \cite{FH} but is {\em different} from \cite{ABW, Wey}. Specifically, our convention is such that 
	\begin{itemize}
		\item $\Schur^{(n)}(E) = \Sym^n(E)$ is the symmetric power, $\Weyl^{(n)}(E) = \Gamma^n(E)$ is the divided power, and $\Schur^{(1^n)} (E)= \Weyl^{(1^n)}(E) = \bigwedge\nolimits^n (E)$ is the exterior power. 
			\item Our notation ``$\Schur^{\lambda}(E) = \Schur^{\lambda}_R(E)$" for Schur functors corresponds to 
			\begin{itemize}
				\item ``$L_{\lambda^t}(E)$" of Akin--Buchsbaum--Weyman \cite{ABW, Wey}, where $\lambda^t$ (also denoted $\lambda'$ or $\widetilde{\lambda})$ is the {\em conjugate} (or {\em transpose}) partition of $\lambda$.
				\item  ``$\dSchur_{\lambda}(E)$" of Fulton--Harris \cite{FH} (in the case where $R =\CC$).
				\item ``$H^0(\lambda)$" of Jantzen \cite{Jan}, ``$D_{\lambda, R}$"  of Green \cite{Green}, and ``$M(\lambda)$" of Martin \cite{Martin} (in the case where $R= \kappa$ is a field and $E=\kappa^n$).
				\end{itemize} 
			 \item Our notation ``$\Weyl^{\lambda}(E) = \Weyl^{\lambda}_R(E)$" for Weyl functors corresponds to 
			 \begin{itemize}
				\item ``$K_{\lambda}(E)$" of Akin--Buchsbaum--Weyman \cite{ABW, Wey}.
				\item  ``$\dSchur_{\lambda}(E)$" of Fulton--Harris \cite{FH} (if $R =\CC$; note that in this case $\Schur^{\lambda} = \Weyl^{\lambda}$).

				\item  ``$V(\lambda)$" of Jantzen \cite{Jan} and Martin \cite{Martin}, and ``$V_{\lambda, R}$"  of Green \cite{Green}  (in the case where $R= \kappa$ is a field and $E=\kappa^n$).
\end{itemize}
		\end{itemize}
		This paper uses $\bSchur_R^{\lambda/\mu}(\rho \colon M' \to M) = \bSchur_R^{\lambda/\mu}(\rho)$ to denote {\em Schur complex} (\cite{ABW}; see also \S \ref{sec:bSchur}). Our convention also {\em differs} from \cite{ABW, Wey}: ``$\bSchur_R^{\lambda/\mu}(\rho)$" of this paper corresponds to ``$L_{\lambda^t/\mu^t}(\rho)$" in \textit{loc. cit.}, where $\lambda^t/\mu^t$ is the conjugate of $\lambda/\mu$.
	\item ({\bf Grothendieck's convention for Grassmannian and flag schemes}). This paper follows Grothendieck's convention, so that the (classical or derived) projectivization $\PP(\sE)$, Grassmannian $\Grass_d(\sE)$ or flag scheme $\Flag_{\bdd}(\sE)$ parametrizes {\em quotients} rather than sub-objects. For example, if $X= \Spec \kappa$, where $\kappa$ is a field, and $\sV$ is a $\kappa$-vector space of dimension $n$, then $\Grass_d(\sV)$ parametrizes $d$-dimensional quotients of $\sV$, and is isomorphic to the classical Grassmannian variety $\Gr_{n-d}(n)=\Grass_{n-d, \, \mathrm{sub}}(\sV^\vee)$ which parametrizes $(n-d)$-dimensional subspaces of the {\em dual} vector space $\sV^\vee$.
	\item ({\bf Derived convention}). All the functors are assumed to be {\em derived}. If we want to use the classical functor, we will use the notations for zeroth derived functors or indicate with a subscript ``$\cl$".  For example, if $f \colon X \to Y$ is a map between schemes, $\sE$ is a sheaf on $X$ and $\sF$ is a sheaf on $Y$, then $f_*(\sE)$ and $f^* (\sF)$ denote the {\em derived} pushforward $\RR f_*(\sE)$ and the {\em derived} pullback $\LL f^*(\sF)$, respectively. We will use $\RR^0 f_*(\sE)$ to denote the classical pushforward, and $\LL^0 f^*(\sE)$ or $f_{\cl}^*(\sE)$ to denote the classical pullback.
	 Similarly, if $R$ is a commutative ring, and $M$, $N$ are (discrete) $R$-modules, then we use $M \otimes_R N$ to denote the derived tensor product $M \otimes^\LL_R N$ in the classical sense, and use $\Tor_0^R(M,N)$ or $M \otimes_R^\cl N$ to denote the classical tensor product of $R$-modules. 
	\item ({\bf Homological grading convention}).
	In the introduction, we used the cohomological grading convention for complexes. In the main body of the paper, we will use {\em homological grading convention}, i.e., we write a complex as $\sE_* = [\cdots \to \sE_{i+1} \to \sE_{i} \to \cdots]$. Setting $\sE^i = \sE_{-i}$ allows one to easily regard a homologically graded complex $\sE_*$ as a cohomologically graded complex $\sE^* = [\cdots \to \sE^{j} \to \sE^{j+1} \to \cdots]$, and vice versa. If $R$ is an ordinary commutative ring and $\sE_*$ is a complex of $R$-module, then the homotopy group $\pi_i(\sE_*)$ is canonically equivalent to the (co)homology group $H_i(\sE_*) = H^{-i}(\sE^*)$. In particular, we use $\Mod_R^\cn = (\Mod_R)_{\ge 0} $ to denote the $\infty$-category of connective complexes (i.e., complexes $\sE_*$ such that $\pi_i(\sE_*)=0$ for $i<0$), which is sometimes written as $(\Mod_R)^{\le 0}$ in the literature if cohomological grading convention is used. 	
\end{enumerate}

We refer readers to Notation \ref{notation:Young.diagrams} for conventions about partitions. We mention that a {\em partition} means a non-increasing sequence of non-negative integers $\lambda = (\lambda_1, \ldots, \lambda_d)$. 
A {\em skew partition} $\lambda/\mu$ is a pair of partitions $\lambda$ and $\mu$ such that $\mu \subseteq \lambda$, that is, $\mu_i \le \lambda_i$ for all $i$. 

We will use the framework of $\infty$-categories developed by Lurie in \cite{HTT}; our notations and terminologies will mostly follow Lurie's in \cite{DAG, HTT, HA, SAG, kerodon}. The major difference is that this paper will refer to modules (or sheaves) that are not necessarily discrete as ``{\em complexes}" rather than ``modules" (or ``sheaves"), following the classical convention. 
 
We list some of the notations and terminologies used frequently in this paper, and refer readers to \cite[\S 1.7]{J22a} for more detailed account:

\begin{itemize}
	\item We let $\bDelta$ denote the {\em simplex category}. For each $n \geq 0$, we let $\Delta^n \in \Set_\Delta$ denote the simplicial set that represents the functor $\Hom_{\bDelta}(\blank, [n])$.
	\item We let $\shS$ denote the {\em $\infty$-category of spaces} (or equivalently, the $\infty$-category of $\infty$-groupoids), i.e., the homotopy-coherent nerve of simplicial category of Kan complexes; see \cite{HTT}.
	\item For an $\infty$-category $\shC$ and objects $C,D \in \shC$, we let $\Map_\shC(C,D) \in \shS$ denote their {\em mapping space}. For a pair of $\infty$-categories $\shC$ and $\shD$, we let $\Fun(\shC,\shD)$ denote the $\infty$-category of functors from $\shC$ to $\shD$.  We let $\shC^{\simeq}$ denote {\em core} of $\shC$, that is, the $\infty$-category obtained from $\shC$ by discarding all non-invertible morphisms.
	\item We let $\CAlgDelta$ denote the $\infty$-category of ``derived rings", that is, {\em simplicial commutative rings} (\cite[Definition 25.1.1.1]{SAG}). These are also referred to as {\em animated commutative rings}; see \cite[\S 5.1]{CS}, \cite[Appendix A]{Mao}, \cite[Appendix A]{BL22}.
	 A simplicial commutative ring $A \in \CAlgDelta$ is called {\em discrete} if $\pi_i(A) = 0$ for all $i \neq 0$.
	The (nerve of the) category of ordinary commutative rings is canonically equivalent to the full subcategory $\CAlg^{\heartsuit}$ of $\CAlgDelta$ spanned by discrete simplicial commutative rings.
	\item For a simplicial commutative ring $A \in \CAlgDelta$, we will let $\Mod_A$ denote the $\infty$-category of complexes over its underlying $\EE_{\infty}$-ring spectrum $A^{\circ}$ (\cite[Notation 7.1.1.1]{HA}). We say that a complex $M$ is {\em connective} if 
	$\pi_i(M) = 0$ for all $i<0$, and that $M$ is {\em discrete} if $\pi_i(M) = 0$ for all $i \neq 0$.
	 We let $\Mod_A^\cn$ denote the full subcategory of $\Mod_A$ spanned by connective $A$-complexes and let  $\Mod_A^\heartsuit$ denote the full subcategory of discrete $A$-modules.
	\item By a {\em derived scheme} we mean a pair $X = (|X|, \sO_X)$ where $|X|$ is a topological space, and $\sO_X$ is an $\CAlgDelta$-valued sheaf on $|X|$, such that the following conditions are satisfied: (i) The underlying ringed space $X_\cl = (|X|, \pi_0 \sO_X)$ is a (classical) scheme, which is called the {\em underlying classical scheme of $X$}. (ii) Each of the sheaves $\pi_n(\sO_X)$ is a quasi-coherent $\pi_0(\sO_X)$-module on $X_\cl$. (iii) The structure sheaf $\sO_X$ is hypercomplete when regarded as an object of the $\infty$-topos $\Shv_\shS(X)$ in the sense of \cite[\S 6.5.2]{HTT}. 
	\item By a {\em prestack} we mean an element of the $\infty$-category $\Fun(\CAlgDelta, \shS)$, that is, a functor $X \colon \CAlgDelta \to \shS$. A map $f \colon X \to Y$ between prestacks is a natural transformation of the functors $X, Y \colon  \CAlgDelta \to \shS$. 
	\item For a prestack $X$, we let $\QCoh(X)$ denote the {\em $\infty$-category of (derived category) of quasi-coherent complexes on $X$}, let $\QCoh(X)^\cn$ and $\Perf(X)$ denote the full subcategories of $\QCoh(X)$ spanned by {\em connective complexes} and, respectively, {\em perfect complexes}. In the case where $X$ is a quasi-compact, quasi-separated scheme, the homotopy categories of $\QCoh(X)$, $\QCoh(X)^{\cn}$ and $\Perf(X)$ are the usual triangulated derived category $\Dqc(X)$ of complexes with quasi-coherent cohomology sheaves, the full subcategory $\Dqc(X)^{\le 0}$ of connective quasi-coherent complexes (that is, those complexes $\sE^*$ whose cohomology sheaves $\sH^{i}(\sE^*)=0$ for $i >0$), and the full triangulated subcategory $\D^{\perf}(X)$ of perfect complexes, respectively.
\end{itemize}

\subsection{Acknowledgment}
The author would like to thank Arend Bayer for numerous helpful discussions and suggestions throughout this project, as well as Jerzy Weyman, Richard Thomas,  Yukinobu Toda and Andrei Negu{\c{t}} for useful communications and comments on an earlier draft of this paper, and Dougal Davis and Kostya Tolmachov for helpful representation theory related discussions. The author was supported by the Engineering and Physical Sciences Research Council [EP/R034826/1], and by the ERC Consolidator grant WallCrossAG, no. 819864.

\newpage
\section{Classical Schur Functors and Schur Complexes}
\label{sec:Schur}
This section reviews the characteristic-free theory of Schur functors and Schur complexes developed by Akin, Buchsbaum, and Weyman \cite{ABW}, but presents it in a more functional manner that prepares us to extend the theory to the derived setting in \S \ref{sec:dSchur}.

\subsection{Classical Symmetric, Exterior and Divided Power (Co)Algebras}\label{sec:classical_sym}  
This subsection briefly reviews the classical theory of symmetric, exterior, and divided power algebras, including their coalgebra structures, which we will need later. Throughout this subsection, we assume that $R$ is a(n ordinary) commutative ring and $M$ a discrete $R$-module. 

For two discrete $R$-modules $M$ and $M'$, we let $M \otimes_R^{\cl} M' = \Tor_0^R(M,M')$ denote the classical tensor product of $M$ and $M'$ over $R$.
 
 \begin{enumerate}[leftmargin=*]
	\item (Tensor algebras) We let $\T^n(M)$ denote $n$-fold classical tensor product $M \otimes_R^\cl M \otimes_R^\cl \cdots \otimes _R^\cl M$. The classical tensor algebra of $M$ over $R$ (\cite[Chapter III, \S 5]{Bou}),  denoted by $\T^*(M)$, is the (ordinary) graded associative $R$-algebra $\T^*(M) = \bigoplus _{n \ge 0} \T^n(M)$, with the multiplication map $m_{p,q} \colon \T^p(M) \times \T^q(M) \to \T^{p+q}(M)$, $p,q \ge 0$, given by the formula:
	$$m_{p,q} (x_1 \otimes \cdots \otimes x_p, x_{p+1} \otimes \cdots \otimes x_{p+q}) = x_1 \otimes \cdots \otimes x_p \otimes x_{p+1} \otimes \cdots \otimes x_{p+q}.$$
There exists a unique $R$-linear map $\Delta \colon \T^*(M) \to \T^*(M) \otimes \T^*(M)$, called {\em comultiplication}, such that for all $x_i \in M$,
	$$\Delta(x_1 \otimes \cdots \otimes x_{n}) = \sum_{0 \le p \le n} (x_1 \otimes \cdots \otimes x_p) \otimes (x_{p+1} \otimes \cdots \otimes x_{n}).$$

	\item (Classical symmetric algebras) The classical symmetric algebra of $M$ over $R$ (\cite[Chapter III, \S 6]{Bou}), denoted by $S^*(M)$, $S_{R}^*(M)$, or $\Sym_\cl^*(M)$, is the (ordinary) commutative $R$-algebra obtained as the quotient of the tensor algebra $\T_R(M)$ by the two-sided ideal $\foC$ generated by the elements of the form $x \otimes y - y \otimes x$ for $x, y \in M$. The algebra is naturally graded $S_R^*(M) = \bigoplus _{n \ge 0} S_R^n(M)$, where $S_R^n(M) = \T^n(M)/ (\foC \cap \T^n(M))$, where $S_R^n(M)$ is called the classical $n$th symmetric power of $M$ over $R$. The diagonal map $(x \in M) \mapsto ((x,x) \in M\oplus M)$ induces an $R$-algebra homomorphism $\Delta \colon S_R^*(M) \to S_R^*(M \oplus M) \simeq S_R^*(M) \otimes_R^\cl S_R^*(M)$, called the comultiplication map, which is the unique $R$-algebra homomorphism satisfying $\Delta(x) = x \otimes 1 + 1 \otimes x$ for all $x \in M$. Concretely, for all $x_{1}, \ldots, x_{n} \in M$, 
 	\begin{align*}
		 \Delta(x_1  \cdots  x_n)= \sum_{0 \le p \le n} \sum_{\sigma \in \foS_{p, n-p}}  (x_{\sigma(1)}  \cdots  x_{\sigma(p)}) \otimes ( x_{\sigma(p+1)}  \cdots  x_{\sigma(n)}).
 	\end{align*}
(Here, $\foS_{p,q}$ denote the subgroup $\{ \sigma \in \foS_{p+q} \mid \sigma(1) <  \cdots <\sigma(p),  \sigma(p+1) < \cdots < \sigma(p+q)\}$ of the permutation group $\foS_{p+q}$.) By abuse of notations, we also denote the induced map on graded components by $\Delta \colon S^{p+q}_R(M) \to S_R^{p}(M) \otimes_R^\cl S_R^{q}(M)$.
	\item (Classical exterior algebras) The classical exterior algebra of $M$ over $R$ (\cite[Chapter III, \S 7]{Bou}), denoted by $\bigwedge_\cl^*(M)$, or $\bigwedge_{\cl, R}^*(M)$, is the ordinary $R$-algebra defined as the quotient of the tensor algebra $\T_R(M)$ by the two-sided ideal $\foA$ generated by elements of the form $x \otimes x$ for $x \in M$. The algebra $\bigwedge_\cl^*(M) = \bigoplus _{n \ge 0} \bigwedge_\cl^n(M)$ is graded , where $\bigwedge_\cl^n(M) = \T^n(M)/ ( \foA \cap \T^n(M))$. We refer to $\bigwedge_\cl^n(M)$ as the classical $n$th exterior power of $M$ over $R$. 
	The diagonal map $(x \in M) \mapsto ((x,x) \in M\oplus M)$ induces an $R$-algebra homomorphism $\Delta' \colon \bigwedge_\cl^*(M) \to \bigwedge_\cl^*(M \oplus M) \simeq \bigwedge_\cl^*(M) \otimes_R^\cl \bigwedge_\cl^*(M)$, comultiplication map, which is the unique $R$-algebra homomorphism satisfying $\Delta'(x) = x \otimes 1 + 1 \otimes x$ for all $x \in M$. Concretely, 
 	\begin{align*}
		 \Delta'(x_1 \wedge \cdots \wedge x_n)= \sum_{0 \le p \le n} \sum_{\sigma \in \foS_{p, n-p}} \sign(\sigma) \cdot (x_{\sigma(1)} \wedge \cdots \wedge x_{\sigma(p)}) \otimes ( x_{\sigma(p+1)} \wedge \cdots \wedge x_{\sigma(n)})
 	\end{align*}
	for all $x_{1}, \ldots, x_{n} \in M$. By abuse of notations, we also denote the induced map on graded components by $\Delta' \colon \bigwedge_\cl^{p+q}(M) \to \bigwedge_\cl^p (M) \otimes_R^\cl \bigwedge_\cl^q(M)$.

	\item (Classical divided algebras) The classical divided power algebra of $M$ over $R$ (\cite[Chapter III, \S 1, p. 248]{Ro}), denoted by $\Gamma_\cl^*(M)$, $\Gamma_{\cl,R}^*(M)$, or $\DD_{\cl}^*(M)$, is quotient $R$-algebra of the free polynomial algebra $R[\{x^{(n)}\}_{n\ge 0, x \in M}]$ on symbols $\{x^{(n)}\}_{n \ge 0, x \in M}$ by the ideal generated by relations for all $x, y \in M$, $r \in R$ and all integers $m, n \ge 0$:
	\begin{align*}
		& x^{(0)} = 1 \quad (x \ne 0) & (r x)^{(n)} = r^n x^{(n)} \quad (n \ge 1)\\
		& x^{(m)} \cdot x^{(n)} = \binom{m+n}{m} x^{(m+n)} & (x+y)^{(n)} = \sum_{i=0}^n x^{(i)} y^{(n-i)}  \quad (n \ge 1).
	\end{align*}
The algebra $\Gamma_\cl^*(M) = \bigoplus_{n \ge 0} \Gamma_\cl^n(M)$ is commutative and graded; we will refer to the $R$-module $\Gamma_\cl^n (M) = \Gamma_{\cl, R}^n(M)$ as the classical $n$th  divided power of $M$ over $R$. The diagonal map $(x \in M) \mapsto ((x,x) \in M\oplus M)$ induces an $R$-algebra homomorphism $\Delta'' \colon \Gamma_\cl^*(M) \to \Gamma_\cl^*(M \oplus M) \simeq \Gamma_\cl^*(M) \otimes_R^\cl \Gamma_\cl^*(M)$, comultiplication map, which is the unique $R$-algebra homomorphism satisfying $\Delta''(x) = x \otimes 1 + 1 \otimes x$ for all $x \in M$. Concretely,
\begin{align*}
		\Delta''(x_1^{(\nu_1)} \cdots x_m^{(\nu_m)})  =\sum_{0 \le \alpha_i \le \nu_i}  (x_1^{(\alpha_1)} \cdots x_m^{(\alpha_m)}) \otimes (x_1^{(\nu_1 - \alpha_1)} \cdots x_m^{(\nu_m - \alpha_m)})
 	\end{align*}
	for all $x_1, \ldots, x_m \in M$ and $\nu_i \ge 0$, $\nu_1 + \ldots + \nu_k = n$.
By abuse of notations, we also denote the induced map on graded components by $\Delta'' \colon \Gamma_\cl^{p+q}(M) \to \Gamma_\cl^p (M) \otimes_R^\cl \Gamma_\cl^q(M)$.
\end{enumerate}

The algebra and coalgebras structures of $\T^*(M)$, $S^*(M)$, $\bigwedge\nolimits_\cl^*(M)$ and $\Gamma_\cl^*(M)$ satisfy a sequence of compatibility conditions, making them {\em Hopf algebras}. As these details will not be used in this paper, we omit them and refer readers to \cite[I.1-I.4]{ABW}.

The following constructions will be used in constructing Schur complexes (\S \ref{sec:bSchur}):

\begin{enumerate}[leftmargin=*]
	\item (Symmetric complexes) Let $\rho \colon M' \to M$ be a morphism between finite projective $R$-modules and $n \ge 0$ an integer. Then the $n$th symmetric complex of $\rho$ is the complex 
	\begin{equation*}
	\bS^n_R(\rho \colon M' \to M) \colon \quad \bigwedge\nolimits^n_R M'  \xrightarrow{d_n}  \cdots \xrightarrow{d_2} \bigwedge\nolimits_R^1 M'  \otimes_R (S_R^{n-1} M) \xrightarrow{d_1} S_R^n M,
	\end{equation*}
	where the differentials $d_i$ are given by the formula: for all $x_1, \ldots, x_i \in M'$ and $y \in S_R^{d-i} M$,
	\begin{align*}
		d_i (x_{1} \wedge \cdots \wedge x_{i} \otimes y) = \sum_{j=1}^i (-1)^{j-1} (x_{1} \wedge \cdots  \widehat{x_{j}} \cdots \wedge x_{i} ) \otimes (\rho(x_{j}) \cdot y) \in \bigwedge\nolimits_R^{i-1} M' \otimes_R S_R^{d-i+1} M. 
	\end{align*}
	\item (Exterior complexes)
	Let $\rho \colon M' \to M$ be a morphism between finite projective $R$-module and $n \ge 0$ an integer. Then the $n$th exterior complex of $\rho$ is the complex
	\begin{equation*}
		 \bwedge^n_R(\rho \colon M' \to M) \colon \quad \Gamma_R^n(M') \xrightarrow{d_n'}  \cdots  \xrightarrow{d_2'} \Gamma_R^1(M') \otimes_R \bigwedge\nolimits_R^{n-1}(M)   \xrightarrow{d_1'} \bigwedge\nolimits_R^{n} M,
	\end{equation*}
	where the differentials $d_j'$ are given by the formula: for all $g \in \bigwedge^{d-j} M$ and $\varepsilon_1^{(\nu_1)} \cdots \varepsilon_{m'}^{(\nu_{m'})} \in \Gamma^{j}(M')$ (where $\varepsilon_1, \ldots, \varepsilon_{m'}$ is a basis of $M'$, $\nu_i \ge 0$ and $\sum \nu_i = j$),
	$$d_j'(g \otimes \varepsilon_1^{(\nu_1)} \cdots \varepsilon_{m'}^{(\nu_{m'})}) = \sum_{i=1}^{m'}  (\rho(\varepsilon_i) \wedge g ) \otimes \varepsilon_1^{(\nu_1)}  \cdots \varepsilon_{i}^{(\nu_{i} - 1)} \cdots \varepsilon_{m'}^{(\nu_{m'})} \in  \bigwedge\nolimits_R^{d-j+1} M \otimes_R \Gamma_R^{j-1} M'.$$
\end{enumerate}

Similar to the cases of usual symmetric and exterior powers of modules, there are multiplication maps $m \colon \bS^{p}_R(\rho) \otimes_R \bS^{q} (\rho) \to \bS^{p+q}_R(\rho)$ and $m' \colon \bwedge^{p}_R(\rho) \otimes_R \bwedge^{q} (\rho) \to \bwedge^{p+q}_R(\rho)$, and comultiplication maps $\Delta \colon \bS^{p+q}_R(\rho) \to \bS^{p}_R(\rho) \otimes_R \bS^{q} (\rho)$ and $\Delta' \colon \bwedge^{p+q}_R(\rho) \to \bwedge^{p}_R(\rho) \otimes_R \bwedge^{q} (\rho)$ for symmetric and exterior complexes, which satisfy a differential graded version of  Hopf algebra conditions. The details of these constructions and properties are omitted, as they will not be used in the current paper, and readers are referred to \cite[\S V.1]{ABW} for further details.

\subsection{Classical Schur Functors and Weyl Functors}
\label{sec:CSchurWeyl}
In order to present the theory of Schur and Weyl functors in a more functorial way, the following notations will be used:

\begin{notation}
\label{notation:CRingMod}
\begin{enumerate}[leftmargin=*]
	\item We let ${\rm CRing}$ denote the (ordinary) category of (ordinary) commutative rings. For any commutative ring $R \in {\rm CRing}$, we let $\Mod_R^\heartsuit$ denote the abelian category of (discrete) $R$-modules. 
	\item  We let ${\rm CRingMod}^\heartsuit$ the (ordinary) category of pairs $(R, M)$, where $R$ is a commutative ring and $M$ is a (discrete) $R$-module.  A morphism $(R,M) \to (R',M')$ in ${\rm CRingMod}$ is given by a  pair $(R \to R', M \to M')$, where $R \to R'$ is a morphism of commutative rings, and $M \to M'$ is a morphism of $R$-modules. Then the natural forgetful functor
		$$U \colon {\rm CRingMod}^\heartsuit \to {\rm CRing}, \qquad (R,M) \mapsto R$$
	is a {Grothendieck opfibration} (or called a coCartesian fibration; see \cite[\href{https://kerodon.net/tag/01RN}{Tag 01RN}]{kerodon}) which classifies the functor $\Mod_{\bullet} \colon (R \in {\rm CRing}) \mapsto (\Mod_R^\heartsuit \in \Cat)$. A morphism $(R, M) \to (R',M')$ is $U$-coCartesian if and only if the natural map $R' \otimes_R M \to M'$ is an isomorphism of $R'$-modules. 
	\item If $R$ is a commutative ring, we consider the following sequence of subcategories:
	$$\Mod_R^{\rm ff} \subseteq \Mod_R^{\rm fproj} 
	\subseteq \Mod_R^{\flat} \subseteq \Mod_R^\heartsuit,$$
where $\Mod_R^{\rm ff}$ (resp. $\Mod_R^{\rm fproj}$, resp. $\Mod_R^{\flat}$) denote the full subcategory of $\Mod_R^\heartsuit$ spanned by {\em finite free} (resp. {\em finite projective}, resp. {\em flat}) $R$-modules. 
	\item For any $? \in \{{\rm ff}, {\rm fproj}, \flat\}$, we let ${\rm CRingMod}^{?}$ denote the full subcategory of ${\rm CRingMod}^\heartsuit$ spanned by those $(R, M)$ where $R \in {\rm CRing}$ and $M \in \Mod_R^?$. Since the subcategory $\Mod_R^?$ stable under base change, the restriction of the forgetful functor
		$$U^? \colon {\rm CRingMod}^{?} \to {\rm CRing}, \qquad (R,M) \mapsto R$$
	is also a {Grothendieck opfibration}, and a morphism $(R, M) \to (R',M')$ is $U^?$-coCartesian if and only if the natural map $R' \otimes_R M \to M'$ is an isomorphism of $R'$-modules. 
\end{enumerate}
\end{notation}

\begin{notation} 
\label{notation:Young.diagrams}
A {\em partition} is a sequence of integers $\lambda = (\lambda_1, \lambda_2, \ldots, \lambda_\ell)$ such that $\lambda_1 \ge \lambda_2 \ge \cdots \ge \lambda_\ell \ge 0$. We denote $|\lambda| = \sum \lambda_i$. If $n = |\lambda|$, we say that $\lambda$ is a {\em partition of $n$}. For a partition $\lambda$, we let $\lambda^{t} = (\lambda_1^t, \lambda_2^t, \ldots, \lambda_s^t)$ dente the partition defined in the following way: for any $i$, $\lambda^t_i$ is the number of $j$s such that $\lambda_j \ge i$, and refer to $\lambda^t$ as the {\em conjugate partition} (or {\em transpose}) of $\lambda$. For two partitions $\mu$ and $\lambda$, we write $\mu \subseteq \lambda$ if $\mu_i \le \lambda_i$ for any $i \ge 0$. A {\em skew partition} $\lambda/\mu$ is a pair of partitions $\lambda$ and $\mu$ such that $\mu \subseteq \lambda$. By convention, we regard a usual partition $\lambda$ as a skew partition $\lambda/(0)$.

We let $\mathrm{Y}(\lambda)\subset \ZZ_{\ge 0}^2$ denote the {\em Young diagram} of $\lambda$, that is, $\mathrm{Y}(\lambda) = \{ (i,j) \in \ZZ_{\ge 0}^2 \mid 1 \le j \le \lambda_i \text{~~for all $i$} \}$. Similarly, for skew partitions $\lambda/\mu$, its corresponding Young diagram is $\mathrm{Y}(\lambda/\mu) = \{(i,j) \in \NN^2 \mid \mu_i +1 \le j \le \lambda_i \text{~~for all $i \in [1, \lambda_1^t]$} \}.$ Notice that $(i,j) \in \mathrm{Y}(\lambda/\mu)$ if and only if $(j,i) \in \mathrm{Y}(\lambda^t/\mu^t)$, if and only if $\mu_j^t +1 \le i \le \lambda_j^t$ for all $j \in [1, \lambda_1]$. 

There are different (but equivalent) conventions to depict the Young diagram of a skew partition via a picture consisting of ``boxes". The convention that the author has in mind is the following: for a skew partition $\lambda/\mu$, we let $A(\lambda/\mu)= (a_{ij})$ denote the matrix which satisfies $a_{ij} = 1$ if $(i,j) \in \mathrm{Y}(\lambda/\mu)$ and $a_{ij}=0$ otherwise. Notice that the matrix $A(\lambda^t/\mu^t)$ for $\lambda^t/\mu^t$ is precisely the {\em transpose} of the matrix $A(\lambda/\mu)$ for $\lambda/\mu$. Next, instead of drawing the matrix $A(\lambda/\mu)$, we draw a box $\square$ at the place where $a_{ij}=1$ and leave the places where $a_{ij}=0$ empty.  For example, if $\mu=(1,1)$, $\lambda=(2,2,1)$, then $\mu^t=(2)$, $\lambda^t = (3,2)$, and the corresponding matrices and Young diagrams for $\lambda/\mu$ and $\lambda^t/\mu^t$ are depicted as:
$$
A(\lambda/\mu) = 
\begin{pmatrix}
0 & 1 \\
0 & 1 \\
1 & 0  
\end{pmatrix}
\quad \mathrm{Y}(\lambda/\mu) = 
\begin{ytableau}
 \none &   \\
 \none & \\
~ & \none
\end{ytableau} 
\qquad
A(\lambda^t/\mu^t) = 
\begin{pmatrix}
0 & 0 & 1\\
1& 1 & 0
\end{pmatrix}
\quad
\mathrm{Y}(\lambda^t/\mu^t)= \begin{ytableau}
\none &\none  & \\
~& 
\end{ytableau}
\, \,.
$$
\end{notation}

\begin{notation} [{Slight Variant of \cite[\S V.2]{ABW}}]
\label{notation:square}
Let $\lambda/\mu$ be a skew partition, $R \in {\rm CRing}$ a commutative ring, and $M \in \Mod_R^\heartsuit$ a discrete $R$-module. 
\begin{enumerate}[leftmargin=*]
	\item (See \cite[beginning of \S 2.1 \& Proposition (2.1.9)]{Wey}) We let $p_j: = \lambda_j^t - \mu_j^t$, $s: = \max\{j \mid p_j \ne 0\}$, and let 
	$$\bigwedge\nolimits_{\cl}^{\lambda^t/\mu^t} (M) := \bigwedge\nolimits_{\cl}^{p_1} M \otimes_R^{\cl}\cdots \otimes_R^{\cl}\bigwedge\nolimits_{\cl}^{p_j} M \otimes_R^{\cl}\cdots \otimes_R^{\cl}\bigwedge\nolimits_{\cl}^{p_s} M,$$
	\begin{align*}
		\widetilde{\bigwedge\nolimits}_{\cl}^{\lambda^t/\mu^t}(M) := \bigoplus_{j=1}^{q-1} 
		\underbrace{\big(
		\bigwedge\nolimits_{\cl}^{p_1} M \otimes_R^{\cl}\cdots \otimes_R^{\cl}\bigwedge\nolimits_{\cl}^{p_{j-1}} M\big)}_{j-1}
		 \otimes_R^{\cl}\, R^{\lambda^t/\mu^t}_{j}(M)
			 \otimes_R^{\cl}
			 \underbrace{\big(\bigwedge\nolimits_{\cl}^{p_{j+2}} M \otimes_R^{\cl}\cdots \otimes_R^{\cl}\bigwedge\nolimits_{\cl}^{p_{s}}M\big)}_{s-j}. 
	\end{align*}
Here, $R^{\lambda^t/\mu^t}_{j}(M) : = \bigoplus_{\substack{u, v \ge 0,  \\ u+v < \lambda_{j+1}^t - \mu^t_{j}}}  R^{\lambda^t/\mu^t}_{u,v;j}(M)$ is an auxiliary module, where 
	\begin{align*}
	R^{\lambda^t/\mu^t}_{u,v;j}(M) : = \bigwedge\nolimits_{\cl}^{u} M \otimes_R^{\cl}\bigwedge\nolimits_{\cl}^{p_j+p_{j+1}-u-v} M  \otimes_R^{\cl}\bigwedge\nolimits_{\cl}^{v} M .
	\end{align*}			 
Next, we define a natural transformation $\square^{\lambda^t/\mu^t} \colon \widetilde{\bigwedge\nolimits}_\cl^{\lambda^t/\mu^t} \to  \bigwedge_\cl^{\lambda^t/\mu^t}$ as follows. If $s<2$, we set $\widetilde{\bigwedge\nolimits}_{\cl}^{\lambda^t/\mu^t}(M) =0$. Now we assume $s \ge 2$. 
For any $j \in [1, s-1]$, and any pair of integers $(u, v)$ such that $u, v \ge 0$ and $u+v < \lambda_{j+1}^t - \mu_j^t$, we let $\square_{\lambda^t/\mu^t;u,v;j}$ denote the composition
	$$R^{\lambda^t/\mu^t}_{u,v;j}(M)  \xrightarrow{1 \otimes \Delta' \otimes 1}  \bigwedge\nolimits_{\cl}^{u} M  \otimes_R^{\cl}\bigwedge\nolimits_{\cl}^{p_{j}-u} M  \otimes_R^{\cl}\bigwedge\nolimits_{\cl}^{p_{j+1}-v} M  \otimes_R^{\cl}\bigwedge\nolimits_{\cl}^{v} M  \xrightarrow{m' \otimes m'} \bigwedge\nolimits_{\cl}^{p_j} M  \otimes_R^{\cl}\bigwedge\nolimits_{\cl}^{p_{j+1}} M,$$
where $\Delta'$ and $m'$ are the comultiplication and multiplication map for classical exterior algebras. We then let $\square_{\lambda^t/\mu^t; j}$ denote the sum of maps
	$$\square_{\lambda^t/\mu^t; j} := \sum_{\substack{u, v \ge 0,  \\ u+v < \lambda_{j+1}^t - \mu^t_{j}}}  \square_{\lambda^t/\mu^t;u,v;j}\colon   \quad
	R^{\lambda^t/\mu^t}_{j}(M) \to \bigwedge\nolimits_{\cl}^{p_j} M  \otimes_R^{\cl}\bigwedge\nolimits_{\cl}^{p_{j+1}} M.$$
Finally, we define $\square_{\lambda^t/\mu^t} = \square_{\lambda^t/\mu^t}(R;M) \colon \widetilde{\bigwedge}^{\lambda^t/\mu^t}_\cl(M) \to \bigwedge^{\lambda^t/\mu^t}_\cl(M)$ to be the sum of maps:
	$$\square_{\lambda^t/\mu^t}(R;M) := \sum_{j=1}^{s-1}~ \underbrace{1 \otimes_R^{\cl}\cdots \otimes_R^{\cl}1}_{j-1} \otimes_R^{\cl}\, \square_{\lambda^t/\mu^t;j} \otimes_R^{\cl}\underbrace{1 \otimes_R^{\cl}\cdots \otimes_R^{\cl}1}_{s-j} \colon \widetilde{\bigwedge\nolimits}_\cl^{\lambda^t/\mu^t} (M) \to \bigwedge\nolimits_\cl^{\lambda^t/\mu^t}(M).$$
	\item (See \cite[Proposition 2.1.15]{Wey}) Similarly, we let $q_i : = \lambda_i - \mu_i$ and $\ell : \max{i \mid q_i \ne 0}$.
	We let 
		$$\Gamma_{\cl}^{\lambda/\mu} (M) := \Gamma_{\cl}^{q_1} (M) \otimes_R^{\cl}\cdots \otimes_R^{\cl}\Gamma_{\cl}^{q_j} (M) \otimes_R^{\cl}\cdots \otimes_R^{\cl}\Gamma_{\cl}^{q_\ell} (M),$$
	\begin{align*}
		\widetilde{\Gamma}_{\cl}^{\lambda/\mu}(M) := \bigoplus_{i=1}^{\ell-1} 
		\underbrace{\big(
		\Gamma_{\cl}^{q_1} (M) \otimes_R^{\cl}\cdots \otimes_R^{\cl}\Gamma_{\cl}^{q_{i-1}} (M)\big)}_{i-1}
		 \otimes_R^{\cl}\, U^{\lambda/\mu}_{i}(M)
			 \otimes_R^{\cl}
			 \underbrace{\big(\Gamma_{\cl}^{q_{i+2}} (M) \otimes_R^{\cl}\cdots \otimes_R^{\cl}\Gamma_{\cl}^{q_{\ell}}(M)\big)}_{s-i}. 
	\end{align*}
Here, $U^{\lambda/\mu}_{i}(M) : = \bigoplus_{\substack{u, v \ge 0,  \\ u+v < \lambda_{i+1} - \mu^t_{i}}}  U^{\lambda/\mu}_{u,v;i}(M)$ is an auxiliary module, where 
	\begin{align*}
	U^{\lambda/\mu}_{u,v;i}(M) : = \Gamma_{\cl}^{u} (M) \otimes_R^{\cl}\Gamma_{\cl}^{q_i+q_{i+1}-u-v} (M)  \otimes_R^{\cl}\Gamma_{\cl}^{v} (M) .
	\end{align*}			 
Next, we define a natural transformation $\square_{\lambda/\mu}' \colon \widetilde{\Gamma}_\cl^{\lambda/\mu} \to  \Gamma_\cl^{\lambda/\mu}$ as follows. If $\ell <2$, we set $\widetilde{\Gamma}_{\cl}^{\lambda/\mu}(M) =0$. Now we  assume $q \ge 2$. 
For any $i \in [1, \ell]$, and any pair of integers $(u, v)$ such that $u, v \ge 0$ and $u+v < \lambda_{i+1} - \mu_i$, we let $\square_{\lambda/\mu; u,v;i}'$ denote the composition
	$$U^{\lambda/\mu}_{u,v;i}(M)  \xrightarrow{1 \otimes \Delta'' \otimes 1}  \Gamma_{\cl}^{u} (M)  \otimes_R^{\cl}\Gamma_{\cl}^{q_{i}-u} (M)  \otimes_R^{\cl}\Gamma_{\cl}^{q_{i+1}-v} (M)  \otimes_R^{\cl}\Gamma_{\cl}^{v} (M)  \xrightarrow{m'' \otimes m''} \Gamma_{\cl}^{q_i} (M)  \otimes_R^{\cl}\Gamma_{\cl}^{q_{i+1}} (M),$$
where $\Delta''$ and $m''$ are the comultiplication and multiplication map for classical divided power algebras, respectively. We then let $\square'_{\lambda/\mu;i}$ denote the sum of maps
	$$\square'_{\lambda/\mu;i} := \sum_{\substack{u, v \ge 0,  \\ u+v < \lambda_{i+1} - \mu_{i}}}  \square'_{\lambda/\mu; u,v;i} \colon   \quad
	U^{\lambda/\mu}_{i}(M) \to \Gamma_{\cl}^{q_i} (M)  \otimes_R^{\cl}\Gamma_{\cl}^{q_{i+1}} (M).$$
Finally, we define $\square'_{\lambda/\mu} = \square'_{\lambda/\mu}(R;M) \colon \widetilde{\Gamma}^{\lambda/\mu}_\cl(M) \to \Gamma^{\lambda/\mu}_\cl(M)$ to be the sum of maps:
	$$\square'_{\lambda/\mu}(R;M) := \sum_{i=1}^{\ell-1}~ \underbrace{1 \otimes_R^{\cl}\cdots \otimes_R^{\cl}1}_{i-1} \otimes_R^{\cl}\, \square'_{\lambda/\mu;i} \otimes_R^{\cl}\underbrace{1 \otimes_R^{\cl}\cdots \otimes_R^{\cl}1}_{\ell-i} \colon \widetilde{\Gamma}_\cl^{\lambda/\mu} (M) \to \Gamma_\cl^{\lambda/\mu}(M).$$
	\end{enumerate}
All the above expressions can be simplified by dropping the suffix ``cl" (= ``classical") if $M$ is {\em flat} over $R$; we will almost only use the above definitions in this case.
\end{notation}

\begin{definition}[{Classical Schur and Weyl Functors; see \cite[V.2]{ABW}}]
\label{def:SchurWeyl}
Let $\lambda/\mu$ be a skew partition. Consider the following functors:
	$$\Schur^{\lambda/\mu} \colon {\rm CRingMod}^\heartsuit \to {\rm CRingMod}^\heartsuit
	\qquad 
	(R, M) \mapsto \big(R, \Schur^{\lambda/\mu}_R(M) : = \Coker (\square_{\lambda^t/\mu^t}(R; M)) \big).$$
	$$\Weyl^{\lambda/\mu} \colon {\rm CRingMod}^\heartsuit \to {\rm CRingMod}^\heartsuit
	\qquad 
	(R, M) \mapsto \big(R, \Weyl^{\lambda/\mu}_R(M) : = \Coker(\square'_{\lambda/\mu}(R; M)) \big).$$
Here, the maps $\square_{\lambda^t/\mu^t}(R; M)$ and $\square'_{\lambda/\mu}(R; M)$ are defined in Notation \ref{notation:square}.  We refer to $\Schur^{\lambda/\mu}$ and $\Weyl^{\lambda/\mu}$ as the (classical) {\em Schur functor} and the {\em Weyl functor} (associated with $\lambda/\mu$), respectively. 
For any $(R, M) \in {\rm CRingMod}^\heartsuit$, $\Schur^{\lambda/\mu}_R(M)$ and $\Weyl^{\lambda/\mu}_R(M)$ are called the (classical) {\em Schur module} and {\em Weyl module} (associated with $\lambda/\mu$), respectively. By convention, we set $\Schur_R^{\lambda/\mu}(M) = \Weyl_R^{\lambda/\mu}(M) =R$ if $\lambda=\mu$ and $M \neq 0$, and $\Schur_R^{\lambda/\mu}(M) =\Weyl_R^{\lambda/\mu}(M)=0$ if $M=0$. 
\end{definition}

\begin{remark}
In the preceding definition, we set up the framework of classical Schur and Weyl functors for general (discrete) modules $M$, but this is purely for theoretical purposes; in practice, the classical Schur and Weyl functors only have a satisfactory theory for flat modules. When the modules $M$ are not flat, the derived Schur and Weyl functors of \S \ref{sec:dSchurdWeyl} should be used, along with classical criteria Proposition \ref{prop:dSchur:classical} and Remark \ref{remark:classical.criteria}, instead of the classical ones.
\end{remark}

\begin{example} Let $n$ be a positive integer.
\begin{enumerate}[label=(\roman*), leftmargin=*]
	\item 
        If $\lambda = (n)$, $\mu=(0)$, then $\Schur_R^{(n)}(M)=\Sym_{\cl, R}^n (M)$ is the {\em $n$th (classical) symmetric power} functor, and $\Weyl_R^{(n)}(M)=\Gamma_{\cl, R}^n (M))$ is the {\em $n$th (classical) divided power} functor. 
	\item 
        If $\lambda = (1^n) := \underbrace{(1, \ldots, 1)}_{n \,\text{terms}}$, $\mu=(0)$, then $\Schur_R^{(1^n)}(M) = \Weyl_R^{(1^n)}(M) =\bigwedge_{\cl, R}^n (M)$ are both equal to the {\em $n$th (classical) exterior power} functor.
	\item 
        If $\lambda= (n,n-1, \ldots, 1)$ and $\mu= (n-1, n-2, \ldots, 1)$, then $\Schur_R^{\lambda/\mu}(M) = \Weyl_R^{\lambda/\mu}(M) =\bigotimes_{\cl, R}^n (M)$ are both equal to the {\em $n$th (classical) tensor product} functor.
	\item 
        If $\lambda = (2,1)$ and $\mu=(0)$, then for any commutative ring $R$ and any (discrete) $R$-module $M$, the Schur module $\Schur_R^{(2,1)} (M)$ is defined by the exact sequence
		\begin{align*}
\bigwedge\nolimits_\cl^3(M) \xrightarrow{\psi} \bigwedge\nolimits_\cl^2(M) \otimes_R^{\cl}M \to \Schur^{(2,1)}_R(M) \to 0,
		\end{align*}
where $\psi=\square_{(2,1)^t} = \Delta \colon \bigwedge\nolimits_\cl^3(M) \to \bigwedge\nolimits_\cl^2(M) \otimes_R^{\cl}M$ is the comultiplication map of the exterior power algebra, given by the formula: for any $x, y, z \in M$,
		$$\psi(x \wedge y \wedge z) = (x \wedge y) \otimes z - (x \wedge z) \otimes y + (y \wedge z) \otimes x.$$
	Similarly, the Weyl module $\Weyl^{(2,1)} (M)$ is defined by the exact sequence
		\begin{align*}
			\Gamma_\cl^3(M) \xrightarrow{\psi' } \Gamma_\cl^2(M) \otimes_R^{\cl}M \to \Weyl^{(2,1)}_R(M) \to 0,
	\end{align*}
where $\psi'=\square'_{(2,1)} = \Delta'' \colon \Gamma_\cl^3(M) \to \Gamma_\cl^2(M) \otimes_R^{\cl}M$ is the comultiplication map of the divided power algebra, given by the formula: for any $x, y, z \in M$,
		\begin{align*}
			\psi'(x^{(3)}) = x^{(2)} \otimes x, \quad
			\psi'(x^{(2)}y) = x^{(2)} \otimes y + xy \otimes x, \quad
			\psi'(x y z) = x y \otimes z + xz \otimes y + yz \otimes x.
		\end{align*}
	\end{enumerate}
\end{example}

\begin{remark}[Schur and Weyl Functors as Images]
\label{rem:Schur.Weyl.as.images}
Let $\lambda/\mu$ be a skew partition, and $R \in {\rm CRing}$. Assume that $M$ is a {\em finite projective} $R$-module. We introduce notations
	\begin{align*}
	&\bigwedge\nolimits_R^{\lambda/\mu} (M): = \bigotimes_{i \in [1, \lambda_1]} \bigwedge\nolimits_R^{\lambda_i - \mu_i} (M) 
	&  
	\Sym_R^{\lambda/\mu} (M): = \bigotimes_{i \in [1, \lambda_1]} \Sym_R^{\lambda_i - \mu_i} (M) \\
	&\Gamma_R^{\lambda/\mu} (M): = \bigotimes_{i \in [1, \lambda_1]} \Gamma_R^{\lambda_i - \mu_i} (M) 
	&
	\bigotimes\nolimits_R^{\lambda/\mu} (M): = \bigotimes_{i \in [1, \lambda_1]} M^{\otimes (\lambda_i - \mu_i)}.
	\end{align*}
Then in this case, one can also express $\Schur_R^{\lambda/\mu}(M)$ and $\Weyl_R^{\lambda/\mu}(M)$ via the formulae:
$$\Schur_R^{\lambda/\mu}(M) = {\rm image} \Big(\bigwedge\nolimits_R^{\lambda^t/\mu^t} (M) \xrightarrow{\otimes_{j \in [1,\lambda_1]} \Delta_j'} \bigotimes\nolimits^{\lambda^t/\mu^t}_R(M) = \bigotimes\nolimits_R^{\lambda/\mu} (M) \xrightarrow{\otimes_{i \in [1, \lambda_1^t]} m_i} \Sym_R^{\lambda/\mu} (M) \Big).$$
 $$\Weyl_R^{\lambda/\mu}(M) = {\rm image} \Big(\Gamma_R^{\lambda/\mu} (M) \xrightarrow{\otimes_{i\in[1,\lambda_1^t]} \Delta_i''} 
 \bigotimes\nolimits_R^{\lambda/\mu} (M)  = \bigotimes\nolimits^{\lambda^t/\mu^t}_R(M) 
 \xrightarrow{\otimes_{j \in [1, \lambda_1]} m_j'} \bigwedge\nolimits_R^{\lambda^t/\mu^t} (M).$$
Here, $\Delta_j', \Delta_i''$ are the comultiplications of exterior and divided power algebras, and $m_i, m_j'$ are the canonical multiplication maps of symmetric and exterior algebras; see \cite{ABW} or \cite{Wey}.
\end{remark}

\begin{theorem}[Universal Freeness; {\cite{ABW}}]
\label{thm:Schur:free}
\begin{enumerate}
	\item 
	\label{thm:Schur:free-1}
	The Schur functor $\Schur^{\lambda/\mu}$ and Weyl functor $\Weyl^{\lambda/\mu}$ of Definition \ref{def:SchurWeyl} restrict to functors
		$$\Schur^{\lambda/\mu} \colon {\rm CRingMod}^? \to {\rm CRingMod}^? \quad \text{and} \quad \Weyl^{\lambda/\mu} \colon {\rm CRingMod}^? \to {\rm CRingMod}^?
 ,$$  
 	respectively, for any $? \in \{{\rm ff}, {\rm fproj}, \flat\}$. In other words, for any $R \in {\rm CRing}$ and $M \in \Mod_R^\heartsuit$, if $M$ is a finite free (resp. finite projective, resp. flat) $R$-module, then $\Schur^{\lambda/\mu}_R(M)$ and $\Weyl^{\lambda/\mu}_R(M)$ are finite free (resp. finite projective, resp. flat) $R$-modules. 
	\item 
	\label{thm:Schur:free-2}
	For any $? \in \{{\rm ff}, {\rm fproj}, \flat\}$, the following diagrams commute:
		$$
\begin{tikzcd}
		{\rm CRingMod}^? \ar{r}{\Schur^{\lambda/\mu}} \ar{d}{U^?} & {\rm CRingMod}^? \ar{d}{U^?} \\
		{\rm CRing} \ar{r}{\id} & {\rm CRing}
	\end{tikzcd}
	\quad
	\begin{tikzcd}
		{\rm CRingMod}^? \ar{r}{\Weyl^{\lambda/\mu}} \ar{d}{U^?} & {\rm CRingMod}^? \ar{d}{U^?} \\
		{\rm CRing} \ar{r}{\id} & {\rm CRing}.
	\end{tikzcd}
	$$
Moreover, $\Schur^{\lambda/\mu}$ and $\Weyl^{\lambda/\mu}$ carry $U^?$-coCartesian morphisms to $U^?$-coCartesian morphisms. In other words, for any base change of commutative rings $R \to R'$ and any finite free (resp. finite projective, resp. flat) $R$-module $M$, the canonical maps
		$$\alpha_{M} \colon R' \otimes_R \Schur_R^{\lambda/\mu}(M) \to \Schur_{R'}^{\lambda/\mu}(R' \otimes_R M) \qquad \alpha_{M}' \colon R' \otimes_R \Weyl_{R}^{\lambda/\mu}(M) \to \Weyl_{R'}^{\lambda/\mu}(R' \otimes_R M)$$
are isomorphisms of finite free (resp. finite projective, resp. flat) $R'$-modules.
\end{enumerate}
\end{theorem}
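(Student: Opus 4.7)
The plan is to establish the base change assertion \eqref{thm:Schur:free-2} in the flat case first, by a direct naturality argument, and then to bootstrap the restriction assertion \eqref{thm:Schur:free-1} from the finite free case to finite projective and flat $M$ using Zariski descent and filtered colimits.

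For the first step, I would observe that the defining map $\square_{\lambda^t/\mu^t}(R;M)$ (and likewise $\square'_{\lambda/\mu}(R;M)$), together with its source and target in Notation \ref{notation:square}, is natural in the pair $(R,M)$, since all three are assembled from classical tensor products, classical exterior and divided powers, and the Hopf-algebra comultiplication and multiplication maps recalled in \S\ref{sec:classical_sym}. For any ring map $R \to R'$ and any flat $R$-module $M$, the canonical comparison maps
$$R' \otimes_R \bigwedge\nolimits_\cl^p M \longrightarrow \bigwedge\nolimits_\cl^p (R' \otimes_R M), \qquad R' \otimes_R \Gamma_\cl^p M \longrightarrow \Gamma_\cl^p (R' \otimes_R M)$$
are isomorphisms: this is clear for finite free $M$, and follows for flat $M$ by writing $M$ as a filtered colimit of finite free modules (Lazard) and using that both sides commute with filtered colimits. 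Since $R' \otimes_R(-)$ is right exact and therefore commutes with cokernels, the comparison maps $\alpha_M$ and $\alpha_M'$ of part \eqref{thm:Schur:free-2} are isomorphisms whenever $M$ is flat.

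For the second step, I would settle the finite free case of \eqref{thm:Schur:free-1} by invoking Akin--Buchsbaum--Weyman's standard basis theorem: for $M = R^n$, $\Schur_R^{\lambda/\mu}(R^n)$ and $\Weyl_R^{\lambda/\mu}(R^n)$ are free $R$-modules with explicit bases indexed by standard (resp.\ co-standard) Young tableaux of shape $\lambda/\mu$ with entries in $\{1,\ldots,n\}$; see \cite[Ch.~II]{ABW} or \cite[\S 2.1]{Wey}. For a finite projective $M$, pick a Zariski cover $\{R \to R_i\}$ trivializing $M$; the base change step of the previous paragraph identifies each $R_i \otimes_R \Schur_R^{\lambda/\mu}(M)$ with $\Schur_{R_i}^{\lambda/\mu}(R_i \otimes_R M)$, which is finite free, so Zariski descent of finite projectivity gives the conclusion for $\Schur_R^{\lambda/\mu}(M)$, and analogously for $\Weyl_R^{\lambda/\mu}(M)$. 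For a general flat $M = \varinjlim_\alpha F_\alpha$ with $F_\alpha$ finite free (Lazard), the fact that $\bigwedge\nolimits_\cl^p$, $\Gamma_\cl^p$, $\otimes_R^\cl$, and cokernels all commute with filtered colimits exhibits $\Schur_R^{\lambda/\mu}(M)$ (and $\Weyl_R^{\lambda/\mu}(M)$) as a filtered colimit of finite free modules, hence as a flat module.

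The main obstacle is the standard basis theorem cited in the second step, which rests on the characteristic-free straightening algorithm for Young tableaux; I will not reprove it but rather invoke it as a black box from \cite{ABW, Wey}. Everything else --- naturality of the $\square$-maps, base change isomorphisms for classical exterior and divided powers of flat modules, Zariski descent of finite projectivity, and preservation of flatness under filtered colimits --- is either formal or standard, so the remaining assembly into a complete proof should be routine.
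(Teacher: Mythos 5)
Your proposal is correct and follows essentially the same route as the paper: both reduce to Akin--Buchsbaum--Weyman's finite-free result, then pass to projective modules (the paper gestures at this, you spell it out via Zariski descent) and to flat modules via Lazard's theorem and commutation of the defining $\square$-maps with filtered colimits.
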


In what follows, we will refer to the properties of assertion \eqref{thm:Schur:free-2}
 as ``the formation of Schur and Weyl functors commutes with base change of commutative rings".

\begin{proof}
In the case where $M$ is a finite free $R$-module, this theorem is a reformulation of \cite[Theorem II.2.16 \& Theorem II.3.16]{ABW}, which also implies the theorem in the case where $M$ is a finite projective $R$-module. 
If $M$ is a flat $R$-module, then by Lazard's theorem (see, for example, \cite[Theorem 7.2.2.15]{HA}), we could write $M$ as a filtered colimit of ﬁnite free $R$-modules. Observe that, for any fixed $R$, the formation of the functors
	$$\square_{\lambda^t/\mu^t}(R; \blank) \colon \widetilde{\bigwedge\nolimits}_\cl^{\lambda^t/\mu^t} (\blank) \to \bigwedge\nolimits_\cl^{\lambda^t/\mu^t}(\blank) \quad \text{and} \quad  \square'_{\lambda/\mu}(R;\blank)  \colon \widetilde{\Gamma}_\cl^{\lambda/\mu} (\blank) \to \Gamma_\cl^{\lambda/\mu}(\blank)$$
 of Notation \ref{notation:square} commutes with filtered colimits of $R$-modules. Since cokernel preserves small colimits, for any fixed $R$, the functors $\Schur_R^{\lambda/\mu}(\blank)$ and $\Weyl_R^{\lambda/\mu}(\blank)$ defined in Definition \ref{def:SchurWeyl} preserve filtered colimits of $R$-modules. Therefore, 
$\Schur_R^{\lambda/\mu}(M)$ and $\Weyl_R^{\lambda/\mu}(M)$ are filtered colimits of finite free $R$-modules, hence flat by Lazard's theorem.
Moreover, since the formation of $\alpha_M$ and $\alpha_M'$ commutes with  filtered colimits of the $R$-module $M$, the assertions that they are isomorphisms in the case where $M$ is flat follow from the case where $M$ is finite free. 
\end{proof}

\begin{remark}[Duality] Let $\lambda/\mu$ be a skew partition, and $R \in {\rm CRing}$. If $M$ is a {finite projective} $R$-module, then there is a canonical isomorphism $\Schur^{\lambda/\mu}(M^\vee) \simeq \Weyl^{\lambda/\mu}(M)^\vee$ (\cite[Proposition II.4.1]{ABW}), where $(\blank)^\vee = \Hom_R(\blank, R) \colon (\Mod_R^{\rm fproj})^{\rm op} \xrightarrow{\sim} \Mod_R^{\rm fproj}$ denotes the functor of taking dual of a finite projective $R$-module.
\end{remark}

\subsection{Filtrations Associated with Schur and Weyl Functors}
\label{sec:univ.fil:Schur}
This subsection reviews the classical theory of canonical filtrations associated with Schur and Weyl functors \cite{ABW, Wey, Kou, Bo1, Bo2, BB88, Wang}, presented in a manner that allows direct generalization to the derived setting in \S \ref{sec:univ.fib:dSchur}. Our exposition is mostly based on Kouwenhoven's \cite{Kou}.

Recall that the lexicographic order $<$ for partitions is defined as follows: if $\lambda$ and $\mu$ are two partitions of $n$, let $i$ be the smallest integer such that $\lambda_i \ne \mu_i$, then $\lambda < \mu$ in the lexicographic order if and only if $\lambda_i < \mu_i$. For example, $(1,1,1) < (2,1) < (3)$.


\begin{theorem}[{Cauchy Decomposition Formula; \cite[Theorems III.1.4 \& III.2.4]{ABW}, \cite[Propositions 2.3 \& 2.4]{Kou}}] 
\label{thm:fil:sym_otimes}
Let $n$ be a positive integer, and let 
	$$\lambda^{0} = (1^n) < \lambda^{1} \cdots < \lambda^{p-2} < \lambda^{p-1}=(n)$$
be all partitions of $n$ in lexicographic order, where $p = p(n)$ is number of partitions of $n$. 
\begin{enumerate}
	\item	 
	\label{thm:fil:sym_otimes-1}
	For any $? \in \{\mathrm{ff}, \mathrm{fproj}\}$, the functor 
		$$\Sym_R^n(\blank \otimes_R \blank) \colon {\rm Mod}_R^? \times {\rm Mod}_R^? \to {\rm Mod}_R^?$$
	admits a canonical filtration by subfunctors
		$$0 = F_R^{-1} \subset F_R^{0} \subset F_R^{1} \subset \cdots \subset F_R^{p-2} \subset F_R^{p-1} =  \Sym_R^n(\blank \otimes_R \blank)$$
	for which there is a canonical equivalence for each $1 \le i \le p$:
		$$a_R^i \colon \Schur^{\lambda^i} (\blank)\otimes \Schur^{\lambda^i} (\blank) \xrightarrow{\sim} F_R^{i} / F_R^{i-1} .$$
	Moreover, the formations of the filtrations $F_R^i$ and equivalences $a_R^i$ commute with base change of commutative rings $R$. 
	\item  
	\label{thm:fil:sym_otimes-2}
	For any $? \in \{\mathrm{ff}, \mathrm{fproj}\}$, the functor 		$$\bigwedge\nolimits_R^n(\blank \otimes_R \blank) \colon {\rm Mod}_R^?  \times {\rm Mod}_R^? \to {\rm Mod}_R^?$$
	admits a canonical filtration by subfunctors
		$$0 = G_R^{-1} \subset G_R^{0} \subset G_R^{1} \subset \cdots \subset G_R^{p-2} \subset G_R^{1} =  \bigwedge\nolimits_R^n(\blank \otimes_R \blank)$$
	for which there is an canonical equivalence for each $1 \le i\le p$:
		$$b_R^i \colon \Schur^{\lambda^i} (\blank)\otimes \Weyl^{(\lambda^i)^t} (\blank) \xrightarrow{\sim} G_R^{i} / G_R^{i-1} .$$
	Moreover, the formations of the filtrations $G_R^i$ and equivalences $b_R^i$ commute with base change of commutative rings $R$. 
\end{enumerate}
\end{theorem}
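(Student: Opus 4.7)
The plan is to follow the strategy of Akin--Buchsbaum--Weyman \cite{ABW} in characteristic-free form (as streamlined by Kouwenhoven \cite{Kou}), packaged so that naturality in $R$ is manifest. First I would reduce to the universal case $R = \ZZ$ with $M, M'$ finite free. All four functors $\Sym^n$, $\bigwedge^n$, $\Schur^{\lambda}$, $\Weyl^{\lambda}$ commute with base change on finite projective modules by Theorem \ref{thm:Schur:free}\eqref{thm:Schur:free-2}; since a finite projective module is Zariski-locally finite free and the construction we will give is natural, the filtrations $F_R^i$, $G_R^i$ and the subquotient isomorphisms $a_R^i$, $b_R^i$ are forced to be compatible with base change and to glue over $\Spec R$. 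Therefore it suffices to construct everything functorially in the free case.

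Second, for each partition $\lambda$ of $n$ I would build a natural ``Cauchy map''
$$c_\lambda \colon \bigwedge\nolimits^{\lambda^t}(M) \otimes_R \Sym\nolimits^{\lambda}(M') \longrightarrow \Sym\nolimits^n(M \otimes_R M'),$$
using only the Hopf-algebra structures of \S\ref{sec:classical_sym}: iterated comultiplication presents $\bigwedge^{\lambda^t_j}(M)$ as a quotient of $M^{\otimes \lambda^t_j}$, iterated comultiplication presents $\Sym^{\lambda_i}(M')$ as a quotient of $(M')^{\otimes \lambda_i}$, and one pairs the two collections of tensor factors according to the boxes of the Young diagram of $\lambda$ and multiplies the resulting elements of $M \otimes_R M'$ inside $\Sym^n(M \otimes_R M')$. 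Define $F_R^i \subseteq \Sym^n(M \otimes_R M')$ to be the sum of the images of $c_{\lambda^j}$ for $j \leq i$. By construction this is an increasing chain of subfunctors natural in $(R, M, M')$; the equality $F_R^{p-1} = \Sym^n(M\otimes_R M')$ follows from the observation that $c_{(n)}$ covers every monomial generator.

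Third, and this is the main content, I would show that $c_{\lambda^i}$ descends modulo $F_R^{i-1}$ to an isomorphism
$$a_R^i \colon \Schur^{\lambda^i}(M) \otimes_R \Schur^{\lambda^i}(M') \xrightarrow{\ \sim\ } F_R^i / F_R^{i-1}.$$
That $c_{\lambda^i}$ kills the $\square_{(\lambda^i)^t}$-relation of Notation \ref{notation:square} on the $M$-factor after projecting to $F^i/F^{i-1}$ is the key compatibility: an exchange of wedge factors between adjacent columns is rewritten, via the commutativity of $\Sym^*(M\otimes M')$ and the Hopf identities, as a sum of $c_\mu$ for partitions $\mu < \lambda^i$ in lex order, which lie in $F_R^{i-1}$ by definition. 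The symmetric argument handles the $M'$-factor. Surjectivity of $a_R^i$ is tautological. Injectivity is the hardest step: I would invoke the Akin--Buchsbaum--Weyman straightening/standard-tableaux basis \cite[\S II]{ABW} for $\Schur^{\lambda^i}$, identifying a ``leading monomial'' in $\Sym^n(M\otimes M')$ associated with each pair of standard tableaux of shape $\lambda^i$ that appears with coefficient $\pm 1$ and cannot be produced by any $c_{\lambda^j}$ with $j < i$; this exhibits the image of the standard basis as linearly independent modulo $F_R^{i-1}$, and universal freeness (Theorem \ref{thm:Schur:free}\eqref{thm:Schur:free-1}) propagates injectivity to arbitrary $R$.

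Fourth, the exterior case is entirely parallel: the Cauchy map is replaced by
$$c_\lambda' \colon \Schur^{\lambda}(M)\text{-version input} \longrightarrow \bigwedge\nolimits^n(M \otimes_R M'),$$
built analogously from the Hopf structure of $\bigwedge^*(M)$ and of $\Gamma^*(M')$ paired with multiplication in $\bigwedge^*(M\otimes_R M')$. The anticommutativity of $\bigwedge^*(M \otimes_R M')$ forces the $M'$-factor of the subquotient to be a Weyl module $\Weyl^{(\lambda^i)^t}(M')$ rather than a Schur module, and the $\square'_{\lambda^i}$-relation for divided powers plays the role of $\square_{(\lambda^i)^t}$ in the argument above. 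The main obstacle in both parts is the injectivity step, namely the explicit straightening/leading-term calculation that identifies the rank of the subquotient with the product of ranks of $\Schur^{\lambda^i}$ and $\Schur^{\lambda^i}$ (respectively $\Weyl^{(\lambda^i)^t}$); everything else is functorial Hopf-algebra bookkeeping, which automatically delivers the base-change compatibility claims for $F_R^i$, $G_R^i$, $a_R^i$ and $b_R^i$.
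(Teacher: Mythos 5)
The paper gives no proof of this theorem: the statement is quoted directly from Akin--Buchsbaum--Weyman (\cite[III.1.4, III.2.4]{ABW}) and Kouwenhoven (\cite[Props.\ 2.3, 2.4]{Kou}), with the functoriality and base-change assertions understood as part of their universal-freeness package. Your sketch is an outline of how the cited proof works, so at the level of strategy it is consistent with what the paper depends on. There is, however, a concrete gap in your construction of the Cauchy maps that would already kill the argument for $n = 2$ over $\ZZ$.

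You propose to take the source of $c_\lambda$ to be $\bigwedge^{\lambda^t}(M) \otimes \Sym^{\lambda}(M')$ and to lift both factors into tensor powers by ``iterated comultiplication.'' The bookkeeping is inverted in the description (multiplication, not comultiplication, realizes $\bigwedge^{\lambda^t_j}(M)$ and $\Sym^{\lambda_i}(M')$ as quotients of tensor powers; the comultiplications give the split \emph{inclusions}, which is what a well-defined map out of $\bigwedge^{\lambda^t}(M) \otimes \Sym^{\lambda}(M')$ must use), but the real problem is that comultiplication of symmetric powers is the wrong characteristic-free choice: $\Delta \colon \Sym^{p}(M') \to (M')^{\otimes p}$ carries $x^p$ to $p!\,x^{\otimes p}$. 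Already for $n=2$, $\lambda = (2)$, one has $c_{(2)}(e \otimes e \otimes f^2) = 2(e \otimes f)^2$, so $(e \otimes f)^2 \in \Sym^2(M \otimes M')$ does not lie in $\Im\, c_{(1,1)} + \Im\, c_{(2)}$ although $2(e \otimes f)^2$ does; your filtration never reaches $F_R^{p-1} = \Sym^n_R(M \otimes_R M')$. The standard characteristic-free source is $\bigwedge^{\lambda^t}(M) \otimes \bigwedge^{\lambda^t}(M')$ (equivalently $\Gamma^{\lambda}(M) \otimes \Gamma^{\lambda}(M')$), with $c_\lambda$ assembled from the column-wise determinantal Cauchy maps $\bigwedge^j(M) \otimes \bigwedge^j(M') \to \Sym^j(M \otimes M')$, $(m_1 \wedge \cdots \wedge m_j) \otimes (m_1' \wedge \cdots \wedge m_j') \mapsto \det\bigl(m_a \otimes m_b'\bigr)$, which \emph{are} surjective onto the filtration steps with integer coefficients; likewise the exterior case pairs $\bigwedge^{\lambda^t}(M)$ with $\Gamma^{\lambda^t}(M')$ so that $\Weyl^{\lambda^t}(M')$ appears. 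Once the source is corrected, the remaining structure you describe --- lexicographic descent modulo $F_R^{i-1}$, injectivity via the ABW standard-tableaux basis, and local-to-global plus base change --- matches the argument in the cited references.
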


\begin{example}
Let $n=3$, then we have $p=3$, and
	$$\lambda^{0}=(1,1,1) < \lambda^{1}=(2,1) < \lambda^{2}=(3)$$
are all the partitions of $3$. Then we have canonical filtrations with subquotients: 
			\begin{equation*}
				\begin{tikzcd} [back line/.style={dashed}, row sep=1 em, column sep= -.2 em]
	0 \ar{rr} 	& 	& F_R^{0} \ar[hook]{rr} \ar[equal]{ld}		&		&   F_R^{1} \ar[two heads]{ld} \ar[hook]{rr} 	&	&F_R^{2} = \Sym_R^3(\blank \otimes \blank)  \ar[two heads]{ld}. \\
							& \bigwedge\nolimits_R^3(\blank) \otimes \bigwedge\nolimits_R^3(\blank) \ar[dashed]{lu} &	& \Schur_R^{(2,1)}(\blank) \otimes \Schur_R^{(2,1)}(\blank)  \ar[dashed]{lu}  & & \Sym_R^3(\blank) \otimes \Sym_R^3(\blank)  \ar[dashed]{lu}
				\end{tikzcd}
			\end{equation*}		
			\begin{equation*}
				\begin{tikzcd} [back line/.style={dashed}, row sep=1 em, column sep=-.2 em]
	0 \ar{rr} 	& 	& G_R^{0} \ar[hook]{rr} \ar[equal]{ld}		&		&   G_R^{1} \ar[two heads]{ld} \ar[hook]{rr} 	&	&G_R^{2} =\bigwedge\nolimits_R^3(\blank \otimes \blank)  \ar[two heads]{ld}. \\
							& \bigwedge\nolimits_R^3(\blank) \otimes \Gamma_R^3(\blank) \ar[dashed]{lu} &	& \Schur_R^{(2,1)}(\blank) \otimes \Weyl_R^{(2,1)}(\blank)  \ar[dashed]{lu}  & & \Sym_R^3(\blank) \otimes \bigwedge\nolimits_R^3(\blank)  \ar[dashed]{lu}
				\end{tikzcd}
			\end{equation*}
In the above diagrams, each triangle represents an exact triangle (in this case, a short exact sequence), and the doted arows represent the connecting morphisms that have degree $+1$. 
\end{example}

\begin{theorem}[Direct-Sum Decomposition Formula {\cite[Theorem II.4.11]{ABW}, \cite[Proposition 2.3.1]{Wey}, \cite[Theorems 1.4 \& 1.5]{Kou}}]
\label{thm:fil:Schur_oplus} 
Let $\lambda/\mu$ be a skew partition and set $N = |\lambda|-|\mu|$. For each integer $0 \le k \le N$, we let
	$$\gamma^{0}_{(k)} <   \gamma^{1}_{(k)} < \cdots < \gamma^{\ell_k-1}_{(k)}$$
denote all the partitions in $I_k(\lambda/\mu) = \{\gamma \mid \mu \subseteq \gamma \subseteq  \lambda, |\gamma| - |\mu|=k\}$ listed in lexicographic order (so  our $\gamma^i$ is $(\lambda^{\ell_k-i})^t$ in \cite{Kou}), where $\ell_k = |I_k(\lambda/\mu)|$ is the cardinality of $I_k(\lambda/\mu)$ Then:
\begin{enumerate}[leftmargin=*]
	\item 
	\label{thm:fil:Schur_oplus-1} 
	\begin{enumerate}
		\item 
		\label{thm:fil:Schur_oplus-1i} 
		For any $R \in {\rm CRing}$, the functor  
		$$\Schur_R^{\lambda/\mu} (\blank \oplus \blank) \colon {\rm Mod}_R^? \times {\rm Mod}_R^? \to {\rm Mod}_R^?,$$
	where $? \in \{ \mathrm{ff}, \mathrm{fproj}\}$, admits a canonical decomposition by subfunctors
		$$\Schur_R^{\lambda/\mu}  (\blank \oplus \blank) = \bigoplus\nolimits_{k=0}^{N} \Schur_R^{\lambda/\mu}(\blank, \blank)_{(k,N-k)},$$
	 and for each $k$, $\Schur_R^{\lambda/\mu}(\blank, \blank)_{(k,N-k)}$ admits a canonical filtration by subfunctors
		$$0 = F_R^{-1, (k)} \subset F_R^{0, (k)}\subset F_R^{1, (k)} \subset \cdots  \subset F_R^{\ell_k-1, (k)} = \Schur_R^{\lambda/\mu}(\blank, \blank)_{(k,N-k)}$$
	for which there is an canonical equivalence for each $1 \le i\le \ell_k$:
		$$a_R^{i, (k)} \colon \Schur_R^{\gamma_{(k)}^i/\mu} (\blank)\otimes \Schur_R^{\lambda/\gamma_{(k)}^i} (\blank) \xrightarrow{\sim} F_R^{i, (k)} /F_R^{i-1, (k)}.$$
	Moreover, the formations of the decomposition of $\Schur_R^{\lambda/\mu}  (\blank \oplus \blank) $, the subfunctors $F_R^{i, (k)}$, and the equivalences $a_R^{i, (k)}$ commute with base change of commutative rings $R$.
		\item 
		\label{thm:fil:Schur_oplus-1ii} 
		For any $R \in {\rm CRing}$, the functor  
		$$\Weyl_R^{\lambda/\mu} (\blank \oplus \blank) \colon {\rm Mod}_R^? \times {\rm Mod}_R^? \to {\rm Mod}_R^?,$$
	where $? \in \{ \mathrm{ff}, \mathrm{fproj}\}$, admits a canonical decomposition by subfunctors
		$$\Weyl_R^{\lambda/\mu}  (\blank \oplus \blank) = \bigoplus_{k=0}^{|\lambda|-|\mu|} \Weyl_R^{\lambda/\mu}(\blank, \blank)_{(k,N-k)},$$
	 and for each $k$, $\Weyl_R^{\lambda/\mu}(\blank, \blank)_{(k,N-k)}$ admits a canonical filtration by subfunctors
		$$0 = G^R_{\ell_k,(k)} \subset G^R_{\ell_k-1,(k)}  \subset \cdots \subset G^R_{1,(k)}  \subset G^R_{0,(k)} = \Weyl_R^{\lambda/\mu}(\blank, \blank)_{(k,N-k)}$$
	for which there is an canonical equivalence for each $1 \le i\le \ell_k$:
		$$b^R_{i, (k)} \colon \Weyl_R^{\gamma_{(k)}^i/\mu} (\blank)\otimes \Weyl_R^{\lambda/ \gamma_{(k)}^i} (\blank) \xrightarrow{\sim} G^R_{i, ,(k)} / G^R_{i+1, (k)}.$$
	Moreover, the formations of the decomposition of $\Weyl_R^{\lambda/\mu}  (\blank \oplus \blank)$, the subfunctors $G^R_{i,(k)}$, and the equivalences $b^R_{i, (k)} $ commute with base change of commutative rings $R$.	\end{enumerate}

	\item 
	\label{thm:fil:Schur_oplus-2} 
	\begin{enumerate}
		\item
		\label{thm:fil:Schur_oplus-2i} 
		Let $R \in {\rm CRing}$ and let $\alpha$ denote a short exact sequence $0 \to M' \xrightarrow{u} M \xrightarrow{v} M'' \to 0$ of finite free $R$-modules, then there is a canonical filtration of $\Schur_R^{\lambda/\mu}(M)$ by submodules
			$$0 \subset F_{|\lambda|-|\mu|}(R, \alpha) \subset  \cdots \subset F_{1}(R,\alpha) \subset F_{0}(R, \alpha)= \Schur_R^{\lambda/\mu}(M)$$
		which is functorial on $\alpha$. Moreover, for each $k$, there is an functorial equivalence
			$$c^{R, \alpha}_k \colon \Schur_{R}^{\lambda/\mu}(M', M'')_{(k,N-k)} \xrightarrow{\sim} F_{k}(R, \alpha)/ F_{k+1}(R, \alpha),$$
		where $\Schur_{R}^{\lambda/\mu}(M', M'')_{(k,N-k)}$ is defined in \eqref{thm:fil:Schur_oplus-1i}. The formations of the filtrations $F_{i}(R, \alpha)$ and equivalences $c^{R, \alpha}_k$ commute with base change of commutative rings $R$.
			\item
			\label{thm:fil:Schur_oplus-2ii}
		  	 Let $R \in {\rm CRing}$ and let $\alpha$ denote a short exact sequence $0 \to  M' \xrightarrow{u} M \xrightarrow{v} M'' \to 0$ of finite free $R$-modules, there is a canonical filtration of $\Weyl_R^{\lambda/\mu}(M)$ by submodules
					$$0 \subset G_{|\lambda|-|\mu|}(R, \alpha) \subset  \cdots \subset G_{1}(R,\alpha) \subset G_{0}(R, \alpha) =\Weyl_R^{\lambda/\mu}(M)$$
		which is functorial on $\alpha$. Moreover, for each $k$, there is an functorial equivalence
			$$d^{R, \alpha}_k \colon \Weyl_{R}^{\lambda/\mu}(M', M'')_{(k,N-k)} \xrightarrow{\sim} G_{k}(R,\alpha)/G_{k+1}(R,\alpha)$$
		where $ \Weyl_{R}^{\lambda/\mu}(M', M'')_{(k,N-k)}$ is defined in \eqref{thm:fil:Schur_oplus-1ii}. The formations of the filtrations $G_{i}(R,\alpha)$ and equivalences $d^{R, \alpha}_k$ commute with base change of commutative rings $R$.  
		\end{enumerate}
\end{enumerate}
 \end{theorem}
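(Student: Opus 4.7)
The plan is to follow the classical arguments of Akin--Buchsbaum--Weyman \cite[Theorem II.4.11]{ABW}, Weyman \cite[Proposition 2.3.1]{Wey}, and Kouwenhoven \cite[Theorems 1.4, 1.5]{Kou}, rephrased within the framework of Definition \ref{def:SchurWeyl}. Since Theorem \ref{thm:Schur:free} already guarantees that $\Schur^{\lambda/\mu}$ and $\Weyl^{\lambda/\mu}$ preserve flatness and commute with base change of rings, it suffices to construct all decompositions, filtrations, and subquotient equivalences over a fixed $R$, functorially in $(M, M') \in \Mod_R^{\mathrm{fproj}} \times \Mod_R^{\mathrm{fproj}}$; compatibility with base change then follows automatically, and flatness ensures that the resulting short exact sequences of subquotients remain exact after pullback.

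For part (1)(a), I start from the presentation $\Schur_R^{\lambda/\mu}(N) = \Coker\bigl(\square_{\lambda^t/\mu^t}(R;N)\bigr)$ of Definition \ref{def:SchurWeyl}. Substituting $N = M \oplus M'$ and using the Hopf-algebra splittings
\begin{equation*}
\bigwedge\nolimits^p(M \oplus M') \simeq \bigoplus_{a+b=p}\bigwedge\nolimits^a(M) \otimes \bigwedge\nolimits^b(M'), \qquad \Sym^p(M \oplus M') \simeq \bigoplus_{a+b=p} \Sym^a(M) \otimes \Sym^b(M'),
\end{equation*}
which are compatible with the multiplications and comultiplications $m, m', \Delta, \Delta'$ entering the definition of $\square_{\lambda^t/\mu^t}$, I obtain a natural $\ZZ$-bigrading by total $M'$-weight on both the source and the target of $\square_{\lambda^t/\mu^t}(R; M \oplus M')$. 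The cokernel therefore inherits the direct-sum decomposition $\Schur_R^{\lambda/\mu}(M \oplus M') = \bigoplus_k \Schur_R^{\lambda/\mu}(M, M')_{(k, N-k)}$. Within the $(k, N-k)$-summand, for every $\gamma \in I_k(\lambda/\mu)$ the factor $\bigwedge\nolimits^{\gamma^t/\mu^t}(M) \otimes \bigwedge\nolimits^{\lambda^t/\gamma^t}(M')$ embeds as a natural direct summand of $\bigwedge\nolimits^{\lambda^t/\mu^t}(M \oplus M')$, and I define $F_R^{i, (k)}$ to be the image in $\Schur_R^{\lambda/\mu}(M, M')_{(k, N-k)}$ of the sum of these summands over all $\gamma$ satisfying $\gamma \leq \gamma^i_{(k)}$ in the lexicographic order. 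The construction is manifestly natural in $R$ and in $(M, M')$.

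The main obstacle will be identifying the associated graded $F_R^{i, (k)}/F_R^{i-1, (k)}$ with $\Schur_R^{\gamma^i_{(k)}/\mu}(M) \otimes \Schur_R^{\lambda/\gamma^i_{(k)}}(M')$. I plan to argue this via the standard-basis theorem \cite[Theorem II.2.16]{ABW}: after fixing ordered bases of $M$ and $M'$, the semistandard tableaux on $\lambda/\mu$ whose entries come from the ordered concatenation of these bases form a basis of $\Schur_R^{\lambda/\mu}(M \oplus M')$, and those whose ``$M$-subshape'' equals $\gamma^i_{(k)}$ biject naturally with pairs of semistandard tableaux on $\gamma^i_{(k)}/\mu$ (in $M$) and on $\lambda/\gamma^i_{(k)}$ (in $M'$). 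The lexicographic filtration collects exactly those tableaux whose $M$-subshape is $\leq \gamma^i_{(k)}$, so the straightening relations on $\Schur_R^{\lambda/\mu}(M \oplus M')$ specialize, modulo lex-smaller $M$-subshapes, to the straightening relations defining $\Schur_R^{\gamma^i_{(k)}/\mu}(M) \otimes \Schur_R^{\lambda/\gamma^i_{(k)}}(M')$, yielding the desired basis-to-basis isomorphism $a_R^{i, (k)}$. Part (1)(b) for Weyl functors is obtained by the dual argument: replace exterior algebras by divided-power algebras, $\Delta'$ by $\Delta''$, and invoke \cite[Theorem II.3.16]{ABW}; the direction of the filtration is reversed because the natural straightening order on $\Gamma^{\lambda/\mu}$ is opposite to that on $\bigwedge\nolimits^{\lambda^t/\mu^t}$.

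For part (2), given $\alpha \colon 0 \to M' \xrightarrow{u} M \xrightarrow{v} M'' \to 0$ of finite free $R$-modules, I define $F_k(R, \alpha) \subseteq \Schur_R^{\lambda/\mu}(M)$ intrinsically as the image under the canonical surjection $\bigwedge\nolimits^{\lambda^t/\mu^t}(M) \twoheadrightarrow \Schur_R^{\lambda/\mu}(M)$ of the sub-module generated by tensors in which the total wedge-contribution from the subobject $u(M') \subseteq M$ carries weight at least $k$ (using only the natural sub-inclusions $\bigwedge\nolimits^a(u(M')) \hookrightarrow \bigwedge\nolimits^a(M)$, with no splitting chosen). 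This sub-module is visibly functorial on $\alpha$. Since $M''$ is projective the sequence $\alpha$ splits; choosing a splitting $s \colon M'' \to M$ produces an identification $M \simeq M' \oplus M''$ under which $F_k(R, \alpha)$, by its intrinsic description, matches $\bigoplus_{j \geq k} \Schur_R^{\lambda/\mu}(M', M'')_{(j, N-j)}$ from part (1). Hence $F_k(R, \alpha)/F_{k+1}(R, \alpha) \simeq \Schur_R^{\lambda/\mu}(M', M'')_{(k, N-k)}$, and because the left-hand side is intrinsic to $\alpha$ this equivalence does not depend on the choice of $s$; this yields the canonical $c_k^{R, \alpha}$, with functoriality in $\alpha$ and base-change compatibility immediate from the intrinsic construction. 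Part (2)(b) for Weyl functors is entirely analogous.
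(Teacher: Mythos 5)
Your proposal follows the same classical blueprint (ABW, Weyman, Kouwenhoven) that the paper defers to, and it is essentially on the right track for parts (1) and for the overall structure of part (2). The paper's actual proof cites Kouwenhoven \cite[Theorems 1.4, 1.5]{Kou} for everything except the base-change assertions and the \emph{functorial} construction of $c_k^{R,\alpha}$, $d_k^{R,\alpha}$; for the latter it gives a specific construction using the canonical map $\varphi \colon \Schur_R^{\lambda/\mu}(M'\oplus M)\to \Schur_R^{\lambda/\mu}(M)$ induced by $(u,\id_M)\colon M'\oplus M\to M$, defines $F_k(R,\alpha)=\varphi(\Schur_R^{\lambda/\mu}(M',M)_{\ge k})$, and then proves the canonical map $\Schur_R^{\lambda/\mu}(M',M)_{(k,N-k)}\to F_k/F_{k+1}$ factors through the projection to $\Schur_R^{\lambda/\mu}(M',M'')_{(k,N-k)}$ with the induced map an isomorphism, verifying both by choosing a splitting. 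Your definition of $F_k(R,\alpha)$ via the images of $\bigwedge^a(u(M'))\hookrightarrow \bigwedge^a(M)$ is a legitimate alternative that likely produces the same submodule and is also manifestly functorial.

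There is one genuine gap: the passage ``because the left-hand side is intrinsic to $\alpha$ this equivalence does not depend on the choice of $s$'' is a non-sequitur. Both $F_k/F_{k+1}$ \emph{and} $\Schur_R^{\lambda/\mu}(M',M'')_{(k,N-k)}$ are intrinsic to $\alpha$, yet a priori two different splittings $s,s'$ could induce two different isomorphisms between these two intrinsic modules. You must actually prove the independence. The standard way: $s'-s = u\circ h$ for some $h\colon M''\to M'$, so the two induced identifications $M\simeq M'\oplus M''$ differ by the unipotent automorphism $\begin{pmatrix}1 & h\\ 0 & 1\end{pmatrix}$ of $M'\oplus M''$, which preserves the filtration by $M'$-degree and therefore acts as the identity on its associated graded. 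Alternatively, do as the paper does: first show the \emph{canonical} (splitting-free) map $\Schur_R^{\lambda/\mu}(M',M)_{(k,N-k)}\to F_k/F_{k+1}$ induced by $\varphi$ factors through the projection $\Schur_R^{\lambda/\mu}(M',M)_{(k,N-k)}\to \Schur_R^{\lambda/\mu}(M',M'')_{(k,N-k)}$ and yields an isomorphism; checking both facts may use a splitting, but the statement being verified is splitting-free. Without one of these arguments, the ``canonical'' in $c_k^{R,\alpha}$ is not established. A secondary, lesser issue: ``compatibility with base change then follows automatically'' from Theorem \ref{thm:Schur:free} is too quick—the theorem controls the functors $\Schur^{\lambda/\mu}$, not arbitrary subfunctors or filtrations; you should note, as the paper does, that each stage of the construction (the Hopf-algebra bigrading, the lex-sums of partition summands, the map $\varphi$) is given by universal formulas that visibly commute with $R\to R'$.
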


 \begin{proof}
These statements are proved in \cite[Theorem 1.4, Theorem 1.5]{Kou} except that we add assertions about base-change properties and that we claim that the equivalences $c^{R, \alpha}_k$ and $d^{R, \alpha}_k$ of the second part \eqref{thm:fil:Schur_oplus-2} can be constructed functorially. Regarding the base-change properties, it suffices to observe that in the proof of \cite[Theorem 1.4 (a)]{Kou}, the formation of the functor $db_{\alpha}(N,L)$, of the image submodules  $K_k=\sum_{|\tau|=k}{\rm Im}(db_{\tau})$, $\sum_{|\tau|=k, \tau \ge \alpha}{\rm Im}(db_{\tau})$ and $\sum_{|\tau|=k, \tau > \alpha}{\rm Im}(db_{\tau})$, and of the equivalence $c_{\lambda}$ there commutes with base change. 

Now we consider assertion \eqref{thm:fil:Schur_oplus-2i}. The argument of \cite[Theorem 1.4]{Kou} already implies the existence the equivalences of the form $c_{R, \alpha}^k$; it remains to provide functorial constructions of these equivalences.
For each exact sequence $0 \to M' \xrightarrow{u} M \xrightarrow{v} M'' \to 0$ of finite free $R$-modules, the canonical map $(u, \id_M) \colon M' \oplus M \to M$ induces a canonical map $\varphi \colon \Schur_R^{\lambda/\mu}(M' \oplus M) \to \Schur^{\lambda/\mu}(M)$. For each $k$, we let $F_{k}(R, \alpha)$ denote the image $\varphi(\Schur_R^{\lambda/\mu}(M' \oplus M)_{\ge k})$ inside $ \Schur_R^{\lambda/\mu}(M)$, where $\Schur_R^{\lambda/\mu}(M' \oplus M)_{\ge k}$ denotes $\bigoplus_{\ell \ge k} \Schur_R^{\lambda/\mu}(M', M)_{(\ell,N-\ell)}$; we wish to show that $F_{k}(R, \alpha)$, for $0 \le k \le |\lambda|-|\mu|$, make up the desired filtration. Consider the commutative diagram
	$$
	\begin{tikzcd} 
		0 \ar{r} & M' \ar{d}{\id_{M'}} \ar{r}{(\id_{M'},0)^t} & M'\oplus M \ar{d}{(u,\id_M)} \ar{r}{(0,\id_{M})}  &M \ar{r} \ar{d}{v} & 0  \\
		0 \ar{r} & M'  \ar{r}{u} & M  \ar{r}{v}  &M'' \ar{r} & 0.
	\end{tikzcd}
	$$
For any element $(x,y) \in \bigwedge^{\alpha/\mu^t} M' \otimes \bigwedge^{\lambda^t/\alpha} M$, then the equation 
	$$\varphi (db_{\alpha}(x,y)) = db_{\alpha} (x, \wedge^{\lambda^t/\alpha}(v)(y))+ \big(\text{elements in ~} \sum\nolimits_{m > k} \sum\nolimits_{\tau \in P(m)} {\rm Im} (db_{\tau})\big)$$
in the proof of \cite[Theorem 1.4 (b), page 92]{Kou} implies that the our submodules $F_{k}(R, \alpha)=\varphi(\Schur_R^{\lambda/\mu}(M' \oplus M)_{\ge k})$ coincide with the submodules $\sum\nolimits_{m \ge k} \sum\nolimits_{\tau \in P(m)} {\rm Im} (db_{\tau}) \subseteq \Schur^{\lambda}_R(M)$ considered in {\em loc. cit.} (which are constructed by choosing a splitting of the map $v \colon M \to M''$, but shown to be independent of the choice a splitting in \cite{Kou}). In particular, by considering the split case, we have $F_{k}( R, \alpha) / F_{k+1}(R, \alpha) \simeq \Schur_R^{\lambda/\mu}(M', M'')_{(k,N-k)}$. It remains to show that the natural map 
	$\Schur_R^{\lambda/\mu}(M', M)_{(k,N-k)} \to F_{k}(R, \alpha) / F_{k+1}(R, \alpha)$
induced by $\varphi$ factorizes as 
	\begin{equation*} 
\Schur_R^{\lambda/\mu}(M', M)_{(k,N-k)} \xrightarrow{can.} 
	\Schur_R^{\lambda/\mu}(M', M'')_{(k,N-k)} \to F_{k}( R, \alpha) / F_{k+1}(R, \alpha)
	\end{equation*}
where the first map is the canonical projection, and the latter morphism is a functorial isomorphism of $R$-modules. To prove this assertion, it suffices to choose a splitting, where the desired result is again a consequence of the above equation. Assertion \eqref{thm:fil:Schur_oplus-2ii} is proved similarly. 
\end{proof}

\begin{remark} 
\label{rmk:fil:Schur_oplus:(k,N-k)}
The notation $\Schur_R^{\lambda/\mu}(M_1, M_2)_{(k,N-k)}$ (resp. $\Weyl_R^{\lambda/\mu}(M_1, M_2)_{(k,N-k)}$) in \eqref{thm:fil:Schur_oplus-1i} (resp. \eqref{thm:fil:Schur_oplus-1ii}) is intended to indicate that it is the component of $\Schur_R^{\lambda/\mu}(M_1 \oplus M_2)$  (resp. $\Weyl_R^{\lambda/\mu}(M_1 \oplus M_2)$) which has homogeneous degree $k$ on $M_1$ and $N-k$ on $M_2$; 
we expect these components are homogeneous in the sense of polynomial functor theory \cite{Bous}.
It is obvious from construction that the canonical equivalence $\Schur_R^{\lambda/\mu}(M_1 \oplus M_2) \simeq \Schur_R^{\lambda/\mu}(M_2 \oplus M_1)$ (resp. $\Weyl_R^{\lambda/\mu}(M_1 \oplus M_2) \simeq \Weyl_R^{\lambda/\mu}(M_2 \oplus M_1)$) induces canonical equivalences of summands $\Schur_R^{\lambda/\mu}(M_1, M_2)_{(k,N-k)} \simeq \Schur_R^{\lambda/\mu}(M_2, M_1)_{(N-k,k)}$ (resp. $\Weyl_R^{\lambda/\mu}(M_1, M_2)_{(k,N-k)} \simeq \Weyl_R^{\lambda/\mu}(M_2, M_1)_{(N-k,k)}$).
\end{remark}

The following result shows that the Schur and Weyl functors associated with skew partitions can be functorially built up from those associated with usual partitions.

\begin{theorem}[{\cite[Theorem 1.3]{Bo2}, \cite[Theorem 1.5 \& Theorem 2.6]{Kou}}]
\label{thm:fil:Schur_LR} 
Let $\lambda/\mu$ be a skew partition. For any partition $\gamma$, we let $c_{\mu, \gamma}^{\lambda}$ denote the {\em Littlewood--Richardson} number (see, for example, \cite[page 62, \S 5.1]{Ful}). Then $c_{\mu, \gamma}^{\lambda} \ne 0$ implies that $|\mu| + |\gamma| = |\lambda|$ and $\gamma \subseteq \lambda$. We let 
	$$\alpha^{0} < \alpha^{1} < \cdots < \alpha^{s-1}$$
denote the list of all the partitions $\alpha$ of size $|\lambda| - |\mu|$ such that $c_{\mu,\alpha}^{\lambda} \ne 0$ in the lexicographical order, and consider the sequence of partitions with each $\alpha^i$ repeated $c_{\mu,\alpha^i}^{\alpha}$-many times:
	$$(\tau^0, \tau^1, \ldots, \tau^{\ell-1}) : =(\underbrace{\alpha^0, \cdots, \alpha^0}_{c_{\mu,\alpha^0}^{\lambda} \,\text{terms}}, \underbrace{\alpha^1, \cdots, \alpha^1}_{c_{\mu,\alpha^1}^{\lambda} \,\text{terms}}, \cdots, \underbrace{\alpha^{s-1}, \cdots, \alpha^{s-1}}_{c_{\mu,\alpha^{s-1}}^{\lambda} \,\text{terms}}).$$
Here, $\ell: = \ell(\lambda/\mu) = \sum_{\gamma \subseteq \lambda} c_{\mu, \gamma}^{\lambda} = \sum_{i=1}^{s} c_{\mu,\alpha^i}^{\lambda}.$
The following statements are true:
\begin{enumerate}
	\item \label{thm:fil:Schur_LR-1} 
	For any $R \in {\rm CRing}$, there is a functorial filtration of the Schur functor $\Schur_R^{\lambda/\mu}(\blank) \colon \Mod_R^{?} \to \Mod_R^{?}$, where $? \in \{ \mathrm{ff}, \mathrm{fproj}\}$, by subfunctors
	$$0 = F_R^{-1} \subset F_R^{0} \subset  F_R^{1} \subset \cdots \subset F_R^{\ell-2} \subset F_R^{\ell-1} = \Schur_R^{\lambda/\mu}$$
and a functorial isomorphism for each $0 \le i \le \ell-1$,
	$$a^i_R \colon \Schur_R^{\tau^i} \xrightarrow{\sim} F_R^{i}/F_R^{i-1}.$$
	Moreover, the formations of the subfunctors $F_R^{i}$ and the isomorphisms $a^i_R$ commute with base change of commutative rings $R \to R'$. 
	\item \label{thm:fil:Schur_LR-2} 
	For any $R \in {\rm CRing}$, there is a functorial filtration of the Weyl functor $\Weyl_R^{\lambda/\mu}(\blank) \colon \Mod_R^{?} \to \Mod_R^{?}$, where $? \in \{ \mathrm{ff}, \mathrm{fproj}\}$, by subfunctors
	$$0 = G^R_{\ell} \subset G^R_{\ell-1} \subset \cdots \subset G^R_{2} \subset G^R_{1} \subset G^R_{0} = \Weyl_R^{\lambda/\mu}$$
and a functorial isomorphism for each $1 \le i \le \ell$,
	$$b_i^R \colon \Weyl_R^{\tau^i} \xrightarrow{\sim} G^R_{i}/G^R_{i+1}.$$
	Moreover, the formations of the subfunctors $G^R_{i}$ and the isomorphisms $b_i^R$ commute with base change of commutative rings $R \to R'$. 

\end{enumerate}
\end{theorem}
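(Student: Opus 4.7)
My plan is to derive this theorem from the direct-sum decomposition of Theorem \ref{thm:fil:Schur_oplus} by reducing to a universal case, identifying subquotients via the classical Littlewood--Richardson rule in characteristic zero together with flatness arguments. By Theorem \ref{thm:Schur:free}, both $\Schur^{\lambda/\mu}$ and each $\Schur^{\tau^i}$ preserve finite freeness and commute with base change of commutative rings, so it suffices to construct the filtration $F_\ZZ^\bullet$ and the equivalences $a_\ZZ^i$ universally over $R=\ZZ$ on $M=\ZZ^n$ for $n\ge |\lambda|$ (which ensures all relevant Schur modules are nonzero finite free $\ZZ$-modules); the case of arbitrary $(R,M)\in {\rm CRingMod}^?$ will then follow by $\blank\otimes_\ZZ R$, with filtration preservation guaranteed by the finite freeness of every subquotient.

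For the construction I would follow Kouwenhoven \cite{Kou} and argue by induction on $|\mu|$, with the base case $\mu=\emptyset$ trivial (a single layer $\Schur^\lambda$). For the inductive step, the key input is Theorem \ref{thm:fil:Schur_oplus}(2)(i): applying it to a suitable short exact sequence (e.g., $0\to M\to M\oplus M\to M\to 0$ via the diagonal, or to a sequence isolating a removable box from $\mu$) produces canonical natural transformations $\Schur^{\gamma/\mu}(M)\otimes \Schur^{\lambda/\gamma}(M)\to \Schur^{\lambda/\mu}(M)$ for all $\mu\subseteq\gamma\subseteq\lambda$, as well as a one-step filtration relating $\Schur^{\lambda/\mu}$ to $\Schur^{\lambda/\mu'}$ for smaller $\mu'$. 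Iterating along a chain $\emptyset=\mu^{(0)}\subsetneq \mu^{(1)}\subsetneq\cdots\subsetneq\mu$ and splicing with the inductively constructed filtrations, one obtains a filtration $F_\ZZ^\bullet$ of $\Schur^{\lambda/\mu}$ whose subquotients are direct sums of Schur functors on partitions $\tau$ of size $|\lambda|-|\mu|$, with layers ordered according to the prescribed lexicographic order on $\{\tau^i\}$. In characteristic zero, complete reducibility and the classical Littlewood--Richardson rule force the filtration to split into $\bigoplus_\tau(\Schur^\tau)^{\oplus c_{\mu,\tau}^\lambda}$, determining both the multiplicities and the identification of each graded piece with $\Schur^{\tau^i}$. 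Since every module in sight is finite free over $\ZZ$, any candidate natural transformation $\Schur^{\tau^i}(M)\to F_\ZZ^i/F_\ZZ^{i-1}$ built from Pieri-type products is an integral isomorphism iff it is an isomorphism after $\otimes\QQ$, so the equivalences $a_\ZZ^i$ lift canonically from characteristic zero. The Weyl-functor statement (2) is proved by the dual argument using Theorem \ref{thm:fil:Schur_oplus}(2)(ii).

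The hard part will be organizing the inductive construction to produce a \emph{canonical} filtration--not merely one whose subquotients have the right isomorphism class--compatible with the lexicographic ordering on $\{\tau^i\}$ and with arbitrary base change $R\to R'$. This demands a careful combinatorial bookkeeping, essentially a straightening law for skew tableaux as in Akin--Buchsbaum--Weyman \cite{ABW} and Boffi \cite{Bo2}: one must specify, layer by layer, an explicit natural transformation realizing each $a_R^i$ and check it is base-change equivariant. The risk is that an ad hoc construction may depend on a choice of chain $\mu^{(k)}$ or on a splitting; avoiding such choices requires either appealing to the functoriality already present in Theorem \ref{thm:fil:Schur_oplus}(2) or using Kouwenhoven's direct definition via images of composed multiplication/comultiplication maps, which gives a manifestly natural filtration at the cost of a lengthier combinatorial verification.
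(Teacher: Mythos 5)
Your proposal has two genuine gaps, one of which is fatal, and your route is also quite different from the paper's.

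The fatal gap is the characteristic-zero reduction. You claim that because every module in sight is finite free over $\ZZ$, a candidate natural transformation $\Schur^{\tau^i}(M)\to F_\ZZ^i/F_\ZZ^{i-1}$ is an integral isomorphism iff it becomes one after $\otimes\QQ$. This is false: multiplication by $2$ on $\ZZ$ is an isomorphism after $\otimes\QQ$ but not over $\ZZ$, and nothing rules such phenomena out for naturally defined Pieri-type maps. Establishing that a naturally defined map between Schur modules is an isomorphism over $\ZZ$ (equivalently, over $\FF_p$ for every $p$) is precisely the characteristic-free content of ABW/Boffi/Kouwenhoven and cannot be extracted from complete reducibility over $\QQ$. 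Your proposal therefore hides the entire hard part -- the straightening law and the standard-tableau basis over $\ZZ$ -- inside an invalid reduction. A secondary concern is the inductive step itself: Theorem~\ref{thm:fil:Schur_oplus} filters $\Schur^{\lambda/\mu}(M_1\oplus M_2)$ (or $\Schur^{\lambda/\mu}(M)$ for $M$ in a short exact sequence $0\to M'\to M\to M''\to 0$) with subquotients $\Schur^{\gamma/\mu}(M')\otimes\Schur^{\lambda/\gamma}(M'')$ involving \emph{different} modules in the two skew factors. It does not obviously yield a filtration of $\Schur^{\lambda/\mu}(M)$ itself with subquotients $\Schur^{\tau}(M)$ on the \emph{same} module; your suggestion to apply it to $M\xrightarrow{\rm diag} M\oplus M\to M$ or to a ``sequence isolating a removable box from $\mu$'' does not clearly achieve this, and you yourself flag that canonicity (independence of choices of chain $\mu^{(k)}$ and splittings) is an unresolved issue.

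The paper's proof is shorter and avoids both problems by outsourcing the hard content. It invokes Kouwenhoven's \cite[Theorem 1.5]{Kou} directly, which already constructs the characteristic-free filtration of $\Schur_R^{\lambda/\mu}$ with a canonical (basis-independent, base-change-compatible) indexing by a set $L$ of standard tableaux. The paper's only new contribution is the combinatorial identification: a bijection $\varphi\colon{\rm CST}^{\nu}(\lambda/\mu)\to{\rm CST}^{\lambda^t/\mu^t}(\nu^t)$ exhibits the indexing set $\shL\shR(\lambda/\mu;\gamma) := {\rm ST}^{\gamma^t}(\lambda/\mu)\cap\varphi^{-1}({\rm ST}^{\lambda^t/\mu^t}(\gamma))$ as a set of Littlewood--Richardson skew tableaux in the sense of \cite[\S 5.1]{Ful}, so that $\#\shL\shR(\lambda/\mu;\gamma)=c_{\mu,\gamma}^{\lambda}$. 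If you want to argue along your lines, you would need to replace the characteristic-zero step with an explicit integral verification (a standard monomial / straightening basis argument), at which point you are essentially reproving Kouwenhoven.
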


\begin{example}
\begin{enumerate}
	\item 
	Let $\lambda/\mu = (3,2,1)/(1) = \ytableausetup{smalltableaux}\ydiagram{1+2,2,1}$, then we have $\ell=3$ and 
	$$\tau^{0} =\ydiagram{2,2,1}=(2,2,1) < \tau^{1}=\ydiagram{3,1,1} = (3,1,1) < \tau^{2}= \ydiagram{3,2} = (3,2).$$
Then we obtain functorial filtrations with subquotients: 
	\begin{equation*}
				\begin{tikzcd} [back line/.style={dashed}, row sep=1 em, column sep=0.2 em]
	0 \ar{rr} 	& 	& F^{0} \ar[hook]{rr} \ar[equal]{ld}		&		&   F^{1} \ar[two heads]{ld} \ar[hook]{rr} 	&	&F^{2} = \Schur_R^{(3,2,1)/(1)}(\blank)  \ar[two heads]{ld}. \\
							& \Schur_R^{(2,2,1)}(\blank) \ar[dashed]{lu} &	& \Schur_R^{(3,1,1)}(\blank)  \ar[dashed]{lu}  & & \Schur_R^{(3,2)}(\blank) \ar[dashed]{lu}
				\end{tikzcd}
			\end{equation*}		
\begin{equation*}
				\begin{tikzcd} [back line/.style={dashed}, row sep=1 em, column sep=0.2 em]
	0 \ar{rr} 	& 	& G_{2} \ar[hook]{rr} \ar[equal]{ld}		&		&   G_{1}  \ar[two heads]{ld} \ar[hook]{rr} 	&	&G_{0}= \Weyl_R^{(3,2,1)/(1)}(\blank)  \ar[two heads]{ld}. \\
							& \Weyl_R^{(3,2)}(\blank) \ar[dashed]{lu} &	& \Weyl_R^{(3,1,1)}(\blank)  \ar[dashed]{lu}  & & \Weyl_R^{(2,2,1)}(\blank) \ar[dashed]{lu}
				\end{tikzcd}
			\end{equation*}	
		\item 
		Let $\lambda/\mu = (2,2,1,1)/(1,1) = \ydiagram{1+1,1+1,1,1}$, then we have $\ell =3$, and 
$$
	\tau^0 =\ydiagram{1,1,1,1} = (1^4) <\tau^1 = \ydiagram{2,1,1} = (2,1,1) < \tau^2 = \ydiagram{2,2} = (2,2).
	$$
	In this case, $\Schur_R^{(2,2,1,1)/(1,1)} (M)= \Weyl_R^{(2,2,1,1)/(1,1)} (M)=\bigwedge\nolimits_R^2 (M) \otimes \bigwedge\nolimits_R^2 (M)$ for all $M \in \Mod_R^?$, and the theorem implies there are functorial filtrations with subquotients: 
	\begin{equation*}
				\begin{tikzcd} [back line/.style={dashed}, row sep=1 em, column sep=0.2 em]
	0 \ar{rr} 	& 	& F^{0} \ar[hook]{rr} \ar[equal]{ld}		&		&   F^{1} \ar[two heads]{ld} \ar[hook]{rr} 	&	&F^{2} =(\bigwedge\nolimits_R^2 \otimes \bigwedge\nolimits_R^2 )(\blank) \ar[two heads]{ld}. \\
							&\Schur_R^{(1^4)}= \bigwedge\nolimits_R^4(\blank) \ar[dashed]{lu} &	& \Schur_R^{(2,1,1)} (\blank)\ar[dashed]{lu}  & & \Schur_R^{(2,2)}(\blank) \ar[dashed]{lu}
				\end{tikzcd}
			\end{equation*}		
	\begin{equation*}
				\begin{tikzcd} [back line/.style={dashed}, row sep=1 em, column sep=0.2 em]
	0 \ar{rr} 	& 	& G_{2} \ar[hook]{rr} \ar[equal]{ld}		&		&   G_{1} \ar[two heads]{ld} \ar[hook]{rr} 	&	&G_{0} = (\bigwedge\nolimits_R^2 \otimes \bigwedge\nolimits_R^2)(\blank)  \ar[two heads]{ld}. \\
							&\Weyl_R^{(2,2)}(\blank) \ar[dashed]{lu} &	& \Weyl_R^{(2,1,1)}(\blank)  \ar[dashed]{lu}  & & \Weyl_R^{(1^4)}=\bigwedge\nolimits_R^{4}(\blank) \ar[dashed]{lu}
				\end{tikzcd}
			\end{equation*}		
\end{enumerate}
\end{example}

\begin{proof}[Proof of Theorem \ref{thm:fil:Schur_LR}]
It suffices to establish a canonical order-preserving bijection between the sequence consists of the transposes of the underlying Young diagrams of the Young tableaux $T \in L$ constructed in \cite[Theorem 1.5]{Kou} and the above sequence of Young diagrams $\tau^i$. 

To construct the desired bijection, we first review the constructions in \cite[Theorem 1.5]{Kou}. Let $\mu,\nu \subseteq \lambda$ be partitions such that $|\mu| + |\nu| = |\lambda|$. Let us consider the following bijection:
	$$\varphi \colon {\rm CST}^{\nu}(\lambda/\mu) \xrightarrow{\sim} {\rm CST}^{\lambda^t/\mu^t}(\nu^t).$$
Here, ${\rm CST}^{\nu}(\lambda/\mu)$ denotes the set of column-standard tableaux $T$ with shape $\nu$ and content $\lambda/\mu$, that is, each column of $T$ is non-decreasing from top to bottom, $T$ has underlying Young diagram $\nu$, and the entries of $T$ contain exactly $(\lambda_1-\mu_1)$ $1$'s, $(\lambda_2-\mu_2)$ $2$'s, etc. The set ${\rm CST}^{\lambda^t/\mu^t}(\nu^t)$ is defined similarly. Then the map $\varphi$ and its inverse $\varphi^{-1}$ are defined as follows:
\begin{itemize}
	\item Given $T \in {\rm CST}^{\nu}(\lambda/\mu)$. Then $Y = \varphi(T)$ is the tableau such that, for each $j$, $1 \le j \le \lambda_1^t$, the $j$th column of $Y$, $Y(*,j): = (Y(\mu_j+1, j) \le  Y(\mu_j+2, j) \le \cdots \le Y(\lambda_j, j))$, is precisely the sequence of the column-indices (with multiplicities) for which the number ``$j$" occurs as an entry of $T$.
	\item Conversely, for each $Y \in {\rm CST}^{\lambda^t/\mu^t}(\nu^t)$, the tableau $T = \varphi^{-1}(Y)$ is such that its $j$th column, $(T(1,j) \le T(2,j) \le \cdots \le T(\nu_j^t, j))$ (where $1 \le j \le \nu_1$), is precisely the sequence of the column-indices (with multiplicities) for which the number ``$j$" occurs as an entry of $Y$.
\end{itemize}
Next, we let $\gamma = \nu^t$, and consider the following set of tableaux
	$$\shL\shR(\lambda/\nu; \gamma): = {\rm ST}^{\gamma^t}(\lambda/\mu) \cap \varphi^{-1}({\rm ST}^{\lambda^t/\mu^t}(\gamma)).$$
The bijection $\varphi$ induces a bijection
	$\shL\shR(\lambda/\mu; \gamma) \simeq \varphi(\shL\shR(\lambda/\mu;  \gamma)).$
Unwinding the definitions, we see that the set $\shL\shR(\lambda/\mu; \gamma)$ is the set $L$ of tableaux considered in \cite[Theorem 1.5]{Kou} (after change of notations), and the transposes of the tableaux in $\varphi(\shL\shR(\lambda/\mu; \gamma))$ are precisely the {\em Littlewood--Richardson skew tableaux on the skew shape $\lambda/\mu$ with content $\gamma$} defined in \cite[page 64, Chapter 5, Proposition 3]{Ful}. In particular, \textit{loc. cit.} implies that 
	$$\# \shL\shR(\lambda/\mu; \gamma) = c_{\mu, \gamma}^{\lambda}$$
is the Littlewood--Richardson number. Hence assertion \eqref{thm:fil:Schur_LR-1} is proved. Assertion \eqref{thm:fil:Schur_LR-2} is proved similarly.
\end{proof}

\subsection{Schur Complexes}
 \label{sec:bSchur}
This subsection reviews the theory of Schur complexes \cite{ABW}. Additional to Notation \ref{notation:CRingMod}, we introduce the following notations:

\begin{notation}
\label{notation:CRingCh}
\begin{enumerate}[leftmargin=*]
	\item For any commutative ring $R$, we let $\Ch^{\heartsuit}(R) = \Ch(\Mod_R^\heartsuit)$ denote the abelian category of chain complexes of (discrete) $R$-modules $F_* = (\cdots \to F_{i} \xrightarrow{d_i} F_{i-1} \to \cdots)$, and for any pair integers $a \le b$, we let $\Ch_{[a,b]}^{\heartsuit}(R) \subseteq \Ch^{\heartsuit}(R)$ denote the abelian subcategory spanned by the chain complexes $F_*$ which satisfy $F_i =0$ for $i \notin [a,b]$. 
	 We also consider the following sequence of subcategories:
	$$\Ch_{[a,b]}^{\rm ff}(R) \subseteq \Ch_{[a,b]}^{\rm fproj}(R) 
	\subseteq \Ch_{[a,b]}^{\flat}(R) \subseteq \Ch_{[a,b]}^\heartsuit(R),$$
where $\Ch_{[a,b]}^{\rm ff}(R)$ (resp. $\Ch_{[a,b]}^{\rm fproj}(R)$, resp. $\Ch_{[a,b]}^{\flat}(R)$) denote the full subcategory of $\Ch_{[a,b]}^\heartsuit(R)$ spanned by those chain complexes $F_*$ whose $i$th terms $F_i$ are {\em finite free} (resp. {\em finite projective}, resp. {\em flat}) $R$-modules for all $i$. 

	\item For any $? \in \{{\rm ff}, {\rm fproj}, \flat, \heartsuit\}$, we can canonically identify the category $\Ch_{[0,1]}^?(R)$ of two-term chain complexes $F_* = (F_1 \xrightarrow{d_1} F_0)$, where $F_i \in \Mod_R^?$, with the category $\Fun(\Delta^{1}, \Mod_R^?)$ of morphisms $\rho \colon M' \to M$ of the category $\Mod_R^?$, by setting $F_1= M'$, $F_0 = M$, and $d_1 = \rho$. In what follows, we will always make this identification. 
	\item For any $? \in \{{\rm ff}, {\rm fproj}, \flat, \heartsuit\}$ and any pair of integers $a \le b$, we let ${\rm CRingCh}_{[a,b]}^{?}$ denote the (ordinary) category of pairs $(R, F_*)$, where $R \in {\rm CRing}$ and $F_* \in \Ch_{[a,b]}^{?}(R)$. A morphism $(R,F_*) \to (R',F_*')$ in ${\rm CRingCh}_{[a,b]}^{?}$ is given by a  pair $(R \to R', F_* \to F_*')$, where $R \to R'$ is a morphism of commutative rings, and $F_* \to F_*'$ is a morphism of chain complexes of $R$-modules, where $F_*'$ is regarded as a $R$-module chain complex via restrictions of scalars along $R \to R'$. Then the natural forgetful functor
		$$V \colon {\rm CRingCh}_{[a,b]}^{?} \to {\rm CRing}, \qquad (R,F_*) \mapsto R$$
	is a {Grothendieck opfibration} which classifies the functor $\Ch_{[a,b]}^?(\blank) \colon {\rm CRing} \to \Cat, R \mapsto \Ch_{[a,b]}^?(R)$. A morphism $(R, F_*) \to (R',F_*')$ is $V$-coCartesian if and only if the natural chain map of $R'$-modules $R' \otimes_R F_* \to F_*'$ is an isomorphism. 
\end{enumerate}
\end{notation}

\begin{definition}[{Schur Complexes; \cite{ABW}}] 
\label{def:bSchur}
Let $\lambda/\mu$ be a skew partition, we define a functor 
	$$\bSchur^{\lambda/\mu}(\blank) \colon {\rm CRingCh}_{[0,1]}^{\rm fproj} \to {\rm CRingCh}_{[0, \infty]}^\heartsuit,  \qquad
	(R, \rho \colon M' \to M) \mapsto (R, \bSchur_R^{\lambda/\mu}(\rho \colon M' \to M))$$
as follows. For any $R \in {\rm CRing}$ and a two-term complex $\rho \colon M' \to M$ of finite projective $R$-modules, we consider the following composite map
	$$d_{\lambda^t/\mu^t}(R; \rho)  \colon \bigotimes_{j \in [1, \lambda_1]} \bwedge^ {\lambda_j^t - \mu_j^t} (\rho) \xrightarrow{\otimes_j \Delta'_j } \bigotimes_{(i,j) \in \lambda/\mu} \rho(i,j) \xrightarrow{\otimes_i m_i } \bigotimes_{i \in [1, \lambda_1^t]} \bSym^{\lambda_i-\mu_i} (\rho) $$
where $\rho(i,j)$ denotes a copy of the complex $M' \xrightarrow{\rho} M$ labeled by $(i,j) \in \mathrm{Y}(\lambda/\mu)$, $\Delta'_j$ is the $(\lambda_j^t-\mu_j^t)$-fold comultiplication map of the exterior complexes, and $m_i$ is the $(\lambda_i-\mu_i)$-fold multiplication map of the symmetric complexes. We let $\bSchur_R^{\lambda/\mu}(\rho \colon M' \to M)$ denote the image of the map $d_{\lambda^t/\mu^t}(R; \rho)$ in the abelian category ${\rm CRingCh}_{[0, \infty]}^\heartsuit$, and refer to $\bSchur_R^{\lambda/\mu}(\rho \colon M' \to M)$ as the {\em Schur complex} associated with $\rho \colon M' \to M$ and the skew partition $\lambda/\mu$. 
\end{definition}

\begin{remark}[Schur Complexes as Cokernels]
\label{rmk:bSchur:coker}
Similar to Definition \ref{def:SchurWeyl}, the Schur complexes can be defined as the cokernel of a natural map $\square_{\lambda^t/\mu^t} \colon \widetilde{\bwedge}_\cl^{\lambda^t/\mu^t}(\rho) \to \bwedge_\cl^{\lambda^t/\mu^t}(\rho)$ between tensor products of exterior complexes; see \cite{ABW} for details. In particular, this definition allows us to extend the definition of Schur complex to the whole category ${\rm CRingCh}_{[0,1]}^\heartsuit$:
	$$\bSchur^{\lambda/\mu}(\blank) \colon {\rm CRingCh}_{[0,1]}^\heartsuit \to {\rm CRingCh}_{[0, \infty]}^\heartsuit$$
	$$(R, \rho \colon M' \to M) \mapsto (R, \bSchur_R^{\lambda/\mu}(\rho \colon M' \to M): = \Coker(\square_{\lambda^t/\mu^t})).$$
However, since we will almost only use Schur complexes in the case where $\rho$ is a morphism between finite free modules, we leave the details of this construction to readers. 
\end{remark}

\begin{example} Let $R \in {\rm CRing}$, let $\rho \colon M' \to M$ be a morphism of finite projective $R$-modules, and let $\lambda/\mu$ be a skew partition.
\begin{enumerate}[label=(\roman*), leftmargin=2 em]
	\item If $M'=0$, then $\bSchur_R^{\lambda/\mu}(\rho \colon 0 \to M) = \Schur_R^{\lambda/\mu}(M)$ consists of a single Schur module $\Schur_R^{\lambda/\mu}(M)$ of $M$ placed at homological degree $0$.
	\item If $M= 0$, then $\bSchur_R^{\lambda/\mu}(\rho \colon M' \to 0) = \Weyl_R^{\lambda^t/\mu^t}(M')[|\lambda| - |\mu|]$ consist of a single Weyl module $\Weyl_R^{\lambda^t/\mu^t}(M')$ of $M'$ placed at homological degree $|\lambda| - |\mu|$.
	\item If $\lambda = (1^n)$, then $\bSchur_R^{(1^n)}(\rho \colon M' \to M) = \bwedge_R^n(\rho)$ is the exterior complex.
	\item $\lambda = (n)$, then $\bSchur^{(n)}(\rho \colon M' \to M) = \bSym_R^n(\rho)$ is the symmetric complex. 
	\end{enumerate}
\end{example}

\begin{theorem}[Universal Freeness of Schur complexes; {\cite[Theorem V.1.10]{ABW}}]
\label{thm:bSchur:free}
\begin{enumerate}
	\item The Schur complex functor $\bSchur^{\lambda/\mu}(\blank)$ defines a  functor 
		$$\bSchur^{\lambda/\mu}(\blank) \colon {\rm CRingCh}_{[0,1]}^{?} \to {\rm CRingCh}_{[0, |\lambda| - |\mu|]}^{?}$$ 	
	for any $? \in \{{\rm ff}, {\rm fproj}\}$. In other words, for any $R \in {\rm CRing}$ if $\rho \colon M' \to M$ is a morphism between finite free (resp. finite projective) $R$-modules, then $\bSchur^{\lambda/\mu}_R(\rho \colon M' \to M)$ is a finite chain complex of length $\le |\lambda| - |\mu|$ whose terms are given by 
	finite free (resp. finite projective) $R$-modules.
	\item For any $? \in \{{\rm ff}, {\rm fproj}\}$, the following diagram commutes:
		$$
	\begin{tikzcd}[column sep=3.5 em]
		{\rm CRingCh}_{[0,1]}^{?} \ar{r}{\bSchur^{\lambda/\mu}(\blank)} \ar{d}{V} & {\rm CRingCh}_{[0,|\lambda| - |\mu|]}^{?} \ar{d}{V} \\
		{\rm CRing} \ar{r}{\id} & {\rm CRing}
	\end{tikzcd}
	$$
Moreover, $\bSchur^{\lambda/\mu}(\blank)$ carries $V$-coCartesian morphisms to $V$-coCartesian morphisms. In other words, for any base change of commutative rings $R \to R'$ and any morphism $\rho \colon M' \to M$ between finite free (resp. finite projective) $R$-modules, the canonical map
		$$\alpha_{\rho} \colon R' \otimes_R \bSchur_R^{\lambda/\mu}(\rho \colon M' \to M) \to \bSchur_{R'}^{\lambda/\mu}(\id_{R'} \otimes \rho \colon R' \otimes_R M' \to R' \otimes_R M)$$
is an isomorphism of chain complexes of finite free (resp. finite projective) $R'$-modules. 
\end{enumerate}
\end{theorem}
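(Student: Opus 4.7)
The plan is to leverage the cokernel description of Schur complexes (Remark \ref{rmk:bSchur:coker}) together with a reduction to the universal case over $\ZZ$. First, observe that for any $R \in {\rm CRing}$ and any $\rho \colon M' \to M$ between finite free (resp.\ finite projective) $R$-modules, the source $\widetilde{\bwedge}_\cl^{\lambda^t/\mu^t}(\rho)$ and target $\bwedge_\cl^{\lambda^t/\mu^t}(\rho)$ of the defining map $\square_{\lambda^t/\mu^t}$ are finite chain complexes each term of which is a tensor product of divided powers of $M'$ with exterior powers of $M$. Such terms are manifestly finite free (resp.\ finite projective), concentrated in degrees $[0, |\lambda|-|\mu|]$, and their formation commutes with arbitrary base change $R \to R'$ (by the base-change properties of tensor, exterior, and divided powers over commutative rings).

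The heart of the argument is therefore to establish that each term of the cokernel $\bSchur_R^{\lambda/\mu}(\rho) = \Coker(\square_{\lambda^t/\mu^t})$ is itself a finite free (resp.\ finite projective) $R$-module. Once this is in hand, assertion (2) follows automatically: base change is right exact and hence commutes with the cokernel, and the claim that the canonical map $\alpha_\rho$ is an isomorphism reduces to checking it on each degree, where both sides are computed as the same cokernel after base change. I would carry out the freeness claim by a two-step reduction: finite projective modules are Zariski-locally free, which (using that the construction is compatible with Zariski localization, a consequence of the base-change statement on source and target) reduces the finite projective case to the finite free case; and any morphism $\rho \colon M' \to M$ between finite free $R$-modules of ranks $(n', n)$ is canonically the base change of a universal morphism $\rho_{\ZZ} \colon \ZZ^{n'} \to \ZZ^n$ along $\ZZ \to R$ (once bases are fixed).

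It therefore suffices to prove the theorem in the universal case $R = \ZZ$ with $\rho_{\ZZ}$ any morphism of finite free $\ZZ$-modules. Here the main input is the \emph{Standard Basis Theorem} for Schur complexes of \cite[\S II.3 and \S V.1]{ABW}, which produces an explicit $\ZZ$-basis of $\bSchur_\ZZ^{\lambda/\mu}(\rho_{\ZZ})$ in each degree, indexed by standard (double) skew tableaux of shape $\lambda/\mu$ whose entries come from the chosen bases of the source and target of $\rho_{\ZZ}$. This simultaneously exhibits freeness over $\ZZ$ and determines the rank of each term. The construction of this basis, via the straightening algorithm and the verification that the resulting elements span and are $\ZZ$-linearly independent, is the main technical obstacle; I would simply cite \cite{ABW} for this input rather than redo it.

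Finally, I would assemble the pieces: freeness of each term of $\bSchur_\ZZ^{\lambda/\mu}(\rho_{\ZZ})$ implies, via base change along $\ZZ \to R$, that each term of $\bSchur_R^{\lambda/\mu}(\rho)$ is finite free of the correct rank whenever $\rho$ is a morphism of finite free $R$-modules, and hence finite projective whenever $\rho$ is a morphism of finite projective $R$-modules (by the Zariski-local reduction). Combined with the right-exactness argument above, this establishes both the functoriality statement (1) and the preservation of $V$-coCartesian morphisms in (2), completing the proof.
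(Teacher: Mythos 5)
The paper does not actually prove this theorem; it is imported wholesale from \cite[Theorem V.1.10]{ABW}, so there is no internal proof to compare against. Evaluating your sketch on its own merits, the central reduction step is broken. You claim that ``any morphism $\rho \colon M' \to M$ between finite free $R$-modules of ranks $(n',n)$ is canonically the base change of a universal morphism $\rho_{\ZZ} \colon \ZZ^{n'} \to \ZZ^n$ along $\ZZ \to R$.'' This is false: a morphism $R^{n'} \to R^n$ is an $n \times n'$ matrix with entries in $R$, and it is the base change of a $\ZZ$-matrix if and only if those entries lie in the image of $\ZZ \to R$. (Take $R = \QQ[t]$ and $\rho = $ multiplication by $t$ on $R^1$.) The genuine universal object lives over the polynomial ring $\ZZ[x_{ij}]$ on the generic matrix entries, not over $\ZZ$, and so the reduction to $\ZZ$ does not go through in the form you state.

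The fix uses an observation that you should make explicit and that the paper itself records as Corollary~\ref{cor:bSchurk}: the \emph{terms} of the Schur complex depend only on the underlying pair of modules $(M',M)$ and not on the map $\rho$, because the shuffle map $\square_{\lambda^t/\mu^t}$ in each homological degree is built purely from the multiplication and comultiplication of $\Gamma^\ast(M')$ and $\bigwedge^\ast(M)$ (only the differentials of the exterior complexes use $\rho$). For assertion~(1) one can therefore replace $\rho$ by the zero morphism, and $0 \colon R^{n'} \to R^n$ genuinely is base-changed from $0 \colon \ZZ^{n'} \to \ZZ^n$; citing the Standard Basis Theorem for Schur complexes over $\ZZ$ then gives freeness of the terms, and your right-exactness argument for assertion~(2) is sound once this is in hand. (Alternatively, reduce to the generic matrix over $\ZZ[x_{ij}]$, where the Standard Basis Theorem also applies since it is characteristic-free.) One further small point: your reduction from the finite projective case to the finite free case by Zariski localization is correct in spirit, but to conclude projectivity you should also note that each term of $\bSchur_R^{\lambda/\mu}(\rho)$ is a cokernel of a map of finite free modules and hence finitely presented, so Zariski-local freeness (equivalently, flatness) does indeed yield finite projectivity; alternatively one can argue globally via the direct-sum decomposition by embedding a projective module as a direct summand of a finite free one.
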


In the situation of Definition \ref{def:bSchur}, for any $R \in {\rm CRing}$ and a morphism $\rho \colon M' \to M$ in $\Mod_R^{?}$, where $? \in \{\mathrm{ff}, \mathrm{fproj}\}$, any $k \ge 0$, we denote the $k$th term of the Schur complex $\bSchur_R^{\lambda/\mu} (\rho \colon M' \to M)$ by $\bSchur_R^{\lambda/\mu} (\rho \colon M' \to M)_k$. Then $\bSchur_R^{\lambda/\mu} (\rho \colon M' \to M)_k \in \Mod_R^{?}$ and 
$\bSchur_R^{\lambda/\mu} (\rho \colon M' \to M)_k = 0$ if $k \notin [0, |\lambda| - |\mu|]$. Moreover, for any $0 \le k \le |\lambda| - |\mu|$, the module $\bSchur^{\lambda/\mu} (R;M' \xrightarrow{\rho} M)_k$ does not depend on the morphism $\rho \in \Hom_R(M',M)$. Hence we obtain:

\begin{corollary}
\label{cor:bSchurk}
For $? \in \{\mathrm{ff}, \mathrm{fproj}\}$, the $k$th component of Schur complex defines a functor	
	$$\bSchur^{\lambda/\mu}(\blank, \blank)_k \colon {\rm CRingMod}^? \times_{\rm CRing} {\rm CRingMod}^?  \to {\rm CRingMod}^? $$
	$$ (R, M, M') \mapsto  (R, \bSchur_R^{\lambda/\mu}(M, M')_k)$$
where, for any fixed $R$, the module $\bSchur_R^{\lambda/\mu}(M, M')_k$ can be  defined as the $k$th component $\bSchur_R^{\lambda/\mu} (\rho \colon M' \to M)_k \in \Mod_R^?$ of the Schur complex $\bSchur_R^{\lambda/\mu}(\rho)$ for any choice of a morphism $\rho \in \Hom_R(M',M)$; a canonical choice is $\rho=0$. Moreover, there is a commutative diagram
	$$
	\begin{tikzcd}[column sep = 5 em]
		{\rm CRingMod}^? \times_{\rm CRing} {\rm CRingMod}^? \ar{r}{\bSchur^{\lambda/\mu}(\blank, \blank)_k} \ar{d}[swap]{W^? = U^? \times_{\Id_{\rm CRing}} U^?} & {\rm CRingMod}^? \ar{d}{U^?} \\
		{\rm CRing} \ar{r}{\id} & {\rm CRing}
	\end{tikzcd}
	$$
in which the functor $\bSchur^{\lambda/\mu}(\blank, \blank)_k$ carries $W^?$-coCartesian morphisms to $U^?$-coCartesian morphisms. In other words, for any base change of commutative rings $R \to R'$ and any pair of finite free (resp. finite projective) $R$-modules $(M',M)$, the canonical map
		$$R' \otimes_R \bSchur_R^{\lambda/\mu}(M',M)_k \to \bSchur_{R'}^{\lambda/\mu}(R' \otimes_R M', R' \otimes_R M)_k$$
is an isomorphism of finite free (resp. finite projective) $R'$-modules. 
\end{corollary}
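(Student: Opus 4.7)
The corollary asserts three things: (i) the $k$th term of $\bSchur^{\lambda/\mu}_R(\rho)$ depends on the morphism $\rho \colon M' \to M$ only through its source and target, (ii) the resulting assignment is functorial in the triple $(R,M',M)$, and (iii) it is compatible with base change of $R$. The plan is to prove (i) first as the main content; once it is in hand, (ii) and (iii) will follow formally from Theorem~\ref{thm:bSchur:free} applied to the zero-morphism representative.

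For (i), I would work from the cokernel presentation of the Schur complex in Remark~\ref{rmk:bSchur:coker}, namely $\bSchur_R^{\lambda/\mu}(\rho) = \Coker\bigl(\square_{\lambda^t/\mu^t}(\rho) \colon \widetilde{\bwedge}_{\cl}^{\lambda^t/\mu^t}(\rho) \to \bwedge_{\cl}^{\lambda^t/\mu^t}(\rho)\bigr)$, where both the source and target are tensor products of the exterior complexes $\bwedge_R^{p_j}(\rho)$, and the morphism is assembled out of their comultiplications and multiplications. Unwinding the definitions of \S\ref{sec:classical_sym}, the $k$th term of $\bwedge_R^{n}(\rho)$ is the $R$-module $\Gamma_R^{n-k}(M') \otimes_R \bigwedge_R^{k}(M)$, which manifestly does not involve $\rho$; and in each fixed homological degree the multiplication $\bwedge^p(\rho) \otimes \bwedge^q(\rho) \to \bwedge^{p+q}(\rho)$ and comultiplication $\bwedge^{p+q}(\rho) \to \bwedge^p(\rho) \otimes \bwedge^q(\rho)$ are prescribed by pure Hopf-algebra operations on $\Gamma_R^\bullet(M')$ and $\bigwedge_R^\bullet(M)$ that again make no use of $\rho$, since $\rho$ enters only through the internal differentials of the exterior complexes, not through the horizontal maps between modules of matching total degree. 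Consequently, in each homological degree $k$ the morphism $\square_{\lambda^t/\mu^t}(\rho)_k$ depends only on $(R,M',M)$, and so does its cokernel, which is $\bSchur_R^{\lambda/\mu}(\rho)_k$. I would then take $\rho = 0$ as a canonical choice and set $\bSchur_R^{\lambda/\mu}(M,M')_k := \bSchur_R^{\lambda/\mu}(0 \colon M' \to M)_k$.

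For functoriality (ii), a morphism $(R,M',M) \to (R',N',N)$ in ${\rm CRingMod}^? \times_{{\rm CRing}} {\rm CRingMod}^?$ trivially induces a morphism of two-term chain complexes $(0 \colon M' \to M) \to (0 \colon N' \to N)$ in ${\rm CRingCh}_{[0,1]}^?$ (the zero map commutes with any pair of module maps), to which we apply the functor $\bSchur^{\lambda/\mu}(\blank)$ of Theorem~\ref{thm:bSchur:free} and then extract the $k$th component. For base change (iii), one applies the $V$-coCartesian preservation clause of Theorem~\ref{thm:bSchur:free} to the zero-map representative: the map $R' \otimes_R \bSchur_R^{\lambda/\mu}(0 \colon M' \to M) \to \bSchur_{R'}^{\lambda/\mu}(0 \colon R'\otimes_R M' \to R'\otimes_R M)$ is an isomorphism of chain complexes of $R'$-modules, and passing to the $k$th component yields the desired isomorphism. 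The only genuine obstacle is the bookkeeping in step (i): one has to verify that every component of each iterated comultiplication/multiplication making up $\square_{\lambda^t/\mu^t}(\rho)_k$ is simultaneously free of $\rho$. This is mechanical unpacking of the formulas in \S\ref{sec:classical_sym} and \S\ref{sec:bSchur} rather than a conceptual difficulty, but it is the place where care is required.
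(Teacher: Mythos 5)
Your proof is correct and follows essentially the same route as the paper: you establish that the $k$th term of $\bSchur^{\lambda/\mu}_R(\rho)$ is independent of $\rho$ (because the terms, and the degree-fixed components of the (co)multiplication maps, of the exterior complexes are built solely from $(M,M')$, with $\rho$ entering only through the internal differentials), after which functoriality and base-change fall out of Theorem~\ref{thm:bSchur:free} applied to the zero-morphism representative, exactly as the paper does. One small indexing slip: with the paper's homological convention the $k$th term of $\bwedge_R^n(\rho)$ is $\Gamma_R^{k}(M') \otimes_R \bigwedge_R^{n-k}(M)$, not $\Gamma_R^{n-k}(M') \otimes_R \bigwedge_R^{k}(M)$, but this does not affect the argument.
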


The next theorem shows that the $k$th components of Schur complexes can be constructed from Schur and Weyl modules through iterated extensions. 

\begin{theorem}[{\cite[Corollary V.1.14 ]{ABW}, \cite[Theorem 2.4.10]{Wey}}]
\label{thm:fil:bSchur_k}
Let $\lambda/\mu$ be a skew partition, for any fixed commutative ring $R$, let 
	$$ \bSchur^{\lambda/\mu}(\blank, \blank)_k \colon \Mod_R^{\rm ff} \times \Mod_R^{\rm ff}  \to \Mod_R^{\rm ff}, \qquad (M, M') \mapsto  \bSchur^{\lambda/\mu} (R;M' \xrightarrow{0} M)_k$$
denote the functor of taking $k$th components of Schur complexes considered in Corollary \ref{cor:bSchurk}.
\begin{enumerate}
	\item
	\label{thm:fil:bSchur_k-1}
	Let $\gamma^{0}_{(k)} <   \gamma^{1}_{(k)} < \cdots < \gamma^{\ell_k-1}_{(k)}$
be all the partitions in $I_k (\lambda/\mu) = \{\gamma \mid \mu \subseteq \gamma \subseteq  \lambda, |\gamma| - |\mu|=k\}$ listed in lexicographic order, where $\ell_k = |I_k(\lambda/\mu)|$ is the cardinality. Then $\bSchur^{\lambda/\mu}(\blank, \blank)_k$ admits a canonical filtration by subfunctors
			$$0 = P_A^{-1,(k)}\subset P_R^{0,(k)} \subset P_R^{1,(k)} \subset \cdots \subset P_R^{\ell_k-2,(k)} \subset P_R^{\ell_k-1,(k)} = \bSchur_R^{\lambda/\mu}(\blank, \blank)_k,$$
		and for each $0 \le i \le \ell_k-1$, there is a canonical natural isomorphism of functors
			$$\Schur_R^{\lambda/\gamma^i} (\blank) \otimes \Weyl^{(\gamma^i)^t/\mu^t}(\blank) \xrightarrow{\sim} P_R^{i,(k)}/ P_R^{i-1,(k)} \colon \Mod_R^{\rm ff} \times \Mod_R^{\rm ff}  \to \Mod_R^{\rm ff}.$$
		Furthermore, the formations of the functors $P_R^{i,(k)}$ and the above equivalences commute with base change of ordinary commutative rings $R \to R'$.
	\item 
	\label{thm:fil:bSchur_k-2}
	Let 
			$\nu_{(k)}^{0} < \nu_{(k)}^{1} < \cdots < \nu_{(k)}^{m_k-1}$
		be all the partitions in $J_k (\lambda/\mu) = \{\nu \mid \mu \subseteq \nu \subseteq \lambda, |\lambda| - |\nu| = k\}$ listed in lexicographic order, where $m_k = |J_k(\lambda/\mu)|$. Then $\bSchur_R^{\lambda/\mu}(\blank, \blank)_k$ admits a canonical filtration by subfunctors
			$$0 = Q_R^{-1,(k)} \subset Q_R^{1,(k)}  \subset Q_R^{2,(k)} \subset \cdots \subset Q_R^{m_k-2,(k)}  \subset Q_R^{m_k-1,(k)}= \bSchur^{\lambda/\mu}(\blank, \blank)_k,$$
		and for each $0 \le i \le m_k-1$, there is a functorial equivalence of functors
			$$\Schur_R^{\nu^i/\mu} (\blank) \otimes \Weyl_R^{\lambda^t/(\nu^i)^t}(\blank) \xrightarrow{\sim} Q_R^{i,(k)} /Q_R^{i-1,(k)} \colon \Mod_R^{\rm ff} \times \Mod_R^{\rm ff}  \to \Mod_R^{\rm ff}.$$
		Furthermore, the formations of the functors $Q_R^{i,(k)}$ and the above equivalences commute with base change of ordinary commutative rings $R \to R'$.
		\end{enumerate}
\end{theorem}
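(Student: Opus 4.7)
My plan is to deduce both filtrations from canonical filtrations of the full Schur complex $\bSchur_R^{\lambda/\mu}(\rho\colon M'\to M)$ and then restrict to homological degree $k$. This reduces the theorem to the classical filtration statements for Schur complexes due to Akin--Buchsbaum--Weyman \cite[Corollary V.1.14]{ABW} and Weyman \cite[Theorem 2.4.10]{Wey}, supplemented with the base-change compatibility needed here.

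First, I invoke the canonical ``outer'' filtration of $\bSchur_R^{\lambda/\mu}(\rho)$ by subcomplexes of $R$-modules, indexed by partitions $\mu\subseteq\nu\subseteq\lambda$ in a lexicographic order, whose successive quotients are canonically identified with $\Schur_R^{\lambda/\nu}(M)\otimes_R \Weyl_R^{\nu^t/\mu^t}(M')$. A dual construction---obtained by reversing the order, using the duality $\Schur^{\sigma}(E^\vee)\simeq \Weyl^{\sigma}(E)^\vee$ in the finite-projective case, or directly by running the Akin--Buchsbaum--Weyman construction with the roles of the inner and outer skew shapes interchanged in the map $d_{\lambda^t/\mu^t}$ of Definition \ref{def:bSchur}---produces a second canonical filtration with successive quotients $\Schur_R^{\nu/\mu}(M)\otimes_R \Weyl_R^{\lambda^t/\nu^t}(M')$, again indexed by $\mu\subseteq\nu\subseteq\lambda$. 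Both filtrations are built from the Hopf-algebra-style comultiplications, multiplications, and image formation on exterior, symmetric, and divided power algebras, so their formation commutes with arbitrary base change by Theorems \ref{thm:Schur:free} and \ref{thm:bSchur:free}.

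The key observation is that each subquotient of the above filtrations is concentrated in a single homological degree of the Schur complex. Indeed, $\Schur_R^{\lambda/\nu}(M)$ sits in degree $0$ because only copies of $M$ appear, whereas $\Weyl_R^{\nu^t/\mu^t}(M')$ contributes $|\nu|-|\mu|$ copies of $M'$, each living in homological degree $1$ in the two-term complex $\rho\colon M'\to M$; hence the first kind of subquotient is concentrated in homological degree $|\nu|-|\mu|$, and symmetrically the second kind sits in degree $|\lambda|-|\nu|$. Applying the functor of $k$th chain module therefore keeps exactly the indices with $|\nu|-|\mu|=k$ in the first filtration (giving precisely the partitions $\gamma^i_{(k)}\in I_k(\lambda/\mu)$ and yielding \eqref{thm:fil:bSchur_k-1}), and those with $|\lambda|-|\nu|=k$ in the second (giving $\nu^i_{(k)}\in J_k(\lambda/\mu)$ and yielding \eqref{thm:fil:bSchur_k-2}); base-change naturality is preserved throughout.

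The principal technical point is verifying that the Akin--Buchsbaum--Weyman filtrations can indeed be arranged so that each subquotient lives in a \emph{pure} homological degree---this is what makes the restriction to the $k$th term produce the correct indexing sets $I_k$ and $J_k$. The remaining work---lexicographically ordering the subfactors, identifying them with explicit tensor products of Schur and Weyl modules, and tracking base-change naturality through the Hopf-algebra construction of $d_{\lambda^t/\mu^t}$---is routine given Theorems \ref{thm:Schur:free} and \ref{thm:bSchur:free}.
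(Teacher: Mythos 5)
Your proposal takes essentially the same route as the paper's proof: both invoke the Akin--Buchsbaum--Weyman filtration of the Schur complex (\cite[Corollary~V.1.14]{ABW}, \cite[Theorem~2.4.10]{Wey}) for the split morphism $\rho=0$, observe that with the appropriate choice of direct summands $\phi_1,\phi_2$ each filtration quotient lives in a single homological degree, restrict to the degree-$k$ chain module, and then verify base-change compatibility. The one small imprecision worth noting is your suggestion that the second filtration could be obtained via the duality $\Schur^\sigma(E^\vee)\simeq\Weyl^\sigma(E)^\vee$ -- the paper instead obtains both filtrations from the same ABW construction by swapping the roles $\phi_1=0\to M,\ \phi_2=M'\to 0$ versus $\phi_1=M'\to0,\ \phi_2=0\to M$ in the direct-sum decomposition, which is cleaner and avoids having to relate $\bSchur^{\lambda/\mu}(\rho)$ to $\bSchur^{\lambda/\mu}(\rho^\vee)$.
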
	

\begin{proof}
For any fixed $R$ and $M', M$, assertion \eqref{thm:fil:bSchur_k-2} is proved in \cite[Corollary V.1.14]{ABW}, and assertion \eqref{thm:fil:bSchur_k-1} can be proved in a similar way by setting $\phi_1 = M' \to 0$ and $\phi_2 = 0 \to M$ in the proof of  \cite[Corollary V.1.14]{ABW} (see also \cite[Theorem 2.4.10 (a) \& (b)]{Wey}). To finish the proof of assertion \eqref{thm:fil:bSchur_k-2}, it suffices to observe that the formation of the subcomplexes $M_\gamma(\bSchur^{\lambda/\mu}(\rho \colon M' \to M))$ and $\dot{M}_\gamma(\bSchur^{\lambda/\mu}(\rho \colon M' \to M))$ of $\bSchur^{\lambda/\mu}(\rho)$ defined in \cite[Definition V.1.11]{ABW} (in the case $\phi_1 = 0 \to M$ and $\phi_2 = M' \to 0$), and the formation of the isomorphism 
	$$\Schur_R^{\gamma/\mu}(M) \otimes \Weyl_R^{\lambda^t/\gamma^t}(M') \to M_\gamma(\bSchur^{\lambda/\mu}(\rho))/\dot{M}_\gamma(\bSchur^{\lambda/\mu}(\rho))$$
 of \cite[Theorem V.1.13]{ABW} (in the case $\phi_1 = 0 \to M$ and $\phi_2 = M' \to 0$) are functorial on $M$ and $M'$ and commute with base change of rings $R$. Assertion \eqref{thm:fil:bSchur_k-1} is proved similarly. 
\end{proof}

\begin{example}
Let $\lambda/\mu$ be a skew partition such that $\mu \neq \lambda$.
\begin{enumerate}[leftmargin=*]
	\item As special cases of Theorem \ref{thm:fil:bSchur_k}, we have $\bSchur_R^{\lambda/\mu}  (M',M)_{0} = \Schur_R^{\lambda/\mu}(M)$ and $\bSchur_R^{\lambda/\mu}  (M',M)_{|\lambda|-|\mu|} = \Weyl_R^{\lambda^t/\mu^t}(M')$. Consequently, the Schur complex $\bSchur_R^{\lambda/\mu} (\rho \colon M' \to M)$ has the form
	\begin{align*} 
	 0 \to \Weyl_R^{\lambda^t/\mu^t}(M') \xrightarrow{d_{|\lambda|-|\mu|}} \bSchur_R^{\lambda/\mu}  (M',M)_{|\lambda|-|\mu|-1} \to \cdots 
	\xrightarrow{d_2} & \bSchur_R^{\lambda/\mu}  (M',M)_1 \xrightarrow{d_1}   \Schur_R^{\lambda/\mu}(M) \to 0.
	\end{align*}
	\item If $\mu=0$, then Theorem \ref{thm:fil:bSchur_k} implies $\bSchur_R^{\lambda}(M', M)_1 = \Schur_{R}^{\lambda/(1)}(M) \otimes_R M'$ and $\bSchur_R^{\lambda}(M', M)_{|\lambda|-1} =M \otimes_R  \Weyl_R^{\lambda^t/(1)}(M')$. Consequently, the Schur complex $\bSchur_R^{\lambda} (\rho \colon M' \to M)$ has the form
	\begin{align*}
	 0 \to \Weyl_R^{\lambda^t}(M') \xrightarrow{d_{|\lambda|}} M \otimes_R  \Weyl_R^{\lambda^t/(1)}(M') \xrightarrow{d_{|\lambda|-1}} \cdots 
	\xrightarrow{d_2} & \Schur_{R}^{\lambda/(1)}(M) \otimes_R M' \xrightarrow{d_1} \Schur_R^{\lambda}(M) \to 0.
	\end{align*}
\end{enumerate}
\end{example}

Finally, there are systematic results about the acyclic properties of Schur complexes; see \cite[Theorem V.1.17]{ABW}, \cite[Theorem 6.1]{AT19}. Here, we will only review two special cases and refer readers to the above-mentioned references for the general situation.

\begin{theorem}[Acyclicity; {\cite[Theorem V.1.17]{ABW}}] Let $R$ be a commutative ring.
\label{thm:acyclic}
\begin{enumerate}
	\item \label{thm:acyclic-1}
	(Split-injective cases) Let $0 \to M' \xrightarrow{\rho} M \xrightarrow{q} M'' \to 0$ be a short exact sequence of finite projective $R$-modules, and let $\lambda/\mu$ be a skew partition such that $\lambda \neq \mu$. In this case, the Schur complex $\bSchur_{R}^{\lambda/\mu} (\rho \colon M' \to M) = [0 \to \bSchur_{R}^{\lambda/\mu} (M',M)_{|\lambda|-|\mu|} \to \cdots \to \bSchur_{R}^{\lambda/\mu} (M',M)_{0}]$ adjoined by the sequence $[\bSchur_{R}^{\lambda/\mu} (M',M)_{0} = \Schur_R^{\lambda/\mu}(M) \xrightarrow{\Schur_R^{\lambda/\mu}(q)} \Schur_R^{\lambda/\mu}(M'') \to 0]$ to its right constitutes a long exact sequence of finite projective $R$-modules.
 	\item \label{thm:acyclic-2}
	(Universal local cases) Let $\lambda \neq (0)$ be a partition, let $M_0 = R^m$, $N_0 = R^n$ for some integers $n \ge m \ge 0$, let $A = \Sym_R^*(M_1^\vee \otimes M_0) = R[X_{ij}]_{1 \le i \le m, 1 \le j \le n}$ be the polynomial algebra,  let $M = A \otimes_R M_0 = A^m$ and $N = A \otimes_R N_0 = A^n$, and let $\rho \colon M = A^m \to N = A^n$ denote the tautological $A$-module morphism given by the tautological matrix $(X_{ij})_{1 \le i \le m, 1 \le j \le n}$. Assume that $1 \le \lambda_1^t \le n- m +1$, then the Schur complex $\bSchur_{A}^{\lambda} (\rho \colon M \to N)$ together with the canonical quotient map $\bSchur_{A}^{\lambda} (M,N)_{0} = \Schur_A^{\lambda}(N) \twoheadrightarrow \Schur_A^{\lambda}({\rm Coker}(\rho))$ forms a finite projective resolution of the classical Schur module $\Schur_A^{\lambda/\mu}({\rm Coker}(\rho))$.
\end{enumerate}
\end{theorem}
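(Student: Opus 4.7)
The plan is to prove the two parts by separate arguments. For part~\eqref{thm:acyclic-1}, since $M''$ is finite projective, the short exact sequence splits, so one may assume $M = M' \oplus M''$ with $\rho$ the canonical inclusion. In this split situation I would combine the term-wise filtration of Theorem~\ref{thm:fil:bSchur_k} with the direct-sum decomposition of Theorem~\ref{thm:fil:Schur_oplus}, refined with respect to the splitting $M = M' \oplus M''$, to produce a canonical bigrading of the augmented Schur complex
\[ 0 \to \Weyl_R^{\lambda^t/\mu^t}(M') \to \cdots \to \Schur_R^{\lambda/\mu}(M) \xrightarrow{\Schur_R^{\lambda/\mu}(q)} \Schur_R^{\lambda/\mu}(M'') \to 0 \]
indexed by partitions $\nu$ with $\mu \subseteq \nu \subseteq \lambda$, tracking the homogeneous polynomial-functor degree in the second factor $M''$. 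The compatibilities of the comultiplication and multiplication maps on the classical Hopf algebras reviewed in \S\ref{sec:classical_sym} should ensure that the Schur-complex differentials preserve the $\nu$-grading exactly, so that the augmented complex splits as a direct sum indexed by $\nu$. The $\nu = \lambda$ summand contributes precisely the augmentation $\Schur_R^{\lambda/\mu}(M'')$ concentrated in degree $0$, while each summand with $\nu \subsetneq \lambda$ is a universal Koszul-type complex built from $\Schur/\Weyl$-pieces of $M'$ alone; acyclicity of these remaining summands reduces to the universal statement that $\bSchur^{\lambda/\nu}(\id_{M'})$ is contractible for $\nu \subsetneq \lambda$, which I would verify by an explicit contracting homotopy built from the exterior/divided-power Hopf structure.

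For part~\eqref{thm:acyclic-2}, the approach is geometric. I would realize $\bSchur^{\lambda}_A(\rho)$, together with its augmentation onto $\Schur^\lambda_A(\Coker(\rho))$, as the derived direct image along $\pr \colon \Grass_{n-m}(A^n) \to \Spec A$ of a classical Schur functor applied to the tautological rank-$(n-m)$ quotient bundle on $\Grass_{n-m}(A^n)$. By Theorem~\ref{thm:bSchur:free} this identification can be made universally in $A$, and the augmentation then corresponds to the canonical map from global sections onto $\Schur^\lambda_A(\Coker(\rho))$ on the locus of maximal rank. The hypothesis $1 \le \lambda_1^t \le n-m+1$ is precisely the rank/dominance condition that ensures the associated line bundle on the relative flag scheme is dominant with respect to the tautological quotient, so acyclicity in positive homological degrees reduces to \emph{Kempf's vanishing theorem} applied fiber-wise, while the identification in degree zero reduces to the classical Borel--Weil component of the Borel--Weil--Bott theorem recalled in \S\ref{sec:intro:Bott}.

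The main obstacle is part~\eqref{thm:acyclic-2}: the argument must be characteristic-free, which rules out Weyl-character or semisimplicity techniques and genuinely requires Kempf's vanishing theorem as input. Additionally, matching the individual terms of the Schur complex $\bSchur^\lambda_A(\rho)$ with the successive pieces of an explicit Koszul-type resolution on $\Grass_{n-m}(A^n)$ is a delicate combinatorial task that couples Theorem~\ref{thm:fil:bSchur_k} with a flag filtration of the tautological quotient. For part~\eqref{thm:acyclic-1}, the principal technical check is that the $\nu$-bigrading is preserved on the nose by the differentials (not merely up to lower filtration), a fact which ultimately rests on the Hopf-algebra compatibilities of \S\ref{sec:classical_sym}.
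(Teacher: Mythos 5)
The paper does not prove this theorem: it is imported verbatim from Akin--Buchsbaum--Weyman \cite[Theorem V.1.17]{ABW}, whose argument is purely algebraic (filtrations plus a standard-basis/straightening argument, no geometry). So the question is whether your reconstruction would succeed on its own.

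For part~\eqref{thm:acyclic-1} your filtration-and-splitting idea is in the right direction and close in spirit to ABW's. One correction is needed, though: the decomposition of the augmented complex by \emph{$M''$-degree} (a nonnegative integer) is a genuine direct-sum splitting, because $\rho$ lands in $M'\subset M$ and all Schur-complex differentials preserve the homogeneous degree in $M''$. But you phrase this as a ``bigrading indexed by partitions $\nu$ with $\mu\subseteq\nu\subseteq\lambda$''; Theorems~\ref{thm:fil:Schur_oplus} and \ref{thm:fil:bSchur_k} only furnish a \emph{filtration} by such $\nu$, never a direct-sum decomposition, and many $\nu$ have the same $M''$-degree. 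The correct shape is: split by $M''$-degree $b$; the $b=N$ piece is the identity on $\Schur_R^{\lambda/\mu}(M'')$; for $b<N$, filter the $b$-piece by $\nu$ and show each associated-graded subquotient, which is a tensor product of $\bSchur^{\lambda/\nu}(\id_{M'})$ with a Schur piece of $M''$ rather than a functor of $M'$ alone, is acyclic via the contracting homotopy you sketch. With that fix your part~\eqref{thm:acyclic-1} is sound.

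For part~\eqref{thm:acyclic-2} there is a real gap, not just a ``delicate'' bookkeeping issue. You want to realize $\bSchur_A^\lambda(\rho)$ plus its augmentation as the derived pushforward of a tautological Schur bundle on the Grassmannian bundle and then invoke Kempf's vanishing. But the identification of $\bSchur_A^\lambda(\rho)$ with that pushforward (equivalently, with the Koszul-type complex $\Kos^\lambda_*(X;\rho)$ of Lemma~\ref{lem:Koszul:lambda:complex}) is precisely the nontrivial content, and it is not delivered by Theorem~\ref{thm:bSchur:free}, which gives only universal freeness and base-change. The paper's own mechanism for this identification --- Corollary~\ref{cor:Koszul_vs_bSchur} and Remark~\ref{rmk:Koszul.v.s.Schur.complexes} --- \emph{presupposes} Theorem~\ref{thm:acyclic}~\eqref{thm:acyclic-2}: the chain isomorphism $\Kos^\lambda_*(X;\rho)\cong\bSchur_A^\lambda(\rho)$ in the universal local case is obtained by noting both are \emph{minimal} free resolutions of $\Schur^\lambda_A(\Coker\rho)$, which requires already knowing that $\bSchur^\lambda_A(\rho)$ is a resolution. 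So as proposed your argument is circular unless you construct the identification by hand (matching terms \emph{and} differentials characteristic-freely; the paper only sketches a term-matching in characteristic zero in Remark~\ref{rmk:Koszul.v.s.Schur.complexes}). For a characteristic-free proof you would need to fall back to the algebraic straightening-law route that ABW use, which is what the paper's citation is relying on.
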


\section{Derived Schur and Weyl Functors}
\label{sec:dSchur}

This section studies derived Schur and Weyl functors, which are the derived functors of the classical Schur and Weyl functors in the sense of non-abelian derived theory (\cite[\S 5.5]{HTT}).

\S \ref{sec:non-abelian} briefly reviews Lurie's non-abelian derived theory (\cite[\S 5.5]{HTT}) and introduces several constructions that will be used in the following subsections. 

\S \ref{sec:dSchurdWeyl} introduces derived Schur and Weyl functors (Definition \ref{def:dSchurWeyl}) and discusses their basic properties, including base change properties (Proposition \ref{prop:dSchur:basechange}), flatness and freeness (Proposition \ref{prop:dSchur:free}), and their classical truncations (Proposition \ref{prop:dSchur:classical}).

\S \ref{sec:univ.fib:dSchur} generalizes the classical results of \S \ref{sec:univ.fil:Schur} to the derived setting. We obtained derived versions of Cauchy decomposition formula (Theorem \ref{thm:fil:dsym_otimes}), the decomposition formula for direct sums of complexes (Theorem \ref{thm:fib:dSchur_oplus}), Littlewood--Richardson rules for derived Schur and Weyl functors (Theorem \ref{thm:fil:dSchur_LR} and Corollary \ref{cor:fil:dSchur_LR}), and Koszul-type sequences associated with derived Schur functors (Theorem \ref{thm:fib:dbSchur}).

\S \ref{sec:dSchurdWeyl.properties} examines the further properties of derived Schur and Weyl functors. It includes results on the d{\'e}calage isomorphisms (Theorem \ref{thm:Illusie--Lurie}, Corollary \ref{cor:dSchur.decalage}), connectivity (Corollary \ref{cor:dSchur:connective}), finiteness (Proposition \ref{prop:dSchur:pc}) and Tor-amplitudes and perfectness (Proposition \ref{prop:dSchur:Tor-amp}), generalizations of Illusie's equivalences (Proposition \ref{prop:dSchur_vs_bSchur}) and the relationship between the $k$th component of Schur complexes and derived Schur functors (Proposition \ref{prop:bSchur_k=Schur_oplus_k}). 

All of the constructions and results from subsections \S \ref{sec:non-abelian} through \S \ref{sec:dSchurdWeyl.properties} are ``globalized" in \S \ref{sec:dSchurdWeyl.prestacks}  from the affine situation to the larger framework of derived schemes and stacks (and more generally, prestacks).

\subsection{Non-abelian Derived Categories}
\label{sec:non-abelian}
This subsection briefly reviews Lurie's theory of non-abelian derived categories \cite[\S 5.5]{HTT} and discusses several key examples that we will need in the later parts of this section.
  
Lurie's non-abelian derived category $\shP_{\Sigma}(\shC)$ of an $\infty$-category $\shC$ can be thought of the $\infty$-category obtained from $\shC$ by freely adjoining {\em sifted colimits}.  Recall that a simplicial set $K$ is called {\em sifted} if it is non-empty and the diagonal map $\delta \colon K \to K \times K$ is cofinal (see \cite[Definition 5.5.8.1]{HTT}, \cite[\href{https://kerodon.net/tag/02QE}{Tag 02QE}]{kerodon}). Let $\shD$ be an $\infty$-category, then sifted colimits in $\shD$ are colimits of the diagrams of the form $K \to \shD$, where $K$ is a sifted simplicial set. The two most important subclasses of sifted colimits are {\em filtered colimits} (\cite[\S 5.4.1]{HTT}) and {\em geometric realizations} of simplicial objects (\cite[Notation 6.1.2.12]{HTT}).

Let $\shC$ be a small $\infty$-category that admits finite coproducts, then its {\em non-abelian derived category} $\shP_{\Sigma}(\shC)$ is defined to be the $\infty$-category $\Fun^{\pi}(\shC^\op, \shS)$ spanned by those functors which preserve finite products. The $\infty$-category $\shP_{\Sigma}(\shC)$ can be characterized by the following  properties (\cite[Propositions 5.5.8.10, 5.5.8.22]{HTT}): $\shP_{\Sigma}(\shC)$ is a presentable $\infty$-category (in particular, it admits all small colimits), and there exists a coproduct-preserving fully faithful functor $j \colon \shC \hookrightarrow \shP_{\Sigma}(\shC)$, called {\em Yoneda embedding}, such that the essential image of $j$ consists of compact projective objects of $\shP_{\Sigma}(\shC)$ which generate $\shP_{\Sigma}(\shC)$ under sifted colimits.

\begin{proposition}[Lurie {\cite[\S 5.5.8]{HTT}}] \label{prop:nonab:derived} 
Let $\shC$ be a small $\infty$-category that admits finite coproducts and $\shD$  an $\infty$-category that admits sifted colimits. Then composition with the Yoneda embedding $j$ induces an equivalence of $\infty$-categories
				$$\Fun_\Sigma(\shP_\Sigma(\shC), \shD) \to \Fun(\shC, \shD),$$
where $\Fun_\Sigma(\shP_\Sigma(\shC), \shD)$ denotes the $\infty$-subcategory of $\Fun(\shP_{\Sigma}(\shC), \shD)$ spanned by those functors which preserve sifted colimits. Moreover, a functor $F\in \Fun(\shP_{\Sigma}(\shC), \shD)$ preserves sifted colimits if and only if $F$ is a left Kan extension of $f = F \circ j$ along $j$. Furthermore, if $\shD$ admits all small colimits, then a functor $F \in \Fun_\Sigma(\shP_\Sigma(\shC), \shD)$ preserves all small colimits if and only if $f = F \circ j \colon \shC \to \shD$ preserves finite coproducts.
\end{proposition}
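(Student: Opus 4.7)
The plan is to exploit the universal property of $\shP_\Sigma(\shC)$ as the free sifted-cocompletion of $\shC$, constructing the inverse to the restriction functor via left Kan extension along $j$. First I would recall that, because $\shC$ admits finite coproducts and $\shP_\Sigma(\shC) \subseteq \Fun(\shC^{\op}, \shS)$ consists of those functors carrying finite coproducts in $\shC$ to finite products in $\shS$, every object $X \in \shP_\Sigma(\shC)$ may be canonically written as a sifted colimit of representables, namely as the colimit of the composite $(\shC_{/X} \to \shC \xrightarrow{j} \shP_\Sigma(\shC))$, where the slice $\shC_{/X}$ is sifted precisely because $X$ preserves finite products. This is the key combinatorial input, and it is what distinguishes $\shP_\Sigma$ from the full presheaf category $\shP$.

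Given this density statement, I would construct the inverse to the restriction functor $\Fun_\Sigma(\shP_\Sigma(\shC), \shD) \to \Fun(\shC, \shD)$ by sending $f \colon \shC \to \shD$ to its left Kan extension $F := \mathrm{Lan}_j(f)$. The pointwise formula $F(X) \simeq \mathrm{colim}_{(C \to X) \in \shC_{/X}} f(C)$ is well-defined because $\shC_{/X}$ is sifted and $\shD$ admits sifted colimits. To verify this constitutes an equivalence, one checks: (a) $F \circ j \simeq f$ since $j$ is fully faithful, so restriction recovers $f$; (b) $F$ preserves sifted colimits, because sifted colimits of presheaves in $\shP_\Sigma(\shC)$ are computed pointwise and commute with the colimits $\mathrm{colim}_{\shC_{/X}} f(C)$ defining $F$; (c) any $G \in \Fun_\Sigma(\shP_\Sigma(\shC), \shD)$ satisfies $G(X) \simeq \mathrm{colim}_{\shC_{/X}} G(j(C))$, so $G$ is canonically equivalent to the Kan extension of its restriction. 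Assertions (a)--(c) together yield both the asserted equivalence of functor categories and the ``moreover'' characterization of sifted-colimit-preserving functors as left Kan extensions.

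For the final assertion, I would use the classical fact (proved in Lurie's setting) that in a cocomplete $\infty$-category every small colimit can be built from finite coproducts and sifted colimits, so that $F \in \Fun_\Sigma(\shP_\Sigma(\shC), \shD)$ preserves all small colimits iff it preserves finite coproducts. Since $F = \mathrm{Lan}_j(f)$ and $j \colon \shC \to \shP_\Sigma(\shC)$ itself preserves finite coproducts (a formal consequence of the finite-product-preservation condition cutting out $\shP_\Sigma(\shC)$), the finite-coproduct preservation of $F$ reduces on representables to that of $f$; conversely, finite coproducts distribute over sifted colimits in both source and target, so preservation on the dense subcategory $\shC$ propagates to all of $\shP_\Sigma(\shC)$. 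The main obstacle in the proof is really the first step: establishing siftedness of the slices $\shC_{/X}$ for $X \in \shP_\Sigma(\shC)$, which requires some care with the definition of sifted simplicial sets (via cofinality of the diagonal), but once granted, the rest of the argument is a formal manipulation of Kan extensions. The proposition itself is a restatement of \cite[Proposition 5.5.8.10, Proposition 5.5.8.15 and Proposition 5.5.8.22]{HTT}, so one may alternatively cite these results directly.
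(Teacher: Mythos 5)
The paper gives no proof of its own; it simply cites Lurie's results in \cite[\S 5.5.8]{HTT}, exactly as you note at the end of your proposal. Your reconstruction --- siftedness of the slice $\shC_{/X}$ from the finite-product-preservation of $X$, inversion by left Kan extension along the dense Yoneda embedding, and reduction of general small colimits to finite coproducts plus sifted colimits --- is a faithful rendering of Lurie's argument and contains no gap.
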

 

\begin{definition}\label{def:nonab:derived} 
Let $\shC$ be a small $\infty$-category that admits finite coproducts, and $\shD$  an $\infty$-category that admits sifted colimits. For any functor $f \colon \shC \to \shD$ be any functor, we will refer to the essentially uniquely functor $F \in \Fun_{\Sigma}(\shP_{\Sigma}(\shC), \shD)$ of Proposition \ref{prop:nonab:derived} satisfying $f = F \circ j$ as the {\em derived functor} of $f$. By construction, the derived functor $F$ of $f$ is a left Kan extension of the functor $f = F| \shC$ and preserves all sifted colimits (hence \textit{a fortiori} preserves all filtered colimits and geometric realizations of simplicial objects).
\end{definition}

\begin{notation}[{See \cite[\S 25.1.1, \S 25.2.1]{SAG}}] 
\label{notation:SCRMod}
\begin{enumerate}[leftmargin=*]
	\item 
	\label{notation:SCRMod-1}
	We let ${\rm Poly} \subseteq {\rm CRing}$ denote the full subcategory spanned by (finite generated) polynomial rings $R = \ZZ[x_1, \ldots, x_m]$, where $m \ge 0$ is an integer. We let ${\rm PolyMod}^{\rm ff} \subseteq {\rm CRingMod}^{\rm ff}$ denote the full subcategory spanned by elements $(R, M)$, where $R = \ZZ[x_1, \ldots, x_m]$ is a polynomial ring, and $M= R^n$ is a finite free $R$-module, for some integers $m,n \ge 0$.

        \item
        \label{notation:SCRMod-2}
	 We let $\CAlgDelta = \shP_{\Sigma}({\rm Poly})$ and refer to its elements as the {\em simplicial commutative rings}. We let $\SCRModcn: = \CAlgDelta \times_{\CAlg} \Mod^\cn$ denote the $\infty$-category of pairs $(A, M)$, where $A$ is a simplicial commutative ring and $M$ is a connective $A^\circ$-module, where $A^\circ$ is the underlying $\EE_\infty$-ring spectrum of $A$. Then the inclusion ${\rm PolyMod}^{\rm ff} \hookrightarrow \SCRModcn$ extends to an equivalence of $\infty$-categories $\shP_\Sigma({\rm PolyMod}^{\rm ff}) \simeq \SCRModcn$ (\cite[Proposition 25.2.1.2]{SAG}).
        		 The natural forgetful functor $q \colon \SCRModcn \to \CAlgDelta$, $(A,M) \mapsto A$ is a {\em coCartesian fibration} (\cite[\href{https://kerodon.net/tag/01UA}{Tag 01UA}]{kerodon}) which classifies the functor $(A \in \CAlgDelta) \mapsto (\Modcn_A \in \widehat{\Cat}_\infty)$; a morphism $(A, M) \to (B, N)$ is $q$-coCartesian if and only if the natural induced morphism $B \otimes_A M \to N$ is an equivalence in $\Modcn_B$. 

	\item 
	\label{notation:SCRMod-3}
	Let $\shE$ denote the $\infty$-category $\Fun(\Delta^1, \SCRModcn) \times_{\Fun(\Delta^1, \CAlgDelta)} \CAlgDelta$ whose objects are pairs $(A, \rho \colon M' \to M)$, where $A$ is a simplicial commutative ring and $\rho$ is a morphism of connective $A$-modules. Let $\shE_0 \subseteq \shE$ be the full subcategory spanned by those pairs $(R, \rho \colon M' \to M)$, where $R$ is a polynomial ring $\ZZ[x_1, \ldots, x_k]$ and $\rho$ fits into a short exact sequence $0 \to M' \xrightarrow{\rho} M \to M'' \to 0$ of finitely generated free $R$-modules. Then the proof of \cite[Proposition 25.2.4.1]{SAG} shows that the inclusion $\shE_0 \hookrightarrow \shE$ induce an equivalence of $\infty$-categories $\shP_\Sigma(\shE_0) \simeq \shE$.
 	
	\item 
	\label{notation:SCRMod-4}
More generally, for an integer $n \ge 0$, we define $\shE^{[n]}$ by the pullback diagram:
		$$
	\begin{tikzcd}[column sep = 2 em]
		\shE^{[n]}: =\Fun(\Delta^n, \SCRModcn) \times_{\Fun(\Delta^n, \CAlgDelta)} \CAlgDelta  \ar{r} \ar{d}{q^{[n]}} & \Fun(\Delta^n, \SCRModcn) \ar{d}{q'} \\
		\CAlgDelta \ar{r}{\delta} & \Fun(\Delta^n, \CAlgDelta),
	\end{tikzcd}
	$$
where $q'$ is given by composition with the forgetful functor $q \colon \SCRModcn \to \CAlgDelta$, and $\delta$ is the diagonal morphism which carries $A$ to the identity sequence $A \xrightarrow{\id} A \xrightarrow{\id} \cdots \xrightarrow{\id} A$. Since the natural forgetful map
	$$\Fun(\Delta^n, \SCRModcn) \to \Fun({\rm Spine}[n], \SCRModcn)$$
is a trivial Kan fibration (where the ``spine" ${\rm Spine}[n]$ of $\Delta^n$ denotes the directed graph $0 \to 1 \to 2 \to \cdots \to n$, and the inclusion ${\rm Spine}[n] \subseteq \Delta^n$ is inner anodyne), we can represent elements of the category $\shE^{[n]}$ by pairs $(A, M^*)$, where $A \in \CAlgDelta$ is a simplicial commutative ring, and $M^* = (M^0 \to M^1 \to \cdots \to M^n) \in \shE_A^{[n]}$ is a sequence of morphisms in $\Modcn_A$. By definition, $\shE^{[1]} = \shE$ is the category of pairs $(A, \rho\colon M' \to M)$ defined in $(3)$. By virtue of 
\cite[{\href{https://kerodon.net/tag/01UF}{Tag 01UF}} \& {\href{https://kerodon.net/tag/01VG}{Tag 01VG}}]{kerodon},
the forgetful functor $q^{[n]} \colon \shE^{[n]} \to \CAlgDelta$ is a {\em coCartesian fibration}. A morphism $(A, M^*) \to (B, N^*)$ in $\shE^{[n]} $ is coCartesian if and only if the natural morphism $B \otimes_A M^* \to N^*$ in $\Fun(\Delta^n, \Modcn_B)$ is an equivalence (that is, it induces equivalences $B \otimes_A M^i \to N^i$ for all $0 \le i \le n$).
\end{enumerate}
\end{notation}

\begin{remark}[Base-change properties] 
\label{rem:base-change:CAlgDelta}
Let $q \colon \shC \to \CAlgDelta$ and $q' \colon \shC' \to \CAlgDelta$ be coCartesian fibrations, and $F \colon \shC \to \shC'$ be a functor such that the diagram
	$$
	\begin{tikzcd} 
		\shC \ar{d} \ar{r}{F}& \shC' \ar{d} \\
		\CAlgDelta \ar{r}{\id} & \CAlgDelta
	\end{tikzcd}
	$$
 commutes up to canonical equivalences. In practice, we will usually represent the functor $F$ by functorial assignments $A  \mapsto F_A$ for all $A \in \CAlgDelta$, where $F_A \colon \shC_A = \{A\} \times_{\CAlgDelta} \shC \to \shC'_A = \{A\} \times_{\CAlgDelta} \shC'$ are the induced functors on fibers. We say that the functor $F$ (or the formation of the functors $F_A \colon \shC_A \to \shC'_A$, $A \in \CAlgDelta$) {\em commutes with base change of simplicial commutative rings}, if $F$ carries $q$-coCartesian morphisms to $q'$-coCartesian morphisms. For example, if $\shC = \shC' = \SCRModcn$, then a functor $F \colon \shC \to \shC'$ commutes with base change of simplicial commutative rings if and only if for any morphism $A \to B$ in $\CAlgDelta$ and any $M \in \Modcn_A$, the natural map $B \otimes_A F(M) \to F(B \otimes_A M)$ is an equivalence. 
\end{remark}

 \subsection{Derived Schur and Weyl Functors}
 \label{sec:dSchurdWeyl}
This subsection defines and studies the derived Schur functors $\dSchur^{\lambda/\mu}$ and derived Weyl functors $\dWeyl^{\lambda/\mu}$, which are derived functors of the classical Schur functors $\Schur^{\lambda/\mu}$ and Weyl functors $\Weyl^{\lambda/\mu}$ of \S \ref{sec:CSchurWeyl}, respectively, in the sense of Lurie's non-abelian derived theory (reviewed in \S \ref{sec:non-abelian}). 

Additionally, we will also study the auxiliary functors $\LL^{\lambda/\mu}(\blank,\blank)_k$, which are derived functors of the $k$th component functors $\bSchur^{\lambda/\mu}(\blank, \blank)_k$ of Schur complexes. However, it is safe to skip all the statements regarding these auxiliary functors $\LL^{\lambda/\mu}(\blank,\blank)_k$ if readers aren't interested in the details of the proofs for properties of derived Schur and Weyl functors. In fact, as we will prove in Proposition \ref{prop:bSchur_k=Schur_oplus_k}, the functor $(A,M,M') \mapsto \LL_A^{\lambda/\mu}(M,M')_k$ is nothing else than a degree shift of the bidegree-$(N-k,k)$ homogeneous component of the functor $(A, M,M') \mapsto \dSchur_A^{\lambda/\mu}(M \oplus M'[1])$.

\begin{definition}[Derived Schur functors and Derived Weyl functors]
\label{def:dSchurWeyl}
In the situation of Proposition \ref{prop:nonab:derived}, we let $\shC = {\rm PolyMod}^{\rm ff}$ (Notation \ref{notation:SCRMod}; so that $\shP_{\Sigma}(\shC) = \SCRModcn$), and let $\shD = \SCRModcn$. Let $\lambda/\mu$ be a skew partition, then the classical Schur and Weyl functors (Definition \ref{def:SchurWeyl}) restrict to functors $\Schur^{\lambda/\mu}|{\rm PolyMod}^{\rm ff} \colon  {\rm PolyMod}^{\rm ff} \to {\rm PolyMod}^{\rm ff} \subseteq \SCRModcn$ and $\Weyl^{\lambda/\mu}|{\rm PolyMod}^{\rm ff} \colon  {\rm PolyMod}^{\rm ff} \to {\rm PolyMod}^{\rm ff} \subseteq \SCRModcn$, respectively, which commute with the natural forgetful functors to ${\rm Poly}$ and carry coCartesian morphisms to coCartesian morphisms (Theorem \ref{thm:Schur:free}). We let 
	$$\dSchur^{\lambda/\mu} \colon \SCRModcn \to \SCRModcn \quad \text{resp.} \quad \dWeyl^{\lambda/\mu} \colon \SCRModcn \to \SCRModcn$$
	$$(A, M) \mapsto (A, \dSchur^{\lambda/\mu}_A(M)) \quad \text{resp.} \quad (A, M) \mapsto (A, \dWeyl^{\lambda/\mu}_A(M))$$
denote the derived functors of $\Schur^{\lambda/\mu}|{\rm PolyMod}^{\rm ff}$, and respectively $\Weyl^{\lambda/\mu}|{\rm PolyMod}^{\rm ff}$, in the sense of Definition \ref{def:nonab:derived}. We will refer to $\dSchur^{\lambda/\mu}$ (resp. $\dWeyl^{\lambda/\mu}$) as the {\em derived Schur functor} (resp. {\em derived Weyl functor}) associated with the skew partition $\lambda/\mu$, and $\dSchur^{\lambda/\mu}_A(M)$ (resp. $\dWeyl^{\lambda/\mu}_A(M)$) as the {\em derived Schur power}  (resp. {\em derived Weyl power}) {\em of $M$}. By construction, $\dSchur^{\lambda/\mu}$ and $\dWeyl^{\lambda/\mu}$ are left Kan extensions of $\dSchur^{\lambda/\mu}|{\rm PolyMod}^{\rm ff} = \Schur^{\lambda/\mu}|{\rm PolyMod}^{\rm ff}$ and $\dWeyl^{\lambda/\mu}|{\rm PolyMod}^{\rm ff}=\Weyl^{\lambda/\mu}|{\rm PolyMod}^{\rm ff}$, respectively. Furthermore, $\dSchur^{\lambda/\mu}$ and $\dWeyl^{\lambda/\mu}$ preserve sifted colimits, and the diagrams 
	$$
	\begin{tikzcd}
		\SCRModcn \ar{r}{\dSchur^{\lambda/\mu}} \ar{d}{q} & \SCRModcn \ar{d}{q} \\
		\CAlgDelta \ar{r}{\id} & \CAlgDelta
	\end{tikzcd}
	\quad
	\begin{tikzcd}
		\SCRModcn \ar{r}{\dWeyl^{\lambda/\mu}} \ar{d}{q} & \SCRModcn \ar{d}{q} \\
		\CAlgDelta \ar{r}{\id} & \CAlgDelta
	\end{tikzcd}
	$$
commute up to canonical equivalences (here, $q$ is the canonical forgetful functor which carries $(A,M)$ to $A$). If the simplicial commutative ring $A$ is clear from the context, we will drop the subscript $A$ in the notations $\dSchur^{\lambda/\mu}_A(M)$ and $\dWeyl^{\lambda/\mu}_A(M)$, and write $\dSchur^{\lambda/\mu}(M)$ and $\dWeyl^{\lambda/\mu}(M)$ instead.
Following the same convention as Definition \ref{def:SchurWeyl} , we have $\dSchur_A^{\lambda/\mu}(M) = \dWeyl_A^{\lambda/\mu}(M) =A$ if $\lambda=\mu$ and $M \neq 0$, and $\dSchur_A^{\lambda/\mu}(M) =\dWeyl_A^{\lambda/\mu}(M)=0$ whenever $M=0$. 
\end{definition}

\begin{example} Let $n$ be a positive integer, $(A, M) \in \SCRModcn$.
\begin{enumerate}[label=(\roman*), leftmargin=*]
	\item If $\lambda = (n)$, $\mu=(0)$, then $\dSchur_A^{(n)}(M)=\Sym_{A}^n (M)$ is the {\em $n$th derived symmetric power} of $M$, and $\dWeyl_A^{(n)}(M)=\Gamma_{A}^n (M)$ is the {\em $n$th derived divided power} of $M$. 
	\item If $\lambda = (1^n) := \underbrace{(1, \ldots, 1)}_{n \,\text{terms}}$, $\mu=(0)$, then $\dSchur_A^{(1^n)}(M) = \dWeyl_A^{(1^n)}(M) =\bigwedge_{A}^n (M)$ are both equal to the {\em $n$th derived exterior power} of $M$.
	\item If $\lambda= (n,n-1, \ldots, 1)$ and $\mu= (n-1, n-2, \ldots, 1)$, then $\dSchur_A^{\lambda/\mu}(M) = \dWeyl_A^{\lambda/\mu}(M) =\bigotimes_{A}^n (M)$ are both equal to the {\em $n$th derived tensor product} of $M$.
	\item If $\lambda = (2,1)$ and $\mu=(0)$, then $\dSchur_A^{(2,1)} (M)$ fits into a canonical equivalence 
		\begin{align*}
\cofib \Big( \bigwedge\nolimits_A^3(M) \xrightarrow{\psi} \bigwedge\nolimits_A^2(M) \otimes_A M \Big) \xrightarrow{\sim} \dSchur^{(2,1)}_A(M)
		\end{align*}
	and $\dWeyl_A^{(2,1)} (M)$ fits into a canonical equivalence 
				\begin{align*}
\cofib \Big( \Gamma_A^3(M) \xrightarrow{\psi'} \Gamma_A^{2}(M) \otimes_A M \Big) \xrightarrow{\sim} \dWeyl^{(2,1)}_A(M).
		\end{align*}
	\end{enumerate}
\end{example}

\begin{remark}[Characteristic-zero cases] Let $(R,M) \in {\rm CRingMod}^{\rm fproj}$, if $R$ is a $\QQ$-algebra, then the canonical map $\Weyl^{\lambda/\mu}_R(M) \to \Schur^{\lambda/\mu}_R(M)$ is an isomorphism. Hence we obtain a canonical equivalence $\dWeyl_A^{\lambda/\mu}  (M) \xrightarrow{\sim} \dSchur_A^{\lambda/\mu} (M)$ for all $A \in \CAlgDelta_\QQ$ and $M \in \Modcn_A$.
\end{remark}

\begin{definition}[Derived functors of $k$th component functor of Schur complexes] 
Let $\lambda/\mu$ be a skew partition, and let $k \ge 0$ be an integer. We let $\shC = {\rm PolyMod}^{\rm ff} \times_{\rm Poly} {\rm PolyMod}^{\rm ff}$ and $\shD = \SCRModcn$ (Notation \ref{notation:SCRMod}) in the situation of Proposition \ref{prop:nonab:derived}. Then the $k$th component of Schur complex functor $\bSchur^{\lambda/\mu}$ defines a functor $\bSchur^{\lambda/\mu}(\blank, \blank)_k|\shC  \colon  \shC \to {\rm PolyMod}^{\rm ff} \subseteq \SCRModcn$ (Corollary \ref{cor:bSchurk}) which commute with the natural forgetful functors to ${\rm Poly}$ and carry coCartesian morphisms to coCartesian morphisms. We let 
	$$\LL^{\lambda/\mu}(\blank,\blank)_k \colon \SCRModcn \times_{\CAlgDelta} \SCRModcn \to \SCRModcn$$
	$$(A, M, M') \mapsto (A, \LL_A^{\lambda/\mu}(M, M')_k)$$
denote the derived functor of $\bSchur^{\lambda/\mu}(\blank, \blank)_k|\shC$ in the sense of Definition \ref{def:nonab:derived}. Then $\LL^{\lambda/\mu}(\blank,\blank)_k$ is a left Kan extension of $\bSchur^{\lambda/\mu}(\blank, \blank)_k|\shC$, preserves sifted colimits, and the diagram
	$$
	\begin{tikzcd}[column sep = 5 em]
		\SCRModcn \times_{\CAlgDelta} \SCRModcn \ar{r}{\LL^{\lambda/\mu}(\blank,\blank)_k} \ar{d}{v} & \SCRModcn \ar{d}{q} \\
		\CAlgDelta \ar{r}{\id} & \CAlgDelta
	\end{tikzcd}
	$$
commute up to canonical equivalences; here, $v$ is the canonical forgetful functor which carries $(A,M, M')$ to $A$. Notice that $\LL_A^{\lambda/\mu}(M, M')_{k} =0$ whenever $k \notin [0, |\lambda|-|\mu|]$.
\end{definition}

\begin{remark} In the above situation, we have $\LL_A^{\lambda/\mu}(M,0)_k = 0$ for any $k \ne 0$, and $\LL_A^{\lambda/\mu}(M,0)_0 = \dSchur^{\lambda/\mu}_A(M)$. Similarly, $\LL_A^{\lambda/\mu}(0,M')_k = 0$ for any $k \ne |\lambda| - |\mu|$, and $\LL_A^{\lambda/\mu}(0,M')_{|\lambda|-|\mu|} = \dWeyl^{\lambda^t/\mu^t}_A(M')$. 
\end{remark}

 \begin{proposition}[Base change] 
 \label{prop:dSchur:basechange}
 For any skew partition $\lambda/\mu$, the derived Schur functor $\dSchur^{\lambda/\mu}$ (resp. the derived Weyl functor $\dWeyl^{\lambda/\mu}$) carries $q$-coCartesian morphisms to $q$-coCartesian morphisms. In other words, for any map $A \to B$ of simplicial commutative rings and any connective complex $M \in \Modcn_A$, there are canonical equivalences
 	$$B \otimes_A \dSchur_A^{\lambda/\mu}(M) \xrightarrow{\sim} \dSchur_B^{\lambda/\mu}(B \otimes_A M) \qquad B \otimes_A \dWeyl_A^{\lambda/\mu}(M) \xrightarrow{\sim} \dWeyl_B^{\lambda/\mu}(B \otimes_A M).$$	
Similarly, for any skew partition $\lambda/\mu$ and any integer $k \ge 0$, the derived functor $\LL^{\lambda/\mu}(\blank, \blank)_k$ carries $v$-coCartesian morphisms to $q$-coCartesian morphisms. In other words, for any map $A \to B$ of simplicial commutative rings and any pair of connective complexes $M, M' \in \Modcn_A$, there are canonical equivalences
 	$$B \otimes_A \LL_A^{\lambda/\mu}(M, M')_k \xrightarrow{\sim}  \LL_B^{\lambda/\mu}(B \otimes_A M, B \otimes_A M')_k.$$
\end{proposition}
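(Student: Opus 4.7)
The plan is to combine Lurie's universal property of non-abelian derived categories (Proposition \ref{prop:nonab:derived}) with the classical base-change property (Theorem \ref{thm:Schur:free} for Schur and Weyl, and Corollary \ref{cor:bSchurk} for the auxiliary $k$th components) via a two-step sifted-colimit reduction. I describe the argument for $\dSchur^{\lambda/\mu}$; the case of $\dWeyl^{\lambda/\mu}$ is entirely analogous, and the case of $\LL^{\lambda/\mu}(\blank,\blank)_k$ requires only an additional reduction of a second module variable to finite free modules.

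First, fix a morphism $A \to B$ in $\CAlgDelta$ and consider the two functors $F, G \colon \Modcn_A \to \Modcn_B$ given by $F(M) = B \otimes_A \dSchur_A^{\lambda/\mu}(M)$ and $G(M) = \dSchur_B^{\lambda/\mu}(B \otimes_A M)$, together with the canonical natural transformation $\alpha \colon F \to G$. Both $F$ and $G$ preserve sifted colimits in $M$: the derived Schur functors preserve them by construction (Definition \ref{def:dSchurWeyl}), and $B \otimes_A (-)$ preserves all colimits. Since $\Modcn_A$ is generated under sifted colimits by the finite free modules $A^n$, Proposition \ref{prop:nonab:derived} reduces the problem to showing that $\alpha_{A^n}$ is an equivalence for each $n \ge 0$.

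Second, evaluate $\dSchur_A^{\lambda/\mu}(A^n)$ by varying $A$. Consider the two functors $\Phi_1, \Phi_2 \colon \CAlgDelta \to \SCRModcn$ given by $\Phi_1(A) = (A, \dSchur_A^{\lambda/\mu}(A^n))$ and $\Phi_2(A) = (A, A \otimes_\ZZ \Schur_\ZZ^{\lambda/\mu}(\ZZ^n))$. The assignment $\phi_k \colon A \mapsto (A, A^k)$ preserves sifted colimits in $\SCRModcn$: using the coCartesian fibration structure of $q \colon \SCRModcn \to \CAlgDelta$, the colimit of a sifted diagram $A_\bullet \mapsto (A_\bullet, A_\bullet^k)$ with $\mathrm{colim}\, A_\bullet = A$ is computed by pushing forward to the fiber over $A$, giving $(A, \mathrm{colim}_\bullet (A^k)_{\mathrm{const}}) = (A, A^k)$, since any sifted simplicial set has contractible geometric realization. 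Consequently, both $\Phi_1 = \dSchur^{\lambda/\mu} \circ \phi_n$ and $\Phi_2$ (which, after identifying $\Schur_\ZZ^{\lambda/\mu}(\ZZ^n)$ with $\ZZ^N$ for some $N$, is $\phi_N$) preserve sifted colimits. On a polynomial ring $R = \ZZ[x_1,\ldots,x_m]$, the object $(R, R^n)$ lies in ${\rm PolyMod}^{\rm ff}$, so $\dSchur_R^{\lambda/\mu}(R^n) = \Schur_R^{\lambda/\mu}(R^n)$ by construction; classical base change along $\ZZ \to R$ (Theorem \ref{thm:Schur:free}(2)) yields $\Schur_R^{\lambda/\mu}(R^n) \simeq R \otimes_\ZZ \Schur_\ZZ^{\lambda/\mu}(\ZZ^n)$, so $\Phi_1|_{\rm Poly} \simeq \Phi_2|_{\rm Poly}$. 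A final application of Proposition \ref{prop:nonab:derived}, with $\shC = {\rm Poly}$ and $\shD = \SCRModcn$, forces a global equivalence $\Phi_1 \simeq \Phi_2$, which unpacks to a functorial equivalence $\dSchur_A^{\lambda/\mu}(A^n) \simeq A \otimes_\ZZ \Schur_\ZZ^{\lambda/\mu}(\ZZ^n)$ in $A$.

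With this identification, $\alpha_{A^n}$ is an equivalence: $B \otimes_A \dSchur_A^{\lambda/\mu}(A^n) \simeq B \otimes_\ZZ \Schur_\ZZ^{\lambda/\mu}(\ZZ^n) \simeq \dSchur_B^{\lambda/\mu}(B^n)$, completing the proof for $\dSchur^{\lambda/\mu}$. The argument for $\dWeyl^{\lambda/\mu}$ is identical. For $\LL^{\lambda/\mu}(\blank,\blank)_k$ one reduces $M$ and $M'$ separately to finite free $A$-modules by the same sifted-colimit argument, then evaluates $\LL_A^{\lambda/\mu}(A^n, A^{n'})_k$ by varying $A$ and applying the classical base-change property of the $k$th component functor (Corollary \ref{cor:bSchurk}) on polynomial rings. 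The principal technical point—rather than a true obstacle—is verifying that functors of the form $\phi_k \colon A \mapsto (A, A^k)$ preserve sifted colimits in the total category $\SCRModcn$; once one recognizes this as a standard computation in a coCartesian fibration that reduces to the constancy of sifted colimits of constant diagrams, the remainder of the argument is formal bookkeeping.
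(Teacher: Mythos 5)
Your proof is correct and takes essentially the same approach as the paper's: both reduce the module variables to finite free modules via sifted colimits, then reduce the ring variable to polynomial rings (equivalently, to a functorial formula over $\ZZ$) via a second sifted-colimit argument, and finish by invoking the classical base change result (Theorem~\ref{thm:Schur:free} resp.\ Corollary~\ref{cor:bSchurk}). Your reorganization of the ring-variable step as establishing the universal formula $\dSchur_A^{\lambda/\mu}(A^n) \simeq A \otimes_\ZZ \Schur_\ZZ^{\lambda/\mu}(\ZZ^n)$ is equivalent to the paper's commutative-diagram factorization of $\alpha_{B,M,M'}$ through the case $A = \ZZ$, merely packaging the same content differently.
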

 \begin{proof}
We use the same strategy as the proof of \cite[Proposition 25.2.3.1]{SAG}. We prove the assertion for $\LL^{\lambda/\mu}(\blank, \blank)_k$; and the other cases are similar and simpler. Let $\alpha_{B, M,M'} \colon B \otimes_A \LL_A^{\lambda/\mu}(M, M')_k \to \LL_B^{\lambda/\mu}(B \otimes_A M, B \otimes_A M')_k$ denote the canonical morphism, and we wish to show that $\alpha_{B, M,M'}$ is an equivalence. Since the functor $(M,M') \mapsto \alpha_{B,M,M'}$ commutes with sifted colimits, we are reduced to the case where $M= A \otimes_\ZZ \ZZ^m$ and $M' = A \otimes_\ZZ \ZZ^n$ for some integers $m,n \ge 0$. From the commutative diagram
		$$
	\begin{tikzcd}[column sep = -0.8 em, row sep = 1 em]
		 & B\otimes_A  \LL_{A}^{\lambda/\mu}(A \otimes_\ZZ \ZZ^m, A \otimes_\ZZ \ZZ^n) \ar{dr}{\alpha_{B,M,M'}}& \\
		B\otimes_A A \otimes_{\ZZ} \LL_{\ZZ}^{\lambda/\mu}(\ZZ^m, \ZZ^n) \ar{rr} \ar{ru} & & \LL_{B}^{\lambda/\mu}(B \otimes_A A \otimes_\ZZ \ZZ^m, B \otimes_A A  \otimes_\ZZ \ZZ^n),
	\end{tikzcd}
	$$
to prove that $\alpha_{B,M,M'}$ is an equivalence, it suffices to prove in the case where $A = \ZZ$. Furthermore, when fixing $A=\ZZ$, $M=\ZZ^m$ and $M'=\ZZ^n$, the functor $ B \mapsto \alpha_{B, M, M'}$ commutes with sifted colimits, hence we are reduced to the case where $B = \ZZ[x_1, \ldots, x_s]$ for some integer $s \ge 0$. Then the desired equivalence follows from Corollary \ref{cor:bSchurk}. 

The cases for derived Schur and Weyl functors are proved similarly, by reducing to the case where $A=\ZZ$, $B = \ZZ[x_1, \ldots, x_s]$ and $M=\ZZ^n$, for which the desired equivalences follow from the universal freeness of the classical Schur and Weyl functors (Theorem \ref{thm:Schur:free}).
\end{proof}

\begin{proposition}[Freeness and Flatness]
 \label{prop:dSchur:free} 
 For any skew partition $\lambda/\mu$ and any simplicial commutative ring $A$, if $M \in \Modcn_A$ is finite free (resp. locally finite free, resp. flat), then $\dSchur_A^{\lambda/\mu}(M)$ and $\dWeyl_A^{\lambda/\mu}(M)$ are both  finite free (resp. locally finite free, resp. flat). Similarly, for any integer $k \ge 0$, if $M, M' \in \Modcn_A$ are both finite free (resp. locally finite free, resp. flat), then $\LL_A^{\lambda/\mu}(M,M')_k$ is finite free (resp. locally finite free, resp. flat).
\end{proposition}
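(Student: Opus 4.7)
The plan is to reduce each of the three cases (finite free, locally finite free, flat) to the finite free case via the base-change property (Proposition \ref{prop:dSchur:basechange}) and then reduce the finite free case to the classical statement of Theorem \ref{thm:Schur:free}. I will give the argument for $\dSchur^{\lambda/\mu}$; the proofs for $\dWeyl^{\lambda/\mu}$ and $\LL^{\lambda/\mu}(\blank,\blank)_k$ are identical (using Theorem \ref{thm:Schur:free} for $\dWeyl^{\lambda/\mu}$ and Corollary \ref{cor:bSchurk} for $\LL^{\lambda/\mu}(\blank,\blank)_k$).

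First I would treat the finite free case. If $M \simeq A^n$ for some $n \ge 0$, then the morphism of simplicial commutative rings $\ZZ \to A$ together with the module $\ZZ^n$ fits into a $q$-coCartesian morphism $(\ZZ, \ZZ^n) \to (A, M)$. By the base-change property (Proposition \ref{prop:dSchur:basechange}), there is a canonical equivalence $\dSchur_A^{\lambda/\mu}(A^n) \simeq A \otimes_\ZZ \dSchur_\ZZ^{\lambda/\mu}(\ZZ^n)$. Because $(\ZZ, \ZZ^n) \in {\rm PolyMod}^{\rm ff}$, and because the derived Schur functor is defined (Definition \ref{def:dSchurWeyl}) as the left Kan extension along the fully faithful Yoneda embedding $j \colon {\rm PolyMod}^{\rm ff} \hookrightarrow \SCRModcn$, we have $\dSchur_\ZZ^{\lambda/\mu}(\ZZ^n) \simeq \Schur_\ZZ^{\lambda/\mu}(\ZZ^n)$, which is a finite free $\ZZ$-module by Theorem \ref{thm:Schur:free}. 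Hence $\dSchur_A^{\lambda/\mu}(A^n)$ is finite free over $A$.

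Next I would handle the flat case by Lazard's theorem in the derived setting (\cite[Theorem 7.2.2.15]{HA}): a connective $A$-module $M$ is flat if and only if it can be written as a filtered colimit $M \simeq \colim_{i \in I} M_i$ with each $M_i$ finite free over $A$. Since $\dSchur^{\lambda/\mu}$ is defined as a left Kan extension and therefore preserves all sifted colimits, we obtain
\[
\dSchur_A^{\lambda/\mu}(M) \simeq \colim_{i \in I} \dSchur_A^{\lambda/\mu}(M_i).
\]
By the finite free case just established, each $\dSchur_A^{\lambda/\mu}(M_i)$ is finite free; a filtered colimit of finite free modules is flat by Lazard's theorem again, so $\dSchur_A^{\lambda/\mu}(M)$ is flat.

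Finally, for the locally finite free case I would use that the assertion is local for the Zariski (or \'etale) topology on $\Spec A$: if there is a cover $\{A \to B_\alpha\}$ of $A$ such that $B_\alpha \otimes_A M$ is finite free over $B_\alpha$ for each $\alpha$, then by Proposition \ref{prop:dSchur:basechange} the canonical map
\[
B_\alpha \otimes_A \dSchur_A^{\lambda/\mu}(M) \xrightarrow{\sim} \dSchur_{B_\alpha}^{\lambda/\mu}(B_\alpha \otimes_A M)
\]
is an equivalence, and the right-hand side is finite free over $B_\alpha$ by the first step. Since local finite freeness is detected after such a cover, $\dSchur_A^{\lambda/\mu}(M)$ is locally finite free over $A$. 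No step poses a real obstacle; the only point requiring care is verifying that $\dSchur^{\lambda/\mu}|{\rm PolyMod}^{\rm ff}$ literally agrees with $\Schur^{\lambda/\mu}|{\rm PolyMod}^{\rm ff}$ under the Yoneda embedding, which is built into the construction of the left Kan extension.
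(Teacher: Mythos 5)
Your proof is correct and follows essentially the same route as the paper: reduce the (locally) finite free case to $(\ZZ,\ZZ^n)$ via the base-change property of Proposition \ref{prop:dSchur:basechange} and the fully faithfulness of the Yoneda embedding, invoke Theorem \ref{thm:Schur:free} (resp.\ Corollary \ref{cor:bSchurk}) there, and handle the flat case by Lazard's theorem together with the fact that derived Schur/Weyl functors preserve sifted colimits. Your spelled-out treatment of the locally finite free case via descent along a cover is a slight expansion of what the paper states tersely, but it is the same idea.
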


\begin{proof}
To prove the assertions about (locally) finite freeness for the derived Schur and Weyl functors (resp. the functor $\LL^{\lambda/\mu}(M,M')$), by virtue of the base change property Proposition \ref{prop:dSchur:basechange}, we are reduced to the cases $A=\ZZ$, $M= \ZZ^m$ (resp. $A=\ZZ$, $M= \ZZ^m$, $M'= \ZZ^n$). In the case, the assertion follows from Theorem \ref{thm:Schur:free} (resp. Corollary \ref{cor:bSchurk}). Finally, the assertions about flatness follow from the above cases of finite free modules by using Lazard’s Theorem (\cite[Theorem 7.2.2.15]{HA}) and the fact that these functors preserve filtered colimits.
\end{proof}

 \begin{remark}[Vanishing criteria] Let $A$ be a simplicial commutative ring.
\begin{enumerate}[leftmargin=*]
	\item If $M$ is a (locally) free $A$-module of finite $\rank M \ge 1$, then the proposition implies that $\dSchur^{\lambda/\mu}(M) \ne 0$ if and only if $\dWeyl^{\lambda/\mu}(M) \ne 0$, if and only if $0 \le \lambda_j^t - \mu_j^t \le \rank M$ for all $j$.
	\item Similarly, suppose $M$ and $M'$ are (locally) free $A$-modules of finite positive ranks $\rank M$ and $\rank M'$, respectively. We define two partitions $\gamma_{\min}$ and $\gamma_{\max}$ by the following formulae:
	$$(\gamma_{\min})_i : = \max\{\mu_i, \lambda_i - \rank M'\} \qquad (\gamma_{\max}^t)_j = \max \{\lambda_j^t, \mu_j^t + \rank M\}.$$
In particular, the partition $\gamma_{\min}$ depends only on $\lambda/\mu$ and $\rank M'$, and $\gamma_{\max} $ on $\lambda/\mu$ and $\rank M$.
Then for any integer $k \in [0,  |\lambda| - |\mu|]$, we have $\bSchur_A^{\lambda/\mu}(M,M')_k \ne 0$ if and only if there exists a partition $\gamma$ such that $\gamma_{\min} \subseteq \gamma \subseteq \gamma_{\max}$ and $|\lambda| - |\gamma| = k$.
\end{enumerate}
\end{remark}

\begin{proposition}
\label{prop:dSchur:flat}
Let $R$ be an ordinary commutative ring and $\lambda/\mu$ a skew partition. Then for any flat $R$-module $M$, there are canonical equivalences
	$$\dSchur_R^{\lambda/\mu}(M) \xrightarrow{\sim} \Schur_R^{\lambda/\mu}(M) \quad \text{and} \quad \dWeyl_R^{\lambda/\mu}(M) \xrightarrow{\sim} \Weyl_R^{\lambda/\mu}(M),$$
where $\Schur_R^{\lambda/\mu}(M)$ and $\Weyl_R^{\lambda/\mu}(M)$ are the classical Schur and Weyl modules of Definition \ref{def:SchurWeyl}. 
\end{proposition}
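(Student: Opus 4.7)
The plan is to treat the Schur case (the Weyl case being completely parallel), using Lazard's theorem together with the base-change property already proved in Proposition \ref{prop:dSchur:basechange} to reduce the problem to the universal finite free case over $\mathbb{Z}$, where the equivalence is true by construction.

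First I would dispose of the finite free case: suppose $M = R^n$ for some $n \ge 0$. Regarding $R^n$ as the pullback of $\mathbb{Z}^n$ along the unit map $\mathbb{Z} \to R$ and applying Proposition \ref{prop:dSchur:basechange}, we obtain a canonical equivalence
\[
\dSchur_R^{\lambda/\mu}(R^n) \simeq R \otimes_\mathbb{Z} \dSchur_\mathbb{Z}^{\lambda/\mu}(\mathbb{Z}^n).
\]
Since $(\mathbb{Z}, \mathbb{Z}^n) \in {\rm PolyMod}^{\rm ff}$, the right-hand side equals $R \otimes_\mathbb{Z} \Schur_\mathbb{Z}^{\lambda/\mu}(\mathbb{Z}^n)$ by the defining property of $\dSchur^{\lambda/\mu}$ as the left Kan extension of $\Schur^{\lambda/\mu}|{\rm PolyMod}^{\rm ff}$. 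Because $\Schur_\mathbb{Z}^{\lambda/\mu}(\mathbb{Z}^n)$ is a finite free $\mathbb{Z}$-module by Theorem \ref{thm:Schur:free}\eqref{thm:Schur:free-1}, the derived base change agrees with the classical one, and the classical-side base-change statement in Theorem \ref{thm:Schur:free}\eqref{thm:Schur:free-2} identifies it with $\Schur_R^{\lambda/\mu}(R^n)$. Exactly the same argument, using the corresponding statements for $\Weyl^{\lambda/\mu}$, handles the Weyl case.

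For a general flat $R$-module $M$, I would invoke Lazard's theorem (\cite[Theorem 7.2.2.15]{HA}) to write $M = \varinjlim_\alpha M_\alpha$ as a filtered colimit of finite free $R$-modules. By Definition \ref{def:dSchurWeyl}, the derived Schur functor $\dSchur_R^{\lambda/\mu}$ preserves sifted, hence filtered, colimits, so
\[
\dSchur_R^{\lambda/\mu}(M) \simeq \varinjlim_\alpha \dSchur_R^{\lambda/\mu}(M_\alpha) \simeq \varinjlim_\alpha \Schur_R^{\lambda/\mu}(M_\alpha)
\]
by the finite free case. On the classical side, the observation used in the proof of Theorem \ref{thm:Schur:free} that $\Schur_R^{\lambda/\mu}(\blank)$ commutes with filtered colimits of $R$-modules identifies this colimit with $\Schur_R^{\lambda/\mu}(M)$. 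Finally, since each $\Schur_R^{\lambda/\mu}(M_\alpha)$ is flat and hence discrete, the filtered colimit computed in $\Modcn_R$ agrees with the ordinary filtered colimit of discrete $R$-modules, so the equivalence takes place in $\Mod_R^\heartsuit$.

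The only subtle point is this last compatibility between derived and underived filtered colimits; it is standard, but it is crucial that the intermediate objects $\dSchur_R^{\lambda/\mu}(M_\alpha)$ are already known to be discrete (in fact flat) by Proposition \ref{prop:dSchur:free}, so that taking the colimit in $\Modcn_R$ introduces no higher homotopy. Once this is noted, the rest of the argument is routine bookkeeping combining base change, Lazard's theorem, and left Kan extension.
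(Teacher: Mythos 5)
Your proof is correct and follows essentially the same route as the paper: reduce to the finite free case via Lazard's theorem and preservation of filtered colimits on both sides, then settle the finite free case using base change to $\mathbb{Z}^n$. The only presentational difference is that the paper first exhibits the canonical natural transformation $\alpha \colon \dSchur^{\lambda/\mu} \to \Schur^{\lambda/\mu}$ on all of ${\rm CRingMod}^\heartsuit$ via the universal property of the left Kan extension and then shows $\alpha_{R,M}$ is an equivalence for flat $M$, whereas you build the equivalences bottom-up and then remark on their compatibility across colimits; both arguments carry the same mathematical content.
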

 
\begin{proof}
By construction, the two functors
	${\rm CRingMod}^\heartsuit \to \SCRModcn$
	$$(R,M) \mapsto (R, \dSchur_R^{\lambda/\mu}(M)) \quad \text{and} \quad (R,M) \mapsto (R, \Schur_R^{\lambda/\mu}(M))$$
agree on the subcategory ${\rm CRingMod}^{\rm ff}$. Since $\dSchur^{\lambda/\mu}$ is a Left Kan extension from its restriction $\dSchur^{\lambda/\mu}|{\rm CRingMod}^{\rm ff}$, there is an essentially unique natural transformation $\alpha$ between these two functors which induces functorial morphisms $\alpha_{R,M} \colon \dSchur_{R}^{\lambda/\mu}(M) \to \Schur_{R}^{\lambda/\mu}(M)$ for all $(R,M) \in {\rm CRingMod}^\heartsuit$ which satisfy $\alpha_{R,M} = \id$ if $(R,M) \in {\rm CRingMod}^{\rm ff}$. We wish to show that $\alpha_{R,M}$ is an equivalence if $M$ is a flat $R$-module. Since the functor $M \mapsto \alpha_{R,M}$ commutes with filtered colimits, by Lazard’s Theorem (\cite[Theorem 7.2.2.15]{HA}) we are reduced to the cases where $M = R \otimes_\ZZ \ZZ^n$ for some $n \ge 0$. In this case, the desired equivalence follows from Proposition \ref{prop:dSchur:basechange}. The proof for Weyl functors is similar. 
\end{proof}

\begin{remark} Similarly, if $R$ is an ordinary commutative ring, for any pair of flat $R$-modules $M, M'$ and any integer $k \ge 0$, there is a canonical equivalence
	$$\LL_{R}^{\lambda/\mu}(M,M')_k \xrightarrow{\sim} \bSchur_R^{\lambda/\mu}(M,M')_k,$$
where, if $M,M'$ are not finitely generated projective modules, the $k$th component $\bSchur_R^{\lambda/\mu}(M,M')_k$ of Schur complexes is defined via the cokernel construction of Remark \ref{rmk:bSchur:coker}.
\end{remark}

The following result shows that we are able to recover classical Schur and Weyl functors from classical truncations of the derived Schur and Weyl functors.

\begin{proposition}[Classical Truncation]
\label{prop:dSchur:classical}
Let $\lambda/\mu$ be a skew partition,  then for any $(A,M) \in \SCRModcn$, there are canonical equivalences 
	$$ \pi_0(\dSchur_A^{\lambda/\mu}(M)) \xrightarrow{\sim} \Schur_{\pi_0 A}^{\lambda/\mu}(\pi_0(M)) \quad \text{and} \quad \pi_0( \dWeyl_A^{\lambda/\mu}(M)) \xrightarrow{\sim} \Weyl_{\pi_0 A}^{\lambda/\mu}(\pi_0(M)),$$
where the right-hand sides denote the classical Schur and Weyl modules of Definition \ref{def:SchurWeyl}.
\end{proposition}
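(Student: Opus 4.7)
My approach follows the strategy used for derived symmetric powers in \cite[Proposition 25.2.3.2]{SAG}; I treat the Schur case, the Weyl case being formally parallel under the exchange of the symmetric/exterior and divided-power algebras of Definition \ref{def:SchurWeyl}. There are two reductions and one key classical step.

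First, I would apply Proposition \ref{prop:dSchur:basechange} along the unit $A \to \pi_0 A$ to obtain
$$\dSchur_A^{\lambda/\mu}(M) \otimes_A \pi_0 A \;\simeq\; \dSchur_{\pi_0 A}^{\lambda/\mu}(M \otimes_A \pi_0 A).$$
Since $\pi_0 N \xrightarrow{\sim} \pi_0(N \otimes_A \pi_0 A)$ for every connective $A$-module $N$, applying $\pi_0$ reduces the problem to the case where $A = R$ is an ordinary commutative ring. Next, using that $\Modcn_R$ is generated under sifted colimits by $\Mod_R^{\rm ff}$, I would choose a simplicial resolution $M_\bullet \to M$ by free $R$-modules. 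Because $\dSchur_R^{\lambda/\mu}$ preserves sifted colimits (Definition \ref{def:dSchurWeyl}) and agrees with $\Schur_R^{\lambda/\mu}$ on flat modules (Proposition \ref{prop:dSchur:flat}), this gives $\dSchur_R^{\lambda/\mu}(M) \simeq |\Schur_R^{\lambda/\mu}(M_\bullet)|$. Passing to $\pi_0$, which in $\Mod_R^\heartsuit$ is computed as the coequalizer of the two $0$-face maps, yields
$$\pi_0 \dSchur_R^{\lambda/\mu}(M) \;\simeq\; \mathrm{coeq}\!\bigl(\Schur_R^{\lambda/\mu}(d_0),\,\Schur_R^{\lambda/\mu}(d_1)\bigr) \quad \text{and} \quad \pi_0 M \simeq \mathrm{coeq}(d_0, d_1),$$
with reflexive structure on $(d_0, d_1) \colon M_1 \rightrightarrows M_0$ given by the degeneracy $s_0 \colon M_0 \to M_1$.

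The theorem then reduces to the classical statement that $\Schur_R^{\lambda/\mu} \colon \Mod_R^\heartsuit \to \Mod_R^\heartsuit$ preserves reflexive coequalizers, which is the main obstacle. Given a reflexive pair $d_0, d_1 \colon N \rightrightarrows P$ with section $s \colon P \to N$ and coequalizer $Q = P/I$, $I := \mathrm{im}(d_0 - d_1)$, surjectivity of $\Schur_R^{\lambda/\mu}(P) \twoheadrightarrow \Schur_R^{\lambda/\mu}(Q)$ is immediate from the cokernel description of Definition \ref{def:SchurWeyl}. Its kernel is generated, inside the symmetric-power factors of the tensor product $\Sym_R^{\lambda/\mu}(P)$, by elements of the form $(d_0(n) - d_1(n)) \cdot p_2 \cdots p_k$; each lifts to $n \cdot s(p_2) \cdots s(p_k) \in \Sym_R^{\lambda/\mu}(N)$, and applying $\Schur_R^{\lambda/\mu}(d_0) - \Schur_R^{\lambda/\mu}(d_1)$ to this lift recovers the original generator, exactly as in the symmetric-power argument of \cite[Proposition 25.2.3.2]{SAG}. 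The technical difficulty is verifying that this lifting is compatible with the full system of defining relations recorded in Notation \ref{notation:square}. A conceptually cleaner alternative would combine the Littlewood--Richardson filtration of Theorem \ref{thm:fil:Schur_LR} (reducing skew partitions to straight ones) with the Cauchy-type filtration of Theorem \ref{thm:fil:sym_otimes} (reducing straight partitions to tensor products of symmetric and exterior powers), thereby reducing the assertion to the symmetric-power case already handled in \textit{loc.\ cit.}
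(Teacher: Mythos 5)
Your high-level strategy is the same as the paper's: base-change to $A = \pi_0 A$, resolve $M$ by free modules simplicially, and reduce the claim to the statement that the classical Schur (resp.\ Weyl) functor preserves reflexive coequalizers. But the crucial reflexive-coequalizer step is not actually established, and the way you attempt it contains a conceptual confusion.

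Your element-level argument works ``inside the symmetric-power factors of $\Sym_R^{\lambda/\mu}(P)$,'' which appeals to the image description of Remark~\ref{rem:Schur.Weyl.as.images}. That description realizes $\Schur_R^{\lambda/\mu}(P)$ as a submodule of $\Sym_R^{\lambda/\mu}(P)$ and is available only for finite projective $P$. Worse, the elements $(d_0(n)-d_1(n))\cdot p_2\cdots p_k$ that you claim generate the kernel of $\Schur^{\lambda/\mu}_R(P)\to\Schur^{\lambda/\mu}_R(Q)$ do not lie in the Schur submodule of $\Sym_R^{\lambda/\mu}(P)$ in general, so they cannot generate a kernel taken inside $\Schur^{\lambda/\mu}_R(P)$. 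The argument should instead be carried out in the \emph{cokernel} presentation $\Schur_R^{\lambda/\mu} = \Coker\bigl(\square_{\lambda^t/\mu^t} \colon \widetilde{\bigwedge}_\cl^{\lambda^t/\mu^t}\to\bigwedge_\cl^{\lambda^t/\mu^t}\bigr)$ of Definition~\ref{def:SchurWeyl}. Then the key classical input is that each single exterior power $\bigwedge_R^p$ (resp.\ divided power $\Gamma_R^q$) preserves reflexive coequalizers of $R$-modules; once that is in hand, the tensor products $\bigwedge_\cl^{\lambda^t/\mu^t}$ and $\widetilde{\bigwedge}_\cl^{\lambda^t/\mu^t}$ (resp.\ their $\Gamma$-analogues from Notation~\ref{notation:square}) preserve them too, and passing to cokernels --- a colimit commuting with reflexive coequalizers --- does the rest. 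This is exactly the structure of the paper's proof for the Weyl case: it quotes Roby's result that $\Gamma_R^d$ preserves reflexive coequalizers, tensors up to $\Gamma_R^{\lambda/\mu}$ and $\widetilde{\Gamma}_R^{\lambda/\mu}$, and closes with a three-by-three exactness diagram identifying the cokernel with $\Weyl^{\lambda/\mu}_{\pi_0 A}(\pi_0 M)$.

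Your proposed ``cleaner alternative'' is also not quite viable. Theorem~\ref{thm:fil:sym_otimes} (Cauchy) decomposes $\Sym^n_R(\blank\otimes\blank)$ in terms of Schur functors; it does not go the other way and express $\Schur^{\lambda}$ via tensor products of symmetric and exterior powers. More importantly, the filtration results of \S\ref{sec:univ.fil:Schur} are stated for finite free or finite projective modules, whereas the reflexive-coequalizer preservation must hold for arbitrary $R$-modules to close the simplicial-resolution argument. So that route does not actually reduce you to the $\Sym^n$ case. The fix is simply to supply (or cite, as the paper does via Roby) the classical statement that $\bigwedge^p_R$ and $\Gamma^q_R$ individually preserve reflexive coequalizers, and then run the cokernel argument sketched above.
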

 
\begin{proof} We prove for derived Weyl functors; the proof for derived Schur functors are similar. By applying Proposition \ref{prop:dSchur:basechange} to the canonical map $A \to \pi_0 A$, we obtain equivalences 
	$$\pi_0(A) \otimes_A \dWeyl_A^{\lambda/\mu}(M) \xrightarrow{\sim} \dWeyl_{\pi_0(A)}^{\lambda/\mu}(\pi_0 A \otimes_A M).$$
Since $\pi_0 (\dWeyl_A^{\lambda/\mu}(M) ) \simeq \pi_0(\dWeyl_{\pi_0(A)}^{\lambda/\mu}(\pi_0 A \otimes_A M))$ and $\pi_0(M) \simeq \pi_0(\pi_0 A \otimes_A M)$, in order to prove the proposition, we may assume $A= R$ is an ordinary commutative ring and $M \in \Modcn_R$ is any connective complex. We let $P_* = \cdots \to P_2 \to P_1 \xrightarrow{\rho} P_0$ be a projective resolution of $M$, where $P_i$ are projective $R$-modules, and let $Q_\bullet :={\rm DK}(P_*)$ denote the simplicial projective resolution of $M$ obtained by applying the Dold--Kan correspondence functor to $P_*$ (\cite[Theorem 1.2.3.7]{HA}). Consequently, the discrete $R$-module $\pi_0 M$ is a coequalizer of the diagram 
	$$
	\begin{tikzcd}
		 P_0 \oplus P_1 \ar[shift left]{r}{d^0} \ar[shift right]{r}[swap]{d^1}  & P_0
	\end{tikzcd} 
	\quad \text{where} \quad d^0 (x,y)= x, d^1(x,y) =x + \rho(y) \quad\text{for}\quad x \in P_0, y \in P_1.
	$$
By virtue of 
\cite[IV.10, p. 284]{Ro}, we obtain for each $d \ge 0$ a coequalizer sequence
	$$
	\begin{tikzcd}[column sep=4em]
		 \Gamma_R^{d}(P_0 \oplus P_1) \ar[shift left]{r}{\Gamma_R^d(d^0)} \ar[shift right]{r}[swap]{\Gamma_R^d(d^1)}  &  \Gamma_R^{d}(P_0) \ar{r}{\Gamma_R^d(q)}& \Gamma_{\cl, R}^d(\pi_0(M)),
	\end{tikzcd} 
	$$
where $q \colon P_0 \to \pi_0(M)$ denotes the canonical quotient map. Inductively, we obtain exact sequences for all skew partitions $\lambda/\mu$:
	$$\Gamma_R^{\lambda/\mu}(P_0 \oplus P_1) \xrightarrow{\Gamma_R^{\lambda/\mu}(d^1) - \Gamma_R^{\lambda/\mu}(d^0)} \Gamma_R^{\lambda/\mu}(P_0) \xrightarrow{\Gamma_R^{\lambda/\mu}(q)} \Gamma_{\cl, R}^{\lambda/\mu}(\pi_0(M)) \to 0$$
	$$\widetilde{\Gamma}_R^{\lambda/\mu}(P_0 \oplus P_1) \xrightarrow{\widetilde{\Gamma}_R^{\lambda/\mu}(d^1) - \widetilde{\Gamma}_R^{\lambda/\mu}(d^0)} \widetilde{\Gamma}_R^{\lambda/\mu}(P_0) \xrightarrow{\widetilde{\Gamma}_R^{\lambda/\mu}(q)} \widetilde{\Gamma}_{\cl, R}^{\lambda/\mu}(\pi_0(M)) \to 0$$
where the functors $\Gamma^{\lambda/\mu}$ and $\widetilde{\Gamma}^{\lambda/\mu}$ are defined in Notation \ref{notation:square}. Consider the following commutative diagram	
	\begin{equation}
	\label{diag:pizerodWeyl=Weylpizero}
	\begin{tikzcd}[column sep= 5em]
		\widetilde{\Gamma}_R^{\lambda/\mu}(P_0 \oplus P_1) \ar{r}{\square'_{\lambda/\mu}(P_0 \oplus P_1)} \ar{d}{\widetilde{\Gamma}_R^{\lambda/\mu}(d^0) - \widetilde{\Gamma}_R^{\lambda/\mu}(d^1)} & 	\Gamma_R^{\lambda/\mu}(P_0 \oplus P_1) \ar{r} \ar{d}{\Gamma_R^{\lambda/\mu}(d^0) - \Gamma_R^{\lambda/\mu}(d^1)}  &  \Weyl_R^{\lambda/\mu}(P_0 \oplus P_1)  \ar{d}{\Weyl_R^{\lambda/\mu}(d^0) - \Weyl_R^{\lambda/\mu}(d^1)} \ar{r}& 0 \\
		\widetilde{\Gamma}_R^{\lambda/\mu}(P_0)  \ar{r}{\square'_{\lambda/\mu}(P_0)}  \ar{d}{\widetilde{\Gamma}_R^{\lambda/\mu}(q)} & {\Gamma}_R^{\lambda/\mu}(P_0)  \ar{r} \ar{d}{{\Gamma}_R^{\lambda/\mu}(q)}   & \Weyl_R^{\lambda/\mu}(P_0)  \ar{d}{\Weyl_R^{\lambda/\mu}(q)} \ar{r}& 0 \\
		\widetilde{\Gamma}_{\cl, R}^{\lambda/\mu}(\pi_0 M)	\ar{r}{\square'_{\lambda/\mu}(\pi_0 M)} \ar{d}  &{\Gamma}_{\cl,R}^{\lambda/\mu}(\pi_0 M) \ar{r} \ar{d}	 & \Weyl_R^{\lambda/\mu}(\pi_0 M) \ar{r} \ar{d}& 0 \\
		0 & 0 & 0,
	\end{tikzcd}
	\end{equation}
where first two columns are exact by the above arguments, and all three rows are  exact by the definition of classical Weyl modules (Definition \ref{def:SchurWeyl}). Consequently, the last column is also exact. On the other hand, since $\dWeyl_R^{\lambda/\mu}$ preserves geometric realizations of simplicial objects, $\pi_0 (\dWeyl_R^{\lambda/\mu}(M))$ is canonically isomorphic to the coequalizer of the diagram
	$$
	\begin{tikzcd}[column sep= 5em]
		 \Weyl_R^{\lambda/\mu}(P_0 \oplus P_1) \ar[shift left]{r}{\Weyl_R^{\lambda/\mu}(d^1)} \ar[shift right]{r}[swap]{\Weyl_R^{\lambda/\mu}(d^0)}  & \Weyl_R^{\lambda/\mu}(P_0).
	\end{tikzcd}
	$$
Comparing above with the last column of \eqref{diag:pizerodWeyl=Weylpizero}, we obtain the desired equivalence. 
\end{proof}

\begin{remark}
An alternative proof of the equivalence $ \pi_0(\dSchur_R^{\lambda/\mu}(M)) \simeq \Schur_R^{\lambda/\mu}(M)$ can be given by combining Proposition \ref{prop:dSchur_vs_bSchur} and \cite[Proposition V.2.2]{ABW} when $R$ is an ordinary commutative ring and $M$ is a discrete coherent $R$-module. This strategy, however, doesn't work in the case of Weyl functors.
\end{remark}

\subsection{Universal Sequences Associated with Derived Schur and Weyl Functors}
 \label{sec:univ.fib:dSchur}
This subsection generalizes the classical theory of canonical filtrations associated with Schur and Weyl functors (\S \ref{sec:univ.fil:Schur}) to the derived setting. We obtain results such as derived versions of Cauchy Decomposition Formula for derived symmetric and exterior powers (Theorem \ref{thm:fil:dsym_otimes}), Direct-Sum Decomposition Formula for derived Schur and Weyl powers of direct sums of complexes (Theorem \ref{thm:fib:dSchur_oplus}), and Littlewood--Richardson Rule for tensor products of derived Schur and Weyl functors (Theorem \ref{thm:fil:dSchur_LR} and Corolloary \ref{cor:fil:dSchur_LR}). Additionally, there are canonical sequence of morphisms of Koszul type associated with derived Schur functors (Theorem \ref{thm:fib:dbSchur}) which are vast generalizations of the Koszul complexes for classical symmetric powers.

It is worth noting that, in the following results, the assertion about the existence of certain universal sequences exist is sufficient for all of this paper's subsequent applications. However, we specified the orders of these sequences because they may be useful in future computations.

The following generalizes the classical Cauchy Decomposition Formula {Theorem \ref{thm:fil:sym_otimes}

\begin{theorem}[Cauchy Decomposition Formula] 
 \label{thm:fil:dsym_otimes}
Let $n$ be a positive integer, and let 
	$$\lambda^{0} = (1^n) < \lambda^{1} \cdots < \lambda^{p-2} < \lambda^{p-1}=(n)$$
be all partitions of $n$ in lexicographic order, where $p = p(n)$ is the number of partitions of $n$. 

\begin{enumerate}
	\item 
	 \label{thm:fil:dsym_otimes-1}
	There exists a canonical functor
			$$\SCRModcn \times_{\CAlgDelta} \SCRModcn \to \shE^{[p-1]}$$
			$$(A, M, M') \mapsto (A, F_A^{0,n}(M,M') \to  F_A^{1,n}(M,M') \to \cdots \to F_A^{p-1,n}(M,M'))$$
		which preserves sifted colimits and commutes with base change of simplicial commutative rings, such that there are canonical equivalences
			$$F_A^{p-1,n}(M,M') = \Sym_A^n(M \otimes_A M')  \qquad F_A^{0,n}(M,M') \simeq \bigwedge\nolimits_A^n M \otimes_A \bigwedge\nolimits_A^n M'$$
			$$\cofib\big(F_A^{i-1,n}(M,M') \to F_A^{i,n}(M,M')\big) \simeq \dSchur_A^{\lambda^i}(M) \otimes_A \dSchur_A^{\lambda^i}(M') \quad \text{for} \quad 1 \le i \le p-1.$$
		Moreover, the formations of the above equivalences are functorial with respect to $(A, M,M')$ and commute with base change of simplicial commutative rings. 
	\item 
	\label{thm:fil:dsym_otimes-2}
	There exists a canonical functor
			$$\SCRModcn \times_{\CAlgDelta} \SCRModcn \to \shE^{[p-1]}$$
			$$(A, M, M') \mapsto (A, G_A^{0,n}(M,M') \to  G_A^{1,n}(M,M') \to \cdots \to G_A^{p-1,n}(M,M'))$$
		which preserves sifted colimits and commutes with base change of simplicial commutative rings, such that there are canonical equivalences
			$$G_A^{p-1,n}(M,M') = \bigwedge\nolimits_A^n(M \otimes_A M')  \qquad G_A^{0,n}(M,M') \simeq \bigwedge\nolimits_A^n M \otimes_A \Gamma_A^n M'$$
			$$\cofib\big(G_A^{i-1,n}(M,M') \to G_A^{i,n}(M,M')\big) \simeq \dSchur_A^{\lambda^i}(M) \otimes_A \dWeyl_A^{(\lambda^i)^t}(M') \quad \text{for} \quad 1 \le i \le p-1.$$
		Moreover, the formations of the above equivalences are functorial with respect to $(A, M,M')$ and commute with base change of simplicial commutative rings. 
\end{enumerate}
\end{theorem}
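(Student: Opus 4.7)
The plan is to leverage Lurie's non-abelian derived functor machinery (Proposition \ref{prop:nonab:derived}) to upgrade the classical Cauchy decomposition formula (Theorem \ref{thm:fil:sym_otimes}) to the derived setting. Set $\shC := \SCRModcn \times_{\CAlgDelta} \SCRModcn$ and $\shC_0 := {\rm PolyMod}^{\rm ff} \times_{\rm Poly} {\rm PolyMod}^{\rm ff}$; by an argument parallel to Notation \ref{notation:SCRMod}(\ref{notation:SCRMod-2}), the inclusion $\shC_0 \hookrightarrow \shC$ induces an equivalence $\shP_\Sigma(\shC_0) \simeq \shC$. The target $\shE^{[p-1]}$ admits sifted colimits (formed termwise in $\SCRModcn$), so Proposition \ref{prop:nonab:derived} is applicable.

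First, I will construct the sequence functor on $\shC_0$. For $(R, M, M') \in \shC_0$, Theorem \ref{thm:fil:sym_otimes}(\ref{thm:fil:sym_otimes-1}) furnishes a filtration $0 = F_R^{-1} \subset F_R^{0} \subset \cdots \subset F_R^{p-1} = \Sym_R^n(M \otimes_R M')$ by submodules which is functorial in $(R,M,M')$, compatible with base change, and whose successive quotients $F_R^i / F_R^{i-1}$ are isomorphic to $\Schur_R^{\lambda^i}(M) \otimes_R \Schur_R^{\lambda^i}(M')$. By Theorem \ref{thm:Schur:free} these quotients are finite free, so each inclusion $F_R^{i-1} \hookrightarrow F_R^i$ is a monomorphism of discrete $R$-modules with finite free cokernel; consequently, when viewed in $\SCRModcn$, each such inclusion is a cofiber sequence whose cofiber is canonically identified with $\Schur_R^{\lambda^i}(M) \otimes_R \Schur_R^{\lambda^i}(M')$. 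Assembling these yields a functor $\shC_0 \to \shE^{[p-1]}$; the required $\infty$-categorical coherences are automatic since the mapping spaces between the relevant discrete flat objects in $\SCRModcn$ are discrete, and ordinary $1$-categorical functoriality suffices.

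Second, I extend by left Kan extension along $\shC_0 \hookrightarrow \shC$ using Proposition \ref{prop:nonab:derived}, obtaining an essentially unique sifted-colimit-preserving functor $F^{\bullet, n} \colon \shC \to \shE^{[p-1]}$. Base-change compatibility follows by the reduction argument used in the proof of Proposition \ref{prop:dSchur:basechange}. For each $(A, M, M') \in \shC$, the three required identifications of $F^{p-1,n}_A(M,M')$, $F^{0,n}_A(M,M')$, and $\cofib(F_A^{i-1,n} \to F_A^{i,n})$ are all established by uniqueness of left Kan extensions: in each case both sides define functors $\shC \to \SCRModcn$ that preserve sifted colimits (for the cofibers, because $\cofib$ is a finite colimit in $\SCRModcn$ and hence commutes with sifted colimits; for the endpoints, because $\dSchur^{\lambda^i}$, derived symmetric powers, derived exterior powers and the derived tensor product all preserve sifted colimits by construction) and they agree on $\shC_0$ by the classical theorem.

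The main technical point I expect to require care is verifying that the classical Cauchy filtration genuinely assembles into a diagram in $\shE^{[p-1]}$ up to coherent homotopy; this reduces to the observation that on $\shC_0$ all objects in question are discrete and flat, so their cofibers in $\SCRModcn$ coincide with classical cokernels and no higher coherences arise. Part (\ref{thm:fil:dsym_otimes-2}) is proved in an entirely parallel fashion: invoke Theorem \ref{thm:fil:sym_otimes}(\ref{thm:fil:sym_otimes-2}) to obtain the classical filtration of $\bigwedge\nolimits_R^n(M \otimes_R M')$ whose subquotients $\Schur_R^{\lambda^i}(M) \otimes_R \Weyl_R^{(\lambda^i)^t}(M')$ remain finite free (again by Theorem \ref{thm:Schur:free}), and run the same left Kan extension argument using that $\dWeyl^{(\lambda^i)^t}$ is the derived extension of $\Weyl^{(\lambda^i)^t}$.
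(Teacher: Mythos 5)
Your proposal is correct and follows essentially the same route as the paper: both apply Proposition \ref{prop:nonab:derived} with $\shC_0 = {\rm PolyMod}^{\rm ff} \times_{\rm Poly} {\rm PolyMod}^{\rm ff}$ and $\shD = \shE^{[p-1]}$, take $f$ to be the classical Cauchy filtration sequence from Theorem \ref{thm:fil:sym_otimes}, left Kan extend to obtain the sifted-colimit-preserving functor, and deduce base change via the argument of Proposition \ref{prop:dSchur:basechange}. Your added remarks on discreteness of mapping spaces (so no higher coherences arise when passing from the $1$-categorical filtration to a diagram in $\shE^{[p-1]}$) and on identifying endpoints and cofibers via uniqueness of left Kan extensions are details the paper leaves implicit, but the method is the same.
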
 

\begin{proof}
We apply Proposition \ref{prop:nonab:derived} to the situation where $\shC = {\rm PolyMod}^{\rm ff} \times_{\rm Poly} {\rm PolyMod}^{\rm ff}$, $\shD = \shE^{[p-1]}$ (Notation \ref{notation:SCRMod}), and $f \colon \shC \to \shD$ is the functor
	$$(R, M, M') \mapsto (R, F_R^{0}(M,M') \to  F_R^{1}(M,M') \to \cdots \to F_R^{p-1}(M,M')),$$
where $F_R^{0}(M,M') \to \cdots \to F_R^{p-1}(M,M')$ is the sequence of inclusion morphisms constructed in Theorem \ref{thm:fil:sym_otimes} \eqref{thm:fil:sym_otimes-1}. Then Proposition \ref{prop:nonab:derived} implies there is an essentially unique functor $\shP_{\Sigma}(\shC) \to \shD$ extending $f$ with the desired properties as described in assertion \eqref{thm:fil:dsym_otimes-1} except the base-change properties; The assertion that this functor commutes with base change follows from the same argument of the proof of Proposition \ref{prop:dSchur:basechange}. Assertion \eqref{thm:fil:dsym_otimes-2} is proved similarly. 
\end{proof}

\begin{example}
Let $A \in \CAlgDelta$ and $M, M' \in \Modcn_A$. 
\begin{enumerate}[leftmargin=*]
	\item If $n=2$, we obtain canonical cofiber sequences
	$$\bigwedge\nolimits_A^2 (M) \otimes_A \bigwedge\nolimits_A^2 (M') \to \Sym_A^2(M \otimes_A M') \to \Sym_A^2(M) \otimes_A \Sym_A^2(M').$$
	$$\bigwedge\nolimits_A^2 (M )\otimes_A \Gamma_A^2 (M') \to \bigwedge\nolimits_A^2(M \otimes_A M') \to \Sym_A^2(M) \otimes_A \bigwedge\nolimits_A^2(M').$$
	\item If $n=3$, we obtain canonical sequences of morphisms in $\Modcn_A$,
	$$(F^0_A(M,M') \to F^1_A(M,M') \to F^2_A(M,M')) \in \Fun(\Delta^2, \Modcn_A),$$
	$$(G^0_A(M,M') \to G^1_A(M,M') \to G^2_A(M,M')) \in \Fun(\Delta^2, \Modcn_A),$$
for which there are canonical equivalences:
	$$F^0_A(M,M') =  \bigwedge\nolimits_A^3(M) \otimes_A \bigwedge\nolimits_A^3(M')  \qquad F^2_A(M,M') = \Sym_A^3(M \otimes_A M'),$$
	$$\cofib(F^0_A(M,M') \to F^1_A(M,M')) \simeq \dSchur_A^{(2,1)}(M) \otimes_A \dSchur_A^{(2,1)}(M'),$$
	$$\cofib(F^1_A(M,M') \to F^2_A(M,M')) \simeq \Sym_A^3(M) \otimes_A \Sym_A^3(M').$$
	$$G^0_A(M,M') =  \bigwedge\nolimits_A^3(M) \otimes_A \Gamma_A^3(M')  \qquad G^2_A(M,M') = \bigwedge\nolimits_A^3(M \otimes_A M'),$$
	$$\cofib(G^0_A(M,M') \to G^1_A(M,M')) \simeq \dSchur_A^{(2,1)}(M) \otimes_A \dWeyl_A^{(2,1)}(M'),$$
	$$\cofib(G^1_A(M,M') \to G^2_A(M,M')) \simeq \Sym_A^3(M) \otimes_A \bigwedge\nolimits_A^3(M').$$
\end{enumerate}
\end{example}

\begin{theorem}[Direct-Sum Decomposition Formula; compare with Theorem \ref{thm:fil:Schur_oplus}]
\label{thm:fib:dSchur_oplus} 
Let $\lambda/\mu$ be a skew partition such that $\mu \neq \lambda$. For each integer $0 \le k \le |\lambda| -  |\mu|$, we let
	$$\gamma^{0}_{(k)} <   \gamma^{1}_{(k)} < \cdots < \gamma^{\ell_k-1}_{(k)}$$
denote all the partitions in $I_k(\lambda/\mu) = \{\gamma \mid \mu \subseteq \gamma \subseteq  \lambda, |\gamma| - |\mu|=k\}$ listed in lexicographic order, where $\ell_k = |I_k(\lambda/\mu)| \ge 1$ is the cardinality of $I_k(\lambda/\mu)$. We set $N = |\lambda|- |\mu|$. Then:
\begin{enumerate}[leftmargin=*]
	\item 
	\label{thm:fib:dSchur_oplus-1} 
	\begin{enumerate}
		\item 
		\label{thm:fib:dSchur_oplus-1i} 
		There are canonical decomposition of functors 
			$$\SCRModcn \times_{\CAlgDelta} \SCRModcn \to \SCRModcn$$
			$$(A, M_1, M_2) \mapsto (A, \dSchur_A^{\lambda/\mu}(M_1 \oplus M_2) = \bigoplus_{k=0}^{N} \dSchur_A^{\lambda/\mu}(M_1,M_2)_{(k,N-k)})$$
		which preserves sifted colimits and commutes with base change of simplicial commutative rings, and for each $k$, the functor $(A, M_1, M_2) \mapsto (A, \dSchur_A^{\lambda/\mu}(M_1,M_2)_{(k,N-k)})$ preserves sifted colimits and commutes with base change of simplicial commutative rings. Furthermore, for each $0 \le k \le N$, there is a canonical functor
			$$\SCRModcn \times_{\CAlgDelta} \SCRModcn \to \shE^{[\ell_k-1]}$$
			$$(A, M_1, M_2) \mapsto (A, F_A^{0,(k)}(M_1,M_2) \to  F_A^{1,(k)}(M_1,M_2) \to \cdots \to F_A^{\ell_k-1,(k)}(M_1,M_2))$$
		which preserves sifted colimits and commutes with base change of simplicial commutative rings, such that
			$$F_A^{\ell_k-1,(k)}(M_1,M_2) = \dSchur_A^{\lambda/\mu}(M_1, M_2)_{(k,N-k)},$$
		and for each $0 \le i \le \ell_k-1$, there is a canonical equivalence
			$$\cofib\big(F_A^{i-1,(k)}(M_1,M_2) \to F_A^{i,(k)}(M_1,M_2)\big) \simeq \dSchur_A^{\gamma_{(k)}^i/\mu}(M_1) \otimes_A \dSchur_A^{\lambda/\gamma^i_{(k)}}(M_2)$$
		(Here, by convention, we let $F_A^{\, -1,(k)}(M_1,M_2)=0$.) 
		Moreover, the formations of the above equivalences are functorial with respect to $(A,M_1,M_2)$ and commute with base change of simplicial commutative rings. 

		\item 
		\label{thm:fib:dSchur_oplus-1ii} 
		There are canonical decomposition of functors 
			$$\SCRModcn \times_{\CAlgDelta} \SCRModcn \to \SCRModcn$$
			$$(A, M_1, M_2) \mapsto (A, \dWeyl_A^{\lambda/\mu}(M_1 \oplus M_2) = \bigoplus_{k=0}^{N} \dWeyl_A^{\lambda/\mu}(M_1,M_2)_{(k,N-k)}),$$
		where for each $k$, the functor $(A, M_1, M_2) \mapsto (A, \dWeyl_A^{\lambda/\mu}(M_1,M_2)_{(k,N-k)})$ preserves sifted colimits and commutes with base change of simplicial commutative rings. 
		Furthermore, for each $0 \le k \le N$, there is a canonical functor
			$$\SCRModcn \times_{\CAlgDelta} \SCRModcn \to \shE^{[\ell_k-1]}$$
			$$(A, M_1, M_2) \mapsto (A, G^A_{\ell_k-1,(k)}(M_1,M_2) \to\cdots \to G^A_{1,(k)}(M_1,M_2) \to G^A_{0,(k)}(M_1,M_2))$$
		which preserves sifted colimits and commutes with base change of simplicial commutative rings, such that 
			$$G^A_{0,(k)}(M_1,M_2) = \dWeyl_A^{\lambda/\mu}(M_1, M_2)_{(k,N-k)},$$
		and for each $0 \le i \le \ell_k-1$, there is a canonical equivalence
			$$\cofib\big(G^A_{i+1,(k)}(M_1,M_2) \to G^A_{i,(k)}(M_1,M_2)\big) \simeq \dWeyl_A^{\gamma_{(k)}^{i}/\mu}(M_1) \otimes_A \dWeyl_A^{\lambda/\gamma^{i}_{(k)}}(M_2).$$
		(Here, by convention, we let $G^A_{\ell_k,(k)}(M_1,M_2)=0$.) 
		Moreover, the formations of the above equivalences are functorial with respect to $(A, M_1, M_2)$ and commute with base change of simplicial commutative rings. 
			\end{enumerate}

	\item 
	\label{thm:fib:dSchur_oplus-2} 
	\begin{enumerate}
		\item
		\label{thm:fib:dSchur_oplus-2i} 
		There is a canonical functor 
			$$\shE=\shE^{[1]} \to \shE^{[N]},$$
		$$(A, \rho \colon M' \to M) \mapsto (A, F_{N}(A, \rho) \to \cdots \to F_{1}(A,\rho) \to F_{0}(A, \rho))$$
		(where $N = |\lambda| - |\mu|$) which preserves sifted colimits and commutes with base change of simplicial commutative rings, such that there are canonical equivalencs
			$$F_{0}(A, \rho) = \dSchur_A^{\lambda/\mu}(M) \qquad F_{N}(A, \rho) = \dSchur_A^{\lambda/\mu}(M')$$
			$$\cofib(F_1(A,\rho) \to F_0(A,\rho)) \simeq \dSchur_A^{\lambda/\mu}(\cofib(\rho))$$
			$$\cofib(F_{k+1}(A, \rho) \to F_{k}(A, \rho)) \simeq \dSchur^{\lambda/\mu}_A(M',\cofib(\rho))_{(k,N-k)} \quad \text{for} \quad 0 \le k \le N-1.$$
		Moreover, the formations of the above equivalences are functorial with respect to $(A, \rho \colon M' \to M)$ and commute with base change of simplicial commutative rings. 

			\item
			\label{thm:fib:dSchur_oplus-2ii}
			There is a canonical functor 
			$$\shE=\shE^{[1]} \to \shE^{[N]},$$
		$$(A, \rho \colon M' \to M) \mapsto (A, G_{N}(A, \rho) \to \cdots \to G_{1}(A,\rho) \to G_{0}(A, \rho))$$
		which preserves sifted colimits and commutes with base change of simplicial commutative rings, such that there are canonical equivalencs
			$$G_{0}(A, \rho) = \dWeyl_A^{\lambda/\mu}(M) \qquad G_{N}(A, \rho) = \dWeyl_A^{\lambda/\mu}(M')$$
			$$\cofib(G_1(A,\rho) \to G_0(A,\rho)) \simeq \dWeyl_A^{\lambda/\mu}(\cofib(\rho))$$
			$$\cofib(G_{k+1}(A, \rho) \to G_{k}(A, \rho)) \simeq \dWeyl^{\lambda/\mu}_A(M',\cofib(\rho))_{(k,N-k)} \quad \text{for} \quad 0 \le k \le N-1.$$
		Moreover, the formations of the above equivalences are functorial with respect to $(A, \rho \colon M' \to M)$ and commute with base change of simplicial commutative rings. 
		\end{enumerate}
\end{enumerate}
 \end{theorem}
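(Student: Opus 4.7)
The plan is to lift the classical direct-sum decomposition formula (Theorem \ref{thm:fil:Schur_oplus}) to the derived setting via Lurie's non-abelian derived theory, in the same spirit as the proof of Theorem \ref{thm:fil:dsym_otimes}. For Part \eqref{thm:fib:dSchur_oplus-1}, I would apply Proposition \ref{prop:nonab:derived} with $\shC = {\rm PolyMod}^{\rm ff} \times_{\rm Poly} {\rm PolyMod}^{\rm ff}$ (whose non-abelian derived category is $\SCRModcn \times_{\CAlgDelta} \SCRModcn$, by the argument of \cite[Proposition 25.2.1.2]{SAG}) and $\shD = \shE^{[\ell_k-1]}$. Theorem \ref{thm:fil:Schur_oplus}\eqref{thm:fil:Schur_oplus-1}\eqref{thm:fil:Schur_oplus-1i} supplies a functor $\shC \to \shE^{[\ell_k-1]}$ sending $(R, M_1, M_2)$ to the classical sequence of inclusions $F_R^{0,(k)} \hookrightarrow \cdots \hookrightarrow F_R^{\ell_k-1,(k)}$; left Kan extension along the Yoneda embedding then produces the sifted-colimit-preserving functor giving the filtration in \eqref{thm:fib:dSchur_oplus-1i}. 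The finite direct-sum decomposition is preserved by left Kan extension since direct sums over a fixed finite index set commute with colimits, and I would then define $\dSchur_A^{\lambda/\mu}(M_1,M_2)_{(k,N-k)}$ as the top $F_A^{\ell_k-1,(k)}$ of the filtration, which matches the classical definition on $\shC$.

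For the cofiber identifications, left Kan extension preserves all small colimits, hence cofibers. On $\shC$ the modules in the classical filtration are finite free over polynomial rings, so the classical short exact sequence
\begin{equation*}
0 \to F_R^{i-1,(k)} \to F_R^{i,(k)} \to \Schur_R^{\gamma_{(k)}^i/\mu}(M_1) \otimes_R \Schur_R^{\lambda/\gamma_{(k)}^i}(M_2) \to 0
\end{equation*}
is a cofiber sequence in $\SCRModcn$. The functors $(A, M_1, M_2) \mapsto \cofib(F_A^{i-1,(k)} \to F_A^{i,(k)})$ and $(A, M_1, M_2) \mapsto \dSchur_A^{\gamma_{(k)}^i/\mu}(M_1) \otimes_A \dSchur_A^{\lambda/\gamma_{(k)}^i}(M_2)$ both preserve sifted colimits and agree on the generating subcategory $\shC$, so by the universal property of left Kan extension they are canonically equivalent. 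Base-change compatibility follows by the same reduction argument as in Proposition \ref{prop:dSchur:basechange}: the canonical comparison morphism preserves sifted colimits, reducing to the universal case on $\shC$ where it is handled by Theorem \ref{thm:fil:Schur_oplus}. The Weyl case \eqref{thm:fib:dSchur_oplus-1ii} is proved identically using Theorem \ref{thm:fil:Schur_oplus}\eqref{thm:fil:Schur_oplus-1}\eqref{thm:fil:Schur_oplus-1ii}.

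For Part \eqref{thm:fib:dSchur_oplus-2}, I would use the equivalence $\shP_\Sigma(\shE_0) \simeq \shE$ from Notation \ref{notation:SCRMod}\eqref{notation:SCRMod-3}: the classical Theorem \ref{thm:fil:Schur_oplus}\eqref{thm:fil:Schur_oplus-2}\eqref{thm:fil:Schur_oplus-2i} provides a functor $\shE_0 \to \shE^{[N]}$ sending a short exact sequence $\alpha \colon 0 \to M' \to M \to M'' \to 0$ of finite free modules over a polynomial ring to its classical filtration $F_N(R,\alpha) \hookrightarrow \cdots \hookrightarrow F_0(R,\alpha) = \Schur_R^{\lambda/\mu}(M)$. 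Left Kan extending along $\shE_0 \hookrightarrow \shE$ produces the desired functor $\shE \to \shE^{[N]}$ preserving sifted colimits, and base-change compatibility is again a sifted-colimit reduction. The key observation is that on $\shE_0$ the morphism $\rho$ is a split injection of finite free $R$-modules, so the derived cofiber $\cofib(\rho) \in \Modcn_R$ coincides with the classical cokernel $M''$.

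The main obstacle will be identifying the cofibers with $\dSchur_A^{\lambda/\mu}(M',\cofib(\rho))_{(k,N-k)}$ in full generality. I would resolve this by the two-functor comparison principle: the functor $(A, \rho) \mapsto \cofib(F_{k+1}(A,\rho) \to F_k(A,\rho))$ preserves sifted colimits by construction, and the functor $(A, \rho) \mapsto \dSchur_A^{\lambda/\mu}(M', \cofib(\rho))_{(k,N-k)}$ preserves sifted colimits by combining Proposition \ref{prop:dSchur:basechange}, the fact that $\cofib$ preserves sifted colimits of morphisms, and Part \eqref{thm:fib:dSchur_oplus-1}\eqref{thm:fib:dSchur_oplus-1i}. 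Their restrictions to $\shE_0$ are equivalent by Theorem \ref{thm:fil:Schur_oplus}\eqref{thm:fil:Schur_oplus-2}\eqref{thm:fil:Schur_oplus-2i} (combined with the identification $\cofib(\rho) \simeq M''$ on $\shE_0$), so the universal property of left Kan extension along $\shE_0 \hookrightarrow \shE$ forces them to be equivalent on all of $\shE$. The boundary identifications $F_0(A,\rho) = \dSchur_A^{\lambda/\mu}(M)$ and $F_N(A,\rho) = \dSchur_A^{\lambda/\mu}(M')$ follow from the corresponding classical identifications and sifted-colimit preservation. Part \eqref{thm:fib:dSchur_oplus-2}\eqref{thm:fib:dSchur_oplus-2ii} for Weyl functors is handled by the same method applied to Theorem \ref{thm:fil:Schur_oplus}\eqref{thm:fil:Schur_oplus-2}\eqref{thm:fil:Schur_oplus-2ii}.
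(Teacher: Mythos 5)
Your proposal is correct and follows the same strategy as the paper's proof: apply Proposition \ref{prop:nonab:derived} with $\shC = {\rm PolyMod}^{\rm ff} \times_{\rm Poly} {\rm PolyMod}^{\rm ff}$ (resp.\ $\shC = \shE_0$), $\shD = \shE^{[\ell_k-1]}$ (resp.\ $\shD = \shE^{[N]}$), and the classical filtration functor from Theorem \ref{thm:fil:Schur_oplus}, then left Kan extend and handle base change by the sifted-colimit reduction of Proposition \ref{prop:dSchur:basechange}. You actually provide more detail than the paper (which defers to the proof of Theorem \ref{thm:fil:dsym_otimes}) on points such as why the finite direct-sum decomposition passes to the derived setting, the two-functor comparison for the cofiber identifications, and the observation that $\cofib(\rho) \simeq M''$ on $\shE_0$ since $\rho$ is a split injection there; the only slight imprecision is the aside ``left Kan extension preserves all small colimits,'' which isn't needed since your two-functor comparison on sifted colimits already carries the argument.
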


\begin{proof}
We first prove assertion \eqref{thm:fib:dSchur_oplus-1i}. For each $k$, $0 \le k \le N$, similarly as with Theorem \ref{thm:fil:dsym_otimes}, we apply Proposition \ref{prop:nonab:derived} to the situation where $\shC = {\rm PolyMod}^{\rm ff} \times_{\rm Poly} {\rm PolyMod}^{\rm ff}$, $\shD = \shE^{[\ell_k-1]}$ (Notation \ref{notation:SCRMod}), and $f \colon \shC \to \shD$ is the functor
	$$(R, M, M') \mapsto (R, F_R^{0,(k)}(M,M') \to  F_R^{1,(k)}(M,M') \to \cdots \to F_R^{\ell_k-1, (k)}(M,M')),$$
where $F_R^{0, (k)}(M,M') \to \cdots \to F_R^{p-1, (k)}(M,M')$ is the sequence of inclusion morphisms constructed in Theorem \ref{thm:fil:Schur_oplus} \eqref{thm:fil:Schur_oplus-1i}. Then Proposition \ref{prop:nonab:derived} implies that there is an essentially unique functor extending $f$ as described in assertion \eqref{thm:fib:dSchur_oplus-1i} and base-change properties are similarly proved as we did in Proposition \ref{prop:dSchur:basechange}. Assertion \eqref{thm:fil:Schur_oplus-1ii} is proved similarly. 

Assertion \eqref{thm:fib:dSchur_oplus-2i} is proved similarly by applying Proposition \ref{prop:nonab:derived} to the situation where $\shC = \shE_0$, $\shD = \shE^{[|\lambda| - |\mu|]}$, and $f$ is the functor 
	$$(R, \rho \colon M' \to M) \mapsto (A, F_{|\lambda|-|\mu|}(R, \alpha) \to \cdots \to F_{1}(R,\alpha) \to F_{0}(R, \alpha))$$
where $\alpha$ denote the short exact sequence $M' \xrightarrow{\rho} M \to M'':={\rm Coker}(\rho)$ determined by $\rho$, and $F_{|\lambda|-|\mu|}(R, \alpha) \to \cdots  \to F_{0}(R, \alpha)$ is the sequence of inclusions constructed in Theorem \ref{thm:fil:Schur_oplus} \eqref{thm:fil:Schur_oplus-2i}. The proof of assertion \eqref{thm:fib:dSchur_oplus-2ii} is similar.  
\end{proof}	

\begin{remark}
\label{rmk:fil:dSchur_oplus:(k,N-k)}
Similar to Remark \ref{rmk:fil:Schur_oplus:(k,N-k)}, we should regard the summand $\dSchur_A^{\lambda/\mu}(M_1,M_2)_{(k,N-k)}$ (resp. $\dWeyl_A^{\lambda/\mu}(M_1, M_2)_{(k,N-k)}$) as the {\em bi-degree $(k,N-k)$ homogeneous component of $\dSchur_A^{\lambda/\mu}(M_1 \oplus M_2)$  (resp. {\em $\dWeyl_A^{\lambda/\mu}(M_1 \oplus M_2)$})} in the sense of polynomial functor theory (see \cite{BGMN21}). Moreover, as with Remark \ref{rmk:fil:Schur_oplus:(k,N-k)}, the canonical equivalence $\dSchur_A^{\lambda/\mu}(M_1 \oplus M_2) \simeq \dSchur_A^{\lambda/\mu}(M_2 \oplus M_1)$ (resp. $\dWeyl_A^{\lambda/\mu}(M_1 \oplus M_2) \simeq \dWeyl_A^{\lambda/\mu}(M_2 \oplus M_1)$) induces canonical equivalences of summands $\dSchur_A^{\lambda/\mu}(M_1, M_2)_{(k,N-k)} \simeq \dSchur_A^{\lambda/\mu}(M_2, M_1)_{(N-k,k)}$ (resp. $\Weyl_A^{\lambda/\mu}(M_1, M_2)_{(k,N-k)} \simeq \Weyl_A^{\lambda/\mu}(M_2, M_1)_{(N-k,k)}$).
\end{remark}

\begin{remark}[Complexes Associated with Derived Schur and Weyl Functors]
\label{rmk:fib:dSchur_oplus}
Let $A \in \CAlgDelta$ and let $\gamma \colon P_1 \to P_0$ denote a morphism of objects in $\Mod_A^{\cn}$. Let $M' = P_0$, $M = \cofib(\gamma)$ and let $\rho \colon M' \to M$ denote the canonical morphism. Then $\cofib(\rho) \simeq P_1[1]$. In view of \cite[Remark 1.2.2.3, Lemma 1.2.2.4]{HA}, we should regard the canonical sequence
 	$$F_{N}(A, \rho) \to \cdots \to F_{1}(A,\rho) \to F_{0}(A, \rho)$$
of Theorem \ref{thm:fib:dSchur_oplus} \eqref{thm:fib:dSchur_oplus-2i} as a $\shJ$-{complex} in the sense of \cite[Definition 1.2.2.2]{HA} (where $\shJ = [n] \cup \{-\infty\}$) which resolves $\dSchur_A^{\lambda/\mu}(\cofib(\gamma))$. Concretely, by virtue of \cite[Remark 1.2.2.3]{HA}, the above sequence induces a {\em complex} of objects
	$$0 \to \dSchur_A^{\lambda/\mu}(P_1[1])[-N] \to 
	\cdots 
	\to \dSchur_A^{\lambda/\mu}(P_0, P_1[1])_{(N-k,k)}[-k] \to \cdots \to  
	\dSchur_A^{\lambda/\mu}(P_0)$$
 in the homotopy category $\D_{\ge 0}(A)={\rm Ho}(\Mod_A^\cn)$. In the case where $A=R$ is an ordinary commutative ring and $P_i$ are finite projective modules, the above complex is a complex of discrete finite projective $R$-modules. We will see in Proposition \ref{prop:bSchur_k=Schur_oplus_k} (see Remark \ref{rem:bSchur_k=Schur_oplus_k}) that in this case, the above sequence is equivalent to the Schur complex $\bSchur_R^{\lambda/\mu}(\gamma \colon P_1 \to P_0)$. Therefore, we could regard the above sequence $F_{N}(A, \rho) \to \cdots \to F_{0}(A, \rho)$ as a derived version of Schur complex for any morphism $\rho \colon P_0 \to P_1$ in $\Mod_A^{\cn}$.

Similarly, in the case where $\rho \colon P_0 \to \cofib(P_1 \xrightarrow{\gamma} P_0)$, we should regard the sequence
 	$$G_{N}(A, \rho) \to \cdots \to G_{1}(A,\rho) \to G_{0}(A, \rho)$$
of Theorem \ref{thm:fib:dSchur_oplus} \eqref{thm:fib:dSchur_oplus-2ii} as a derived version of {\em ``Weyl complex"} which resolves $\dWeyl_A^{\lambda/\mu}(\cofib(\gamma))$. (As above, it is a $\shJ$-complex in the sense of \cite[Definition 1.2.2.2]{HA}, where $\shJ = [n] \cup \{-\infty\}$). By virtue of \cite[Remark 1.2.2.3]{HA}, the above sequence induces a {\em complex} of objects
	$$0 \to \dWeyl_A^{\lambda/\mu}(P_1[1])[-N] \to 
	\cdots 
	\to \dWeyl_A^{\lambda/\mu}(P_0, P_1[1])_{(N-k,k)}[-k] \to \cdots \to  
	\dWeyl_A^{\lambda/\mu}(P_0)$$
 in the homotopy category $\D_{\ge 0}(A)={\rm Ho}(\Mod_A^\cn)$. 
  
 Be aware, however, that unlike Schur complexes, the components $\dWeyl_A^{\lambda/\mu}(P_0, P_1[1])_{(N-k,k)}[-k]$ of the above complex are generally {\em not} discrete modules, even in the case where $A=R$ is an ordinary commutative ring and $P_i$ are finite free modules; see, for example, \cite{BMT} for computations in the cases of divided powers. As a result, we should not generally expect a classical ``Weyl complex" (as a connective complex of finite free modules) to exist.
\end{remark}

The next result is a derived generalization of Theorem \ref{thm:fil:Schur_LR}, which asserts that derived Schur (resp. Weyl) functors associated with skew partitions can be expressed as iterated extensions of the derived Schur (resp. Weyl) functors associated with usual partitions.

\begin{theorem}[Decomposition Rule for Skew Partitions]
\label{thm:fil:dSchur_LR} 
Let $\lambda/\mu$ be a skew partition such that  $\mu \subsetneq \lambda$. 
We let 
	$$\alpha^{0} < \alpha^{1} < \cdots < \alpha^{s-1}$$
denote the list of all the partitions $\alpha$ of size $|\lambda| - |\mu|$ such that $c_{\mu,\alpha}^{\lambda} \ne 0$ in the lexicographical order, and consider the sequence of partitions with each $\alpha^i$ repeated $c_{\mu,\alpha^i}^{\alpha}$-many times:
	$$(\tau^0, \tau^1, \ldots, \tau^{\ell-1}) : =(\underbrace{\alpha^0, \cdots, \alpha^0}_{c_{\mu,\alpha^0}^{\lambda}\,\text{terms}}, \underbrace{\alpha^1, \cdots, \alpha^1}_{c_{\mu,\alpha^1}^{\lambda} \,\text{terms}}, \cdots, \underbrace{\alpha^{s-1}, \cdots, \alpha^{s-1}}_{c_{\mu,\alpha^{s-1}}^{\lambda}\,\text{terms}}).$$
Here, $c_{\mu,\alpha^i}^{\lambda}$ are the Littlewood-Richardson numbers, and $ \ell: = \ell(\lambda/\mu) = \sum_{\gamma \subseteq \lambda} c_{\mu, \gamma}^{\lambda} = \sum_{i=0}^{s-1} c_{\mu,\alpha^i}^{\lambda}.$

\begin{enumerate}
	\item \label{thm:fil:dSchur_LR-1} 
	There is a canonical functor
			$$\SCRModcn \to \shE^{[\ell-1]}$$
			$$(A, M) \mapsto (A, F^{0}(A, M) \to  F^{1}(A, M) \to \cdots \to F^{\ell-1}(A, M))$$
		which preserves sifted colimits and commutes with base change of simplicial commutative rings, for which there are canonical equivalences
			$$F^{\ell-1}(A, M) = \dSchur_A^{\lambda/\mu}(M) \qquad F^{0}(A, M) \simeq \dSchur^{\alpha^0}_A(M)$$
			$$\cofib\big(F^{i-1}(A, M) \to F^{i}(A, M)\big) \simeq \dSchur_A^{\tau^i}(M) \quad \text{for} \quad 1 \le i \le \ell-1.$$
	Moreover, the formations of the above equivalences are functorial with respect to $(A,M)$ and commute with base change of simplicial commutative rings. 
	\item \label{thm:fil:dSchur_LR-2} 
	There is a canonical functor
			$$\SCRModcn \to \shE^{[\ell-1]}$$
			$$(A, M) \mapsto (A, G_{\ell-1}(A, M) \to  \cdots \to G_{1}(A,M) \to G_{0}(A, M))$$
		which preserves sifted colimits and commutes with base change of simplicial commutative rings, for which there are canonical equivalences
			$$G_0(A, M) = \dWeyl_A^{\lambda/\mu}(M) \qquad G_{\ell-1}(A, M) \simeq \dWeyl^{\alpha^{s-1}}_A(M)$$
			$$\cofib\big(G_{i+1}(A, M) \to G_{i}(A,M)\big) \simeq \dWeyl_A^{\tau^i}(M) \quad \text{for} \quad 1 \le i \le \ell-1.$$
	Moreover, the formations of the above equivalences are functorial with respect to $(A,M)$ and commute with base change of simplicial commutative rings. 
\end{enumerate}
\end{theorem}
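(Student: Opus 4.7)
The strategy will mirror the proofs of Theorems \ref{thm:fil:dsym_otimes} and \ref{thm:fib:dSchur_oplus}: apply Lurie's non-abelian derived theory (Proposition \ref{prop:nonab:derived}) to extend the classical filtrations of Theorem \ref{thm:fil:Schur_LR} to the derived setting, then verify base-change compatibility and identify the cofibers.

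To prove \eqref{thm:fil:dSchur_LR-1}, I take $\shC = {\rm PolyMod}^{\rm ff}$ and $\shD = \shE^{[\ell-1]}$ (Notation \ref{notation:SCRMod}), and consider the functor $f \colon \shC \to \shD$ which sends $(R, M)$ to the sequence of inclusions
\[
F_R^{0}(M) \hookrightarrow F_R^{1}(M) \hookrightarrow \cdots \hookrightarrow F_R^{\ell-1}(M) = \Schur_R^{\lambda/\mu}(M)
\]
constructed in Theorem \ref{thm:fil:Schur_LR}\eqref{thm:fil:Schur_LR-1}. Since $R$ is a polynomial ring and $M$ is finite free, each subfunctor $F_R^{i}$ and each quotient $F_R^{i}/F_R^{i-1} \simeq \Schur_R^{\tau^i}(M)$ is again a finite free $R$-module (Theorem \ref{thm:Schur:free}), so $f$ genuinely lands in $\shE^{[\ell-1]}$. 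By Proposition \ref{prop:nonab:derived}, $f$ admits an essentially unique left Kan extension $F \colon \SCRModcn \to \shE^{[\ell-1]}$ along the Yoneda embedding, and this $F$ preserves sifted colimits. We define $F^{i}(A,M)$ for general $(A,M) \in \SCRModcn$ by applying $F$.

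For the cofiber identification, observe that on ${\rm PolyMod}^{\rm ff}$ the short exact sequences $0 \to F_R^{i-1}(M) \to F_R^{i}(M) \to \Schur_R^{\tau^i}(M) \to 0$ of finite free $R$-modules are \emph{split}, so their images in $\Modcn_R$ are cofiber sequences; combined with Proposition \ref{prop:dSchur:flat} (which identifies $\Schur^{\tau^i}$ with $\dSchur^{\tau^i}$ on flat modules), one obtains the desired equivalence $\cofib(F^{i-1}(R,M) \to F^{i}(R,M)) \simeq \dSchur_R^{\tau^i}(M)$ on the subcategory $\shC$. These equivalences then extend functorially to all of $\SCRModcn$ by left Kan extension, using that both $(A,M) \mapsto \cofib(F^{i-1}(A,M) \to F^{i}(A,M))$ and $(A,M) \mapsto \dSchur_A^{\tau^i}(M)$ preserve sifted colimits.

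The base-change assertion is proved exactly as in Proposition \ref{prop:dSchur:basechange}: given $A \to B$ and $M \in \Modcn_A$, the candidate comparison morphism $B \otimes_A F^{i}(A,M) \to F^{i}(B, B\otimes_A M)$ is a natural transformation between two functors of $(A,B,M)$ which both commute with sifted colimits in each variable; one reduces successively to the cases $A = \ZZ$, $B = \ZZ[x_1,\ldots,x_s]$ and $M=\ZZ^n$, where the map is tautologically an equivalence on $\shC$. Assertion \eqref{thm:fil:dSchur_LR-2} is proved by the completely parallel construction using the filtration of Theorem \ref{thm:fil:Schur_LR}\eqref{thm:fil:Schur_LR-2}, with the indexing reversed so that $G_0(A,M) = \dWeyl_A^{\lambda/\mu}(M)$.

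The main subtlety, and the step that most deserves care, is ensuring that the classical filtration steps truly upgrade to cofiber sequences after left Kan extension; concretely, that the cofibers of the extended sequence are identified with $\dSchur^{\tau^i}$ (resp.\ $\dWeyl^{\tau^i}$) rather than with some \emph{a priori} larger object. This is handled by the splitness of the classical exact sequences on finite frees and the universal property of left Kan extension along $j \colon {\rm PolyMod}^{\rm ff} \hookrightarrow \SCRModcn$, which turns the termwise-sifted-colimit-preserving functor $\cofib \circ F$ into the (unique) sifted-colimit-preserving extension of its restriction to $\shC$ --- and that restriction already agrees with $\dSchur^{\tau^i}|{\rm PolyMod}^{\rm ff}$ (resp.\ $\dWeyl^{\tau^i}|{\rm PolyMod}^{\rm ff}$).
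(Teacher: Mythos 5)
Your proof is correct and takes essentially the same approach as the paper: apply Proposition \ref{prop:nonab:derived} with $\shC = {\rm PolyMod}^{\rm ff}$, $\shD = \shE^{[\ell-1]}$, and $f$ given by the classical filtration of Theorem \ref{thm:fil:Schur_LR}, and establish base change and the cofiber identifications by the same sifted-colimit reductions used in Proposition \ref{prop:dSchur:basechange} and Theorems \ref{thm:fil:dsym_otimes}, \ref{thm:fib:dSchur_oplus}. One inessential point: you invoke splitness to see that the classical filtration steps become cofiber sequences in $\Modcn_R$, but this is not needed --- any short exact sequence of discrete modules already yields a cofiber sequence in $\Modcn_R$ (the cone of an injection of discrete modules has homology concentrated in degree zero equal to the cokernel); the sequences do happen to be split (the quotient is finite free, hence projective), but that is beside the point.
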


\begin{proof} Similar to the proof of the preceding two theorems, assertion \ref{thm:fil:dSchur_LR-1} follows from applying Proposition \ref{prop:nonab:derived} to the situation where $\shC = {\rm PolyMod}^{\rm ff}$, $\shD = \shE^{[\ell-1]}$ (Notation \ref{notation:SCRMod}), and $f \colon \shC \to \shD$ is the functor 
	$$(R, M) \mapsto  (R, F_R^{0} \to  F_R^{1} \to \cdots \to F_R^{\ell-1})$$
where $F_R^{0} \to  F_R^{1} \to \cdots \to F_R^{\ell-1}$ is the sequence of inclusions constructed in Theorem \ref{thm:fil:Schur_LR} \eqref{thm:fil:Schur_LR-1}. Assertion \ref{thm:fil:dSchur_LR-2} is proved similarly.
\end{proof}

\begin{example}
	Let $\lambda/\mu = (3,2,1)/(1) 
	$, then we have $\ell=3$ and 
		$$\tau^0 = (2,2,1) < \tau^1= (3,1,1) < \tau^2 = (3,2).$$
For any $(A,M) \in \SCRModcn$, we obtain canonical sequences of morphisms in $\Modcn_A$
	$$F^0(A,M)  = \dSchur_A^{(2,2,1)}(M)  \to F^1(A,M) \to F^2(A,M) =  \dSchur_A^{(3,2,1)/(1)}(M) $$
	$$G_2(A,M) = \dWeyl_A^{(3,2)}(M)  \to G_1(A,M) \to G_0(A,M) = \dWeyl_A^{(3,2,1)/(1)}(M),$$
for which there are canonical equivalences:		
	$$\cofib(F^0(A,M) \to F^1(A,M)) \simeq \dSchur_A^{(3,1,1)}(M) \qquad
	\cofib(F^1(A,M)  \to F^2(A,M)) \simeq \dSchur_A^{(3,2)}(M) $$
	$$\cofib(G_2(A,M) \to G_1(A,M)) \simeq \dWeyl_A^{(3,1,1)}(M) \qquad
	\cofib(G_1(A,M)  \to G_0(A,M)) \simeq \dWeyl_A^{(2,2,1)}(M).$$
\end{example}

We have the following derived generalizations of the classical Littlewood--Richardson rule (\cite[Theorem IV.2.1]{ABW}, \cite[Theorem 2.3]{Bo1}, \cite{BB88}, \cite[Corollary 1.5, 2.6]{Kou}):

\begin{corollary}[Littlewood--Richardson Rule]
\label{cor:fil:dSchur_LR} 
Let $\lambda$ and $\mu$ be two partitions, and let
	$$\gamma^{0} < \gamma^{1} < \cdots < \gamma^{s-1}$$
denote the list of all partitions in the set $\{ \gamma \mid c_{\lambda,\mu}^{\gamma} \ne 0 \}$ in lexicographical order. We then consider the induced sequence of partitions with each $\gamma^i$ repeated $c_{\lambda,\mu}^{\gamma^i}$-many times:
	$$(\nu^0, \nu^1, \ldots, \nu^{\ell-1}) : =(\underbrace{\gamma^0, \cdots, \gamma^0}_{c_{\lambda,\mu}^{\gamma^0} \,\text{terms}}, \underbrace{\gamma^1, \cdots, \gamma^1}_{c_{\lambda,\mu}^{\gamma^1}\,\text{terms}}, \cdots, \underbrace{\gamma^{s-1}, \cdots, \gamma^{s-1}}_{c_{\lambda, \mu}^{\gamma^{s-1}}\,\text{terms}}).$$
Here, $c_{\lambda,\mu}^{\gamma^i}$ are the Littlewood-Richardson numbers, and $ \ell: = \ell(\lambda, \mu) = \sum_{\gamma} c_{\lambda,\mu}^{\gamma} = \sum_{i=0}^{s-1} c_{\lambda, \mu}^{\gamma^i}.$

\begin{enumerate}
	\item There is a canonical functor
			$$\SCRModcn \to \shE^{[\ell-1]}$$
			$$(A, M) \mapsto (A, F^{0}(A, M) \to  F^{1}(A, M) \to \cdots \to F^{\ell-1}(A, M))$$
		which preserves sifted colimits and commutes with base change of simplicial commutative rings, for which there are canonical equivalences
			$$F^{\ell-1}(A, M) = \dSchur_A^{\lambda}(M) \otimes_A \dSchur_A^{\mu}(M) \qquad F^{0}(A, M) \simeq \dSchur^{\gamma^0}_A(M)$$
			$$\cofib\big(F^{i-1}(A, M) \to F^{i}(A, M)\big) \simeq \dSchur_A^{\nu^i}(M) \quad \text{for} \quad 1 \le i \le \ell-1.$$
	Moreover, the formations of the above equivalences are functorial with respect to $(A,M)$ and commute with base change of simplicial commutative rings. 
	\item There is a canonical functor
			$$\SCRModcn \to \shE^{[\ell-1]}$$
			$$(A, M) \mapsto (A, G_{\ell-1}(A, M) \to  \cdots \to G_{1}(A,M) \to G_{0}(A, M))$$
		which preserves sifted colimits and commutes with base change of simplicial commutative rings, for which there are canonical equivalences
			$$G_0(A, M) = \dWeyl_A^{\lambda}(M) \otimes_A \dWeyl_A^{\mu} (M) \qquad G_{\ell-1}(A, M) \simeq \dWeyl^{\gamma^{s-1}}_A(M),$$
			$$\cofib\big(G_{i+1}(A, M) \to G_{i}(A,M)\big) \simeq \dWeyl_A^{\nu^i}(M) \quad \text{for} \quad 1 \le i \le \ell-1.$$
	Moreover, the formations of the above equivalences are functorial with respect to $(A,M)$ and commute with base change of simplicial commutative rings. 
\end{enumerate}
\end{corollary}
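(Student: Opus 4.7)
The strategy is to realize the tensor product $\dSchur_A^{\lambda}(M) \otimes_A \dSchur_A^{\mu}(M)$ as a derived Schur functor of a suitably chosen skew partition, and then apply Theorem \ref{thm:fil:dSchur_LR}. Concretely, let $p$ be the length of $\lambda$, let $q$ be the length of $\mu$, and set
	$$\tau := (\mu_1 + \lambda_1,\, \mu_1 + \lambda_2,\, \ldots,\, \mu_1 + \lambda_p,\, \mu_1,\, \mu_2,\, \ldots,\, \mu_q), \qquad \sigma := (\underbrace{\mu_1, \ldots, \mu_1}_{p\,\text{terms}},\, 0,\, \ldots,\, 0).$$
Then the Young diagram $\mathrm{Y}(\tau/\sigma)$ is the disjoint union of a copy of $\mathrm{Y}(\lambda)$ (in the upper right) and a copy of $\mathrm{Y}(\mu)$ (in the lower left), with no two boxes sharing a row or a column.

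The first step is to produce a natural equivalence $\dSchur_A^{\tau/\sigma}(M) \simeq \dSchur_A^{\lambda}(M) \otimes_A \dSchur_A^{\mu}(M)$ of functors $\SCRModcn \to \SCRModcn$ which commutes with base change of simplicial commutative rings. Since the Young diagram $\mathrm{Y}(\tau/\sigma)$ splits as a disjoint union in the sense above, one checks directly from the presentation of Schur functors as cokernels (Notation \ref{notation:square} and Definition \ref{def:SchurWeyl}) that there is a canonical natural isomorphism $\Schur_R^{\tau/\sigma}(P) \cong \Schur_R^{\lambda}(P) \otimes_R^\cl \Schur_R^{\mu}(P)$ on the category ${\rm PolyMod}^{\rm ff}$, functorial in $(R,P)$ and compatible with base change (this is standard; see e.g.\ \cite[\S 2.3]{Wey}). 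Both sides, regarded as functors $\SCRModcn \to \SCRModcn$, preserve sifted colimits: the left-hand side by Definition \ref{def:dSchurWeyl}, the right-hand side because it is a composition of such. Since they agree on ${\rm PolyMod}^{\rm ff}$, the universal property of the non-abelian derived category (Proposition \ref{prop:nonab:derived}) forces them to be canonically equivalent as functors on $\SCRModcn$, functorially in $(A,M)$ and compatible with base change.

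The second step is to identify the Littlewood--Richardson data. One shows $c^{\tau}_{\sigma,\gamma} = c^{\gamma}_{\lambda,\mu}$ for every partition $\gamma$; this is a classical combinatorial identity, most cleanly proved by noting that the skew Schur function $s_{\tau/\sigma}$ equals $s_\lambda \cdot s_\mu = \sum_\gamma c^\gamma_{\lambda,\mu}\, s_\gamma$ on the one hand, and $s_{\tau/\sigma} = \sum_\gamma c^\tau_{\sigma,\gamma}\, s_\gamma$ on the other, and then comparing coefficients in the basis of Schur functions (see \cite[Chapter 5]{Ful}). In particular, the sequence $(\gamma^0 < \cdots < \gamma^{s-1})$ listing the partitions $\gamma$ with $c^\gamma_{\lambda,\mu} \ne 0$, together with its multiplicity-padded version $(\nu^0, \ldots, \nu^{\ell-1})$ in the statement, is precisely the sequence $(\alpha^0 < \cdots < \alpha^{s-1})$ and $(\tau^0, \ldots, \tau^{\ell-1})$ associated to $\tau/\sigma$ in Theorem \ref{thm:fil:dSchur_LR}.

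The third and final step is then to simply apply Theorem \ref{thm:fil:dSchur_LR} to the skew partition $\tau/\sigma$ and transport the resulting sequence $F^0(A,M) \to \cdots \to F^{\ell-1}(A,M) = \dSchur_A^{\tau/\sigma}(M)$ across the canonical equivalence of the first step to obtain the desired sequence ending in $\dSchur_A^{\lambda}(M) \otimes_A \dSchur_A^{\mu}(M)$. The derived Weyl case is proved in exactly the same way, using the equivalence $\dWeyl_A^{\tau/\sigma}(M) \simeq \dWeyl_A^{\lambda}(M) \otimes_A \dWeyl_A^{\mu}(M)$ and the second part of Theorem \ref{thm:fil:dSchur_LR}. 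The only real content to check is the first step (the product-as-skew identity at the derived level), which, as explained, reduces via the universal property of $\shP_\Sigma$ to the classical statement on finite free modules over polynomial rings; the LR number matching is purely combinatorial and does not interact with the derived structure.
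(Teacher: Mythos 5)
Your proof is correct and takes essentially the same route as the paper: both realize $\dSchur_A^{\lambda}\otimes_A\dSchur_A^{\mu}$ as a derived Schur functor of a disjoint-union skew shape (the paper denotes it $\omega/\rho$ with $\rho=(\lambda_1^{\mu_1^t})$, placing $\mu$ upper-right and $\lambda$ lower-left --- the transpose of your $\tau/\sigma$ --- but this is cosmetic), match the Littlewood--Richardson data via $c_{\lambda,\mu}^{\gamma}=c_{\rho,\gamma}^{\omega}$, and apply Theorem~\ref{thm:fil:dSchur_LR}. Your fuller justification of the product-as-skew equivalence via the universal property of $\shP_{\Sigma}$ is a step the paper states without proof, and is a welcome addition.
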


\begin{proof} We consider the skew partition $\omega/\rho$ of the form,
\begin{center}
\begin{tikzpicture}[scale=0.4]
    \draw 
     (0,0) -- (0,5) -- (7,5) -- (7,4) -- (6,4) -- (6,3) -- (5,3) -- (5,2) -- (4,2) -- (2,2) -- (2,1) -- (1,1) -- (1,0) -- (0,0);
    \node at (2.5,3.5) {$\lambda$};
    
       \draw 
       (7,5) -- (7,8) -- (10,8) -- (10,7) -- (9,7) -- (9,6) -- (8,6) -- (8,5) -- (7,5);
    \node at (8.25,7) {$\mu$};
     
     \draw[to-to]	(0.5,8) -- (6.5,8);
     \node at (3.5, 8.5) {\text{\tiny $\lambda_1$}};
     
     \draw[to-to]  (0,5.5) -- (0,7.5);
         \node at (-0.8, 6.5) {\text{\tiny $\mu_1^t$}};   
 \end{tikzpicture}
\end{center}
(which is denoted by $\lambda * \mu$ in \cite[\S 5.1]{Ful}), where $\rho$ is the partition corresponding to the rectangle region $(\lambda_1^{\mu_1^t})$.
Then we have canonical equivalences $\dSchur_A^{\omega/\rho} (\blank) = \dSchur_A^{\lambda}(\blank) \otimes \dSchur_A^{\mu}(\blank)$ and $\dWeyl_A^{\omega/\rho} (\blank) = \dWeyl_A^{\lambda}(\blank) \otimes \dWeyl_A^{\mu}(\blank)$, and the desired results follow from applying Theorem \ref{thm:fil:dSchur_LR} to the skew partition $\omega/\rho$ and the equality of Littlewood--Richardson numbers $c_{\lambda,\mu}^{\nu}=c_{\nu, \rho}^{\omega} $ for all partitions $\nu$ (which is a consequence of \cite[\S 5.1, page 62, Corollary 2 (v)]{Ful}, where $\omega/\rho$ is denoted by $\lambda * \mu$ and $\shS(\omega/\rho; V_0)$ has cardinality $c_{\nu, \rho}^{\omega}$ by definition).
\end{proof}

\begin{example}
Let $\lambda = \mu = (1,1)$, then we have $\ell =3$, and 
	$$\nu^0= (1^4) < \nu^1 = (2,1,1) < \nu^2 =(2,2).$$
For any $(A,M) \in \SCRModcn$, we obtain canonical sequences of morphisms in $\Modcn_A$
	$$F^0(A,M)  = \bigwedge\nolimits_A^{4}(M)  \to F^1(A,M) \to F^2(A,M) =   \bigwedge\nolimits_A^{2}(M) \otimes_A  \bigwedge\nolimits_A^{2}(M) $$
	$$G_2(A,M) = \dWeyl_A^{(2,2)}(M)  \to G_1(A,M) \to G_0(A,M) = \bigwedge\nolimits_A^{2}(M) \otimes_A  \bigwedge\nolimits_A^{2}(M),$$
for which there are canonical equivalences:		
	$$\cofib(F^0(A,M) \to F^1(A,M)) \simeq \dSchur_A^{(2,1,1)}(M) \qquad
	\cofib(F^1(A,M)  \to F^2(A,M)) \simeq \dSchur_A^{(2,2)}(M) $$
	$$\cofib(G_2(A,M) \to G_1(A,M)) \simeq \dWeyl_A^{(2,1,1)}(M) \qquad
	\cofib(G_1(A,M)  \to G_0(A,M)) \simeq \bigwedge\nolimits_A^{4}(M).$$
\end{example}

The following theorem, which is divided into two parts, concerns the derived functors of $k$th component functors of Schur complexes. The first part is a derived generalization of Theorem \ref{thm:fil:bSchur_k}, and the second part is an extension of the Schur complex construction to the case of connective complexes.

\begin{theorem}
\label{thm:fib:dbSchur}
Let $\lambda/\mu$ be a skew partition.
\begin{enumerate}[leftmargin=*]
	
	\item \label{thm:fib:dbSchur-1}
	For any integer $0 \le k \le |\lambda| - |\mu|$, we let $\LL_A(M,M')_k$ denote derived functor of the $k$th component of the Schur complex. 
	\begin{enumerate}
		\item \label{thm:fib:dbSchur-1i}
		Let $\gamma^{0}_{(k)} <   \gamma^{1}_{(k)} < \cdots < \gamma^{\ell_k-1}_{(k)}$
be all the partitions in $I_k (\lambda/\mu) = \{\gamma \mid \mu \subseteq \gamma \subseteq  \lambda, |\gamma| - |\mu|=k\}$ listed in lexicographic order, where $\ell_k = |I_k(\lambda/\mu)|$ is the cardinality.
	Then there is a canonical functor
			$$\SCRModcn \times_{\CAlgDelta} \SCRModcn \to \shE^{[\ell_k-1]}$$
			$$(A, M, M') \mapsto (A, P_A^{0,(k)}(M,M') \to  P_A^{1,(k)}(M,M') \to \cdots \to P_A^{\ell_k-1,(k)}(M,M'))$$
		which preserves sifted colimits and commutes with base change of simplicial commutative rings, such that
			$$P_A^{\ell_k-1,(k)}(M,M') = \LL_A^{\lambda/\mu}(M, M')_k,$$
		and for each $0 \le i \le m_k-1$, there is a canonical equivalence
			$$\cofib\big(P_A^{i-1,(k)}(M,M') \to P_A^{i,(k)}(M,M')\big) \simeq \dSchur_A^{\lambda/\gamma_{(k)}^i}(M) \otimes_A \dWeyl_A^{(\gamma_{(k)}^i)^t/\mu^t}(M').$$
		(Here, by convention we let $P_A^{\, -1,(k)}(M,M')=0$ denote the zero functor.) 
		Moreover, the formations of the above equivalences are functorial with respect to $(A,M,M')$ and commute with base change of simplicial commutative rings. 
		\item\label{thm:fib:dbSchur-1ii}
		 Let 
			$\nu_{(k)}^{0} < \nu_{(k)}^{1} < \cdots < \nu_{(k)}^{m_k-1}$
		be all the partitions in $J_k (\lambda/\mu) = \{\nu \mid \mu \subseteq \nu \subseteq \lambda, |\lambda| - |\nu| = k\}$ listed in lexicographic order, where $m_k = |J_k(\lambda/\mu)|$. Then there is a canonical functor
			$$\SCRModcn \times_{\CAlgDelta} \SCRModcn \to \shE^{[m_k-1]}$$
			$$(A, M, M') \mapsto (A, Q_A^{0,(k)}(M,M') \to  Q_A^{1,(k)}(M,M') \to \cdots \to Q_A^{m_k-1,(k)}(M,M'))$$
		which preserves sifted colimits and commutes with base change of simplicial commutative rings, such that
			$$Q_A^{m_k-1,(k)}(M,M') = \LL_A^{\lambda/\mu}(M, M')_k,$$
		and for each $0 \le i \le m_k-1$, there is a canonical equivalence
			$$\cofib\big(Q_A^{i-1,(k)}(M,M') \to Q_A^{i,(k)}(M,M')\big) \simeq \dSchur_A^{\nu_{(k)}^i/\mu}(M) \otimes_A \dWeyl_A^{\lambda^t/(\nu^i_{(k)})^t}(M').$$
		(Here, by convention we let $Q_A^{\, -1,(k)}(M,M')=0$ denote the zero functor.) 
		Moreover, the formations of the above equivalences are functorial with respect to $(A,M,M')$ and commute with base change of simplicial commutative rings. 
		\end{enumerate}

		\item \label{thm:fib:dbSchur-2}	
		\begin{enumerate}
			\item \label{thm:fib:dbSchur-2i}
			There are canonical functors 
			$$\shE=\shE^{[1]} \to \shE^{[|\lambda|-|\mu|]}$$
		$$(A, \rho \colon M' \to M) \mapsto (A, \LL^{\lambda/\mu}_A(M,M')_{|\lambda|-|\mu|} \xrightarrow{d_{|\lambda|-|\mu|}} \cdots \xrightarrow{d_2} \LL^{\lambda/\mu}_A(M,M')_{1}  \xrightarrow{d_1} \LL^{\lambda/\mu}_A(M,M')_{0}  )$$	
				which preserve sifted colimits and commute with base change of simplicial commutative rings. Moreover, in the case where $A=R$ is an ordinary commutative ring and $M,M'$ are locally free $R$-modules, the morphism $d_i$ is canonically equivalent to the differential morphism $d_i$ of the Schur complex $\bSchur_{R}^{\lambda/\mu}(\rho \colon M' \to M)$, and in the case where $\rho \simeq 0$, the morphisms $d_i \simeq 0$ for all $i$. 
			\item \label{thm:fib:dbSchur-2ii}
			There is a canonical functor 
			$$\shE=\shE^{[1]} \to \shE^{[|\lambda|-|\mu|]}$$		
		$$(A, \rho \colon M' \to M) \mapsto (A, P^0(A, \rho) \to P^1(A, \rho) \cdots \to P^{|\lambda| - |\mu|}(A,\rho) )$$
		which preserve sifted colimits and commute with base change of simplicial commutative rings, such that there are canonical equivalences
			$$P^{0}(A, \rho) \simeq \dSchur_A^{\lambda/\mu}(M) \qquad P^{|\lambda|-|\mu|}(A, \rho) = \dSchur_A^{\lambda/\mu}(\cofib(\rho))$$
			$$\cofib(P^{k-1}(A, \rho) \to P^{k}(A, \rho)) \simeq \LL_A^{\lambda/\mu}(M, M')_k [k] \quad \text{for} \quad 1 \le k \le |\lambda|-|\mu|.$$
			In particular, we have $\cofib(P^{|\lambda|-|\mu|-1}(A,\rho) \to P^{|\lambda|-|\mu|}(A,\rho)) \simeq \dWeyl_A^{\lambda/\mu}(M')[|\lambda|-|\mu|]$. 
		Moreover, the formations of the above equivalences are functorial with respect to $(A, \rho \colon M' \to M)$ and commute with base change of simplicial commutative rings. 
		Furthermore, for each $1 \le k \le |\lambda|- |\mu|$, the composition of the functors
			$$\LL^{\lambda/\mu}_A(M,M')_{k}[k] \to P^{k-1}(A, \rho)[1] \to \LL^{\lambda/\mu}_A(M,M')_{k-1}[k]$$
		is canonically equivalence to the functor $d_{k}[k]$.
		\end{enumerate}
\end{enumerate}
\end{theorem}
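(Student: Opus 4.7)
The plan is to treat each of the four pieces by the same non-abelian derived Kan-extension strategy used in the proofs of Theorems \ref{thm:fil:dsym_otimes}, \ref{thm:fib:dSchur_oplus}, and \ref{thm:fil:dSchur_LR}, reducing everything to the classical statements of \S \ref{sec:Schur} applied to finite-free modules over polynomial rings, and then invoking Proposition \ref{prop:nonab:derived} together with the base-change bookkeeping of Proposition \ref{prop:dSchur:basechange}. What is really new is only Part \eqref{thm:fib:dbSchur-2ii}.

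For Part \eqref{thm:fib:dbSchur-1}, I would apply Proposition \ref{prop:nonab:derived} with $\shC = {\rm PolyMod}^{\rm ff} \times_{\rm Poly} {\rm PolyMod}^{\rm ff}$, $\shD = \shE^{[\ell_k-1]}$ (respectively $\shE^{[m_k-1]}$), and the generator functor sending $(R,M,M')$ to the sequence of inclusions $P_R^{0,(k)} \hookrightarrow \cdots \hookrightarrow P_R^{\ell_k-1,(k)} = \bSchur_R^{\lambda/\mu}(M,M')_k$ from Theorem \ref{thm:fil:bSchur_k}\eqref{thm:fil:bSchur_k-1} (resp.\ the $Q_R^{i,(k)}$ from \eqref{thm:fil:bSchur_k-2}). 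The restriction lands in ${\rm PolyMod}^{\rm ff}$ by Corollary \ref{cor:bSchurk} and carries coCartesian morphisms to coCartesian morphisms by the base-change part of Corollary \ref{cor:bSchurk} and Theorem \ref{thm:Schur:free}. The universal property produces the sifted-colimit-preserving extension to $\SCRModcn \times_{\CAlgDelta} \SCRModcn$, and the cofiber identifications together with the base-change claim follow by exactly the three-step reduction $(A,M,M') \leadsto (\ZZ, \ZZ^m, \ZZ^n) \leadsto (\ZZ[x_\bullet], \ldots)$ used in Proposition \ref{prop:dSchur:basechange}.

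For Part \eqref{thm:fib:dbSchur-2i}, the subtlety is that the underlying terms depend only on $(M,M')$ while the differentials depend on $\rho$. I would apply Proposition \ref{prop:nonab:derived} to $\shC = \shE_0$ (Notation \ref{notation:SCRMod}\eqref{notation:SCRMod-3}; it generates $\shE$ under sifted colimits) and $\shD = \shE^{[|\lambda|-|\mu|]}$, with generator $(R,\rho) \mapsto \bSchur_R^{\lambda/\mu}(\rho)$ regarded as a sequence of differentials. Here Theorem \ref{thm:bSchur:free} gives both the requisite landing in finite-free modules and the coCartesian-preservation needed for base change. The classical identification of terms makes the resulting sequence coincide termwise with $\LL^{\lambda/\mu}_A(M,M')_\bullet$; the vanishing statement for $\rho \simeq 0$ is inherited from the fact that every $d_i$ in the classical Schur complex factors through a copy of $\rho$, hence vanishes on the generators.

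The main obstacle is Part \eqref{thm:fib:dbSchur-2ii}. Here the classical input is the ``stupid filtration" of the Schur complex: when $R \in {\rm Poly}$ and $0 \to M' \xrightarrow{\rho} M \to M'' \to 0$ is a short exact sequence of finite-free $R$-modules (so $M''$ is a genuine model for $\cofib(\rho)$), the acyclicity result Theorem \ref{thm:acyclic}\eqref{thm:acyclic-1} shows that $\bSchur_R^{\lambda/\mu}(\rho)$, augmented by $\bSchur_R^{\lambda/\mu}(\rho)_0 = \Schur_R^{\lambda/\mu}(M) \twoheadrightarrow \Schur_R^{\lambda/\mu}(M'')$, is a finite-free resolution of $\Schur_R^{\lambda/\mu}(M'')$. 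Truncating this resolution from the right in the naive way produces, for each $0 \le k \le |\lambda|-|\mu|$, a finite complex $P^k(R,\rho)$ of finite-free modules together with inclusion-of-subcomplexes maps $P^{k-1}(R,\rho) \hookrightarrow P^k(R,\rho)$, whose termwise cofibers are the single term $\bSchur_R^{\lambda/\mu}(M,M')_k$ placed in homological degree $k$; by Theorem \ref{thm:acyclic}\eqref{thm:acyclic-1} the top filtration piece $P^{|\lambda|-|\mu|}(R,\rho)$ is quasi-isomorphic to $\Schur_R^{\lambda/\mu}(M'')$, while $P^0(R,\rho) = \Schur_R^{\lambda/\mu}(M)$. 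This assembles into a functor $\shE_0 \to \shE^{[|\lambda|-|\mu|]}$, and Proposition \ref{prop:nonab:derived} gives the desired sifted-colimit-preserving extension to $\shE$; on $\shE_0$ the top piece is $\dSchur^{\lambda/\mu}_R(M'') = \dSchur^{\lambda/\mu}_R(\cofib(\rho))$, and sifted-colimit-preservation together with Proposition \ref{prop:dSchur:basechange} propagates this identification to all of $\shE$. The termwise cofiber identification $\LL^{\lambda/\mu}_A(M,M')_k[k]$ is automatic from the construction because $\LL^{\lambda/\mu}(\blank,\blank)_k$ is itself defined by left Kan extension and agrees with the classical component on $\shE_0$. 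Finally, the compatibility statement that the composite $\LL^{\lambda/\mu}_A(M,M')_k[k] \to P^{k-1}(A,\rho)[1] \to \LL^{\lambda/\mu}_A(M,M')_{k-1}[k]$ coincides with $d_k[k]$ is, on the generating subcategory $\shE_0$, the standard unravelling of the boundary map for the stupid filtration of a chain complex (cf.\ \cite[Remark 1.2.2.3]{HA}); one checks it there and then transports it via Proposition \ref{prop:nonab:derived}. The main hazard is making the Kan-extension setup for this stupid filtration fully functorial in $\shE_0$---ensuring that the subcomplex inclusions and the resulting cofiber identifications are all strictly natural on the classical side; this is where one must be careful to pick a functorial model (e.g.\ working with the naive truncation as an actual subcomplex rather than only up to quasi-isomorphism) before left Kan extending.
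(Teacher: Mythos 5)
Your overall strategy matches the paper's: Part \eqref{thm:fib:dbSchur-1} is Kan extension from ${\rm PolyMod}^{\rm ff} \times_{\rm Poly} {\rm PolyMod}^{\rm ff}$ using Theorem \ref{thm:fil:bSchur_k}, Part \eqref{thm:fib:dbSchur-2i} is Kan extension of the Schur complex construction from $\shE_0$, and Part \eqref{thm:fib:dbSchur-2ii} is Kan extension of the brutal truncation filtration from $\shE_0$ together with Theorem \ref{thm:acyclic}\eqref{thm:acyclic-1}. Your treatment of the final compatibility claim of \eqref{thm:fib:dbSchur-2ii}, while terse, also points in the right direction: the paper makes this precise by assembling the commutative triangle $\LL_R^{\lambda/\mu}(M,M')_k[k] \to P^{k-1}(R,\rho)[1] \to \LL_R^{\lambda/\mu}(M,M')_{k-1}[k]$ into a functor $\shE_0 \to \Fun(\Delta^2,\SCRModcn) \times_{\Fun(\Delta^2,\CAlgDelta)} \CAlgDelta$ and Kan extending.

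However, there is a genuine gap in your handling of the ``moreover'' clause of \eqref{thm:fib:dbSchur-2i}. You assert that ``the vanishing statement for $\rho \simeq 0$ is inherited from the fact that every $d_i$ in the classical Schur complex factors through a copy of $\rho$, hence vanishes on the generators,'' but this is false as stated: the generating subcategory $\shE_0$ contains only pairs $(R, \rho : M' \to M)$ in which $\rho$ is the inclusion of a short exact sequence of finite free modules, so the only generator with $\rho \simeq 0$ has $M' = 0$ and the ``vanishing on generators'' is vacuous. Similarly, your sentence ``the classical identification of terms makes the resulting sequence coincide termwise with $\LL^{\lambda/\mu}_A(M,M')_\bullet$'' addresses the terms but gives no argument that the \emph{differentials} of the Kan-extended functor agree with the classical Schur-complex differentials once one leaves $\shE_0$, e.g.\ for finite projective (non-free) $M, M'$ over an ordinary ring $R$. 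What is needed is an explicit comparison map: the paper observes that the restriction $F|_{{\rm CRingCh}^{\rm fproj}_{[0,1]}}$ is itself a left Kan extension of $F|_{\shE_0} = \overline{f}|_{\shE_0}$, where $\overline{f}$ is the Schur-complex functor on ${\rm CRingCh}^{\rm fproj}_{[0,1]}$; the universal property then produces a canonical natural transformation $\theta : F|_{{\rm CRingCh}^{\rm fproj}_{[0,1]}} \to \overline{f}$ extending the identity, and this $\theta$ is shown to be an equivalence termwise by Proposition \ref{prop:dSchur:basechange}. This gives the classical identification of differentials for finite projective modules over ordinary rings, and only then does the $\rho \simeq 0$ vanishing become a trivial consequence (the classical differentials of $\bSchur^{\lambda/\mu}_R(0)$ are all zero). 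You should insert this $\theta$ argument to repair the gap.
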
	

\begin{proof}
The assertions \eqref{thm:fib:dbSchur-1i} and \eqref{thm:fib:dbSchur-1ii} are proved by the same method as the proof of the preceding theorems of this subsection, namely, by applying Proposition \ref{prop:nonab:derived} to the situation where $\shC = {\rm PolyMod}^{\rm ff} \times_{\rm Poly} {\rm PolyMod}^{\rm ff}$, $\shD = \shE^{[\ell_k-1]}$ (or $\shD = \shE^{[m_k-1]}$) are as defined in Notation \ref{notation:SCRMod}, and $f \colon \shC \to \shD$ is given by the sequences of filtrations constructed in Theorem \ref{thm:fil:bSchur_k}.

Similarly, for assertion \eqref{thm:fib:dbSchur-2i}, Proposition \ref{prop:nonab:derived} implies that the functor $f \colon \shE_0 \to \shE^{[|\lambda|-|\mu|]}$ induced form the Schur complex construction (Definition \ref{def:bSchur}),
	$$(R, \rho \colon M' \to M) \mapsto (R, \bSchur^{\lambda/\mu}_R(M,M')_{|\lambda|-|\mu|} \xrightarrow{d_{|\lambda|-|\mu|}} \cdots \xrightarrow{d_2} \bSchur^{\lambda/\mu}_A(M,M')_{1}  \xrightarrow{d_1} \bSchur^{\lambda/\mu}_A(M,M')_{0}),$$
has an essentially unique extension to a functor $F \colon \shE \to \shE^{[|\lambda|-|\mu|]}$ which has the desired properties described as in \eqref{thm:fib:dbSchur-2i} before the ``moreover" part. For the assertions after ``moreover", let $\overline{f}$ denote the functor $\big((R, \rho) \in {\rm CRingCh}_{[0,1]}^{\rm fproj}\big) \mapsto \big((R, \bSchur_R^{\lambda/\mu}(\rho)) \in \shE\big)$ defined by the Schur complex construction Definition \ref{def:bSchur}, so that $f = \overline{f}|\shE_0$ is the restriction of $\overline{f}$. Since $F|{\rm CRingCh}_{[0,1]}^{\rm fproj}$ is a left Kan extension of $F|\shE_0 = \overline{f}|\shE_0$, there is an essentially unique morphism $\theta \colon F|{\rm CRingCh}_{[0,1]}^{\rm fproj} \to \overline{f}$ extending the identity $F|\shE_0 = \overline{f}|\shE_0$, which induces, for each $(R, \rho) \in {\rm CRingCh}_{[0,1]}^{\rm fproj}$, a functorial morphism $\theta(R, \rho)$ 
of the form:
		$$
	\begin{tikzcd}[column sep = 2 em, row sep = 1em]
		\cdots \to \LL_R^{\lambda/\mu}(M,M')_{k} \ar{d} \ar{r}&  \LL_R^{\lambda/\mu}(M,M')_{k-1} \ar{d} \to \cdots \\
		\cdots \to  \bSchur_R^{\lambda/\mu}(M,M')_{k} \ar{r} & \bSchur_R^{\lambda/\mu}(M,M')_{k-1} \to \cdots.
	\end{tikzcd}
	$$
As $M,M'$ are finite projective $R$-modules, the morphism $\theta(R, \rho)$ is an  equivalence by virtue of Proposition \ref{prop:dSchur:basechange}. This finishes the proof of assertion \eqref{thm:fib:dbSchur-2i}.

To prove \eqref{thm:fib:dbSchur-2ii}, consider any element $(R, \rho \colon M' \to  M) \in \shE_0$, where $R$ is a commutative ring and $0 \to M' \xrightarrow{\rho} M \to M'' \to 0$ is a short exact sequence of finite free $R$-modules. For any $0 \le k \le |\lambda|-|\mu|$, we let $P^{k}(R,\rho)$ denote the class of the ``brutal" truncation 
	$$\bSchur_R^{\lambda/\mu}(M,M')_k \xrightarrow{d_k} \cdots \xrightarrow{d_2}  \bSchur_R^{\lambda/\mu}(M,M')_1 \xrightarrow{d_1} \bSchur_R^{\lambda/\mu}(M,M')_0$$
 of the Schur complex $\bSchur_R^{\lambda/\mu}(\rho)$ in $\Modcn_R$. By Theorem \ref{thm:acyclic} \eqref{thm:acyclic-1}, there is a canonical equivalence $P^{|\lambda|-|\mu|}(R, \rho) = \Schur_R^{\lambda/\mu}(M'')$. For each $1 \le k \le |\lambda|-|\mu|$, there are canonical short exact sequences
 	$$0 \to P^{k-1}(R,\rho) \to P^{k}(R,\rho) \to \bSchur_R^{\lambda/\mu}(M,M')_k[k] \to 0$$
in the category ${\rm Ch}^{\rm ff}_{[0, k]}(R)$ of chain complexes of finite free modules. In particular, the functor
	$$\shE_0 \to \shE^{[|\lambda|-|\mu|]}, \quad
	(R, \rho \colon M' \to M) \mapsto (R, P^0(R, \rho) \to P^1(R, \rho) \cdots \to P^{|\lambda| - |\mu|}(R,\rho) )$$
extends essentially unique to a functor $\shE = \shP_\Sigma(\shE_0) \to \shE^{[|\lambda|-|\mu|]}$, as described in assertion \eqref{thm:fib:dbSchur-2ii}, with the desired properties on the cofibers of morphisms between consecutive terms, by virtue of Proposition \ref{prop:nonab:derived}. It only remains to prove the last assertion about commutative relations of \eqref{thm:fib:dbSchur-2ii}. For this purpose, we observe that the functor 
	$$\shE_0 \to \Fun(\Delta^2, \SCRModcn) \times_{\Fun(\Delta^2, \CAlgDelta)} \CAlgDelta \quad (R, \rho \colon M' \to M) \mapsto (R, \sigma(R, \rho)),$$
where $\sigma(R, \rho) \colon \Delta^2 \to \Modcn_R$ denotes the canonical commutative diagram
	$$
	\begin{tikzcd}[column sep = 3 em, row sep = .8 em]
		 & P^{k-1}(R, \rho)[1] \ar{dr}& \\
		\bL_R^{\lambda/\mu}(M, M')_{k}[k] \ar{rr}{d_{k}[k]} \ar{ru} & & \bL_R^{\lambda/\mu}(M, M')_{k-1}[k] 
	\end{tikzcd},
	$$
extends essentially uniquely to a functor 
	$$\shE \to \Fun(\Delta^2, \SCRModcn) \times_{\Fun(\Delta^2, \CAlgDelta)} \CAlgDelta \quad (A, \rho \colon M' \to M) \mapsto (A, \sigma(A, \rho))$$
which preserves sifted colimits and commutes with the natural forgetful morphisms which carry each $\sigma(A, \rho) \in \Fun(\Delta^2, \Modcn_A)$ to its edges $\sigma(A,\rho)|_{\N(\{i<j\})} \in \Fun(\Delta^1, \Modcn_A)$, where $i < j$, $i,j \in \{0,1,2\}$, up to canonical equivalences, where $\sigma(A, \rho) \colon \Delta^2 \to \Modcn_A$ is a diagram of the form
	$$
	\begin{tikzcd}[column sep = 3 em, row sep = .8 em]
		 & P^{k-1}(A, \rho)[1] \ar{dr}& \\
		\LL_A^{\lambda/\mu}(M, M')_{k}[k] \ar{rr}{d_k[k]} \ar{ru} & & \LL_A^{\lambda/\mu}(M, M')_{k-1}[k].
	\end{tikzcd}
	$$
The diagram $\sigma(A, \rho)$ induces the desired commutative relations. 
\end{proof}

\subsection{Further Properties of Derived Schur Functors}
\label{sec:dSchurdWeyl.properties}
This subsection studies further properties of derived Schur and Weyl functors. The main results include the d{\'e}calage isomorphisms (Theorem \ref{thm:Illusie--Lurie}, Corollary \ref{cor:dSchur.decalage}), connective properties (Corollary \ref{cor:dSchur:connective}), preservation of pseudo-coherence (Proposition \ref{prop:dSchur:pc}) and Tor-amplitudes and perfectness (Proposition \ref{prop:dSchur:Tor-amp}), generalizations of Illusie's equivalences (Proposition \ref{prop:dSchur_vs_bSchur}) which connect derived Schur functors and Schur complexes, and the relationship between $k$th component of Schur complexes and derived Schur functors (Proposition \ref{prop:bSchur_k=Schur_oplus_k}).

We start with the following d{\'e}calage isomorphism theorem which generalizes Illusie--Lurie's results for derived symmetric and exterior powers \cite[Proposition 25.2.4.2]{SAG} and categorifies the d{\'e}calage isomorphisms of Quillen \cite{Qui}, Bousfield \cite{Bous}, and Touz{\'e} \cite{Tou}:

\begin{theorem}[D{\'e}calage Isomorphisms]
\label{thm:Illusie--Lurie}
For any pair of skew partition $\lambda/\mu$, any $A \in \CAlgDelta$, and any connective complex $M$ over $A$, there is a canonical equivalence in $\Modcn_A$:
	$$\dSchur^{\lambda/\mu}_{A}(M[1]) \simeq \dWeyl_A^{\lambda^t/\mu^t}(M)[|\lambda|- |\mu|].$$
\end{theorem}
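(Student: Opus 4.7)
The plan is to derive the equivalence from a single application of Theorem~\ref{thm:fib:dbSchur}\eqref{thm:fib:dbSchur-2ii} to the zero morphism $\rho = 0 \colon M \to 0$ in $\Modcn_A$. Since $\cofib(\rho) \simeq M[1]$, the canonical sequence furnished by that theorem takes the form
\[
P^{0}(A, \rho) \to P^{1}(A, \rho) \to \cdots \to P^{|\lambda|-|\mu|}(A, \rho),
\]
with endpoints $P^{0}(A, \rho) \simeq \dSchur_A^{\lambda/\mu}(0)$ and $P^{|\lambda|-|\mu|}(A, \rho) \simeq \dSchur_A^{\lambda/\mu}(M[1])$, and intermediate cofibers $\cofib(P^{k-1} \to P^{k}) \simeq \LL_A^{\lambda/\mu}(0, M)_k[k]$. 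The case $\lambda = \mu$ is trivial, so I assume $\lambda \neq \mu$, whence $P^{0}(A, \rho) = 0$.

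The key computation is the identification of the intermediate cofibers $\LL_A^{\lambda/\mu}(0, M)_k$. On the generating family of pairs $(R, M')$ with $R$ a polynomial ring and $M'$ a finitely generated free $R$-module, the classical Schur complex $\bSchur_R^{\lambda/\mu}(M' \to 0)$ is concentrated in top homological degree and equals $\Weyl_R^{\lambda^t/\mu^t}(M')[|\lambda|-|\mu|]$; equivalently, $\bSchur_R^{\lambda/\mu}(0, M')_k = 0$ for $k < |\lambda|-|\mu|$ and $\bSchur_R^{\lambda/\mu}(0, M')_{|\lambda|-|\mu|} \simeq \Weyl_R^{\lambda^t/\mu^t}(M')$. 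Because $\LL^{\lambda/\mu}(\blank, \blank)_k$ is the non-abelian left derived functor of its restriction to this generating family, preserves sifted colimits, and commutes with base change (Proposition~\ref{prop:dSchur:basechange}), these identifications propagate to all of $\SCRModcn \times_{\CAlgDelta} \SCRModcn$, yielding $\LL_A^{\lambda/\mu}(0, M)_k \simeq 0$ for $k < |\lambda|-|\mu|$ and $\LL_A^{\lambda/\mu}(0, M)_{|\lambda|-|\mu|} \simeq \dWeyl_A^{\lambda^t/\mu^t}(M)$ for every $(A, M) \in \SCRModcn$.

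Combining these inputs, the sequence $P^{\bullet}(A, \rho)$ collapses inductively. An induction on $k$, using at each step the cofiber sequence
\[
P^{k-1}(A, \rho) \to P^{k}(A, \rho) \to \LL_A^{\lambda/\mu}(0, M)_k[k]
\]
in which the first term (by the inductive hypothesis) and the third term (by the vanishing just established) are both zero for $0 \le k \le |\lambda|-|\mu|-1$, shows that $P^{k}(A, \rho) \simeq 0$ throughout this range. In the final cofiber sequence the third term is $\dWeyl_A^{\lambda^t/\mu^t}(M)\bigl[|\lambda|-|\mu|\bigr]$ and the first term vanishes, so
\[
\dSchur_A^{\lambda/\mu}(M[1]) \simeq P^{|\lambda|-|\mu|}(A, \rho) \simeq \dWeyl_A^{\lambda^t/\mu^t}(M)\bigl[|\lambda|-|\mu|\bigr],
\]
which is the theorem.

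The main technical ingredient, rather than a serious obstacle, is the vanishing and top-degree identification of $\LL_A^{\lambda/\mu}(0, M)_k$; once this is in hand the argument amounts to tracking cofibers in the $\shJ$-complex of Theorem~\ref{thm:fib:dbSchur}. All of the genuinely difficult machinery---the construction of $P^{\bullet}(A, \rho)$, its sifted-colimit extension from polynomial rings, and its base-change compatibility---has already been packaged into Theorem~\ref{thm:fib:dbSchur}, so the proof reduces to unraveling that construction for the single concrete morphism $\rho = 0 \colon M \to 0$.
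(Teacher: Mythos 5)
Your proof is correct and follows essentially the same strategy as the paper's: apply Theorem~\ref{thm:fib:dbSchur}\eqref{thm:fib:dbSchur-2ii} to $\rho \colon M \to 0$, identify the intermediate cofibers as $\LL_A^{\lambda/\mu}(0,M)_k[k]$, and collapse the sequence via the vanishing of these terms for $k < |\lambda|-|\mu|$ together with the top-degree identification $\LL_A^{\lambda/\mu}(0,M)_{|\lambda|-|\mu|} \simeq \dWeyl_A^{\lambda^t/\mu^t}(M)$. Your direct verification of those two facts on the generating family (concentration of $\bSchur_R^{\lambda/\mu}(M' \to 0)$ in top degree, then sifted-colimit descent) is exactly the content of the remark following the definition of $\LL^{\lambda/\mu}(\blank,\blank)_k$ that the paper's proof relies on.
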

\begin{proof}
Applying Theorem \ref{thm:fib:dbSchur} \eqref{thm:fib:dbSchur-2i} to the cofiber sequence
	$M \xrightarrow{\rho} 0 \to M[1],$
we obtain a canonical sequence of morphisms 
	$$P^0(A, \rho) \to P^1(A, \rho) \cdots \to P^{|\lambda| - |\mu|-1}(A,\rho) \to P^{|\lambda| - |\mu|}(A,\rho) $$
in $\Modcn_A$ such that 
	$$P^{0}(A, \rho) \simeq \dSchur_A^{\lambda/\mu}(0)=0  \qquad P^{|\lambda|-|\mu|}(A, \rho) = \dSchur_A^{\lambda/\mu}(\cofib(\rho))= \dSchur_A^{\lambda/\mu}(M)$$
	$$\cofib(P^{k-1}(A, \rho) \to P^{k}(A, \rho)) \simeq \LL_A^{\lambda/\mu}(0,M[1])_k [k] \quad \text{for} \quad 1 \le k \le |\lambda|-|\mu|.$$
Theorem \ref{thm:fib:dbSchur}  \eqref{thm:fib:dbSchur-1} implies that $\LL_A^{\lambda/\mu}(0,M[1])_k \simeq 0$ for all $k < |\lambda| - |\mu|$ and $\LL_A^{\lambda/\mu}(0,M[1])_{|\lambda| - |\mu|} \simeq  \dWeyl_A^{\lambda^t/\mu^t}(M[1])[|\lambda|-|\mu|]$. Therefore, we obtain $P^{k}(A,\rho) \simeq 0$ for all $0 \le k < |\lambda| - |\mu|$. Hence, the canonical cofiber sequence
	$$0 \simeq P^{|\lambda|-|\mu|-1}(A,\rho) \to P^{|\lambda|-|\mu|}(A,\rho) \to \dWeyl_A^{\lambda^t/\mu^t}(M[1])[|\lambda|-|\mu|]$$
induces the desired canonical equivalence.
\end{proof}

In characteristic zero, from the canonical equivalence $\dWeyl_A^{\lambda/\mu}(M) \simeq \dSchur_A^{\lambda/\mu}(M)$ we obtain:

\begin{corollary}[{D{\'e}calage Isomorphisms in Characteristic Zero}]
\label{cor:dSchur.decalage}
If $A \in \CAlgDelta_\QQ$, then for any connective $A$-complex $M \in \Modcn_A$,
we have canonical equivalences for all integers $m \ge 0$:
	$$
	\dSchur_{A}^{\lambda/\mu} (M[m]) \simeq
	\begin{cases}
	 \dSchur_{A}^{\lambda^t/\mu^t} (M) [m \, (|\lambda| - |\mu|)] & \text{if $m$ is odd.} \\
	  \dSchur_{A}^{\lambda/\mu} (M) [m \, (|\lambda| - |\mu|)] & \text{if $m$ is even.} 
	\end{cases}
	$$
\end{corollary}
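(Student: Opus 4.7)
The plan is to deduce this from Theorem \ref{thm:Illusie--Lurie} by induction on $m$, using the characteristic-zero identification $\dWeyl_A^{\nu/\rho} \simeq \dSchur_A^{\nu/\rho}$ for any skew partition $\nu/\rho$ available for $A \in \CAlgDelta_\QQ$.

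First I would handle the base case $m=0$ (trivially the identity) and $m=1$ (which is already exactly the content of Theorem \ref{thm:Illusie--Lurie} combined with the $\QQ$-linear identification of $\dWeyl^{\lambda^t/\mu^t}$ with $\dSchur^{\lambda^t/\mu^t}$, noting the useful arithmetic identity $|\lambda^t| - |\mu^t| = |\lambda| - |\mu|$). For the inductive step, I would write $M[m] \simeq (M[m-1])[1]$ and apply Theorem \ref{thm:Illusie--Lurie} to the connective complex $M[m-1]$. This gives a canonical equivalence
\[
\dSchur_A^{\lambda/\mu}(M[m]) \simeq \dWeyl_A^{\lambda^t/\mu^t}(M[m-1])[|\lambda|-|\mu|],
\]
and over $\QQ$ the right-hand side is canonically equivalent to $\dSchur_A^{\lambda^t/\mu^t}(M[m-1])[|\lambda|-|\mu|]$.

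At that point the induction hypothesis, applied to the partition $\lambda^t/\mu^t$ (whose size equals $|\lambda|-|\mu|$), rewrites $\dSchur_A^{\lambda^t/\mu^t}(M[m-1])$ either as $\dSchur_A^{\lambda/\mu}(M)[(m-1)(|\lambda|-|\mu|)]$ (if $m-1$ is odd, i.e.\ $m$ is even) or as $\dSchur_A^{\lambda^t/\mu^t}(M)[(m-1)(|\lambda|-|\mu|)]$ (if $m-1$ is even, i.e.\ $m$ is odd), by using $(\lambda^t)^t = \lambda$ and $(\mu^t)^t = \mu$. Adding the extra shift of $|\lambda|-|\mu|$ coming from Theorem \ref{thm:Illusie--Lurie} promotes the total shift to $m(|\lambda|-|\mu|)$, which matches the desired parity statement.

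Since every step is an invocation of previously established canonical equivalences, I do not anticipate a real obstacle: the only subtlety is being careful about bookkeeping for the transpose and the total shift, and in ensuring that all equivalences are functorial in $M$ (which they are, because the underlying equivalence of Theorem \ref{thm:Illusie--Lurie} is functorial, as noted in its proof via Theorem \ref{thm:fib:dbSchur} \eqref{thm:fib:dbSchur-2i}). In particular, no new non-abelian derived functor calculation or left Kan extension is required beyond what is already encoded in Theorem \ref{thm:Illusie--Lurie}.
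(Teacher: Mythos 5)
Your proposal is correct and follows the same route the paper implicitly intends: the corollary is stated immediately after the remark that $\dWeyl_A^{\nu/\rho} \simeq \dSchur_A^{\nu/\rho}$ in characteristic zero, and the paper leaves the iteration of Theorem~\ref{thm:Illusie--Lurie} implicit, which is exactly the induction on $m$ (uniformly over all skew partitions, as you need when you feed $\lambda^t/\mu^t$ back into the hypothesis) that you carry out. The bookkeeping with $|\lambda^t|-|\mu^t|=|\lambda|-|\mu|$ and $(\lambda^t)^t=\lambda$ is the right check, and connectivity of $M[m-1]$ for $m\ge 1$ ensures the theorem applies at each step.
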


We also obtain connective properties of derived Schur and Weyl functors:

\begin{corollary}
\label{cor:dSchur:connective}
Let $\lambda/\mu$ be a skew partition such that $\mu \neq \lambda$,  let $A$ be a simplicial commutative ring and let $M$ be a $m$-connective $A$-complex for some integer $m \ge 0$. Then:
\begin{enumerate}
	\item \label{cor:dSchur:connective-1}
	The complex $\dWeyl^{\lambda/\mu}_A(M)$ is $m$-connective.
	\item \label{cor:dSchur:connective-2}
	If $m \ge 1$, then the complex $\dSchur^{\lambda/\mu}_A(M)$ is $(m-1+ |\lambda|-|\mu|)$-connective. 
\end{enumerate}
 \end{corollary}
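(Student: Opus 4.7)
The plan is to establish (1) first and then derive (2) from it using the d\'ecalage isomorphism of Theorem~\ref{thm:Illusie--Lurie}.

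The base case $m=0$ of (1) is direct. By Definition~\ref{def:dSchurWeyl}, $\dWeyl^{\lambda/\mu}_A$ is a left Kan extension from finite free polynomial modules, hence preserves sifted colimits, and by Proposition~\ref{prop:dSchur:free} it carries finite free modules to finite free modules. Realizing a connective $M$ as a geometric realization $|P_\bullet|$ of a simplicial object in finite free $A$-modules, we obtain $\dWeyl^{\lambda/\mu}_A(M) \simeq |\dWeyl^{\lambda/\mu}_A(P_\bullet)|$, which is a geometric realization of connective objects and is therefore connective.

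Granting (1) at level $m-1$, assertion (2) follows by d\'ecalage. Indeed, if $M$ is $m$-connective with $m \ge 1$, then $\pi_0(M)=0$, so $M \simeq N[1]$ with $N := M[-1]$ being $(m-1)$-connective, and Theorem~\ref{thm:Illusie--Lurie} supplies an equivalence
\[
\dSchur^{\lambda/\mu}_A(M) \simeq \dWeyl^{\lambda^t/\mu^t}_A(N)\bigl[\,|\lambda|-|\mu|\,\bigr];
\]
applying (1) to $N$ with the transposed skew partition (which has the same size) makes the unshifted factor $(m-1)$-connective, so the shift yields the claimed $(m-1+|\lambda|-|\mu|)$-connectivity.

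The main obstacle is (1) for $m \ge 1$, which cannot be extracted from d\'ecalage and the case $m=0$ alone: writing $\dWeyl^{\lambda/\mu}_A(M) \simeq \dSchur^{\lambda^t/\mu^t}_A(M[1])\bigl[-(|\lambda|-|\mu|)\bigr]$ only reduces (1) at level $m$ to (2) at level $m+1$, which in turn reduces back to (1) at level $m$. To break this circle I would exhibit the sifted-colimit-preserving identification
\[
\dWeyl^{\lambda/\mu}_A(M) \;\simeq\; \cofib\!\bigl(\widetilde{\Gamma}^{\lambda/\mu}_A(M) \to \Gamma^{\lambda/\mu}_A(M)\bigr),
\]
where the source and target are the derived tensor-product analogues of the classical constructions of Notation~\ref{notation:square}. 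This identification holds because both sides preserve sifted colimits, and on finite free polynomial modules the classical map $\square'_{\lambda/\mu}$ is injective by the straightening-law theory of \cite[\S II.3]{ABW}, so the $\infty$-categorical cofiber coincides there with the classical cokernel that defines $\Weyl^{\lambda/\mu}$. The desired $m$-connectivity of $\dWeyl^{\lambda/\mu}_A(M)$ then follows from the foundational fact that each derived divided power $\Gamma^n_A$ preserves $m$-connectivity---a Dold--Puppe-type statement for the polynomial functor $\Gamma^n$ established in \cite[\S 2-3]{J22a} and \cite[Chapter 25]{SAG}---combined with the observations that derived tensor products and cofibers of $m$-connective objects remain $m$-connective (via the K\"unneth formula and the long exact homotopy sequence, respectively).
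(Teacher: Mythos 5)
Your derivation of part~(2) from part~(1) via Theorem~\ref{thm:Illusie--Lurie} is correct and is exactly the argument the paper gives, and the base case $m=0$ of~(1) is also fine (indeed it is immediate, since $\dWeyl^{\lambda/\mu}$ is constructed as a functor into $\Modcn_A$).

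The gap is in the case $m\ge 1$ of~(1). The proposed identification
$\dWeyl^{\lambda/\mu}_A(M) \simeq \cofib\bigl(\widetilde{\Gamma}^{\lambda/\mu}_A(M)\to\Gamma^{\lambda/\mu}_A(M)\bigr)$
rests on the claim that $\square'_{\lambda/\mu}$ is injective on finite free modules, and that claim is false in general. Take $\lambda=(2,2)$, $\mu=(0)$, $R=\ZZ$ and $M=R^2$: then $\widetilde{\Gamma}^{(2,2)}(M)=\Gamma^4(M)\oplus\bigl(M\otimes\Gamma^3(M)\bigr)\oplus\bigl(\Gamma^3(M)\otimes M\bigr)$ has rank $5+8+8=21$, while $\Gamma^{(2,2)}(M)=\Gamma^2(M)\otimes\Gamma^2(M)$ has rank $9$ and $\Weyl^{(2,2)}(M)$ has rank $1$; hence $\mathrm{Im}(\square'_{(2,2)})$ has rank $8$ and $\Ker(\square'_{(2,2)})$ has rank $13\ne 0$. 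Consequently the cofiber of the derived $\square'$ has nonvanishing $\pi_1$ on finite free modules, so it cannot agree with $\dWeyl^{\lambda/\mu}$ there, and the two sifted-colimit-preserving functors are genuinely different. (The straightening law of \cite[II.3]{ABW} identifies a basis of $\Weyl^{\lambda/\mu}$ and describes the image of $\square'$ as the span of the non-standard generators; it does not make $\square'$ injective. Replacing $\widetilde{\Gamma}^{\lambda/\mu}$ by the derived functor of $\mathrm{Im}(\square')$ would give a legitimate cofiber presentation, but you would then need the connectivity of that derived functor of the image, which is not among the known building blocks and would reintroduce the circularity you were trying to avoid.)

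The paper avoids this entirely by following Lurie's proof of \cite[Proposition 25.2.4.2]{SAG}: one argues by induction on $m$, writing $M$ as the geometric realization of the \v{C}ech nerve $U_\bullet$ of $0\to M$, so that $U_0=0$ and $U_n\simeq (M[-1])^{\oplus n}$ is $(m-1)$-connective for $n\ge 1$. The induction hypothesis makes each $\dWeyl^{\lambda/\mu}_A(U_n)$ $(m-1)$-connective, geometric realization preserves $(m-1)$-connectivity, and then $\pi_{m-1}\bigl(\dWeyl^{\lambda/\mu}_A(M)\bigr)$ is a quotient of $\pi_{m-1}\bigl(\dWeyl^{\lambda/\mu}_A(U_0)\bigr)=0$, giving $m$-connectivity. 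This is the step you would need to supply in place of the cofiber identification.
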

 \begin{proof}
 We use the same strategy as Lurie's proof of \cite[Proposition 25.2.4.1]{SAG}.
We prove assertion \eqref{cor:dSchur:connective-1} by induction on $m \ge 0$. If $m=0$, then the assertion is trivial. Now we assume $m \ge 1$ and the assertion \eqref{cor:dSchur:connective-1} holds for $m-1$. We let $U_\bullet^+ \colon \N(\bDelta_+)^\op \to \shC$ be the {\v C}ech nerve of the morphism $0 \to M$ in the sense of \cite[\S 6.1.2]{HTT}, that is, $U_\bullet^+ \colon \N(\bDelta_+)^\op \to \shC$ is a right Kan extension of the diagram
 	$$
	U_\bullet^+ | \N(\bDelta_+^{\le 0})^\op \colon
	\begin{tikzcd}
	U_0=0 \ar[r,shift left]\ar[r,shift right] & U_{-1}=M
	\end{tikzcd}.$$
Then $M \simeq |U_\bullet|$ is the geometric realization, and by virtue of \cite[Proposition 6.1.2.11]{HTT}, we have 
for each $n \ge 1$, $U_n \simeq (M[-1])^{n}$. Hence $U_n$ is $(m-1)$-connective for all $n \ge 0$, and therefore $\dWeyl_{A}^{\lambda/\mu}(U_n)$ is $(m-1)$-connective for all $n \ge 0$ by induction hypothesis. Since the derived Weyl functor preserves geometric realization of simplicial objects and $\tau_{\ge m-1} (\Modcn_A)$ is closed under geometric realization of simplicial objects, $\dWeyl_{A}^{\lambda/\mu}(M) \simeq |\dWeyl_{A}^{\lambda/\mu}(U_\bullet)|$ is also $(m-1)$-connective. Since $\pi_{m-1}(\dWeyl_{A}^{\lambda/\mu}(M))$ is a quotient of $\pi_{m-1} (\dWeyl_{A}^{\lambda/\mu}(U_0))=0$, we obtain that $\dWeyl_{A}^{\lambda/\mu}(M)$ is $m$-connective. This proves  assertion \eqref{cor:dSchur:connective-1}.

Assertion \eqref{cor:dSchur:connective-2} follows from the equivalence $\dSchur^{\lambda/\mu}_A(M) = \dWeyl^{\lambda^t/\mu^t}_A(M[-1]) [|\lambda| - |\mu|]$ of  Theorem \ref{thm:Illusie--Lurie} and the fact that $\dWeyl^{\lambda^t/\mu^t}_A(M[-1])$ is $(m-1)$-connective (assertion \eqref{cor:dSchur:connective-1}). 
\end{proof}
 
\begin{proposition}[{Pseudo-coherence}] 
\label{prop:dSchur:pc}
For any pair of partitions $\mu \subseteq \lambda$ and any simplicial commutative ring $A$, if a connective complex $M \in \Modcn_A$ is pseudo-coherent to order $m$ for some $m \ge 0$ (resp. almost perfect), then $\dSchur_A^{\lambda/\mu}(M)$ and $\dWeyl_A^{\lambda/\mu}(M)$ are pseudo-coherent to order $m$  (resp. almost perfect).
 \end{proposition}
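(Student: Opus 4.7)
The plan is to prove both assertions simultaneously by induction on the pseudo-coherence order $m$. The base case $m = 0$ follows from Proposition~\ref{prop:dSchur:classical}, which reduces the claim to showing that the classical modules $\Schur^{\lambda/\mu}_{\pi_0 A}(\pi_0 M)$ and $\Weyl^{\lambda/\mu}_{\pi_0 A}(\pi_0 M)$ are finitely generated over $\pi_0 A$ when $\pi_0 M$ is; this is immediate from the explicit presentation of classical Schur and Weyl modules as quotients of tensor products of classical symmetric, exterior, and divided power functors.

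For the inductive step at order $m$, the inductive characterization of pseudo-coherence (from SAG~\S 2.7) supplies a fiber sequence $K \to P \to M$ in which $P = A^r$ is finite free and $K$ is connective and pseudo-coherent to order $m-1$. Note that the cofiber of $\rho \colon P \to M$ is $K[1]$, which is $1$-connective and itself pseudo-coherent to order $m$. To handle the Schur case, I would apply Theorem~\ref{thm:fib:dSchur_oplus}\eqref{thm:fib:dSchur_oplus-2i} to $\rho$, producing a finite sequence $F_N(A,\rho) \to \cdots \to F_0(A,\rho) = \dSchur^{\lambda/\mu}(M)$ with $F_N = \dSchur^{\lambda/\mu}(P)$ finite free, and with successive cofibers that decompose, via Theorem~\ref{thm:fib:dSchur_oplus}\eqref{thm:fib:dSchur_oplus-1i}, into subquotients $\dSchur^{\gamma/\mu}(P) \otimes_A \dSchur^{\lambda/\gamma}(K[1])$ indexed by $\mu \subseteq \gamma \subsetneq \lambda$. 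By Proposition~\ref{prop:dSchur:free} the first factor is finite free, and by d{\'e}calage (Theorem~\ref{thm:Illusie--Lurie}) the second factor is equivalent to $\dWeyl^{\lambda^t/\gamma^t}(K)[\,|\lambda|-|\gamma|\,]$; since $|\lambda|-|\gamma| \ge 1$ and $\dWeyl^{\lambda^t/\gamma^t}(K)$ is pseudo-coherent to order $m-1$ by the inductive hypothesis, the shift upgrades it to pseudo-coherence of order $m$. Closure of pseudo-coherence to order $m$ under tensor with perfect complexes and under cofiber sequences then propagates the conclusion through the filtration.

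The Weyl case proceeds by the analogous argument using Theorem~\ref{thm:fib:dSchur_oplus}\eqref{thm:fib:dSchur_oplus-2ii} together with \eqref{thm:fib:dSchur_oplus-1ii}, but here the subquotients $\dWeyl^{\gamma/\mu}(P) \otimes_A \dWeyl^{\lambda/\gamma}(K[1])$ require more care, because d{\'e}calage does not immediately express $\dWeyl$ of a shifted complex in terms of quantities already controlled. I would handle this by a secondary induction on $N = |\lambda| - |\mu|$: for the factors with $\mu \subsetneq \gamma$ one has $|\lambda|-|\gamma| < N$, and since $K[1]$ is pseudo-coherent to order $m$, the outer induction on $N$ applies directly. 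The only remaining case is $\gamma = \mu$, which yields the self-referential term $\dWeyl^{\lambda/\mu}(K[1])$; to resolve this I would iterate the covering argument, choosing successive fiber sequences $K^{(j+1)}[\,j+1\,] \to A^{r_j}[\,j\,] \to K^{(j)}[\,j\,]$ with $K^{(j)}$ pseudo-coherent to order $m - j$, using d{\'e}calage at each stage to convert $\dWeyl$ applied to the free terms $A^{r_j}[\,j\,]$ into shifted finite free $\dSchur$'s (hence pseudo-coherent to any order). The crucial termination mechanism is the observation that any $(m{+}1)$-connective complex is automatically pseudo-coherent to order $m$, so after at most $m+1$ iterations the remaining recursive term becomes trivially pseudo-coherent to order $m$. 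The almost perfect case then follows from the standard equivalence of almost perfectness with pseudo-coherence to every order.

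The hard part will be the Weyl inductive step, specifically the self-referential subquotient $\dWeyl^{\lambda/\mu}(K[1])$: unlike in the Schur case, d{\'e}calage here sends $\dWeyl$ to $\dSchur$ of a doubly shifted complex, obstructing a direct reduction to the inductive hypothesis. Closing this gap by the iterative covering described above, together with carefully tracking how each $[1]$-shift upgrades the pseudo-coherence order by one, is the key technical point of the argument.
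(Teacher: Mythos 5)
The paper's proof is a direct one-paragraph argument that you have not anticipated: by SAG~Corollary~2.7.2.4, a connective $M$ is pseudo-coherent to order $m$ iff it can be written as the geometric realization of a simplicial object $M_\bullet$ with each $M_k$ finite free for $0 \le k \le m$; since the derived Schur and Weyl functors preserve geometric realizations (they are non-abelian derived functors, hence preserve sifted colimits) and carry finite free modules to finite free modules (Proposition~\ref{prop:dSchur:free}), $\dSchur^{\lambda/\mu}_A(M)\simeq |\dSchur^{\lambda/\mu}_A(M_\bullet)|$ and $\dWeyl^{\lambda/\mu}_A(M)\simeq |\dWeyl^{\lambda/\mu}_A(M_\bullet)|$ inherit the same presentation, and one concludes immediately by the same corollary. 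Your inductive proposal using fiber sequences, the direct-sum decomposition of Theorem~\ref{thm:fib:dSchur_oplus}, and d\'ecalage is a genuinely different (and far heavier) route.

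Beyond the inefficiency, there is a gap in your Weyl step that you yourself flag as ``the key technical point.'' You claim that d\'ecalage ``converts $\dWeyl$ applied to the free terms $A^{r_j}[j]$ into shifted finite free $\dSchur$'s.'' It does not: Theorem~\ref{thm:Illusie--Lurie} gives $\dWeyl^{\gamma/\mu}(N)\simeq \dSchur^{\gamma^t/\mu^t}(N[1])[\,-(|\gamma|-|\mu|)\,]$, so $\dWeyl^{\gamma/\mu}(A^{r}[j])\simeq \dSchur^{\gamma^t/\mu^t}(A^{r}[j+1])[\,-(|\gamma|-|\mu|)\,]$, and applying d\'ecalage once more simply returns you to $\dWeyl^{\gamma/\mu}(A^{r}[j])$ — the two d\'ecalage moves form a closed loop and never land on an unshifted free input. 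So for $1 \le j \le m$ the terms $\dWeyl^{\gamma/\mu}(A^{r_j}[j])$ are not controlled by your argument (connectivity only rescues $j \ge m+1$, and invoking the perfectness statement of Proposition~\ref{prop:dSchur:Tor-amp} would be circular, since ``perfect $\Rightarrow$ derived Weyl perfect'' itself uses Proposition~\ref{prop:dSchur:pc}). The cleanest way to handle $\dWeyl^{\gamma/\mu}$ of a shifted free module is to realize the shift by a simplicial bar construction with finite free terms and use that $\dWeyl$ preserves realizations — but that is precisely the paper's argument applied globally, and once you allow it for the special inputs $A^{r}[j]$ there is no reason not to apply it to $M$ directly and bypass the entire induction. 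A minor secondary issue: the fiber sequences in your iteration should read $K^{(j+1)}[j] \to A^{r_j}[j] \to K^{(j)}[j]$ (the shift of $K^{(j+1)} \to A^{r_j} \to K^{(j)}$ by $[j]$), not $K^{(j+1)}[j+1] \to A^{r_j}[j] \to K^{(j)}[j]$.
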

 
\begin{proof}
If $M \in \Modcn_A$ is pseudo-coherent to order $m$, then by virtue of \cite[Corollary 2.7.2.4]{SAG}, we can write $M$ as the geometric realization of a simplicial object $M_\bullet$ of $\Modcn_A$ such that each $M_k$ is a finite free $A$-module for $0 \le k \le m$. Since derived Schur and Weyl functors preserve geometric realization of simplicial objects, $\dSchur_A^{\lambda/\mu}(M)$ and $\dWeyl_A^{\lambda/\mu}(M)$ are the geometric realizations of $\dSchur_A^{\lambda/\mu}(M_\bullet)$ and $\dWeyl_A^{\lambda/\mu}(M_\bullet)$, respectively, where $\dSchur_A^{\lambda/\mu}(M_k)$ and $\dWeyl_A^{\lambda/\mu}(M_k)$ are finite free $A$-modules for all $0 \le k \le m$, by virtue of Proposition \ref{prop:dSchur:free}. Hence we obtain $\dSchur_A^{\lambda/\mu}(M)$ and $\dWeyl_A^{\lambda/\mu}(M)$ are pseudo-coherent to order $m$, by virtue of \cite[Corollary 2.7.2.4]{SAG} again. Consequently, if $M$ is almost perfect, $\dSchur_A^{\lambda/\mu}(M)$ and $\dWeyl_A^{\lambda/\mu}(M)$ are also almost perfect.
\end{proof}

\begin{proposition}[{Tor-amplitudes and Perfectness}] 
\label{prop:dSchur:Tor-amp}
For any pair of partitions $\mu \subseteq \lambda$ and any simplicial commutative ring $A$, if a connective complex $M \in \Modcn_A$ has Tor-amplitude $\le m$ for some integer $m \ge 0$, then $\dSchur_A^{\lambda/\mu}(M)$ and $\dWeyl_A^{\lambda/\mu}(M)$ have Tor-amplitude $\le m (|\lambda| - |\mu|)$. Consequently, if $M$ is a perfect complex, then $\dSchur_A^{\lambda/\mu}(M)$ and $\dWeyl_A^{\lambda/\mu}(M)$ are perfect complexes. 
\end{proposition}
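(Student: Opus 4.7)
The plan is to prove the Tor-amplitude bound by nested induction on the pair $(N, m)$, with $N := |\lambda| - |\mu|$, ordered lexicographically (primary on $N$, secondary on $m$), handling $\dSchur^{\lambda/\mu}$ and $\dWeyl^{\lambda/\mu}$ in parallel. The perfect-complex statement will then follow at once from the Tor-amplitude bound together with Proposition \ref{prop:dSchur:pc}, since a perfect complex is precisely one that is almost perfect and of bounded Tor-amplitude. The main geometric input is the Direct-Sum Decomposition Formula (Theorem \ref{thm:fib:dSchur_oplus}), applied to a fiber sequence that drops Tor-amplitude by one.

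The base cases are straightforward: for $N \le 1$ the functors are either constant or the identity, while for $m = 0$ the complex $M$ is flat and Proposition \ref{prop:dSchur:free} produces flat outputs. For the inductive step at $(N, m)$ with $N \ge 2$ and $m \ge 1$, I will choose a free $A$-module $P_0$ together with a map $P_0 \to M$ surjective on $\pi_0$, set $M' := \fib(P_0 \to M)$, and verify via long exact sequences that $M'$ is connective with Tor-amplitude $\le m - 1$ (the hypothesis $m \ge 1$ is needed here, so that for any discrete test module $T$ the vanishing $\pi_j(P_0 \otimes_A T) = 0$ for $j \ge 1$ forces the relevant homotopy group of $M' \otimes_A T$ to vanish as well).

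Applying Theorem \ref{thm:fib:dSchur_oplus}\eqref{thm:fib:dSchur_oplus-2i} to $\rho : M' \to P_0$, whose cofiber is $M$, produces a sequence $F_N \to F_{N-1} \to \cdots \to F_1 \to F_0$ in which $F_0 \simeq \dSchur^{\lambda/\mu}_A(P_0)$ is flat, $F_N \simeq \dSchur^{\lambda/\mu}_A(M')$ has Tor-amplitude $\le (m-1)N$ by the secondary inductive hypothesis, $\cofib(F_1 \to F_0) \simeq \dSchur^{\lambda/\mu}_A(M)$ is the object we need to bound, and the intermediate cofibers $C_k \simeq \dSchur^{\lambda/\mu}_A(M', M)_{(k, N-k)}$ for $1 \le k \le N-1$ admit (by Theorem \ref{thm:fib:dSchur_oplus}\eqref{thm:fib:dSchur_oplus-1i}) further filtrations with graded pieces $\dSchur^{\gamma/\mu}_A(M') \otimes_A \dSchur^{\lambda/\gamma}_A(M)$ where $|\gamma/\mu| = k$ and $|\lambda/\gamma| = N - k$, both strictly less than $N$. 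The primary inductive hypothesis bounds these two Tor-amplitudes by $(m-1)k$ and $m(N-k)$ respectively, so each tensor product---and hence each $C_k$---has Tor-amplitude at most $(m-1)k + m(N-k) = mN - k$. Iterating the elementary middle-term inequality (for a cofiber sequence $X \to Y \to Z$, the Tor-amplitude of $Y$ is at most the maximum of those of $X$ and $Z$) down from $F_N$ through the $C_k$ yields Tor-amplitude of $F_1$ at most $\max\bigl((m-1)N,\, mN - 1\bigr) = mN - 1$. A final application of the cofiber inequality (Tor-amplitude of $Z$ is at most the maximum of Tor-amplitude of $Y$ and Tor-amplitude of $X$ plus one) to $F_1 \to F_0 \to \dSchur^{\lambda/\mu}_A(M)$ gives Tor-amplitude at most $mN$, as required. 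The derived Weyl case is strictly parallel, using \eqref{thm:fib:dSchur_oplus-2ii} and \eqref{thm:fib:dSchur_oplus-1ii} in place of their Schur counterparts.

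I expect the main point requiring care to be the combinatorial bookkeeping across the nested induction, specifically verifying that the indices appearing in the graded pieces of the $C_k$ are genuinely smaller in the lexicographic order so that the primary hypothesis applies, and that the Tor-amplitude arithmetic fits together so the iterated cofiber estimates absorb exactly into the target bound $mN$ without excess slack.
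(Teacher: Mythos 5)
The proposal is correct and takes essentially the same route as the paper: both proofs reduce Tor-amplitude by taking the fiber $M'$ of a $\pi_0$-surjection from a free module, apply the Direct-Sum Decomposition Formula (Theorem \ref{thm:fib:dSchur_oplus}) parts \eqref{thm:fib:dSchur_oplus-1i} and \eqref{thm:fib:dSchur_oplus-2i} to filter $\dSchur^{\lambda/\mu}_A(M)$ by pieces of the form $\dSchur^{\gamma/\mu}_A(M')\otimes_A\dSchur^{\lambda/\gamma}_A(M)$, and then run a double induction on $(|\lambda|-|\mu|, m)$ with the same cofiber-sequence arithmetic. The only cosmetic differences are that you keep the two-stage filtration explicit while the paper merges it into one long sequence, and your lexicographic formulation of the inductive hypothesis is a cleaner packaging of the paper's grid induction ($P(m-1,N)$ and $P(m,N-1)\Rightarrow P(m,N)$) — a modest gain in rigor on the bookkeeping, not a different idea.
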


\begin{proof}
We will only prove the proposition for $\dSchur_A^{\lambda/\mu}(M)$; the proof for $\dWeyl_A^{\lambda/\mu}(M)$ is similar. For a fixed simplicial commutative ring $A$, we let $P(m,N)$ denote the following statement:
\begin{itemize}
	\item
	$P(m,N) \colon \quad$ For any skew partition $\lambda/\mu$ and any $M \in \Modcn_A$, if $|\lambda|-|\mu| \le N$ and $M$ has Tor-amplitude $\le m$, then $\dSchur_A^{\lambda/\mu}(M)$ has Tor-amplitude $\le m \cdot N$. 
\end{itemize}
The statement $P(m,1)$ is trivially true for any $m \ge 0$. If $M$ has Tor-amplitude $\le 0$, then $M$ is a flat $A$-module, and the statement $P(0,N)$ holds for any $N \ge 0$ by virtue of Proposition \ref{prop:dSchur:free}. 

Now we assume $m \ge 1$, $N \ge 2$, and the statements $P(m-1,N)$ and $P(m,N-1)$ are true; we wish to prove that $P(m,N)$ is true. Let $M$ be a connective complex with Tor-amplitude $\le m$, where $m \ge 1$, and $\lambda/\mu$ a skew partition such that $|\lambda|-|\mu|=N$. Let $I$ be a generating set of $\pi_0(M)$ as a $\pi_0(R)$-module, then the natural map $A^{I} \xrightarrow{\rm ev} M$ is surjective on $\pi_0$. We form a fiber sequence $M' \xrightarrow{\rho} A^{I} \xrightarrow{\rm ev} M$, then $M'$ is a connective complex with Tor-amplitude $\le m-1$. By Theorem \ref{thm:fib:dSchur_oplus} \eqref{thm:fib:dSchur_oplus-1i} \& \eqref{thm:fib:dSchur_oplus-2i}, we have a canonical sequence of morphisms
	$$\dSchur^{\lambda/\mu}_A(M') = F_{\ell-1} \to F_{\ell -2} \to \cdots \to F_{1} \to F_{0} \simeq \dSchur^{\lambda/\mu}_A(A^{I})$$
(here, $\ell = \sum_{k=0}^{N} \ell_k \ge 3$, where $\ell_k \ge 1$ is defined in Theorem \ref{thm:fib:dSchur_oplus} \eqref{thm:fib:dSchur_oplus-1i}) such that
	$$\cofib(F_{i+1} \to F_{i}) \simeq \dSchur_A^{\gamma^i/\mu}(M') \otimes \dSchur_A^{\lambda/\gamma^i}(M) \quad \text{for} \quad 0 \le i \le \ell-2,$$
where $\gamma^i$ are partitions such that $\mu \subseteq \gamma^i \subseteq \lambda$, and $\gamma^i \neq \mu$ if $i \ge 1$. By induction hypothesis, $\dSchur^{\lambda/\mu}_A(M')$ has Tor-amplitude $\le N (m-1)$, and furthermore, for any $1 \le i \le \ell-2$, 
	$${\rm Tor.amp}_A \big(\dSchur_A^{\gamma^i/\mu}(M') \otimes \dSchur_A^{\lambda/\gamma^i}(M)\big) \le (m-1)(|\gamma^i| - |\mu|) + m (|\lambda| - |\gamma^i|) \le m \cdot N - 1$$
(see \cite[Lemma 6.1.1.6]{SAG}). Consequently, we obtain that $F_{i}$ have Tor-amplitude $\le m \cdot N - 1$ for all $1 \le i \le \ell-1$ (\cite[Proposition 7.2.4.23 (2)]{HA}). Moreover, from the equivalence 
	$$\cofib \big(F_{1} \to F_{0}\big) \simeq \dSchur_A^{\lambda/\mu}(M),$$
and the fact that $F_0\simeq \dSchur_A^{\lambda/\mu}(A^{I})$ is flat (Proposition \ref{prop:dSchur:free}), we obtain that $\dSchur_A^{\lambda/\mu}(M)$ has Tor-amplitude $\le m\cdot N -1 + 1 = m \cdot N$. 
By induction, the proposition is proved. 	
\end{proof}

\begin{corollary}
\label{cor:dSchur:perf-amp}
Let $\lambda/\mu$ be a skew partition with $d = |\lambda|- |\mu| \ge 1$, for any simplicial commutative ring $A$, if $M \in \Modcn_A$ is a perfect complex of Tor-amplitude in $[m,n]$, where $n \ge m \ge 0$, then $\dSchur_A^{\lambda/\mu}(M)$ and $\dWeyl_A^{\lambda/\mu}(M)$ are perfect complexes with Tor-amplitude contained in $[m, n d]$. If $m \ge 1$, then we furthermore obtain that $\dSchur_A^{\lambda/\mu}(M)$ is a perfect complex with Tor-amplitude contained in $[m-1+d, n d]$.
\end{corollary}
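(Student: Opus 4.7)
The plan is to deduce the corollary cleanly from Proposition \ref{prop:dSchur:Tor-amp} together with Corollary \ref{cor:dSchur:connective}, after recording one elementary general fact: for any $m$-connective $A$-complex $X$ and any discrete $A$-module $N$, the complex $X \otimes_A N$ is $m$-connective, because writing $X = X'[m]$ with $X'$ connective we have $X \otimes_A N \simeq (X' \otimes_A N)[m]$ and the relative tensor product of two connective complexes is connective. In other words, $m$-connectivity implies Tor-amplitude bounded below by $m$, which is the bridge used to turn connectivity statements into lower bounds on Tor-amplitude.

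Next, I would unpack the hypothesis that $M$ is perfect of Tor-amplitude in $[m,n]$ into its two components: $M$ is $m$-connective (hence in particular connective, since $m \ge 0$), and $M$ has Tor-amplitude $\le n$. Applying Proposition \ref{prop:dSchur:Tor-amp} to the latter yields that $\dSchur_A^{\lambda/\mu}(M)$ and $\dWeyl_A^{\lambda/\mu}(M)$ are perfect complexes of Tor-amplitude $\le n d$. This already settles the upper bound in both assertions.

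For the lower bounds, I would argue separately for the Weyl and Schur cases. Corollary \ref{cor:dSchur:connective}(1) asserts that $\dWeyl_A^{\lambda/\mu}(M)$ is $m$-connective, so by the observation above it has Tor-amplitude $\ge m$; combined with the upper bound, this gives the $[m, nd]$-containment for $\dWeyl_A^{\lambda/\mu}(M)$. For $\dSchur_A^{\lambda/\mu}(M)$ when $m = 0$, connectivity of $M$ forces $\dSchur_A^{\lambda/\mu}(M) \in \Modcn_A$ by the very construction of the derived functor (Definition \ref{def:dSchurWeyl}), hence Tor-amplitude $\ge 0 = m$. When $m \ge 1$, Corollary \ref{cor:dSchur:connective}(2) applies and yields $(m-1+d)$-connectivity of $\dSchur_A^{\lambda/\mu}(M)$, and the general observation promotes this to Tor-amplitude $\ge m-1+d$; this both refines the $[m, nd]$-bound (using $d \ge 1$) and establishes the sharper interval $[m-1+d, nd]$ asserted in the last sentence.

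There is no real obstacle, since every needed ingredient has been established earlier: the main point is simply to recognize that the connectivity results of Corollary \ref{cor:dSchur:connective} upgrade formally to Tor-amplitude lower bounds, and to combine these with the Tor-amplitude upper bounds and perfectness coming from Proposition \ref{prop:dSchur:Tor-amp}.
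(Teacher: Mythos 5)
Your proof is correct, and in fact it is more complete than the one the paper offers, which simply says ``Combine Propositions~\ref{prop:dSchur:pc} and~\ref{prop:dSchur:Tor-amp}.'' Those two propositions by themselves only give perfectness and the \emph{upper} Tor-amplitude bound $\le nd$; the lower bounds $m$ (for both functors) and $m-1+d$ (for $\dSchur$ when $m \ge 1$) require exactly the connectivity input from Corollary~\ref{cor:dSchur:connective} plus the bridge ``$m$-connective $\Rightarrow$ Tor-amplitude $\ge m$'' that you supply. So your route is the right one and, as far as the lower bounds go, the paper's one-line proof tacitly presupposes the argument you made explicit.

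The one place you should not present something as a mere ``unpacking of the hypothesis'' is the assertion that a connective perfect complex of Tor-amplitude in $[m,n]$ is $m$-connective. That implication is not definitional, and the converse of your easy observation (``$m$-connective $\Rightarrow$ Tor-amplitude $\ge m$'') does not hold for arbitrary connective modules. It does hold here, but the proof uses almost-perfectness: since $M$ is almost perfect, $\pi_k(M)$ is a finitely generated $\pi_0(A)$-module; if $k < m$ were the least degree with $\pi_k(M) \neq 0$, then choosing a residue field $\kappa$ at a prime in the support of $\pi_k(M)$, the identification $\pi_k(M \otimes_A \kappa) \simeq \pi_k(M) \otimes_{\pi_0(A)} \kappa$ (the edge term of the relevant Tor spectral sequence, valid because $M$ is $k$-connective) together with Nakayama yields $\pi_k(M \otimes_A \kappa) \neq 0$, contradicting Tor-amplitude $\ge m$. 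Once you insert this one-sentence justification, your argument is complete and, in my view, is what the published proof should have recorded.
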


\begin{proof}
Combine Propositions \ref{prop:dSchur:pc} and \ref{prop:dSchur:Tor-amp}.
\end{proof}

\begin{example} Let $A \in \CAlgDelta$, $M \in \Modcn_A$ a connective  complex of Tor-amplitude in $[m,n]$, where $n \ge m \ge 0$, and $d \ge 1$ an integer. Corollary \ref{cor:dSchur:perf-amp} implies that
	$${\rm Tor.amp}_A(\Gamma^d_A(M)) \subseteq [m, nd]
	\quad \text{and} \quad
	{\rm Tor.amp}_A(\bigwedge\nolimits^d_A(M)) \subseteq 
	\begin{cases}
	[0, nd], & \text{if $m=0$}; \\
	[m-1+d, nd], & \text{if $m \ge 1$}.
	\end{cases}
	$$
For derived symmetric powers, combining with the equivalences $\Sym^d_A(M) \simeq \bigwedge\nolimits^d_A(M[-1])[d]$ if $m \ge 1$, and $\Sym^d_A(M)  \simeq \Gamma^d_A(M[-2])[2d]$ if $m \ge 2$, we obtain from Corollary \ref{cor:dSchur:perf-amp} that
	$$
	{\rm Tor.amp}_A(\Sym^d_A(M)) \subseteq  
	\begin{cases}
		 [0, nd], & \text{if $m=0$}; \\
		 [d, nd], & \text{if $m=1$}; \\
		 [m-2+2d, nd], & \text{if $m \ge 2$}.
  	\end{cases}
	$$
\end{example}

In classical situation, our next result shows that the Schur complex $\bSchur^{\lambda/\mu}_R(\rho \colon M' \to M)$ canonically represents the derived Schur power $\dSchur^{\lambda/\mu}_R(\cofib (\rho \colon M' \to M))$. This is a generalization of Illusie's results for derived symmetric and exterior powers (\cite[Corollary 2.30 \& Variant 2.31]{J22a}).

\begin{proposition}[Generalizations of Illusie's Equivalences]
\label{prop:dSchur_vs_bSchur}
Let $R$ be an ordinary commutative ring and $\rho \colon M' \to M$ a morphism between finite projective $R$-modules. Then for any skew partition $\lambda/\mu$, there is a canonical equivalence
	$$[\bSchur_R^{\lambda/\mu}(\rho \colon M' \to M)] \xrightarrow{\sim} \dSchur_R^{\lambda/\mu} ( \cofib(M' \xrightarrow{\rho} M)) \in  \Mod_R^\cn,$$
where $[\bSchur_R^{\lambda/\mu}(\rho)] \in  \Mod_R^\cn$ denotes the image of the class of the Schur complex $\bSchur_R^{\lambda/\mu}(\rho)$ under the canonical equivalence $\shD_{\ge 0}(\Ch(\Mod_R^\heartsuit)) \simeq \Mod_R^\cn$ of \cite[Proposition 7.1.1.15]{HA}.
\end{proposition}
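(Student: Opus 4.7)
The plan is to build the equivalence by invoking Theorem~\ref{thm:fib:dbSchur}\eqref{thm:fib:dbSchur-2ii}, which already packages $\dSchur_R^{\lambda/\mu}(\cofib(\rho))$ as an iterated extension whose layers are shifts of the functors $\LL_R^{\lambda/\mu}(M,M')_k$ and whose connecting maps recover the Schur-complex differentials. The task is then to identify this iterated extension with the Dold--Kan image of $\bSchur_R^{\lambda/\mu}(\rho)$.

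First, I would apply Theorem~\ref{thm:fib:dbSchur}\eqref{thm:fib:dbSchur-2ii} to the morphism $\rho \colon M' \to M$, obtaining a canonical sequence
\[ P^0(R,\rho) \to P^1(R,\rho) \to \cdots \to P^{N}(R,\rho), \qquad N = |\lambda|-|\mu|,\]
with $P^{N}(R,\rho) \simeq \dSchur_R^{\lambda/\mu}(\cofib(\rho))$ and successive cofibers $\LL_R^{\lambda/\mu}(M,M')_k[k]$. Since $M$ and $M'$ are finite projective $R$-modules, base change and a Lazard-type argument (identical in spirit to Proposition~\ref{prop:dSchur:flat}) identify $\LL_R^{\lambda/\mu}(M,M')_k$ with the classical $k$th term $\bSchur_R^{\lambda/\mu}(M,M')_k$ of the Schur complex, which is a finite projective discrete $R$-module.

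Next, I would invoke the last clause of Theorem~\ref{thm:fib:dbSchur}\eqref{thm:fib:dbSchur-2ii}: the composite
\[
\LL_R^{\lambda/\mu}(M,M')_k[k] \to P^{k-1}(R,\rho)[1] \to \LL_R^{\lambda/\mu}(M,M')_{k-1}[k]
\]
is canonically $d_k[k]$, where $d_k$ is the Schur-complex differential. Together with step~one, this shows that $P^\bullet(R,\rho)$ is exactly the Postnikov-style tower obtained from a connective chain complex by iterated attachment of its terms in increasing simplicial degree via its differentials, applied to the two-term truncation $\bSchur_R^{\lambda/\mu}(\rho)$ of finite projective modules. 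A standard fact about the Dold--Kan correspondence $\shD_{\ge 0}(\Ch(\Mod_R^\heartsuit)) \simeq \Mod_R^\cn$ (see \cite[Proposition 7.1.1.15]{HA}) is that any connective chain complex $C_\bullet$ of projective modules admits precisely such an iterated-cofiber presentation of its Dold--Kan image, with cofibers $C_k[k]$ and connecting maps $d_k[k]$; by induction on $k$ this uniquely characterizes the tower up to contractible choice.

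Hence $P^{N}(R,\rho)$ is canonically equivalent to $[\bSchur_R^{\lambda/\mu}(\rho)]$, giving the desired equivalence with $\dSchur_R^{\lambda/\mu}(\cofib(\rho))$. The main technical point is the identification in step~three: one must verify that the Postnikov tower produced by Theorem~\ref{thm:fib:dbSchur}\eqref{thm:fib:dbSchur-2ii}, which is constructed functorially on $\shE$ via left Kan extension, genuinely matches the tower coming from Dold--Kan term by term, not merely layer by layer; the commutativity statement relating the connecting map to $d_k[k]$ is exactly what is needed, and lets one build the equivalence inductively by choosing compatible fillers in the stable $\infty$-category $\Mod_R$. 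Base-change compatibility (Proposition~\ref{prop:dSchur:basechange}) plus reduction to the universal free case over a polynomial ring makes the identification canonical and independent of choices.
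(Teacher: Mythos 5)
Your proposal is correct and follows essentially the same strategy as the paper: both apply Theorem~\ref{thm:fib:dbSchur}\eqref{thm:fib:dbSchur-2ii} to obtain the Postnikov-type tower $P^{0}(R,\rho) \to \cdots \to P^{N}(R,\rho)$ and then identify it with the brutal-truncation tower of $\bSchur_R^{\lambda/\mu}(\rho)$ under Dold--Kan. The one point you treat a bit loosely is the uniqueness claim in step three: knowing the layers $C_k[k]$ and the composites $C_k[k] \to P^{k-1}[1] \to C_{k-1}[k]$ does not by itself pin down the classifying maps $C_k[k] \to P^{k-1}[1]$; one needs the vanishing of $\Ext_R^{s}(C_i, C_j)$ for $s \neq 0$ (which holds here because the $C_i$ are finite projective $R$-modules) to make the space of compatible lifts contractible. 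The paper packages this precisely as an appeal to \cite[Lemma 2.28]{J22a}, whereas you gesture at it with "choosing compatible fillers" --- your instinct is right, but spelling out the Ext-vanishing hypothesis would make the inductive uniqueness argument airtight.
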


\begin{proof}
By virtue of Theorem \ref{thm:fib:dbSchur} \eqref{thm:fib:dbSchur-1} \&\eqref{thm:fib:dbSchur-2ii} and Proposition \ref{prop:dSchur:free}, there is a canonical sequence 
	$$P^0(R, \rho) \to P^1(R, \rho) \to \cdots \to P^{|\lambda| - |\mu|}(R,\rho)$$
of objects in $\Modcn_R$ such that
	$$P^{0}(R, \rho) \simeq \Schur_R^{\lambda/\mu}(M) \qquad P^{|\lambda|-|\mu|}(R, \rho) = \dSchur_R^{\lambda/\mu}(\cofib(\rho))$$
			$$\cofib(P^{k-1}(R, \rho) \to P^{k}(R, \rho)) \simeq \bSchur_R^{\lambda/\mu}(M, M')_k [k] \quad \text{for} \quad 1 \le k \le |\lambda|-|\mu|$$
and that the composition of canonical morphisms
	$$\bSchur^{\lambda/\mu}_R(M,M')_{k} \to P^{k-1}(R, \rho)[1-k] \to \bSchur^{\lambda/\mu}_R(M,M')_{k-1}$$
		is canonically equivalent to the differential morphism $d_{k}$ of the Schur complex $\bSchur_{R}^{\lambda/\mu}(\rho)$. Since 
		$$\Ext^s_R\left( \bSchur^{\lambda/\mu}_R(M,M')_{i}, \bSchur^{\lambda/\mu}_R(M,M')_{j} \right) = 0$$
for all $s \ne 0$ and $0 \le i,j \le |\lambda|-|\mu|-1$, from \cite[Lemma 2.28]{J22a} we obtain that the above sequence of morphisms $P^*(R, \rho)$ is canonically equivalent to the sequence $P'^{*}(R,\rho)$ in the proof of Theorem \ref{thm:fib:dbSchur} \eqref{thm:fib:dbSchur-2ii}, which is obtained by taking ``brutal" truncations 
	$$P'^{k}(R,\rho) : = [\bSchur_R^{\lambda/\mu}(M,M')_k \xrightarrow{d_k} \cdots \xrightarrow{d_2}  \bSchur_R^{\lambda/\mu}(M,M')_1 \xrightarrow{d_1} \bSchur_R^{\lambda/\mu}(M,M')_0]$$
of the Schur complex $\bSchur_{R}^{\lambda/\mu}(\rho)$. In particular, we obtain canonical equivalences 
	$[\bSchur_R^{\lambda/\mu}(\rho)]  \simeq P'^{|\lambda| - |\mu|}(R,\rho) \simeq P^{|\lambda| - |\mu|}(R,\rho) \simeq  \dSchur_R^{\lambda/\mu}(\cofib(\rho))$ as desired.
\end{proof}

\begin{remark}[The Classical Criteria]
\label{remark:classical.criteria}
Let $R$ be a commutative ring, let $\rho \colon M' \to M$ be a morphism between finite projective $R$-modules, and let $\sE = \cofib(\rho)$ denote the cofiber (which canonically represents the total complex $[M ' \xrightarrow{\rho} M]$). By combining the equivalence $\dSchur_R^{\lambda/\mu}(\sE) = [\bSchur_R^{\lambda}(\rho)]$ of Proposition \ref{prop:dSchur_vs_bSchur} and the acyclic criteria for the Schur complex $\bSchur_{R}^{\lambda/\mu}(\rho)$ of Akin-- Buchsbaum--Weyman \cite[Theorem V.1.17]{ABW} and Allahverdi--Tchernev \cite[Theorem 6.3]{AT19}, we obtain a necessary and sufficient condition for the derived Schur functor $\dSchur_R^{\lambda/\mu}(\sE)$ to be classical (or equivalently, for the canonical composite map $\dSchur_R^{\lambda/\mu}(\sE) \to \pi_0(\dSchur_R^{\lambda/\mu}(\sE)) \xrightarrow{\sim} \Schur_R^{\lambda/\mu}(\pi_0(\sE))$ to be an equivalence, where the last equivalence is from Proposition \ref{prop:dSchur:classical}).
\end{remark}
 
Next, we show that the derived functor of the $k$th component of Schur complexes can be expressed in terms of derived Schur functors. 

\begin{proposition}
\label{prop:bSchur_k=Schur_oplus_k}
For any skew partition $\lambda/\mu$, any $A \in \CAlgDelta$, any $M, M' \in \Mod_A^\cn$ and any $0 \le k \le N: =|\lambda| - |\mu| \ge 0$, there is a functorial equivalence of connective complexes:
	$$\LL_A^{\lambda/\mu}(M, M')_k  \xrightarrow{\sim} \dSchur_{A}^{\lambda/\mu}(M, M'[1])_{(N-k,k)} [-k].$$
\end{proposition}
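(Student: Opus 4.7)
The plan is to reduce to the case of finite free modules over a polynomial ring, and there exhibit the equivalence via a matching of $\Gm$-weight decompositions. Both sides, viewed as functors $\SCRModcn \times_{\CAlgDelta} \SCRModcn \to \SCRModcn$, preserve sifted colimits and commute with base change of simplicial commutative rings: by construction for the left-hand side, and by Theorem \ref{thm:fib:dSchur_oplus} \eqref{thm:fib:dSchur_oplus-1i} (together with the fact that $M' \mapsto M'[1]$ is an equivalence) for the right-hand side. Hence, by the universal property of Proposition \ref{prop:nonab:derived}, it suffices to produce a functorial equivalence on the generating subcategory $\shC = {\rm PolyMod}^{\rm ff} \times_{\rm Poly} {\rm PolyMod}^{\rm ff}$. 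I may therefore assume $A = R$ is a polynomial ring over $\ZZ$ and $M, M'$ are finite free $R$-modules.

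In this setting, I consider the zero morphism $\rho = 0 \colon M' \to M$, whose cofiber is $\cofib(\rho) = M \oplus M'[1]$. Applying Theorem \ref{thm:fib:dbSchur} \eqref{thm:fib:dbSchur-2ii} to $\rho$ gives a canonical sequence
$$P^0 \to P^1 \to \cdots \to P^N$$
with $P^N = \dSchur_R^{\lambda/\mu}(M \oplus M'[1])$ and successive cofibers $\LL_R^{\lambda/\mu}(M, M')_k[k]$. The scaling automorphism $[t] \colon M' \xrightarrow{\sim} M'$ fixes the zero morphism $\rho = 0$, so it acts on the pair $(R, \rho)$ by functoriality and induces a $\Gm$-action on the whole sequence $P^*$. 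Under this action, the $k$th cofiber $\LL_R^{\lambda/\mu}(M, M')_k[k]$ has pure weight $k$: by Theorem \ref{thm:fib:dbSchur} \eqref{thm:fib:dbSchur-1ii} (or equivalently by Theorem \ref{thm:fil:bSchur_k}) it admits a filtration whose subquotients are $\dSchur_R^{\nu/\mu}(M) \otimes \dWeyl_R^{\lambda^t/\nu^t}(M')$ with $|\lambda^t/\nu^t| = k$, so each is polynomial of degree $k$ in $M'$. Since $\Gm$ is linearly reductive, the action decomposes $P^N$ canonically into its weight spaces, yielding
$$\dSchur_R^{\lambda/\mu}(M \oplus M'[1]) = P^N \simeq \bigoplus_{k=0}^N \LL_R^{\lambda/\mu}(M, M')_k[k]$$
canonically, the weight-$k$ summand being $\LL_R^{\lambda/\mu}(M, M')_k[k]$.

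On the other hand, Theorem \ref{thm:fib:dSchur_oplus} \eqref{thm:fib:dSchur_oplus-1i} furnishes a second canonical direct-sum decomposition
$$\dSchur_R^{\lambda/\mu}(M \oplus M'[1]) = \bigoplus_{j=0}^N \dSchur_R^{\lambda/\mu}(M, M'[1])_{(N-j, j)},$$
in which $\dSchur_R^{\lambda/\mu}(M, M'[1])_{(N-j, j)}$ is the homogeneous component of degree $j$ in the second variable $M_2 = M'[1]$ and hence carries $\Gm$-weight $j$ under the same scaling action. Matching these two canonical weight decompositions of $P^N$ weight by weight yields the functorial equivalence
$$\LL_R^{\lambda/\mu}(M, M')_k[k] \xrightarrow{\sim} \dSchur_R^{\lambda/\mu}(M, M'[1])_{(N-k, k)},$$
which then extends uniquely to the asserted equivalence on all of $\SCRModcn \times_{\CAlgDelta} \SCRModcn$ by the universal property from the first paragraph.

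The main obstacle is the bookkeeping that ensures $\Gm$-equivariance of the universal constructions involved, namely that the filtration $P^*$ from Theorem \ref{thm:fib:dbSchur} \eqref{thm:fib:dbSchur-2ii} and the direct-sum decomposition from Theorem \ref{thm:fib:dSchur_oplus} \eqref{thm:fib:dSchur_oplus-1i} are both compatible with the scaling action on $M'$, and that the resulting weights on the cofibers and summands are as claimed. Once this equivariance is verified, the canonical matching of eigenspace decompositions under a linearly reductive group action is automatic.
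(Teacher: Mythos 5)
Your proof is correct in outline, and it shares the same overall scaffolding as the paper's (apply Theorem \ref{thm:fib:dbSchur} \eqref{thm:fib:dbSchur-2ii} to the zero map, combine with Theorem \ref{thm:fib:dSchur_oplus}, and reduce via sifted colimits to $R$ a polynomial ring and $M, M'$ finite free), but the decisive step — matching the two direct-sum decompositions of $\dSchur^{\lambda/\mu}_R(M \oplus M'[1])$ — is genuinely different. The paper observes that, in the finite-free case, each $\LL_R^{\lambda/\mu}(M,M')_k[k] = \bSchur_R^{\lambda/\mu}(M,M')_k[k]$ is a discrete finite free module concentrated in homological degree $k$, while each $\dSchur_R^{\lambda/\mu}(M,M'[1])_{(N-k,k)}$ is \emph{also} a discrete finite free module concentrated in degree $k$ (an iterated extension, via Theorem \ref{thm:fib:dSchur_oplus}, of pieces $\dSchur^{\gamma/\mu}_R(M) \otimes \dSchur^{\lambda/\gamma}_R(M'[1])$, each of which is discrete and sitting in degree $k$ by d\'ecalage, Theorem \ref{thm:Illusie--Lurie}, and Proposition \ref{prop:dSchur:free}). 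So taking $\pi_k$ of the equivalence of direct sums isolates the $k$th summand automatically, with no extra structure needed. Your route instead matches the summands via a $\Gm$-weight decomposition coming from scaling $M'$. Both invariants (homological degree vs.\ $\Gm$-weight) do separate the summands, so both approaches succeed. The trade-off is exactly the one you flag at the end: the paper's argument needs no $\Gm$-equivariance bookkeeping at all — the separation falls out of the homological degree for free — whereas your argument requires one to promote the scaling automorphisms to an algebraic $\Gm$-action on the universal constructions of Theorems \ref{thm:fib:dbSchur} and \ref{thm:fib:dSchur_oplus} (over the simplicial commutative ring base), verify its compatibility with both the filtration and the direct-sum decomposition, and then invoke linear reductivity of a split torus. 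That is genuine extra work, and in the finite-free-with-$\rho = 0$ case the filtration from Theorem \ref{thm:fib:dbSchur} \eqref{thm:fib:dbSchur-2ii} already splits trivially (the Schur-complex differentials vanish), so the $\Gm$-action is more than is needed. Your approach is conceptually attractive and might generalize to settings where the pieces are not concentrated in distinct homological degrees, but for this particular statement the homological-degree argument is the more economical one.
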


\begin{proof}
By applying Theorem \ref{thm:fib:dbSchur} \eqref{thm:fib:dbSchur-2} to the split cofiber sequence $M' \xrightarrow{0} M \to M\oplus M'[1]$ and using the decomposition of Theorem \ref{thm:fib:dSchur_oplus} \eqref{thm:fib:dSchur_oplus-1i}, we obtain a canonical equivalence
	$$\Phi_{(A, M, M')} \colon \bigoplus_k \, \LL_A^{\lambda/\mu}(M,M')_k [k] \xrightarrow{\sim}  \dSchur_A^{\lambda/\mu}(M \oplus M'[1]) = \bigoplus_k \, \dSchur_A^{\lambda/\mu}(M, M'[1])_{(N-k,k)}.$$
We wish to show that $\Phi_{(A, M, M')} $ induces an equivalence on the $k$th summands. Since the functor $(A, M, M') \mapsto \Phi_{(A,M,M')}$ commutes with sifted colimits, and the non-abelian derived functors $\LL_A^{\lambda/\mu}(M,M')_k$ and $\dSchur_A^{\lambda/\mu}(M, M'[1])_{(N-k,k)}$ commute with sifted colimits, it suffices to prove the assertion in the case where $A=R$ is a polynomial ring and $M, M'$ are finite free $R$-modules. 
In this case, on the one hand, we know each $\LL_R^{\lambda/\mu}(M,M')_k = \bSchur_R^{\lambda/\mu}(M,M')_k$ is a (discrete) finite free $R$-module. On the other hand, for each fixed $k$, by virtue of Theorem \ref{thm:fib:dSchur_oplus}, we obtain that $\dSchur_R^{\lambda/\mu}(M, M'[1])_{(N-k,k)}$ can be constructed through iterated extensions of objects of the form 
	$$\dSchur_A^{\gamma/\mu}(M) \otimes \dSchur_A^{\lambda/\gamma}(M'[1]) [-k], \quad \text{where $|\lambda| - |\gamma| = k$}.$$ 
By virtue of Theorem \ref{thm:Illusie--Lurie} and Proposition \ref{prop:dSchur:free}, we  have canonical equivalences 
	$$\dSchur_R^{\gamma/\mu}(M) \otimes \dSchur_R^{\lambda/\gamma}(M'[1]) [-k] \simeq \dSchur_R^{\gamma/\mu}(M) \otimes  \dWeyl_R^{\lambda^t/\gamma^t}(M') \simeq \Schur_R^{\gamma/\mu}(M) \otimes \Weyl_R^{\lambda^t/\gamma^t}(M')$$
for all $\gamma$ such that $|\lambda| - |\gamma| = k$. Hence $\dSchur_R^{\lambda/\mu}(M, M'[1])_{(N-k,k)}[-k]$ is equivalent to a finite free $R$-module. Therefore, by taking $k$th homology of both sides of the morphism $\Phi_{(R, M, M')}$, we obtain the desired equivalence $\bSchur_R^{\lambda}(M,M')_k \simeq \dSchur_R^{\lambda/\mu}(M, M'[1])_{(N-k,k)} [-k]$. 
\end{proof}

\begin{remark}[Alternative Characterization of Schur Complexes] 
\label{rem:bSchur_k=Schur_oplus_k}
Combining Proposition \ref{prop:bSchur_k=Schur_oplus_k} with Remark \ref{rmk:fib:dSchur_oplus}, we see that the universal sequence  	
	$$F_{N}(A, \rho') \to \cdots \to F_{1}(A,\rho') \to F_{0}(A, \rho')$$
of Theorem \ref{thm:fib:dSchur_oplus} \eqref{thm:fib:dSchur_oplus-2i}, where $\rho'$ denotes the canonical morphism $M \to \cofib(\rho \colon M' \to M)$, gives rise to a {\em complex} of objects of the form
	$$0 \to \LL_A^{\lambda/\mu}(M, M')_{N} \to \LL_A^{\lambda/\mu}(M, M')_{N-1} \to \cdots \to \LL_A^{\lambda/\mu}(M, M')_{1} \to  \LL_A^{\lambda/\mu}(M, M')_{0} \to 0 $$
in the homotopy category $\D_{\ge 0}(A)={\rm Ho}(\Mod_A^\cn)$.

If $A=R$ is an ordinary commutative ring and $M, M'$ are finite projective modules over $R$, then the above complex is equivalent to a complex of finite projective modules of the form
	$$0 \to \bSchur_R^{\lambda/\mu}(M, M')_{N} \to \bSchur_R^{\lambda/\mu}(M, M')_{N-1} \to \cdots \to \bSchur_R^{\lambda/\mu}(M, M')_{1} \to  \bSchur_R^{\lambda/\mu}(M, M')_{0} \to 0,$$	
which has the same terms as the Schur complex $\bSchur_R^{\lambda/\mu}(\rho)$ (Definition \ref{def:bSchur}). Moreover, by virtue of Proposition \ref{prop:dSchur_vs_bSchur} and the equivalence $F_{0}(A, \rho') \simeq \dSchur^{\lambda/\mu}(\cofib(\rho))$ of Theorem \ref{thm:fib:dSchur_oplus} \eqref{thm:fib:dSchur_oplus-2i}, the above complex is also quasi-isomorphic to the Schur complex $\bSchur_R^{\lambda/\mu}(\rho)$. 
\end{remark}

\subsection{Derived Schur and Weyl Functors over Prestacks}
\label{sec:dSchurdWeyl.prestacks}
Using Lurie's machinery developed in \cite[\S 6.2.1]{SAG}, we could ``globalize" all the constructions and results of the preceding subsections from affine cases to the cases of general prestacks. 

We refer readers to \cite[\S 3]{J22a} for a more detailed discussion in the situation of derived symmetric, exterior and divided powers; the same argument applies to the derived Schur and Weyl functors considered in this paper. We will only sketch the key concepts and steps here. 

Prestacks are arguably the most general type of spaces in algebraic geometry (see \cite[Chapter 2]{GR}). Special cases of prestacks include the classical algebro-geometric objects such as classical schemes, algebraic spaces, Deligne--Mumford stacks, Artin stacks, and higher algebraic stacks, as well as the derived algebro-geometric objects such as derived schemes, derived algebraic spaces, derived Deligne--Mumford, Artin, and higher algebraic stacks.

A {\em prestack} $X \in \Fun(\CAlgDelta,\shS)$ is a functor from the $\infty$-category of simplicial commutative rings $\CAlgDelta$ to the $\infty$-category $\shS$ of spaces (equivalently, $\infty$-groupoids). The representable ones, denoted by $X = \Spec A$, for $A \in \CAlgDelta$, are called the derived affine schemes. If $R \in \CAlg^\heartsuit$ is an ordinary commutative ring, then $X = \Spec R$ is the classical affine scheme in the sense of Grothendieck. 

\subsubsection{The functors $\underline{\shE^{[n]}}$} 
Using the construction of \cite[\S 6.2.1]{SAG} (see also \cite[\S 3.1]{J22a}), we can globalize the constructions of Notation \ref{notation:SCRMod} to the case of all prestacks and obtain functors
	$$\QCoh^\cn, \underline{\shE^{[n]}} \colon  \Fun(\CAlgDelta, \shS)^\op \to \widehat{\Cat}_{\infty}.$$
For any prestack $X$, the $\infty$-categories $\QCoh^\cn(X)$ and $\underline{\shE^{[n]}}(X)$ are defined by the formula:
	$$\QCoh^\cn(X) = \Fun_{/ \CAlgDelta}^{\rm CCart}(\int_{\CAlgDelta} X, \SCRMod^\cn), \quad 
	\underline{\shE^{[n]}}(X) = \Fun_{/ \CAlgDelta}^{\rm CCart}(\int_{\CAlgDelta} X, \shE^{[n]}).$$
Here, $\SCRMod^\cn \to \CAlgDelta$ and $\shE^{[n]} \to \CAlgDelta$ are the coCartesian fibrations defined in Notation \ref{notation:SCRMod}, 
$\int_{\CAlgDelta} X \to \CAlgDelta$ denotes the left fibration classified by the prestack $X \colon \CAlgDelta \to \shS$ (\cite[Proposition 3.3.2.5]{HTT}), and $\Fun_{/\CAlgDelta}^{\rm CCart}(\int_{\CAlgDelta} X, \shC)$ denotes the $\infty$-category of functors from $\int_{\CAlgDelta} X$ to $\shC$ which commute with their projections to $\CAlgDelta$ and carry each edge of $\int_{\CAlgDelta} X$ to a coCartesian edge of $q \colon \shC \to \CAlgDelta$ (see \cite[Definition 6.2.1.1]{SAG}), where $\shC = \SCRMod^\cn, \shE^{[n]}$. Notice that $\QCohcn$ could be regarded as the special case of $\underline{\shE^{[n]}}$ where $n=0$, i.e., $\QCohcn = \underline{\shE^{[0]}}$. In what follows, the case of $\QCohcn$ will be listed separately as it is more intuitive and may be helpful for understanding the general case of $\underline{\shE^{[n]}}$.

\begin{remark} Unwinding the definitions, we have the following more informal descriptions  
(see \cite[Remark 6.2.1.8]{SAG}, \cite[Remark 3.1]{J22a}):

\begin{enumerate}
	\item We can describe $\QCoh^\cn(X)$ as the limit of the $\infty$-categories of connective quasi-coherent complexes: $\QCoh^\cn(X) \simeq \varprojlim_{A \in \CAlgDelta, \eta \in X(A)} \Mod^\cn_A$, and $\underline{\shE^{[n]}}(X)$ as the limit of the $\infty$-categories of sequences of morphisms of length $(n+1)$: $\underline{\shE^{[n]}}(X) \simeq \varprojlim_{A \in \CAlgDelta, \eta \in X(A)} \shE^{[n]}_{A}$, where $\shE^{[n]}_A$ are the fibers of $\shE^{[n]}$ over $A \in \CAlgDelta$ defined in Notation \ref{notation:SCRMod} \eqref{notation:SCRMod-4}.

	\item We can think of a connective quasi-coherent complex $\sF \in \QCoh^\cn(X)$ as a functorial assignment $(A \in \CAlgDelta, \eta \in X(A)) \mapsto (\sF(\eta) \in \Mod^\cn_A)$ which commutes with base change of $A$, that is, if $\phi \colon A \to B$ is a map in $\CAlgDelta$ and $\eta' \in X(B)$ is the image of $\eta$ under $\phi$, then there is a canonical equivalence $B \otimes_A \sF(\eta) \xrightarrow{\sim} \sF(\eta')$ in $\Modcn_B$.
	
	\item Similarly, we can think of an object $\sM \in \underline{\shE^{[n]}}(X)$ as a functorial assignment 
		$$(A \in \CAlgDelta, \eta \in X(A)) \mapsto (\sM(\eta) = (M^0 \to M^1 \to \cdots \to M^n) \in \shE^{[n]}_A)$$ 
	where $M^i \in \Modcn_A$, which commutes with base change of $A$: if $\phi \colon A \to B$ is a map of simplicial commutative rings, and we let $\eta' \in X(B)$ denote the image of $\eta$ under $\phi$ and let $\sM(\eta') = (M'^0 \to M'^1 \to \cdots \to M'^n) \in \shE^{[n]}_B$, then there is a canonical equivalence $B \otimes_A \sM(\eta) \xrightarrow{\sim} \sM(\eta')$ in $\shE^{[n]}_{B}$, that is, the natural map $B \otimes_A M^* \to M'^*$ induces equivalences $B \otimes_A M^i \to M'^i$ for all $0 \le i \le n$. 
\end{enumerate}
\end{remark}

\begin{remark}
 \label{rmk:En.prestacks:properties} 
The above constructions have the following properties (\cite[Remark 3.2]{J22a}):
\begin{enumerate}
	\item 
	 \label{rmk:En.prestacks:properties-1} 
	 Let $f \colon X \to Y$ be a morphism of prestacks, then there is a canonical pullback functor $f^* \colon \QCoh(Y)^\cn \to \QCoh(X)^\cn$ (resp. $f^* \colon \underline{\shE^{[n]}}(Y) \to \underline{\shE^{[n]}}(X)$). 
	\item 
	 \label{rmk:En.prestacks:properties-2} 
	 Let $F \colon \SCRModcn \to \SCRModcn$ (resp. $F \colon 
	\shE^{[m]} \to \shE^{[n]}$ for some $m,n \ge 0$) be a functor which commutes base change of simplicial commutative rings (Remark \ref{rem:base-change:CAlgDelta}), then $F$ induces a natural transformation $\sF \colon \QCohcn \to \QCohcn$ (resp. $\sF \colon \underline{\shE^{[m]}}  \to \underline{\shE^{[n]}}$) which determines a functor $\sF_X \colon \QCoh(X)^\cn \to \QCoh(X)^\cn$ (resp. $\sF_X \colon \underline{\shE^{[m]}}(X) \to \underline{\shE^{[n]}}(X)$) for each prestack $X$. Moreover, the formation $X \mapsto \sF_X$ commutes with base change of prestacks, that is, for any morphism of prestacks $f \colon X \to Y$, there is a canonical equivalence of functors $f^* \circ \sF_X \xrightarrow{\sim} \sF_Y \circ f^* $. Furthermore, in the case where $X=\Spec A$ is an affine derived scheme, $A \in \CAlgDelta$, there is a canonical equivalence $\sF_{X} \simeq F_{A} \colon \Modcn_A \to \Modcn_A$ (resp. $\sF_{X} \simeq F_{A} \colon \shE^{[m]}_A \to \shE^{[n]}_A$.)
	\end{enumerate}
\end{remark}

\subsubsection{Derived Schur and Weyl Functors over Prestacks}
By applying Remark \ref{rmk:En.prestacks:properties}  \eqref{rmk:En.prestacks:properties-2} to the derived Schur and Weyl functors (of Definition \ref{def:dSchurWeyl}; they commute with base change of simplicial commutative rings by Proposition \ref{prop:dSchur:basechange}), we obtain, for any skew partition $\lambda/\mu$, natural transformations
	$$\dSchur^{\lambda/\mu} \colon \QCohcn \to \QCohcn \quad \text{resp.} \quad \dWeyl^{\lambda/\mu} \colon \QCohcn \to \QCohcn,$$
which determine, for each prestack $X$, functors
	$$\dSchur^{\lambda/\mu}_X \colon \QCoh(X)^\cn \to \QCoh(X)^\cn \quad \text{resp.} \quad  \dWeyl^{\lambda/\mu}_X \colon \QCoh(X)^\cn \to \QCoh(X)^\cn.$$
We will refer to $\dSchur_X^{\lambda/\mu}$ and $\dWeyl_X^{\lambda/\mu}$ as the {\em derived Schur functor} and {\em derived Weyl functor} over $X$ (associated with the skew partition $\lambda/\mu$), respectively. For any $\sE \in \QCoh(X)^\cn$, we will refer to $\dSchur_X^{\lambda/\mu}(\sE)$ and $\dWeyl_X^{\lambda/\mu}(\sE)$ as the {\em derived Schur power} and {\em derived Weyl power} {\em of $\sE$} (over $X$, associated with the skew partition $\lambda/\mu$), respectively. If the prestack $X$ is clear from the context, we will generally drop the subscript $X$ and write $\dSchur^{\lambda/\mu}(\sE)$ and $\dWeyl^{\lambda/\mu}(\sE)$ instead.

By virtue of Remark \ref{rmk:En.prestacks:properties}  \eqref{rmk:En.prestacks:properties-2}, the formations of the functors $\dSchur_X^{\lambda/\mu}$ and $\dWeyl_X^{\lambda/\mu}$ commute with base change of prestacks, that is, for any morphism $f \colon X \to Y$ of prestacks and any connective quasi-coherent complex $\sE$ on $X$, there are canonical functorial equivalences 
 	$$f^* \, \dSchur^{\lambda/\mu}_X(\sE) \xrightarrow{\sim} \dSchur^{\lambda/\mu}_Y (f^*\sE)  \quad \text{resp.} \quad  f^* \, \dWeyl^{\lambda/\mu}_X(\sE) \xrightarrow{\sim} \dWeyl^{\lambda/\mu}_Y (f^*\sE).$$
Furthermore, in the case where $X=\Spec A$ is an affine derived scheme, $A \in \CAlgDelta$, there are canonical equivalence $\dSchur^{\lambda/\mu}_{X} \simeq \dSchur^{\lambda/\mu}_{A} \colon \Modcn_A \to \Modcn_A$ and $\dWeyl^{\lambda/\mu}_{X} \simeq \dWeyl^{\lambda/\mu}_{A} \colon \Modcn_A \to \Modcn_A$, where $\dSchur^{\lambda/\mu}_{A}$ and $\dWeyl^{\lambda/\mu}_{A}$ are defined in Definition \ref{def:dSchurWeyl}.

If $\sE =\sV$ is a vector bundle over $X$, then Proposition \ref{prop:dSchur:free} implies that $\dSchur_X^{\lambda/\mu}(\sV)$ and $\dWeyl_X^{\lambda/\mu}(\sV)$ are both vector bundles over $X$. In this case, we will often use classical notations $\Schur_X^{\lambda/\mu}(\sV)=\dSchur_X^{\lambda/\mu}(\sV)$ and $\Weyl_X^{\lambda/\mu}(\sV)=\dWeyl_X^{\lambda/\mu}(\sV)$
 to emphasize that they are vector bundles and that if $X$ is classical, they are precisely the classical Schur and Weyl functors, respectively, obtained by globalizing the classical construction Definition \ref{def:SchurWeyl} (Proposition \ref{prop:dSchur:flat}).

All the properties of derived Schur and Weyl functors that we obtained in \S \ref{sec:dSchurdWeyl} and \S \ref{sec:dSchurdWeyl.properties} have direct generalizations to the case of prestacks, including the results on classical truncations (Proposition \ref{prop:dSchur:classical}), the d{\'e}calage isomorphisms (Theorem \ref{thm:Illusie--Lurie}, Corollary \ref{cor:dSchur.decalage}), connectivity  (Corollary \ref{cor:dSchur:connective}), pseudo-coherence (Proposition \ref{prop:dSchur:pc}), and perfectness (Proposition \ref{prop:dSchur:Tor-amp}). We leave the details of these statements in the prestack case to the readers.

\subsubsection{Universal sequences associated with derived Schur and Weyl functors over prestacks}
We could also apply Remark \ref{rmk:En.prestacks:properties}  \eqref{rmk:En.prestacks:properties-2} to the functors considered in \S \ref{sec:univ.fib:dSchur} and obtain the global version of all the results obtained in \S \ref{sec:univ.fib:dSchur}. 

For example, in the situation of Theorem \ref{thm:fil:dSchur_LR}, we could apply Remark \ref{rmk:En.prestacks:properties} \eqref{rmk:En.prestacks:properties-2} to the functor $\SCRModcn \to \shE^{[\ell-1]}$ of Theorem \ref{thm:fil:dSchur_LR}  \eqref{thm:fil:dSchur_LR-1} and obtain: 
\begin{itemize}
	\item For any skew partition $\lambda/\mu$ such that $\mu \neq \lambda$, there is a canonical natural transformation 
	$$\QCohcn \to \underline{\shE^{[\ell-1]}}$$
which determines, for each prestack $X$, a canonical functor
	$$(\sE \in \QCoh(X)^\cn) \mapsto  (\sF^{0}(X, \sE) \to  \sF^{1}(X, \sE) \to \cdots \to \sF^{\ell-1}(X, \sE) \in \underline{\shE^{[\ell-1]}}(X))$$
for which there are canonical equivalences
		$$\sF^{\ell-1}(X, \sE) = \dSchur_X^{\lambda/\mu}(\sE) \qquad \sF^{0}(X, \sE) \simeq \dSchur^{\tau^0}_X(\sE)$$
		$$\cofib\big(\sF^{i-1}(X, \sE) \to \sF^{i}(X, \sE)\big) \simeq \dSchur_X^{\tau^i}(\sE) \quad \text{for} \quad 1 \le i \le \ell-1,$$
	where $\tau^i \subseteq \lambda$ are the partitions such that $c_{\mu, \tau^i}^{\lambda} \neq 0$ considered in Theorem \ref{thm:fil:dSchur_LR}.
\end{itemize}

The same argument works for all the other situations in \S \ref{sec:univ.fib:dSchur}. Consequently, we obtain global versions of derived Cauchy decomposition formula (Theorem \ref{thm:fil:dsym_otimes}), decomposition formula for direct sums (Theorem \ref{thm:fib:dSchur_oplus}), and Littlewood--Richardson rules for derived Schur and Weyl functors (Theorem \ref{thm:fil:dSchur_LR} and Corolloary \ref{cor:fil:dSchur_LR}), and Koszul type sequences (Theorem \ref{thm:fib:dbSchur}). The details of these statements in the prestack case are left to readers.

\section{Derived Grassmannians and Derived Flag Schemes}
\label{sec:dGrass.dFlag}
This section defines derived Grassmannians and derived flag schemes of connective complexes and study their fundamental properties. The classical Grassmannian functors parametrize locally free quotients of (discrete) quasi-coherent sheaves. More precisely:

\begin{definition}[Classical Grassmannian Functors]
\label{def:Grass:classical}
Let $X$ be a classical scheme, $\sE$ a discrete quasi-coherent sheaf on $X$, and let $d>0$ be an integer.
The {\em classical Grassmannian functor}
	$$\Grasscl_{X,d}(\sE) =\Grasscl_{d}(\sE) \colon (\shS\mathrm{ch}/X)^{\rm op} \to \shS\mathrm{et}$$ 
is defined as follows: 
	\begin{itemize}
		\item For any $X$-scheme $\eta \colon T \to X$, $\Grasscl_{d}(\sE)(\eta)$ is the set of equivalence classes of quotients $q \colon \pi_0(\eta^* \sE) \twoheadrightarrow \sP$ in $\QCoh(T)^\heartsuit$, where $\sP$ is a locally free rank $d$ quasi-coherent complex on $T$, and two quotients $q \colon  \pi_0(\eta^* \sE)   \twoheadrightarrow \sP$ and $q' \colon  \pi_0(\eta^* \sE)  \twoheadrightarrow \sP'$ are said to be equivalent if $\ker(q) = \ker(q')$ as subsheaves of $\pi_0(\eta^* \sE)$ in $\QCoh(T)^\heartsuit$.
		\item For a $g \colon T' \to T$ between $X$-schemes $\eta \colon T \to X$ and $\eta' \colon T' \to X$, 
			$$\Grasscl_{d}(\sE)(g) \colon \Grasscl_{d}(\sE)(T) \to \Grasscl_{d}(\sE)(T')$$
			 is the pullback morphism which carries an epimorphism $\pi_0(\eta^*\sE) \twoheadrightarrow \sP$ to the classical pullback $\pi_0(\eta'^* \sE)\simeq \pi_0(g^* \pi_0(\eta^*\sE))  \twoheadrightarrow g^*\sP$. (This is well defined since the classical pullback $\pi_0 \circ g^* \colon \QCoh(T)^\heartsuit \to \QCoh(T')^\heartsuit$ is right-exact.)
	\end{itemize}
\end{definition}

In \cite[\S 9]{EGAI}, Grothendieck showed that the Grassmannian functors are representable by (classical) schemes over the base scheme $X$ and systematically studied their properties.

This section extends this theory to the context of derived algebraic geometry. 

\S \ref{def:Grass:classical} defines derived Grassmannians and explores their basic properties. We show that the derived Grassmannian functors are representable by relative derived schemes (Proposition \ref{prop:Grass:represent}) and their classical truncations are the classical Grassmannian functors (Proposition \ref{prop:Grass-classical}). We study their functoriality (Proposition \ref{prop:Grass-4,5}) and finiteness properties (Proposition \ref{prop:Grass:finite}), and completely describe their relative cotangent complexes (Theorem \ref{thm:Grass:cotangent}) and the closed immersions induced by surjective morphisms of complexes (Proposition \ref{prop:Grass:PB}).

\S \ref{sec:dGrass:Plucker.Segre} studies the derived generalizations of P\"ucker Morphisms and Segre Morphisms, relating derived Grassmannians to derived projectivizations studied in \cite{J22a}.

\S \ref{sec:flag} generalizes the above theory of derived Grassmannians to the theory of derived flag schemes. We study their representability (Proposition \ref{prop:Flag:rep}) and finiteness properties (Proposition \ref{prop:Flag:rep}) and describe their relative cotangent complexes (Theorem \ref{thm:dflag:cotangent}) and the behavior of the closed immersions induced by surjective maps of complexes (Proposition \ref{prop:dflag:immersion}). Furthermore, we  systematically investigate the natural morphisms among various derived flag schemes, with a focus on the forgetful functors (\S \ref{sec:dflag:forget}) and the closed immersions of derived flag schemes into products of Grassmannians (\S \ref{sec:closed.dflag.to.dGrass}).

\subsection{Derived Grassmannians} 
\label{sec:dGrass}

We will generally fix a prestack $X$ as base space. Recall that, a prestack is a functor $X \colon \CAlgDelta \to \shS$. It is arguably the most general notation of space that one could study in algebraic geometry. For any morphism of prestacks $\eta \colon T \to X$ and any complex $\sE \in \QCoh(X)$, we will let $\sE_{T} = \eta^*(\sE)$ denote the base change of $\sE$ along $\eta$. By abuse of notations, we will not distinguish the $\infty$-category $\CAlgDelta$ and the opposite $\infty$-category of affined derived schemes. Similarly, if $X$ is a prestack and $A \in \CAlgDelta$, we will generally not distinguish the space of $A$-points $X(A)$ and the space of maps $T = \Spec A \to X$.

\subsubsection{Definition of derived Grassmannians}
\label{sec:defn:dGrass}
Let $X$ be a prestack, let $\sE$ be a connective quasi-coherent complex over $X$, and let $d \ge 1$ be an integer. For any $A \in \CAlgDelta$ and any morphism $\eta \colon T =\Spec A \to X$, we let $G_{\sE}(\eta)$ denote the full subcategory of $\Fun(\Delta^1, \QCoh(T)^\cn)^{\simeq}$ spanned by those morphisms $u \colon \sE_T:=\eta^*\sE \to \sP$ 
 satisfying the following two conditions: 
	\begin{itemize}
		\item $\sP$ is a vector bundle of rank $d$ on $T$.
		\item The map $u$ is surjective on $\pi_0$ (that is, the induced map $\pi_0(u) \colon \pi_0(\sE_T) \to \pi_0(\sP)$ is an epimorphism in $\QCoh(T)^\heartsuit$). 
	\end{itemize}
We will refer to the elements $\sE_T  \to \sP$ of $G_{\sE}(\eta)$ as {\em rank-$d$ locally free quotients of $\sE_T$}. We define the prestack $G_{\sE} \in \Fun(\CAlgDelta, \shS)_{/X}$ over $X$ by the formula:
		$$(A \in \CAlgDelta, \eta \in X(A)) \mapsto (G_{\sE}(\eta) \in \shS).$$

\begin{proposition}[Representability]
\label{prop:Grass:represent}
For any prestack $X$ and any connective quasi-coherent complex $\sE$ on $X$, the above-defined prestack $G_{\sE}$ is a relative derived scheme over $X$.\end{proposition}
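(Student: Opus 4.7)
The plan is to apply Lurie's Artin--Lurie representability criterion (\cite[Theorem 18.1.0.2]{SAG}) after reducing to the affine base case. First I would observe that the formation $\sE \mapsto G_\sE$ commutes with base change: for any morphism $\eta \colon T \to X$ of prestacks, there is a natural equivalence $G_\sE \times_X T \simeq G_{\eta^*\sE}$, immediate from the pointwise definition of $G_\sE$ and the compatibility of quotients with pullback of connective complexes. Since being a relative derived scheme is local on the base for the flat topology, it suffices to prove representability when $X = \Spec A$ for some $A \in \CAlgDelta$, in which case $G_\sE$ becomes an $\shS$-valued functor on simplicial commutative $A$-algebras.

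With $X$ affine, the plan is to verify the four hypotheses of the representability theorem. Nilcompleteness and infinitesimal cohesion follow from the corresponding properties of the $\infty$-category $\QCoh^\cn$, combined with the stability of the conditions ``$\sP$ is a rank-$d$ vector bundle'' and ``$u$ is surjective on $\pi_0$'' under the limits appearing in those definitions (in the infinitesimally-cohesive case, one uses that a vector bundle on a square-zero thickening is equivalent data to a vector bundle on the base together with no extra data, and similarly for the surjectivity condition). Representability of the classical truncation $G_\sE|_{\CAlg^{\heartsuit}}$ by a classical scheme is the content of Proposition \ref{prop:Grass-classical}: for a discrete commutative ring $R$, the space $G_\sE(\eta)$ for $\eta \colon \Spec R \to X$ is discrete (since the mapping space from a connective complex to a discrete vector bundle is discrete), and specifying a rank-$d$ locally free quotient $\sE_R \to \sP$ surjective on $\pi_0$ is equivalent to specifying a rank-$d$ locally free quotient of the discrete sheaf $\pi_0(\sE_R)$; this identifies $G_\sE|_{\CAlg^{\heartsuit}}$ with Grothendieck's classical Grassmannian functor of $\pi_0(\sE)$, known to be representable by a classical scheme.

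The substantive remaining point is the construction of a $(-1)$-connective relative cotangent complex without assuming representability in advance. I would take as candidate the functor which assigns to each test point $(B, u \colon \sE_B \to \sP)$ the connective $B$-complex $\sQ^\vee \otimes_B \fib(u)$, and verify the defining universal property by a direct deformation-theoretic calculation. Concretely, given a square-zero extension $B \oplus M \to B$ with $M \in \QCoh(\Spec B)^\cn$, one identifies the fiber of $G_\sE(B \oplus M) \to G_\sE(B)$ over $u$ with the space of lifts of $u$ to a quotient $\sE_{B \oplus M} \to \widetilde \sP$ with $\widetilde \sP$ a vector bundle restricting to $\sP$; unwinding this using the standard identification of square-zero deformations with mapping spaces out of cotangent complexes in $\QCoh^\cn$ yields precisely $\Map_B(\sQ^\vee \otimes_B \fib(u), M)$. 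This calculation will of course also yield the formula of Theorem \ref{thm:Grass:cotangent}, but here it is needed only to verify the existence hypothesis of the representability theorem. The hardest part of the plan is checking the universal property of the candidate cotangent complex carefully enough to avoid circular appeals to representability, since infinitesimal cohesion and nilcompleteness of $\QCoh^\cn$ have to do all the work; once these pieces are in place, \cite[Theorem 18.1.0.2]{SAG} produces the desired derived scheme structure on $G_\sE$ over $X$.
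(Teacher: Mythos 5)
Your proposal takes a genuinely different route from the paper, and while it reflects good deformation-theoretic instincts, it has a real gap in the stated generality.

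The paper's proof follows Grothendieck's strategy from EGA I: after reducing to $X = \Spec R$, one chooses a (possibly infinite) family of sections $\{t_i : \sO_X \to \sE\}_{i \in I}$ that is jointly surjective on $\pi_0$, and for every size-$d$ subset $J \subseteq I$ defines an open subfunctor $U_J \subseteq G_\sE$ of quotients for which the restricted $d$ sections generate. One then checks directly that each $U_J$ is a derived affine scheme (a closed subscheme of the affine cone $\VV(\sE \otimes (\sO_X^{\oplus d})^\vee)$), that each inclusion $U_J \hookrightarrow G_\sE$ is a Zariski open immersion (via localization at determinants), and that the $U_J$'s jointly cover. This constructive approach also yields the affine charts used later (Remark \ref{remark:Grass:UJ}, Propositions \ref{prop:Grass:finite} and \ref{prop:Grass:PB}), so it buys more than mere representability.

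Your approach via the Artin--Lurie representability theorem is a reasonable alternative strategy, but it does not work in the generality stated. Proposition \ref{prop:Grass:represent} imposes no finiteness hypotheses on $\sE$: it is an arbitrary connective quasi-coherent complex, and the paper's proof deliberately allows $I$ to be infinite. The Artin--Lurie criterion (SAG Theorem 18.1.0.2 and its variants), by contrast, requires the functor to be locally almost of finite presentation and the cotangent complex to be almost perfect, together with a Noetherian-type hypothesis on the base; you do not address this among your ``four hypotheses,'' and indeed it fails for general connective $\sE$. Your candidate cotangent complex $\sQ^\vee \otimes \fib(u)$ is only almost perfect when $\sE$ is, so the theorem as cited cannot be invoked. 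Additionally, even when the finiteness conditions do hold, that theorem produces a spectral Deligne--Mumford stack (or algebraic space), not a scheme; one must then separately argue that a derived algebraic space whose classical truncation is a scheme is itself a derived scheme. Your approach would correctly establish representability when $\sE$ is almost perfect (which covers the Borel--Weil--Bott applications in the paper), but to prove the proposition as stated you would need either the explicit-charts construction or a different representability result with no finiteness hypotheses.
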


\begin{definition}[Derived Grassmannians] Let $X$ be a prestack, $d \ge 1$ an integer, and let $\sE$ be a connective quasi-coherent complex on $X$. We let 
	$$\pr = \pr_{\Grass_d(\sE)} \colon  \Grass_{X,d}(\sE) = \Grass_{X}(\sE;d) \to X$$
denote the relative derived scheme over $X$ which represents the prestack $G_{\sF}$, and refer to it as the {\em rank $d$ derived Grassmannian of $\sE$ (over $X$)}, or {\em derived Grassmannian of rank $d$ (locally free) quotients of of $\sE$ (over $X$)}. We will simply write $\Grass_{d}(\sE) = \Grass(\sE;d) = \Grass_{X,d}(\sE)$ if the space $X$ is clear from the context. We let $\sQ$, $\sQ_d$, $\sQ(\sE)$, or $\sQ_{\Grass_d(\sE)}$ denote the universal vector bundle of rank $d$ on $\Grass_d(\sE)$, and let $\rho = \rho_{
\Grass_d(\sE)} \colon \pr^* \sE \to \sQ$ denote the tautological map that is surjective on $\pi_0$. We will refer $\rho$ as the {\em tautological quotient (map)}. We will let $\sR$, $\sR(\sE)$, or $\sR_{\Grass_d(\sE)}$ denote the fiber of the tautological quotient $\rho$, and refer to 
	$$\sR_{\Grass_d(\sE)} \to \pr^*(\sE) \xrightarrow{\rho} \sQ_{\Grass_d(\sE)}  \quad (\text{or simply} \quad \sR \to \pr^*(\sE) \xrightarrow{\rho} \sQ )$$ 
as the tautological fiber sequence on $\Grass_d(\sE)$. By convention, we will set $\Grass_d(\sE) = X$ if $d=0$, and $\Grass_d(\sE)=\emptyset$ if $d<0$.
\end{definition}

\begin{example} If $d=1$, then $\Grass_1(\sE) = \PP(\sE)$ is the {\em derived projectivization} of $\sE$ over $X$ that we studied in \cite[\S 4.2]{J22a}. 
\end{example}

\begin{example}
\label{eg:dGrass:Z[1]oplusZ}
Let $X = \Spec \ZZ$ and $\sE = [\ZZ^m \xrightarrow{0} \ZZ^d]$, where $m, d \ge 1$ are integers. Then $\Grass_{d}(\sE)$ is the derived zero locus in $\Spec \ZZ$ cut out by the  zero section of the vector bundle $\ZZ^{md}$ over $\Spec \ZZ$ . In particular, the derived Grassmannian scheme $\Grass_{d}(\sE)$ has underlying classical scheme $\Spec \ZZ$, but is equipped with a nontrivial derived structure. More concretely, as a special case of the later Example \ref{eg:prop:Grass:PB}, we have canonical identifications
	$$\Grass_{d}(\sE) \simeq \VV_{\Spec \ZZ}(\ZZ^{md}[1]) = \Spec ( \Sym_\ZZ^*(\ZZ^{md}[1])) \simeq \underbrace{\Spec (\ZZ[\varepsilon]) \times_{\ZZ} \cdots \times_{\ZZ} \Spec (\ZZ[\varepsilon])}_{\text{$md$-terms}}.$$
 Here, $\ZZ[\varepsilon] : = \Sym_\ZZ^*(\ZZ[1])$ denotes the simplicial commutative ring of derived dual numbers. 
 \end{example}

\begin{proof}[Proof of Proposition \ref{prop:Grass:represent}]
The proof is a combination of Grothendieck's strategy in \cite{EGAI} and the higher-rank version of the proof of the projectivization case \cite[Proposition 4.17]{J22a}.
It suffices to prove the proposition in the case where $X = \Spec R$, $R \in \CAlgDelta$, and $\sE$ corresponds to a connective $R$-module $M$. We let $(t_i)_{i \in I}$ be a family of (possibly infinite many) elements that generates the $\pi_0(R)$-module $\Ext_R^0(R, M)$. Then the corresponding family of morphisms $\{t_i \colon \sO_X \to \sE\}_{i \in I}$ induces a map $\sO_X^{\oplus I} \to \sE$ of quasi-coherent sheaves on $X$ that is surjective on $\pi_0$. For every {\em finite subset $J \subseteq I$ of size $d$}, we let $U_{J}(\eta)$ denote the full subcategory of $G_{\sE}(\eta)$ spanned by those surjections $u \colon \eta^* \sE \to \sP$ for which the composition map
 	$$\sO_T^{\oplus d} \xrightarrow{(\eta^* t_i)_{i \in J}} \eta^* \sE \xrightarrow{u} \sP$$
is surjective on $\pi_0$. Since the formation of $\eta^* t_i$ is functorial on $\eta$, and the condition of ``being surjective on $\pi_0$" is stable under base change, the assignment $\eta \mapsto U_{J}(\eta) \subseteq G_{\sE}(\eta)$ defines a sub-prestack $U_{J}$ of $G_\sE$. It suffices to show the following assertions:
	\begin{enumerate}[label=$(\roman*)$, ref=$\roman*$]
		\item \label{proof:prop:Grass:represent-i}
		For each finite subset $J \subseteq I$ of size $d$, the sub-prestack $U_{J}$ is representable by a derived affine scheme.
		\item \label{proof:prop:Grass:represent-ii} 
		For each finite subset $J \subseteq I$ of size $d$, the inclusion $U_{J} \subseteq G_{\sE}$ is an open immersion.
		\item \label{proof:prop:Grass:represent-iii}
		 The family of maps $\{U_{J} \to G_{\sE} \}_{J \subseteq I}$, where $J \subseteq I$ runs through all finite subsets of $I$ of size $d$, is jointly surjective.
	\end{enumerate}

We first prove assertion \eqref{proof:prop:Grass:represent-i}. Let $U_J'$ denote the subfunctor of $U_J$ such that for each $\eta \colon T = \Spec R \to X$, $U_J'(\eta)$ is the space spanned by surjections of the form $u \colon \eta^* \sE \to \sO_T^{\oplus d}$ for which the composition $\sO_T^{\oplus d} \xrightarrow{\eta^* t_i} \eta^* \sE \xrightarrow{u} \sO_T^{\oplus d}$ is an isomorphism. On the other hand, for each $(u \colon \eta^* \sE \to \sP) \in U_J(\eta)$, the epimorphism $f_J \colon \sO_T^{\oplus d} \xrightarrow{(\eta^* t_i)_{i \in J}} \eta^* \sE \xrightarrow{u} \sP$ between vector bundles of same rank $d$ is necessarily an isomorphism. Since $\Vect_d(T)^{\simeq}_{\sO_T^{\oplus d}/}$ is a contractible Kan complex, we obtain that the inclusion of spaces $U_J'(\eta) \subseteq U_J(\eta)$ is a homotopy equivalence, and $U_J(\eta)$ is canonically homotopy equivalent to the fiber space
	$$\mathrm{fib}\left( \Map_{\QCoh(T)} (\eta^* \sE, \sO_T^{\oplus d}) \xrightarrow{\circ (\eta^* t_i)_{i\in J}} \Map_{\QCoh(T)}(\sO_T^{\oplus d}, \sO_T^{\oplus d})\right)$$
over the point $\id_{\sO_T^{\oplus d}}$. Consequently, we obtain a canonical equivalence between the prestack $U_J$ and the fiber of the morphism
	$$ \vert \sHom_X(\sE ,  \sO_X^{\oplus d}) \vert \xrightarrow{\circ (t_i)_{i \in J}} \vert \sHom_X(\sO_X^{\oplus d}, \sO_X^{\oplus d}) \vert \simeq \AA_X^{d^2}$$ 
over the section $X \to \AA_X^{d^2}$ which classifies the identity map $\id_{\sO_X^{\oplus d}}$. Therefore, $U_J$ is equivalent to an affine closed derived subscheme of $\vert \sHom_X(\sE ,  \sO_X^{\oplus d}) \vert \simeq \VV(\sE \otimes  (\sO_X^{\oplus d})^\vee)$.

Next, in order to prove assertion \eqref{proof:prop:Grass:represent-ii}, it suffices to show that for every affine derived scheme $Z = \Spec B$ and every map $g \colon Z \to G_{\sE}$, the functor
	\begin{equation} \label{eqn:Grass:rep:UJ-Z}
	(\eta \colon T = \Spec A \to X) \mapsto (U_J(\eta) \times_{G_{\sE}(\eta)} Z(\eta) \in \shS)
	\end{equation}
is representable by a derived open subscheme of $Z$. The morphism $g \colon Z \to G_{\sE}$ classifies a surjection $u_Z \colon \sE_Z \to \sP$, where $\sE_Z = g^* \sE$ and $\sP \in \Vect_d(Z)$. For each finite subset $J \subseteq I$ of size $d$, we let  $f_{J}$ denote the composite map $\sO_Z^{\oplus d} \xrightarrow{(g^* t_{i})_{i\in J}} \sE_Z \xrightarrow{u_{Z}} \sP$. By working Zariski locally over $Z$, we may assume $\sP \simeq \sO_Z^{\oplus d}$ and regard $f_J \colon \sO_Z^{\oplus d} \to \sO_Z^{\oplus d}$ as a matrix $(f_{ij})_{1 \le i, j \le d}$ with entries in $\pi_0(B)$. We let $\det (f_J) = \det((f_{ij})_{1 \le i,j \le d}) \in \pi_0(B)$ denote the determinant of the matrix $(f_{ij})_{1 \le i,j \le d})$, and let $U_{Z, J} = \Spec B[\frac{1}{\det (f_J)}]$ be the derived open subscheme of $Z = \Spec B$, where $B[\frac{1}{\det (f_J)}] \to B$ is the localization map of the simplicial commutative ring $B$ with respect to the element $\det (f_J) \in \pi_0(B)$ defined in \cite[Proposition 4.1.18]{DAGV}. To show that $U_{J, Z} \subseteq Z$ represents the functor \eqref{eqn:Grass:rep:UJ-Z}, it suffices to show the following assertion:
	\begin{enumerate}[label=$(*)$, ref=$*$]
		\item  \label{proof:prop:Grass:represent-ii-*}
		Let $h \colon Y=\Spec C \to Z = \Spec B$ be any map of derived affine schemes. Then $h^*(f_J)$ is an epimorphism on $\pi_0$ if and only if $h$ factorizes through $U_{J, Z} \subseteq Z$.
	\end{enumerate}
The assertion \eqref{proof:prop:Grass:represent-ii-*} is a consequence of the following two assertions:
\begin{itemize}
	\item By virtue of \cite[Proposition 4.1.18 (1)]{DAGV}, there is a canonical homotopy equivalence 
	$$\Map_{\CAlgDelta}(B[\frac{1}{\det(f_J)}], C) \to \Map_{\CAlgDelta}^0(B, C)$$
where $\Map_{\CAlgDelta}^0(B, C)$ is the union of summands of $\Map_{\CAlgDelta}(B, C)$ spanned by those maps $B \to C$ which carry $\det (f_J) \in \pi_0(B)$ to an invertible element of $\pi_0(C)$. 
	\item	The map between vector bundles $h^*(f_J) \colon \sO_Y^{\oplus d} \to \sO_Y^{\oplus d}$ is surjective on $\pi_0$ if and only if $\det(h^* (f_J)) \simeq h^* \det(f_J)$ is an invertible element of $\pi_0(C) \simeq \pi_0(\sO_Y)$.
\end{itemize}

Finally, in order to prove assertion \eqref{proof:prop:Grass:represent-iii}, it will suffice to show that, for each map of the form  $g \colon Z = \Spec B \to G_{\sE}$, the open subschemes $\{U_{J,Z} \subseteq Z\}_{J \subseteq I}$ forms a Zariski open cover of $Z$, where $J \subseteq I$ runs through all finite subsets of $I$ of size $d$, and $U_{J,Z}$ is the open subscheme of $Z$ which represents the functor \eqref{eqn:Grass:rep:UJ-Z} as in assertion \eqref{proof:prop:Grass:represent-ii}. To prove this, let $z \in |\Spec B| = |\Spec \pi_0(B)|$ be any point, and let $\kappa$ denote the residue field of $\pi_0(B)$ at $z$. Since $(f_{i, Z})_{i \in I} \colon \sO_Z^{\oplus I} \to \sP$ is surjective on $\pi_0$, the map $(f_{i,Z} \otimes_{\pi_0(B)} \kappa)_{i \in I} \colon \kappa^{\oplus I} \to \sP \otimes \kappa$ is a surjection of vector spaces. Since $\sP \otimes \kappa \simeq \kappa^{\oplus d}$, there exists a $J \subseteq I$ of size $d$ such that the induced map $(f_J)_Z = (g^* f_{i})_{i\in J} \colon \kappa^{\oplus d} \to \sP \otimes \kappa$ consists a basis of $\sP\otimes \kappa$, and thus surjective. By \eqref{proof:prop:Grass:represent-ii-*}, we deduce that $z \in U_{J, Z}$. 
\end{proof}
 
 \begin{remark}[Affine open covers] \label{remark:Grass:UJ} 
Let $X$ be a prestack and $\sE$ a connective quasi-coherent complex on $X$. Let $\{t_i \colon \sO_X \to \sE\}_{i \in I}$ be a family of sections that is jointly surjective on $\pi_0$, then the proof of Proposition \ref{prop:Grass:represent} shows that there is a Zariski open cover $\{U_J \subseteq \Grass_d(\sE)\}_{J \subseteq I}$ of $\Grass_d(\sE)$, where $J \subseteq I$ runs through all finite subsets of $I$ of size $d$, and $U_J \subseteq \Grass_d(\sE)$ is the open subfunctor characterized by the following universal property:
 	\begin{enumerate}[label=$(*')$, ref=$*'$]
		\item  \label{remark:prop:Grass:represent-ii-*'}
		Let $h \colon Y \to \Grass_d(\sE)$ be any map of prestacks. Then $h^* (\rho \circ (\pr^* t_i)_{i \in J}) \colon \sO_Y^{\oplus d} \to h^* \sQ$ is an epimorphism on $\pi_0$ if and only if $h$ factorizes through $U_{J} \subseteq \Grass_d(\sE)$.
	\end{enumerate}
In particular, over each $U_J$, the composite map of the restrictions
	$$f_J \colon \sO_{U_J}^{\oplus d} \xrightarrow{(\pr^*t_i)_{i \in J}|_{U_J}} \pr_{U_J}^* (\sE) \xrightarrow{\rho|_{U_J}} \sQ|_{U_J}$$
is surjective on $\pi_0$, where $\pr_{U_J}$ is the composite map $U_J \subseteq \Grass_d(\sF) \xrightarrow{\pr} X$. Therefore, $f_J$ is an isomorphism, and we let $f_J^{-1}$ denote an inverse of $f_J$. By virtue of the proof of assertion \eqref{proof:prop:Grass:represent-ii} of Proposition \ref{prop:Grass:represent}, the composite map
	$$\pr_{U_J}^* (\sE) \xrightarrow{\rho|_{U_J}} \sQ|_{U_J} \xrightarrow{f_J^{-1}}   \sO_X^{\oplus d}$$
classifies a closed immersion $s_J \colon U_J \to \vert \sHom_{X}(\sE, \sO_X^{\oplus d}) \vert$ which fits into a pullback diagram:
 	$$
	\begin{tikzcd}
	U_J \ar{r}{s_J}  \ar{d}[swap]{\pr_{U_J}} & \vert \sHom_X(\sE, \sO_X^{\oplus d}) \vert \ar{d}{\circ (t_i^*)_{i \in J} }\\
	X \ar{r}{s_X} & \vert \sHom_X(\sO_X^{\oplus d}, \sO_X^{\oplus d}) \vert .
	\end{tikzcd}
	$$
where $s_X \colon X \to \vert\sHom_X(\sO_X^{\oplus d}, \sO_X^{\oplus d})\vert$ is the section which classifies the identity map. Here, for a connective complex $\sE$, $|\sHom_X(\sE, \sO_X^{\oplus d})|  =  \VV_X(\sE \otimes \sO_X^{\oplus d}) = \Spec \Sym_X^*(\sE \otimes \sO_X^{\oplus d})$ is the affine cone over $X$ that classifies all maps from $\sE$ to $\sO_X^{\oplus d}$; see \cite[Example 4.8]{J22a} for details.
\end{remark}

\subsubsection{Classical truncations and functorial properties}
\begin{proposition}[Classical truncations]
\label{prop:Grass-classical}
 Let $X \colon \CAlgDelta \to \shS$ be a prestack, let $\sE$ be a connective quasi-coherent complex on $X$, and let $d \ge 1$ be an integer. Then the restriction 
 	$$\Grass_d(\sE)|_{\CAlg^\heartsuit} \colon \CAlg^\heartsuit \subseteq \CAlgDelta \xrightarrow{\Grass_d(\sE)} \shS$$
is canonically equivalent to the classical Grassmannian functor 
	$$\Grass_{d}^\cl(\pi_0(\sE)) \colon \CAlg^\heartsuit  \to \shS{\rm et} \subseteq \shS$$
which carries each pair $(R \in \CAlg^\heartsuit, \eta \colon \Spec R \to X)$ to the set of isomorphism classes 
of rank-$d$ locally free quotients of $\eta_\cl^*(\pi_0 \sE):=\pi_0 (\eta^* (\pi_0 \sE))$ in $\QCoh(\Spec R)^\heartsuit$.
Consequently, if $X$ is a derived scheme, then $\Grass_d(\sE)$ is a derived scheme, and its underlying classical scheme is canonically equivalent to the classical Grassmannian scheme (Definition \ref{def:Grass:classical}) of rank-$d$ locally free quotients of the discrete sheaf $\pi_0(\sE)$ over the classical scheme $X_\cl$.
\end{proposition}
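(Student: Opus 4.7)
The plan is to identify, for each ordinary commutative ring $R \in \CAlg^\heartsuit$ and each map $\eta \colon T = \Spec R \to X$, the space $G_{\sE}(\eta)$ of rank-$d$ locally free quotients of $\eta^*\sE$ (as defined in \S\ref{sec:defn:dGrass}) with the classical set $\Grasscl_d(\pi_0\sE)(\eta)$. The consequent statement about derived schemes then follows formally: Proposition \ref{prop:Grass:represent} guarantees $\Grass_d(\sE)$ is a relative derived scheme over $X$, and a derived scheme whose functor of points agrees on classical rings with the one represented by Grothendieck's classical Grassmannian of $\pi_0(\sE)$ must have this classical scheme as its underlying classical scheme.

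The first step is a $t$-structure observation. Since $T = \Spec R$ is classical, every rank-$d$ vector bundle $\sP$ on $T$ is discrete (lies in $\QCoh(T)^\heartsuit$). By the adjunction between $\pi_0 = \tau_{\le 0}$ and the inclusion $\QCoh(T)^\heartsuit \hookrightarrow \QCoh(T)^\cn$, for any $\sF \in \QCoh(T)^\cn$ the canonical map
\[
\Map_{\QCoh(T)^\cn}(\sF, \sP) \xrightarrow{\sim} \Hom_{\QCoh(T)^\heartsuit}(\pi_0 \sF, \sP)
\]
is an equivalence, and the target is discrete. Applied to $\sF = \eta^*\sE$, which is connective, and combined with the right $t$-exactness of $\eta^*$ (yielding $\pi_0(\eta^*\sE) \simeq \eta_\cl^*(\pi_0 \sE)$), this shows that every $u \colon \eta^*\sE \to \sP$ factors canonically and uniquely through a map $\bar u \colon \eta_\cl^*\pi_0\sE \to \sP$ in $\QCoh(T)^\heartsuit$. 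The condition that $u$ is surjective on $\pi_0$ translates to $\bar u$ being an epimorphism in $\QCoh(T)^\heartsuit$.

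The second step is to translate this into an equivalence of groupoids. The factorization $u \mapsto \bar u$ exhibits $G_{\sE}(\eta)$ as equivalent to the groupoid $\widetilde G(\eta)$ whose objects are rank-$d$ locally free quotient maps $\bar u \colon \eta_\cl^*\pi_0 \sE \twoheadrightarrow \sP$ and whose morphisms are isomorphisms $\beta \colon \sP \xrightarrow{\sim} \sP'$ compatible with the surjections. This groupoid is discrete: since $\bar u$ is epi, any such $\beta$ is uniquely determined by its compatibility constraint, so every object has trivial automorphism group. Its set of isomorphism classes is then in bijection with the subsheaves $K \subseteq \eta_\cl^*\pi_0\sE$ whose quotient is locally free of rank $d$, which is exactly $\Grasscl_d(\pi_0\sE)(\eta)$ by Definition \ref{def:Grass:classical}. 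Functoriality in $\eta$ is automatic because all constructions commute with pullback.

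The main technical obstacle will be the precise bookkeeping in the second step to ensure we capture the correct equivalence relation (``same kernel'' versus isomorphism of quotient with compatible projection), and to verify that the groupoid structure inherited from $\Fun(\Delta^1, \QCoh(T)^\cn)^{\simeq}$ indeed reduces to this discrete set. In particular, one must check that the naturality (in both $\eta$ and the ``source'' $\eta^*\sE$) of the factorization $u \mapsto \bar u$ is compatible with morphisms in the ambient arrow groupoid, using essentially that the source is rigid (a specific connective complex) and the target lives in the heart.
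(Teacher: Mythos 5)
Your proposal is correct and follows essentially the same route as the paper's proof: identify the mapping spaces $\Map_{\QCoh(T)}(\eta^*\sE, \sP) \simeq \Hom_{\QCoh(T)^\heartsuit}(\eta_\cl^*\pi_0\sE, \sP)$ using the $t$-structure, observe they are discrete (the paper phrases this as vanishing of $\Ext^{-i}_R$ for $i>0$), and conclude the space of quotients reduces to the classical Grassmannian functor. You add an explicit check that the groupoid of quotients has trivial automorphism groups (since $\bar u$ is epi, any compatible $\beta$ is unique), a small point the paper's proof glosses over; conversely, for the ``consequently'' clause the paper explicitly cites that a map $\Spec R \to X$ with $R$ discrete factors through $X_\cl$, a justification you gesture at but don't pin down.
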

 
 \begin{proof} 
 For any ordinary commutative ring $R \in \CAlg^\heartsuit$, any map $\eta \colon T=\Spec R \to X$ and any rank-$d$ vector bundle $\sP$ on $T$, there are canonical homotopy equivalences
	$$\Map_{\QCoh(T)}(\eta^* \sE, \sP) \simeq \Map_{\QCoh(T)^\heartsuit}(\pi_0(\eta^* \sE), \sP) \simeq \Map_{\QCoh(T)^\heartsuit}(\eta_{\cl}^* (\pi_0 \sE), \sP).$$
(Here, $\eta_\cl^* = \pi_0 \circ \eta^*|_{\QCoh(X_\cl)^\heartsuit} \colon \QCoh(X_\cl)^\heartsuit \to \QCoh(T)^\heartsuit$ denotes the classical pullback of discrete sheaves.) The last mapping space is discrete because, for any $i > 0$, we have
	$$\pi_i (\Map_{\QCoh(T)^\heartsuit}(\eta_{\cl}^* (\pi_0 \sE), \sP)) \simeq \Ext^{-i}_R(\eta_{\cl}^* (\pi_0 \sE), \sP) =0.$$
Therefore, we obtain functorial homotopy equivalences  $\Grass_d(\sE)(\eta) \to \pi_0(\Grass_d(\sE)(\eta)) \simeq \Grass_{d}^\cl(\pi_0(\sE))(\eta)$ for all $(R \in \CAlg^\heartsuit, \eta \colon \Spec R \to X)$. In the case where $X$ is a derived scheme, any morphism of the form $\eta \colon \Spec R \to X$, where $R \in \CAlg^\heartsuit$, factorizes uniquely through $X_{\cl} \subseteq X$ (see \cite[Proposition 1.1.8.1]{SAG}). Hence the proposition follows. 
\end{proof}

\begin{example}
\label{eg:Grass:points}
Let $\kappa$ be any field and consider a morphism $\eta \colon \Spec \kappa \to X$. Then the fiber product $\Spec \kappa \times_X \Grass_d(\sE)$ is canonically equivalent to the classical Grassmannian variety $\Gr(\eta_\cl^*(\pi_0 \sE) ;d):  = \Grass_{\Spec \kappa}(\eta_\cl^*(\pi_0 \sE);d)$ over the field $\kappa$ which parametrizes rank-$d$ locally free quotients of the $\kappa$-vector space $\eta_\cl^*(\pi_0 \sE) : = \pi_0(\eta^* (\pi_0 \sE)) \simeq \pi_0(\sE|_\kappa)  = \pi_0(\eta^*(\sE))$.
\end{example}
 
 \begin{remark}
 \label{rmk:Grass:nonempty}
Let $X$ be a derived scheme, $\sE$ a connective complex such that $\pi_0(\sE)$ is of finite type, and $d \ge 1$ an integer. Then it follows from Proposition \ref{prop:Grass-classical} and Example \ref{eg:Grass:points} that $\Grass_d(\sE)$ is nonempty if and only if the $d$th Fitting locus $|X^{\ge d}(\pi_0(\sE))| := \{x \in |X| \mid \rank_{\kappa(x)} \pi_0(\sE \otimes \kappa(x)) \ge d \} \subseteq |X|=|X_\cl|$ of $\pi_0(\sE)$ over $X_{\cl}$ is nonempty (here, $|X^{\ge d}(\pi_0(\sE))|$ is also called the degeneracy locus of $\pi_0(\sE)$ of rank $\ge d$, as it is the set of points where the discrete sheaf $\pi_0(\sE)$ has rank $\ge d$; see \cite[\S 2.2]{J21}). 
\end{remark}
  
\begin{proposition} \label{prop:Grass-4,5} Let $X$ be a prestack, $\sE \in \QCoh(X)^\cn$ and $d \ge 1$ an integer.
\begin{enumerate}[leftmargin=*]
	\item \label{prop:Grass-4} (The formation of derived Grassmannians commutes with base change) 
	Let $f \colon X' \to X$ be a morphism of prestacks, then there is a natural equivalence $\Grass_{X',d}(f^* \sE) \xrightarrow{\sim} \Grass_{X,d}(\sE) \times_X X'$ such that the pullback of the universal quotient bundle (resp. the tautological quotient map) on $\Grass_{X,d}(\sE)$ is canonically equivalent to the universal quotient bundle (resp. the tautological quotient map) on $\Grass_{X',d}(f^*\sE)$.
	\item \label{prop:Grass-5} (Tensoring with line bundles) 
	Let $\sL$ be a line bundle on $X$. Then there is a canonical equivalence $g \colon \Grass_{X,d}(\sE) \xrightarrow{\sim} \Grass_{X,d}(\sE \otimes \sL)$ for which there are canonical equivalences 
		$$g^* (\sQ(\sE \otimes \sL)) \simeq \sQ(\sE) \otimes \pr_{\Grass_d(\sE)}^* \sL \qquad g^* (\sR(\sE \otimes \sL)) \simeq \sR(\sE) \otimes \pr_{\Grass_d(\sE)}^* \sL,$$ 
		$$g^*\big(\pr_{\Grass_d(\sE \otimes \sL)}^* (\sE\otimes \sL) \xrightarrow{\rho_{\Grass_d(\sE \otimes \sL)}} \sQ (\sE\otimes \sL)  \big) \simeq \big(\pr_{\Grass_d(\sE)}^* \sE \xrightarrow{\rho_{\Grass_d(\sE)}} \sQ(\sE)\big)  \otimes \pr_{\Grass_d(\sE)}^* \sL.$$
\end{enumerate}
\end{proposition}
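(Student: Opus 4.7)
The plan is to verify both assertions directly by comparing functors of points, exploiting the fact that both derived Grassmannians are defined as prestacks in \S\ref{sec:defn:dGrass}, and then transferring all universal data through the Yoneda equivalences.

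For part \eqref{prop:Grass-4}, I would first recall that for any prestack $Y$ over $X$, the $A$-points of $\Grass_{X,d}(\sE) \times_X X'$ classifying a morphism $\eta' \colon T = \Spec A \to X'$ (with composite $\eta := f \circ \eta' \colon T \to X$) are precisely the rank-$d$ locally free quotients of $\eta^* \sE$. Since pullback of quasi-coherent complexes is functorial and satisfies $(f \circ \eta')^* \sE \simeq \eta'^*(f^* \sE)$ canonically, this space is naturally equivalent to $G_{f^*\sE}(\eta')$. By the representability result (Proposition \ref{prop:Grass:represent}), this natural equivalence of functors of points produces the claimed equivalence $\Grass_{X',d}(f^* \sE) \xrightarrow{\sim} \Grass_{X,d}(\sE) \times_X X'$. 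The assertion about the universal quotient bundle and tautological quotient map then follows because both are obtained by applying the Yoneda lemma to the identity map on each Grassmannian: the universal object on the right-hand side pulls back along $\eta'$ to the object classified by the composite $T \to \Grass_{X',d}(f^*\sE) \to \Grass_{X,d}(\sE) \times_X X' \to \Grass_{X,d}(\sE)$, which by construction agrees with the universal object on the left.

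For part \eqref{prop:Grass-5}, I would construct $g$ on functors of points. Given $\eta \colon T \to X$ and a rank-$d$ locally free quotient $u \colon \eta^* \sE \to \sP$, the tensor product with $\eta^* \sL$ produces a morphism
\[
u \otimes \id_{\eta^*\sL} \colon \eta^*(\sE \otimes \sL) \simeq \eta^* \sE \otimes \eta^* \sL \longrightarrow \sP \otimes \eta^* \sL,
\]
and since $\eta^*\sL$ is a line bundle, $\sP \otimes \eta^* \sL$ is again a rank-$d$ vector bundle on $T$; moreover tensoring with a line bundle preserves the property of being surjective on $\pi_0$ (as can be checked locally, where $\eta^*\sL$ trivializes). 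This defines a natural transformation $G_{\sE}(\eta) \to G_{\sE \otimes \sL}(\eta)$. An inverse is constructed by tensoring with $\eta^* \sL^{-1}$, so the transformation is an equivalence of spaces for each $\eta$; by Yoneda this yields the equivalence $g$. To identify the pullbacks of the universal objects, I apply the construction to the universal point $\id_{\Grass_d(\sE)}$: it sends the tautological quotient $\rho_{\Grass_d(\sE)} \colon \pr^*\sE \to \sQ(\sE)$ to $\rho_{\Grass_d(\sE)} \otimes \pr^*\sL \colon \pr^*(\sE \otimes \sL) \to \sQ(\sE) \otimes \pr^*\sL$; but by definition of $g$ and Yoneda, this object is $g^*$ applied to the tautological quotient on $\Grass_d(\sE \otimes \sL)$, giving the three claimed equivalences simultaneously. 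The equivalence for $\sR$ then follows by passing to fibers of the tautological quotient maps, using compatibility of fiber sequences with tensoring against the line bundle $\pr^*\sL$.

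Neither part should present a genuine obstacle, since both are formal consequences of the moduli-theoretic definition of $\Grass_d(\sE)$ together with Yoneda. The only point requiring mild care is the verification that surjectivity on $\pi_0$ is stable under tensoring with a line bundle, which reduces to the analogous statement for the trivial bundle after working Zariski-locally on $T$, and the verification that pullback commutes with tensor products of quasi-coherent complexes, which is standard.
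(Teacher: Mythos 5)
Your proof is correct and takes essentially the same approach as the paper: the paper simply refers to the projectivization case, Proposition 4.25 of \cite{J22a}, and that argument is precisely the formal functor-of-points/Yoneda comparison you carry out here, together with the same two routine checks (that $(f\circ\eta')^*\sE \simeq \eta'^*(f^*\sE)$ canonically, and that surjectivity on $\pi_0$ and rank-$d$ local freeness are preserved under tensoring with a line bundle, both verified Zariski-locally).
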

\begin{proof}
The proof is identical to the proof in the projectivization case \cite[Proposition 4.25]{J22a}.
\end{proof}

\subsubsection{Properness and finiteness properties}
We first review some related concepts:

\begin{definition}[Pseudo-coherence] \label{def:pseudo-coherent}
Let $A$ be a simplicial commutative ring, $n \ge 0$ an integer, and $M \in \Mod_A$ an $A$-complex.
\begin{enumerate}[leftmargin=*]
	\item We say that $M$ is {\em pseudo-coherent to order $n$} (an a complex over $A$) if every filtered diagram $N_\alpha$ in $\Mod_R$ such that each $N_\alpha$ is $n$-truncated and each transition map $\pi_{0} N_\alpha \to \pi_{0} N_\beta$ is a monomorphism, the canonical map $\varinjlim_{\alpha} \Map_{\Mod_A}(M, N) \to \Map_{\Mod_A}(M, \varinjlim_{\alpha} N_\alpha)$ is a homotopy equivalence. The $A$-complex $M$ is called {\em of finite type} if $M$ is pseudo-coherent to order $0$ over $A$.
	\item We say that $M$ is {\em almost perfect} if it is pseudo-coherent to order $n$ for all $n \ge 0$.
	\item We say that $M$ is {\em perfect} if $\Map_{\Mod_A}(M, \blank)$ preserve filtered colimits.
\end{enumerate}
\end{definition}

\begin{remark} Notice our condition that`` $M$ is pseudo-coherent to order $n$" is the same as the condition that ``$M$ is perfect to order $n$" in the sense of \cite[Definition 2.7.0.1]{SAG}, but is {\em different from} (and slightly weaker than) the condition that ``$M$ is perfect to order $n$" in the sense of  \cite[Proposition 2.5.7]{DAG}. Hence, we choose the above terminology to avoid confusions. If $A$ is a commutative ring, then our condition that ``$M$ is pseudo-coherent to order $n$" is equivalent to the condition that ``$M$ $(-n)$-pseudo-coherent" in the sense of \cite[\href{https://stacks.math.columbia.edu/tag/064Q}{Tag 064Q}]{stacks-project}.
\end{remark}

\begin{definition}[Finite generation and finite presentation]
\label{def:finite.generation}
Let $\phi \colon A \to B$ be a map of simplicial commutative rings and let $n \ge 0$ be an integer.

\begin{enumerate}[leftmargin=*]
		\item We say that $\phi$ is {\em of ﬁnite generation to order $n$} (cf. \cite[Definition 4.1.1.1]{SAG}) if for every filtered diagram $\{C_{\alpha}\}$ in $\CAlgDelta_A$ such that each $C_\alpha$ is $n$-truncated and each transition map $\pi_n C_\alpha \to \pi_n C_\beta$ is a monomorphism, the canonical map $\varinjlim_{\alpha} \Map_{\CAlgDelta_A}(B, C_\alpha) \to  \Map_{\CAlgDelta_A}(B, \varinjlim_{\alpha} C_\alpha)$ is a homotopy equivalence.  We say $\phi$ is {\em of ﬁnite type} if it is of ﬁnite generation to order $0$.
		\item We say $\phi$ is {\em almost of finite presentation} if $\phi$ is of ﬁnite generation to order $n$ for all $n\ge 0$.
		\item We say $\phi$ is {\em locally of finite presentation} if $B$ is compact object of $\CAlgDelta_A$. 
	\end{enumerate}
\end{definition}

We need the following variant of \cite[Proposition 2.16 (2)]{J22a}:

\begin{lemma} 
\label{lem:affine:finite.generation}
Let $A$ be a simplicial commutative ring, let $M \in \Modcn_A$ be a connective complex, and let $n \ge 0$ be an integer. Then $M$ is pseudo-coherent to order $n$ (resp. almost perfect, resp. perfect) over $A$ if and only if the map $A \to \Sym_A^*(M)$ is of ﬁnite generation to order $n$ (resp. almost of finite presentation, resp. locally of finite presentation).
\end{lemma}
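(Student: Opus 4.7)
The backbone of the argument is the free-forget adjunction $\Sym_A^* \colon \Mod_A^\cn \rightleftarrows \CAlgDelta_A$, which provides a natural homotopy equivalence
\begin{equation*}
\Map_{\CAlgDelta_A}(\Sym_A^*(M), C) \xrightarrow{\sim} \Map_{\Mod_A^\cn}(M, C)
\end{equation*}
for every $C \in \CAlgDelta_A$, where $C$ on the right-hand side is regarded as a connective $A$-module via its structure map $A \to C$. Two basic observations are used throughout: the forgetful functor $\CAlgDelta_A \to \Mod_A^\cn$ preserves filtered colimits, and $n$-truncation is a property of the underlying module (so an object of $\CAlgDelta_A$ is $n$-truncated iff its underlying module is). Thus the two sides of the adjunction translate module-level and algebra-level colimit comparisons into one another in a controlled way.

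For the direction ``$\Rightarrow$'', assume that $M$ has the stated module-theoretic finiteness property. Given a test diagram $\{C_\alpha\}$ in $\CAlgDelta_A$ satisfying the hypotheses of the corresponding algebraic notion (Definition \ref{def:finite.generation}), view it as a diagram in $\Mod_A^\cn$; the $n$-truncation is inherited and the algebra-level monomorphism hypothesis on transition maps translates into the corresponding module-level one (this is essentially formal once one notes that $\pi_0$ and $\pi_n$ of an $n$-truncated trivial square-zero extension are determined by those of its ideal summand). Applying the finiteness of $M$ to this test diagram and unwinding the adjunction yields the required equivalence for $\Sym_A^*(M)$. The perfectness case is identical, but with no truncation hypothesis to track.

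For the direction ``$\Leftarrow$'', given a test diagram $\{N_\alpha\}$ in $\Mod_A^\cn$, I would form the trivial square-zero extensions $C_\alpha := A \oplus N_\alpha \in \CAlgDelta_A$. Combined with the adjunction, the splitting
\begin{equation*}
\Map_{\Mod_A^\cn}(M, A \oplus N_\alpha) \;\simeq\; \Map_{\Mod_A^\cn}(M, A) \times \Map_{\Mod_A^\cn}(M, N_\alpha)
\end{equation*}
reduces the comparison map for $\{N_\alpha\}$ to the comparison map for $\{C_\alpha\}$, since the factor $\Map_{\Mod_A^\cn}(M, A)$ is constant in $\alpha$ and passes through the filtered colimit. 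For the perfectness case no further input is needed.

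The main obstacle is in the pseudo-coherence and almost-perfectness cases: the square-zero extension $C_\alpha = A \oplus N_\alpha$ satisfies $\pi_i(C_\alpha) \simeq \pi_i(A) \oplus \pi_i(N_\alpha)$, so it fails to be $n$-truncated unless $A$ itself is. I would resolve this by replacing $C_\alpha$ with its Postnikov truncation $\widetilde{C}_\alpha := \tau_{\le n}(A \oplus N_\alpha)$, using two facts: (i) the canonical map $\Map_{\Mod_A^\cn}(M, N_\alpha) \to \Map_{\Mod_A^\cn}(M, \widetilde{C}_\alpha)$ still captures $\Map_{\Mod_A^\cn}(M, N_\alpha)$ as a factor, since truncating the target does not change mapping spaces from $M$ once the target is $n$-truncated, and (ii) the filtered colimit of $\{\widetilde{C}_\alpha\}$ computes the truncated colimit of $\{A \oplus N_\alpha\}$. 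The last bookkeeping step is to verify that $\{\widetilde{C}_\alpha\}$ satisfies the $\pi_n$-monomorphism hypothesis required by Definition \ref{def:finite.generation}; this follows from the $\pi_0$-monomorphism hypothesis on $\{N_\alpha\}$ combined with the split-summand structure of the trivial square-zero extension, after truncation. With this handled, the algebraic finiteness of $A \to \Sym_A^*(M)$ applied to $\{\widetilde{C}_\alpha\}$ delivers the module-theoretic finiteness of $M$.
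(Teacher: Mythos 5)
Your forward direction (module finiteness $\Rightarrow$ algebra finiteness) is essentially the paper's argument: both use the free--forget adjunction to translate a test diagram $\{C_\alpha\}$ in $\CAlgDelta_A$ into a test diagram of underlying modules and then invoke the hypothesis on $M$. The paper does this in a single commutative square without saying much about the monomorphism conditions, which it implicitly treats as directly matching under the forgetful functor. (As a side note: the parenthetical remark you insert about square-zero extensions is out of place in this direction, since the $C_\alpha$ there are arbitrary algebras, not square-zero extensions.)

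Your backward direction is genuinely different and more self-contained. The paper goes through \cite[Proposition~4.1.2.1]{SAG}: finiteness of $A \to \Sym_A^*(M)$ implies $L_{\Sym_A^*(M)/A}$ is pseudo-coherent to order $n$ over $\Sym_A^*(M)$, and then base change along the augmentation together with $L_{\Sym_A^*(M)/A} \simeq \Sym_A^*(M) \otimes_A M$ gives the conclusion for $M$. Your trivial-square-zero-extension argument is in effect unwinding that SAG result; it avoids the cotangent complex black box at the cost of more bookkeeping.

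There is one step you should fix in the bookkeeping. You claim that the $\pi_n$-monomorphism hypothesis on $\{\widetilde{C}_\alpha\}$, where $\widetilde{C}_\alpha = \tau_{\leq n}(A \oplus N_\alpha) \simeq \tau_{\leq n}(A) \oplus N_\alpha$, follows from a $\pi_0$-monomorphism hypothesis on $\{N_\alpha\}$. That is false for $n \geq 1$: since $\pi_n \widetilde{C}_\alpha \simeq \pi_n(\tau_{\leq n} A) \oplus \pi_n N_\alpha$ with the identity on the first summand, you need monicity of $\pi_n N_\alpha \to \pi_n N_\beta$, not of $\pi_0$. This traces back to an apparent typo in the paper's Definition~\ref{def:pseudo-coherent}, which reads ``$\pi_0 N_\alpha \to \pi_0 N_\beta$'' but, to be consistent with \cite[Definition~2.7.0.1]{SAG} (which the paper cites as the source), with the paper's Definition~\ref{def:finite.generation}, and with the paper's own forward-direction argument, should read ``$\pi_n N_\alpha \to \pi_n N_\beta$.'' Once that is corrected, your backward direction goes through: the split mapping-space factor $\Map(M, \tau_{\leq n}A)$ is constant in $\alpha$ and cancels compatibly from both sides of the comparison map, and the filtered-colimit-commutes-with-truncation observation finishes it.
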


\begin{proof}
If $A \to \Sym_A^*(M)$ is of finite generation to order $n$, then it follows from  \cite[Proposition 4.1.2.1]{SAG} that $ L_{\Sym_A^*(M)/A}$ is pseudo-coherent to order $n$ as a $\Sym_A^*(M)$-complex. Consequently, $M \simeq L_{\Sym_A^*(M)/A} \otimes_{\Sym_A^*(M)} A$ is pseudo-coherent to order $n$ as an $A$-complex. For the converse direction, if $M$ is pseudo-coherent to order $n$, let $\{C_{\alpha}\}$ be any filtered diagram in $\CAlgDelta_A$ such that each $C_\alpha$ is $n$-truncated and each transition map $\pi_n C_\alpha \to \pi_n C_\beta$ is a monomorphism. Consider the following commutative diagram
		$$
	\begin{tikzcd} 
		\varinjlim_{\alpha} \Map_{\CAlgDelta_A}(\Sym_A^*(M), C_\alpha) \ar{d}{\simeq} \ar{r} & \Map_{\CAlgDelta_A}(\Sym_A^*(M), \varinjlim_{\alpha} C_\alpha) \ar{d}{\simeq} \\
		\varinjlim_{\alpha} \Map_{\Modcn_A}(M, C_\alpha)   \ar{r} & \Map_{\Modcn_A}(M, \varinjlim_{\alpha} C_\alpha)
	\end{tikzcd}
	$$
where the vertical arrows are the canonical equivalences from the adjuction between $\Sym_A^*$ and the forgetful functor from $\CAlgDelta_A$ to $\Modcn_A$ and the bottom horizontal arrow is an equivalence by our assumption on $M$. Hence, the top horizontal arrow is an equivalence. Therefore, $A \to \Sym_A^*(M)$ is of ﬁnite generation to order $n$. The rest of the statements are already proved in \cite[Proposition 2.16 (2)]{J22a}.
\end{proof}

We now define these concepts in the geometric setting (cf.  \cite[Definition 4.9]{J22a}).

\begin{definition}\label{def:perfectfg:prestacks}
Let $X$ be a prestack, $\sF \in \QCoh(X)$, and let $f \colon X \to Y$ be a morphisms between prestacks which is a relative derived scheme. Let $n \ge 0$ be an integer.   
\begin{enumerate}[leftmargin=*]
	\item We say that $\sF$ is {\em pseudo-coherent to order $n$ (resp. 
 almost perfect, resp. perfect)} if for every simplicial commutative ring $R$ and every point $\eta \in X(\eta)$, then $R$-module $\sF(\eta)$ has the same property in the sense of Definition \ref{def:pseudo-coherent}. 
 We say $\sF$ is {\em locally of finite type} if $\sF$ is pseudo-coherent to order $0$.
 	\item If $f \colon X \to Y$ is a morphism of derived schemes, then we say that $f$ is locally of finite generation to order $n$ (resp. locally almost of finite presentation, resp. locally of ﬁnite presentation), if for every pair of open immersions $\Spec B \subseteq X$, $\Spec A \subseteq Y$, where $A, B \in \CAlgDelta$, such that $f$ restricts to a morphism $\Spec B \to \Spec A$, $B$ is of finite generation to order $n$ (resp. almost of finite presentation, locally of ﬁnite presentation) over $A$ in the sense of Definition \ref{def:finite.generation}. In general, if  $f \colon X \to Y$ be a morphism of prestacks which is a relative derived scheme, then we say that $f$ is {\em locally of finite generation to order $n$ (resp. locally almost of finite presentation, resp. locally of ﬁnite presentation)} if for every map $\eta \colon \Spec R \to Y$, where $R \in \CAlgDelta$, the induced morphism of derived schemes $X_R = X \times_{\Spec R} Y \to \Spec R$ has the same property in the above sense for derived schemes. 
	We say $f$ is {\em locally of finite type} if $f$ is  locally of finite generation to order $0$. 
\end{enumerate}
\end{definition}

\begin{definition}[Properness]
A morphism $f \colon X \to Y$ between derived schemes are called {\em proper} if it is quasi-compact, separated, locally of finite type, and universally closed (Definition \cite[Definition 5.1.2.1]{SAG}); This is equivalent to the condition that the underlying morphism of classical schemes $f_\cl \colon X_\cl \to Y_\cl$ is proper in the sense of classical algebraic geometry (\cite[Remark 5.1.2.2]{SAG}). In general, a morphism $f \colon X \to Y$ between prestacks which is a relative derived scheme is called {\em proper} if the induced morphism $X_A \to \Spec A$ along every base change $\Spec A \to Y$, where $A \in \CAlgDelta$, is a proper morphism between derived schemes.
\end{definition}

\begin{proposition}[Finiteness properties] 
\label{prop:Grass:finite}
 Let $X$ be a prestack, $\sE$ a connective quasi-coherent complex on $X$, $d \ge 1$ an integer. Then:
 \begin{enumerate}
	\item 
	\label{prop:Grass:finite-1}
	If $\sE$ is locally of finite type, then the canonical projection $\pr \colon \Grass_d(\sE) \to X$ is proper (hence quasi-compact, separated and locally of finite type).
	\item 
	\label{prop:Grass:finite-2}
	If $\sE$ is pseudo-coherent to order $n$ for some $n \ge 0$, then the projection $\pr \colon \Grass_d(\sE) \to X$ is proper and locally of finite generation to order $n$.
	\item 
	\label{prop:Grass:finite-3}
	If $\sE$ is almost perfect, then the projection $\pr \colon \Grass_d(\sE) \to X$  is proper and locally almost of finite presentation.
	\item
	\label{prop:Grass:finite-4}
	 If $\sE$ is perfect, then the projection $\pr \colon \Grass_d(\sE) \to X$ is proper and locally of finite presentation.
\end{enumerate}
\end{proposition}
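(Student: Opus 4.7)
The plan is to reduce all four statements to a local analysis of $\Grass_d(\sE)$ over an affine base, using the open cover constructed in Remark \ref{remark:Grass:UJ}. Since the properties in question (properness, and local finite generation/presentation) are all local on the target and stable under base change, Proposition \ref{prop:Grass-4,5}\eqref{prop:Grass-4} allows me to assume $X = \Spec R$ for some $R \in \CAlgDelta$, with $\sE$ corresponding to a connective $R$-module $M$. I fix a family $\{t_i \colon \sO_X \to \sE\}_{i \in I}$ of sections that is jointly surjective on $\pi_0$ (taking $I$ finite when $\sE$ is locally of finite type); Remark \ref{remark:Grass:UJ} then produces a Zariski open cover $\{U_J\}$ of $\Grass_d(\sE)$, indexed by size-$d$ subsets $J \subseteq I$, together with a pullback square exhibiting each $U_J$ as $\Spec R \times_{\AA^{d^2}_R} \VV_R(M \otimes_R R^d)$, where the right vertical arrow is induced by precomposition with $(t_i)_{i \in J}$ and the bottom horizontal arrow is the identity section.

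For assertions \eqref{prop:Grass:finite-2}--\eqref{prop:Grass:finite-4} (the local finiteness statements), the key computation is the following: if $M$ is pseudo-coherent to order $n$ (resp.\ almost perfect, resp.\ perfect), then so is the finite direct sum $M \otimes_R R^d$, and hence by Lemma \ref{lem:affine:finite.generation}, the map $R \to \Sym_R^*(M \otimes_R R^d)$ is of finite generation to order $n$ (resp.\ almost of finite presentation, resp.\ locally of finite presentation). Equivalently, $\VV_R(M \otimes_R R^d) \to \Spec R$ has the corresponding finiteness property. The identity section $\Spec R \to \AA^{d^2}_R$ is a closed immersion cut out by the regular sequence $(Y_{ij} - \delta_{ij})$, hence locally of finite presentation. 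Base-change and composition of these three maps yields the desired finiteness property for $\pr_{U_J} \colon U_J \to \Spec R$; since these properties are local on the source and $\{U_J\}$ covers $\Grass_d(\sE)$, the same holds for $\pr$.

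For properness, which enters all of \eqref{prop:Grass:finite-1}--\eqref{prop:Grass:finite-4}, I use that a morphism of derived schemes is proper if and only if its underlying classical morphism is proper (\cite[Remark 5.1.2.2]{SAG}). Proposition \ref{prop:Grass-classical} identifies the underlying classical morphism with the classical Grassmannian morphism $\Grasscl_d(\pi_0 M) \to \Spec \pi_0 R$. Under the hypothesis of \eqref{prop:Grass:finite-1} (and hence of \eqref{prop:Grass:finite-2}--\eqref{prop:Grass:finite-4}, as these are strictly stronger), $\pi_0 M$ is a finitely generated $\pi_0 R$-module, so Grothendieck's classical result (\cite[\S 9]{EGAI}) gives properness. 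Combining this with the local-finite-type conclusion of \eqref{prop:Grass:finite-2} applied with $n=0$ yields \eqref{prop:Grass:finite-1}.

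The main non-formal step is the identification of the local model via the pullback square from Remark \ref{remark:Grass:UJ} combined with Lemma \ref{lem:affine:finite.generation}: once one accepts this, everything else is a bookkeeping exercise in the standard stability properties of the relevant finiteness conditions under composition and base change. I expect no genuine obstacle beyond checking that the finite-direct-sum $M \otimes_R R^d$ inherits pseudo-coherence/almost perfectness/perfectness from $M$, which is immediate from the definitions as these classes are closed under finite colimits in $\Modcn_R$.
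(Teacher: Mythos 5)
Your proposal is correct and takes essentially the same approach as the paper: reduce to the affine case, handle properness via Proposition \ref{prop:Grass-classical} and the classical theory of Grassmannians of finite-type sheaves, and handle the finiteness assertions locally using the affine open cover of Remark \ref{remark:Grass:UJ} together with Lemma \ref{lem:affine:finite.generation}. The only (inconsequential) difference is in the bookkeeping for the local finiteness step: you factor $\pr_{U_J}$ as the composition of the closed immersion $s_J$ (base-changed from the identity section $s_X$, which is locally of finite presentation) with the structure map of $\VV_R(M \otimes_R R^d)$, whereas the paper first uses cancellation to show $\pr_\VV \colon \VV(\sE)^{\times d} \to \AA_X^{d^2}$ has the desired finiteness property and then base-changes along $s_X$; both variants depend on the same standard stability of these finiteness conditions under composition, cancellation, and base change.
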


\begin{proof}
Since the question is local with respect to Zariski topology of $X$, we may reduce to the case where $X = \Spec A$ for some $A \in \CAlgDelta$. In order to prove assertion \eqref{prop:Grass:finite-1}, by virtue of \cite[Remark 5.0.0.1]{SAG}, Proposition \ref{prop:Grass-classical}, and Proposition \ref{prop:Grass:represent}, it suffices to show that the projection morphism $\pr_\cl \colon \Grass_d^\cl(\pi_0 \sE) \to X_\cl$ of the classical Grassmannian is proper in the sense of classical algebraic geometry. 
By virtue of \cite[Proposition 9.8.4]{EGAI}, we know that the classical Pl\"ucker morphism $\varpi_\cl \colon \Grass_d^\cl(\pi_0 \sE) \to \PP_\cl(\bigwedge_\cl^d (\pi_0\sE))$ is a closed immersion, hence proper. The projection $\PP_\cl(\bigwedge_\cl^d (\pi_0\sE)) \to X_\cl$ is locally projective as $\bigwedge_\cl^d( \pi_0\sE)$ is of finite type, hence proper (\cite[\href{https://stacks.math.columbia.edu/tag/01WC}{Tag 01WC}]{stacks-project}). Therefore, $\pr_\cl \colon \Grass_d^\cl(\pi_0 \sE) \to X_\cl$ is proper. Hence assertion \eqref{prop:Grass:finite-1} is proved, as are the claims about properness of assertions \eqref{prop:Grass:finite-2} through \eqref{prop:Grass:finite-4}. 
 
To prove the rest of the assertions, we let $\{U_J \subseteq \Grass_d(\sE)\}_{J \subseteq I}$ of $\Grass_d(\sE)$ be a Zariski open cover of $\Grass_d(\sE)$ (associated with a family of sections $\{t_i \colon \sO_X \to \sE\}_{i \in I}$ that is jointly surjective on $\pi_0$) constructed in Remark \ref{remark:Grass:UJ}. We assume $\sE$ is pseudo-coherent to order $n$ (resp. almost perfect, perfect), and we wish to show that $\Grass_d(\sE) \to X = \Spec A$ is locally of finite generation to order $n$ (resp. locally almost of finite presentation, locally of finite presentation). Since the desired properties for $\Grass_d(\sE) \to X$ are local on the source with respect to Zariski topology, it suffices to prove the assertions for the projections $\pr_{U_J} \colon U_J \to X$ from each open chart $U_J$. From Remark \ref{remark:Grass:UJ}, we have a pullback diagram:
 	$$
	\begin{tikzcd}
	U_J \ar{r}{s_J}  \ar{d}[swap]{\pr_{U_J}} & \vert \sHom_X(\sE, \sO_X^{\oplus d}) \vert = \VV(\sE)^{\times d} \ar{d}{\pr_\VV}\\
	X \ar{r}{s_X} & \vert \sHom_X(\sO_X^{\oplus d}, \sO_X^{\oplus d}) \vert  = \AA_X^{d^2}.
	\end{tikzcd}
	$$	
By virtue of Lemma \ref{lem:affine:finite.generation} and \cite[Corollary 4.1.3.3]{SAG}, the canonical projection $\VV(\sE)^{\times d} \to X$ is locally of finite generation to order $n$ (resp. locally almost of finite presentation, locally of finite presentation). Since the projection $\AA_X^{d^2} \to X$ is locally of finite presentation, we obtain from \cite[Proposition 4.1.3.1]{SAG} that the projection $\pr_{\VV} \colon \VV(\sE)^{\times d} \to \AA_X^{d^2}$ is locally of finite generation to order $n$ (resp. locally almost of finite presentation, locally of finite presentation), hence $\pr_{U_J} \colon U_J \to X$ has the same property by base change (\cite[Proposition 4.2.1.6]{SAG}).
\end{proof}

\subsubsection{Relative cotangent complexes}
The framework of derived algebraic geometry allows us to effectively describe cotangent complexes for all derived Grassmannians, in the same manner as it does for derived projectivizations \cite[Theorem 4.27]{J22a}:

\begin{theorem}[Relative cotangent complexes] 
\label{thm:Grass:cotangent} 
Let $X$ be a prestack, $\sE$ a connective quasi-coherent complex on $X$, and $d \ge 1$ an integer. Then the canonical projection map of the derived Grassmannian $\pr \colon \Grass_d(\sE) \to X$ admits a connective relative cotangent complex $\LL_{\Grass_d(\sE)/X}$ which can be described by the canonical equivalence 
	$$\LL_{\Grass_d(\sE)/X} \simeq \sR \otimes \sQ^\vee = \fib(\pr^*(\sE) \xrightarrow{\rho} \sQ) \otimes \sQ^\vee,$$
where $\sQ$ is the tautological rank $d$ vector bundle and $\rho \colon \pr^*(\sE) \xrightarrow{\rho} \sQ$ is the tautological quotient map. Consequently, if $\sE$ is pseudo-coherent to order $n$, for some integer $n \ge 0$ (resp. almost perfect, perfect, of Tor-amplitude $\le n$), then $\LL_{\Grass_d(\sE)/X}$ is pseudo-coherent to order $n$ (resp. almost perfect, perfect, of Tor-amplitude $\le n$). 
\end{theorem}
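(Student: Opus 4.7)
The strategy extends the projectivization case \cite[Theorem 4.27]{J22a} from $d = 1$ to general $d \ge 1$. Since the formation of cotangent complexes, derived Grassmannians, and the tautological fiber sequence all commute with base change (Proposition \ref{prop:Grass-4,5}), one reduces to $X = \Spec A$ with $A \in \CAlgDelta$; existence of the relative cotangent complex is then automatic from the fact that $\pr \colon \Grass_d(\sE) \to X$ is a relative derived scheme (Proposition \ref{prop:Grass:represent}). The plan is to pick a family $\{t_i \colon \sO_X \to \sE\}_{i \in I}$ jointly surjective on $\pi_0$ and to work on the Zariski cover $\{U_J \subseteq \Grass_d(\sE)\}$ of Remark \ref{remark:Grass:UJ}, indexed by finite subsets $J \subseteq I$ of size $d$, together with its defining pullback square in which the right-hand morphism $g_J \colon \VV_X(\sE^{\oplus d}) \to \AA^{d^2}_X$ classifies precomposition with $(t_i)_{i \in J}$.

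On each chart, base change of cotangent complexes yields $\LL_{U_J/X} \simeq s_J^* \LL_{\VV_X(\sE^{\oplus d})/\AA^{d^2}_X}$. Combining the cofiber sequence $g_J^* \LL_{\AA^{d^2}_X/X} \to \LL_{\VV_X(\sE^{\oplus d})/X} \to \LL_{\VV_X(\sE^{\oplus d})/\AA^{d^2}_X}$ with the standard formula $\LL_{\VV_X(\sF)/X} \simeq \pi^* \sF$ for connective $\sF$, the first map is identified with $\pi^*((t_i)_{i \in J}) \otimes \id_{\sO^{\oplus d}}$. Passing to the cofiber and pulling back to $U_J$ gives
$$\LL_{U_J/X} \simeq \cofib\bigl( (t_i)_{i \in J}|_{U_J} \colon \sO_{U_J}^{\oplus d} \to \pr^*\sE|_{U_J}\bigr) \otimes \sO_{U_J}^{\oplus d}.$$
The composite $\sO_{U_J}^{\oplus d} \xrightarrow{(t_i)_{i \in J}} \pr^*\sE|_{U_J} \xrightarrow{\rho} \sQ|_{U_J}$ is by construction the isomorphism $f_J$, so $(t_i)_{i \in J}|_{U_J}$ splits the tautological fiber sequence $\sR|_{U_J} \to \pr^*\sE|_{U_J} \to \sQ|_{U_J}$, identifying $\cofib((t_i)_{i \in J}|_{U_J}) \simeq \sR|_{U_J}$. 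Combined with the trivialization $\sO_{U_J}^{\oplus d} \simeq \sQ^\vee|_{U_J}$ induced by $f_J$ and the self-duality of the trivial bundle, this will yield the local equivalence $\LL_{U_J/X} \simeq \sR|_{U_J} \otimes \sQ^\vee|_{U_J}$.

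To upgrade these local identifications into a single canonical global equivalence, I would construct the morphism $\sR \otimes \sQ^\vee \to \LL_{\Grass_d(\sE)/X}$ directly from the functor of points. For any $T$-point $p$ classifying $\rho \colon \sE_T \to \sQ$ and any $M \in \QCoh(T)^{\cn}$, a square-zero extension calculation shows that, after fixing the trivial deformation $\tilde{\sQ} = \sQ \otimes_{\sO_T} \sO_{T[M]}$, the space of lifts $\tilde{\rho}$ of $\rho$ is $\Map(\sE_T, \sQ \otimes M)$, and the automorphism group $\Aut^1(\tilde{\sQ}) \simeq \Map(\sQ, \sQ \otimes M)$ acts on this by translation $\tilde{\rho} \mapsto \tilde{\rho} + \beta \circ \rho$. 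Applying $\Map(-, \sQ \otimes M)$ to the tautological fiber sequence $\sR_T \to \sE_T \to \sQ$ and taking the homotopy quotient under this translation action then identifies the full space of deformations of $p$ to $T[M]$ with $\Map(\sR_T, \sQ \otimes M) \simeq \Map(p^*(\sR \otimes \sQ^\vee), M)$. By the universal property of the cotangent complex this produces the desired canonical morphism, which by construction restricts on each $U_J$ to the local equivalence above, and is therefore a global equivalence. The most delicate point will be the homotopy-quotient computation: one must verify that in this square-zero setting, modding out the additive translation action of $\End(\sQ) \otimes M$ on $\Map(\sE_T, \sQ \otimes M)$ is computed by the cofiber in the stable setting, which ultimately rests on the rigidity of vector-bundle deformations and the abelian structure of the action.

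Finally, the finiteness statements will follow formally from $\LL_{\Grass_d(\sE)/X} \simeq \sR \otimes \sQ^\vee = \fib(\pr^*\sE \to \sQ) \otimes \sQ^\vee$: connectivity of $\sR$ comes from connectivity of $\pr^*\sE$, $\sQ$ and the surjectivity of $\pr^*\sE \to \sQ$ on $\pi_0$, and tensoring with the flat bundle $\sQ^\vee$ preserves connectivity. Since $\sQ^\vee$ is perfect of Tor-amplitude $0$, the remaining properties pass from $\pr^*\sE$ to $\sR$ through the fiber sequence $\sR \to \pr^*\sE \to \sQ$ by the stability of pseudo-coherence of order $n$, almost perfectness, perfectness, and Tor-amplitude $\le n$ under fibers with a perfect cofiber, and then persist under tensoring with $\sQ^\vee$.
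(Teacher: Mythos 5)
Your deformation-theoretic computation is the heart of the argument and essentially matches the paper's proof: the paper invokes the corepresentability characterization of relative cotangent complexes (\cite[Remark 17.2.4.3]{SAG}, \cite[Remark 4.29]{J22a}) and cites \cite[Proposition 4.30 (3)]{J22a} for the identification of the square-zero deformation functor $F_\eta(M) = \fib(\Grass_d(\sE)(A\oplus M) \to \Grass_d(\sE)(A) \times_{X(A)} X(A\oplus M))$ with $\Map(\eta^*(\sR\otimes\sQ^\vee), M)$, finishing with a base-change check on the corepresenting objects. The delicate point you flag does resolve affirmatively: $\Map(\sQ,\sQ\otimes M)$ and $\Map(\sE_T,\sQ\otimes M)$ are group-like $\EE_\infty$-spaces, being the zeroth spaces of mapping spectra of $A$-modules; translation by $\beta\mapsto\beta\circ\rho$ is a group homomorphism; and $\Map(\sE_T,\sQ\otimes M)\to\Map(\sR_T,\sQ\otimes M)$ is thereby a principal $\Map(\sQ,\sQ\otimes M)$-bundle, whose total space over the (delooped) space of deformations of $\sQ$ is identified with $\Map(\sR_T,\sQ\otimes M)$. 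Stably this is the coincidence of fiber and cofiber sequences of spectra.

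Two structural remarks. First, once you have corepresented $F_\eta$ by $\eta^*(\sR\otimes\sQ^\vee)$ for every $(A, \eta)$ and verified that these corepresenting objects commute with base change in $A$ (the one thing you omit, but it is immediate from the compatibility of the tautological sequence with base change, Proposition \ref{prop:Grass-4,5}), the criterion already furnishes the global equivalence $\LL_{\Grass_d(\sE)/X} \simeq \sR\otimes\sQ^\vee$. Your preliminary chart computation on the opens $U_J$ and the subsequent claim that the deformation-theoretic map restricts to the chartwise identification are therefore redundant; moreover, the chartwise identification uses the non-canonical splitting of the tautological sequence over $U_J$ (depending on $J$ and the sections $t_i$), so it could not by itself deliver the canonical global statement. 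Second, the phrase ``existence of the relative cotangent complex is then automatic'' overstates things: for a relative derived scheme over a general prestack, existence of a connective quasi-coherent cotangent complex on the total space is precisely what the corepresentability criterion supplies, so it is an output of the argument, not an input.
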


\begin{proof}
For any $A \in \CAlgDelta$ and $\eta \in \Grass_d(\sE)(A)$, we consider the functor $F_\eta \colon \Mod_A^{\rm cn} \to \shS$,
	$$F_\eta(M)  = {\rm fib} (\Grass_d(\sE)(A \oplus M)  \to \Grass_d(\sE)(A) \times_{X(A)} X(A \oplus M))$$
(where the fiber is taken over the point of $\Grass_d(\sE)(A) \times_{X(A)} X(A \oplus M))$ determined by $\eta$). By virtue of \cite[Remark 17.2.4.3]{SAG} (see also \cite[Remark 4.29]{J22a} in our setting), to prove the desired assertion of the theorem, it suffices to show:
\begin{enumerate}
	\item  For any $A \in \CAlgDelta$ and $\eta \colon T:= \Spec A  \to \Grass_d(\sE)$ which corresponds to $\overline{\eta} \colon T= \Spec A \to X$ and quotient $\rho_A \colon \sQ_A \to \sP_A$, the above defined functor $F_\eta \colon \Mod_A^{\rm cn} \to \shS$ is corepresented the connective quasi-coherent complex $\sM_\eta = \fib(\sQ_A \xrightarrow{\rho_A} \sP_A) \otimes \sQ_A^\vee$. This follows from \cite[Proposition 4.30 (3)]{J22a}.

	\item The quasi-coherent complexes $\sM_\eta$ that corepresent $F_\eta$ depend functorially on $A$ in the following strong sense: for each map $A \to B$ in $\CAlgDelta$ and $\eta \in \Grass_d(\sE)(A)$, let $\eta' \in \Grass_d(\sE)(B)$ denote the image of $\eta$, then the functor $F_{\eta'}$ is corepresented by $B \otimes_A \sM_\eta$. The quasi-coherent complexes $\sM_\eta = \fib(\sQ_T \xrightarrow{\rho_A} \sP_T) \otimes \sQ_A^\vee$ in $(1)$ evidently satisfy this property. 	
\end{enumerate} 
\end{proof}
 
\subsubsection{Closed immersions induced by surjective morphisms of complexes}
 
 \begin{proposition}[Closed immersions] \label{prop:Grass:PB}  Let $X$ be a prestack, $d \ge 1$ an integer, and $\sE' \xrightarrow{\varphi'} \sE \xrightarrow{\varphi''} \sE''$ a fiber sequence of connective quasi-coherent complexes on $X$. Let $\iota_{\varphi''} \colon \Grass_d(\sE'') \to \Grass_d(\sE)$ be the canonical map which, for any morphism $\eta \colon T=\Spec A \to X$, $A \in \CAlgDelta$, carries the element $(\eta^*(\sE'') \twoheadrightarrow \sP) \in \Grass_d(\sE'')(\eta)$ of $\Grass_d(\sE'')$ to the composite map $(\eta^*(\sE) \xrightarrow{\eta^*(\varphi'')} \eta^*(\sE'') \twoheadrightarrow \sP) \in \Grass_d(\sE)(\eta)$. Then:
 	\begin{enumerate}
		\item 
		 \label{prop:Grass:PB-1}
		The map $\iota_{\varphi''} \colon \Grass_d(\sE'') \to \Grass_d(\sE)$ is a closed immersion. 
		\item
		 \label{prop:Grass:PB-2}
		  The map $\iota_{\varphi''}$ admits a relative cotangent complex which can be described by the formula 
			$$\LL_{\Grass_d(\sE'')/\Grass_d(\sE)} \simeq \pr_{\Grass_d(\sE'')}^*(\sE') \otimes \sQ(\sE'')^\vee [1].$$
		\item 
		 \label{prop:Grass:PB-3}
		The map $\iota_{\varphi''}$ fits into a canonical pullback diagram of close immersions:
	\begin{equation} \label{eqn:Grass:PB:V}
	\begin{tikzcd} 
		\Grass_d(\sE'') \ar{d}[swap]{\iota_{\varphi''}} \ar{r}{\iota_{\varphi''}} & \Grass_d(\sE) \ar{d}{i_{\varphi'}} \\
		\Grass_d(\sE)  \ar{r}{i_{\mathbf{0}}} & \VV_{\Grass_d(\sE)}(\pr_{\Grass_d(\sE)}^* (\sE') \otimes \sQ(\sE)^\vee),
	\end{tikzcd}
	\end{equation}
where $i_{\mathbf{0}}$ and $i_{\varphi'}$ are sections of the projection map $\VV_{\Grass_d(\sE)}(\pr_{\Grass_d(\sE)}^* (\sE') \otimes \sQ(\sE)^\vee) \to \Grass_d(\sE)$ that classify the zero cosection 
	$$\pr_{\Grass_d(\sE)}^* (\sE') \otimes \sQ(\sE)^\vee \xrightarrow{0} \sO_{\Grass_d(\sE)}$$
and the cosection $\rho_{\varphi'} \colon \pr_{\Grass_d(\sE)}^* (\sE') \otimes \sQ(\sE)^\vee \to \sO_{\Grass_d(\sE)}$ corresponding to the composition
	$$\pr_{\Grass_d(\sE)}^* (\sE') \xrightarrow{\pr_{\Grass(\sE)}^* (\varphi')} \pr_{\Grass_d(\sE)}^* (\sE) \xrightarrow{\rho_{\Grass_d(\sE)}} \sQ(\sE),$$ 
respectively. In other words, $\iota_{\varphi''}$ identifies $\Grass_d(\sE'')$ as the derived zero locus of the canonical cosection  $\rho_{\varphi'} \colon \pr_{\Grass_d(\sE)}^* (\sE') \otimes \sQ(\sE)^\vee \to \sO_{\Grass_d(\sE)}$ determined by $\varphi'$.
	\end{enumerate}
\end{proposition}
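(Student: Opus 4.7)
The plan is to establish \eqref{prop:Grass:PB-3} first and then deduce \eqref{prop:Grass:PB-1} and \eqref{prop:Grass:PB-2} from it. For \eqref{prop:Grass:PB-3}, I will verify the pullback square \eqref{eqn:Grass:PB:V} by comparing universal properties on test prestacks $T \to X$. Fix such a test together with a rank-$d$ quotient $v \colon \eta^*\sE \twoheadrightarrow \sP$ on $T$. On one hand, lifting $v$ to a $T$-point of $\Grass_d(\sE'')$ along $\iota_{\varphi''}$ amounts to factoring $v$ through $\eta^*\sE \to \eta^*\sE''$; since $\sE' \to \sE \to \sE''$ is a fiber (hence cofiber) sequence in the stable $\infty$-category $\QCoh(X)$, the space of such factorizations is canonically equivalent to the space of null-homotopies of the composite $\eta^*\sE' \to \eta^*\sE \xrightarrow{v} \sP$. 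On the other hand, lifting $v$ to the pullback $\Grass_d(\sE) \times_{\VV} \Grass_d(\sE)$ along $i_0$ and $i_{\varphi'}$ amounts to specifying a homotopy between the sections $i_0|_T$ and $i_{\varphi'}|_T$ of $\VV \to \Grass_d(\sE)$, which is precisely a null-homotopy of the cosection $\rho_{\varphi'}|_T \colon \eta^*\sE' \otimes \sP^\vee \to \sO_T$; via the adjunction $\Map(\eta^*\sE' \otimes \sP^\vee, \sO_T) \simeq \Map(\eta^*\sE', \sP)$ (valid because $\sP$ is a vector bundle), this recovers the previous data. Natural in $T$, this comparison produces the pullback square.

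Assertion \eqref{prop:Grass:PB-1} then follows immediately: the zero section $i_0 \colon Y \to \VV_Y(\sM)$ of the affine bundle attached to a connective quasi-coherent complex $\sM$ is a closed immersion (it corresponds to the augmentation $\Sym^*_Y(\sM) \twoheadrightarrow \sO_Y$ of simplicial commutative algebras), and closed immersions are preserved under derived base change.

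For \eqref{prop:Grass:PB-2}, the base-change property of the cotangent complex applied to the pullback square \eqref{eqn:Grass:PB:V} gives a canonical equivalence $\LL_{\Grass_d(\sE'')/\Grass_d(\sE)} \simeq \iota_{\varphi''}^*\, \LL_{\Grass_d(\sE)/\VV}$, where the latter refers to the section $i_{\varphi'}$. The cotangent fiber sequence of the retraction $\Grass_d(\sE) \xrightarrow{i_{\varphi'}} \VV \xrightarrow{\pi} \Grass_d(\sE)$, together with the standard formula $\LL_{\VV_Y(\sM)/Y} \simeq \pi^*\sM$, then yields $\LL_{\Grass_d(\sE)/\VV} \simeq (\pr_{\Grass_d(\sE)}^*\sE' \otimes \sQ(\sE)^\vee)[1]$. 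Pulling back along $\iota_{\varphi''}$ and using the tautological identification $\iota_{\varphi''}^*\sQ(\sE) \simeq \sQ(\sE'')$ (which holds by the very construction of $\iota_{\varphi''}$) produces the desired formula.

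The main obstacle is the coherent matching of universal properties in the first step. Both sides must be compared as spaces rather than merely as sets of isomorphism classes, which requires interpreting the space of $T$-points of the pullback explicitly: a homotopy between two sections of $\pi \colon \VV \to \Grass_d(\sE)$ becomes, after applying $\pi$, a self-homotopy of the underlying map $T \to \Grass_d(\sE)$, and the residual data is a null-homotopy of the difference cosection. Once this comparison is arranged functorially in $T$, the remaining arguments are formal consequences of the stability of closed immersions under base change and of the base-change formula for cotangent complexes in a pullback square.
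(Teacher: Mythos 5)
Your approach is correct but takes a genuinely different route from the paper's. The paper proves assertions \eqref{prop:Grass:PB-1} and \eqref{prop:Grass:PB-3} by reducing to the Zariski open cover $\{U_J\}$ of Remark \ref{remark:Grass:UJ}: on each chart, $\iota_{\varphi''}$ is identified with a base change of the closed immersion $\VV_X(\sE'' \otimes \sO_X^{\oplus d}) \to \VV_X(\sE \otimes \sO_X^{\oplus d})$ of affine cone schemes, and the pullback square is verified chart-by-chart using the analogous statement for cones; assertion \eqref{prop:Grass:PB-2} is then extracted from \eqref{prop:Grass:PB-3}. You instead establish \eqref{prop:Grass:PB-3} directly by a functor-of-points argument without invoking the open charts: for a fixed $T$-point $v \colon \eta^*\sE \twoheadrightarrow \sP$ of $\Grass_d(\sE)$, you identify the fiber of $\iota_{\varphi''}$ over $v$ with the space of null-homotopies of $\eta^*\sE' \to \sP$ via the cofiber universal property, and you identify the fiber of the candidate pullback over $v$ with the same space via the path space of the fiber $\pi^{-1}(v)$ of $\VV \to \Grass_d(\sE)$ between the two sections $i_0(v)$ and $i_{\varphi'}(v)$. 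The key reduction you use and correctly flag as the delicate step — that lifting $v$ to $\Grass_d(\sE) \times_\VV \Grass_d(\sE)$ is, up to the contractible choice coming from $\pi$ being a retraction, the space of paths in $\pi^{-1}(v)$ from $i_0(v)$ to $i_{\varphi'}(v)$ — is correct and makes the argument cleaner and more intrinsic. Deducing \eqref{prop:Grass:PB-1} from \eqref{prop:Grass:PB-3} via stability of closed immersions under base change is a nice shortcut the paper does not exploit (it proves \eqref{prop:Grass:PB-1} independently by the chart argument). Your proof of \eqref{prop:Grass:PB-2} via the base-change formula for cotangent complexes of a pullback square plus $\LL_{\VV_Y(\sM)/Y} \simeq \pi^*\sM$ coincides with the paper's stated alternative route. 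One point you should make explicit in a polished version: when the composite $\eta^*\sE \xrightarrow{v} \sP$ is factored through $\eta^*\sE''$, the resulting map $\eta^*\sE'' \to \sP$ is automatically surjective on $\pi_0$ (since $v$ is), which is what ensures that the lift genuinely lands in $\Grass_d(\sE'')(\eta)$ and not merely in the space of all maps.
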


\begin{proof}
The strategy is the same as the case of projectivizations \cite[Corollary 4.32, Proposition 4.33]{J22a}. More concretely, to prove assertion \eqref{prop:Grass:PB-1} and \eqref{prop:Grass:PB-3}, we can reduce to the case where $X = \Spec R$, $R \in \CAlgDelta$. Let $\{t_i \colon \sO_X \to \sE\}_{I \in I}$ be a family of sections that is jointly surjective on $\pi_0$, then $\{\varphi'' \circ t_i \colon \sO_X \to \sE''\}_{i \in I}$ is also jointly surjective on $\pi_0$. Let $\{U_J \subseteq \Grass_d(\sE)\}_{J \subseteq I, |J|=d}$ and $\{V_J \subseteq \Grass_d(\sE'')\}_{J \subseteq I, |J|=d}$ denote the respective Zariski open covers as in Remark \ref {remark:Grass:UJ}. By construction, $V_J$ is the inverse image of $U_J$ under the map $\iota_{\varphi''}$, and it suffices to show each restriction $\iota_{\varphi''}|_{V_J} \colon V_J \to U_J$ is a closed immersion. By virtue of Remark \ref {remark:Grass:UJ}, then map $\iota_{\varphi''}|_{V_J} $ is the base change of the map $\VV_X(\sE'' \otimes\sO_X^{\oplus d}) \to \VV_X(\sE \otimes \sO_X^{\oplus d})$ along the section map $s_X \colon X \to  |\sHom_X(\sO_X^{\oplus d}, \sO_X^{\oplus d})|$ which classifies the identity map. By virtue of \cite[Proposition 4.10 (6)]{J22a},  the map $\VV_X(\sE'' \otimes\sO_X^{\oplus d}) \to \VV_X(\sE \otimes \sO_X^{\oplus d})$ is a closed immersion since the map $\sE \otimes\sO_X^{\oplus d} \to \sE'' \otimes\sO_X^{\oplus d}$ is surjective on $\pi_0$. This proves assertion \eqref{prop:Grass:PB-1}.

To prove assertion \eqref{prop:Grass:PB-3}, it suffices to show that for every $J \subseteq I$, $|J| = d$, the restriction of the diagram \eqref{eqn:Grass:PB:V} to each $U_J$,
	\begin{equation*}
	\begin{tikzcd} 
		V_J \ar{d}[swap]{\iota_{\varphi''}|V_J} \ar{r}{\iota_{\varphi''}|V_J} & U_J\ar{d}{i_{\varphi'}|U_J} \\
		U_J \ar{r}{i_{\mathbf{0}}|U_J} & \VV_{U_J}(\pr_{U_J}^* (\sE') \otimes \sQ(\sF)^\vee|_{U_J}),
	\end{tikzcd}
	\end{equation*}
is a pullback square. By virtue of Remark \ref {remark:Grass:UJ}, $\sQ(\sF)^\vee|_{U_J} \simeq \sO_{U_J}^{\oplus d}$ is trivial, and the above diagram is a base change of the commutative square
	$$
	\begin{tikzcd} 
		\VV(\sE'' \otimes\sO_X^{\oplus d}) \ar{d}[swap]{\VV(\rho''^*)} \ar{r}{\VV(\rho''^*)}& \VV(\sE \otimes\sO_X^{\oplus d}) \ar{d}{i_{\varphi'}} \\
		\VV(\sE \otimes\sO_X^{\oplus d})  \ar{r}{i_{\mathbf{0}}} & \VV(\sE' \otimes\sO_X^{\oplus d}) \times_X \VV(\sE \otimes\sO_X^{\oplus d}),
	\end{tikzcd}
	$$
which is a pullback square by virtue of \cite[Remark 4.12]{J22a}. This proves assertion \eqref{prop:Grass:PB-3}.

The proof of assertion \eqref{prop:Grass:PB-2} is similar to the proof of   \cite[Corollary 4.32 (2)]{J22a}; Alternatively, it follows from assertion  \eqref{prop:Grass:PB-3} and \cite[Proposition 4.10 (3)]{J22a}.
\end{proof}

\begin{example} Let $X$ be a prestack, $\sV$ a vector bundle of rank $d$ on $X$, and $\sE$ a connective quasi-coherent complex on $X$, where $d \ge 1$ is an integer. Let $\varphi \colon \sE \to \sV$ be any map in $\QCoh(X)$. Then Proposition \ref{prop:Grass:PB} implies that there is a canonical closed immersion $\iota \colon \Grass_{X, d}(\cofib(\varphi)) \hookrightarrow \Grass_{X,d}(\sV) \simeq X$ which identifies $\Grass_{X,d}(\cofib(\varphi))$ as the derived zero locus the cosection $\sE \otimes \sV^\vee \to \sO_X$ induced by the map $\varphi \colon \sE \to \sV$. Moreover, the closed immersion $\iota$ admits a relative cotangent complex given by
	$$\LL_{\Grass_{X, d}(\cofib(\varphi))/X} \simeq (\sE \otimes \sV^\vee)|_{\Grass_{X, d}(\cofib(\varphi))}[1].$$
\end{example}

\begin{example}\label{eg:prop:Grass:PB} Let $X$ be a prestack, $\sW$ a vector bundle on $X$, and $\sE$ a connective quasi-coherent complex on $X$. Let $d \ge 1$ be an integer. By virtue of Proposition \ref{prop:Grass:PB}, the natural inclusion map $\sE \to \sE \oplus \sW[1]$ induces a canonical closed immersion
	$\iota \colon \Grass_{X,d}(\sE \oplus \sW[1]) \hookrightarrow \Grass_{X,d}(\sE)$ which fits into a pullback diagram of prestacks:
	$$
	\begin{tikzcd} 
		\Grass_{X,d}( \sE \oplus \sW[1]) \ar[hook]{d}[swap]{\iota} \ar[hook]{r}{\iota}& \Grass_{X,d}(\sE) \ar[hook]{d}{i_{\mathbf{0}}} \\
		\Grass_{X,d}(\sE) \ar[hook]{r}{i_{\mathbf{0}}} & \VV_{\Grass_{X,d}(\sE)}(\pr_{\Grass_{d}(\sE)}^*(\sW) \otimes \sQ_{\Grass_d(\sE)}^\vee).
	\end{tikzcd}
	$$
Consequently (see \cite[Proposition 4.11]{J22a}), we obtain a canonical equivalence 
	$$\Grass_{X,d}(\sE \oplus \sW[1]) \simeq \VV_{\Grass_{X,d}(\sE)}(\pr_{\Grass_{d}(\sE)}^*(\sW) \otimes  \sQ_{\Grass_d(\sE)}^\vee [1]).$$
In other words, $\Grass_{X,d}(\sE \oplus \sW[1]) \hookrightarrow \Grass_{X,d}(\sE)$ is the derived zero locus of the zero cosection of the vector bundle $\pr_{\Grass_{d}(\sE)}^*(\sW) \otimes \sQ_{\Grass_d(\sE)}^\vee$ over $\Grass_{X,d}(\sE)$ (or equivalently, the derived zero locus of the zero section of the vector bundle $\pr_{\Grass_{d}(\sE)}^*(\sW^\vee) \otimes  \sQ_{\Grass_d(\sE)}$ over $\Grass_{X,d}(\sE)$).
\end{example}

\subsection{P\"ucker Morphisms and Segre Morphisms}
\label{sec:dGrass:Plucker.Segre}
\subsubsection{P\"ucker morphisms}
\label{sec:dGrass:Plucker}
Let $X$ be a prestack, $\sE$ a connective quasi-coherent complex on $X$, and $d \ge 1$ an integer. There is a canonical morphism
		$$\varpi_\sE \colon \Grass_{d}(\sE) \to \PP(\bigwedge\nolimits^d \sE),$$
between the derived Grassmannian $\pr_{\Grass_d(\sE)} \colon \Grass_d(\sE) \to X$ and the derived projectivization $\pr_{\PP(\bigwedge\nolimits^d \sE)} \colon \PP(\bigwedge\nolimits^d \sE) = \Grass_1(\bigwedge\nolimits^d \sE) \to X$, defined as follows: for any $A \in \CAlgDelta$ and any morphism $\eta \colon T =\Spec A \to X$, the morphism $\varpi_\sE(\eta)$ is the map of spaces
	$$\big( (u \colon \sE_T \to \sP) \in \Grass_d(\sE)(\eta) \big) \mapsto \big( (\wedge^d u \colon \bigwedge\nolimits^e \sE_T \to \bigwedge\nolimits^d \sP) \in \PP(\bigwedge\nolimits^d \sE)(\eta) \big).$$ 
We will refer to the morphism $\varpi_\sE$ as the {\em Pl\"ucker morphism for $\Grass_d(\sE)$ over $X$}. By construction, there is a canonical equivalence 
	$\varpi_\sE^* (\sO_{\PP(\bigwedge\nolimits^d \sE)}(1)) \simeq \bigwedge\nolimits^d (\sQ_{\Grass_d(\sE)}).$

\begin{lemma}[{Compare with \cite[Proposition (9.8.3)]{EGAI}}]
The formation of Pl\"ucker morphisms commutes with the closed immersions studied in Proposition \ref{prop:Grass:PB}, that is, if $\varphi \colon \sE \to \sE''$ is a morphism of connective quasi-coherent complexes on $X$ which is surjective on $\pi_0$, then the following diagram commutes:
	$$
	\begin{tikzcd} 
		\Grass_d(\sE'') \ar{d}{\iota_{\varphi}} \ar{r}{\varpi_{\sE''}}& \PP(\bigwedge\nolimits^d \sE'') \ar{d}{\iota_{(\bigwedge^d \varphi)}} \\
		\Grass_d(\sE)  \ar{r}{\varpi_{\sE}} & \PP(\bigwedge\nolimits^d \sE).
	\end{tikzcd}
	$$
\end{lemma}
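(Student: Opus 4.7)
The plan is to verify the commutativity of the square by testing on $A$-points via the moduli-theoretic descriptions of all four maps, and then to invoke the Yoneda lemma. First, however, we must check that the right vertical arrow $\iota_{(\bigwedge^d \varphi)}$ is actually defined, i.e., that $\bigwedge^d \varphi \colon \bigwedge^d \sE \to \bigwedge^d \sE''$ is surjective on $\pi_0$, which is a hypothesis of Proposition \ref{prop:Grass:PB} applied in rank $d=1$. Since derived Schur functors commute with classical truncation (Proposition \ref{prop:dSchur:classical}), we have $\pi_0(\bigwedge^d \varphi) \simeq \bigwedge^d_{\pi_0(\sO_X)}(\pi_0 \varphi)$; and classically, exterior powers of surjective maps of discrete modules are surjective. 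Hence $\bigwedge^d \varphi$ is surjective on $\pi_0$, so $\iota_{(\bigwedge^d \varphi)}$ is indeed a closed immersion into $\PP(\bigwedge^d \sE)$.

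Next, fix $A \in \CAlgDelta$, a map $\eta \colon T = \Spec A \to X$, and an element $(u \colon \eta^*\sE'' \to \sP) \in \Grass_d(\sE'')(\eta)$, where $\sP$ is a rank-$d$ vector bundle on $T$. Tracing along the upper-right path: $\varpi_{\sE''}$ sends this to $(\bigwedge^d u \colon \bigwedge^d \eta^*\sE'' \to \bigwedge^d \sP) \in \PP(\bigwedge^d \sE'')(\eta)$, and then $\iota_{(\bigwedge^d \varphi)}$ precomposes with $\eta^*(\bigwedge^d \varphi)$ to produce
\[
\bigl(\bigwedge\nolimits^d \eta^*\sE \xrightarrow{\eta^*(\bigwedge^d \varphi)} \bigwedge\nolimits^d \eta^* \sE'' \xrightarrow{\bigwedge^d u} \bigwedge\nolimits^d \sP \bigr) \in \PP(\bigwedge\nolimits^d \sE)(\eta).
\]
Along the lower-left path, $\iota_\varphi$ first sends $u$ to $(u \circ \eta^*\varphi \colon \eta^*\sE \to \sP) \in \Grass_d(\sE)(\eta)$, and then $\varpi_\sE$ takes the $d$th exterior power, yielding $(\bigwedge^d(u \circ \eta^*\varphi) \colon \bigwedge^d \eta^*\sE \to \bigwedge^d \sP)$. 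The two outputs agree by the functoriality of the derived exterior power functor, together with the compatibility of $\bigwedge^d$ with the pullback $\eta^*$ (which is a special case of the base-change property of derived Schur functors, Proposition \ref{prop:dSchur:basechange}, in its prestack form from \S\ref{sec:dSchurdWeyl.prestacks}): indeed $\bigwedge^d(u \circ \eta^*\varphi) \simeq \bigwedge^d u \circ \bigwedge^d(\eta^*\varphi) \simeq \bigwedge^d u \circ \eta^*(\bigwedge^d \varphi)$.

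Since this identification of the two composite maps of spaces is natural in $(A, \eta)$, the two compositions in the square define equivalent natural transformations of functors $\Grass_d(\sE'') \to \PP(\bigwedge^d \sE)$. By Yoneda, the square commutes. The only potential obstacle is the bookkeeping of canonical equivalences for the functoriality of $\bigwedge^d$ on composites in the derived setting, but this is subsumed by Proposition \ref{prop:dSchur:basechange} and the fact that $\bigwedge^d = \dSchur^{(1^d)}$ is a genuine $\infty$-functor $\QCoh(\blank)^\cn \to \QCoh(\blank)^\cn$ over the $\infty$-category of prestacks, as constructed in \S\ref{sec:dSchurdWeyl.prestacks}; no further calculation is required.
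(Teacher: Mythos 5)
Your proof is correct and takes the same route the paper intends; the paper's own proof is just the remark ``This follows easily from construction,'' so you are filling in the details it elides. Two small observations: the check that $\bigwedge^d\varphi$ is surjective on $\pi_0$ (so that $\iota_{(\bigwedge^d\varphi)}$ is actually defined as a closed immersion via Proposition~\ref{prop:Grass:PB} in rank one) is a useful sanity check the paper leaves implicit, and your appeal to $\bigwedge^d = \dSchur^{(1^d)}$ being an honest $\infty$-functor commuting with $\eta^*$ is precisely the right way to handle the coherence issues lurking behind ``the two composites agree'' in the $\infty$-categorical setting, rather than only pointwise up to non-canonical homotopy.
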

\begin{proof}
This follows easily from construction. 
\end{proof}

\begin{proposition}[Finiteness properties for Pl\"ucker morphisms; compare with {\cite[Proposition (9.8.4)]{EGAI}}]
\label{prop:Plucker}
The Pl\"ucker morphism $\varpi_\sE$ is a closed immersion. If $\sE$ is pseudo-coherent to order $n$ for some $n \ge 0$ (resp. almost perfect, resp. perfect), then  the closed immersion $\varpi_{\sE}$ is locally of finite generation to order $n$ (resp. locally almost of finite presentation, locally of finite presentation).
\end{proposition}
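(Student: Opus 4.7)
The plan is to first prove $\varpi_\sE$ is a closed immersion by matching open affine covers on both sides, and then to deduce the finiteness statements from the corresponding properties of $\Grass_d(\sE)$ and $\PP(\bigwedge^d \sE)$ over $X$ (Proposition \ref{prop:Grass:finite}), combined with the preservation of pseudo-coherence by the derived exterior power functor (Propositions \ref{prop:dSchur:pc} and \ref{prop:dSchur:Tor-amp}).

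Since being a closed immersion is Zariski-local on the target, I would assume $X = \Spec A$ is a derived affine scheme. Choose a family of sections $\{t_i \colon \sO_X \to \sE\}_{i \in I}$ jointly surjective on $\pi_0$, and for each ordered subset $J = \{i_1 < \cdots < i_d\} \subseteq I$ of size $d$, set $t_J := t_{i_1} \wedge \cdots \wedge t_{i_d} \colon \sO_X \to \bigwedge^d \sE$. Using the identification $\pi_0(\bigwedge^d \sE) \simeq \bigwedge_\cl^d(\pi_0 \sE)$ from Proposition \ref{prop:dSchur:classical} together with the classical fact that decomposable wedges of generators generate the exterior power, the induced family $\{t_J\}_J$ is jointly surjective on $\pi_0(\bigwedge^d \sE)$. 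Applying Remark \ref{remark:Grass:UJ} to $\sE$ (for rank $d$) and to $\bigwedge^d \sE$ (for rank $1$), one obtains Zariski open covers $\{U_J \subseteq \Grass_d(\sE)\}_J$ and $\{V_J \subseteq \PP(\bigwedge^d \sE)\}_J$ together with pullback descriptions of each chart as a closed subscheme of an affine cone over $X$.

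Next, I would verify that $\varpi_\sE^{-1}(V_J) = U_J$ and then that each restriction $\varpi_\sE|_{U_J} \colon U_J \to V_J$ is a closed immersion. For the first claim, using the universal characterizations in Remark \ref{remark:Grass:UJ}, a map $h \colon Y \to \Grass_d(\sE)$ factors through $U_J$ iff the composite $\sO_Y^{\oplus d} \to h^*\pr^*(\sE) \to h^*\sQ$ is surjective on $\pi_0$. Since $\sO_Y^{\oplus d}$ and $h^*\sQ$ are both rank-$d$ vector bundles, such a surjection is automatically an isomorphism; and taking $d$-th exterior powers shows this is equivalent to $\sO_Y \to h^*(\bigwedge^d \sQ)$ being an isomorphism of line bundles (equivalently, surjective on $\pi_0$), which is precisely the condition for $\varpi_\sE \circ h$ to factor through $V_J$. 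For the second claim, the pullback descriptions in Remark \ref{remark:Grass:UJ} exhibit $\varpi_\sE|_{U_J}$ as the base change, along the identity sections $X \to |\sHom_X(\sO_X^{\oplus d}, \sO_X^{\oplus d})|$ and $X \to |\sHom_X(\sO_X, \sO_X)|$, of the natural morphism
\[
	|\sHom_X(\sE, \sO_X^{\oplus d})| \longrightarrow |\sHom_X(\bigwedge\nolimits^d \sE, \sO_X)|
\]
induced functorially by the $d$-th exterior power construction.

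The main obstacle will be showing that this base-change morphism is a closed immersion; at the level of global sections, this amounts to checking that the corresponding map of simplicial commutative $A$-algebras (obtained from $\Sym_A^*$ of the natural transformation comparing the two affine cones, modulo the ideals cutting out $U_J$ and $V_J$) is surjective on $\pi_0$. Since passage to $\pi_0$ commutes with $\Sym_A^*$, with the formation of affine cones, and (by Proposition \ref{prop:dSchur:classical}) with $\bigwedge^d$, this surjectivity on $\pi_0$ reduces exactly to the classical statement that the Pl\"ucker embedding of the classical Grassmannian of quotients of $\pi_0 \sE$ is a closed immersion, namely \cite[Proposition 9.8.4]{EGAI} (already invoked in the proof of Proposition \ref{prop:Grass:finite}); combined with the fact that both $U_J$ and $V_J$ are affine over $X$, this completes the proof that $\varpi_\sE$ is a closed immersion. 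For the finiteness assertions: if $\sE$ is pseudo-coherent to order $n$ (resp.\ almost perfect, perfect), then so is $\bigwedge^d \sE$ by Proposition \ref{prop:dSchur:pc} (together with Proposition \ref{prop:dSchur:Tor-amp} in the perfect case), so by Proposition \ref{prop:Grass:finite} both projections $\Grass_d(\sE) \to X$ and $\PP(\bigwedge^d \sE) \to X$ are locally of finite generation to order $n$ (resp.\ locally almost of finite presentation, locally of finite presentation) over $X$. The closed immersion $\varpi_\sE$ over $X$ then inherits the same finiteness type via the two-out-of-three principle \cite[Proposition 4.1.3.1]{SAG} (applied as in the proof of Proposition \ref{prop:Grass:finite}); the perfect case is immediate, while the weaker cases require further verifying, using the explicit local description above, that the ideal cutting out $U_J$ inside $V_J$ is appropriately finitely generated.
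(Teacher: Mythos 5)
Your proof is correct in substance, but it takes a more laborious route for the closed-immersion assertion than the paper does, and it contains an unnecessary hedge at the end of the finiteness argument. The paper's proof reduces the closed-immersion claim to \cite[Proposition (9.8.4)]{EGAI} directly: by the same argument as Proposition \ref{prop:Grass-classical}, the classical truncation of $\varpi_\sE$ is the classical Pl\"ucker morphism $\Grasscl_{X_\cl}(\pi_0\sE) \to \PP^\cl_{X_\cl}(\bigwedge^d_\cl \pi_0\sE)$, and since closed immersions of derived schemes are detected on underlying classical schemes, the result is immediate. Your chart-by-chart approach (choosing $t_J$, showing $\varpi_\sE^{-1}(V_J)=U_J$, exhibiting the restriction as a base change of affine cones, checking $\pi_0$-surjectivity) implements the same underlying idea---everything is detected on $\pi_0$---but rebuilds the local structure that Remark \ref{remark:Grass:UJ} and Proposition \ref{prop:Grass-classical} already provide abstractly. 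It works, but it is several steps longer than necessary.

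For the finiteness statements, your argument coincides with the paper's: apply Propositions \ref{prop:dSchur:pc} and \ref{prop:dSchur:Tor-amp} to see $\bigwedge^d\sE$ inherits the relevant finiteness from $\sE$, conclude via Proposition \ref{prop:Grass:finite} that both projections to $X$ do as well, and cancel. However, your final sentence---``the perfect case is immediate, while the weaker cases require further verifying\ldots\ that the ideal cutting out $U_J$ inside $V_J$ is appropriately finitely generated''---introduces a worry that is not actually there. The cancellation results (\cite[Proposition 4.1.3.1]{SAG} for finite generation to order $n$, \cite[Corollary 7.4.3.19]{HA} and its finite-presentation analogue for the other two) handle all three cases uniformly once both composites to $X$ have the stated property; no further chart-level verification is needed. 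You should delete that caveat, since leaving it suggests the cancellation argument is incomplete when in fact it already closes the proof.
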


\begin{proof}
The problem being local, we may reduce to the case where $X = \Spec A$, $A \in \CAlgDelta$. It follows from the same argument of Proposition \ref{prop:Grass-classical} that the underlying classical map of $\omega_\sE$ is canonically equivalent to the classical P\"ucker morphism $\Grass^\cl_{X_\cl}(\pi_0(\sE)) \to \PP_{X_\cl}^\cl(\pi_0(\bigwedge\nolimits_\cl \pi_0 \sE))$ for the underlying classical Grassmannian.
Then the first assertion follows from the corresponding assertion for the classical P\"ucker morphism \cite[Proposition (9.8.4)]{EGAI}. For the rest of the assertions, consider the composite map $\pr_\GG \colon \Grass_d(\sE) \xrightarrow{\varpi_{\sE}} \PP(\bigwedge\nolimits^d \sE) \xrightarrow{\pr_{\PP}} X$. By virtue of Proposition \ref{prop:dSchur:pc} and Proposition \ref{prop:dSchur:Tor-amp}, if $\sE$ is is pseudo-coherent to order $n$ for some $n \ge 0$ (resp. almost perfect, resp. perfect), then $\bigwedge^d \sE$ has the same property. As a result, by Proposition \ref{prop:Grass:finite}, the projections $\pr_{\GG} \colon \Grass_d(\sE) \to X$ and $\pr_{\PP} \colon \PP(\bigwedge\nolimits^d \sE) \to X$ are locally of finite generation to order $n$ (resp. locally almost of finite presentation, locally of finite presentation). Therefore, the morphism $\omega_{\sE}$ is locally of finite generation to order $n$ (resp. locally almost of finite presentation, locally of finite presentation), by virtue of \cite[Proposition 4.1.3.1]{SAG} (resp. \cite[Corollary 7.4.3.19]{HA} and its analogous statement regarding locally finite presentations; the latter follows easily from \cite[Theorem 7.4.3.18 (1)]{HA}).
\end{proof}

\begin{remark}[Pl\"ucker morphisms in affines charts]
In the situation of Proposition \ref{prop:Plucker}, let us furthermore assume that there exists a family $\{t_i \colon \sO_X \to \sE\}_{i \in I}$ of sections of $\sE$ which is jointly surjective on $\pi_0$ as in Remark \ref{remark:Grass:UJ}, the family of sections $\{\wedge^d (t_J) \colon \sO_X \simeq \bigwedge^d (\sO_X^{\oplus d}) \to \bigwedge^d (\sE)\}_{J \subseteq I, |J|=d}$ is also jointly surjective on $\pi_0$, where $t_J := \prod\nolimits_{j\in J} t_j \colon \sO_X^J \to \sE$. We thus obtain from Remark \ref{remark:Grass:UJ} Zariski open covers $\{U_J \subseteq \Grass_d(\sE)\}_{J \subseteq I, |J|=d}$ and $\{V_J \subseteq \PP(\bigwedge^d \sE)\}_{J \subseteq I, |J|=d}$, where $U_J$ and $V_J$ are the open subschemes which represent the respective open subfunctors of $\Grass_d(\sE)$ and $\PP(\sE)$ characterized by properties that for all $T \to X$, the composite maps
	$$\sO_{T}^{\oplus d} \xrightarrow{(t_J)_T} \sE_T \to \sQ_T \quad \text{and} \quad \sO_{T} \simeq \bigwedge\nolimits^d (\sO_{T}^{\oplus d})  \xrightarrow{\wedge^d (t_J)_T} \bigwedge\nolimits^d (\sE_T) \to \sL_T$$
are isomorphisms, where $\sF_T$ denote the base change of a complex $\sF$ on $X$ along $T \to X$, and $\sE_T \to \sQ_T$ and $\sE_T \to \sL_T$ are the rank $d$ and rank $1$ locally free quotients of $\sE_T$, respecitvely. It follows that $\varpi_{\sE}^{-1}(V_J) = U_J$ for each $J \subseteq I$, $|J|=d$.
\end{remark}

\subsubsection{Segre morphisms}
\label{sec:dGrass:Segre}
Let $X$ be a prestack and,let $\sE$ and $\sF$ be two connective quasi-coherent complexes on $X$. Consider the derived projectivizations $\PP(\sE)=\Grass_1(\sE)$ and $\PP( \sF) = \Grass_1(\sF)$ over $X$, with tautological quotients $\pr_{\PP(\sE)}^*(\sE) \to \sO_{\PP( \sE)}(1)$ and  $\pr_{\PP(\sF)}^*(\sF) \to \sO_{\PP( \sF)}(1)$, respectively. Then there is a canonical morphism between prestacks
		$$\varsigma_{\sE,\sF} \colon \PP(\sE) \times_X \PP(\sF) \to \PP(\sE \otimes_{\sO_X} \sF)$$
which, for any $\eta \colon T = \Spec A \to X$, $A \in \CAlgDelta$, carries the pair of line bundle quotients
	$$(u \colon \sE_T \to \sL_1, v \colon \sF_T \to \sL_2) \in \PP(\sE) \times_X \PP(\sF) (\eta)$$
to the line bundle quotient of the tensor products
	$$\big(u \otimes v \colon (\sE \otimes_{\sO_X} \sF)_T \simeq \sE_T \otimes_{\sO_T} \sF_T  \to \sL_1 \otimes_{\sO_T} \sL_2
	\big) \in \PP(\sE \otimes_{\sO_X} \sF)(\eta).$$
Then there is a canonical equivalence  
	$\varsigma_{\sE,\sF}^*(\sO_{\PP(\sE \otimes \sF)}(1)) \simeq  \sO_{\PP( \sE)}(1)\boxtimes_X  \sO_{\PP( \sF)}(1).$ 
We will refer to the morphism $\varsigma_{\sE,\sF}$ as the {\em Segre morphism} for $\sE$ and $\sF$.

\begin{lemma}[{Compare with \cite[Proposition (9.8.7)]{EGAI}}]
The formation of Segre morphisms commutes with the closed immersions studied in Proposition \ref{prop:Grass:PB}, that is, if $\varphi \colon \sE \to \sE''$ and $\psi \colon \sF \to \sF''$ are morphism of connective quasi-coherent complexes on $X$ which are surjective on $\pi_0$, then the following diagram commutes:
	$$
	\begin{tikzcd} 
		\PP(\sE'') \times_X \PP(\sF'')  \ar{d}{\iota_{\varphi} \times_X \iota_{\psi}} \ar{r}{\varsigma_{\sE'',\sF''}}& \PP(\sE'' \otimes_{\sO_X} \sF'') \ar{d}{\iota_{\varphi \otimes \psi}} \\
		\PP(\sE) \times_X \PP(\sF)   \ar{r}{\varsigma_{\sE,\sF}} & \PP(\sE \otimes_{\sO_X} \sF).
	\end{tikzcd}
	$$
\end{lemma}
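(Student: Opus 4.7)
The plan is to verify the commutativity of the diagram on the functor of points, since all four prestacks in the diagram are defined as moduli functors and the four morphisms are defined pointwise. Concretely, I would fix an arbitrary $A \in \CAlgDelta$ together with a morphism $\eta \colon T = \Spec A \to X$, and an arbitrary pair of line bundle quotients
$$(u'' \colon \sE''_T \to \sL_1, \; v'' \colon \sF''_T \to \sL_2) \in \big(\PP(\sE'') \times_X \PP(\sF'')\big)(\eta),$$
then chase this element around both sides of the square and check that the resulting two objects of $\PP(\sE \otimes_{\sO_X} \sF)(\eta)$ are canonically equivalent.

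Going first right and then down: the Segre morphism $\varsigma_{\sE'',\sF''}$ sends the pair $(u'', v'')$ to the tensor product quotient $u'' \otimes v'' \colon \sE''_T \otimes_{\sO_T} \sF''_T \to \sL_1 \otimes_{\sO_T} \sL_2$, and then $\iota_{\varphi \otimes \psi}$ precomposes with the base change of $\varphi \otimes \psi \colon \sE \otimes_{\sO_X} \sF \to \sE'' \otimes_{\sO_X} \sF''$, producing the composition
$$(u'' \otimes v'') \circ (\varphi_T \otimes \psi_T) \colon \sE_T \otimes_{\sO_T} \sF_T \longrightarrow \sL_1 \otimes_{\sO_T} \sL_2.$$
Going first down and then right: the map $\iota_\varphi \times_X \iota_\psi$ sends $(u'',v'')$ to the pair of compositions $(u'' \circ \varphi_T, \, v'' \circ \psi_T)$, and then $\varsigma_{\sE,\sF}$ forms their tensor product, namely $(u'' \circ \varphi_T) \otimes (v'' \circ \psi_T)$.

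The conclusion then follows immediately from the bifunctoriality of the derived tensor product $\otimes_{\sO_T}$: the canonical equivalence
$$(u'' \circ \varphi_T) \otimes (v'' \circ \psi_T) \simeq (u'' \otimes v'') \circ (\varphi_T \otimes \psi_T)$$
identifies the two outputs as the same line bundle quotient of $(\sE \otimes_{\sO_X} \sF)_T \simeq \sE_T \otimes_{\sO_T} \sF_T$. Moreover, these identifications are natural in $\eta$, and so they assemble into the desired commutativity of the square as a diagram of prestacks. Since the proof is purely formal and uses only the definitions given in \S\ref{sec:dGrass:Segre} together with Proposition \ref{prop:Grass:PB}, there is no substantial obstacle; the only point that requires care is keeping track of the base-change equivalences $(\sE \otimes_{\sO_X} \sF)_T \simeq \sE_T \otimes_{\sO_T} \sF_T$ coherently, which is handled by the standard symmetric monoidal properties of the pullback functor $\eta^*$ on connective quasi-coherent complexes.
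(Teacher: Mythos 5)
Your proof is correct and matches the paper's intent: the paper simply records ``This follows easily from construction,'' and your functor-of-points element chase is precisely what that one-line remark is shorthand for. The only thing I would add is that, in the $\infty$-categorical setting, the claimed commutativity is really a specified homotopy rather than an equality, so the ``natural in $\eta$'' coherence you flag at the end (via the symmetric monoidal structure on $\eta^*$) is genuinely the crux, and you have correctly identified it.
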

\begin{proof}
This follows easily from construction. 
\end{proof}

\begin{proposition}[Compare with {\cite[Proposition (9.8.8)]{EGAI}}]
\label{prop:Segre}
The Segre morphism $\varsigma_{\sE,\sF}$ is a closed immersion. If $\sE$ and $\sF$ are almost perfect (resp. perfect), then $\varsigma_{\sE,\sF}$ is locally almost of finite presentation (resp. locally of finite presentation).
\end{proposition}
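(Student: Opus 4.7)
The strategy mirrors the argument of Proposition \ref{prop:Plucker}. Since being a closed immersion and possessing a given finiteness property are both Zariski-local on the base, I first reduce to the case $X = \Spec A$ with $A \in \CAlgDelta$.

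For the closed immersion assertion, the key observation is that the formation of $\varsigma_{\sE,\sF}$ is compatible with classical truncation. By the projectivization analogue of Proposition \ref{prop:Grass-classical}, combined with the fact that classical truncation commutes with fiber products, the underlying classical morphism $(\varsigma_{\sE,\sF})_{\cl}$ identifies with the classical Segre morphism
$$\PP^\cl_{X_\cl}(\pi_0 \sE) \times_{X_\cl} \PP^\cl_{X_\cl}(\pi_0 \sF) \to \PP^\cl_{X_\cl}(\pi_0 \sE \otimes^\cl_{\pi_0 \sO_X} \pi_0 \sF),$$
which is a closed immersion by the classical result \cite[Proposition (9.8.8)]{EGAI}. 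Since closed immersions of derived schemes are detected on underlying classical schemes (cf.\ \cite[\S 5.1]{SAG}), it follows that $\varsigma_{\sE,\sF}$ is a closed immersion.

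For the finiteness assertions, I consider the factorization of the structural projection
$$\pr_{\PP(\sE) \times_X \PP(\sF)} \colon \PP(\sE) \times_X \PP(\sF) \xrightarrow{\varsigma_{\sE,\sF}} \PP(\sE \otimes_{\sO_X} \sF) \xrightarrow{\pr_{\PP(\sE \otimes \sF)}} X.$$
If $\sE$ and $\sF$ are almost perfect (resp.\ perfect), then so is their tensor product $\sE \otimes_{\sO_X} \sF$. Applying Proposition \ref{prop:Grass:finite} in the case $d = 1$, the three projections $\pr_{\PP(\sE)}$, $\pr_{\PP(\sF)}$, and $\pr_{\PP(\sE \otimes \sF)}$ are all locally almost of finite presentation (resp.\ locally of finite presentation); in particular, the projection from the fiber product $\PP(\sE) \times_X \PP(\sF) \to X$ inherits the same property. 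A two-out-of-three argument (specifically, \cite[Corollary 7.4.3.19]{HA} for the almost-of-finite-presentation case, and its variant via \cite[Theorem 7.4.3.18 (1)]{HA} for the locally-of-finite-presentation case, exactly as in the proof of Proposition \ref{prop:Plucker}) then transfers the finiteness property to $\varsigma_{\sE,\sF}$.

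The main technical point is the precise identification of the classical truncation of $\varsigma_{\sE,\sF}$ with the classical Segre morphism, which requires tracking the compatibility of $\varsigma_{\sE,\sF}$ with fiber products, classical truncations, and tensor products of discrete sheaves. Once that identification is in hand, the rest is a direct application of the machinery already developed in Propositions \ref{prop:Grass-classical}, \ref{prop:Grass:finite}, and the finiteness argument of Proposition \ref{prop:Plucker}.
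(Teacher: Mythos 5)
Your proposal follows essentially the same route as the paper: reduce the closed-immersion claim to the underlying classical Segre morphism (via the analogue of Proposition \ref{prop:Grass-classical} and the fact that classical truncation commutes with fiber products), and deduce the finiteness claims by a two-out-of-three argument through $X$ using Proposition \ref{prop:Grass:finite} and \cite[Proposition 4.2.1.6]{SAG}.

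The one genuine gap is your unsupported assertion that if $\sE$ and $\sF$ are almost perfect (resp.\ perfect) then $\sE \otimes_{\sO_X} \sF$ is almost perfect (resp.\ perfect). For perfect complexes this is immediate, since they form a symmetric monoidal subcategory of $\QCoh(X)$. But for almost perfect complexes in the derived, non-noetherian setting, the claim is not automatic: almost perfection means pseudo-coherence to every order, and one must track how pseudo-coherence to order $n$ interacts with connectivity under derived tensor product. The paper isolates this as Lemma \ref{lem:otimes_pc}, which gives the sharper statement that if $\sE_i$ is $m_i$-connective and pseudo-coherent to order $n_i$, then $\sE_1 \otimes_{\sO_X} \sE_2$ is $(m_1+m_2)$-connective and pseudo-coherent to order $\min\{n_1 + m_2, n_2 + m_1\}$; almost perfection (for connective $\sE_i$) then follows by letting $n_i \to \infty$. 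You should either cite this lemma or supply the argument — stating "then so is their tensor product" without comment elides the only nontrivial ingredient in the finiteness part of the proof.
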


\begin{proof}
Similar to Proposition \ref{prop:Plucker}, the first assertion follows from the classical case \cite[Proposition (9.8.8)]{EGAI}. If $\sE$ and $\sF$ are almost perfect (resp. perfect), then $\sE \otimes_{\sO_X} \sF$ is also almost perfect (resp. perfect) by the following Lemma \ref{lem:otimes_pc}. Therefore, by virtue of Proposition \ref{prop:Grass:finite}, the projections $\PP(\sE) \to X$, $\PP(\sF) \to X$, and $\PP(\sE \otimes_{\sO_X} \sF) \to X$ are locally almost of finite presentation (resp. locally of finite presentation). It follows from \cite[Proposition 4.2.1.6]{SAG} that $\PP(\sE) \times_X \PP(\sF) \to X$ is locally almost of finite presentation (resp. locally of finite presentation). Therefore, by virtue \cite[Theorem 7.4.3.18 \& Corollary 7.4.3.19]{HA}, we obtain that $\varsigma_{\sE,\sF}$ is locally almost of finite presentation (resp. locally of finite presentation).
\end{proof}

\begin{lemma} 
\label{lem:otimes_pc}
Let $X$ be a prestack, let $m_i \le n_i$ be integers, and let $\sE_i \in \QCoh(X)$ be quasi-coherent complexes on $X$, where $i=1,2$. If $\sE_i$ is pseudo-coherent to order $n_i$ (Definition \ref{def:perfectfg:prestacks}) and $m_i$-connective for each $i \in \{1,2\}$, then $\sE_1 \otimes_{\sO_X} \sE_2$ is pseudo-coherent to order $n$ and $m$-connective, where $n = \min \{n_1 + m_2, n_2 + m_1\}$ and $m = m_1 + m_2$. In particular, if $\sE_1$ and $\sE_2$ are almost perfect, then $\sE_1 \otimes_{\sO_X} \sE_2$ is almost perfect. 
\end{lemma}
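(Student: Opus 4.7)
The plan is to reduce to the affine case $X = \Spec A$ using the Zariski-locality of pseudo-coherence and connectivity (Definition~\ref{def:perfectfg:prestacks}), identifying each $\sE_i$ with an $A$-complex $M_i \in \Mod_A$. The connectivity claim then follows from the standard fact that the derived tensor product of two connective $A$-complexes is connective: since $M_i[-m_i]$ is connective, $M_1[-m_1] \otimes_A M_2[-m_2]$ is connective, so $M_1 \otimes_A M_2 \simeq (M_1[-m_1] \otimes_A M_2[-m_2])[m_1+m_2]$ is $(m_1+m_2)$-connective.

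For the pseudo-coherence claim, I first invoke the shift-invariance of the notion: a complex $M$ is $m$-connective and pseudo-coherent to order $n$ if and only if $M[-m]$ is connective and pseudo-coherent to order $n-m$. This is a direct verification from the filtered-colimit defining condition (using $\Map(M,N) \simeq \Map(M[-m], N[-m])$ and tracking the effect of the shift on truncation levels). Setting $N_i := M_i[-m_i]$ and $k_i := n_i - m_i$, it then suffices to prove the connective assertion: if $N_1, N_2 \in \Mod_A^{\cn}$ are pseudo-coherent to orders $k_1, k_2$ respectively, then $N_1 \otimes_A N_2$ is pseudo-coherent to order $k := \min\{k_1, k_2\}$. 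Shifting back by $m_1 + m_2$ then yields pseudo-coherence of $M_1 \otimes_A M_2$ to order $\min\{k_1, k_2\} + m_1 + m_2 = \min\{n_1 + m_2, n_2 + m_1\} = n$.

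For this connective case, I apply \cite[Corollary 2.7.2.4]{SAG} to choose, for each $i \in \{1,2\}$, a simplicial $A$-complex $N_{i,\bullet}$ with $N_{i,j}$ finitely generated free for $0 \le j \le k$ (possible since $k \le k_i$) and $N_i \simeq |N_{i,\bullet}|$. Because the derived tensor product over $A$ preserves colimits in each variable, $N_1 \otimes_A N_2$ is the geometric realization of the diagonal simplicial object $[j] \mapsto N_{1,j} \otimes_A N_{2,j}$, whose terms in simplicial degree $j \le k$ are tensor products of finitely generated free $A$-modules, hence finitely generated free. A second application of \cite[Corollary 2.7.2.4]{SAG} then delivers pseudo-coherence of $N_1 \otimes_A N_2$ to order $k$. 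The almost perfect case is immediate by letting $n_1, n_2 \to \infty$. The main technical point is the shift-invariance of pseudo-coherence, which is a routine but careful verification from the definition; all other steps are a direct application of the simplicial-resolution characterization and the monoidal behaviour of derived tensor under geometric realizations.
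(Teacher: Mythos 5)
Your proof is correct, but it takes a genuinely different route from the paper's for the pseudo-coherence claim. The paper's proof works directly with perfect approximations: for each $i$ it chooses (via \cite[Corollary~2.7.2.2~(2)]{SAG}) a perfect complex $P_i$ of Tor-amplitude $\le n_i$ and a map $\phi_i \colon P_i \to M_i$ with $n_i$-connective fiber, notes that $P_i$ is $m_i$-connective, and then runs an octahedral-axiom argument on the factorization $P_1 \otimes_A P_2 \to M_1 \otimes_A P_2 \to M_1 \otimes_A M_2$ to show that $\fib(\phi_1 \otimes_A \phi_2)$ is $n$-connective; this yields the result via \cite[Corollary~2.7.2.2~(3)]{SAG}. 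You instead first establish shift-invariance of ``$m$-connective and pseudo-coherent to order $n$'' to reduce to the connective case, then resolve each $N_i$ by a simplicial object with finite free terms in simplicial degrees $\le k_i$ using \cite[Corollary~2.7.2.4]{SAG}, realize $N_1 \otimes_A N_2$ as the geometric realization of the diagonal of the resulting bisimplicial object, and apply \cite[Corollary~2.7.2.4]{SAG} once more in the other direction. Both arguments are short and give the same bound on the order (your bookkeeping $\min\{k_1,k_2\}+m_1+m_2 = \min\{n_1+m_2,\,n_2+m_1\}$ matches the paper). The paper's version avoids having to justify shift-invariance at the cost of tracking a fiber sequence; yours is closer in spirit to the paper's own proof of Proposition~\ref{prop:dSchur:pc} (which also proceeds via \cite[Corollary~2.7.2.4]{SAG}), so it would arguably have been the more uniform choice. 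One small thing worth spelling out explicitly in your shift-invariance step: the test objects $N_\alpha$ of Definition~\ref{def:pseudo-coherent} that are $n$-truncated become $(n-m)$-truncated under $[-m]$, which is exactly what causes the pseudo-coherence order to drop by $m$; you allude to this but it is the one quantitative point that carries the argument.
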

\begin{proof}
The problem being local, we may assume that $X = \Spec A$ and $\sE_i$ is represented by a complex $M_i \in \Mod_A$, $i=1,2$. By virtue of \cite[Corollary 7.2.1.23]{HA}, $M_1 \otimes_A M_2$ is $m$-connective. For each $i \in \{1,2\}$, by virtue of \cite[Corollary 2.7.2.2 (2)]{SAG}, there exists a perfect complex $P_i$ of Tor-amplitude $\le n_i$ and a morphism $\phi_i \colon P_i \to M_i$ such that $\fib(\phi_i)$ is $n_i$-connective; In particular, $P_i$ is $m_i$-connective. From the fiber sequence
	$$\fib(\phi_1) \otimes_A \fib(\phi_2) \to P_1 \otimes_A \fib(\phi_2) \to M_1 \otimes_A \fib(\phi_2),$$
we obtain that $M_1 \otimes_A \fib(\phi_2)$ is $(m_1 + n_2)$-connective. The  morphism $\phi_1 \otimes_{A} \phi_2 \colon P_1 \otimes_A P_2 \to M_1 \otimes_A M_2$ factorizes as a composite map $P_1 \otimes_A P_2 \to M_1 \otimes P_2 \to M_1 \otimes M_2$, using octahedral axiom (see $(TR4)$ in the proof of  \cite[Theorem 1.1.2.14]{HA}) we obtain a fiber sequence 
	$$\fib(\phi_1) \otimes_A P_2 \to \fib(\phi_1 \otimes_A \phi_2 \colon P_1 \otimes_A P_2 \to M_1 \otimes_A M_2) \to M_1 \otimes \fib(\phi_2).$$
Therefore, $\fib(\phi_1 \otimes_A \phi_2)$ is $n = \min\{m_1 + n_2, m_2 + n_1\}$-connective. By virtue of \cite[Corollary 2.7.2.2 (3)]{SAG}, $M_1 \otimes_A M_2$ is pseudo-coherent to order $n$.
\end{proof}

\subsection{Derived Flag Schemes} 
\label{sec:flag} 
This subsection studies the derived extension of the classical theory of flag schemes (\cite[\S 9.9]{EGAI}). In this subsection, we will fix a base prestack $X \colon \CAlgDelta \to \shS$, and all fiber products are taken as the fiber products over $X$.

\subsubsection{Definition of Derived Flag Schemes}
\begin{definition}[Derived Flag Functors] 
\label{def:dflag}
Let $X$ be a prestack, $\sE$ a connective quasi-coherent complex on $X$, and $\bdd = (d_1, \ldots, d_k)$ an increasing sequence of positive integers, where $k$ is a positive integer. For any morphism of prestacks $\eta \colon T \to X$, we let $\sE_T = \eta^* \sE \in \QCohcn(T)$ denote the base-change of $\sE$ along $\eta$.   
\begin{itemize}[leftmargin=*]
	\item We define the {\em space of flags of type $\bdd$ of $\sE_T$}, denoted by $\Flag_{\bdd}(\sE)(\eta)$, to be the full subspace of $\Fun(\Delta^k, \QCoh(T)^{\cn} )^{\simeq}$ spanned by those elements 
	\begin{equation}\label{eqn:dflag:zeta_T}
		\zeta_T = (\sE_T \xrightarrow{\varphi_{k, T} = \phi_{k,k+1}} \sE_{k} \xrightarrow{\phi_{k-1,k}} \sE_{k-1} \to \cdots \xrightarrow{\phi_{1,2}} \sE_{1})
	\end{equation}
such that each $\sE_{i}$ is a vector bundle on $T$ of rank $d_i$ and each morphism $\phi_{i,i+1}$ is surjective on $\pi_0$, where $1 \le i \le k$. We will refer to an element $\zeta_T \in \Flag_{\bdd}(\sE)(\eta)$ of the form \eqref{eqn:dflag:zeta_T} as a {\em flag of $\sE_T$ of type $\bdd$}. It is usually convenient to set $\sE_{k+1} = \sE_T$, $\sE_{0} = 0$, $\phi_{0,1} = 0$, and represent $\zeta_T$ by an extended sequence of morphisms $(\sE_{k+1} \xrightarrow{\phi_{k,k+1}} \sE_{k} \to \cdots \to \sE_{1} \xrightarrow{\phi_{0,1}} \sE_{0})$. It is also convenient to introduce the following notations: we define morphisms $\varphi_{i,T} \colon \sE_T \to \sE_i$, $1 \le i \le k$, inductively by the formula $\varphi_{k, T} = \phi_{k,k+1}$ and $\varphi_{i-1, T} = (\phi_{i-1,i}) \circ \varphi_{i,T}$. For any $0 \le i < j \le k+1$, we define $\phi_{i,j} \colon \sE_j \to \sE_i$ to be the composite morphism $\phi_{j-1,j} \circ \cdots \circ \phi_{i+1,i+2} \circ \phi_{i,i+1}$. 
	\item  We let $\Flag_{X, \bdd}(\sE) = \Flag_X(\sE;\bdd)$ denote the functor which carries each morphism of the form $\eta \colon T = \Spec A \to X$, $A \in \CAlgDelta$, to the space flags of type $\bdd$ of $\sE_T$ over $T$, that is,
	$$\Flag_{X, \bdd}(\sE)  \colon (\eta \colon T  = \Spec A \to X, A \in \CAlgDelta) \mapsto \Flag_{\bdd}(\sE)(\eta) \in \shS.$$
If the base prestack $X$ is clear from the context, we will generally omit the suffix $X$ from the expressions. We denote the canonical projection by 
	$$\pr = \pr_{\Flag_\bdd(\sE)} \colon \Flag_{\bdd}(\sE) = \Flag(\sE;\bdd) \to X$$
and refer to $\Flag_\bdd(\sE)/X$ as the
{\em derived flag scheme of $\sE$ of type $\bdd$ over $X$}. 
\end{itemize}
\end{definition}

Proposition \ref{prop:Flag:rep} below shows that $\Flag_\bdd(\sE)$ is a relative derived scheme over $X$. It would be more accurate to refer to $\Flag_\bdd(\sE)$ as the {\em relative derived flag scheme} of $\sE$ over $X$. However, we will omit the word ``relative" in order to avoid awkward terminology. 

\begin{notation} In the situation of Definition \ref{def:dflag}, the following notations are introduced: 
\label{not:dflag}
\begin{enumerate}[leftmargin=*]
	\item  \label{not:dflag-1}
	We denote the (spine of the) universal element of $\Flag_\bdd(\sE)$ by 
	$$\zeta_{\Flag_\bdd (\sE)} = (\pr^* (\sE) \xrightarrow{\phi_{k,k+1}} \sQ_{k} \xrightarrow{\phi_{k-1,k}} \sQ_{k-1} \xrightarrow{\phi_{k-1,k-2}}  \cdots \xrightarrow{\phi_{1,2}} \sQ_{1}) \in \Flag_\bdd(\sE)(\pr)$$
and refer to it as the {\em universal flag of $\sE$ of type $\bdd$} (or the {\em universal quotient sequence}). It is usually convenient to set $\sQ_{k+1} = \pr^*(\sE)$, $\sQ_{0} = 0$, $d_0 = 0$, $\phi_{0,1} = 0 \colon \sQ_1 \to \sQ_0$, and represent $\zeta_{\Flag_\bdd(\sE)}$ by the extended universal flag 
	$(\sQ_{k+1} \to \sQ_{k} \to \cdots \to \sQ_{1} \to \sQ_{0}=0).$
	\item  \label{not:dflag-2}
	For each $1 \le i \le k$, we refer to $\sQ_i = \sQ_{d_i}(\sE)$ as the {universal quotient bundle of $\sE$ of rank $d_i$}. For any $0 \le i < j \le k+1$, we define $\phi_{i, j}$ to be the composite map 
			$\phi_{i,j}: = \phi_{j-1,j} \circ \cdots \circ \phi_{i+1,i+2} \circ \phi_{i,i+1} \colon \sQ_j \to \sQ_i.$
			For $1 \le i \le k$, we let $\varphi_{i}  = \phi_{i,k+1} \colon \pr^*(\sE) \to \sQ_i$ and refer to it as the universal quotient morphisms. We let $\sR_i = \fib(\varphi_i)$ and refer to the fiber sequences
	$\sR_i \xrightarrow{\iota_i} \pr^*(\sE) \xrightarrow{\varphi_i} \sQ_i$
			as the universal fiber sequences.
	\item \label{not:dflag-3}
	For each $1 \le i \le k$, we set 
	$\sV_i  :=\fib(\phi_{i-1,i} \colon \sQ_{i} \to \sQ_{i-1}),$
then $\sV_i$ is a vector bundle of rank $(d_{i} - d_{i-1})$ over $\Flag_{\bdd}(\sE)$ (since $\phi_{i-1,i}$ is a surjective map between vector bundles). The composition $\varphi_{i-1} = (\phi_{i-1,i}) \circ \varphi_{i}$ induces a canonical fiber sequence
	$\sR_{i-1} \to \sR_{i} \to \sV_i$
(by octahedral axiom; see $(TR4)$ in the proof of \cite[Theorem 1.1.2.14]{HA}).
	\item \label{not:dflag-4}
	We will also use the notations $\sQ_i = \sQ_{i}(\sE) =\sQ_{i}^{(\bdd)}$, $\sR_i = \sR(\sE)=\sR_{i}^{(\bdd)}$, $\sV_i =\sV_i(\sE)= \sV_{i}^{(\bdd)}$, $\phi_{i,j} =\phi_{i,j}^{\sE} = \phi_{i,j}^{(\bdd)}$, etc, to emphasize the dependence of these constructions on $\sE$ or $\bdd$.
\end{enumerate}
\end{notation}

\begin{example}[Flag Bundles]
\label{eg:flag.bundle}
In the situation of Definition \ref{def:dflag}, let $\sE=\sV$ be a vector bundle of rank $n$, and assume that $d_k < n$. By convention, we set $d_0 = 0$ and $d_{k+1} = n$. Then for any field $\kappa$ and any morphism $\Spec \kappa \to X$, the fiber of $\Flag_{\bdd}(\sV) \to X$ over $\Spec \kappa \to X$ is the usual partial flag manifold $\Flag_{\bdd}(\kappa^{\oplus n})$, which is a nonsingular projective $\kappa$-variety of dimension $\sum_{i=1}^{k} d_i(d_{i+1} - d_i)$ (\cite[Proposition 3.2.9]{Wey}). Combined with Propositions \ref{prop:Flag:finite} and \ref{prop:Flag:finite} below, we obtain that the projection $\Flag_{\bdd}(\sV) \to X$ is smooth, proper, a relative derived scheme of relative dimension $\sum_{i=1}^{k} d_i(d_{i+1} - d_i)$.
\end{example}

\begin{example}[Derived Complete Flag Schemes]
\label{eg:dflag}
Let $n \ge 1$ be a positive integer, let $\bdd = \underline{n} :=(1,2, 3, \cdots, n)$ be the sequence of all integers in $[1,n]$, and consider
	$$\pr \colon \Flag_{\underline{n}}(X;\sE)  = \Flag_{X}(\sE; \underline{n}) \to X.$$
Then the extended universal quotient sequence has the form 
	$$\pr^*(\sE) \xrightarrow{\varphi_{n}} \sQ_{n} \xrightarrow{\phi_{n-1,n}} \sQ_{n-1} \to \cdots \xrightarrow{\phi_{1,2}} \sQ_{1}  \xrightarrow{\phi_{0,1}=0} \sQ_{0} = 0.$$
For each $1 \le i \le n$, we define
	$\sL_i : =  \Ker(\phi_{i-1,i} \colon \sQ_{i} \twoheadrightarrow \sQ_{i-1})$. Then 
$\sL_i$'s are line bundles on $\Flag(\sE; \underline{n})$; we will refer to the $\sL_i$'s as the {\em universal line bundles}. Consequently, for each $1 \le i \le n$, there is a canonical equivalence $\sL_1 \otimes \sL_2 \otimes \cdots \otimes \sL_i \simeq \det \sQ_i$.

In the case where $\sE = \sV$ is a vector bundle of rank $n$, the morphism $\varphi_{n} \colon \pr^{*}(\sV) \to \sQ_n$ is an isomorphism and the forgetful map  $\Flag(\sV; \underline{n}) \to \Flag(\sV;\underline{n-1})$ is an equivalence of functors. The relative scheme $\Flag(\sV): = \Flag(\sV; \underline{n}) \simeq \Flag(\sV;\underline{n-1}) \to X$ is precisely the classical {\em complete flag bundle} of $\sV$ over $X$ (compare with \cite[\S 3.1, Definition before (3.1.6)]{Wey}). Therefore, even when $\sE$ is a perfect complex rather than a vector bundle, we will sometimes abuse terminology by referring to $\Flag(\sE; \underline{n})$ as the {\em derived complete flag scheme of $\sE$}.
 \end{example}

The derived flag schemes have same functorial properties as the derived Grassmannians Proposition \ref{prop:Grass-4,5} with the same proof (see also \cite[Proposition 4.25]{J22a}):

 \begin{proposition} \label{prop:flag:functorial} 
 In the situation of Definition \ref{def:dflag} and Notation \ref{not:dflag}. 
\begin{enumerate}[leftmargin=*]
	\item \label{prop:dflag-1} 
	(Base change)
	Let $f \colon X' \to X$ be a morphism of prestacks, then there is a canonical equivalence $\Flag_{X',\bdd}(f^* \sE) \xrightarrow{\sim} \Flag_{X,\bdd}(\sE) \times_X X'$ such that the pullback of the universal quotient sequence over $\Flag_{X,\bdd}(\sE)$ along the map $\Flag_{X',\bdd}(f^* \sE) \xrightarrow{\sim} \Flag_{X,\bdd}(\sE) \times_X X' \to \Flag_{X,\bdd}(\sE)$ is canonically equivalent to the universal quotient sequence over $\Flag_{X',\bdd}(f^* \sE)$.
	\item \label{prop:dflag-2} (Tensoring with line bundles) 
	Let $\sL$ be a line bundle on $X$. Then there is a canonical equivalence $g \colon \Flag_{X,\bdd}(\sE) \xrightarrow{\sim} \Flag_{X,d}(\sE \otimes \sL)$ such that there is a canonical equivalence 
		$$g^*(\zeta_{\Flag_{X,\bdd}(\sE \otimes \sL)}) \xrightarrow{\sim} \zeta_{\Flag_{X,\bdd}(\sE)} \otimes \sL.$$
\end{enumerate}
\end{proposition}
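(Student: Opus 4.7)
For assertion \eqref{prop:dflag-1}, the plan is to verify the equivalence of functors directly from the definition. Both sides are prestacks over $X'$, so it suffices to compare their values on each $\eta' \colon T = \Spec A \to X'$ with $A \in \CAlgDelta$. Writing $\eta = f \circ \eta' \colon T \to X$, the canonical equivalence $\eta'^*(f^*\sE) \simeq \eta^*\sE$ in $\QCoh(T)^\cn$ induces a bijection between flags of type $\bdd$ of $\eta'^*(f^*\sE)$ and flags of type $\bdd$ of $\eta^*\sE$ (the defining conditions---that each $\sE_i$ is a vector bundle of rank $d_i$ and each $\phi_{i,i+1}$ is surjective on $\pi_0$---are preserved). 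Passing to cores of functor $\infty$-categories yields a natural homotopy equivalence $\Flag_{X',\bdd}(f^*\sE)(\eta') \simeq \Flag_{X,\bdd}(\sE)(\eta)$, and unwinding definitions shows the latter equals $(\Flag_{X,\bdd}(\sE) \times_X X')(\eta')$. By construction, the universal flag on $\Flag_{X',\bdd}(f^*\sE)$ is precisely the pullback of the universal flag on $\Flag_{X,\bdd}(\sE)$ along this identification.

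For assertion \eqref{prop:dflag-2}, the plan is to construct the equivalence $g$ via tensoring with $\sL$ and its inverse via tensoring with $\sL^{-1}$. For any $\eta \colon T \to X$, the functor $(\blank) \otimes_{\sO_T} \eta^*\sL \colon \QCoh(T)^\cn \to \QCoh(T)^\cn$ is an auto-equivalence; it preserves the rank of vector bundles and the property of being surjective on $\pi_0$ (because $\eta^*\sL$ is a line bundle, hence flat, and tensoring with a line bundle is exact on $\pi_0$). Applying this functor term-wise to a flag
\begin{equation*}
\eta^*\sE \xrightarrow{\phi_{k,k+1}} \sE_k \xrightarrow{\phi_{k-1,k}} \cdots \xrightarrow{\phi_{1,2}} \sE_1
\end{equation*}
produces a flag
\begin{equation*}
\eta^*(\sE\otimes\sL) \simeq \eta^*\sE \otimes \eta^*\sL \xrightarrow{\phi_{k,k+1}\otimes\id} \sE_k\otimes\eta^*\sL \to \cdots \to \sE_1\otimes\eta^*\sL
\end{equation*}
in $\Flag_{\bdd}(\sE\otimes\sL)(\eta)$. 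Naturality in $\eta$ gives a morphism of prestacks $g \colon \Flag_{X,\bdd}(\sE) \to \Flag_{X,\bdd}(\sE\otimes\sL)$ over $X$, and the analogous construction with $\sL^{-1}$ yields a homotopy inverse. The stated equivalence $g^*(\zeta_{\Flag_{X,\bdd}(\sE\otimes\sL)}) \simeq \zeta_{\Flag_{X,\bdd}(\sE)}\otimes\sL$ holds by construction, since the universal flag on the target pulls back precisely to the termwise tensor product defining $g$.

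Both arguments are essentially formal, being direct generalizations of the derived projectivization case \cite[Proposition 4.25]{J22a} from rank-one quotients to iterated quotient sequences; there is no genuine obstacle, since the definition of $\Flag_{\bdd}$ on an input $\eta$ depends only on $\eta^*\sE$ through an $\infty$-categorical construction that is manifestly functorial in base change and compatible with line-bundle twists. The only minor care required is bookkeeping of the universal sequence under these identifications, which is handled automatically by the naturality of the constructions in $\eta$.
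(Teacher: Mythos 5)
Your proof is correct and follows essentially the same route as the paper, which simply defers to the derived projectivization case \cite[Proposition 4.25]{J22a} and leaves the formal base-change and line-bundle-twisting arguments to the reader. You have spelled out the details that the paper treats as routine, and both the pointwise comparison in~(1) and the termwise tensoring in~(2) match what the cited argument does in the rank-one case.
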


\subsubsection{Forgetful morphisms}
\label{sec:dflag:forget}
In the situation of Definition \ref{def:dflag} and Notation \ref{not:dflag}, we let $\bdd' = (d_{i_j})_{1 \le j \le \ell}$ be a subsequence of $\bdd = (d_i)_{1 \le i \le k}$, where $(i_1 < i_2 < \ldots < i_\ell)$ is a subsequence of $\underline{n}: = (1 < 2 <\ldots < n)$. Then there is a natural forgetful morphism
	$$\pi_{\bdd',\bdd} \colon \Flag(\sE; \bdd) \to \Flag(\sE; \bdd')$$
which, for each $\eta \colon T = \Spec A \to X$, carries the flag $\zeta_T \in \Flag_{\bdd}(\sE)(\eta)$ of \eqref{eqn:dflag:zeta_T} to the flag
	$$\zeta_T' = (\sE_T \xrightarrow{\varphi_{i_\ell}} \sE_{i_\ell} \xrightarrow{\phi_{i_{\ell-1},i_{\ell}}} \sE_{i_{\ell-1}}  \xrightarrow{\phi_{i_{\ell-2},i_{\ell-1}}} \cdots \xrightarrow{\phi_{i_{1},i_{2}}} \sE_{i_1}) \in \Flag_{\bdd'}(\sE)(\eta).
	$$
of $\sE_T$ of type $\bdd'$. It follows that the pullback of the universal quotient sequence, $\pi_{\bdd',\bdd}^*(\zeta_{\Flag_{\bdd'}(\sE)})$,	
 is canonically equivalent to the quotient sequence
	$$(\pr_{\Flag_{\bdd}(\sE)}^*(\sE) \xrightarrow{\varphi_{i_\ell}^{(\bdd)}} \sQ_{i_\ell}^{(\bdd)} \xrightarrow{\phi_{i_{\ell-1},i_{\ell}}^{(\bdd)}} \sQ_{i_{\ell-1}}^{(\bdd)}  \xrightarrow{\phi_{i_{\ell-2},i_{\ell-1}}^{(\bdd)}} \cdots \xrightarrow{\phi_{i_{1},i_{2}}^{(\bdd)}} \sQ_{i_1}^{(\bdd)})
	$$
on $\Flag_{\bdd}(\sE)$, where $\sQ_{i}^{(\bdd)}$ and $\phi_{i,j}^{(\bdd)}$ are as defined in Notation \ref{not:dflag} \eqref{not:dflag-4}.

We will agree on convention that $\pi_{\bdd, \bdd} = \id$ denotes the identity morphism,  $\Flag(\sE; \underline{0}) = X$, and the forgetful morphism $\pi_{\underline{0}, \bdd} \colon \Flag(\sE; \bdd) \to X$ denotes the natural projection morphism.

\begin{lemma}
\label{lem:dflag:forget}
 In the situation of Notation \ref{not:dflag}, let $\bdd = (d_i)_{1 \le i \le k}$ and assume that $k \ge 2$.
\begin{enumerate}[leftmargin=*]
	\item \label{lem:dflag:forget-1}
	(The case $\bdd' = \bdd \backslash \{d_1\}$.)
	The forgetful morphism $\pi_{\bdd \backslash \{d_1\},\bdd} \colon \Flag_X(\sE; \bdd) \to \Flag_X(\sE; \bdd \backslash \{d_1\})$ canonically identifies $\Flag_X(\sE; \bdd)$ as the derived Grassmannian $\Grass \big(\sQ_{2}^{(\bdd \backslash \{d_1\})} ;d_1 \big)$ over $\Flag_X(\sE; \bdd \backslash \{d_1\})$, where $\sQ_{2}^{(\bdd \backslash \{d_1\})}$ is the universal quotient vector bundle of rank $d_2$. 
	\item \label{lem:dflag:forget-2}
	(The case $\bdd' = \bdd \backslash \{d_k\}$.)
	The forgetful morphism  $\pi_{\bdd \backslash \{d_k\},\bdd} \colon  \Flag_X(\sE; \bdd) \to \Flag_X(\sE; \bdd \backslash \{d_k\})$ canonically identifies $\Flag_X(\sE; \bdd)$ as the derived Grassmannian $\Grass\big(\sR_{k-1}^{(\bdd \backslash \{d_k\})} ; d_{k} - d_{k-1}\big)$ over $\Flag_X(\sE; \bdd \backslash \{d_k\})$, where $\sR_{k-1}^{(\bdd \backslash \{d_k\})} = \fib(\varphi_{k-1}^{(\bdd \backslash \{d_k\})})$ (see Notation \ref{not:dflag} \eqref{not:dflag-2}).
	\item \label{lem:dflag:forget-3}
	(The case $\bdd' = \bdd \backslash \{d_i\}$, $1 < i < k$.)
	If $k \ge 3$, and let $i$ be an integer such that $1 < i < k$. Then the forgetful morphism $\pi_{\bdd \backslash \{d_i\},\bdd} \colon  \Flag_X(\sE; \bdd) \to \Flag_X(\sE; \bdd \backslash \{d_i\})$ canonically identifies $\Flag_X(\sE; \bdd)$ as the derived Grassmannian $\Grass\big(\sV_{i-1, i+1}; d_{i} - d_{i-1}  \big)$ 
	over $\Flag_X(\sE; \bdd \backslash \{d_i\})$, where $\sV_{i-1, i+1} :=\fib(\phi_{i-1,i+1}^{\bdd \backslash \{d_i\}} \colon \sQ_{i+1}^{\bdd \backslash \{d_i\}} \to \sQ_{i-1}^{\bdd \backslash \{d_i\}})$ is a vector bundle on $\Flag_X(\sE; \bdd \backslash \{d_i\})$ of rank $(d_{i+1}-d_{i-1})$, and $\sQ_{\ell}^{\bdd \backslash \{d_i\}}$ denote the universal quotient bundles on $\Flag_X(\sE; \bdd \backslash \{d_i\})$ of rank $\ell$ where $\ell=i-1,i+1$.
\end{enumerate}
\end{lemma}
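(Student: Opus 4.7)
The proof strategy for all three cases is the same: I will identify the fiber of the forgetful morphism $\pi_{\bdd \setminus \{d_i\}, \bdd}$ over a $T$-point of $\Flag_X(\sE; \bdd \setminus \{d_i\})$ with a space of rank-$(d_i - d_{i-1})$ locally free quotients of a naturally defined vector bundle, then invoke the universal property of the derived Grassmannian (Proposition \ref{prop:Grass:represent}). Case \eqref{lem:dflag:forget-1} is essentially immediate from Definition \ref{def:dflag}: a $T$-point of $\Flag_X(\sE;\bdd)$ is, tautologically, a $T$-point of $\Flag_X(\sE; \bdd \setminus \{d_1\})$ together with a rank-$d_1$ locally free quotient $\sE_2 \to \sE_1$ that is surjective on $\pi_0$; here $\sE_2$ is the pullback of the universal quotient bundle $\sQ_2^{(\bdd \setminus \{d_1\})}$, so the additional data is exactly a $T$-point of $\Grass(\sQ_2^{(\bdd \setminus \{d_1\})}; d_1)$ over the given $\Flag_X(\sE; \bdd \setminus \{d_1\})$-point.

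The main technical ingredient, which I would isolate as an intermediate claim, concerns Case \eqref{lem:dflag:forget-2}: given a map $\varphi \colon \sE_T \to \sE_{k-1}$ surjective on $\pi_0$ with $\sE_{k-1}$ a rank-$d_{k-1}$ vector bundle, and setting $\sR := \fib(\varphi)$ (which is connective by the long exact sequence of homotopy), the space of factorizations $\sE_T \xrightarrow{a} \sE_k \xrightarrow{b} \sE_{k-1}$ of $\varphi$ through a rank-$d_k$ vector bundle $\sE_k$ with both $a$ and $b$ surjective on $\pi_0$ is canonically equivalent to the space of rank-$(d_k - d_{k-1})$ locally free quotients of $\sR$ that are surjective on $\pi_0$. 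The forward direction takes fibers of $b$: the induced map $\sR \to \sV_k := \fib(b)$ is surjective on $\pi_0$ since its cofiber in the stable category $\QCoh(T)$ is $\cofib(a)$, which is $1$-connective by hypothesis, and $\sV_k$ is a rank-$(d_k - d_{k-1})$ vector bundle (fiber of a surjection of vector bundles). For the inverse, given $\sR \to \sV$ surjective on $\pi_0$ with $\sV$ a rank-$(d_k - d_{k-1})$ vector bundle, I would form the pushout $\sE_k := \sE_T \sqcup_{\sR} \sV$ in $\QCoh(T)$; since this category is stable, $\cofib(\sV \to \sE_k) \simeq \cofib(\sR \to \sE_T) \simeq \sE_{k-1}$, so $\sE_k$ is an extension of the vector bundles $\sE_{k-1}$ and $\sV$, hence itself a rank-$d_k$ vector bundle, and $\cofib(a)\simeq \cofib(\sR\to\sV)$ is $1$-connective so $a$ is surjective on $\pi_0$. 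That the two constructions are mutually inverse is formal. Applied to $\varphi = \varphi_{k-1}^{(\bdd \setminus \{d_k\})}$, this yields Case \eqref{lem:dflag:forget-2}.

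Case \eqref{lem:dflag:forget-3} follows from the same bijection with $\sE_T$ and $\sE_{k-1}$ replaced by the pullbacks of $\sQ_{i+1}^{(\bdd \setminus \{d_i\})}$ and $\sQ_{i-1}^{(\bdd \setminus \{d_i\})}$, and $\varphi$ replaced by $\phi_{i-1,i+1}^{(\bdd \setminus \{d_i\})}$; here $\sV_{i-1,i+1} = \fib(\phi_{i-1,i+1}^{(\bdd \setminus \{d_i\})})$ is already a vector bundle of rank $d_{i+1} - d_{i-1}$ as the fiber of a surjection of vector bundles, so $\Grass(\sV_{i-1,i+1}; d_i - d_{i-1})$ is well-defined and the same bijection gives the identification. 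All constructions are manifestly natural in $\eta \colon T \to X$ and in maps of flags, so they globalize to the asserted equivalences of prestacks over $\Flag_X(\sE; \bdd \setminus \{d_i\})$ via Propositions \ref{prop:Grass:represent} and \ref{prop:flag:functorial}\eqref{prop:dflag-1}.

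The main obstacle is the careful verification of the intermediate claim in Case \eqref{lem:dflag:forget-2}, specifically that the pushout $\sE_T \sqcup_{\sR} \sV$ is again a vector bundle of the predicted rank; this ultimately rests on the stability of $\QCoh(T)$, the identification of fiber and cofiber sequences there, and the fact that extensions of vector bundles are vector bundles of the summed rank. Once this claim is in hand, Case \eqref{lem:dflag:forget-1} is a trivial specialization (with $\sE_T$ replaced by $\sE_2$ and $\sE_{k-1}$ by $0$), Case \eqref{lem:dflag:forget-3} is an internal application, and the naturality needed for globalization is formal.
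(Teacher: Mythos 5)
Your proof is correct and follows essentially the same route as the paper's: case (1) is tautological from the definition, and cases (2)--(3) reduce to identifying lifts with locally free quotients of the relevant fiber; your pushout $\sE_T \sqcup_{\sR}\sV$ coincides with the paper's $\cofib(\fib(\sR\to\sV)\to\sE_T)$ by pasting the fiber square for $\fib(\sR\to\sV)\to\sR\to\sV$ with the defining pushout, so the inverse construction is the same up to rewriting.
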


\begin{proof} In order to prove assertion \eqref{lem:dflag:forget-1}, it suffices to observe that, for each $\eta \colon T = \Spec A \to X$, where $A \in \CAlgDelta$, the morphism which carries each flag of the form $\zeta_T$ of \eqref{eqn:dflag:zeta_T} to the rank-$d_1$ locally free quotient $\sE_{2} \to \sE_{1}$ defines a homotopy equivalence from the space of liftings $\zeta_T \in \Flag_X(\sE; \bdd)(\eta)$ of $\zeta_T' = (\sE_T \to \sE_k \to \cdots \to \sE_2) \in\Flag_X(\sE; \bdd \backslash \{d_1\})(\eta)$ to the space $\Grass_{d_1}(\sQ_{2}^{(\bdd \backslash \{d_1\})})(\zeta_T')$ of rank-$d_1$ locally free quotients $\sE_2 \to \sE_1$ over $\zeta_T'$. 

Next, we prove assertion \eqref{lem:dflag:forget-2}. Given any $\eta \colon T = \Spec A \to X$, $A \in \CAlgDelta$, each element $\zeta_T$ of the form \eqref{eqn:dflag:zeta_T} canonically determines a fiber sequence $\sR_{k} \to \sR_{k-1} \to \sV_{k}$ as in  Notation \ref{not:dflag} \eqref{not:dflag-4}, where $\sR_i=\fib(\varphi_{i,T} \colon \sE_T \to \sE_i)$ as usual, $i=k,k-1$, and $\sV_k = \fib(\sE_{k} \to \sE_{k-1})$ is a vector bundle of rank $(d_k - d_{k-1})$. Hence, $(\eta \colon T \to X, \zeta_T)$ canonically determines a point $(\sR_{k-1} \to \sV_k)$ on the Grassmannian $\Grass_{d_{k} - d_{k-1}}\big(\sR_{k-1}^{(\bdd \backslash \{d_k\})} \big)(\zeta_T')$ over the point $\zeta_T':=\pi_{\bdd \backslash \{d_k\},\bdd}(\zeta_T)$. Conversely, given any quotient sequence $\zeta_T' = (\sE_T \to \sE_{k-1} \to \cdots \to \sE_1) \in \Flag_X(\sE; \bdd \backslash \{d_k\})(\eta)$ and any rank-$(d_{k}-d_{k-1})$ locally free quotient $\sR_{k-1} \to \sV_{k}$ of $\sR_{k-1} = \fib(\sE_T \to \sE_{k-1})$, we define
	$$\sE_{k} = \cofib(\psi_k \colon \fib(\sR_{k-1} \to \sV_{k}) \to \sE_T),$$
where the morphism $\psi_k$ is the composition of canonical morphisms $\fib(\sR_{k-1} \to \sV_{k}) \to \sR_{k-1} \to \sE_T$.
Therefore, the edge $(\sE_T \to \sE_{k-1})$ in $\zeta_T'$ factorize through a sequence of quotients $\sE_T \to \sE_{k} \to \sE_{k-1}$. Moreover, from the fiber sequence $\sV_{k} \to \sE_{k} \to \sE_{k-1}$, we obtain that $\sE_{k}$ is a vector bundle of rank $d_{k}$. In particular, the sequence $(\sE_T \to \sE_{k} \to \sE_{k-1} \to \cdots \to \sE_1)$ determines an element in $\Flag_X(\sE; \bdd)(\eta)$ up to contractible choices. The above argument shows that there is a homotopy equivalence between the space of liftings $\zeta_T \in \Flag_X(\sE; \bdd)(\eta)$ of  $\zeta_T'$ and the space $\Grass_{d_{k} - d_{k-1}}\big(\sR_{k-1}^{(\bdd \backslash \{d_k\})} \big)(\zeta_T')$ of locally free quotients $\sR_{k-1} \to \sV_k$ over $\zeta_T'$. This proves \eqref{lem:dflag:forget-2}. 

The proof of assertion \eqref{lem:dflag:forget-3} is similar to that of \eqref{lem:dflag:forget-2}: given any $\eta \colon T = \Spec A \to X$, 
and any $\zeta_T'  = (\sE_T \to \sE_{k} \to \cdots \to \sE_{i+1} \xrightarrow{\phi_{i-1,i+1}} \sE_{i-1} \to \cdots \to \sE_{1}) \in \Flag_X(\sE; \bdd \backslash \{d_i\})(\eta)$, the space of  liftings $\zeta_T$ of $\zeta_T'$ of the form \eqref{eqn:dflag:zeta_T} is homotopy equivalent to the space of sequences of morphisms 
	$$\sE_{i+1} \xrightarrow{\phi_{i,i+1}} \sE_i \xrightarrow{\phi_{i-1,i}} \sE_{i-1}$$
such that $\phi_{i-1,i+1} \simeq \phi_{i,i+1} \circ \phi_{i-1,i}$ and $\sE_i$ is a vector bundle of rank $d_i$. In particular, by virtue of octahedral axiom, any lifting $\zeta_T$ determines a fiber sequence 
	$$\fib(\phi_{i,i+1})=\sV_{i+1}  \to \fib(\phi_{i-1,i+1}) \to \fib(\phi_{i-1,i}) = \sV_i$$
 where the morphism $\fib(\phi_{i-1,i+1}) \to \sV_i$ is a rank-$(d_{i}-d_{i-1})$ locally free quotient of $\fib(\phi_{i-1,i+1})$. Conversely, given any rank-$(d_{i}-d_{i-1})$ locally free quotient $\fib(\phi_{i-1,i+1}) \to \sV_{i}$, we set 
	$$\sE_{i} = \cofib\big( \psi_i' \colon \fib(\fib(\phi_{i,i+1}) \to \sV_{i}) \to \sE_{i+1} \big)$$
where $\psi_i'$ is the composition of canonical maps $\fib(\fib(\phi_{i,i+1}) \to \sV_{i})  \to \fib(\phi_{i,i+1})  \to \sE_{i+1}$, then we obtain a sequence of morphisms $\sE_{i+1} \xrightarrow{\phi_{i,i+1}} \sE_i \xrightarrow{\phi_{i-1,i}} \sE_{i-1}$ which satisfies $\phi_{i-1,i+1} \simeq \phi_{i,i+1} \circ \phi_{i-1,i}$ and that $\sE_i$ is a vector bundle of rank $d_i$. Therefore, we obtain a homotopy equivalence between the space of liftings $\zeta_T \in \Flag_X(\sE; \bdd)(\eta)$ of $\zeta_T'$ and the space $\Grass_{d_{i} - d_{i-1}}(\fib(\phi_{i-1,i+1})(\zeta_T')$ of locally free quotient $\fib(\phi_{i-1,i+1}) \to \sV_{i}$ over $\zeta_T'$. This proves assertion \eqref{lem:dflag:forget-3}.
\end{proof}

\begin{corollary}
\label{cor:dflag:forget}
In the situation of Notation \ref{not:dflag}, let $\bdd = (d_i)_{1 \le i \le k}$ be an increasing sequence of integers, and assume that $k \ge 2$. Let $i$ be an integer such that $1 \le i \le k-1$, and let $\bdd' = (d_1, \cdots, d_{i})$ and $\bdd''=(d_{i+1}, \cdots, d_{k})$ so that $\bdd = (\bdd', \bdd'')$.
\begin{enumerate}[leftmargin=*]
	\item \label{cor:dflag:forget-1}
	The forgetful map $\pi_{\bdd'',\bdd} \colon \Flag_X(\sE; \bdd) \to \Flag_X(\sE;  \bdd'')$ canonically identifies $\Flag_X(\sE; \bdd)$ as the derived flag scheme $\Flag\big(\sQ_{i+1}^{(\bdd'')}; \bdd'\big)$ over $\Flag_X(\sE;  \bdd'')$, where $\sQ_{i+1}^{(\bdd'')}$ is the universal quotient bundle on $\Flag_X(\sE;  \bdd'')$ of rank $d_{i+1}$.
	\item \label{cor:dflag:forget-2}
	The forgetful map $\pi_{\bdd',\bdd}  \colon \Flag_X(\sE; \bdd) \to \Flag_X(\sE; \bdd')$ canonically identifies $\Flag_X(\sE; \bdd)$ as the derived flag scheme $\Flag\big(\fib(\varphi_i^{(\bdd')}); d_{i+1}-d_{i}, \ldots, d_{k} - d_{i}\big)$ over $\Flag_X(\sE; \bdd')$.
	\item \label{cor:dflag:forget-3}
	Assume that $k \ge 3$ and let $i,j$ be integers such that $1 \le i < j < k$. We write $\bdd$ as:
		$$\bdd = (\underbrace{d_1, \cdots, d_{i}}_{\bdd^{(1)}}; \underbrace{d_{i+1}, \cdots, d_{j}}_{\bdd^{(2)}}; \underbrace{d_{j+1}, \cdots, d_{k}}_{\bdd^{(3)}}).$$
		Then the forgetful map 
			$\Flag_X(\sE; \bdd) \to \Flag_X(\sE; \bdd^{(1)}, \bdd^{(3)})$
			 canonically identifies $\Flag_X(\sE; \bdd)$ as the derived flag scheme $\Flag\big(\fib(\phi_{i,j+1}^{(\bdd^{(1)}, \bdd^{(3)})}) ; d_{i+1}-d_{i}, \ldots, d_{j} - d_{i}\big)$ over $\Flag_X(\sE; \bdd^{(1)}, \bdd^{(3)})$, where
			 $\phi_{i,j+1}^{(\bdd^{(1)}, \bdd^{(3)})} \colon \sQ_{j+1}^{(\bdd^{(1)}, \bdd^{(3)})} \to \sQ_{i}^{(\bdd^{(1)}, \bdd^{(3)})}$ is the canonical quotient morphism between the universal quotient bundles of ranks $d_{j+1}$ and respectively $d_{i}$ on $\Flag_X(\sE; \bdd^{(1)}, \bdd^{(3)})$. 
\end{enumerate}
\end{corollary}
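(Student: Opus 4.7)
All three statements follow by iteration of Lemma \ref{lem:dflag:forget}, combined with a direct functorial unwinding of Definition \ref{def:dflag}. My plan is to prove assertions \eqref{cor:dflag:forget-1} and \eqref{cor:dflag:forget-2} by induction on the length of the block being forgotten and then deduce assertion \eqref{cor:dflag:forget-3} as a consequence of combining them via Proposition \ref{prop:flag:functorial}\eqref{prop:dflag-1}.

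For \eqref{cor:dflag:forget-1}, I will proceed by induction on $i$. The base case $i=1$ is exactly Lemma \ref{lem:dflag:forget}\eqref{lem:dflag:forget-1}. For the inductive step, I factor the forgetful morphism as $\pi_{\bdd'',\bdd} = \pi_{\bdd'',\,\bdd\setminus\{d_1\}} \circ \pi_{\bdd\setminus\{d_1\},\,\bdd}$. Lemma \ref{lem:dflag:forget}\eqref{lem:dflag:forget-1} identifies $\Flag_X(\sE;\bdd)$ with $\Grass(\sQ_2^{(\bdd\setminus\{d_1\})};d_1)$ over $\Flag_X(\sE;\bdd\setminus\{d_1\})$, and by the inductive hypothesis the latter is identified with $\Flag(\sQ_{i+1}^{(\bdd'')};(d_2,\ldots,d_i))$ over $\Flag_X(\sE;\bdd'')$. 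The key point to verify is that, under this latter identification, the universal rank-$d_2$ quotient bundle $\sQ_2^{(\bdd\setminus\{d_1\})}$ matches the tautological rank-$d_2$ quotient bundle $\sQ_2$ of the flag scheme $\Flag(\sQ_{i+1}^{(\bdd'')};(d_2,\ldots,d_i))$; this is immediate from the construction in Lemma \ref{lem:dflag:forget}\eqref{lem:dflag:forget-1}. Taking a further Grassmannian of rank-$d_1$ quotients of this bundle reconstructs the functor parametrizing flags of type $(d_1,d_2,\ldots,d_i) = \bdd'$ in $\sQ_{i+1}^{(\bdd'')}$, which is $\Flag(\sQ_{i+1}^{(\bdd'')};\bdd')$ by Definition \ref{def:dflag}.

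For \eqref{cor:dflag:forget-2}, I again induct, this time on the length $k-i$ of the block $(d_{i+1},\ldots,d_k)$ being forgotten. The base case (forgetting a single index $d_k$) is Lemma \ref{lem:dflag:forget}\eqref{lem:dflag:forget-2}. For the inductive step, I factor $\pi_{\bdd',\bdd} = \pi_{\bdd',\,\bdd\setminus\{d_k\}} \circ \pi_{\bdd\setminus\{d_k\},\,\bdd}$. Lemma \ref{lem:dflag:forget}\eqref{lem:dflag:forget-2} expresses $\Flag_X(\sE;\bdd)$ as $\Grass(\sR_{k-1}^{(\bdd\setminus\{d_k\})};d_k-d_{k-1})$ over $\Flag_X(\sE;\bdd\setminus\{d_k\})$, and induction expresses the latter as $\Flag(\fib(\varphi_i^{(\bdd')});d_{i+1}-d_i,\ldots,d_{k-1}-d_i)$ over $\Flag_X(\sE;\bdd')$. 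The crucial compatibility to check is that the tautological bundle $\sR_{k-1}^{(\bdd\setminus\{d_k\})}$ identifies, under the inductive identification, with the fiber of the final rank-$(d_{k-1}-d_i)$ quotient map from $\fib(\varphi_i^{(\bdd')})$, as provided by the fiber-sequence relation $\sR_{k-1} \simeq \fib(\sR_i \to \sV_k')$ in the conventions of Notation \ref{not:dflag}\eqref{not:dflag-3}; this follows by the octahedral axiom applied to the tower of universal fiber sequences. Combining, we obtain the desired identification with $\Flag(\fib(\varphi_i^{(\bdd')});d_{i+1}-d_i,\ldots,d_k-d_i)$.

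For \eqref{cor:dflag:forget-3}, I factor the forgetful morphism as
\[
\Flag_X(\sE;\bdd) \xrightarrow{\pi_{(\bdd^{(1)},\bdd^{(2)}),\,\bdd}} \Flag_X(\sE;\bdd^{(1)},\bdd^{(2)}) \xrightarrow{\pi_{(\bdd^{(1)},\bdd^{(3)}),\,(\bdd^{(1)},\bdd^{(2)})}} \Flag_X(\sE;\bdd^{(1)},\bdd^{(3)})
\]
is not quite what I want; instead I factor through $\Flag_X(\sE;\bdd^{(1)},\bdd^{(3)})$ by applying \eqref{cor:dflag:forget-2} to the subsequence $(\bdd^{(1)},\bdd^{(3)}) \subseteq \bdd$ to eliminate the block $\bdd^{(2)}$, viewed relatively as an insertion problem. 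Concretely, given $\zeta_T' \in \Flag_X(\sE;\bdd^{(1)},\bdd^{(3)})(\eta)$, a lifting to $\Flag_X(\sE;\bdd)(\eta)$ amounts to factoring the edge $\phi_{i,j+1}^{(\bdd^{(1)},\bdd^{(3)})} \colon \sQ_{j+1}|_\eta \to \sQ_i|_\eta$ through vector bundles of ranks $d_j,d_{j-1},\ldots,d_{i+1}$; by the argument used in the proof of Lemma \ref{lem:dflag:forget}\eqref{lem:dflag:forget-3} (iterated via the octahedral axiom), the space of such factorizations is naturally homotopy equivalent to the space of flags of type $(d_{i+1}-d_i,\ldots,d_j-d_i)$ in $\fib(\phi_{i,j+1}^{(\bdd^{(1)},\bdd^{(3)})})$. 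Combined with the base-change property of Proposition \ref{prop:flag:functorial}\eqref{prop:dflag-1}, this yields the claimed identification. The main technical obstacle across all three parts will be the bookkeeping needed to verify that the tautological bundles, fibers, and quotient morphisms match correctly under the iterated identifications; this is essentially a careful application of the octahedral axiom to the towers of universal fiber sequences introduced in Notation \ref{not:dflag}.
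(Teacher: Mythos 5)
Your proof is correct and follows essentially the same route as the paper, whose entire stated proof is ``This follows from Lemma \ref{lem:dflag:forget} and induction.'' Your filling-in of the inductive structure for parts \eqref{cor:dflag:forget-1} and \eqref{cor:dflag:forget-2} is right, and your identification of the compatibility of tautological bundles as the real content of the induction is exactly the point worth making explicit; the octahedral-axiom argument you sketch for matching $\sR_{k-1}^{(\bdd\setminus\{d_k\})}$ with the fiber of the final quotient map is sound. The one rough spot is part \eqref{cor:dflag:forget-3}: you begin a factorization, abandon it mid-sentence, and then pivot to directly re-running the argument of Lemma \ref{lem:dflag:forget}\eqref{lem:dflag:forget-3} on the level of mapping spaces. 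That pivot is fine (and arguably the cleanest thing to do, since neither \eqref{cor:dflag:forget-1} nor \eqref{cor:dflag:forget-2} alone forgets an interior block), but the exposition should be tidied to either drop the false start or replace it with the actual induction: iterate Lemma \ref{lem:dflag:forget}\eqref{lem:dflag:forget-3} to peel off $d_{i+1},\dots,d_j$ one at a time, then observe that the resulting tower of relative Grassmannians over $\Flag_X(\sE;\bdd^{(1)},\bdd^{(3)})$ assembles, by \eqref{cor:dflag:forget-1}, into the claimed relative flag scheme of $\fib(\phi_{i,j+1}^{(\bdd^{(1)},\bdd^{(3)})})$.
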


\begin{proof}
This follows from Lemma \ref{lem:dflag:forget} and induction. 
\end{proof}

Using Corollary \ref{cor:dflag:forget}, we can completely describe all forgetful morphisms $\pi_{\bdd', \bdd}$ among derived flag schemes of different types. We spell out one special case in detail:

\begin{corollary}
\label{prop:dflag:forget}
Assume we are in the situation in Notation \ref{not:dflag}. 
Let $n \ge 1$ be an integer and $\bdd=(d_1, \ldots, d_k)$ a subsequence of $\underline{n} = (1, 2, \ldots, n)$. Then the forgetful map
	$$\pi_{\bdd, \underline{n}} \colon \Flag_X(\sE; \underline{n}) \to \Flag_X(\sE; \bdd)$$
is canonically identified with the fiber products of derived flag schemes over $\Flag_X(\sE;\bdd)$,
	$$\Flag(\sR; \underline{n-d_k}) \times_{\Flag(\sE;\bdd)}  \Flag(\sV_k; \underline{d_k - d_{k-1}}) \times_{\Flag(\sE;\bdd)} \cdots \times_{\Flag(\sE;\bdd)} \Flag(\sV_1; \underline{d_1}),$$
where $\sR = \fib(\pr_{\Flag_{\bdd}(\sE)}^*(\sE) \xrightarrow{\varphi_{k}^{(\bdd)}} \sQ_{k}^{\bdd})$ and $\sV_j = \Ker(\sQ_{j}^{(\bdd)} \xrightarrow{\phi_{j-1,j}^{(\bdd)}} \sQ_{j-1}^{(\bdd)})$. Moreover, we let 
	\begin{align*}
	& \pi_{\bdd, \underline{n}}^*(\sR) \to \sQ_{n-d_k}(\sR) \twoheadrightarrow \sQ_{n - d_{k}-1}(\sR) \twoheadrightarrow \cdots \twoheadrightarrow \sQ_{1}(\sR) \twoheadrightarrow \sQ_{0}(\sR)=0 \\
	& \pi_{\bdd, \underline{n}}^* (\sV_j) \xrightarrow{\sim} \sQ_{d_j - d_{j-1}}(\sV_j) \twoheadrightarrow \sQ_{d_j - d_{j-1}-1}(\sV_j) \twoheadrightarrow \cdots \twoheadrightarrow \sQ_{1}(\sV_j) \twoheadrightarrow \sQ_{0}(\sV_j)=0
	\end{align*}
denote the sequences of quotients on $\Flag(\sE;\underline{n})$ which are the pullbacks of the extended universal quotient sequences on the derived (complete) flag schemes $\Flag(\sR; \underline{n-d_k})$, and respectively $\Flag(\sV_j; \underline{d_j - d_{j-1}})$, where $j=1,\ldots,k$, then there are canonical equivalences
	$$\sL_s \simeq 
	\begin{cases}
	\Ker(\sQ_{s-d_{j-1}}(\sV_j) \twoheadrightarrow \sQ_{s-d_{j-1}-1}(\sV_j)) & \text{if} \quad d_{j-1}+1 \le s \le d_j ~\text{for $j=1,\ldots, k$}; \\
	\Ker(\sQ_{s-d_{k}}(\sR) \twoheadrightarrow \sQ_{s-d_{k}-1}(\sR)) & \text{if} \quad d_{k}+1 \le s \le n,
	\end{cases}
	$$
where $\{\sL_s\}_{1 \le s \le n}$ are the line bundles on $\Flag(\sE; \underline{n})$ defined in Example \ref{eg:dflag}.
\end{corollary}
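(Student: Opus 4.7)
The plan is to reduce the statement to Corollary \ref{cor:dflag:forget} by peeling off the indices in $\underline{n} \setminus \bdd$ one gap at a time: the ``tail'' gap $(d_k+1, \ldots, n)$ to the right of $d_k$, and the $k$ ``middle'' gaps $(d_{j-1}+1, \ldots, d_j-1)$ between consecutive elements of $\bdd$ (with the convention $d_0 = 0$). For the tail gap, set $\widetilde{\bdd} := (d_1, \ldots, d_k, d_k+1, \ldots, n)$ and apply Corollary \ref{cor:dflag:forget}\eqref{cor:dflag:forget-2} at position $i = k$: this identifies $\Flag_X(\sE; \widetilde{\bdd}) \to \Flag_X(\sE; \bdd)$ canonically with $\Flag(\fib(\varphi_k^{(\bdd)}); 1, 2, \ldots, n-d_k) = \Flag(\sR; \underline{n-d_k})$ over $\Flag_X(\sE;\bdd)$.

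For the middle gaps, I would iterate Corollary \ref{cor:dflag:forget}\eqref{cor:dflag:forget-3}: inserting one index $s$ with $d_{j-1} < s < d_j$ at a time into a current sequence $\bdd_{\mathrm{cur}}$ containing both $d_{j-1}$ and $d_j$, with $\bdd^{(1)}$ the initial segment of $\bdd_{\mathrm{cur}}$ through $d_{j-1}$, $\bdd^{(2)} = (s)$, and $\bdd^{(3)}$ the remaining tail. Iterating through all indices in the $j$-th gap collects these Grassmannians into the complete flag scheme of the vector bundle $\fib(\phi_{d_{j-1}, d_j}^{(\bdd_{\mathrm{cur}})})$ of rank $d_j - d_{j-1}$. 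Because $\sQ_{d_{j-1}}^{(\bdd_{\mathrm{cur}})}$ and $\sQ_{d_j}^{(\bdd_{\mathrm{cur}})}$ are pulled back from $\Flag_X(\sE; \bdd)$, this fiber is canonically the pullback of $\sV_j^{(\bdd)} = \fib(\phi_{j-1,j}^{(\bdd)})$; and by Example \ref{eg:dflag} a complete flag of a rank-$r$ vector bundle of type $(1,\ldots,r-1)$ is equivalent to one of type $\underline{r}$, producing the factor $\Flag(\sV_j; \underline{d_j-d_{j-1}})$.

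The decisive observation for assembling these factors into a fiber product is that the universal bundles $\sR$ and $\sV_1, \ldots, \sV_k$ all descend to $\Flag_X(\sE; \bdd)$, and filling one gap only refines the universal quotients strictly between two adjacent quotients $\sQ_{d_{j-1}}^{(\bdd)}$ and $\sQ_{d_j}^{(\bdd)}$ (or above $\sQ_{d_k}^{(\bdd)}$), leaving the data governing the other gaps unchanged. Consequently the order in which gaps are filled is immaterial, and the tower of Grassmannian-type fibrations collapses to the claimed fiber product over $\Flag_X(\sE;\bdd)$. The main obstacle I anticipate is the bookkeeping needed to organize this compatibility into a canonical (order-independent) equivalence between the two sides, rather than a mere isomorphism depending on the chosen order of insertion.

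The description of the line bundles $\sL_s$ then emerges by inspecting the universal quotients produced at each step. In the pushout construction of the new rank-$s$ universal quotient $\sQ_s$ given in the proof of Lemma \ref{lem:dflag:forget}\eqref{lem:dflag:forget-3}, the kernel $\Ker(\sQ_s \to \sQ_{s-1})$ is canonically identified with the rank-one kernel $\Ker(\sQ_{s-d_{j-1}}(\sV_j) \to \sQ_{s-d_{j-1}-1}(\sV_j))$ occurring at the corresponding step in the flag of $\sV_j$; for the tail gap the analogous identification with $\sR$ follows from the pushout in Lemma \ref{lem:dflag:forget}\eqref{lem:dflag:forget-2}. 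This recovers exactly the case split stated.
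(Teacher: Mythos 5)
The paper gives no separate proof for this corollary; it is stated as a ``special case'' one can spell out from Corollary \ref{cor:dflag:forget}, which is exactly what you do, so your approach matches the paper's intent. Two small remarks. First, you do not need to insert indices one at a time in each middle gap: Corollary \ref{cor:dflag:forget}\eqref{cor:dflag:forget-3} already lets you insert the entire chunk $\bdd^{(2)} = (d_{j-1}+1,\ldots,d_j-1)$ in one step, directly producing $\Flag(\sV_j;\underline{d_j-d_{j-1}-1}) \simeq \Flag(\sV_j;\underline{d_j-d_{j-1}})$ and avoiding the need to reassemble a tower of Grassmannian bundles into a flag bundle. Second, your anticipated ``obstacle'' about order-independence dissolves once you organize the proof the right way around: first construct the canonical comparison map from $\Flag_X(\sE;\underline n)$ to the fiber product, defined without any choice by sending the universal flag $\pr^*\sE \to \sQ_n \to \cdots \to \sQ_1$ to the tuple of flags $\bigl(\sR \to \fib(\sQ_n\to\sQ_{d_k}) \to\cdots\bigr)$ and $\bigl(\sV_j \to \fib(\sQ_{d_j-1}\to\sQ_{d_{j-1}})\to\cdots\bigr)$ obtained by taking fibers over $\sQ_{d_k}$ and the $\sQ_{d_{j-1}}$. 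Being an equivalence is a property, not extra structure, so you may then verify it by filling gaps in any convenient order using Corollary \ref{cor:dflag:forget}\eqref{cor:dflag:forget-2} and \eqref{cor:dflag:forget-3} together with the observation that $\sR$ and the $\sV_j$ are pulled back from $\Flag_X(\sE;\bdd)$; no compatibility-of-orderings argument is required. The line-bundle identification then follows exactly as you say, from $\sQ_{s-d_{j-1}}(\sV_j)\simeq\fib(\sQ_s\to\sQ_{d_{j-1}})$ and $\sQ_{s-d_k}(\sR)\simeq\fib(\sQ_s\to\sQ_{d_k})$, whose successive kernels are the $\sL_s$.
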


\begin{example}
\label{eg:forget:flag.to.Grass}
Let $1 \le d \le n$ be an integer, let $\pr \colon \Grass(\sE;d) \to X$ denote the derived Grassmannian, and let 
	$\sR \to \pr^*(\sE) \to \sQ$
denote the tautological fiber sequence on $\Grass(\sE;d)$. Then the natural forgetful map
	$\pi_{(d), \underline{n}} \colon \Flag_X(\sE; \underline{n}) \to \Grass_X(\sE; d)$
is canonically identifies $\Flag_X(\sE; \underline{n})$ with the fiber product 
	$\Flag(\sR; \underline{n-d}) \times_{\Grass(\sE;d)} \Flag(\sQ; \underline{d}).$
\end{example}

\begin{proposition}[Classical truncations]
\label{prop:Flag:classical}
In the situation of Definition \ref{def:dflag}, the restriction 
 	$\Flag_{\bdd}(\sE)|_{\CAlg^\heartsuit} \colon \CAlg^\heartsuit \subseteq \CAlgDelta \xrightarrow{\Flag_{\bdd}(\sE)} \shS$
is canonically equivalent to the classical flag functor $\Flag^\cl_{\bdd}(\pi_0 \sE)$ (\cite[(9.9.2)]{EGAI})
which carries each pair $(R \in \CAlg^\heartsuit, \eta \colon \Spec R \to X)$ to the set of isomorphism classes of flags $\eta_\cl^*(\pi_0 \sE) \to \sP_{k} \to  \sP_{k-1} \to \cdots \to \sP_{1}$ of vector bundle quotients of the discrete sheaf $\eta_\cl^*(\pi_0 \sE):=\pi_0 (\eta^* (\pi_0 \sE))$ of type $\bdd$.
\end{proposition}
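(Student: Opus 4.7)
The plan is to argue by induction on the length $k$ of the sequence $\bdd = (d_1, \ldots, d_k)$, reducing the flag case to the Grassmannian case already handled by Proposition \ref{prop:Grass-classical}. The base case $k = 1$ is immediate: $\Flag_X(\sE; (d_1))$ coincides with $\Grass_X(\sE; d_1)$ by definition, and on the classical side $\Flag^\cl_{(d_1)}(\pi_0\sE)$ is precisely the classical Grassmannian functor $\Grass^\cl_{d_1}(\pi_0\sE)$, so Proposition \ref{prop:Grass-classical} yields the desired equivalence.

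For the inductive step, I would invoke Lemma \ref{lem:dflag:forget} \eqref{lem:dflag:forget-1}, which identifies the forgetful morphism $\pi_{\bdd\setminus\{d_1\},\bdd}\colon \Flag_X(\sE; \bdd) \to \Flag_X(\sE; \bdd\setminus\{d_1\})$ with the relative derived Grassmannian $\Grass(\sQ_2^{(\bdd\setminus\{d_1\})}; d_1)$ over $\Flag_X(\sE; \bdd\setminus\{d_1\})$. Fix any $\eta \colon T = \Spec R \to X$ with $R \in \CAlg^\heartsuit$. By the inductive hypothesis, $\Flag_X(\sE; \bdd\setminus\{d_1\})(\eta)$ is discrete and canonically identified with the set of classical flags $\zeta' = (\eta_\cl^*(\pi_0\sE) \twoheadrightarrow \sP_k \twoheadrightarrow \cdots \twoheadrightarrow \sP_2)$ of type $\bdd\setminus\{d_1\}$. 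Any such $\zeta'$ corresponds to a map $\eta_{\zeta'}\colon \Spec R \to \Flag_X(\sE; \bdd\setminus\{d_1\})$, under which the universal bundle $\sQ_2^{(\bdd\setminus\{d_1\})}$ pulls back to the vector bundle $\sP_2$. Applying Proposition \ref{prop:Grass-classical} to the pair $(\eta_{\zeta'}, \sQ_2^{(\bdd\setminus\{d_1\})})$ shows that the fiber of $\Flag_\bdd(\sE)(\eta) \to \Flag_{\bdd\setminus\{d_1\}}(\sE)(\eta)$ over $\zeta'$ is the discrete set of rank-$d_1$ locally free quotients $\sP_2 \twoheadrightarrow \sP_1$ in $\QCoh(T)^\heartsuit$. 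Appending such a quotient to $\zeta'$ produces precisely an element of $\Flag^\cl_\bdd(\pi_0\sE)(\eta)$, giving the desired equivalence $\Flag_\bdd(\sE)(\eta) \simeq \Flag^\cl_\bdd(\pi_0\sE)(\eta)$.

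The main obstacle is ensuring that the equivalences produced at each inductive step assemble into an equivalence of functors on $\CAlg^\heartsuit$ rather than a pointwise bijection. This requires checking that both Lemma \ref{lem:dflag:forget} \eqref{lem:dflag:forget-1} and Proposition \ref{prop:Grass-classical} are natural in the relevant inputs, which is clear from their constructions since all morphisms involved — forgetful maps, pullbacks of tautological quotients, and passage to $\pi_0$ — are functorial. The concluding assertion of the proposition follows immediately: when $X$ is a derived scheme, any map $\Spec R \to X$ from a classical affine scheme factors uniquely through the underlying classical scheme $X_\cl$ (\cite[Proposition 1.1.8.1]{SAG}), and the classical flag functor of $\pi_0\sE$ over $X_\cl$ is representable by Grothendieck's classical flag scheme (\cite[\S 9.9]{EGAI}).
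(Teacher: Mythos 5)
Your proof is correct and fills in the details of the paper's second suggested route, which reads ``deduce it from Proposition \ref{prop:Grass-classical} by considering the forgetful morphism $\pi_{(d_k),\bdd}$ as with Proposition \ref{prop:Flag:rep}.'' Your inductive peeling off of $d_1$ via Lemma \ref{lem:dflag:forget} \eqref{lem:dflag:forget-1} is exactly the factorization of $\pi_{(d_k),\bdd}$ into a tower of Grassmannian bundles used in the proof of Proposition \ref{prop:Flag:rep}, so the two arguments are essentially identical.
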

\begin{proof}
This can be proved similarly as Proposition \ref{prop:Grass-classical}. Alternatively, we could deduce it from Proposition \ref{prop:Grass-classical} by considering the forgetful morphism $\pi_{(d_k), \bdd}$ as with Proposition \ref{prop:Flag:rep}.
\end{proof}

\begin{proposition}[Representability]
\label{prop:Flag:rep}
In the situation of Definition \ref{def:dflag}, the canonical projection $\pr \colon \Flag_{\bdd}(\sE) \to X$ is a relative derived scheme. 
\end{proposition}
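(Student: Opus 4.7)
The plan is to argue by induction on the length $k$ of the sequence $\bdd = (d_1, \ldots, d_k)$, realizing $\Flag_{\bdd}(\sE) \to X$ as a tower of relative derived Grassmannians, each of which is a relative derived scheme by Proposition \ref{prop:Grass:represent}.

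The base case $k=1$ is immediate: $\Flag_{(d_1)}(\sE) = \Grass_{d_1}(\sE)$, which is a relative derived scheme over $X$ by Proposition \ref{prop:Grass:represent}. For the inductive step, assume the result is known for sequences of length $k-1$, and consider the forgetful morphism
\[
\pi_{\bdd \backslash \{d_1\},\bdd} \colon \Flag_{X}(\sE; \bdd) \to \Flag_{X}(\sE; \bdd \backslash \{d_1\})
\]
of \S \ref{sec:dflag:forget}. By Lemma \ref{lem:dflag:forget}\eqref{lem:dflag:forget-1}, this morphism canonically identifies $\Flag_{X}(\sE; \bdd)$ with the relative derived Grassmannian $\Grass\bigl(\sQ_{2}^{(\bdd \backslash \{d_1\})}; d_1\bigr)$ over $\Flag_{X}(\sE; \bdd \backslash \{d_1\})$, where $\sQ_{2}^{(\bdd \backslash \{d_1\})}$ is the rank-$d_2$ universal quotient bundle. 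Note that the proof of Lemma \ref{lem:dflag:forget} is purely a matter of unwinding universal properties at the level of prestacks and does not invoke representability of flag schemes, so there is no circularity. By Proposition \ref{prop:Grass:represent} applied relative to the prestack $\Flag_{X}(\sE; \bdd \backslash \{d_1\})$, the morphism $\pi_{\bdd \backslash \{d_1\},\bdd}$ is a relative derived scheme.

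It remains to observe that relative derived schemes are closed under composition: given morphisms $Z \to Y \to X$ of prestacks such that both $Z \to Y$ and $Y \to X$ are relative derived schemes, and given any morphism $T \to X$ from a derived affine scheme, the fiber product $Y \times_X T$ is a derived scheme by assumption; choosing a Zariski affine cover $\{\Spec A_i\} \hookrightarrow Y \times_X T$, each base change $Z \times_Y \Spec A_i$ is a derived scheme by assumption on $Z \to Y$, and these glue along the cover to exhibit $Z \times_X T$ as a derived scheme. Combining this with the inductive hypothesis that $\Flag_{X}(\sE; \bdd \backslash \{d_1\}) \to X$ is a relative derived scheme, we conclude that $\pr \colon \Flag_{\bdd}(\sE) \to X$ is a relative derived scheme, as desired. (Alternatively, one could iterate Corollary \ref{cor:dflag:forget}\eqref{cor:dflag:forget-1} to realize $\Flag_{\bdd}(\sE)$ directly as an iterated derived Grassmannian tower, but the inductive formulation above is cleanest.) The only point that requires care is the non-circularity noted above; this is the main thing to verify but is straightforward from inspection of the proof of Lemma \ref{lem:dflag:forget}.
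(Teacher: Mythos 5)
Your proof is correct and takes essentially the same approach as the paper: both factor the projection through the chain of forgetful morphisms $\Flag_{\bdd}(\sE) \to \Flag_{\bdd\setminus\{d_1\}}(\sE) \to \cdots \to \Grass_{d_k}(\sE)$, each identified as a relative derived Grassmannian of a vector bundle via Lemma \ref{lem:dflag:forget}\eqref{lem:dflag:forget-1}, and invoke Proposition \ref{prop:Grass:represent}. The only difference is presentational — you phrase it as an induction and spell out the (easy) fact that relative derived schemes are closed under composition, which the paper leaves implicit.
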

\begin{proof}
The natural forgetful morphism 
	$$\pi_{(d_k), \bdd} \colon \Flag_{\bdd}(\sE) \to \Grass_{d_k}(\sE)$$
factorizes through a sequence of forgetful morphisms
	$$\Flag_{\bdd}(\sE) \to \Flag_{\bdd \backslash \{d_1\}}(\sE) \to \Flag_{\bdd \backslash \{d_1, d_2\}}(\sE) \to \cdots \to \Flag_{(d_{k-1}, d_k)}(\sE) \to \Grass_{d_k}(\sE)$$
where each one of the consecutive morphisms is a relative derived Grassmannian of a vector bundle, by virtue of Lemma \ref{lem:dflag:forget} \eqref{lem:dflag:forget-1}, hence in particular a relative derived scheme. Therefore, the desired assertion follows from Propositions \ref{prop:Grass:represent}.
\end{proof}

\begin{proposition}[Finiteness Properties for Derived Flag Schemes] 
\label{prop:Flag:finite}
In the situation of Definition \ref{def:dflag},
if $\sE$ is locally of finite type (Definition \ref{def:perfectfg:prestacks}), then the canonical projection $\pr \colon \Flag_{\bdd}(\sE) \to X$ is proper. 
If $\sE$ is pseudo-coherent to order $n$ for some $n \ge 0$ (resp. almost perfect, resp. perfect), then  $\pr \colon \Flag_{\bdd}(\sE) \to X$ is proper and locally of finite generation to order $n$ (resp. locally almost of finite presentation, resp. locally of finite presentation).
\end{proposition}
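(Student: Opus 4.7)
The strategy is to reduce everything to the Grassmannian case (Proposition \ref{prop:Grass:finite}) via the tower of forgetful morphisms already used in the proof of Proposition \ref{prop:Flag:rep}. Concretely, the projection $\pr \colon \Flag_{\bdd}(\sE) \to X$ factorizes as
\[
\Flag_{\bdd}(\sE) \xrightarrow{\pi_1} \Flag_{\bdd\setminus\{d_1\}}(\sE) \xrightarrow{\pi_2} \cdots \xrightarrow{\pi_{k-1}} \Flag_{(d_{k-1},d_k)}(\sE) \xrightarrow{\pi_k} \Grass_{d_k}(\sE) \xrightarrow{\pr_{\Grass}} X,
\]
where, by Lemma \ref{lem:dflag:forget} \eqref{lem:dflag:forget-1}, each $\pi_j$ ($1\le j\le k$) is canonically equivalent to the structure morphism of a derived Grassmannian $\Grass(\sV_j; d_j')$ of a \emph{vector bundle} $\sV_j$ on the base $\Flag_{\bdd\setminus\{d_1,\ldots,d_j\}}(\sE)$, for some rank parameter $d_j'$.

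First I would handle the last morphism $\pr_{\Grass} \colon \Grass_{d_k}(\sE) \to X$: since $\sE$ satisfies exactly the hypothesis imposed in the statement (locally of finite type, pseudo-coherent to order $n$, almost perfect, or perfect), Proposition \ref{prop:Grass:finite} directly yields the desired properness and finiteness property for $\pr_{\Grass}$. Next, each intermediate morphism $\pi_j$ is the derived Grassmannian of a vector bundle, and vector bundles are perfect (hence automatically pseudo-coherent to every order, almost perfect, and locally of finite type); therefore Proposition \ref{prop:Grass:finite} \eqref{prop:Grass:finite-4} applies to show that every $\pi_j$ is proper and locally of finite presentation.

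Finally I would combine these via composition. Properness is preserved under composition (this reduces immediately to the classical statement after passing to underlying classical schemes via Proposition \ref{prop:Flag:classical}, or directly from \cite[\S 5.1]{SAG}). The property ``locally of finite generation to order $n$'' is closed under composition by \cite[Proposition 4.1.3.1]{SAG}, and analogously for ``locally almost of finite presentation'' and ``locally of finite presentation'' (the latter two being explicit in \cite[Remark 4.1.1.3]{SAG} and \cite[Corollary 4.1.3.2]{SAG}). Since every $\pi_j$ is locally of finite presentation (in particular has all three properties), composing them with $\pr_{\Grass}$ gives that $\pr = \pr_{\Grass}\circ\pi_k\circ\cdots\circ\pi_1$ inherits whichever of the four properties $\pr_{\Grass}$ has.

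I expect no genuine obstacle: the hypothesis on $\sE$ only enters through the outermost map $\pr_{\Grass} \colon \Grass_{d_k}(\sE) \to X$, while the inner tower consists of Grassmannians of vector bundles and is therefore uniformly ``as nice as possible''. The only point to be mildly careful about is the composability statements for ``locally of finite generation to order $n$'' in the derived setting, which is why I would cite \cite[\S 4.1]{SAG} explicitly rather than treat it as obvious.
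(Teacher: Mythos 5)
Your proposal is correct and takes essentially the same approach as the paper: factor $\pr$ as the forgetful morphism $\Flag_{\bdd}(\sE)\to\Grass_{d_k}(\sE)$ (a tower of Grassmannians of vector bundles, hence smooth and proper, i.e. locally of finite presentation and proper) followed by $\Grass_{d_k}(\sE)\to X$, then apply Proposition \ref{prop:Grass:finite} and close by composability. The paper compresses all of this into two sentences by appealing directly to "smooth and proper" for the forgetful morphism, but the underlying reasoning is identical.
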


\begin{proof}
The forgetful morphism $\pi_{(d_k), \bdd}$ considered in the proof of Proposition \ref{prop:Flag:rep} is smooth and proper. Consequently, the desired results follow from Proposition \ref{prop:Grass:finite}.
\end{proof}

\begin{remark}
Using the above proof and Remark \ref{rmk:Grass:nonempty}, we obtain that $\Flag_{\bdd}(\sE)$ is nonempty if and only if $\Grass_{d_k}(\sE)$ is nonempty, if and only if there exists a morphism $\eta \colon \Spec \kappa \to X$, where $\kappa$ is a field, such that $\rank_{\kappa} (\pi_0(\eta^*\sE)) \ge d_k$. If $X$ is a derived scheme and $\pi_0(\sE)$ is of finite type over $X_\cl$, then above conditions are equivalent to that the
$d_k$th Fitting locus $|X^{\ge d_k}(\pi_0(\sE))| \subseteq |X|$ of $\pi_0(\sE)$ over $X_{\cl}$ is nonempty. 
\end{remark}

\subsubsection{Closed immersions into products of Grassmannians}
\label{sec:closed.dflag.to.dGrass}
Assume we are in the same situation as in Definition \ref{def:dflag}, and assume that the increasing sequence of positive integers $\bdd = (d_1, \ldots, d_k)$ has length $k \ge 2$. Then there is a canonical morphism of prestacks
	$$i_{\sE, \bdd} \colon \Flag_{\bdd}(\sE) = \Flag_{X}(\sE; \bdd) \to \GG_{\bdd}: = \prod_{i=1}^k \Grass_{d_i}(\sE)$$
(recall that all products in this section are taken in the category of prestacks over $X$) which, for any $\eta \colon T = \Spec A \to X$, $A \in \CAlgDelta$, carries each flag $\zeta_T \in \Flag_{\bdd}(\sE)(\eta)$ of the form \eqref{eqn:dflag:zeta_T} to the collection of locally free quotients
	$$( \varphi_{i, T} \colon \sE_T \to \sE_{i})_{1 \le i \le k} \in \GG_{\bdd}(\eta) = \prod\nolimits_{i=1}^k \Grass_{d_i}(\sE)(\eta).$$
For each $1 \le i \le k$, we let $(\sQ_{i})_{\GG_{\bdd}}$ and $(\sR_{i})_{\GG_{\bdd}}$ denote the pullback of the bundle $\sQ_{i}$ and complex $\sR_{i} = \fib(\pr_{\Grass_{d_i}}^*(\sE) \to \sQ_{i})$ along the natural projection $\GG_{\bdd} \to \Grass_{d_i}(\sE)$, so that there are canonical fiber sequences 
	$(\sR_{i})_{\GG_{\bdd}} \xrightarrow{\iota_i} \pr_{\GG_{\bdd}}^* (\sE) \xrightarrow{\rho_i} (\sQ_{i})_{\GG_{\bdd}}.$
For each $1 \le i \le k-1$, let
	$$\vert \sHom_{\GG_{\bdd}}(\sR_{i+1})_{\GG_{\bdd}}, (\sQ_i)_{\GG_{\bdd}}) \vert : 
	 = \Spec_{\GG_{\bdd}} \Sym_{\sO_{\GG_{\bdd}}}^*((\sR_{i+1})_{\GG_{\bdd}} \otimes_{\sO_{\GG_{\bdd}}} (\sQ_i)_{\GG_{\bdd}}^\vee) \to \GG_{\bdd}$$
 be the relative derived affine scheme which classifies morphisms from $(\sR_{i+1})_{\GG_{\bdd}} $ to $(\sQ_i)_{\GG_{\bdd}}$ over $\GG_{\bdd}$ (\cite[Example 4.8]{J22a}). 
Then the composite morphisms $(\rho_i \circ \iota_{i+1})_{1 \le i \le k-1}$ and the zero morphisms $(0)_{1 \le i \le k-1}$ are classified by the respective section maps 
 	$$i_{\rm can} \colon \GG_{\bdd} \to \prod_{i=1}^{k-1} \vert \sHom_{\GG_{\bdd}}((\sR_{i+1})_{\GG_{\bdd}}, (\sQ_i)_{\GG_{\bdd}}) \vert \qquad 
	 i_{\mathbf{0}}  \colon \GG_{\bdd} \to \prod_{i=1}^{k-1} \vert \sHom_{\GG_{\bdd}}((\sR_{i+1})_{\GG_{\bdd}}, (\sQ_i)_{\GG_{\bdd}}) \vert.$$

\begin{proposition}
\label{prop:dflag:into.Grass}
In the above situation, the following diagram is a pullback diagram:
	\begin{equation*} 
	\begin{tikzcd}
		 \Flag_{\bdd}(\sE) \ar{d}[swap]{i_{\sE, \bdd}} \ar{r}{i_{\sE, \bdd}} & \GG_{\bdd} = \prod_{i=1}^k \Grass_{d_i}(\sE)  \ar{d}{i_{\rm can}} \\
		\GG_{\bdd} \ar{r}{i_{\mathbf{0}}} &  \prod_{i=1}^{k-1} \vert \sHom_{\GG_{\bdd}}((\sR_{i+1})_{\GG_{\bdd}}, (\sQ_i)_{\GG_{\bdd}}) \vert.
	\end{tikzcd}
	\end{equation*}
In particular, the morphism $i_{\sE, \bdd}$ is a closed immersion. Moreover, if $\sE$ is pseudo-coherent to order $n$ for some $n \ge 0$ (resp. almost perfect, resp. perfect; see Definition \ref{def:perfectfg:prestacks}), then the closed immersion $i_{\sE, \bdd}$ is locally of finite generation to order $n$ (resp. locally almost of finite presentation, resp. locally of finite presentation).
\end{proposition}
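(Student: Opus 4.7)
The plan is to establish the pullback square by induction on the length $k \ge 2$ of the sequence $\bdd$, combining Proposition~\ref{prop:Grass:PB} (closed immersions induced by surjective morphisms of complexes) with the iterated-Grassmannian description of flag schemes in Lemma~\ref{lem:dflag:forget}\eqref{lem:dflag:forget-1}. The finiteness assertion will then follow from Lemma~\ref{lem:affine:finite.generation} together with standard stability properties of pseudo-coherence and perfectness.

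First, I would reduce to the affine case $X = \Spec A$, $A \in \CAlgDelta$, the question being local on the base. To see that the diagram commutes, I note that for any flag $\zeta_T = (\sE_T \to \sE_k \to \cdots \to \sE_1)$ classified by a map $T \to \Flag_{\bdd}(\sE)$ and any $1 \le i \le k-1$, the composition $(\sR_{i+1})_T \to \sE_T \to \sE_i$ factors through $(\sR_{i+1})_T \to \sE_T \to \sE_{i+1} \to \sE_i$, where the first two arrows already form a fiber sequence. Functoriality of $\fib$ therefore supplies the canonical null-homotopy which promotes the collection of quotients $(\sE_T \to \sE_i)_{1 \le i \le k}$ to a point of $\GG_{\bdd} \times_{\prod \vert \sHom \vert} \GG_{\bdd}$, yielding the map $i_{\sE, \bdd}$ together with the $2$-cell witnessing $i_{\rm can} \circ i_{\sE, \bdd} \simeq i_{\mathbf{0}} \circ i_{\sE, \bdd}$.

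For the pullback claim, the base case $k = 2$ proceeds directly. Lemma~\ref{lem:dflag:forget}\eqref{lem:dflag:forget-1} identifies $\Flag_{(d_1, d_2)}(\sE)$ with the relative derived Grassmannian $\Grass(\sQ_2; d_1)$ over $\Grass_{d_2}(\sE)$, where $\sR_2 \to \pr^*\sE \to \sQ_2$ is the universal fiber sequence. Applying Proposition~\ref{prop:Grass:PB}\eqref{prop:Grass:PB-3} to this fiber sequence with $d = d_1$ realizes this Grassmannian as the derived vanishing locus, inside $\Grass(\pr^*\sE; d_1) \simeq \Grass_{d_1}(\sE) \times_X \Grass_{d_2}(\sE) = \GG_{(d_1, d_2)}$, of the cosection associated with the canonical composition $\sR_2 \to \pr^*\sE \to \sQ_1$, which is precisely the required pullback square. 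For the inductive step with $k \ge 3$, set $\bdd^* = (d_2, \ldots, d_k)$ and assume the result for $\bdd^*$. Lemma~\ref{lem:dflag:forget}\eqref{lem:dflag:forget-1} identifies $\Flag_{\bdd}(\sE)$ with $\Grass(\sQ^{(\bdd^*)}_2; d_1)$ over $\Flag_{\bdd^*}(\sE)$, and Proposition~\ref{prop:Grass:PB}\eqref{prop:Grass:PB-3} exhibits this as the derived vanishing locus, inside $\Grass(\pr^*\sE; d_1) \simeq \Grass_{d_1}(\sE) \times_X \Flag_{\bdd^*}(\sE)$, of the cosection associated with the composition $\sR^{(\bdd^*)}_2 \to \pr^*\sE \to \sQ_1$. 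Base-changing the inductive hypothesis along $\Grass_{d_1}(\sE) \to X$ (using Proposition~\ref{prop:Grass-4,5}\eqref{prop:Grass-4}) presents $\Grass_{d_1}(\sE) \times_X \Flag_{\bdd^*}(\sE) \hookrightarrow \GG_{\bdd} = \Grass_{d_1}(\sE) \times_X \GG_{\bdd^*}$ as the joint derived vanishing locus of the cosections $\sR_{i+1} \to \sE \to \sQ_i$ for $i = 2, \ldots, k-1$. Assembling these two vanishing conditions exhibits $\Flag_{\bdd}(\sE)$ as the joint derived vanishing locus of all $k-1$ cosections inside $\GG_{\bdd}$, which is exactly the fiber product $\GG_{\bdd} \times_{\prod \vert \sHom \vert} \GG_{\bdd}$.

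For the finiteness assertion, $\prod_i \vert \sHom_{\GG_{\bdd}}((\sR_{i+1})_{\GG_{\bdd}}, (\sQ_i)_{\GG_{\bdd}})\vert = \Spec_{\GG_{\bdd}} \Sym^*\bigl(\bigoplus_i (\sR_{i+1})_{\GG_{\bdd}} \otimes (\sQ_i)^\vee_{\GG_{\bdd}}\bigr)$, and $i_{\sE, \bdd}$ is a base change of the zero section $i_{\mathbf{0}} \colon \GG_{\bdd} \hookrightarrow \prod \vert \sHom \vert$ along $i_{\rm can}$. Under the hypothesis on $\sE$, each $\sR_{i+1} = \fib(\pr^*\sE \to \sQ_{i+1})$ inherits the relevant finiteness (pseudo-coherence to order $n$, almost perfection, or perfection) from $\sE$ by stability of these conditions under fiber sequences whose third term is a vector bundle, and hence so does each tensor product $(\sR_{i+1})_{\GG_{\bdd}} \otimes (\sQ_i)^\vee_{\GG_{\bdd}}$ and their direct sum. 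Lemma~\ref{lem:affine:finite.generation} and a two-out-of-three argument as in \cite[Proposition 4.1.3.1]{SAG} then ensure that the zero section $i_{\mathbf{0}}$ has the corresponding finiteness, and $i_{\sE, \bdd}$ inherits it by base change.

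The main technical obstacle lies in carefully matching the tautological fiber complexes $\sR_{i+1}$ across the different base spaces $\Flag_{\bdd^*}(\sE)$, $\Grass_{d_1}(\sE) \times_X \Flag_{\bdd^*}(\sE)$, and $\GG_{\bdd}$, and verifying that the vanishing loci constructed inductively and the new one supplied by Proposition~\ref{prop:Grass:PB} assemble into a single joint vanishing locus on $\GG_{\bdd}$. A secondary point is the preservation of pseudo-coherence-to-order-$n$ under taking fibers of surjections with vector-bundle targets, which is standard but should be spelled out.
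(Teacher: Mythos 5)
Your proposal is correct, and it takes a genuinely different route from the paper's.

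For the pullback square, the paper argues directly at the level of mapping spaces for $k=2$: it compares $\Flag_{\bdd}(\sE)$ with the fiber product $Z$ by parametrizing triples $(\zeta_1,\zeta_2,\theta)$, where $\theta$ is a null-homotopy of the composite $\sR_2 \to \sE_T \to \sE_1$, and invokes the universal property of pushouts to identify the space of null-homotopies with the space of factorizations $\sE_2 \to \sE_1$; the general $k$ is then asserted to be ``similar.'' Your approach instead does induction on $k$, reducing via Lemma~\ref{lem:dflag:forget}\eqref{lem:dflag:forget-1} (which presents $\Flag_{\bdd}(\sE)$ as a relative Grassmannian of $\sQ_2^{(\bdd \setminus \{d_1\})}$) and then applying the already-proven pullback square of Proposition~\ref{prop:Grass:PB}\eqref{prop:Grass:PB-3}. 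This is more structural and modular: the $k=2$ case becomes a direct transcription of Proposition~\ref{prop:Grass:PB}, and the general case is a clean induction rather than an argument one has to redo by hand. The trade-off, which you correctly flag, is the bookkeeping at the inductive step: you must match $\sR_2^{(\bdd^*)}$ over $\Flag_{\bdd^*}(\sE)$ with $(\sR_2)_{\GG_{\bdd}}$ over $\GG_{\bdd}$ (they agree because both are pullbacks of $\sR_{d_2}$ along the respective maps to $\Grass_{d_2}(\sE)$), and assemble two nested derived vanishing loci into one joint locus over $\GG_{\bdd}$, which is a standard associativity-of-fiber-products argument that would be worth spelling out. For the finiteness assertion, the paper uses a two-out-of-three argument on the composite $\Flag_{\bdd}(\sE) \to \GG_{\bdd} \to X$, combining Propositions~\ref{prop:Flag:finite} and~\ref{prop:Grass:finite}; you instead show the zero section $i_{\mathbf{0}}$ has the required finiteness (via Lemma~\ref{lem:affine:finite.generation} applied to $\bigoplus_i (\sR_{i+1})_{\GG_{\bdd}} \otimes (\sQ_i)^\vee_{\GG_{\bdd}}$, using Lemma~\ref{lem:otimes_pc} to propagate pseudo-coherence through the tensor products) and then deduce the claim by base change along $i_{\rm can}$ through the pullback square you just established. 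Both routes are valid; yours has the appeal of deriving the finiteness directly from the new pullback description rather than re-invoking the properness results for the projections to $X$.
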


\begin{proof}
We first prove that the diagram of Proposition \ref{prop:dflag:into.Grass} is a pullback square in the case where $k=2$, $\bdd = (d_1, d_2)$. 
Let $Z$ denote the fiber product of $\GG_{\bdd}$ with itself over $\vert \sHom_{\GG_d}((\sR_2)_{\GG_{\bdd}}, (\sQ_1)_{\GG_{\bdd}}) \vert$ along the section maps $i_{\mathbf{0}}$ and $i_{\rm can}$. For any $\eta \colon T = \Spec A \to X$, where $A \in \CAlgDelta$, and any flag 
	$\zeta_T = (\sE_T \to \sE_2 \to \sE_1) \in \Flag_{\bdd}(\sE)(\eta),$
we let $\sR_2:= \fib(\sE_T \to \sE_2)$, then the canonical map $\sE_2 \to \sE_1$ induces a path between the composite map 
	$$\sR_2 \to \sE_T \to \sE_2 \to \sE_1$$
and the zero map in the mapping space $\Map_{\QCoh(T)}(\sR_2, \sE_1)$. Therefore, the morphism $i_{\sE,\bdd}$ factorizes through an essentially unique map $\Flag_{\bdd}(\sE) \to Z$. Conversely, 
the space $Z(\eta)$ is homotopy equivalent to the space of triples $(\zeta_1,\zeta_2, \theta)$, where $\zeta_1$ and $\zeta_2$ are cofiber sequences
	$$\zeta_1 \colon \sR_1 \xrightarrow{\iota_{1,T}} \sE_T \xrightarrow{\varphi_{1,T}} \sE_1 \qquad \zeta_2 \colon \sR_2 \xrightarrow{\iota_{2,T}} \sE_T \xrightarrow{\varphi_{2,T}} \sE_2$$
in $\QCoh(T)$, essentially uniquely determined by a pair $(\varphi_{1,T}, \varphi_{2,T}) \in \GG_\bdd(\eta)$, and $\theta$ is an element of the path space 
between the composite map $\sR_2 \xrightarrow{\iota_{2,T}} \sE_T \xrightarrow{\varphi_{1,T}} \sE_1$ and the zero map in the mapping space $\Map_{\QCoh(T)}(\sR_2, \sE_1)$. By the universal property of pushout squares, the above path space is homotopy equivalent to the space of factorizations $\sE_2 \simeq \cofib(\sR_2 \to \sE_T) \to \sE_1$ of $\sE_T \xrightarrow{\varphi_{1,T}} \sE_1$. Hence the map $\Flag_{\bdd}(\sE)(\eta) \to Z(\eta)$ is a homotopy equivalence. Consequently, the morphism $\Flag_{\bdd}(\sE) \to Z$ is an isomorphism of prestacks. The case for general $k \ge 2$ is similar. 

The assertions about finiteness properties can be similarly proved as in Proposition \ref{prop:Plucker}, using Propositions \ref{prop:Flag:finite} and \ref{prop:Grass:finite} and \cite[Proposition 4.1.3.1]{SAG} (and \cite[Theorem 7.4.3.18]{HA}).
\end{proof}

\begin{corollary} 
\label{cor:Plucker:flag:finite}
In the above situation, let $\varpi_{\sE, \bdd}$ denote the composite map
	$$
	\begin{tikzcd}[column sep = 5 em, row sep = 2.5 em]
		\Flag(\sE; \bdd) \ar{r}{i_{\sE, \bdd}}& \prod_{i=1}^k \Grass_{d_i}(\sE) \ar{r}{\prod_{i=1}^k \varpi_{\sE, d_i}} & \prod_{i=1}^k \PP(\bigwedge\nolimits^{d_i} \sE),
	\end{tikzcd}
	$$
where $\varpi_{\sE, d_i}$ are the Pl\"ucker morphisms of derived Grassmannians (\S \ref{sec:dGrass:Plucker}). Then $\varpi_{\sE, \bdd}$ is a closed immersion. Moreover, if $\sE$ is pseudo-coherent to order $n$ for some $n \ge 0$ (resp. almost perfect, resp. perfect), then $\varpi_{\sE, \bdd}$ is of locally of finite generation to order $n$ (resp. locally almost of finite presentation, resp. locally of finite presentation).
\end{corollary}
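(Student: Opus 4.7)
The strategy is simply to exhibit $\varpi_{\sE, \bdd}$ as a composition of two morphisms that have already been analyzed, and to check that the relevant properties are stable under composition and products.

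First, Proposition~\ref{prop:dflag:into.Grass} shows that $i_{\sE, \bdd} \colon \Flag(\sE; \bdd) \to \prod_{i=1}^k \Grass_{d_i}(\sE)$ is a closed immersion, and moreover it is locally of finite generation to order $n$ (resp.\ locally almost of finite presentation, resp.\ locally of finite presentation) when $\sE$ has the corresponding property. On the other hand, Proposition~\ref{prop:Plucker} gives that each Pl\"ucker morphism $\varpi_{\sE, d_i} \colon \Grass_{d_i}(\sE) \to \PP(\bigwedge\nolimits^{d_i} \sE)$ is a closed immersion, and has the same finiteness property as $\sE$ (using that if $\sE$ is pseudo-coherent to order $n$, almost perfect, or perfect, then so is each $\bigwedge\nolimits^{d_i}\sE$ by Propositions~\ref{prop:dSchur:pc} and~\ref{prop:dSchur:Tor-amp}, applied to the partition $\lambda=(1^{d_i})$). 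Taking the product over $X$ of these Pl\"ucker morphisms yields a morphism $\prod_i \varpi_{\sE, d_i}$ which is again a closed immersion, since closed immersions are stable under fiber products, and which inherits the same finiteness properties (these are preserved under fiber products of morphisms of prestacks, cf.\ \cite[Proposition 4.2.1.6]{SAG}).

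It then remains to observe that closed immersions are stable under composition, so $\varpi_{\sE, \bdd} = (\prod_i \varpi_{\sE, d_i}) \circ i_{\sE, \bdd}$ is a closed immersion. The finiteness properties are also stable under composition: for locally of finite generation to order $n$ this is \cite[Proposition 4.1.3.1]{SAG}, for almost of finite presentation and locally of finite presentation this is \cite[Theorem 7.4.3.18 and Corollary 7.4.3.19]{HA}. This yields the desired conclusion.

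There is no real obstacle here: the entire content is formal, resting on the nontrivial inputs Propositions~\ref{prop:Plucker} and~\ref{prop:dflag:into.Grass} already established above. The only point requiring minor care is to confirm that forming the product $\prod_i \varpi_{\sE, d_i}$ over $X$ does not destroy the finiteness class, which follows from the standard stability of these properties under base change and composition in the derived setting.
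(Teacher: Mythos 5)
Your proof is correct and takes essentially the same route as the paper, which simply says ``Combine Propositions \ref{prop:Plucker} and \ref{prop:dflag:into.Grass}''; you have spelled out the routine details (stability of closed immersions and the relevant finiteness classes under products over $X$ and under composition) that the paper leaves implicit. The only minor redundancy is the parenthetical re-derivation of why $\bigwedge^{d_i}\sE$ inherits the finiteness hypothesis: this is already part of how Proposition \ref{prop:Plucker} is proved, and the conclusion you need (that each $\varpi_{\sE,d_i}$ has the stated finiteness property) is given directly by the statement of that proposition.
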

\begin{proof} 
Combine Propositions \ref{prop:Plucker} and \ref{prop:dflag:into.Grass}.
\end{proof}

The following results shows that the compatibility of the closed immersions $i_{\sE,\bdd}$ and forgetful morphisms considered in \S \ref{sec:dflag:forget}.

\begin{proposition} 
\label{prop:Plucker:flag}
In the above situation, if $\bdd' = (d_{i_j})_{1 \le j \le \ell}$ is a subsequence of $\bdd = (d_i)_{1 \le i \le k}$. There are commutative diagrams of prestacks over $X$:
	$$
	\begin{tikzcd}[column sep = 5 em, row sep = 2.5 em]
		\Flag(\sE; \bdd) \ar{d}{\pi_{\bdd',\bdd}} \ar{r}{i_{\sE, \bdd}}& \prod_{i=1}^k \Grass_{d_i}(\sE) \ar{d}{p_{\bdd', \bdd}} \ar{r}{\prod_{i=1}^k \varpi_{\sE, d_i}} & \prod_{i=1}^k \PP(\bigwedge\nolimits^{d_i} \sE) \ar{d}{q_{\bdd',\bdd}} \\
		\Flag(\sE;\bdd')  \ar{r}{i_{\sE, \bdd'}} & \prod_{j=1}^{\ell} \Grass_{d_{i_j}}(\sE) \ar{r}{\prod_{j=1}^\ell \varpi_{\sE, d_{i_j}}} & \prod_{j=1}^\ell \PP(\bigwedge\nolimits^{d_{i_j}} \sE).
	\end{tikzcd}
	$$
Here, the first vertical arrow is the forgetful morphism (\S \ref{sec:dflag:forget}), and the vertical arrows of the middle and right-most column are the canonical projection morphisms. 
\end{proposition}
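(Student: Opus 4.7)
My plan is to verify each of the two squares separately by unwinding the definitions of the morphisms on $T$-points, where $\eta \colon T = \Spec A \to X$ ranges over $A \in \CAlgDelta$. Since all morphisms involved are defined functorially in terms of their values on $T$-points, it suffices to produce, for each square, canonical homotopies between the two composite morphisms of spaces at the level of each $\eta$; these homotopies will automatically be natural and assemble into a homotopy of morphisms of prestacks.

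For the left square, I would start with an arbitrary flag $\zeta_T = (\sE_T \xrightarrow{\phi_{k,k+1}} \sE_k \xrightarrow{\phi_{k-1,k}} \cdots \xrightarrow{\phi_{1,2}} \sE_1) \in \Flag_{\bdd}(\sE)(\eta)$. Going right then down, the closed immersion $i_{\sE,\bdd}$ sends $\zeta_T$ to the tuple $(\varphi_{i,T} \colon \sE_T \to \sE_i)_{1 \le i \le k}$ (where $\varphi_{i,T} = \phi_{i,i+1} \circ \phi_{i+1, i+2} \circ \cdots \circ \phi_{k, k+1}$ in the notation of Definition \ref{def:dflag}), and the projection $p_{\bdd',\bdd}$ then selects the subcollection indexed by $\bdd'$, yielding $(\varphi_{i_j, T} \colon \sE_T \to \sE_{i_j})_{1 \le j \le \ell}$. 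Going down then right, the forgetful map $\pi_{\bdd',\bdd}$ sends $\zeta_T$ to the subflag $(\sE_T \xrightarrow{\varphi_{i_\ell, T}} \sE_{i_\ell} \to \cdots \to \sE_{i_1})$ (as spelled out in \S\ref{sec:dflag:forget}), and $i_{\sE,\bdd'}$ sends this to exactly the same tuple $(\varphi_{i_j, T} \colon \sE_T \to \sE_{i_j})_{1 \le j \le \ell}$. These agree tautologically, which produces the required homotopy.

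For the right square, I would argue similarly by tracking a tuple of rank-$d_i$ locally free quotients $(u_i \colon \sE_T \to \sP_i)_{1 \le i \le k}$ through both compositions. Applying $\prod_i \varpi_{\sE, d_i}$ first gives $(\wedge^{d_i} u_i \colon \bigwedge^{d_i} \sE_T \to \bigwedge^{d_i} \sP_i)_{1 \le i \le k}$, and then the projection $q_{\bdd',\bdd}$ selects the components indexed by $\bdd'$. Applying $p_{\bdd',\bdd}$ first gives $(u_{i_j})_{1 \le j \le \ell}$, and $\prod_j \varpi_{\sE, d_{i_j}}$ then produces $(\wedge^{d_{i_j}} u_{i_j})_{1 \le j \le \ell}$. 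Both outputs are manifestly the same functorial assignment.

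Neither square presents a real technical obstacle: every arrow is defined by an explicit functorial formula on $T$-points, and the commutativity is a direct check that amounts to observing that ``restrict to a subflag, then pass to its universal quotients'' agrees with ``pass to all universal quotients, then restrict to those indexed by $\bdd'$,'' and similarly for the exterior power construction. The mildest subtlety is ensuring that the chosen homotopies are natural in $\eta$ and compatible with base change, but this follows automatically from the facts that (i) all the morphisms $i_{\sE,\bdd}$, $\pi_{\bdd',\bdd}$, $\varpi_{\sE,d}$, $p_{\bdd',\bdd}$, $q_{\bdd',\bdd}$ have been constructed as natural transformations of functors on $\CAlgDelta$, and (ii) taking cofibers, exterior powers, and subsequences all commute with pullback of quasi-coherent complexes. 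Thus the proof reduces to these two diagram chases, and no further ingredients beyond the definitions of \S\ref{sec:dflag:forget}, \S\ref{sec:dGrass:Plucker}, and \S\ref{sec:closed.dflag.to.dGrass} are required.
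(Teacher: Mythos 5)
Your argument is correct and is exactly the unwinding that the paper gestures at with its one-line remark that the proposition "follows easily from the definitions of these functors." Both squares commute tautologically at the level of $T$-points as you describe, and naturality in $\eta$ is automatic from the functorial definitions of all maps involved.
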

\begin{proof}
This follows easily from the definitions of these functors.
\end{proof}

\subsubsection{Closed immersion induced by surjective morphisms of complexes}
Similar to the situation of derived Grassmannians, if $\varphi \colon \sE \to \sE''$ is a morphism of connective complexes on $X$ which is surjective on $\pi_0$, then there is a canonical induce morphism  $\iota_{\Flag,\varphi} \colon \Flag_{\bdd}(\sE'') \to \Flag_{\bdd}(\sE)$, which, for any $\eta \colon T = \Spec A \to X$, $A \in \CAlgDelta$, carries each flag
	$$\zeta_T'' = (\sE_T'' \xrightarrow{\varphi_{k, T}} \sE_{k} \xrightarrow{\phi_{k-1,k}} \sQ_{d_{k-1}} \to \cdots \xrightarrow{\phi_{1,2}} \sE_{1}) \in \Flag_{\bdd}(\sE'')(\eta)
$$
of $\sE_T''$ of type $\bdd$ to the flag
	$$\iota_{\Flag,\varphi}(\eta) (\zeta_T'') = (\sE_T \xrightarrow{\varphi_{T} \circ \varphi_{k, T}} \sE_{k} \xrightarrow{\phi_{k-1,k}} \sQ_{d_{k-1}} \to \cdots \xrightarrow{\phi_{1,2}} \sE_{1}) \in \Flag_{\bdd}(\sE)(\eta)
$$
of $\sE_T$ of type $\bdd$.
We have the following generalization of Proposition \ref{prop:Grass:PB}:

\begin{proposition}[Closed immersions] 
\label{prop:dflag:immersion}
Let $X$ be a prestack and $\bdd = (d_1, \ldots, d_k)$ an increasing sequence of positive integers.
If $\varphi \colon \sE \to \sE''$ is a morphism of connective quasi-coherent complexes on $X$ which is surjective on $\pi_0$, then the canonical map $\iota_{\Flag,\varphi} \colon \Flag_{\bdd}(\sE'') \to \Flag_{\bdd}(\sE)$ is closed immersion which admits a relative cotangent complex described by the formula
	$$\LL_{\Flag_{\bdd}(\sE'')/\Flag_{\bdd}(\sE)} \simeq \pr_{\Flag_{\bdd}(\sE'')}^*(\fib(\varphi)) \otimes \sQ_{k}(\sE'')^\vee [1].$$
Moreover, $\iota_{\Flag,\varphi}$ fits into a canonical pullback diagram of close immersions:
	\begin{equation*}
	\begin{tikzcd} 
		\Flag_{\bdd}(\sE'') \ar{d}[swap]{\iota_{\Flag, \varphi}} \ar{r}{\iota_{\Flag, \varphi}} & \Flag_{\bdd}(\sE) \ar{d}{i_{\varphi}} \\
		\Flag_{\bdd}(\sE)  \ar{r}{i_{\mathbf{0}}} & \VV_{\Flag_{\bdd}(\sE)}(\pr_{\Flag_{\bdd}(\sE)}^* (\fib(\varphi)) \otimes \sQ_{k}(\sE)^\vee)
	\end{tikzcd}
	\end{equation*}
where $i_{\mathbf{0}}$ denote the zero section, and $i_{\varphi}$ is the section map which classifies the composition $\pr_{\Flag_{\bdd}(\sE)}^* (\fib(\varphi)) \to \pr_{\Flag_{\bdd}(\sE)}^* (\sE) \to \sQ_{k}(\sE)$.
\end{proposition}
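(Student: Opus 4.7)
The plan is to reduce the assertion to the Grassmannian case established in Proposition \ref{prop:Grass:PB} via the forgetful morphism $\pi_{(d_k), \bdd}$ from the derived flag scheme to the derived Grassmannian of the largest rank $d_k$ that appears in $\bdd$. The key step is to establish that the following square is a pullback diagram of prestacks over $X$:
$$
\begin{tikzcd}
\Flag_{\bdd}(\sE'') \ar{r}{\iota_{\Flag, \varphi}} \ar{d}[swap]{\pi_{(d_k), \bdd}} & \Flag_{\bdd}(\sE) \ar{d}{\pi_{(d_k), \bdd}} \\
\Grass_{d_k}(\sE'') \ar{r}{\iota_{\Grass, \varphi}} & \Grass_{d_k}(\sE).
\end{tikzcd}
$$

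To verify this, the key observation is that although a flag of $\sE''$ of type $\bdd$ a priori requires every quotient in the flag to factor through $\sE''$, the flag structure reduces this to the factorization of only the top quotient. Concretely, for $\eta \colon T = \Spec A \to X$, a $T$-point of the fiber product is a pair consisting of a flag $\zeta_T = (\sE_T \twoheadrightarrow \sE_k \twoheadrightarrow \cdots \twoheadrightarrow \sE_1)$ of $\sE_T$ of type $\bdd$ and a factorization of the top quotient $\sE_T \to \sE_k$ through $\sE_T''$, which by the universal property of the pushout $\sE_T'' \simeq \cofib(\fib(\varphi)_T \to \sE_T)$ is equivalent to the datum of a nullhomotopy of the composite $\fib(\varphi)_T \to \sE_T \to \sE_k$. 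Such a nullhomotopy determines (up to contractible choices) a map $\sE_T'' \to \sE_k$, which is automatically surjective on $\pi_0$ because $\sE_T \to \sE_k$ factors through it; composing with the remaining maps $\sE_k \to \sE_{k-1} \to \cdots \to \sE_1$ of $\zeta_T$ assembles a flag of $\sE_T''$ of type $\bdd$, and this construction is inverse to the canonical one, giving the desired equivalence of spaces.

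Granted that the square is a pullback, Proposition \ref{prop:Grass:PB}\eqref{prop:Grass:PB-1} immediately yields that $\iota_{\Flag, \varphi}$ is a closed immersion by stability of closed immersions under arbitrary derived base change. The base change formula for relative cotangent complexes then gives
$$\LL_{\Flag_{\bdd}(\sE'')/\Flag_{\bdd}(\sE)} \simeq \pi_{(d_k), \bdd}^{*}\, \LL_{\Grass_{d_k}(\sE'')/\Grass_{d_k}(\sE)}.$$
Substituting the formula from Proposition \ref{prop:Grass:PB}\eqref{prop:Grass:PB-2}, together with the canonical equivalence $\pi_{(d_k), \bdd}^{*}\, \sQ_{d_k}(\sE'') \simeq \sQ_{k}(\sE'')$ on $\Flag_{\bdd}(\sE'')$ coming from the construction of the forgetful map and the universal flag (Notation \ref{not:dflag}), produces the asserted description of $\LL_{\Flag_{\bdd}(\sE'')/\Flag_{\bdd}(\sE)}$.

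Finally, the pullback square exhibiting $\Flag_{\bdd}(\sE'')$ as the derived zero locus of the cosection $\pr_{\Flag_{\bdd}(\sE)}^{*}(\fib(\varphi)) \otimes \sQ_{k}(\sE)^\vee \to \sO_{\Flag_{\bdd}(\sE)}$ is obtained by base-changing the pullback diagram of Proposition \ref{prop:Grass:PB}\eqref{prop:Grass:PB-3} along $\pi_{(d_k), \bdd} \colon \Flag_{\bdd}(\sE) \to \Grass_{d_k}(\sE)$, again using the equivalence $\pi_{(d_k), \bdd}^{*}\, \sQ_{d_k}(\sE) \simeq \sQ_{k}(\sE)$ and Proposition \ref{prop:flag:functorial}\eqref{prop:dflag-1} to identify the base change of the affine cone $\VV(\blank)$. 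The main (though not serious) obstacle is to carry out the verification of the pullback square with the universal properties tracked carefully; all the remaining conclusions then follow formally by pulling back the corresponding statements from the Grassmannian case.
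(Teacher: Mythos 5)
Your proof is correct and follows essentially the same route as the paper: reduce to Proposition \ref{prop:Grass:PB} via the forgetful morphism $\pi_{(d_k),\bdd}$ by exhibiting the square comparing the flag schemes and the top Grassmannians as a pullback, then transport the closed-immersion, cotangent-complex, and derived-zero-locus statements by base change. The only difference is cosmetic: the paper obtains the pullback square immediately by citing Corollary \ref{cor:dflag:forget}\eqref{cor:dflag:forget-1} (which identifies $\Flag_\bdd(\sE) \to \Grass_{d_k}(\sE)$ as the derived flag scheme $\Flag(\sQ_{d_k};d_1,\ldots,d_{k-1})$ over $\Grass_{d_k}(\sE)$, whose formation commutes with base change by Proposition \ref{prop:flag:functorial}), whereas you verify the universal property of the pullback directly at the level of $T$-points; this re-derives the content of that corollary inline, and your verification is sound.
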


\begin{proof} By virtue of Corollary \ref{cor:dflag:forget} \eqref{cor:dflag:forget-1} and Proposition \ref{prop:Grass:PB}, the following commutative diagram
	$$
	\begin{tikzcd}[column sep = 4 em]
		\Flag_{\bdd}(\sE'') \ar{d}[swap]{\iota_{\Flag, \varphi}} \ar{r}{\pi_{(d_k),\bdd}^{\sE''}}& \Grass_{d_k}(\sE'') \ar{d}{\iota_{\varphi}} \\
		\Flag_{\bdd}(\sE)   \ar{r}{\pi_{(d_k),\bdd}^{\sE}} & \Grass_{d_k}(\sE),
	\end{tikzcd}
	$$
is a pullback diagram. Therefore, the desired assertions follow from Proposition \ref{prop:Grass:PB}.
\end{proof}

\begin{proposition}
If $\varphi \colon \sE \to \sE''$ is a morphism of connective quasi-coherent complexes on $X$ which is surjective on $\pi_0$, then there is are commutative diagrams
	$$
	\begin{tikzcd}[column sep = 5 em, row sep = 2.5 em]
		\Flag_{\bdd}(\sE'') \ar{d}{\iota_{\Flag, \varphi}} \ar{r}{i_{\sE''}}& \prod_{i=1}^k \Grass_{d_i}(\sE'') \ar{d}{\prod \iota_{\Grass_{d_i}, \varphi}} \ar{r}{\prod_{i=1}^k \varpi_{\sE'', d_i}} & \prod_{i=1}^k \PP(\bigwedge\nolimits^{d_i} \sE'') \ar{d}{\prod \iota_{\Grass_{1}, \wedge^{d_i}(\varphi)}} \\
		\Flag_{\bdd}(\sE)  \ar{r}{i_{\sE}} & \prod_{i=1}^k \Grass_{d_i}(\sE) \ar{r}{\prod_{i=1}^k \varpi_{\sE, d_{i}}} & \prod_{i=1}^k  \PP(\bigwedge\nolimits^{d_{i} }\sE).
	\end{tikzcd}
	$$
\end{proposition}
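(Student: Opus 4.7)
The plan is to verify both squares by unwinding the functor-of-points descriptions of the morphisms involved; no substantial calculations are required since every map is defined by a natural operation on quotient sequences of complexes.

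For the left square, fix $\eta \colon T = \Spec A \to X$ and an element $\zeta_T'' = (\sE_T'' \xrightarrow{\phi_{k,k+1}''} \sE_k \to \cdots \to \sE_1) \in \Flag_\bdd(\sE'')(\eta)$. I would trace $\zeta_T''$ around the square and check that both composites produce the same tuple of rank-$d_i$ quotients of $\sE_T$: the composite $i_\sE \circ \iota_{\Flag,\varphi}$ sends $\zeta_T''$ first to the flag $(\sE_T \xrightarrow{\phi_{k,k+1}'' \circ \varphi_T} \sE_k \to \cdots \to \sE_1)$ and then extracts the quotient maps $\varphi_{i,T}'' \circ \varphi_T \colon \sE_T \to \sE_i$, while the composite $(\prod_i \iota_{\Grass_{d_i},\varphi}) \circ i_{\sE''}$ first extracts the quotients $\varphi_{i,T}'' \colon \sE_T'' \to \sE_i$ and then precomposes each with $\varphi_T$, giving the same tuple. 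Since both assignments are defined functorially in $T$, this identification is automatic in the $\infty$-categorical sense; one can promote it to a canonical equivalence of functors by noting that the relevant mapping spaces of factorizations through $\varphi_T$ are canonically contractible, exactly as used in the proof of Proposition~\ref{prop:dflag:into.Grass}.

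For the right square, I would invoke the lemma established immediately after the definition of the Pl\"ucker morphism, which asserts that for a single integer $d \ge 1$ the square
\[
\begin{tikzcd}
\Grass_d(\sE'') \ar{d}{\iota_{\varphi}} \ar{r}{\varpi_{\sE''}}& \PP(\bigwedge\nolimits^d \sE'') \ar{d}{\iota_{(\bigwedge^d \varphi)}} \\
\Grass_d(\sE)  \ar{r}{\varpi_{\sE}} & \PP(\bigwedge\nolimits^d \sE)
\end{tikzcd}
\]
commutes, since this is again a straightforward functor-of-points identification (sending $(\sE_T'' \twoheadrightarrow \sP) \mapsto (\bigwedge^d \sE_T \twoheadrightarrow \bigwedge^d \sP)$). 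Taking the product of these commutative squares over $i = 1, \ldots, k$ yields the right square of the proposition.

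The proof therefore consists of: (i) the naturality check for the left square via quotient sequences as above; and (ii) reduction of the right square to the already-verified Pl\"ucker-closed-immersion compatibility. There is no serious obstacle in either step; the only subtlety worth flagging is that all ``commutative diagrams'' here are diagrams of prestacks, so strictly speaking each equation of morphisms should be interpreted as a canonical homotopy coming from the universal properties used in the constructions of $\iota_{\Flag,\varphi}$, $\iota_{\Grass_{d_i},\varphi}$, $i_{\sE}$ and $\varpi_{\sE, d_i}$. These homotopies are manifestly compatible because all the constructions arise from applying fixed natural operations (cokernel of a morphism, $d$th exterior power, and precomposition with $\varphi_T$) to the universal flags, and these operations are themselves natural transformations between the corresponding functors on $\QCoh(T)^\cn$.
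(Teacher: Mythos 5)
Your proposal is correct and takes the same approach as the paper, which simply states that the result ``follows easily from construction.'' Your left square argument correctly traces a flag $\zeta_T''$ around both paths to see that each produces the tuple $(\varphi_{i,T}'' \circ \varphi_T \colon \sE_T \to \sE_i)_i$, and your right square argument correctly reduces to the unnumbered lemma following the definition of the Pl\"ucker morphism (compatibility of $\varpi_\sE$ with $\iota_\varphi$) applied factorwise. One small caveat: the remark about ``mapping spaces of factorizations through $\varphi_T$ being contractible'' is not quite the mechanism here --- what makes the homotopies compatible is, as you say in the final sentence, that all the arrows are obtained by applying fixed natural operations (precomposition with $\varphi_T$, extraction of the tuple of quotients, $d$th exterior power) to the universal data, so the commutativity is witnessed by the naturality squares of those operations rather than any contractibility statement; but this does not affect the correctness of the argument.
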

\begin{proof}
This follows easily from construction. (Beware that the squares in the above diagram are generally {\em not} pullback squares.)
\end{proof}

\subsubsection{Relative cotangent complexes}
One of the benefits of considering the derived flag schemes is that, we could completely describe their relative cotangent complexes, generalizing the cases of derived Grassmannians (Theorem \ref{thm:Grass:cotangent}). 

\begin{theorem}[Relative Cotangent Complexes] 
\label{thm:dflag:cotangent}
Let everything be defined as in Notation \ref{not:dflag}; in particular, $X$ is prestack, $\sE \in \QCoh(X)^\cn$ is any connective complex , $\bdd = (d_1, \ldots, d_k)$ is an increasing sequence of positive integers, $\pr \colon \Flag_{\bdd}(\sE) \to X$ is the derived flag scheme, $\varphi_i \colon \pr^*(\sE) \to \sQ_{i}$ denotes the canonical quotient morphisms, $1 \le i \le k$, and 
	$$\pr^* (\sE) \xrightarrow{\varphi_{k}} \sQ_{k} \xrightarrow{\phi_{k-1,k}} \sQ_{k-1} \to \cdots \xrightarrow{\phi_{1,2}} \sQ_{1}$$
denotes the universal quotient sequence. Let $\LL_{\Flag_{\bdd}(\sE)/X}$ denote the relative cotangent complex.
\begin{enumerate}[leftmargin=*]
	\item 
	\label{thm:dflag:cotangent-1}
	There exists a canonical sequence of morphisms in $\QCohcn(\Flag_{\bdd}(\sE))$,
			$$\sF_k \to \sF_{k-1} \to \cdots \to \sF_{2} \to \sF_{1} = \LL_{\Flag_{\bdd}(\sE)/X},$$
		such that $\sF_k \simeq \fib(\varphi_k) \otimes \sQ_{k}^\vee$, and 
			$$\cofib(\sF_{i+1} \to \sF_{i}) \simeq \fib( \phi_{i,i+1} \colon \sQ_{i+1} \to \sQ_{i}) \otimes \sQ_{i}^\vee \quad \text{for} \quad 1 \le i \le k-1.$$
	\item
	\label{thm:dflag:cotangent-2}
	There is a canonical equivalence 
			$$\LL_{\Flag_{\bdd}(\sE)/X}  \simeq \cofib\Big(\psi \colon \bigoplus_{i=1}^{k-1} \fib(\varphi_{i+1} ) \otimes \sQ_{i}^\vee \to \bigoplus_{i=1}^{k} \fib(\varphi_{i}) \otimes \sQ_{i}^\vee\Big),$$
		where $\psi = (\psi_i)_{1 \le i \le k-1}$ is defined as follows: for each $1 \le i \le k-1$, the morphism $\psi_i = (\psi_{i, j})_{1 \le j \le k} \colon \fib(\varphi_{i+1} ) \otimes \sQ_{i}^\vee \to \bigoplus_{i=1}^{k} \fib(\varphi_{i}) \otimes \sQ_{i}^\vee$ is given by the formula: $\psi_{i,i} = (\fib(\varphi_{i+1}) \to \fib(\varphi_i)) \otimes \id_{\sQ_{i}^\vee}$, $\psi_{i,i+1} = \id_{\fib(\varphi_{i+1})} \otimes (\sQ_{i}^\vee \to \sQ_{i+1}^\vee)$, and $\psi_{i,j} = 0$ for $j \neq i, i+1$.
\end{enumerate}
Consequently, if $\sE$ is pseudo-coherent to order $n$ for some integer $n \ge 0$ (resp. almost perfect, perfect, of Tor-amplitude $\le n$), then $\LL_{\Flag_{\bdd}(\sE)/X}$ is pseudo-coherent to order $n$ (resp. almost perfect, perfect, of Tor-amplitude $\le n$). 
\end{theorem}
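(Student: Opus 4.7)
The plan is to reduce everything to Theorem \ref{thm:Grass:cotangent} via the tower of forgetful morphisms of \S \ref{sec:dflag:forget}. For each $1 \le i \le k$, set $Y_i := \Flag_X(\sE; d_i, d_{i+1}, \ldots, d_k)$ and $Y_{k+1} := X$, and let $\pi_i \colon Y_i \to Y_{i+1}$ denote the forgetful morphism which discards $d_i$, so that $Y_1 = \Flag_{\bdd}(\sE)$, $Y_k = \Grass_X(\sE; d_k)$, and $\pr \colon Y_1 \to X$ factors as $\pi_k \circ \cdots \circ \pi_1$. By Lemma \ref{lem:dflag:forget} \eqref{lem:dflag:forget-1} applied with the sequence $(d_i, d_{i+1}, \ldots, d_k)$ in the role of $\bdd$, each $\pi_i$ with $1 \le i \le k-1$ is canonically equivalent to the derived Grassmannian $\Grass(\sQ_{i+1}^{Y_{i+1}}; d_i) \to Y_{i+1}$, where $\sQ_{i+1}^{Y_{i+1}}$ is the universal rank-$d_{i+1}$ quotient bundle on $Y_{i+1}$, and $\pi_k$ is a derived Grassmannian by construction. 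Theorem \ref{thm:Grass:cotangent} then supplies connective relative cotangent complexes
\begin{align*}
\LL_{Y_i/Y_{i+1}} &\simeq (\sQ_i^{Y_i})^\vee \otimes \fib\bigl(\sQ_{i+1}^{Y_i} \to \sQ_i^{Y_i}\bigr), \quad 1 \le i \le k-1,\\
\LL_{Y_k/X} &\simeq (\sQ_k^{Y_k})^\vee \otimes \fib\bigl(\pi_{k+1}^{*}\sE \to \sQ_k^{Y_k}\bigr),
\end{align*}
where the universal bundles $\sQ_j^{Y_i}$ are compatible with pullback along the forgetful maps by Proposition \ref{prop:flag:functorial} \eqref{prop:dflag-1}.

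For assertion \eqref{thm:dflag:cotangent-1}, I would let $\sF_i \in \QCohcn(\Flag_{\bdd}(\sE))$ denote the pullback of $\LL_{Y_i/X}$ along $Y_1 \to Y_i$; in particular, $\sF_1 = \LL_{\Flag_{\bdd}(\sE)/X}$. Pulling back the transitivity cofiber sequences $\pi_i^{*}\LL_{Y_{i+1}/X} \to \LL_{Y_i/X} \to \LL_{Y_i/Y_{i+1}}$ to $Y_1$ produces canonical morphisms $\sF_{i+1} \to \sF_i$ whose cofibers are identified, using the displayed formulas, with $\sQ_i^\vee \otimes \fib(\phi_{i,i+1})$ for $1 \le i \le k-1$, while the case $i = k$ reads $\sF_k \simeq \sQ_k^\vee \otimes \fib(\varphi_k)$. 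This gives the filtered presentation of \eqref{thm:dflag:cotangent-1}.

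For assertion \eqref{thm:dflag:cotangent-2}, I would proceed by induction on $k$. The base case $k = 1$ is Theorem \ref{thm:Grass:cotangent}. For the inductive step $k \ge 2$, the induction hypothesis applied to $\Flag_{\bdd \setminus \{d_1\}}(\sE)$ expresses $\pi_1^{*}\LL_{\Flag_{\bdd \setminus \{d_1\}}(\sE)/X}$ as the cofiber of the analogue $\psi'$ of $\psi$ with summation indices running over $i = 2, \ldots, k-1$ on the source and $i = 2, \ldots, k$ on the target. Combining this with the transitivity cofiber sequence
$$\pi_1^{*}\LL_{\Flag_{\bdd \setminus \{d_1\}}(\sE)/X} \to \LL_{\Flag_{\bdd}(\sE)/X} \to \LL_{Y_1/Y_2} \simeq \sQ_1^\vee \otimes \sV_2,$$
and using the canonical cofiber sequences $\sR_2 \to \sR_1 \to \sV_2$ (from the octahedral axiom applied to $\varphi_1 = \phi_{1,2} \circ \varphi_2$) and $\sQ_1^\vee \to \sQ_2^\vee \to \sV_2^\vee$ (the dual of $\sV_2 \to \sQ_2 \to \sQ_1$), an octahedral argument identifies $\sQ_1^\vee \otimes \sV_2$ with the cofiber of the morphism $\sR_2 \otimes \sQ_1^\vee \to \sR_1 \otimes \sQ_1^\vee \oplus \sR_2 \otimes \sQ_2^\vee$ whose two components are $(\sR_2 \to \sR_1) \otimes \id_{\sQ_1^\vee}$ and $\id_{\sR_2} \otimes (\sQ_1^\vee \to \sQ_2^\vee)$. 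Adjoining the source summand $\sR_2 \otimes \sQ_1^\vee$ and target summand $\sR_1 \otimes \sQ_1^\vee$ to $\psi'$ with precisely these components reproduces the morphism $\psi$ of the statement, completing the induction.

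The hardest step is this final cofiber-of-cofibers identification: one has to verify that the boundary morphism in the transitivity cofiber sequence for $\pi_1$ interacts correctly with the target summand $\sR_2 \otimes \sQ_2^\vee$ that already appears in $\psi'$, so that the spliced diagram genuinely presents $\LL_{\Flag_{\bdd}(\sE)/X}$ as $\cofib(\psi)$ rather than a twist thereof. This is combinatorially delicate but reduces to a diagram chase using the octahedral axiom for the three fiber sequences $\sR_{i+1} \to \sR_i \to \sV_{i+1}$, $\sV_{i+1} \to \sQ_{i+1} \to \sQ_i$, and their duals. Finally, the concluding finiteness statement of the theorem is immediate from either presentation, since each $\sQ_i^\vee$ is a vector bundle and each $\sR_i = \fib(\pr^{*}\sE \to \sQ_i)$ inherits pseudo-coherence to order $n$ (resp.\ almost perfectness, perfectness, Tor-amplitude $\le n$) from $\sE$ via the fiber sequence $\sR_i \to \pr^{*}\sE \to \sQ_i$, and these properties are preserved by tensor products with vector bundles and by finite iterated cofibers.
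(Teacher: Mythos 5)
Your proof of assertion~\eqref{thm:dflag:cotangent-1} matches the paper's: both you and the paper pull back the transitivity cofiber sequences along the tower of forgetful morphisms, identifying each step as a relative derived Grassmannian via Lemma~\ref{lem:dflag:forget}~\eqref{lem:dflag:forget-1} and applying Theorem~\ref{thm:Grass:cotangent}.

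For assertion~\eqref{thm:dflag:cotangent-2} you take a genuinely different route. The paper uses the closed immersion $i_{\sE,\bdd}\colon\Flag_{\bdd}(\sE)\hookrightarrow\GG_{\bdd}=\prod_{i=1}^k\Grass_{d_i}(\sE)$ of Proposition~\ref{prop:dflag:into.Grass}, which presents $\Flag_{\bdd}(\sE)$ as the derived zero locus of the section classifying the composites $\sR_{i+1}\to\pr^*\sE\to\sQ_i$; the cotangent complex of a derived zero locus (\cite[Proposition 4.10]{J22a}) then yields $\LL_{\Flag_{\bdd}(\sE)/\GG_{\bdd}}\simeq\bigoplus_{i=1}^{k-1}\sR_{i+1}\otimes\sQ_i^\vee[1]$ \emph{together with an explicit description of the connecting map} $\LL_{\Flag_{\bdd}(\sE)/\GG_{\bdd}}[-1]\to i^*_{\sE,\bdd}\LL_{\GG_{\bdd}/X}$ as the derivative of the defining section, which is precisely the morphism $\psi$. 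Your inductive approach via the forgetful tower avoids this global presentation but has to reconstruct $\psi$ from $\psi'$ by splicing transitivity sequences, and this is where a genuine gap appears.

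Specifically, the claim that ``an octahedral argument identifies $\sQ_1^\vee\otimes\sV_2$ with the cofiber of the morphism $\sR_2\otimes\sQ_1^\vee\to\sR_1\otimes\sQ_1^\vee\oplus\sR_2\otimes\sQ_2^\vee$'' is false. The cofiber of a map $(\alpha,\beta)\colon A\to B\oplus C$ is the pushout $B\sqcup_A C$, which fits in a cofiber sequence $C\to B\sqcup_A C\to\cofib(\alpha)$; it equals $\cofib(\alpha)$ only when $C\simeq 0$. With $A=\sR_2\otimes\sQ_1^\vee$, $B=\sR_1\otimes\sQ_1^\vee$, $C=\sR_2\otimes\sQ_2^\vee$, that cofiber is the full complex $\cofib(\psi)$ (the case $k=2$ of the theorem itself), fitting into $\sR_2\otimes\sQ_2^\vee\to\cofib(\psi)\to\sV_2\otimes\sQ_1^\vee$; it is not $\sV_2\otimes\sQ_1^\vee$. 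The actual content you need is precisely the step you flag as ``combinatorially delicate'': showing that the Kodaira--Spencer boundary map $\LL_{Y_1/Y_2}[-1]\simeq\sV_2\otimes\sQ_1^\vee[-1]\to\pi_1^*\LL_{Y_2/X}\simeq\cofib(\psi')$ agrees with the boundary map of $\cofib(\psi)$, i.e.\ that the two components land on the summands $\sR_1\otimes\sQ_1^\vee$ and $\sR_2\otimes\sQ_2^\vee$ via the prescribed formulas. Extensions are not determined by their ends, so this cannot be dispensed with as a formal diagram chase; it requires an explicit computation of the derived deformation class of the forgetful map, which is what the paper's zero-locus presentation supplies in closed form. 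Either supply that computation directly, or switch to the paper's approach via Proposition~\ref{prop:dflag:into.Grass}. Your finiteness argument at the end is fine.
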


\begin{proof}
In order to prove assertion \eqref{thm:dflag:cotangent-1}, we consider the sequence of forgetful morphisms
	$$\Flag_{\bdd}(\sE) \to \Flag_{\bdd \backslash \{d_1\}}(\sE) \to \Flag_{\bdd \backslash \{d_1, d_2\}}(\sE) \to \cdots \to \Flag_{(d_{k-1}, d_k)}(\sE) \to \Grass_{d_k}(\sE).$$
For each $1 \le i \le k$, we let $\sF_i$ be the pullback of relative cotangent complex:
	$$\sF_i = \pi_{(d_i, d_{i+1}, \ldots, d_{k}), \bdd}^* \big(\LL_{\Flag(\sE; d_i, d_{i+1}, \ldots, d_k)/X} \big) \in \QCoh(\Flag(\sE;
\bdd))^\cn.$$ 
In particular, $\sF_1 = \LL_{\Flag(\sE;\bdd)/X}$ and $\sF_k = \pi_{(d_k), \bdd}^*(\LL_{\Grass_{d_k}(\sE)/X}) \simeq \fib(\varphi_k) \otimes \sQ_{k}^\vee$. By virtue of Lemma \ref{lem:dflag:forget} \eqref{lem:dflag:forget-1}, for each $1 \le i \le k-1$, the forgetful morphism
	$$\pi_{(d_i, d_{i+1}, \ldots, d_k), (d_{i+1}, \ldots, d_k)} \colon \Flag(\sE; d_i, d_{i+1}, \ldots, d_k) \to \Flag(\sE; d_{i+1}, \ldots, d_k)$$
is the relative derived Grassmannian parametrizing rank-$d_{i}$ locally free quotients of the universal quotient bundle $\sQ_{i+1}$ of rank $d_{i+1}$. From the canonical fiber sequence of relative cotangent complexes (\cite[Proposition 3.2.12]{DAG}) associated with the composite map 
	$$ \Flag(\sE; d_i, d_{i+1}, \ldots, d_k) \to \Flag(\sE; d_{i+1}, \ldots, d_k) \to X$$
and  Theorem \ref{thm:Grass:cotangent}, we obtain a canonical fiber sequence
	$$\sF_{i+1} \to \sF_{i} \to \fib(\phi_{i,i+1}) \otimes \sQ_{i}^\vee.$$
This proves assertion \eqref{thm:dflag:cotangent-1}. Assertion \eqref{thm:dflag:cotangent-2} follows from the combination of Proposition \ref{prop:dflag:immersion}, Theorem \ref{thm:Grass:cotangent} and \cite[Proposition 4.10]{J22a}.
\end{proof}

In the special case where $\sE$ is a perfect complex of Tor-amplitude in $[0,1]$, we obtain:

\begin{corollary}
\label{cor:dpflag:perfect.amp<=1}
Let $X$ be a prestack, $\sE$ a connective perfect complex on $X$ of Tor-amplitude $\le 1$, $\bdd = (d_1, \ldots, d_k)$ is an increasing sequence of positive integers, and let $\pr \colon \Flag_{\bdd}(\sE) \to X$ be the derived flag scheme (Definition \ref{def:dflag}). Then the following statements are trure:
\begin{enumerate}[leftmargin=*]
	\item 
	\label{cor:dpflag:perfect.amp<=1-1}
	The projection $\pr$ is a relative derived scheme, proper, locally of finite presentation, locally of finite Tor-amplitude, admits a connective perfect relative cotangent complex $\LL_{\Flag_{\bdd}(\sE)/X}$ of Tor-amplitude $\le 1$ (in particular, $\pr$ is quasi-smooth), and admits an invertible relative dualizing complex $\omega_{\pr}$ (that is, $\omega_{\pr}$ is a degree shift of an line bundle). 
	\item
	\label{cor:dpflag:perfect.amp<=1-2}
	The pushforward functor $\pr_*$ admits a right adjoint given by the formula $\pr^! (\blank) = \pr^*(\blank) \otimes \omega_{\pr}$ and the pullback functor $\pr^*$ admits a left adjoint given by the formula $\pr_!(\blank) = \pr_*(\blank \otimes \omega_{\pr})$. In addition, each member in the adjunction sequence ${\pr}_! \dashv \pr^* \dashv {\pr}_*\dashv {}\pr^!$ preserves perfect complexes (respectively, almost perfect complexes, and respectively, locally bounded almost perfect complexes). 
\end{enumerate}
\end{corollary}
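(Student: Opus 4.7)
The plan is as follows. Nearly all claims of assertion (1) are direct consequences of earlier results in the excerpt. Representability as a relative derived scheme is Proposition \ref{prop:Flag:rep}; properness and local finite presentation follow from Proposition \ref{prop:Flag:finite} (a perfect complex is in particular almost perfect); and the explicit description of $\LL_{\Flag_{\bdd}(\sE)/X}$ in Theorem \ref{thm:dflag:cotangent} as a cofiber of maps between direct sums of the form $\fib(\varphi_i) \otimes \sQ_i^\vee$ exhibits it as connective, perfect, and of Tor-amplitude $\le 1$: each $\sQ_i$ is a vector bundle, and each $\fib(\varphi_i)$ is connective (since $\varphi_i$ is surjective on $\pi_0$) and perfect of Tor-amplitude $\le 1$ via the fiber sequence $\fib(\varphi_i) \to \pr^*(\sE) \to \sQ_i$. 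Quasi-smoothness is exactly the statement that $\LL_{\Flag_{\bdd}(\sE)/X}$ is perfect with Tor-amplitude in $[0,1]$, and the locally finite Tor-amplitude of $\pr$ falls out of the local factorization described next.

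To produce the invertible relative dualizing complex $\omega_{\pr}$, I would argue Zariski-locally on $X$. Since $\sE$ is perfect of Tor-amplitude $\le 1$, after localization we may present $\sE$ as the cofiber of a morphism $\rho \colon P_1 \to P_0$ of vector bundles. Proposition \ref{prop:dflag:immersion} then identifies $\Flag_{\bdd}(\sE) \hookrightarrow \Flag_{\bdd}(P_0)$ as the derived zero locus of a canonical section of the vector bundle $\pr^*(P_1)^\vee \otimes \sQ_k(P_0)$ on the smooth proper classical flag bundle $\Flag_{\bdd}(P_0)/X$. This realises $\pr$ locally as the composition of a derived regular immersion of finite virtual codimension followed by a smooth proper morphism; both carry invertible relative dualizing complexes (a shifted line bundle $\det \Omega^1$ in the smooth case, and a shifted determinant of the normal bundle in the regular-immersion case), and their tensor product is therefore invertible. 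Compatibility of this construction with further Zariski localization yields a globally defined invertible $\omega_{\pr}$. Equivalently and more intrinsically, one may take $\omega_{\pr} := \det(\LL_{\Flag_{\bdd}(\sE)/X})$, invertible by determinant theory for perfect complexes, and check via the local model above that it computes the Grothendieck relative dualizing complex.

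Assertion (2) then follows from Grothendieck duality for proper morphisms of locally finite Tor-amplitude together with formal manipulations exploiting the invertibility of $\omega_{\pr}$. Grothendieck duality supplies the right adjoint $\pr^!$ of $\pr_*$, and invertibility of the relative dualizing complex gives the identification $\pr^!(\blank) \simeq \pr^*(\blank) \otimes \omega_{\pr}$. A left adjoint $\pr_!$ of $\pr^*$ is then produced by the chain of natural equivalences
\[
\Map(\pr_*(N \otimes \omega_{\pr}), M) \simeq \Map(N \otimes \omega_{\pr}, \pr^* M \otimes \omega_{\pr}) \simeq \Map(N, \pr^* M),
\]
yielding $\pr_!(\blank) = \pr_*(\blank \otimes \omega_{\pr})$. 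For the preservation properties, $\pr^*$ trivially preserves all three classes; $\pr_*$ preserves them by the combination of properness, local finite presentation, and locally finite Tor-amplitude of $\pr$ (standard in derived algebraic geometry, e.g.\ \cite[\S 6.1]{SAG}); and $\pr^!$, $\pr_!$ then inherit these properties since they are compositions of $\pr^*$ or $\pr_*$ with tensoring against the invertible $\omega_{\pr}$. The principal obstacle in the plan is to set up Grothendieck duality and the invertibility of $\omega_{\pr}$ uniformly across the prestack generality; both are handled by reducing to the affine case via the local factorization through the classical flag bundle, but verifying the requisite compatibility with base change and gluing demands some care.
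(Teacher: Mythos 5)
Your proposal is correct and follows essentially the same route as the paper: assertion (1) is assembled from Propositions \ref{prop:Flag:rep}, \ref{prop:Flag:finite} and Theorem \ref{thm:dflag:cotangent}, and assertion (2) is deduced from (1) by Grothendieck duality for proper quasi-smooth morphisms. The only difference is that the paper condenses the entire content of assertion (2) — existence and invertibility of $\omega_{\pr}$, the adjunction formulas, and the preservation properties — into a citation of \cite[Theorem 3.7]{J22a}, whereas you reconstruct this package by hand via the local factorization through a classical flag bundle and the determinant of the cotangent complex; both are valid, and your reconstruction is exactly what that cited theorem provides.
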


\begin{proof}
Assertion \eqref{cor:dpflag:perfect.amp<=1-1} is a consequence of Proposition \ref{prop:Flag:finite} and Theorem \ref{thm:dflag:cotangent}. Assertion \eqref{cor:dpflag:perfect.amp<=1-2} follows from \eqref{cor:dpflag:perfect.amp<=1-1} by virtue of  \cite[Theorem 3.7]{J22a}.
\end{proof}

\section{Borel--Weil--Bott Theorem for Derived Flag Schemes}
This section establishes derived versions of the Borel--Weil--Bott theorem which relate the geometry of derived Grassmannians and flag schemes studied in \S \ref{sec:dGrass.dFlag} to the algebra of derived Schur functors studied in \S \ref{sec:dSchur}. Our presentation of derived versions of  Borel--Weil--Bott  theorem is divided into two cases: dominant weight case (\S \ref{sec:Bott.dominant}) and non-dominant weight case (\S \ref{sec:Bott.non-dominant}).

\S \ref{sec:Bott.dominant} investigates the dominant weight case, in which the derived versions of Borel--Weil--Bott theorem (Theorem \ref{thm:Bott:dflag}, Corollary \ref{cor:Bott:dpflag}, and Corollary \ref{cor:Bott:dGrass:dominant}) generalize both the classical Borel--Weil--Bott theorem \cite{Bott} and Kempf's vanishing theorem \cite{Kempf} from the case of vector bundles to the case of perfect complexes of Tor-amplitude $\le 1$. These results are characteristic-free. 

\S \ref{sec:Bott.non-dominant} establishes derived versions of Bott's theorem in the non-dominant weight case (Theorem \ref{thm:BBW:dflag}, Variant \ref{variant:BBW:dflag}, Corollary \ref{cor:BBW:dpflag}, Corollary \ref{cor:BBW:dGrass}), generalizing the classical Borel--Weil--Bott theorem (\cite{Dem, Wey, Lurie}) for flag varieties. Except for the vanishing results in Proposition \ref{prop:dflag:vanishing}, most results in this subsection require characteristic-zero conditions.

The derived versions of Borel--Weil--Bott theorem, like the classical ones, compute (derived) pushforwards of tautological complexes on the derived flag schemes; these tautological complexes are defined in  \S \ref{sec:tautological.complexes}.
Our proof is based on the classical Borel--Weil--Bott theorem for flag bundles over prestacks, which we study in \S \ref{sec:Bott.Grass.Flag.Bundles}, and a preliminary version of the Borel--Weil--Bott theorem for derived Grassmannians, whose proof depends on computations of Koszul-type complexes in the universal local situation, which we investigate in \S \ref{sec:dBott.Pre.Grass}.

\subsection{Tautological Complexes on Derived Flag Schemes}
\label{sec:tautological.complexes}
Classically, if $X = \Spec R$ is an affine scheme, where $R$ is an ordinary commutative ring, and $\sE = R^n$ is a trivial vector bundle of rank $n \ge 1$, then complete flag scheme $\Flag_R(R^{n}; \underline{n})$ is the homogeneous scheme for the Chevalley group scheme ${\rm GL}(n; R)$ over $R$: for each $1 \le i \le n$, let $\sR_i = \fib(\pr^*(R^n) \to  \sQ_i)$ as in Notation \ref{not:dflag-2}, then $0 = \sR_n \subseteq \sR_{n-1} \subseteq \cdots \subseteq \sR_1 \subseteq \pr^*(R^n)$ is a sequence of locally free submodules of $\pr^*(R^n)$ with $\rank \sR_i = n -i$, and there is a natural free action
	$${\rm GL}(n; R) \times \Flag_R(R^{n}; \underline{n}) \to \Flag_R(R^{n}; \underline{n})$$
which carries the above sequence of locally free submodules to their images under the action of ${\rm GL}(n; R)$. This action identifies $\Flag_R(R^{n}; \underline{n})$ with the quotient space ${\rm GL}(n; R)/ B_R$ (which {\em a priori} is only a fppf $R$-sheaf), where $B_R \subseteq {\rm GL}(n; R)$ is the Borel subgroup scheme consisting of lower triangular matrices. We let $T_R \subseteq B_R$ be the maximal $R$-torus consisting of diagonal matrices, then we can identify the character group $X(T_R)$ of $({\rm GL}(n; R), T_R)$ with $\ZZ^n$ and represent each $\lambda \in X(T_R)$ by a sequence of integers $(\lambda_1, \ldots, \lambda_n) \in \ZZ^n$. For each $\lambda \in X(T_R)$, there is a canonical associated line bundle $\sL(\lambda)$ on $ \Flag_R(R^{n};\underline{n})$ (\cite[\S I.5.8]{Jan}). Calculating the cohomologies of the line bundles $\sL(\lambda)$ is the core content of the classical Borel--Weil--Bott theorem.

To extend the Borel--Weil--Bott theorem to the derived setting, in \S \ref{sec:linebundle} we consider the generalization of the above construction $\lambda \mapsto \sL(\lambda)$ for all derived complete flag schemes $\Flag(\sE; \underline{n})$. Additionally, we discuss its variant for derived (partial) flag schemes in \S \ref{sec:V.lambda}. 

\subsubsection{Line bundles on Derived Complete Flag Schemes}
\label{sec:linebundle}

\begin{construction}[Line bundles on Derived Complete Flag Schemes]
\label{constr:dflag}
Let $X$ be a prestack, $\sE \in \QCoh(X)^\cn$, and for $n \ge 1$ an integer we let $\underline{n}  =(1,2,\ldots,n)$. Let $\pr \colon \Flag(\sE; \underline{n}) \to X$ be  the derived flag scheme of $\sE$ of type $\underline{n}$ with extended universal quotient sequence
	$$\pr^*(\sE) \xrightarrow{\varphi_{n}} \sQ_{n} \xrightarrow{\phi_{n-1,n}} \sQ_{n-1} \to \cdots \xrightarrow{\phi_{1,2}} \sQ_{1}  \xrightarrow{\phi_{0,1}=0} \sQ_{0} = 0,$$ 
and let $\sL_i = \Ker(\phi_{i-1,i} \colon \sQ_{i} \to \sQ_{i-1})$ be associated line bundle as in Example \ref{eg:dflag}. For any sequence of integers $\lambda = (\lambda_1, \lambda_2, \ldots, \lambda_n) \in \ZZ^n$, we define a line bundle $\sL(\lambda)$ by the formula:
\begin{equation}\label{eqn:flag:linebundle}
	\sL(\lambda) : = \sL_1^{\otimes \lambda_1} \otimes \sL_2^{\otimes \lambda_2} \otimes \cdots \otimes \sL_n^{\otimes \lambda_n} \in \Pic(\Flag(\sE;\underline{n})).
\end{equation}
\end{construction}

\begin{remark}[Relationships with Forgetful Morphisms]
\label{rem:forget:linebundle}
 In the situation of Construction \ref{constr:dflag}, we consider the sequence of forgetful morphisms (\S \ref{sec:dflag:forget})
	$$\Flag(\sE; \underline{n}) \xrightarrow{\pi_{\underline{n-1}, \underline{n}}} \Flag(\sE; \underline{n-1}) \xrightarrow{\pi_{\underline{n-2}, \underline{n-1}}}  \cdots \to \Flag(\sE; 1,2) \xrightarrow{\pi_{\underline{1}, \underline{2}}} \PP(\sE) \xrightarrow{\pi_{\underline{0}, \underline{1}}} X.$$
(Here, we agree by convention that $\Flag(\sE; \underline{0}) = X$ and the forgetful morphism $\pi_{\underline{0}, \underline{1}} \colon \PP(\sE)\to X$ denotes the natural projection morphism.) 
Then it follows from Lemma \ref{lem:dflag:forget} \eqref{lem:dflag:forget-2} that, for each $1 \le i \le n$, the forgetful morphism 
	$$\pi_{\underline{i-1}, \underline{i}} \colon \Flag(\sE; \underline{i}) \to \Flag(\sE; \underline{i-1})$$
 is a derived projectivization. We let $\sO_{\pi_{\underline{i-1}, \underline{i}}}(1) \in \Pic (\Flag(\sE; \underline{i}))$ denote the universal line bundle of the derived projectivization, then there is a canonical identification for each $1 \le i \le n$:
 	$$\sL_i \simeq \pi_{\underline{i}, \underline{n}}^* \left( \sO_{\pi_{\underline{i-1}, \underline{i}}}(1)  \right) \in \Pic (\Flag(\sE; \underline{n})).$$
Consequently, we have a canonical identification
\begin{align*}
	\sL(\lambda) \simeq \bigotimes_{i=1}^{n} \pi_{\underline{i}, \underline{n}}^*  \left( \sO_{\pi_{\underline{i-1}, \underline{i}}}(\lambda_i)  \right) \in \Pic(\Flag(\sE;\underline{n})).
\end{align*}
\end{remark}

\begin{remark}[Relationships with Pl\"ucker Morphisms]
\label{rem:plucker.map:linebundle}
In the situation of Construction \ref{constr:dflag}, we consider the Pl\"ucker morphism (Corollary \ref{cor:Plucker:flag:finite}) defined as the composite morphism
	$$	
		\varpi_{\sE, \underline{n}} \colon \Flag(\sE; \underline{n}) \xrightarrow{~i_{\sE, \underline{n}}~}  \prod_{i=1}^n \Grass_{i}(\sE) \xrightarrow{~\prod_{i=1}^n \varpi_{\sE, i}~}  \prod_{i=1}^n \PP(\bigwedge\nolimits^{i} \sE),
	$$
where $\{\varpi_{\sE, i} \}_{1 \le i \le n}$ are the Pl\"ucker morphisms of derived Grassmannians (\S \ref{sec:dGrass:Plucker}). Then for any sequence of integers $(s_1, s_2, \ldots, s_{n}) \in \ZZ^{n}$, we have a canonical identification
	\begin{align*}
	\varpi_{\sE, \underline{n}}^*\left(\bigotimes_{i=1}^n \sO_{\PP(\bigwedge^{i} \sE)} (s_i) \right)   \simeq  \sL_1^{\otimes (s_1 + \cdots + s_{n})} \otimes \sL_2^{\otimes (s_2 + \cdots + s_{n})} \otimes \cdots \otimes \sL_{n}^{\otimes s_{n}} \in \Pic(\Flag(\sE;\underline{n})).
	\end{align*}
By setting $\lambda_i = s_i + \cdots s_n$, $1 \le i \le n$, and $\lambda_{n+1}=0$, we obtain a canonical identification
\begin{align*}
	\sL(\lambda) \simeq \varpi_{\sE, \underline{n}}^*\left(\bigotimes_{i=1}^n \sO_{\PP(\bigwedge^{i} \sE)} (\lambda_i - \lambda_{i+1}) \right) \in \Pic(\Flag(\sE;\underline{n})).
\end{align*}
\end{remark}

\subsubsection{Tautological Complexes on Derived Partial Flag Schemes}
\label{sec:V.lambda}
One can extend the construction $\lambda \mapsto \sL(\lambda)$ of the \S \ref{sec:linebundle} to any derived (partial) flag schemes $\Flag(\sE; \bdd)$ under certain assumption on the ``weight" $\lambda = (\lambda_1, \ldots, \lambda_n) \in \ZZ^n$.

\begin{construction}[Tautological Complexes on Partial Flag Schemes]
\label{constr:V.lambda:dpflag}
Let $X$ be any prestack, 
let $n \ge 1, k \ge 1$ be integers, and let $\bdd = (d_1, \ldots, d_k)$ be an increasing sequence of integers in $[1,n]$. Let 
	$\pr \colon \Flag_X(\sE; \bdd) \to X$
denote the derived (partial) flag scheme. For any sequence of integers $\lambda = (\lambda_1, \ldots , \lambda_n)$, we define sequences $\lambda^{(j)}$ by the formula $\lambda^{(j)}= (\lambda_{d_{j-1}+1}, \ldots, \lambda_{d_{j}})$ for all $j=1, \ldots, k, k+1$. Here, by convention, we set $d_0 = 0$ and $d_{k+1} =n$; and if $d_k=n$, we set $\lambda^{(k+1)}=(0)$. More intuitively, we group the entries of $\lambda$ in the following form
	$$\lambda = \big( \underbrace{\lambda_1, \ldots \lambda_{d_1}}_{\lambda^{(1)}\,\text{terms}}; \underbrace{\lambda_{d_1+1} , \ldots, \lambda_{d_2}}_{\lambda^{(2)}\,\text{terms}};
	\ldots \ldots; 
	 \underbrace{\lambda_{d_k+1}  ,\ldots \lambda_{n}}_{\lambda^{(k+1)}\,\text{terms}} \big).$$
For each $j=1, 2, \ldots, k$, we define a vector bundle of rank $(d_{j} - d_{j-1})$ by the formula
	$$\sV_j = \Ker(\phi_{j-1,j} \colon \sQ_{j} \twoheadrightarrow \sQ_{j-1}) \in \Vect(\Flag_X(\sE;\bdd));$$
see Notation \ref{not:dflag} \eqref{not:dflag-3}. Assume that the following condition is satisfied by $\lambda$ and $\bdd$:
	\begin{itemize}
	\item[(*)] The sequences $\lambda^{(j)}$ are {\em partitions} for all $j=1,2,\ldots, k+1$.
\end{itemize}
We can define a connective quasi-coherent complex $\sV(\lambda)$ associated with $\lambda$ by the formula
	\begin{equation}\label{eqn:flag:Vlambda}
	\sV(\lambda) :=  \dSchur^{\lambda^{(k+1)}}\left(\fib(\pr^*\sE \xrightarrow{\varphi_k} \sQ_{d_k})\right) \otimes \bigotimes_{j=1}^{k}  \dSchur^{\lambda^{(j)}} (\sV_j) \in \QCoh(\Flag_X(\sE;\bdd))^\cn.
	\end{equation}
If $\lambda^{(k+1)} = (0)$, by convention we have $\dSchur^{(0)} (\fib( \varphi_k)) = \sO_X$, and in this case $\sV(\lambda) = \bigotimes_{j=1}^{k}  \dSchur^{\lambda^{(j)}} (\sV_j)$ is a vector bundle on $\Flag_X(\sE;\bdd)$. For example, if we let $\bdd = \underline{n} = (1, 2, \ldots, n)$, then $\lambda^{(k+1)} = (0)$ and $\sV(\lambda) = \sL(\lambda)$ is precisely the line bundle defined in Construction \ref{constr:dflag}.
If $\lambda^{(k+1)} \neq (0)$, then $\sV(\lambda)$ is generally {\em not} a vector bundle but only a connective quasi-coherent complex. 
\end{construction}

\begin{remark}[Finiteness Properties of Tautological Complexes]
In the situation of Construction \ref{constr:V.lambda:dpflag}, it follows from Propositions \ref{prop:dSchur:pc} and \ref{prop:dSchur:Tor-amp} that if $\sE$ is pseudo-coherent to order $n$ for some integer $n \ge 0$ (resp. almost perfect, perfect) over $X$, then the complex $\sV(\lambda)$ defined by \eqref{eqn:flag:Vlambda} is pseudo-coherent to order $n$ (resp. almost perfect, perfect) over $\Flag_X(\sE;\bdd)$.
\end{remark}

\begin{example}[Tautological complexes on Derived Grassmannians] 
\label{eg:notation:V:dGrass}
Let $X$ be any prestack, let $n \ge 1$ be a pair of integers, and let $\bdd = (d)$ for some integer $1 \le d \le n$. Let $\alpha = (\alpha_1, \ldots , \alpha_{d}) \in \ZZ_{\ge 0}^d$ and $\beta = (\beta_1, \ldots, \beta_{n-d}) \in \ZZ_{\ge 0}^{n-d}$ be partions, and let $\lambda = (\alpha,\beta) \in \ZZ_{\ge 0}^{n}$ be the concatenation of partions. We let
	$\pr \colon \Grass_X(\sE; d) \to X$
denote the derived Grassmannian and let 
	$\sR \to \pr^*(\sE) \to \sQ$
denote the universal fiber sequence. Then the canonical connective quasi-coherent complex $\sV(\lambda)$ associated with $\lambda = (\alpha, \beta)$ is described by the formula
	\begin{equation}\label{eqn:Grass:Vlambda}
		 \sV(\alpha, \beta) = \dSchur^{\alpha}(\sQ) \otimes \dSchur^{\beta}(\sR) \in \QCoh(\Grass(\sE;d))^\cn.
	\end{equation}
\end{example}

\subsection{Borel--Weil--Bott Theorem for Grassmannian Bundles and Flag Bundles}
\label{sec:Bott.Grass.Flag.Bundles}
This subsection considers the Borel--Weil--Bott theorem in the case of dominant weights and where $\sE=\sV$ is a vector bundle over a prestack $X$ (Theorem \ref{thm:Bott:flagbundle}). 
The results of this subsection are essentially classical, with the only new content being their presentation in a more functional form. The functoriality is, however, crucial to their generalizations over prestacks.

A key insight behind Theorem \ref{thm:Bott:flagbundle} (\cite{ABW, Wey, Tow1, Tow2}) is the intimate relationship between Pl\"ucker relations (that is, the relations satisfied by the homogeneous coordinate rings for Grassmannian and flag schemes) and the defining relations for Schur modules (Notation \ref{notation:square}). 
We review the relevant results (and present in a functorial way) in \S \ref{subsec:plucker.relations:Grass} and \S \ref{subsec:plucker.relations:flag}; our exposition here closely follows Weyman's book \cite[\S 3.1]{Wey}.

\subsubsection{Pl{\"u}cker Relations for Grassmannians}
\label{subsec:plucker.relations:Grass}
We first briefly review the classical theory of homogeneous prime spectra (\cite[II \S 2]{EGA}, \cite[\href{https://stacks.math.columbia.edu/tag/01M3}{Tag 01M3}]{stacks-project}). Let $S_* = \bigoplus_{m \ge 0} S_m$ be any (ordinary) non-negatively graded ring with homogeneous component $S_m$. Then $S_0 =: R$ the subring. We let $S_+ = \bigoplus_{m \ge 1} S_1$ denote the irrelevant ideal. We let $\Proj S_*$ denote the (classical) homogenous prime spectrum of $S_*$ (\cite[II \S 2.3]{EGA}, \cite[\href{https://stacks.math.columbia.edu/tag/01M6}{Tag 01M6}]{stacks-project}). Consider the following pair of functors:
	$$
	\begin{tikzcd}
	& (\blank)^{\sim} \colon \{\text{graded $S_*$-modules}\}	 \ar[r, shift left] & \ar[l, shift left] 	\QCoh(\Proj S_*)^\heartsuit \colon \Gamma_*
	\end{tikzcd}
	$$
where $ (\blank)^{\sim}$ is the functor which carries a graded $S_*$-module $M$ to its associated sheaf $M^{\sim}$ (\cite[II (2.5.3)]{EGA}, \cite[\href{https://stacks.math.columbia.edu/tag/01M6}{Tag 01M6}]{stacks-project}), and $\Gamma_*$ is the functor which carries each (discrete) quasi-coherent $\sF$ on $\Proj S_*$ to the graded $S_*$-module
	$\Gamma_*(\sF) = \bigoplus_{m \in \ZZ} \H^0(\Proj S_*; \sF (m))$
(\cite[II (2.6.1)]{EGA}). The canonical map (\cite[II, (2.6.2.3)]{EGA})
	$$S_* \to \Gamma_*(\sO_{\Proj S_*}) = \bigoplus_{m \ge 0} \H^0\left(\Proj S_*, \sO_{\Proj S_*}(m)\right)$$ 
is a homomorphism of graded $R$-algebras. For each graded $S_*$-module, the canonical map
	$$M \to \Gamma_*(M^{\sim}) =\bigoplus_{m \in \ZZ} \H^0\left(\Proj S_*; M^{\sim} (m)\right)$$
is a homomorphism of graded $S_*$-modules. If we assume that $S_+$ is generated in degree one and $S_1$ is finitely generated as an $S_0=R$-module, then for each $\sF \in \QCoh(\Proj S_*)^\heartsuit$, the canonical morphism $\Gamma_*(\sF)^{\sim} \to \sF$ is an isomorphism of quasi-coherent sheaves (\cite[II, Theorem (2.7.5)]{EGA}). Furthermore, for each closed subscheme $Z \subseteq \Proj S_*$ defined by a quasi-coherent ideal $\sI_Z$, we let $I(Z)_*$ denote the preimage of $\Gamma_*(\sI_Z)$ under the canonical morphism ${\rm sat} \colon S_* \to \Gamma_*(\sO_{\Proj S_*})$. Then $I(Z)_* \subseteq S_*$ is a homogeneous ideal. We will refer to $I(Z)_*$ as the {\em homogeneous ideal of $Z \subseteq \Proj S_*$}, and refer to the non-negatively graded quotient ring $A(Z)_*=S_*/I(Z)_*$ as the {\em homogeneous coordinate ring of $Z \subseteq \Proj S_*$}  (see \cite[II (2.9.2)]{EGA}).
 
Now we consider the situation of Pl\"ucker immersions. Let $R$ be an ordinary commutative ring, $X = \Spec R$, and $\sV= R^n$ a free $R$-module of rank $n$ over $X$. Let $d \ge 1$ be an integer, and we consider the Pl\"ucker morphism  for the Grassmannian $\Grass_X(V;d)$ (\S \ref{sec:dGrass:Plucker}):
 	$$\varpi_{\sV} \colon \GG:=\Grass(V;d) \to \PP:=\PP(\bigwedge\nolimits^d \sV).$$
The morphism $\varpi_{\sV}$ is a closed immersion and locally of finite presentation, by virtue of Proposition \ref{prop:Plucker}.
 We now specify the preceding discussions to the situation of Pl\"ucker immerions:
\begin{itemize}
	\item Let $S_* $ be the non-negatively graded $R$-algebra $ \Sym_R^*(\bigwedge\nolimits^d \sV) = S_R^*(\bigwedge\nolimits^d \sV)$ (\S \ref{sec:Schur} (2)). Then $S_0 = R$, $S_+$ is generated by $S_1 = \Sym_R^1(\sV) = \sV$ which is a finite free $R$-module. We have $\PP = \Proj S_*$ and $\sO_{\PP}(1) = \sO_{\Proj S_*}(1)$. In this case, the canonical morphism 
		$S_* \to \Gamma_*(\sO_{\PP}) = \bigoplus_{m \ge 0} \H^0(\PP; \sO_{\PP}(m))$
	 is an isomorphism of graded $R$-algebras.
	\item Let $I(\GG)_*$ denote the homogeneous ideal of the closed subscheme which is the image of the Pl\"ucker morphism $\varpi_{\sV, d} \colon \GG \to \PP$, and let $A(\GG)_* = S_*/ I(\GG)_*$ denote the homogeneous coordinate ring for $\GG$. Then there is a canonical morphism of graded $R$-algebras
			$$A(\GG)_* \to\Gamma_*(\sO_{\GG}) =  \bigoplus\nolimits_{m \ge 0}  \H^0(\GG; \sO_{\GG}(m)).$$
\end{itemize}

The next result describes the homogeneous ideal $(I_{\GG})_*$ and the  homogeneous coordinate ring $A(\GG)_*$ for $\GG$ and establish their connections with Schur functors.
 
 \begin{proposition}[Pl\"ucker Relations for Grassmannians; {see \cite[\S 3.1 \& \S 4.2]{Wey}.}]
 \label{prop:Plucker.relation:Grass}
 In the above situation:
 \begin{enumerate}[leftmargin=*]
	\item  \label{prop:Plucker.relation:Grass-1}
	Let $\square_{(2^d)}$ denote the morphism of finite free $R$-modules
		$$\square_{(2^d)} \colon \bigoplus_{u,v \ge 0, u+v<d} \left(\bigwedge\nolimits^u \sV \otimes \bigwedge\nolimits^{2d - u - v} \sV \otimes \bigwedge\nolimits^v \sV \right)\to \bigwedge\nolimits^d \sV \otimes \bigwedge\nolimits^d \sV$$
		defined as in Notation \ref{notation:square}; that is, it is the sum of the composite maps 
		$$\bigwedge^u \sV \otimes \bigwedge^{2d - u - v} \sV \otimes \bigwedge^v \sV \xrightarrow{1 \otimes \Delta' \otimes 1} \bigwedge^u \sV \otimes \bigwedge^{d-u} \sV \otimes \bigwedge^{d - v} \sV \otimes \bigwedge^v \sV \xrightarrow{m' \otimes m'} \bigwedge^d \sV  \otimes \bigwedge^d \sV,$$
	where $\Delta'$ and $m'$ are the comultiplication and multiplication maps for exterior products (\S \ref{sec:classical_sym}). Then the homogeneous ideal $I(\GG)_* \subseteq S_*$ of $\GG$ is generated by the image of ${\rm Im}(\square_{(2^d)}) \subseteq \bigwedge\nolimits^d \sV \otimes \bigwedge\nolimits^d \sV$ under the canonical projection $\bigwedge\nolimits^d \sV \otimes \bigwedge\nolimits^d \sV  \twoheadrightarrow S_2 = \Sym^2(\bigwedge\nolimits^{d} \sV)$.
	\item  \label{prop:Plucker.relation:Grass-2}
	The canonical morphism
				$$ A(\GG)_* \to\Gamma_*(\sO_{\GG}) =  \bigoplus\nolimits_{m \ge 0}  \H^0(\GG; \sO_{\GG}(m))$$
	is an isomorphism of graded $R$-algebras. 
	\item   \label{prop:Plucker.relation:Grass-3}
	For each integer $m \ge 0$, the canonical map 
		$$S_m=\Sym_R^m\left(\bigwedge\nolimits^d \sV\right) \xrightarrow{\sim} \H^0\left(\PP(\bigwedge\nolimits^d \sV); \sO_\PP(m)\right) \to \H^0(\GG; \sO_{\GG}(m))$$
	is a surjection of $R$-modules and factorizes through canonical isomorphisms
		$$\Schur_R^{(m^d)}(\sV)  \xrightarrow{\sim}  A(\GG)_m  
		\xrightarrow{\sim}  \H^0(\GG; \sO_{\GG}(m))$$
	of finite free $R$-modules, where $\Schur_R^{(m^d)}(\blank)$ is the Schur functor associated with the partition $(m^d) =(\underbrace{m,\ldots,m}_{\text{$d$ terms}})$ (Definition \ref{def:SchurWeyl}). 
\end{enumerate}
\end{proposition}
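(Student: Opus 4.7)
The plan is to reduce to the universal case $R = \ZZ$, establish assertion \eqref{prop:Plucker.relation:Grass-3} as the main step, and then deduce \eqref{prop:Plucker.relation:Grass-2} and \eqref{prop:Plucker.relation:Grass-1} as consequences. The reduction uses Theorem \ref{thm:Schur:free} for Schur modules, flat base change for cohomology, and the fact that the Grassmannian, the line bundles $\sO_\GG(m)$, the ideal $I(\GG)_*$, and $A(\GG)_*$ are all defined universally; it is valid once we know $\H^0(\GG_\ZZ; \sO(m))$ is a finite free $\ZZ$-module with vanishing higher cohomology, which will be established as part of proving \eqref{prop:Plucker.relation:Grass-3}.

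For \eqref{prop:Plucker.relation:Grass-3}, I construct the natural map $\alpha_m \colon \Sym^m_R(\bigwedge\nolimits^d \sV) \to \H^0(\GG; \sO_\GG(m))$ as the $m$-fold symmetric power of the tautological section of $\sO_\GG(1) = \det \sQ$ induced by the universal quotient $\bigwedge\nolimits^d \varphi \colon \bigwedge\nolimits^d \sV \otimes \sO_\GG \twoheadrightarrow \det \sQ$. To identify the image of $\alpha_m$, I would pass to the complete flag variety via the forgetful morphism $\pi \colon \Flag(\sV; \underline{n}) \to \GG$, which by iterated application of Lemma \ref{lem:dflag:forget} is a tower of projective bundles satisfying $\pi_* \sO_{\Flag(\sV;\underline{n})} = \sO_\GG$ (since each step is a $\PP^r$-bundle with trivial twist). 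As $\pi^*\sO_\GG(m)$ coincides with the line bundle $\sL(\lambda)$ for $\lambda = (m, \ldots, m, 0, \ldots, 0)$ with $d$ initial entries equal to $m$ in the notation of Construction \ref{constr:dflag}, we obtain $\H^0(\GG; \sO_\GG(m)) \simeq \H^0(\Flag(\sV; \underline{n}); \sL(\lambda))$. The classical Borel--Weil--Bott--Kempf theorem for the complete flag variety of a trivial bundle over $\ZZ$ (recorded in the excerpt and proved in \cite[Theorems 4.1.4, 4.1.10]{Wey}) identifies the latter with $\Schur_R^{(m^d)}(\sV)$, free of finite rank over $\ZZ$, and this identification factors $\alpha_m$ through the canonical surjection $\Sym^m(\bigwedge\nolimits^d \sV) \twoheadrightarrow \Schur^{(m^d)}(\sV)$ from Remark \ref{rem:Schur.Weyl.as.images}, yielding the required chain of isomorphisms.

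Assertion \eqref{prop:Plucker.relation:Grass-2} is then immediate: the surjectivity of $\alpha_m$ combined with the long exact sequence attached to $0 \to \sI_\GG(m) \to \sO_\PP(m) \to \varpi_*\sO_\GG(m) \to 0$ identifies $A(\GG)_m = S_m/I(\GG)_m$ with its image $\H^0(\GG; \sO_\GG(m))$, and compatibility with the algebra structure is automatic from the construction. For \eqref{prop:Plucker.relation:Grass-1}, the identification in \eqref{prop:Plucker.relation:Grass-3} realizes $I(\GG)_2$ as the kernel of $\Sym^2(\bigwedge\nolimits^d \sV) \twoheadrightarrow \Schur^{(2^d)}(\sV)$; by Definition \ref{def:SchurWeyl} and Notation \ref{notation:square} (noting $(2^d)^t = (d,d)$), this kernel is precisely the image of $\square_{(2^d)}$ under the projection $\bigwedge\nolimits^d \sV \otimes \bigwedge\nolimits^d \sV \twoheadrightarrow \Sym^2(\bigwedge\nolimits^d \sV)$. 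That the full ideal $I(\GG)_*$ is generated by $I(\GG)_2$ follows from the Akin--Buchsbaum--Weyman standard basis theorem \cite[Theorem II.2.16]{ABW}: the kernel of $\Sym^*(\bigwedge\nolimits^d \sV) \twoheadrightarrow \bigoplus_m \Schur^{(m^d)}(\sV)$ is generated in degree two by the straightening relations, which are the quadratic Plücker relations.

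The principal difficulty is the characteristic-free identification $\H^0(\Flag(\sV; \underline{n}); \sL(\lambda)) \simeq \Schur^{(m^d)}(\sV)$ for dominant $\lambda$ over $\ZZ$, which packages Kempf's vanishing theorem together with Weyman's explicit presentation of cohomology by Schur modules; the remaining steps are formal consequences.
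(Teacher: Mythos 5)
Your proposal is correct in substance but takes a genuinely different route from the paper. The paper reduces to the case where $R$ is a \emph{field}: assertion \eqref{prop:Plucker.relation:Grass-1} is reduced by Nakayama's lemma applied to the map $I_*' \to I(\GG)_*$ (where $I_*'$ is the ideal generated by the Plücker relations), and assertions \eqref{prop:Plucker.relation:Grass-2}--\eqref{prop:Plucker.relation:Grass-3} are reduced by flatness and base-change of $\H^0(\GG; \sO_\GG(m))$; all three are then cited directly from \cite[Prop.\ 3.1.2, Rem.\ 3.1.3, Prop.\ 3.1.4, proof of Thm.\ 4.1.4]{Wey} in the field case. You instead reduce to $R = \ZZ$, establish \eqref{prop:Plucker.relation:Grass-3} as the central result (via the tower $\Flag(\sV;\underline{n}) \to \GG$ and the Kempf–Weyman identification of $\H^0(\Flag;\sL(\lambda))$ with $\Schur^{(m^d)}(\sV)$), and then derive \eqref{prop:Plucker.relation:Grass-2} and \eqref{prop:Plucker.relation:Grass-1} as formal consequences. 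Your arrangement makes the logical dependencies among the three assertions more transparent, and proving \eqref{prop:Plucker.relation:Grass-2} from the surjectivity in \eqref{prop:Plucker.relation:Grass-3} rather than reducing separately to the field case is cleaner. Both approaches ultimately rest on the same content from Weyman's book.

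One imprecision worth fixing: for the deduction of \eqref{prop:Plucker.relation:Grass-1} from \eqref{prop:Plucker.relation:Grass-3} you cite \cite[Theorem II.2.16]{ABW} for the statement that $\ker\bigl(\Sym^*(\bigwedge^d \sV) \twoheadrightarrow \bigoplus_m \Schur^{(m^d)}(\sV)\bigr)$ is generated in degree two. That theorem is the universal freeness statement for Schur modules and does not by itself assert quadratic generation of this ideal; the quadratic generation is the straightening-law/standard-basis argument (encapsulated in \cite[Proposition 3.1.4]{Wey}, which the paper cites, or in the relevant parts of \cite[\S II.3]{ABW}). You also quietly use that the projection $\bigwedge^d\sV \otimes \bigwedge^d\sV \to \Schur^{(2^d)}(\sV)$ factors through $\Sym^2(\bigwedge^d\sV)$; this is true and is recorded in the remark immediately following the proposition in the paper, but it deserves an explicit mention in your proof of \eqref{prop:Plucker.relation:Grass-1}.
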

 \begin{proof} 
In the case where $R$ is a field, assertions \eqref{prop:Plucker.relation:Grass-1} through \eqref{prop:Plucker.relation:Grass-3} are proved in \cite[Proposition 3.1.2, Remark 3.1.3, Proposition 3.1.4, and proof of Theorem 4.1.4]{Wey}. In general, observe that the formulations of assertions \eqref{prop:Plucker.relation:Grass-1}-\eqref{prop:Plucker.relation:Grass-3} commute with base change of commutative rings. Moreover, let $I_*' \subseteq S_*$ denote the homogeneous ideal generated by all the ``P\"ucker relations", that is, generated by the image of $\square_{(2^d)}$. Then it is direct to verify that $A(\GG)_*$ vanishes on the ideal $I_*'$ (see \cite[Proposition (3.1.2)]{Wey}, whose proof does not require $R=\KK$ to be a field). Hence there is a graded morphism $I_*' \to I(\GG)_*$ whose induced maps on each homogeneous component is a morphism of finite type $R$-modules. To show that $I_*' \to I(\GG)_*$ is an isomorphism, by Nakayama's lemma, it suffices to prove that it induces an isomorphism over each residue field of $R$. Hence we are reduced to the case where $R$ is a field and \eqref{prop:Plucker.relation:Grass-1} is proved. For assertions \eqref{prop:Plucker.relation:Grass-2} and \eqref{prop:Plucker.relation:Grass-3}, 
since $\sO_{\GG}(m)$ is $R$-ample on $\GG$ for any $m>0$, and $\H^0(\GG; \sO_\GG) = R$, the $R$-module $\H^0(\GG; \sO_\GG(m))$ is flat over $R$ and its  formation commutes with base change of $R$ for all $m \ge 0$. 
Hence to show the natural map $A(\GG)_m \to \H^0(\GG; \sO_\GG(m))$ is an isomorphism, it suffices to prove in the special case where $R$ is field. Similarly, since the natural morphism $\Sym^m(\bigwedge\nolimits^d \sV)  \to \H^0(\GG; \sO_{\GG}(m))$ is map of flat $R$-modules and $\Schur_R^{(m^d)}(\sV)$ is $R$-flat, in order to prove assertion \eqref{prop:Plucker.relation:Grass-3} it suffices to verify it over each residue field of $R$. Consequently, the assertion \eqref{prop:Plucker.relation:Grass-3} also follows from the special case where $R$ is a field. 
\end{proof}

\begin{remark} In the statements of Proposition \ref{prop:Plucker.relation:Grass}, we implicitly use the fact that for any integer $m \ge 0$, the canonical projection map
	$\left(\bigwedge\nolimits^d \sV \right)^{\otimes m} \twoheadrightarrow \Schur_R^{(m^d)}(\sV)$ 
factorizes uniquely through surjective morphisms of $R$-modules
	$\left(\bigwedge\nolimits^d \sV \right)^{\otimes m} \twoheadrightarrow \Sym^m_R \left(\bigwedge\nolimits^m \sV \right) \twoheadrightarrow \Schur_R^{(m^d)}(\sV).$
To prove this assertion, we observe that the image of an element $(v_{1}, \ldots, v_{m}) \in  (\bigwedge\nolimits^d \sV)^{\otimes m}$ under the composite map
	$\left(\bigwedge\nolimits^d \sV\right)^{\otimes m} \xrightarrow{\otimes_{j=1}^m \Delta_j'}  
	\sV^{\otimes md}
	\xrightarrow{\otimes_{i=1}^{d} m_i} \left(\Sym_R^m \sV \right)^{\otimes d}$
is invariant under permutations of the $m$ entries $v_1, \ldots, v_m \in \bigwedge\nolimits^d(\sV)$ (here, $\Delta_j'$ and $m_i$ are the comultiplication and multiplication maps for exterior and symmetric products, respectively; see \S \ref{sec:classical_sym}). Therefore, the desired factorization follows from the universal properties of symmetric powers of modules (see \cite[Chapter III, \S 6.3, Proposition 6]{Bou}). 
\end{remark}
 
 \subsubsection{Pl{\"u}cker Relations for Flag Schemes}
 \label{subsec:plucker.relations:flag}
 Next, we consider the Pl{\"u}cker morphism for (partial) flag schemes. Let $R$ be an ordinary commutative ring, $X = \Spec R$, and $\sV= R^n$ a free $R$-module of rank $n \ge 1$ over $X$. Let $\bdd = (d_1, \ldots, d_k)$ be an increasing sequence inside $[1,n]$, and consider the Pl\"ucker morphism for the flag scheme $\FF:= \Flag_X(\sV; \bdd)$,
 	$$\varpi_{\sV, \bdd} \colon \FF= \Flag(\sV; \bdd)  \xrightarrow{~i_{\sV, \bdd}~} \prod_{i=1}^k \Grass_{d_i}(\sV) \xrightarrow{~\prod_{i=1}^k \varpi_{\sV, d_i}~} \PP: = \prod_{i=1}^k \PP(\bigwedge\nolimits^{d_i} \sV),$$
which is a closed immersion and locally of finite presentation, by virtue of Corollary \ref{cor:Plucker:flag:finite} (here, the products denote fiber products over $X =\Spec R$ as usual).

We consider the following generalization of homogeneous coordinate rings of Grassmannians:
\begin{itemize}
	\item Let $S_{*, \ldots, *} = \Sym_R^{*}(\bigwedge\nolimits^{d_1} \sV) \otimes \cdots \otimes \Sym_R^{*}(\bigwedge\nolimits^{d_k} \sV)$ denote the $\ZZ_{\ge 0}^k$-graded homogeneous coordinate ring of $\PP$, with homogeneous component, for each $(m_1, \ldots, m_k) \in \ZZ_{\ge 0}^k$,
	$$S_{m_1, \ldots, m_k} = \Sym_R^{m_1}(\bigwedge\nolimits^{d_1} \sV) \otimes \cdots \otimes \Sym_R^{m_k}(\bigwedge\nolimits^{d_k} \sV).$$
	We define $\sO_{\PP} (m_1, \ldots, m_k): = \sO_{\PP(\bigwedge\nolimits^{d_1} \sV)}(m_1) \boxtimes \cdots \boxtimes  \sO_{\PP(\bigwedge\nolimits^{d_k} \sV)}(m_k) \in \Pic(\PP)$, and for each discrete quasi-coherent sheaf $\sF$ on $\PP$, we can define a $\ZZ^k$-graded $S_{*,\ldots,*}$-module by
		$$\Gamma_{*, \ldots, *}(\sF) = \bigoplus_{(m_1, \ldots,m_k) \in \ZZ^k} \H^0(\PP; \sF \otimes \sO_{\PP}(m_1, \ldots, m_k)).$$
	Then the canonical map
		$$S_{*, \ldots, *} \longrightarrow \Gamma_{*, \ldots, *}(\sO_{\PP}) =\bigoplus_{(m_1, \ldots,m_k) \in \ZZ_{\ge 0}^k} \H^0(\PP; \sO_{\PP}(m_1, \ldots, m_k))$$
	is an isomorphism of $\ZZ_{\ge 0}^{k}$-graded $R$-algebras. 
	\item Let $\sI_\FF \subseteq \sO_{\PP}$ denote the quasi-coherent ideal for the closed immersion $\varpi_{\sV, \bdd} \colon \FF \to \PP$, and let $I(\FF)_{*,\ldots,*}$ denote the multigraded homogeneous ideal defined as the image of the morphism $\Gamma_{*, \ldots, *}(\sI_\FF) \to \Gamma_{*, \ldots, *}(\sO_\PP) = S_{*, \ldots, *}$. We will refer to $I(\FF)_{*, \ldots, *} \subseteq S_{*, \ldots, *}$ as the {\em multigraded homogeneous ideal of $\FF$}. We let $A(\FF)_{*, \ldots, *}  = S_{*, \ldots, *}/I(\FF)_{*, \ldots, *}$ and refer to it as the {\em multigraded homogeneous coordinate ring of $\FF \to \PP$}. The canonical morphism
		$$A(\FF)_{*, \ldots, *} \longrightarrow \bigoplus_{(m_1, \ldots, m_k) \in \ZZ_{\ge 0}^k} \H^0(\FF; \sO_{\FF}(m_1, \ldots, m_k))$$
	is a homomorphism of $\ZZ_{\ge 0}^k$-graded rings, where $\sO_{\FF}(m_1, \ldots, m_k) := \varpi_{\sV, \bdd}^{*} (\sO_{\PP} (m_1, \ldots, m_k))$.
	\end{itemize}

 \begin{proposition}[Pl\"ucker Relations for Flag Schemes; {see \cite[\S 3.1 \& \S 4.2]{Wey}}]
 \label{prop:Plucker.relation:flag}
 In the above situation:
 \begin{enumerate}[leftmargin=*]
	\item  \label{prop:Plucker.relation:flag-1}
	For each pair of integers $a_1 \ge a_2$, where $a_1, a_2 \in \{d_1, \ldots, d_k\}$, we let $\square_{(a_1, a_2)^t}$ denote the morphism between finite free $R$-modules
		$$\square_{(a_1,a_2)^t} \colon \bigoplus_{u,v \ge 0, u+v< a_2} \left(\bigwedge\nolimits^u \sV \oplus \bigwedge\nolimits^{a_1 + a_2 - u - v} \sV \oplus \bigwedge\nolimits^v \sV \right)\to \bigwedge\nolimits^{a_1} \sV \otimes \bigwedge\nolimits^{a_2} \sV$$
		defined as in Notation \ref{notation:square} (that is, it is the sum of the composite maps 
	$$\bigwedge^u \sV \otimes \bigwedge^{a_1 + a_2 - u - v} \sV \otimes \bigwedge^v \sV \xrightarrow{1 \otimes \Delta' \otimes 1} \bigwedge^u \sV \otimes \bigwedge^{a_1-u} \sV \otimes \bigwedge^{a_2 - v} \sV \otimes \bigwedge^v \sV \xrightarrow{m' \otimes m'} \bigwedge^{a_1} \sV  \otimes \bigwedge^{a_2} \sV,$$
	where $\Delta'$ and $m'$ are the comultiplication and multiplication maps for exterior products defined in \S \ref{sec:classical_sym}). Then the multigraded homogeneous ideal $I(\FF)_{*, \ldots, *} \subseteq S_{*, \ldots, *}$ is  generated by the images of ${\rm Im}(\square_{(a_1,a_2)^t})$ inside $S_{*, \ldots, *}$ under the canonical map $\bigwedge^{a_1} \sV \otimes \bigwedge^{a_2} \sV \to S_{*, \ldots, *}$, where $(a_1, a_2)$ runs through all pair of integers $a_1, a_2 \in \{d_1, \ldots, d_k\}$ such that $a_1 \le a_2$.
	
	\item  \label{prop:Plucker.relation:flag-2}
	The canonical morphism
		$${\rm sat} \colon A(\FF)_{*, \ldots, *} \longrightarrow \bigoplus_{(m_1, \ldots, m_k) \in \ZZ_{\ge 0}^k} \H^0\left(\FF; \sO_{\FF}(m_1, \ldots, m_k)\right)$$
	is an isomorphism of $\ZZ_{\ge 0}^k$-graded $R$-algebras.
		
	\item \label{prop:Plucker.relation:flag-3}
	For all $(m_1, \ldots, m_k) \in \ZZ_{\ge 0}^k$, the canonical map 
		\begin{align*}
		\Sym_R^{m_1}(\bigwedge\nolimits^{d_1} \sV) \otimes \cdots \otimes \Sym_R^{m_k}(\bigwedge\nolimits^{d_k} \sV)   \xrightarrow{\sim} \H^0\left(\PP; \sO_\PP(m_1, \ldots, m_k)\right) \to \H^0(\FF; \sO_{\FF}(m_1, \ldots, m_k))
		\end{align*}
	is a surjective map of free $R$-modules, and factorizes through an isomorphism
		$$\Schur^{(m_k^{d_k}, m_{k-1}^{d_{k-1}}, \ldots, m_{1}^{d_1})}(\sV) \xrightarrow{\sim} A(\FF)_{m_1, \ldots, m_k} \xrightarrow{\sim}  \H^0(\Flag(\sV;\bdd); \sO_{\FF}(m_1, \ldots, m_k)),$$
	where $\Schur^{(m_k^{d_k}, m_{k-1}^{d_{k-1}}, \ldots, m_{1}^{d_1})}(\blank)$ is the Schur functor 
	associated with the partition:
	\begin{center}
\begin{tikzpicture}[scale=0.7]
    \draw
     (0,0) -- (0,5) -- (8,5) -- (8,3) -- (6,3) -- (6,2) --  (4,2) -- (4,1) -- (2,1) -- (2,0) -- (0,0);
     \draw[dashed]  
      (2,0) -- (2,5);
     \draw[dashed]
       (4,1) -- (4,5); 
      \draw[dashed]
       (6,2) -- (6,5);
     
     \draw [decorate,decoration={brace,amplitude=8pt, raise=2pt},yshift=0pt]
(0,0) -- (0,5) node [black,midway,xshift=-0.7cm] {\footnotesize
$d_k$};
   
   \draw [decorate,decoration={brace,amplitude=6pt, raise=2pt},yshift=0pt] (6,3) -- (6,4.95) node [black,midway,xshift=-0.6cm] {\footnotesize $d_1$};

     \draw [decorate,decoration={brace,amplitude=8pt, raise=2pt},yshift=0pt]
(2,1) -- (2,4.95) node [black,midway,xshift=-0.8cm] {\footnotesize
$d_{k-1}$};
     
       \draw [decorate,decoration={brace,amplitude=6pt, raise=2pt},yshift=0pt]
(0,5) -- (2,5) node [black,midway,yshift=0.5cm] {\footnotesize
$m_k$};
       \draw [decorate,decoration={brace,amplitude=6pt, raise=2pt},yshift=0pt]
(2,5) -- (4,5) node [black,midway,yshift=0.5cm] {\footnotesize
$m_{k-1}$};

      \node at (5, 5.3) {\footnotesize $\cdots$};
     
            \draw [decorate,decoration={brace,amplitude=6pt, raise=2pt},yshift=0pt]
(6,5) -- (8,5) node [black,midway,yshift=0.5cm] {\footnotesize
$m_{1}$};
     
 \end{tikzpicture}.
\end{center}
\end{enumerate}
 \end{proposition}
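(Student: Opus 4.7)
The plan is to mirror the proof of Proposition \ref{prop:Plucker.relation:Grass}, reducing each of the three assertions to the universal case where $R = \kappa$ is a field. First I would observe that the formulations of all three statements commute with base change of $R$. Indeed, the Pl\"ucker morphism $\varpi_{\sV,\bdd}$ is obtained by composing the closed immersion $i_{\sV,\bdd}$ of Proposition \ref{prop:dflag:into.Grass} with the Pl\"ucker morphisms of the Grassmannian factors (Proposition \ref{prop:Plucker}); both commute with base change by Proposition \ref{prop:flag:functorial}. The classical Schur functors commute with base change by Theorem \ref{thm:Schur:free}\,\eqref{thm:Schur:free-2}, and the multigraded ring $S_{*,\ldots,*}$ is formed from exterior and symmetric powers of the free module $\sV$.

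For assertion \eqref{prop:Plucker.relation:flag-1}, I would let $I'_{*,\ldots,*} \subseteq S_{*,\ldots,*}$ denote the multigraded ideal generated by the images of $\square_{(a_1,a_2)^t}$ for all pairs $a_1 \ge a_2$ in $\{d_1,\ldots,d_k\}$. A direct verification (parallel to \cite[Proposition 3.1.2]{Wey}), which uses only the Hopf-algebra compatibilities between $\Delta'$ and $m'$ and holds over any commutative ring, shows that these relations vanish on $\FF$; hence $I'_{*,\ldots,*} \subseteq I(\FF)_{*,\ldots,*}$. Each homogeneous component of the resulting surjection $S_{*,\ldots,*}/I'_{*,\ldots,*} \twoheadrightarrow A(\FF)_{*,\ldots,*}$ is a map of finitely generated $R$-modules, so by Nakayama it suffices to verify it is an isomorphism after tensoring with every residue field. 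This reduces assertion \eqref{prop:Plucker.relation:flag-1} to the case $R=\kappa$, which is \cite[Proposition 3.1.4]{Wey}.

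For assertions \eqref{prop:Plucker.relation:flag-2} and \eqref{prop:Plucker.relation:flag-3}, the key intermediate step is to show that for each $(m_1,\ldots,m_k) \in \ZZ_{\ge 0}^k$, the $R$-module $\H^0(\FF; \sO_\FF(m_1,\ldots,m_k))$ is flat over $R$, has vanishing higher cohomology, and its formation commutes with base change of $R$. The projection $\FF \to X$ is smooth and proper (Example \ref{eg:flag.bundle}), and the line bundles $\sO_\FF(m_1,\ldots,m_k)$ for $(m_i) \in \ZZ_{\ge 0}^k$ are pullbacks of relatively ample line bundles along the closed immersion $\varpi_{\sV,\bdd}$; their higher cohomology vanishes by classical Kempf vanishing (which, in the flag bundle setting with $\sE=\sV$ a vector bundle, is entirely classical). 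Combined with cohomology-and-base-change, this shows that the functors $R \mapsto A(\FF)_{*,\ldots,*}$, $R \mapsto \H^0(\FF; \sO_\FF(m_1,\ldots,m_k))$, and $R \mapsto \Schur_R^{(m_k^{d_k},\ldots,m_1^{d_1})}(\sV)$ all commute with base change and consist of flat $R$-modules. Hence both the saturation map of \eqref{prop:Plucker.relation:flag-2} and the identification of \eqref{prop:Plucker.relation:flag-3} can be checked after reduction to a field, where they follow from the classical Borel--Weil computation as in \cite[Theorem 4.1.4]{Wey} together with the transpose bookkeeping of Notation \ref{notation:Young.diagrams}.

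The main obstacle is not conceptual but combinatorial: identifying the specific Schur module attached to a multidegree $(m_1,\ldots,m_k)$ as the partition $(m_k^{d_k}, m_{k-1}^{d_{k-1}},\ldots,m_1^{d_1})$ requires matching the Pl\"ucker relations $\square_{(a_1,a_2)^t}$ (indexed by pairs of exterior powers of sizes in $\{d_1,\ldots,d_k\}$) with the defining relations $\square_{\lambda^t/\mu^t}$ of $\Schur^{\lambda}$ from Notation \ref{notation:square}. Once the correct partition shape is pinned down, the reduction to the field case handles the rest.
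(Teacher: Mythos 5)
Your reduction-to-a-field strategy matches the paper's, and the flatness/base-change/Kempf argument for assertions~(2) and~(3) is sound as far as it goes. But there is a concrete gap you have glossed over at the very last step. After reduction to a field, you invoke ``the classical Borel--Weil computation as in \cite[Theorem 4.1.4]{Wey}'' to obtain the isomorphism $\Schur^{(m_k^{d_k},\ldots,m_1^{d_1})}(\sV) \xrightarrow{\sim} \H^0(\FF;\sO_\FF(m_1,\ldots,m_k))$. That reference (the argument on page~122 of Weyman) only establishes this identification under the assumption that \emph{every} $m_i>0$. When some $m_i$ vanish, the relevant line bundle $\sO_\FF(m_1,\ldots,m_k)$ is only semi-ample relative to $X$, and the multi-Proj/Veronese argument of Weyman's proof does not apply directly. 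The paper handles this by an explicit additional reduction: it forms the subsequence $\bdd'=(d_{i_1},\ldots,d_{i_\ell})$ corresponding to the strictly positive $m_{i_j}$, observes via Proposition~\ref{prop:Plucker:flag} and the projection formula that $\sO_\FF(m_1,\ldots,m_k)\simeq \pi_{\bdd',\bdd}^*\bigl(\sO_{\FF'}(m_{i_1},\ldots,m_{i_\ell})\bigr)$ so $\H^0$ agrees with that on the smaller flag scheme $\Flag(\sV;\bdd')$, matches the corresponding Schur modules (the blocks with $m_i=0$ contribute nothing to the partition $(m_k^{d_k},\ldots,m_1^{d_1})$), and only then applies Weyman's cited argument over $\bdd'$ where all multidegrees are positive. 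Your proposal needs this step spelled out; without it, the chain of identifications is not justified for $(m_1,\ldots,m_k)$ with a zero entry.

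A secondary, minor point: for assertion~(1) over a field you cite \cite[Proposition~3.1.4]{Wey}, which is the Grassmannian statement. The flag-scheme analogue used by the paper is \cite[Proposition~3.1.9]{Wey}, and the verification that the mixed-column relations $\square_{(a_1,a_2)^t}$ for $a_1\neq a_2$ vanish on $\FF$ lives at the level of the product of Grassmannians via $i_{\sV,\bdd}$ from Proposition~\ref{prop:dflag:into.Grass}, which is slightly more than the Hopf-algebra identity you invoke for a single $\square_{(2^d)}$. The Nakayama reduction itself is correct once the right reference is in place.
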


\begin{proof}
The same argument as in the proof Proposition \ref{prop:Plucker.relation:Grass} reduces us to the case where $R$ is a field. In this case, the assertion \eqref{prop:Plucker.relation:flag-1} is proved in \cite[Proposition 3.1.9]{Wey}; the cited result also implies that the canonical map in \eqref{prop:Plucker.relation:flag-3} factorizes through
	$$f_\bdd \colon \Schur^{(m_k^{d_k}, m_{k-1}^{d_{k-1}}, \ldots, m_{1}^{d_1})}(\sV) \xrightarrow{\sim}  A(\Flag(\sV;\bdd))_{m_1, \ldots, m_k} \xrightarrow{{\rm sat}}  \H^0(\Flag(\sV;\bdd); \sO_{\FF}(m_1, \ldots, m_k)).$$
To prove assertions \eqref{prop:Plucker.relation:flag-2} and \eqref{prop:Plucker.relation:flag-3}, it suffices to show that $f_\bdd$ is an isomorphism. 
In the case where each $m_i >0$, the assertion that $f_\bdd$ is an isomorphism is proved in \cite[proof of Theorem 4.1.4, page 122]{Wey}. In general, let $m_{i_1}, \ldots, m_{i_\ell} >0$ be {\em all} the {non-zero elements} of $\{m_1, \ldots, m_k\}$ where $i_1 < \ldots < i_\ell$, and let $\bdd' = (d_{i_1}, \ldots, d_{i_\ell})$ be the subsequence of $\bdd$. By virtue of Proposition \ref{prop:Plucker:flag}, there is a commutative diagram
		$$
	\begin{tikzcd} 
		\Schur^{(m_{i_\ell}^{d_\ell},  \ldots, m_{i_1}^{d_{i_1}})}(\sV)  \ar{d}{\simeq} \ar{r}{f_{\bdd'}} & \H^0(\Flag(\sV;\bdd'); \sO_{\FF'}(m_{i_1}, \ldots, m_{i_\ell})) \ar{d}{\simeq} \\
		\Schur^{(m_k^{d_k}, \ldots, m_{1}^{d_1})}(\sV)  \ar{r}{f_\bdd} &  \H^0(\Flag(\sV;\bdd); \sO_{\FF}(m_1, \ldots, m_k))
	\end{tikzcd}
	$$
Here, the first vertical arrow is an isomorphism by our definition of the $m_{i_j}$,  second vertical arrow is an isomorphism by projection formula and the fact that
	$$\sO_{\FF}(m_1, \ldots, m_k) \simeq \pi_{\bdd', \bdd}^*(\sO_{\FF'}(m_{i_1}, \ldots, m_{i_\ell})).$$
The top horizontal map $f_{\bdd'}$ is an isomorphism, by virtue of \cite[page 122, proof of Theorem 4.1.4]{Wey} since each $m_{i_j}>0$. Hence $f_\bdd$ is an isomorphism, and the proposition is proved. 
\end{proof}

\subsubsection{Borel--Weil--Bott Theorem for Grassmannians Bundles and Flag Bundles}
Based on preceding subsections' results, we can present Borel--Weil--Bott theorem in the case of vector bundles over prestacks:

\begin{theorem}[{Borel--Weil--Bott Theorem for Complete Flag Bundles; see \cite[Theorem 4.1.10]{Wey}}]
\label{thm:Bott:flagbundle}
Let $X$ be a prestack, and $\sV$ a vector bundle of rank $n \ge 1$ over $X$. Let 
	$$\pr \colon \Flag_X(\sV; \underline{n}) \to X$$
 denote the derived flag scheme of $\sV$ of type $\underline{n}$ over $X$ (Example \ref{eg:dflag}). Let $\lambda = (\lambda_1 \ge \cdots \ge \lambda_n \ge 0)$ be a partition, let $\sL(\lambda)$ be the associated line bundle  \eqref{eqn:flag:linebundle} and let $\Schur_X^\lambda(\sV)$ be the Schur power of $\sV$ over $X$. Then there exists a canonical map
	$\pr^*(\bigwedge\nolimits^{\lambda^t}(\sV)) \to \sL(\lambda)$
inducing a canonical surjective maps of vector bundles
	$\bigwedge\nolimits^{\lambda^t}(\sV) \twoheadrightarrow \pr_*(\sL(\lambda))$
which factorizes through a canonical isomorphism of vector bundles over $X$:
	\begin{equation}\label{eqn:Bott:flagbundle}
		\Schur_X^\lambda(\sV) \xrightarrow{\simeq} \pr_*(\sL(\lambda))
\end{equation}
\end{theorem}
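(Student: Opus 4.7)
The plan is to reduce to the case $X = \Spec R$ affine and $\sV = R^n$ trivial, and then invoke the classical Pl{\"u}cker presentation of Proposition \ref{prop:Plucker.relation:flag}. The construction of the candidate morphism $\pr^*(\bigwedge^{\lambda^t}\sV) \to \sL(\lambda)$ is compatible with arbitrary base change by Proposition \ref{prop:flag:functorial}(1), and both $\Schur_X^\lambda(\sV)$ (Theorem \ref{thm:Schur:free}) and the derived pushforward $\pr_*(\sL(\lambda))$ commute with base change in $X$. For the latter I use flat base change combined with the fact that the higher derived pushforwards of $\sL(\lambda)$ vanish, which is a form of Kempf's vanishing theorem: it can be established inductively in $n$ by decomposing $\pr$ via Lemma \ref{lem:dflag:forget}(2) as a tower of relative derived projectivizations of vector bundles, for which the line bundles $\sO(m)$, $m \geq 0$, are acyclic.

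Next, I construct the canonical morphism explicitly. For each $j \in \{1, \ldots, \lambda_1\}$, the tautological surjection $\pr^*\sV \twoheadrightarrow \sQ_{\lambda_j^t}$ yields a morphism $\pr^*(\bigwedge^{\lambda_j^t}\sV) \to \det\sQ_{\lambda_j^t} \simeq \sL_1 \otimes \cdots \otimes \sL_{\lambda_j^t}$ by taking top exterior powers. Tensoring these over $j = 1, \ldots, \lambda_1$ and repackaging via the identity $\lambda_i = \#\{j : \lambda_j^t \geq i\}$ produces the desired morphism to $\sL(\lambda) = \bigotimes_i \sL_i^{\lambda_i}$. By adjunction this gives the candidate map $\bigwedge^{\lambda^t}\sV \to \pr_*(\sL(\lambda))$.

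To identify this with an isomorphism $\Schur^\lambda(\sV) \xrightarrow{\sim} \pr_*(\sL(\lambda))$, I work in the affine case and use the projection formula together with $\sL_1 \otimes \cdots \otimes \sL_n \simeq \pr^*\det\sV$ to reduce to $\lambda_n = 0$. Setting $\bdd = \underline{n-1}$ and $m_i = \lambda_i - \lambda_{i+1}$, Remark \ref{rem:plucker.map:linebundle} identifies $\sL(\lambda) \simeq \varpi_{\sV, \bdd}^*(\sO_{\PP}(m_1, \ldots, m_{n-1}))$. Proposition \ref{prop:Plucker.relation:flag}(2)--(3) then supplies a canonical isomorphism $\pr_*(\sL(\lambda)) \simeq A(\Flag(\sV;\bdd))_{m_1, \ldots, m_{n-1}} \simeq \Schur_R^\lambda(\sV)$, and Proposition \ref{prop:Plucker.relation:flag}(1) identifies the kernel of the natural surjection $\bigotimes_i \Sym^{m_i}(\bigwedge^{d_i}\sV) \twoheadrightarrow A(\Flag(\sV;\bdd))_{m_1, \ldots, m_{n-1}}$ with the Pl{\"u}cker ideal, which contains the image of $\square_{\lambda^t}$. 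This yields both the factorization through $\Schur^\lambda$ and the sought-after isomorphism.

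The main obstacle is to verify that the isomorphism supplied by Proposition \ref{prop:Plucker.relation:flag}(3) coincides with the map obtained from the geometric construction in the second paragraph. Both arise from composing exterior-power-to-line-bundle contractions with a cokernel presentation of $\Schur^\lambda$, but matching the two combinatorially demands a diagram chase using the compatibility of Pl{\"u}cker morphisms with the forgetful maps $\pi_{\bdd', \bdd}$ (Proposition \ref{prop:Plucker:flag}) and the explicit multiplication and comultiplication formulas of \S \ref{sec:classical_sym} entering the construction of $\square_{\lambda^t}$. Once verified over $\Spec R$, the base-change argument of the first paragraph globalizes the conclusion to an arbitrary prestack $X$.
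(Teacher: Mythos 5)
The overall plan mirrors the paper's: establish that $\pr_*(\sL(\lambda))$ is a vector bundle by base-changing to $\Spec\ZZ$ and invoking Kempf's vanishing, construct the canonical comparison map globally via the Pl\"ucker morphism and the tautological quotients, and then reduce the isomorphism statement to $\Spec\ZZ$ where it follows from the identification of the multigraded coordinate ring of the flag variety with Schur modules (Proposition \ref{prop:Plucker.relation:flag}). Your reduction to $\lambda_n = 0$ and use of $\bdd = \underline{n-1}$ is a cosmetic variant of the paper's choice of $\bdd = \underline{n}$; both are fine. Your final paragraph correctly identifies that one must check the compatibility of the abstract Pl\"ucker isomorphism with the geometrically constructed map, which the paper handles by design in Proposition \ref{prop:Plucker.relation:flag}\eqref{prop:Plucker.relation:flag-3} (the isomorphism there is produced from the very same morphism $\Sym^{m_1}(\bigwedge^{d_1}\sV) \otimes \cdots \to \H^0(\sO_\FF(m_1, \ldots, m_k))$ induced by the universal quotients).

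However, there is a genuine gap in your treatment of Kempf's vanishing. You assert that the vanishing of higher derived pushforwards of $\sL(\lambda)$ for dominant $\lambda$ can be established ``inductively in $n$ by decomposing $\pr$ via Lemma \ref{lem:dflag:forget}(2) as a tower of relative derived projectivizations of vector bundles, for which the line bundles $\sO(m)$, $m \geq 0$, are acyclic.'' This does not work as stated. After the first (for a rank-$n$ bundle, trivial) step, the pushforward of $\sL(\lambda)$ is no longer a line bundle: pushing forward $\sO(\lambda_{n-1}-\lambda_n)$ along the $\PP^1$-bundle $\Flag(\sV;\underline{n-1}) \to \Flag(\sV;\underline{n-2})$ produces a twist of $\Sym^{\lambda_{n-1}-\lambda_n}(\sR_{n-2})$, where $\sR_{n-2}$ is the rank-two tautological subbundle. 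At the next step one must show that $\sO(m) \otimes \Sym^k(\sR_{n-2})$ is $\pi_*$-acyclic for $m, k \geq 0$ on the $\PP^2$-bundle $\PP(\sR_{n-3})$, and this is no longer an application of Serre's vanishing for line bundles. In fact, the acyclicity of such twisted symmetric powers of tautological subbundles is precisely the content of Kempf's theorem; if your inductive argument went through with only Serre's theorem as input, Kempf's vanishing in positive characteristic would be elementary, which it is not. The paper sidesteps this by citing Kempf's vanishing theorem \cite{Kempf} directly over $\Spec\ZZ$ and then transporting it along the derived base change equivalence. You should do the same, or at least recognize that the intermediate vanishing requires a genuinely separate argument (e.g., Frobenius-splitting methods in positive characteristic) and cannot be reduced to Serre's theorem alone.
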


\begin{remark}[Kempf's vanishing theorem]
\label{rmk:Kempf}
As mentioned in the introduction, our above formulation of Borel--Weil--Bott theorem \ref{thm:Bott:flagbundle} includes Kempf's vanishing theorem \cite{Kempf}   since the pushforward functors are {\em derived} in this paper:
the isomorphism \eqref{eqn:Bott:flagbundle} implies that
	$$\RR^0 \pr_*(\sL(\lambda)) \simeq \Schur^\lambda(\sV) \quad \text{and} \quad \RR^i \pr_*(\sL(\lambda)) =0 \quad \text{for} \quad i\ne0 .$$
\end{remark}

\begin{proof}[Proof of Theorem \ref{thm:Bott:flagbundle}]
We first prove that $\pr_*(\sL(\lambda))$ is a vector bundle on $X$. Since this question is local, we may assume that $X = \Spec A$ and $\sV =A^n$, where $A \in \CAlgDelta$ is a simplicial commutative ring. Since $\sV = A \otimes_{\ZZ} \ZZ^n$, $\Flag_{A}(A^n; \underline{n})$ is the base change of $\Flag_{\ZZ}(\ZZ^n; 
\underline{n})$ along $\ZZ \to A$ (Proposition \ref{prop:flag:functorial}). Since the base-change morphism is an equivalence in the derived context (\cite[Theorem 3.7 (1b)]{J22a}, \cite[Proposition 6.3.4.1]{SAG}), there is a canonical equivalence
	$$(\pr_{\Flag_R(R^n;\underline{n})})_* (\sL_A(\lambda)) \simeq A \otimes_\ZZ  \H^*\big(\Flag_\ZZ(\ZZ^n; \underline{n}) ;  \sL_{\ZZ}(\lambda)\big).$$
(Here, $\sL_A(\lambda)$ and $\sL_{\ZZ}(\lambda)$ denote the constructions of the line bundle \eqref{eqn:flag:linebundle} over the base spaces $\Spec A$ and $\Spec \ZZ$, respectively.) By Kempf's vanishing theorem \cite{Kempf} over $\Spec \ZZ$, 
	$$\H^*\big(\Flag_\ZZ(\ZZ^n; \underline{n}) ; \sL_{\ZZ}(\lambda)\big) \simeq  \H^{0}\big(\Flag_\ZZ(\ZZ^n; \underline{n}) ; \sL_{\ZZ}(\lambda)\big) = :\H^0_\ZZ(\lambda)$$
 is a finite free $\ZZ$-module placed in degree $0$. Hence $\pr_*(\sL(\lambda))$ is a vector bundle.

Next, for any prestack $X$, we construct a canonical and functorial morphism between vector bundles $\bigwedge\nolimits^{\lambda^t} \sV \to \pr_*(\sL(\lambda))$, where $\bigwedge\nolimits^{\lambda^t} \sV$ denote the vector bundle 
	$$\bigwedge\nolimits^{\lambda^t} \sV: = \bigotimes_{j=1}^{\lambda_1} \big(\bigwedge\nolimits^{\lambda_j^t} \sV\big) = \bigotimes_{i=1}^{n} \big(\bigwedge\nolimits^i \sV \big)^{\otimes (\lambda_i - \lambda_{i+1})}$$
as in Remark \ref{rem:Schur.Weyl.as.images} (we set by convention $\lambda_{n+1}=0$). Consider the Pl\"ucker morphism
	$$	
	\varpi_{\sV, \underline{n}} \colon \Flag(\sV; \underline{n}) \xrightarrow{~i_{\sV, \underline{n}}~}  \prod_{i=1}^n \Grass_{i}(\sV) \xrightarrow{~\prod_{i=1}^n \varpi_{\sE, i}~}  \prod_{i=1}^n \PP(\bigwedge\nolimits^{i} \sV)
	$$
of Corollary \ref{cor:Plucker:flag:finite} (where the products denote fiber products over $X$ as in Corollary \ref{cor:Plucker:flag:finite}). By virtue of Remark \ref{rem:plucker.map:linebundle}, we have a canonical equivalence
\begin{align*}
	\sL(\lambda) \simeq \varpi_{\sV, \underline{n}}^*\left(\bigotimes_{i=1}^n \sO_{\PP(\bigwedge^{i} \sV)} (\lambda_i - \lambda_{i+1}) \right).
\end{align*}
For each $1 \le i \le n$, let $\varphi_i \colon \pr^*(\sV) \to \sQ_i$ denote the universal quotient morphism, where $\sQ_i$ is the universal quotient bundle of rank $i$ (Notation \ref{not:dflag}). Then the canonical morphisms
	$$\pr^*\big(\bigwedge\nolimits^i \sV\big) \simeq \bigwedge\nolimits^i \pr^*( \sV) \xrightarrow{\wedge^i \varphi_i} \bigwedge\nolimits^i \sQ_i \to \varpi_{\sV, \underline{n}}^*(\sO_{\PP(\bigwedge^i \sV)}(i)) \simeq \sL_1 \otimes \cdots \otimes \sL_i$$
for all $i=1, 2, \ldots, n$ induce a canonical morphism 
	\begin{align*}
		\pr^* \big(\bigwedge\nolimits^{\lambda^t} \sV \big) \simeq  \bigotimes_{i=1}^n  \big(\bigwedge\nolimits^i \pr^* (\sV) \big)^{\otimes (\lambda_i - \lambda_{i+1})}  \xrightarrow{\otimes_i \wedge^i(\varphi_i)} \bigotimes_{i=1}^n \big(\bigwedge\nolimits^i \sQ \big)^{\otimes (\lambda_i - \lambda_{i+1})}  \to \sL(\lambda).
	\end{align*}
It is clear from the above construction that the formation of the above morphism commutes with base change of prestacks. By adjunction, we obtain the desired morphism $\bigwedge\nolimits^{\lambda^t} \sV \to \pr_*(\sL(\lambda))$ over $X$ whose formation commutes with base change of prestacks. 

It remains to show that the canonical morphism $\bigwedge\nolimits^{\lambda^t} \sV \to \pr_*(\sL(\lambda))$ between vector bundles is surjective and factorizes through an isomorphism $\Schur^{\lambda}(\sV) \simeq  \pr_*(\sL(\lambda))$. Since this  assertion is local with respect to Zariski topology on $X$, we may again reduce to the case where $X = \Spec A$ and $\sV =A \otimes_\ZZ \ZZ^n$, where $A \in \CAlgDelta$ is a simplicial commutative ring. Since the pullback functor $f^* \colon \QCoh(Y) \to \QCoh(X)$ for the morphism $f \colon X= \Spec R \to Y=\Spec \ZZ$  restricts to an exact functor $f^*\colon \Vect(Y) \to \Vect(X)$ between the full subcategories spanned by vector bundles. Therefore, it suffices to prove the desired assertion in the special case where $X = \Spec \ZZ$. In this case, let $m_i= \lambda_i - \lambda_{i+1}$ for $i=1, \ldots, n$ (where we set $\lambda_{n+1} = 0$) and $\bdd = \underline{n}$, then the partition $(m_{n}^n, m_{n-1}^{n-1}, \ldots, m_1^{1})$ of Proposition \ref{prop:Plucker.relation:flag} \eqref{prop:Plucker.relation:flag-3} is precisely the partition $\lambda$. Hence the desired assertion follows from Proposition \ref{prop:Plucker.relation:flag} \eqref{prop:Plucker.relation:flag-3}.
\end{proof}

\begin{remark}[Alternative Proof via Reduction to $B {\rm GL}_{n}(\ZZ)$] As we have seen in the above proof of Theorem \ref{thm:Bott:flagbundle}, the formation of the Theorem \ref{thm:Bott:flagbundle} commutes with base change of prestacks. Therefore, we may reduce to the cases where $X = B{\rm GL}_{n}(\ZZ)$.  
In this case, we may deduce the desired assertion from Donkin's work \cite[\S 2.7]{Do}. (Notice that, although \cite[\S 2.7 (5)]{Do} only asserts the existence of an isomorphism $\Schur^{\lambda}_\ZZ (\ZZ^n) \simeq Y_{\ZZ}(\lambda)$ without specifying the morphism, we expect that its argument could be used to show that the canonical morphism $\bigwedge\nolimits^{\lambda^t} (\ZZ^n) \to \H^0(\Flag_\ZZ(\ZZ^n; \underline{n})$ induces via adjunction a canonical isomorphism \eqref{eqn:Bott:flagbundle}.)
\end{remark}

\begin{corollary}[{Borel--Weil Theorem for Grassmannian Bundles; see \cite[Corollary 4.1.12]{Wey}}]
\label{cor:Bott:Grassbundle}
Let $X$ be a prestack, and $\sV$ a vector bundle of rank $n \ge 1$ over $X$, and let $1 \le d \le n$ be an integer. Let 
	$\pr \colon \Grass_X(\sV; d) \to X$
 denote the derived Grassmannian of $\sV$ of rank $d$ over $X$, with tautological quotient morphism $\rho \colon \pr^*(\sV) \to \sQ$, where $\sQ$ is the universal quotient bundle of rank $d$. Let $\lambda = (\lambda_1 \ge \cdots \ge \lambda_d \ge 0)$ be any partition, then the canonical morphism
 	$$\pr^*(\Schur_X^{\lambda}(\sV)) \simeq \Schur^{\lambda}_{\Grass(\sV;d)}(\pr^*(\sV)) \xrightarrow{\Schur^{\lambda}(\rho)} \Schur_{\Grass(\sV;d)}^{\lambda}(\sQ)$$
induces a canonical isomorphism of vector bundles over $X$:
	$$\Schur_{X}^{\lambda}(\sV) \xrightarrow{\simeq} \pr_*(\Schur_{\Grass(\sV;d)}^{\lambda}(\sQ)).$$
\end{corollary}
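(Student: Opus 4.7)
The plan is to reduce Corollary \ref{cor:Bott:Grassbundle} to the complete flag case (Theorem \ref{thm:Bott:flagbundle}) by factoring $\pr$ through a larger complete flag scheme. Write $\mathrm{fl} \colon \Flag_X(\sV;\underline{n}) \to X$ for the complete flag scheme and $\pi \colon \Flag_X(\sV;\underline{n}) \to \Grass_X(\sV;d)$ for the forgetful morphism, so that $\mathrm{fl} = \pr \circ \pi$. Because $\sV$ is a vector bundle, both $\sQ$ and $\sR = \fib(\rho)$ are vector bundles on $\Grass(\sV;d)$, of ranks $d$ and $n-d$ respectively, and by Example \ref{eg:forget:flag.to.Grass} the morphism $\pi$ is canonically identified with $\Flag(\sR;\underline{n-d}) \times_{\Grass(\sV;d)} \Flag(\sQ;\underline{d})$; let $r$ and $q$ denote its two projections and write $\mathrm{fl}_\sR, \mathrm{fl}_\sQ$ for the associated relative complete flag bundles over $\Grass(\sV;d)$.

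First I would extend $\lambda=(\lambda_1,\ldots,\lambda_d)$ by zeros to $\lambda' = (\lambda_1,\ldots,\lambda_d,0,\ldots,0)\in\ZZ^n$ and invoke Corollary \ref{prop:dflag:forget} to identify each line bundle $\sL_s$ on $\Flag(\sV;\underline{n})$, for $1 \le s \le d$, as the $q$-pullback of the $s$-th tautological line bundle on $\Flag(\sQ;\underline{d})$; the vanishing of $\lambda'_s$ for $s > d$ then gives $\sL(\lambda') \simeq q^* \sL_{\Flag(\sQ;\underline{d})}(\lambda)$. Next, Theorem \ref{thm:Bott:flagbundle} applied to the trivial partition on the flag bundle $\mathrm{fl}_\sR$ yields $(\mathrm{fl}_\sR)_* \sO \simeq \sO_{\Grass(\sV;d)}$ (a form of Kempf vanishing, cf.\ Remark \ref{rmk:Kempf}); derived base change along the Cartesian square for $\pi$ then gives $q_* \sO_{\Flag(\sV;\underline{n})} \simeq \sO_{\Flag(\sQ;\underline{d})}$. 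Combining this with the projection formula produces $q_* \sL(\lambda') \simeq \sL_{\Flag(\sQ;\underline{d})}(\lambda)$, and a second appeal to Theorem \ref{thm:Bott:flagbundle} gives $(\mathrm{fl}_\sQ)_* \sL_{\Flag(\sQ;\underline{d})}(\lambda) \simeq \Schur^\lambda(\sQ)$. Pushing these identifications together along $\pr \circ \pi = \mathrm{fl}$ and applying Theorem \ref{thm:Bott:flagbundle} one last time to $\mathrm{fl}$ with the weight $\lambda'$ yields the required chain of equivalences $\pr_* \Schur^\lambda(\sQ) \simeq \mathrm{fl}_* \sL(\lambda') \simeq \Schur^{\lambda}(\sV)$.

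The remaining task is to identify this composite with the canonical morphism of the corollary, adjoint to $\Schur^\lambda(\rho) \colon \Schur^\lambda(\pr^*\sV) \to \Schur^\lambda(\sQ)$. Since every functor involved (the $\dSchur$, pullbacks, and pushforwards along smooth proper morphisms) commutes with base change of prestacks (Propositions \ref{prop:flag:functorial} and \ref{prop:dSchur:basechange}), it is enough to check the identification at the universal example $X = \Spec \ZZ$, $\sV = \ZZ^n$, where the naturality of the canonical morphisms in Theorem \ref{thm:Bott:flagbundle} pins down both the source and the target of the comparison, and Proposition \ref{prop:Plucker.relation:flag} provides the classical identification of sections. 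I expect the main obstacle to be keeping the adjunctions and comparison morphisms coherent at the $\infty$-categorical level; the derived base change step for $q$ is robust thanks to the smoothness and properness of $\mathrm{fl}_\sR$ (Example \ref{eg:flag.bundle}), but carefully tracing that the canonical map of the corollary factors through the construction above is the subtlest part of the argument.
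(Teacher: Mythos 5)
Your proposal is correct and takes essentially the same approach as the paper: factor the Grassmannian projection through the complete flag scheme via Example \ref{eg:forget:flag.to.Grass}, extend $\lambda$ by zeros, and conclude from Theorem \ref{thm:Bott:flagbundle} together with the projection formula. You have merely unpacked the paper's one-sentence argument into explicit intermediate pushforwards along $q$, $\mathrm{fl}_\sQ$ and $\mathrm{fl}_\sR$, and added a check that the resulting equivalence agrees with the canonical morphism, which the paper's proof leaves implicit.
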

\begin{proof}
By virtue of Example \ref{eg:forget:flag.to.Grass}, the forgetful map
	$\pi_{(d), \underline{n}} \colon \Flag_X(\sV; \underline{n}) \to \Grass_X(\sV; d)$
identifies $\Flag_X(\sV; \underline{n})$ with the fiber product 
	$\Flag(\fib(\rho); \underline{n-d}) \times_{\Grass(\sV;d)} \Flag(\sQ; \underline{d}).$
Therefore, the desired results follow from Theorem \ref{thm:Bott:flagbundle} and the projection formula. 
\end{proof}

\begin{corollary}[Borel--Weil Theorem for Grassmannian Bundles: the Case of Skew Partitions]
\label{cor:Bott:Grassbundle.skew}
Assume we are in the same situation as Corollary \ref{cor:Bott:Grassbundle}. Let $\lambda = (\lambda_1 \ge \cdots \ge \lambda_d \ge 0)$ and $\mu = (\mu_1 \ge \cdots \ge \mu_d \ge 0)$ be a pair of partitions such that $\mu \subseteq \lambda$ (that is, $\mu_i \le \lambda_i$ for all $i$). Then the canonical morphism
 	$$\pr^*(\Schur_X^{\lambda/\mu}(\sV)) \simeq \Schur_{\Grass(\sV;d)}^{\lambda/\mu}(\pr^*(\sV)) \xrightarrow{\Schur^{\lambda/\mu}(\rho)} \Schur_{\Grass(\sV;d)}^{\lambda/\mu}(\sQ)$$
induces a canonical isomorphism of vector bundles
	$$\Schur_X^{\lambda/\mu}(\sV) \xrightarrow{\simeq} \pr_*(\Schur_{\Grass(\sV;d)}^{\lambda/\mu}(\sQ)).$$
\end{corollary}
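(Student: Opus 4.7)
The plan is to reduce the skew case to the straight-partition case (Corollary \ref{cor:Bott:Grassbundle}) by means of the Decomposition Rule for Skew Partitions (Theorem \ref{thm:fil:dSchur_LR}\eqref{thm:fil:dSchur_LR-1}), globalized over prestacks as in \S \ref{sec:dSchurdWeyl.prestacks}. A key preliminary observation: since both $\mu$ and $\lambda$ are listed with at most $d$ parts, every partition $\tau$ appearing with $c_{\mu,\tau}^{\lambda}\neq 0$ also has at most $d$ parts (because LR skew tableaux on $\lambda/\mu$ use entries $\le$ the number of rows of $\lambda$). Hence $\Schur^{\tau^i}(\sQ)$ will always be a vector bundle of the sort covered by Corollary \ref{cor:Bott:Grassbundle}.

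First, apply Theorem \ref{thm:fil:dSchur_LR}\eqref{thm:fil:dSchur_LR-1} over the prestack $\Grass(\sV;d)$ to the tautological morphism $\rho\colon \pr^*(\sV)\to \sQ$. Functoriality of the construction in the input complex yields a commutative ladder
\begin{equation*}
\begin{tikzcd}[column sep=1.6em,row sep=1.1em]
\sF^0(\pr^*\sV)\ar{r}\ar{d} & \sF^1(\pr^*\sV)\ar{r}\ar{d} & \cdots \ar{r} & \sF^{\ell-1}(\pr^*\sV)=\Schur^{\lambda/\mu}(\pr^*\sV)\ar{d}{\Schur^{\lambda/\mu}(\rho)} \\
\sF^0(\sQ)\ar{r} & \sF^1(\sQ)\ar{r} & \cdots \ar{r} & \sF^{\ell-1}(\sQ)=\Schur^{\lambda/\mu}(\sQ)
\end{tikzcd}
\end{equation*}
whose induced map on the $i$-th cofiber is $\Schur^{\tau^i}(\rho)\colon \Schur^{\tau^i}(\pr^*\sV)\to\Schur^{\tau^i}(\sQ)$, where $(\tau^i)_{i=0}^{\ell-1}$ is the sequence of partitions from Theorem \ref{thm:fil:dSchur_LR}. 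Separately, applying the same theorem on $X$ to $\sV$ itself produces a canonical sequence $\sF^{\bullet}(\sV)$ on $X$ with cofibers $\Schur^{\tau^i}(\sV)$ and terminal term $\Schur^{\lambda/\mu}(\sV)$.

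Next, apply $\pr_*$. Since $\pr$ is smooth and proper (by Example \ref{eg:flag.bundle} applied to the Grassmannian $\Grass(\sV;d)\to X$), $\pr_*$ is well defined on $\QCoh$ and preserves cofiber sequences of perfect complexes. Combining the unit $\id\to \pr_*\pr^*$ with the base-change compatibility asserted in Theorem \ref{thm:fil:dSchur_LR}, I obtain a canonical map of sequences on $X$,
\begin{equation*}
\sF^{\bullet}(\sV)\longrightarrow \pr_*\pr^*\sF^{\bullet}(\sV)\simeq \pr_*\sF^{\bullet}(\pr^*\sV)\xrightarrow{\;\pr_*\Schur^{\lambda/\mu}(\rho)\;}\pr_*\sF^{\bullet}(\sQ),
\end{equation*}
whose terminal component is precisely the map in the statement of the corollary. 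The induced map on the $i$-th cofiber is the canonical map $\Schur^{\tau^i}(\sV)\to \pr_*\Schur^{\tau^i}(\sQ)$, which is an equivalence by Corollary \ref{cor:Bott:Grassbundle} applied to each partition $\tau^i$.

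Finally, I conclude by induction on $i$. The base case $i=0$ is Corollary \ref{cor:Bott:Grassbundle} applied to $\tau^0$, since $\sF^0 = \Schur^{\tau^0}$. For the inductive step, the cofiber sequence $\sF^{i-1}\to\sF^i\to\Schur^{\tau^i}$ is sent by $\pr_*$ to a cofiber sequence, and two of the three maps in the ladder are equivalences (by the inductive hypothesis and by the cofiber computation just made), so the third, $\sF^i(\sV)\to\pr_*\sF^i(\sQ)$, is an equivalence as well. Taking $i=\ell-1$ gives the desired equivalence $\Schur^{\lambda/\mu}(\sV)\xrightarrow{\sim}\pr_*\Schur^{\lambda/\mu}(\sQ)$.

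The main obstacle is bookkeeping: making precise that the canonical map produced by the unit of adjunction matches, cofiber-by-cofiber, the canonical maps produced by Corollary \ref{cor:Bott:Grassbundle}. This relies on the naturality of the sequence of Theorem \ref{thm:fil:dSchur_LR} both in the complex and under base change of prestacks, together with the projection formula. All pieces are in place; the verification is a diagram chase rather than a new computation.
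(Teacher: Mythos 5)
Your proof is correct and takes essentially the same route as the paper: both reduce the skew case to Corollary \ref{cor:Bott:Grassbundle} via the canonical filtration of Theorem \ref{thm:fil:dSchur_LR}\eqref{thm:fil:dSchur_LR-1}, compare the resulting cofiber sequences over $X$ and over $\Grass(\sV;d)$, and conclude by induction using two-out-of-three for equivalences. Your preliminary observation that each $\tau^i$ has at most $d$ parts (so that Corollary \ref{cor:Bott:Grassbundle} applies) is implicit in the paper's proof and is a worthwhile point to make explicit; it follows immediately from $\tau^i\subseteq\lambda$.
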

\begin{proof}
Let $\GG = \Grass(\sV;d)$. By virtue of Theorem \ref{thm:fil:dSchur_LR}  \eqref{thm:fil:dSchur_LR-1}, there are canonical sequences of morphisms of complexes on $X$ and  $\GG$, respectively:
		$$\sF^{0}(X, \sV) \to  \sF^{1}(X, \sV) \to \cdots \to \sF^{\ell-1}(X, \sV)$$
		$$\sF^{0}(\GG; \sQ) \to  \sF^{1}(\GG, \sQ) \to \cdots \to \sF^{\ell-1}(\GG, \sQ)$$
such that there are canonical equivalences
		$$\sF^{0}(X, \sV) \simeq \dSchur^{\tau^0}_X(\sV)  \qquad \sF^{\ell-1}(X, \sV) = \dSchur_X^{\lambda/\mu}(\sV)$$ 
		$$\sF^{0}(\GG, \sQ) \simeq \dSchur^{\tau^0}_{\GG}(\sQ) \qquad \sF^{\ell-1}(\GG, \sQ) = \dSchur_{\GG}^{\lambda/\mu}(\sQ)$$
			$$\cofib\big(\sF^{i-1}(X,\sV) \to \sF^{i}(X, \sV)\big) \simeq \dSchur_X^{\tau^i}(\sV)  \quad \text{for} \quad 1 \le i \le \ell-1$$
			$$\cofib\big(\sF^{i-1}(\GG,\sQ) \to \sF^{i}(\GG, \sQ)\big) \simeq \dSchur_{\GG}^{\tau^i}(\sQ)  \quad \text{for} \quad 1 \le i \le \ell-1.$$
Here, $\tau^i \subseteq \lambda$ are the partitions defined in Theorem \ref{thm:fil:dSchur_LR}  \eqref{thm:fil:dSchur_LR-1}.

We prove by induction that the morphism $\rho \colon \pr^*(\sV) \to \sQ$ induces a canonical equivalence $\sF^i(X, \sV) \to \pr_*\big( \sF^i(\GG, \sV)\big)$ for all $0 \le i \le \ell-1$. The case $i=0$ follows from Corollary \ref{cor:Bott:Grassbundle}. If $1 \le i \le \ell-1$, then $\rho \colon \pr^*(\sV) \to \sQ$ induces a diagram of cofiber sequences:
	$$
	\begin{tikzcd} 
		\sF^{i-1}(X, \sV) \ar{d} \ar{r}& \sF^i(X, \sV) \ar{d} \ar{r} & \dSchur_X^{\tau^i}(\sV) \ar{d} \\
		\pr_*\big(\sF^{i-1}(\GG, \sV)\big) \ar{r}& \pr_*\big( \sF^i(\GG, \sV)\big)  \ar{r} & \pr_* \big( \dSchur_{\GG}^{\tau^i}(\sQ)),
	\end{tikzcd}
	$$
in which the first vertical arrow is an equivalence by induction hypothesis and the last vertical arrow is an equivalence by Corollary \ref{cor:Bott:Grassbundle}. Therefore, the middle vertical arrow is an equivalence, completing the induction step. The case $i=\ell-1$  implies the desired assertion. 
\end{proof}

We could also obtain from Theorem \ref{thm:Bott:flagbundle} the corresponding statements for partial flag bundles (see \cite[Theorem 4.1.8]{Wey}) as well as refinements of Corollary \ref{cor:Bott:Grassbundle}. As the general theorems in the following subsections will subsume these results, we do not include them here.

\subsection{Borel--Weil--Bott Theorem for Derived Grassmannians: a Preliminary Version}
\label{sec:dBott.Pre.Grass}
The goal of this subsection is to establish the following preliminary version of derived  Borel--Weil--Bott theorem:

\begin{theorem}
\label{thm:Bott:dGrass}
Let $X$ be a prestack, let $\sE$ be a perfect complex of constant $\rank \sE \ge 1$ and Tor-amplitude in $[0,1]$ over a prestack $X$, and let $d$ be an integer satisfying $1 \le d \le \rank \sE$. We let $\pr \colon \Grass(\sE;d) \to X$ denote the derived Grassmannian and let $\rho \colon \pr^*(\sE) \to \sQ$ denote tautological morphism, where $\sQ$ is the universal vector bundle of rank $d$. Then for any partition $\lambda = (\lambda_1 \ge \ldots \ge \lambda_d \ge 0)$, the canonical morphism
	$$ \pr^*(\dSchur_X^{\lambda} (\sE)) \simeq \dSchur_{\Grass(\sE;d)}^{\lambda} (\pr^*(\sE)) \xrightarrow{\dSchur^{\lambda}(\rho)} \Schur_{\Grass(\sE;d)}^{\lambda} (\sQ)$$
 in $\QCoh(\Grass(\sE;d))$ induces a canonical equivalence in $\Perf(X)$
	$$\dSchur_X^{\lambda} (\sE) \xrightarrow{\simeq} \pr_*(\Schur_{\Grass(\sE;d)}^{\lambda} (\sQ)).$$
\end{theorem}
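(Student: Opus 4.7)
The plan is a base-change reduction to the universal local case, followed by combining the universal acyclicity of Akin--Buchsbaum--Weyman's Schur complexes with the classical Borel--Weil theorem on a vector bundle Grassmannian via a Koszul resolution.

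First I would verify that the canonical map $\dSchur^\lambda(\sE) \to \pr_*\Schur^\lambda(\sQ)$ (obtained by adjunction from $\dSchur^\lambda(\rho)$) commutes with arbitrary base change of prestacks: the left-hand side by Proposition~\ref{prop:dSchur:basechange} (globalized as in \S\ref{sec:dSchurdWeyl.prestacks}), the right-hand side by derived proper base change, which applies because $\pr$ is proper, locally of finite presentation and locally of finite Tor-amplitude by Corollary~\ref{cor:dpflag:perfect.amp<=1}. Working Zariski-locally on $X$ and using Proposition~\ref{prop:Grass-4,5}, I may assume $X = \Spec R$ and $\sE \simeq \cofib(\phi\colon R^m \to R^n)$ with $n - m = \rank \sE \ge d$. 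Pulling back along the classifying morphism $\Spec R \to \Spec \ZZ[X_{ij}]$ of $\phi$ reduces the problem to the \emph{universal local case}: $R = \ZZ[X_{ij}]_{1 \le i \le m,\, 1 \le j \le n}$ with $\phi$ the tautological matrix.

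For the algebraic side in this universal case, the inequality $\lambda_1^t \le d \le n - m + 1$ lets me apply Theorem~\ref{thm:acyclic}(\ref{thm:acyclic-2}): the Schur complex $\bSchur^\lambda(\phi)$ together with the canonical quotient is a finite projective resolution of $\Schur^\lambda_R(\coker \phi) = \Schur^\lambda_R(\pi_0\sE)$. Combined with Proposition~\ref{prop:dSchur_vs_bSchur}, which identifies $\dSchur^\lambda(\sE) \simeq \bSchur^\lambda(\phi)$ in the derived category, this gives $\dSchur^\lambda(\sE) \simeq \Schur^\lambda_R(\pi_0\sE)$, a discrete module. The geometric side is set up via the surjection $\sV_0 := R^n \twoheadrightarrow \sE$, which by Proposition~\ref{prop:Grass:PB} produces a closed immersion $\iota\colon \Grass(\sE;d) \hookrightarrow \GG := \Grass(\sV_0;d)$ with $\iota^*\sQ(\sV_0) \simeq \sQ(\sE)$, realizing $\Grass(\sE;d)$ as the derived zero locus of the cosection $s\colon \sW := \pr_\GG^*(\sV_1) \otimes \sQ(\sV_0)^\vee \to \sO_\GG$ induced by $\phi$ (where $\sV_1 := R^m$). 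Writing $p = \pr_\GG\colon \GG \to X$, the projection formula applied to $\iota$, together with the Koszul resolution $\iota_*\sO_{\Grass(\sE;d)} \simeq K_\bullet(s) = [\bigwedge^{md}\sW \to \cdots \to \sW \to \sO_\GG]$, yields
\[
\pr_*\Schur^\lambda(\sQ(\sE)) \;\simeq\; p_*\bigl(\Schur^\lambda(\sQ(\sV_0)) \otimes K_\bullet(s)\bigr).
\]

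To evaluate the right-hand side, I would decompose each $\bigwedge^k(\sV_1 \otimes \sQ(\sV_0)^\vee)$ via the Cauchy-type filtration (Theorem~\ref{thm:fil:dsym_otimes}), whose associated graded pieces are $\Schur^\mu(\sV_1) \otimes \Weyl^{\mu^t}(\sQ(\sV_0)^\vee)$ for $|\mu|=k$. Since $\sV_1$ is pulled back from $X$, projection formula factors $\Schur^\mu(\sV_1)$ out of $p_*$; the remaining factor $p_*\bigl(\Schur^\lambda(\sQ(\sV_0)) \otimes \Weyl^{\mu^t}(\sQ(\sV_0)^\vee)\bigr)$ is computed using Serre duality $\bigwedge^j\sQ(\sV_0)^\vee \simeq (\det \sQ(\sV_0))^{-1} \otimes \bigwedge^{d-j}\sQ(\sV_0)$, the Littlewood--Richardson decomposition for products of Schur powers, and finally Corollary~\ref{cor:Bott:Grassbundle.skew} (the classical Borel--Weil for skew partitions on the Grassmannian bundle $p\colon \GG \to X$), producing an explicit sum of Schur powers of $\sV_0$ on $X$. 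Assembling these with the Koszul differentials yields a complex on $X$ which must be identified canonically with $\bSchur^\lambda(\phi\colon \sV_1 \to \sV_0)$ via the combinatorial description recalled in \S\ref{sec:bSchur}; combined with the algebraic computation above, this gives the theorem.

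The main obstacle will be this last combinatorial identification: matching the Koszul-twisted pushforward complex with the Schur complex $\bSchur^\lambda(\phi)$, including verification that the respective differentials agree. The combinatorics underlying this match is essentially the content of Weyman's geometric technique (\cite[Theorem 4.1.4]{Wey}), but checking the comparison functorially in the derived setting requires careful bookkeeping of the Cauchy and Littlewood--Richardson decompositions and their compatibility with the Koszul differentials. An alternative finish that bypasses the explicit matching is to argue directly that $\pr_*\Schur^\lambda(\sQ(\sE))$ is concentrated in cohomological degree zero and equal to $\Schur^\lambda_R(\pi_0\sE)$ by invoking classical Kempf-vanishing-type arguments on the classical Grassmannian of the sheaf $\pi_0\sE$ together with Proposition~\ref{prop:Plucker.relation:Grass}, then appealing to Step~2 to conclude equality with $\dSchur^\lambda(\sE)$ via the canonical morphism.
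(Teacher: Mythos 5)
Your proposal is in the same family as the paper's proof—base-change reduction to the universal local case, Koszul resolution, comparison with the Schur complex—but you try to carry out the last step much more heavily than is needed, and your identified ``main obstacle'' (matching all differentials of the two complexes) is in fact avoidable. The paper's argument (Lemma~\ref{lem:Koszul_vs_bSchur}, Corollary~\ref{cor:Koszul_vs_bSchur}) rests on a simple observation: since $\Kos^{\lambda}_*(X;\rho)$ and $\bSchur^{\lambda}(\phi)$ are both finite free chain complexes concentrated in non-negative homological degrees that are \emph{acyclic} (each is a resolution of its $H_0$: the former by Lemma~\ref{lem:Koszul:lambda:acyclic}, the latter by Theorem~\ref{thm:acyclic}\eqref{thm:acyclic-2}), to show they are quasi-isomorphic it suffices to identify their zeroth homologies, and for that one needs only to compare the degree-$0$ and degree-$1$ components together with the single differential $d_1$. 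This is the content of Lemma~\ref{lem:Koszul_vs_bSchur}, and its combinatorial input is the short exact sequence for Schur modules in Lemma~\ref{lem:lambda/1tolambda}. The Cauchy/Littlewood--Richardson/Kempf machinery you invoke \emph{is} used in the paper, but only to prove the intermediate degeneration and vanishing results (Lemmas~\ref{lem:vanish:G:koszul} and~\ref{lem:Koszul:lambda:acyclic}), not to match the entire complexes term by term. Your ``alternative finish'' is closer to what the paper actually does, but two details are off: (i) Kempf-vanishing is applied to the complete flag bundle $\Flag_R(R^n;\underline{n})$ of a \emph{vector bundle}, after first recognizing that $\Grass(\sE;d) \simeq \VV_{\GG}(W \otimes \sR_{\GG}^\vee)$ is a vector bundle over $\GG = \Grass_R(V;d)$ (proof of Lemma~\ref{lem:Koszul:lambda:acyclic})—Kempf's theorem does not apply to ``the classical Grassmannian of the sheaf $\pi_0\sE$,'' which can be singular; (ii) Proposition~\ref{prop:Plucker.relation:Grass} concerns the homogeneous coordinate ring of a vector-bundle Grassmannian and is not how $H_0(\Kos^{\lambda}_*)$ is identified with $\Schur^{\lambda}_R(\pi_0\sE)$—that identification comes from the isomorphisms $f_0, f_1$ of Lemma~\ref{lem:Koszul_vs_bSchur}. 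A final minor point: when you apply the Cauchy filtration to $\bigwedge^{k}(\sV_1 \otimes \sQ^\vee)$, the useful placement of the Schur/Weyl factors is $\Schur^{\mu}(\sQ^\vee) \otimes \Weyl^{\mu^t}(\sV_1)$ (not $\Schur^\mu(\sV_1) \otimes \Weyl^{\mu^t}(\sQ^\vee)$), since it is $\Schur^\mu(\sQ^\vee)$ that can be converted to $\Schur^{(n-d)-\mu}(\sQ) \otimes \det(\sQ)^{-\otimes(n-d)}$ and fed into Borel--Weil.
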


\begin{corollary}
\label{cor:Bott:dGrass}
Assume we are in the same situation as Theorem \ref{thm:Bott:dGrass}. Let $\lambda = (\lambda_1 \ge \cdots \ge \lambda_d \ge 0)$ and $\mu = (\mu_1 \ge \cdots \ge \mu_d \ge 0)$ be a pair of partitions such that $\mu \subseteq \lambda$. Then the canonical morphism in  $\QCoh(\Grass(\sE;d))$,
 	$$\pr^*(\dSchur_X^{\lambda/\mu}(\sE)) \simeq \dSchur_{\Grass(\sE;d)}^{\lambda/\mu}(\pr^*(\sE)) \xrightarrow{\Schur^{\lambda/\mu}(\rho)} \Schur_{\Grass(\sE;d)}^{\lambda/\mu}(\sQ),$$
 induces a canonical equivalence of perfect complexes:
 	$$\dSchur_X^{\lambda/\mu}(\sE) \xrightarrow{\simeq} \pr_*(\Schur_{\Grass(\sE;d)}^{\lambda/\mu}(\sQ)).$$
\end{corollary}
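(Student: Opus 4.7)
The plan is to bootstrap from the usual-partition case (Theorem~\ref{thm:Bott:dGrass}) by resolving $\dSchur^{\lambda/\mu}$ via the derived Littlewood--Richardson sequence, following the same pattern as the vector-bundle case in Corollary~\ref{cor:Bott:Grassbundle.skew}. Concretely, let $\tau^0 < \tau^1 < \cdots < \tau^{\ell-1}$ be the sequence of partitions (with multiplicities) of size $|\lambda|-|\mu|$ associated to $\lambda/\mu$ via Littlewood--Richardson numbers as in Theorem~\ref{thm:fil:dSchur_LR}~\eqref{thm:fil:dSchur_LR-1}. Applying the globalized version of that theorem (see \S\ref{sec:dSchurdWeyl.prestacks}) to $(X,\sE)$ and to $(\Grass(\sE;d),\sQ)$ produces two canonical sequences
\begin{equation*}
\sF^{0}(X,\sE) \to \sF^{1}(X,\sE) \to \cdots \to \sF^{\ell-1}(X,\sE)\simeq \dSchur^{\lambda/\mu}_X(\sE),
\end{equation*}
\begin{equation*}
\sF^{0}(\Grass,\sQ) \to \cdots \to \sF^{\ell-1}(\Grass,\sQ)\simeq \Schur^{\lambda/\mu}_{\Grass(\sE;d)}(\sQ),
\end{equation*}
whose consecutive cofibers are $\dSchur^{\tau^i}_X(\sE)$ and $\Schur^{\tau^i}_{\Grass(\sE;d)}(\sQ)$ respectively.

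Next, because the construction of these sequences is functorial in the complex, the tautological quotient $\rho\colon \pr^{*}\sE\to\sQ$ induces a morphism of sequences $\sF^{\bullet}(\Grass,\pr^{*}\sE)\to\sF^{\bullet}(\Grass,\sQ)$, and base-change functoriality identifies $\pr^{*}\sF^{\bullet}(X,\sE)\simeq\sF^{\bullet}(\Grass,\pr^{*}\sE)$. Composing with the unit of the adjunction $\pr^{*}\dashv\pr_{*}$ and using that $\pr_{*}$ preserves cofiber sequences, I obtain a morphism of cofiber sequences in $\QCoh(X)$
\begin{equation*}
\bigl(\sF^{\bullet}(X,\sE)\to\dSchur^{\lambda/\mu}_X(\sE)\bigr)\longrightarrow \bigl(\pr_{*}\sF^{\bullet}(\Grass,\sQ)\to\pr_{*}\Schur^{\lambda/\mu}_{\Grass(\sE;d)}(\sQ)\bigr),
\end{equation*}
whose cofibers at each stage are precisely the canonical maps $\dSchur^{\tau^i}_X(\sE)\to\pr_{*}\Schur^{\tau^i}_{\Grass(\sE;d)}(\sQ)$ attached to the usual partitions $\tau^i$.

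I then run an induction on $i$: the base case $i=0$ is the equivalence $\dSchur^{\tau^0}_X(\sE)\xrightarrow{\simeq}\pr_{*}\Schur^{\tau^0}_{\Grass(\sE;d)}(\sQ)$ of Theorem~\ref{thm:Bott:dGrass}; the inductive step uses the three-term exact triangle of cofibers together with Theorem~\ref{thm:Bott:dGrass} applied to $\tau^i$ (and the two-out-of-three property for equivalences) to promote the equivalence at stage $i-1$ to stage $i$. Taking $i=\ell-1$ yields the desired equivalence $\dSchur^{\lambda/\mu}_X(\sE)\xrightarrow{\simeq}\pr_{*}\Schur^{\lambda/\mu}_{\Grass(\sE;d)}(\sQ)$.

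The main point requiring care (rather than an actual obstacle) is verifying that the two sides are connected by a single coherent morphism of sequences rather than merely by compatible morphisms at each level: this reduces to the functoriality of the Littlewood--Richardson sequence in both the module variable and via base change (so that $\rho$ and the counit of $\pr^{*}\dashv\pr_{*}$ genuinely act on the whole sequence), which is built into the formulation of Theorem~\ref{thm:fil:dSchur_LR} and its globalization. Once this is in hand, the induction closes cleanly because each cofiber in sight is a derived Schur power of a usual partition, precisely the situation handled by Theorem~\ref{thm:Bott:dGrass}. No characteristic or positivity hypothesis beyond those inherited from Theorem~\ref{thm:Bott:dGrass} is needed.
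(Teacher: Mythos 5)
Your proposal is correct and takes essentially the same route as the paper: the paper's proof of Corollary~\ref{cor:Bott:dGrass} explicitly mirrors the argument of Corollary~\ref{cor:Bott:Grassbundle.skew}, resolving $\dSchur^{\lambda/\mu}$ by the universal Littlewood--Richardson sequence of Theorem~\ref{thm:fil:dSchur_LR}~\eqref{thm:fil:dSchur_LR-1} and inducting on its stages with Theorem~\ref{thm:Bott:dGrass} applied to the ordinary partitions $\tau^i$. Your remark about the need for a single coherent morphism of sequences (rather than stage-by-stage compatibility) is exactly the point the paper addresses by formulating Theorem~\ref{thm:fil:dSchur_LR} functorially as a map to $\shE^{[\ell-1]}$ and globalizing it over prestacks in \S\ref{sec:dSchurdWeyl.prestacks}.
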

\begin{proof}
Similar to the proof of Corollary \ref{cor:Bott:Grassbundle.skew}, we can derive the desired result from Theorem \ref{thm:Bott:dGrass} using the universal cofiber sequences established by Theorem \ref{thm:fil:dSchur_LR} \eqref{thm:fil:dSchur_LR-1}.
\end{proof}

\begin{proof}[Proof of Theorem \ref{thm:Bott:dGrass}]
Since the formation of the results of Theorem \ref{thm:Bott:dGrass} is local with respect to Zariski topology on $X$ and is stable under base change, we can reduce the proof of Theorem \ref{thm:Bott:dGrass} to the universal local case where $X = \vert \sHom_{\ZZ}(\ZZ^m, \ZZ^n )\vert$ and $\sE$ is represented by the tautological morphism $[\rho \colon \sO_X \otimes \ZZ^m \to \sO_X \otimes \ZZ^n]$. If $m=0$, then $\sE$ is a vector bundle and the desired result follows from  Corollary \ref{cor:Bott:Grassbundle}. Hence we may assume $m \ge 1$; in this case the desired result follows from the canonical equivalences of Corollary \ref{cor:Koszul_vs_bSchur}. 
\end{proof}

The rest of this subsection is devoted to the proof of Theorem \ref{thm:Bott:dGrass} in the universal local case (Corollary \ref{cor:Koszul_vs_bSchur}); its proof relies on the comparison between certain Koszul-type complexes $\Kos_*^{\lambda}(\rho)$ associated with $\lambda$ (\S \ref{subsec:Koszul}) and the Schur complexes $\bSchur^{\lambda}(\rho)$  (Definition \ref{def:bSchur}).

\subsubsection{Koszul-type complexes associated with partitions}
\label{subsec:Koszul}
Let $R$ be any ordinary commutative ring, let $W = R^m$ and $V = R^n$ be finite free $R$-modules of ranks $m \ge 1$ and $n\ge1$, respectively, let $d \ge 1$ be an integer and assume that $n \ge m + d$. 
Consider the universal $\Hom$ space 
	$$X = \vert \Hom_R(W,V) \vert = \Spec_R (\Sym_R^*(W \otimes V^\vee)) \simeq \Spec (R[ \{X_{ij} \}_{1 \le i \le m, 1 \le j \le n}]).$$ 
Let $\sW = W \otimes_R \sO_X$, $\sV = V \otimes_R \sO_X$, and let $\rho \colon \sW \to \sV$ denote the tautological morphism which represents the tautological matrix $(X_{ij})_{1 \le i \le m, 1 \le j \le n} \colon R[X_{ij}]^m \to R[X_{ij}] ^n$. We let $\sE$ denote the cokernel of $\rho$; then $\sE$ is a coherent sheaf on $X$ of generic rank $n-m \ge d$.

Let $\GG= \Grass_R(V;d)$ denote the Grassmannian scheme of rank $d$ locally free quotients of $V= R^n$ over $\Spec R$, and let  
	$0 \to \sR_\GG \to V \otimes \sO_\GG \to \sQ_\GG \to 0$
denote the tautological short exact sequence on $\GG$ (so that $\sQ_\GG$ and $\sR_\GG$ are vector bundles of ranks $d$ and $n-d$, respectively). Let $q \colon \Grass_X(\sV; d) \to X$ denote the Grassmannian bundle of $\sV$, then there is a canonical identification $\Grass_X(\sV; d) = X \times_R \GG$, by virtue of Proposition \ref{prop:Grass-4,5} \eqref{prop:Grass-4}. By abuse of notations, we will use $\sR_\GG$ and $\sQ_\GG$ to denote the universal subbundle and quotient bundles on $\Grass_X(\sV; d)$, respectively. Consider the derived Grassmannian $\pr \colon Z := \Grass_X(\sE; d) \to X$ (we will later see that $Z$ is a classical smooth $R$-scheme), with tautological fiber sequence $\sR \to \pr^*(\sE) \to \sQ$. By virtue of Proposition \ref{prop:Grass:PB}, there is a commutative diagram
		$$
	\begin{tikzcd} 
		Z:=\Grass(\sE; d) \ar{d}{\pr} \ar[hook]{r}{\iota}& \Grass_X(\sV;d) = X \times_R \GG \ar{ld}{q} \\
		X
	\end{tikzcd}
	$$
where $\iota \colon Z \to X \times_R \GG$ is a closed immersion, such that $\sQ \simeq \iota^*(\sQ_\GG)$. Furthermore, the closed immersion $\iota \colon Z \hookrightarrow X \times_R \GG$ is canonically identified with the inclusion of the zero locus of a canonical regular section $s_\rho$ of the vector bundle $\sW^\vee \boxtimes_R \sQ_\GG$ on $X \times_R \GG$. Here, $s_{\rho}$ is the section corresponding to composite map $q^*\sW \xrightarrow{q^*(\rho)} q^*\sV \twoheadrightarrow \sQ_\GG$ under the canonical identification 
	$$\H^0(X \times_R \GG; \sW^\vee \boxtimes_{R} \sQ_\GG) \simeq \Ext^0_{X \times_R \GG}(q^*\sW, \sQ_\GG).$$
Consequently, $Z$ is a classical scheme, and $\iota_* \sO_Z$ is resolved by the Koszul complex 
	$$\Kos_*(s_\rho) \colon  0 \to \bigwedge\nolimits^{md} (\sW \boxtimes_R \sQ_\GG^\vee) \xrightarrow{\partial_{md}} \cdots \to \bigwedge\nolimits^{i} (\sW \boxtimes_R \sQ_\GG^\vee) \xrightarrow{\partial_i} \cdots \to \sW \boxtimes_R \sQ_\GG^\vee  \xrightarrow{\partial_1} \sO_{X \times_R \GG},$$
where the differentials $\partial_i$ are given by contractions with the section $s_{\rho}$.

The goal of this subsection is to study the complexes $\Kos^{\lambda}_*(X; \rho)$ which canonically represent the pushforwards $q_* (\Kos_*(s_{\rho}) \otimes \Schur^{\lambda}(\sQ_{\GG}))$, where $\lambda = (\lambda_1, \ldots, \lambda_d)$ are partitions. Generally, such complexes are constructed using right resolutions of $\Kos_*(s_{\rho}) \otimes \Schur^{\lambda}(\sQ_{\GG})$ by  double complexes (see \cite[\S 5.2]{Wey}). This subsection, however, considers a  situation which is special enough that right resolutions are not necessary.

We consider the (second quadrant) hypercohomology spectral sequence which computes the hyper-direct image of the complex $\Kos_*(s_{\rho}) \otimes \Schur^{\lambda}(\sQ_{\GG})$ (see \cite[Lemma B.1.5]{Laz}):
	\begin{equation}\label{eqn:Koszul:spectral}
		E_1^{s,t} = \RR^{t} q_* \left(\bigwedge\nolimits^{-s}(\sW \boxtimes_{R} \sQ_\GG^\vee) \otimes \Schur^{\lambda}(\sQ_\GG) \right)  \implies \RR^{s+t} q_*(\Kos_*(s_{\rho}) \otimes \Schur^{\lambda}(\sQ_{\GG})).
	\end{equation}
Since $\sW = W \otimes_R \sO_X$ and $X$ is affine, we have canonical identifications
	$$\RR^{t} q_* \left(\bigwedge\nolimits^{-s}(\sW \boxtimes_{R} \sQ_\GG^\vee) \otimes \Schur^{\lambda}(\sQ_\GG) \right)  =  \H^{t} \left(\GG; \bigwedge\nolimits^{-s}(W \otimes_{\sO_\GG} \sQ_\GG^\vee) \otimes \Schur^{\lambda}(\sQ_\GG) \right) \otimes_R \sO_X.$$

\begin{lemma}
\label{lem:Koszul:lambda:complex}
 In the above situation, the spectral sequence \eqref{eqn:Koszul:spectral} gives rise to a single complex $\Kos^{\lambda}_*(X; \rho)$ of vector bundles on $X$ which satisfies $\Kos^{\lambda}_i(X; \rho)=0$ for $i \notin [0, md]$, and 
	$$\Kos^{\lambda}_i(X; \rho)  = \H^0\left(\GG; \bigwedge\nolimits^{i}(W \otimes_{\sO_\GG} \sQ_\GG^\vee) \otimes \Schur^{\lambda}(\sQ_\GG) \right) \otimes_R \sO_X \quad \text{for} \quad 0 \le i \le md.$$
The differentials $d_i \colon \Kos^{\lambda}_i(X; \rho)  \to \Kos^{\lambda}_{i-1}(X; \rho)$
are induced by taking zeroth cohomology of the differentials 
	$\partial_i \colon \bigwedge\nolimits^i(W \otimes_{\sO_\GG} \sQ_\GG^\vee) \to \bigwedge\nolimits^{i-1}(W \otimes_{\sO_\GG} \sQ_\GG^\vee)$
of the Koszul complex $\Kos_*(s_{\rho})$
\end{lemma}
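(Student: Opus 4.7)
The lemma's content splits into two parts: (i) the $E_1$-page of \eqref{eqn:Koszul:spectral} is concentrated on the single row $t = 0$, and (ii) the resulting chain complex on that row has the terms and differentials described. Part (ii) will be formal once (i) is established, since the $d_1$-differential is then identified by naturality with $H^0(\GG; \partial_i \otimes \Schur^\lambda(\sQ_\GG))$, and the range $i \in [0, md]$ comes from $\bigwedge^j(\sW \boxtimes_R \sQ_\GG^\vee) = 0$ for $j > md$.

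The heart of the proof is therefore the cohomology vanishing
$$H^t\bigl(\GG;\, \bigwedge\nolimits^i(W \otimes_R \sQ_\GG^\vee) \otimes \Schur^\lambda(\sQ_\GG)\bigr) = 0 \quad \text{for all } t > 0,\ 0 \le i \le md.$$
My approach is to apply the classical Cauchy filtration (Theorem \ref{thm:fil:sym_otimes}\eqref{thm:fil:sym_otimes-2}) to $\bigwedge^i(W \otimes_R \sQ_\GG^\vee)$, exhibiting it as an iterated extension whose subquotients are $\Schur^\mu(W) \otimes \Weyl^{\mu^t}(\sQ_\GG^\vee)$ indexed by partitions $\mu \vdash i$ with $\mu \subseteq (d^m)$. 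Since each $\Schur^\mu(W)$ is a free $R$-module, hence a trivial bundle on $\GG$, the long exact sequence of cohomology reduces the vanishing to showing
$$H^t\bigl(\GG;\, \Weyl^{\mu^t}(\sQ_\GG^\vee) \otimes \Schur^\lambda(\sQ_\GG)\bigr) = 0 \quad(t > 0)$$
for every such $\mu$.

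To verify this last vanishing, I would pull back to the complete flag bundle $\pi \colon \FF = \Flag(V; \underline n) \to \GG$, which factors through $\Flag(\sR_\GG; \underline{n-d}) \times_\GG \Flag(\sQ_\GG; \underline d)$ (Example \ref{eg:forget:flag.to.Grass}). Each of $\Weyl^{\mu^t}(\sQ_\GG^\vee)$ and $\Schur^\lambda(\sQ_\GG)$ can be filtered so that $\pi^\ast$ of the tensor product is built from tautological line bundles $\sL(\nu)$ for various weights $\nu \in \ZZ^n$, with the duality $\Weyl^{\mu^t}((\sQ_\GG)^\vee) \simeq \Schur^{\mu^t}(\sQ_\GG)^\vee$ used to track the twists on the $\sQ_\GG$-factor. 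Classical Borel--Weil--Bott together with Kempf vanishing (Theorem \ref{thm:Bott:flagbundle}) then computes each $H^\ast(\FF; \sL(\nu))$ as a Schur module of $V$ concentrated in a single cohomological degree, and the hypothesis $n \ge m + d$ provides enough room in the trivial-weight tail $0^{n-d}$ to force every arising weight to be either dominant (contributing only to $H^0$) or singular (contributing nothing).

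The main obstacle will be precisely this weight-by-weight analysis in arbitrary characteristic. In characteristic zero the Cauchy formula splits as a direct sum and each piece can be handled independently via Borel--Weil--Bott on $\FF$; in positive characteristic one must argue through the filtration and rely on Kempf's vanishing as a characteristic-free input. Once the vanishing is established, identifying the terms $\Kos^\lambda_i(X;\rho)$ with $H^0(\GG;\bigwedge^i(W\otimes \sQ_\GG^\vee)\otimes \Schur^\lambda(\sQ_\GG))\otimes_R\sO_X$ follows from the affineness of $X$ and the flatness of $\sO_X$ over $R$, and the identification of the differentials with $H^0$-images of $\partial_i$ follows from the naturality of the hypercohomology spectral sequence.
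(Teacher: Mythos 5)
Your overall strategy---reduce the degeneracy of the spectral sequence to the vanishing of $\H^{>0}$ of the Koszul terms twisted by $\Schur^\lambda(\sQ_\GG)$, filter the exterior power by Cauchy's formula, and push the remaining computation to the complete flag bundle $\Flag_R(R^n;\underline n)$ where one computes cohomology of tautological line bundles weight by weight---is the right idea and broadly parallels the paper's. But two points need repair. First, the Cauchy factorization you chose places $\Weyl^{\mu^t}$ on the $\sQ_\GG^\vee$ side, and "filtering $\pi^\ast\Weyl^{\mu^t}(\sQ_\GG^\vee)$ by tautological line bundles" needs to be cashed out: you must pass through the duality $\Weyl^{\nu}(E^\vee)\simeq \Schur^{\nu}(E)^\vee$ and then rewrite $\Schur^{\nu}(\sQ_\GG)^\vee$ as a Schur functor of $\sQ_\GG$ twisted by a power of $\det\sQ_\GG$. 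The paper factors Cauchy the other way, converts $\Schur^\mu(\sQ_\GG^\vee)$ to $\Schur^{(n-d)-\mu}(\sQ_\GG)\otimes(\det\sR_\GG)^{\otimes(n-d)}$ using $\mu_1\le m\le n-d$, then applies the characteristic-free Littlewood--Richardson filtration (Corollary \ref{cor:fil:dSchur_LR}) to reduce the tensor $\Schur^{(n-d)-\mu}(\sQ_\GG)\otimes\Schur^\lambda(\sQ_\GG)$ to single Schur factors $\Schur^\gamma(\sQ_\GG)$, each of which is literally the pushforward of one line bundle $\sL(\alpha)$ from the complete flag bundle. This avoids having to filter a general bundle by line bundles after pullback.

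Second, and this is the genuine gap: your reference for the singular-weight vanishing is wrong, and in positive characteristic this is exactly where the argument lives or dies. Theorem \ref{thm:Bott:flagbundle} is the \emph{dominant} (Kempf) case only; it says nothing about non-dominant $\alpha$. In arbitrary characteristic the full Borel--Weil--Bott statement (single cohomological degree determined by a Weyl group element) simply fails, so "either dominant or singular, and singular contributes nothing" cannot be derived from Theorem \ref{thm:Bott:flagbundle}. The paper's proof of Lemma \ref{lem:vanish:G:koszul} handles the non-dominant weights $\alpha=(\gamma_1,\dots,\gamma_d,(n-d)^{n-d})$ with $0\le\gamma_d\le n-d-1$ by invoking the separate characteristic-free vanishing statement Proposition \ref{prop:dflag:vanishing}\eqref{prop:dflag:vanishing-a}, which rests on a direct Serre-vanishing computation on an auxiliary $\mathbb{P}^k$-bundle, not on any form of Bott's theorem. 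You would need to check, for the (slightly different) weights arising from your Cauchy factorization and duality step, that they satisfy the "a block of equal entries preceded by a slightly smaller entry" pattern of Proposition \ref{prop:dflag:vanishing} and cite that result; as written, the sketch over-relies on Kempf.
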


\begin{proof}
This results from the following Lemma \ref{lem:vanish:G:koszul} and the degeneracy of the spectral sequence whose $E_1$-page satisfies $E_1^{s,t}=0$ for all $t \neq 0$ (see \cite[Example B.1.6]{Laz}).  
\end{proof}

\begin{lemma} 
\label{lem:vanish:G:koszul}
Let $R$ be an ordinary commutative ring, let $m,n,d$ be integers such that $1 \le d \le n-m$, let $\GG = \Grass_R(R^n;d)$ denote the Grassmannian scheme for rank $d$ locally free quotients of $R^n$, and let $0 \to \sR_\GG \to \sO_{\GG}^n \to \sQ_\GG \to 0$ denote the tautological short exact sequence on $\GG$. Then for all partitions $\lambda = (\lambda_1 \ge \ldots \ge \lambda_d \ge 0)$ and integers $j,k$, we have 
\begin{align*}
	\H^k \left(\GG; \bigwedge\nolimits^{j}(\sO_{\GG}^m \otimes_{\sO_\GG} \sQ_\GG^\vee) \otimes \Schur^{\lambda}(\sQ_\GG) \right) =0 \quad \text{for all} \quad k>0, j \ge 0.
\end{align*}
Moreover, $\H^0 \big(\GG; \bigwedge\nolimits^{j}(\sO_{\GG}^m \otimes_{\sO_\GG} \sQ_\GG^\vee) \otimes \Schur^{\lambda}(\sQ_\GG) \big)$ are finite free $R$-modules for all $j \ge 0$.
\end{lemma}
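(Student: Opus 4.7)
The plan is to decompose the bundle via Cauchy's formula, reduce to Schur modules of $\sQ_\GG$ via duality and Littlewood--Richardson, and then conclude by characteristic-free Borel--Weil--Bott on $\GG$; the hypothesis $n \ge d+m$ is exactly what ensures every non-dominant weight arising in the reduction lies on a wall.

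First I will apply the Cauchy decomposition formula (Theorem~\ref{thm:fil:sym_otimes}(2)) to $\bigwedge\nolimits_R^j(R^m \otimes_R \sQ_\GG^\vee)$, producing a canonical filtration by subbundles with associated graded $\bigoplus_\mu \Schur^\mu(R^m) \otimes_R \Weyl^{\mu^t}(\sQ_\GG^\vee)$, summed over partitions $\mu$ with $|\mu|=j$, $\ell(\mu)\le m$, and $\mu_1 \le d$. Tensoring with $\Schur^\lambda(\sQ_\GG)$, taking the long exact sequence in cohomology, and pulling out the constant free factor $\Schur^\mu(R^m)$, it remains to show that $\H^*(\GG; \Weyl^{\mu^t}(\sQ_\GG^\vee) \otimes \Schur^\lambda(\sQ_\GG))$ is concentrated in degree zero and $R$-free for each such $\mu$.

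Next (in characteristic zero, then extending integrally by $\ZZ$-flat base change and universal freeness), I rewrite $\Weyl^{\mu^t}(\sQ_\GG^\vee) \simeq \Schur^{\widetilde{\mu^t}}(\sQ_\GG) \otimes (\det\sQ_\GG)^{-\mu^t_1}$ via the identity $(\Schur^\nu V)^\vee \simeq \Schur^{\widetilde{\nu}}(V) \otimes (\det V)^{-\nu_1}$ with $\widetilde{\nu}_i = \nu_1 - \nu_{d+1-i}$, and then apply the Littlewood--Richardson rule (Corollary~\ref{cor:fil:dSchur_LR}) to filter $\Schur^{\widetilde{\mu^t}}(\sQ_\GG) \otimes \Schur^\lambda(\sQ_\GG)$ with graded pieces $\Schur^\tau(\sQ_\GG)$ for partitions $\tau$ of length $\le d$ with $c^\tau_{\widetilde{\mu^t},\lambda} \ne 0$. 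After the $(\det \sQ_\GG)^{-\mu^t_1}$ twist each such piece corresponds to the $\GL_n$-line-bundle $\sL(\omega)$ on $\GG$ with weight $\omega = (\tau_1 - \mu^t_1, \ldots, \tau_d - \mu^t_1, 0, \ldots, 0)$. A two-case BBW analysis now finishes: if $\tau_d \ge \mu^t_1$ then $\omega$ is $\GL_n$-dominant and Kempf's theorem gives $\H^0(\GG;\sL(\omega)) \simeq \Schur^\omega(R^n)$ ($R$-free) with $\H^{>0}=0$; if $\tau_d < \mu^t_1$ then $\omega_d < 0$, and using $\mu^t_1 = \ell(\mu)\le m \le n-d$ the integer $k:=\mu^t_1-\tau_d$ lies in $[1,n-d]$ and a direct check yields $(\rho+\omega)_d = n-d-k = (\rho+\omega)_{d+k}$, so $\rho+\omega$ is singular, and characteristic-free vanishing for singular weights (obtained by pulling back to $\Flag(R^n;\underline{n}) \to \GG$ and cascading $\H^*(\PP^1;\sO(-1))=0$ up through iterated $\PP^1$-fibrations) gives $\H^*(\GG;\sL(\omega))=0$.

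The main obstacle is executing the last step characteristic-freely: Kempf's vanishing handles the dominant case uniformly across all characteristics, but both the duality identity used to rewrite $\Weyl^{\mu^t}(\sQ_\GG^\vee)$ and the wall-vanishing for singular weights are delicate in positive characteristic (Schur and Weyl functors diverge, and cohomology on walls can \textit{a priori} acquire torsion). The cleanest remedy is to prove the statement first over $\ZZ$, verify $\ZZ$-freeness of the cohomology by tracking the characteristic-zero computation together with universal freeness of all functors involved, and then deduce the result over arbitrary $R$ by flat base change.
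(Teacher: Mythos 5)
Your skeleton is the paper's: Cauchy decomposition, a determinant twist to convert duals of $\sQ_\GG$ into non-dual Schur modules, Littlewood--Richardson, then Kempf for the dominant case and singular-wall vanishing (via the iterated $\PP^1$-bundle structure of $\Flag(R^n;\underline{n})\to\GG$) for the non-dominant case, with $m\le n-d$ guaranteeing the non-dominant $\omega$ has the repeated entry needed to hit a wall. Your wall computation $(\rho+\omega)_d=(\rho+\omega)_{d+k}$ is correct and matches condition~\eqref{prop:dflag:vanishing-a} of Proposition~\ref{prop:dflag:vanishing}.

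The genuine gap is in the determinant-twist step, and it matters beyond bookkeeping. You filtered $\bigwedge^j(R^m\otimes\sQ_\GG^\vee)$ with subquotients $\Schur^\mu(R^m)\otimes\Weyl^{\mu^t}(\sQ_\GG^\vee)$, so what you must rewrite is a \emph{Weyl} functor of $\sQ_\GG^\vee$. The characteristic-free identities available are $\Weyl^{\mu^t}(\sQ_\GG^\vee)\simeq\Schur^{\mu^t}(\sQ_\GG)^\vee$ (ABW duality) and then $\Schur^{\nu}(V)^\vee\simeq\Weyl^{\widetilde{\nu}}(V)\otimes(\det V)^{-\nu_1}$ --- the right-hand side is a \emph{Weyl} functor of $\sQ_\GG$, not a Schur functor. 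Writing $\Schur^{\widetilde{\mu^t}}(\sQ_\GG)$ as you did is only valid in characteristic zero; in characteristic $p$ it is literally false (already for $\mu^t=(2)$ and $V$ of rank $2$, the dual of $\Sym^2 V$ is $\Gamma^2(V^\vee)\not\simeq\Sym^2(V^\vee)$). This is not a patchable sign issue: once you carry the correct $\Weyl^{\widetilde{\mu^t}}(\sQ_\GG)$ forward, the next step --- the Littlewood--Richardson filtration --- does not apply, since Corollary~\ref{cor:fil:dSchur_LR} filters $\Schur\otimes\Schur$ or $\Weyl\otimes\Weyl$, not a mixed product $\Weyl\otimes\Schur$. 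The paper sidesteps this by choosing the opposite symmetry of the Cauchy formula, so that the subquotients are $\Weyl^{\mu^t}(\sO_\GG^m)\otimes\Schur^{\mu}(\sQ_\GG^\vee)$; then the characteristic-free twist $\Schur^\mu(V^\vee)\simeq\Schur^{\widetilde\mu}(V)\otimes(\det V)^{-\mu_1}$ produces a genuine Schur functor of $\sQ_\GG$, and LR applies with Schur subquotients throughout. Your final remedy --- prove in characteristic $0$ and ``extend integrally by $\ZZ$-flat base change and universal freeness'' --- is circular: $\ZZ$-freeness of the cohomology (ruling out $p$-torsion) is exactly what you are trying to establish, and a $\QQ$-computation cannot detect it. The correct way to deduce the $\ZZ$-statement (and hence the $R$-statement) is to make every step characteristic-free and then invoke cohomology and base change after checking all residue fields; that is what the paper's choice of Cauchy variant buys you.
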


\begin{proof}
By virtue of Theorem \ref{thm:fil:sym_otimes} \eqref{thm:fil:sym_otimes-1}, the vector bundle $\bigwedge\nolimits^{j}(\sO_{\GG}^m \otimes_{\sO_\GG} \sQ_\GG^\vee)$ admits a filtration with subquotients of the form $\Weyl^{\mu^t}(\sO_{\GG}^m) \otimes \Schur^{\mu}(\sQ_\GG^\vee)$, where $\mu = (\mu_1, \ldots \mu_d)$ are partitions of $j$ such that $\mu_1 \le m$. On the other hand, since $m \le n-d$, there is an isomorphism
	$$ \Schur^{\mu}(\sQ_\GG^\vee) \simeq \Schur^{(n-d) - \mu}(\sQ_\GG) \otimes  (\bigwedge\nolimits^{n-d} \sR_\GG)^{\otimes (n-d)},$$
where $(n-d) -\mu$ denotes the partition $(n-d -\mu_d, \ldots, n-d-\mu_1)$ and we use the fact that $(\bigwedge\nolimits^{d} \sQ_\GG)^\vee \simeq (\bigwedge\nolimits^{n-d} \sR_\GG) \otimes (\bigwedge\nolimits^{n} \sO_{\GG}^n)^\vee \simeq (\bigwedge\nolimits^{n-d} \sR_\GG)$. From the characteristic-free version of the Littlewood--Richardson's rule (Corollary \ref{cor:fil:dSchur_LR}), $\Schur^{(n-d)-\mu}(\sQ_\GG) \otimes \Schur^{\lambda}(\sQ_\GG)$ has a filtration with subquotients of the form $\Schur^{\gamma}(\sQ_\GG)$, where $\gamma = (\gamma_1, \ldots, \gamma_d)$ are partitions satisfying $\gamma \supseteq \lambda$ and $|\gamma|=|\lambda| - |\mu| + d(n-d)$.  Therefore, to prove the desired result, it suffices to prove the following two assertions for {\em any} partition $\gamma = (\gamma_1, \ldots, \gamma_d)$:
\begin{enumerate}
	\item[$(i)$] $\H^k \left(\GG; \Schur^{\gamma}(\sQ_\GG) \otimes (\bigwedge\nolimits^{n-d} \sR_\GG)^{\otimes (n-d)} \right) = 0$ for any integer $k> 0$.  	
	\item[$(ii)$] $\H^0 \left(\GG; \Schur^{\gamma}(\sQ_\GG) \otimes (\bigwedge\nolimits^{n-d} \sR_\GG)^{\otimes (n-d)} \right)$ is (either zero or) finite  free over $R$.
\end{enumerate}
In order to prove these assertions, we consider the forgetful morphism $\pi_{(d), \underline{n}} \colon \Flag_R(R^n; \underline{n}) \to \GG$ (\S \ref{sec:dflag:forget}). By virtue of Theorem \ref{thm:Bott:flagbundle} and Example \ref{eg:forget:flag.to.Grass}, we have	
	$$\Schur^{\gamma}(\sQ_\GG) \otimes (\bigwedge\nolimits^{n-d} \sR_\GG)^{\otimes (n-d)} \simeq (\pi_{(d), \underline{n}} )_* (\sL(\alpha)),$$
where $\sL(\alpha)$ is the line bundle on $\Flag_R(R^n; \underline{n})$ associated with the sequence
	$$\alpha = (\gamma_1, \ldots , \gamma_d; \underbrace{n-d, \ldots, n-d}_{(n-d) \,\text{terms}}).$$
If $\gamma_d \ge n-d$, then $\alpha$ is dominant and assertions $(i)$ and $(ii)$ in this case are both consequences of Kempf's vanishing theorem (see \cite{Kempf} or Remark \ref{rmk:Kempf}). If $0 \le \gamma_d \le n-d-1$, then $\alpha$ satisfies the condition \eqref{prop:dflag:vanishing-a} of Proposition \ref{prop:dflag:vanishing}, and consequently, we have:
	$$\H^k \left(\GG; \Schur^{\gamma}(\sQ_\GG) \otimes (\bigwedge\nolimits^{n-d} \sR_\GG)^{\otimes (n-d)} \right) = 0 \quad \text{for all} \quad k \in \ZZ.$$
Hence assertions $(i)$ and $(ii)$ are proved, so is the lemma.
\end{proof}

\begin{lemma} 
\label{lem:Koszul:lambda:acyclic}
The complex $\Kos^{\lambda}_*(X; \rho)$ of Lemma \ref{lem:Koszul:lambda:complex}
 is a finite free resolution of the (discrete) $\sO_X$-module $\RR^0 \pr_* (\Schur_{\Grass(\sE;d)}^\lambda(\sQ)) \simeq \RR^0 q_*(\iota_*(\sO_Z) \otimes \Schur_{\GG}^{\lambda}(\sQ_\GG))$. 
\end{lemma}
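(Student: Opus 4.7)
The plan is to exploit the two key ingredients already in place: the Koszul resolution of $\iota_*\sO_Z$ by $\Kos_*(s_\rho)$ (valid because $s_\rho$ is a regular section of the rank-$md$ bundle $\sW\boxtimes_R\sQ_\GG^\vee$, as recorded in the setup), together with the vanishing statements in Lemma \ref{lem:vanish:G:koszul}. First I would note that tensoring the Koszul resolution with the vector bundle $\Schur^\lambda(\sQ_\GG)$ produces a finite locally free resolution
\[
K^\bullet \ := \ \Kos_*(s_\rho)\otimes \Schur^\lambda(\sQ_\GG) \ \xrightarrow{\ \simeq\ } \ \iota_*\bigl(\iota^*\Schur^\lambda(\sQ_\GG)\bigr) \ = \ \iota_*\Schur^\lambda(\sQ)
\]
in $\QCoh(X\times_R\GG)$, regarded as a cohomological complex in degrees $[-md,0]$.

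Next I would feed this into the hypercohomology spectral sequence \eqref{eqn:Koszul:spectral}. By Lemma \ref{lem:vanish:G:koszul}, the $E_1$-page is concentrated in the single row $t=0$, so the spectral sequence degenerates at $E_2$; the $d_1$-differentials on that row recover exactly the complex $\Kos^\lambda_*(X;\rho)$, placed in cohomological degrees $[-md,0]$ via the identification $s=-i$. Combining the degeneration with the quasi-isomorphism above yields, for every integer $n$, canonical isomorphisms
\[
H^n\bigl(\Kos^\lambda_*(X;\rho)\bigr) \ \simeq\ \mathbb{H}^n(K^\bullet) \ \simeq\ \RR^n q_*\bigl(\iota_*\Schur^\lambda(\sQ)\bigr) \ \simeq\ \RR^n \pr_*\bigl(\Schur^\lambda(\sQ)\bigr).
\]

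Finally, $\Kos^\lambda_*(X;\rho)$ is supported in cohomological degrees $\le 0$, whereas the classical derived pushforward $\RR\pr_*$ of a quasi-coherent sheaf along the projective morphism $\pr$ between ordinary schemes lives in non-negative cohomological degrees. These two constraints force both sides of the identification to be concentrated in degree $0$: the complex $\Kos^\lambda_*(X;\rho)$ has cohomology only in degree $0$, equal to $\RR^0\pr_*\Schur^\lambda(\sQ)$, and is therefore a resolution of this discrete sheaf (with the Kempf-style vanishing $\RR^{n}\pr_*\Schur^\lambda(\sQ)=0$ for $n>0$ as a bonus). Finite freeness of each term $\Kos^\lambda_i(X;\rho) = \H^0(\GG;\bigwedge^i(\sO_\GG^m\otimes\sQ_\GG^\vee)\otimes\Schur^\lambda(\sQ_\GG))\otimes_R\sO_X$ over $\sO_X$ is immediate from the second half of Lemma \ref{lem:vanish:G:koszul} combined with the fact that $\sO_X$ is a polynomial $R$-algebra. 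No substantial obstacle appears: regularity of $s_\rho$ is built into the universal local setup, and every remaining step is a formal consequence of the degeneration supplied by Lemma \ref{lem:vanish:G:koszul}.
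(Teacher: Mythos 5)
Your proof is correct, and it takes a genuinely more economical route than the paper's. Both arguments use Lemma \ref{lem:vanish:G:koszul} to collapse the hypercohomology spectral sequence \eqref{eqn:Koszul:spectral} to its $t=0$ row, which immediately gives $H^n(\Kos^\lambda_*(X;\rho)) \simeq \RR^n q_*(K^\bullet) \simeq \RR^n \pr_*(\Schur^\lambda(\sQ))$ for every $n$. The paper then invokes Weyman's basic theorem of the geometric technique \cite[Theorem 5.1.3]{Wey}, which reduces the lemma to verifying the Kempf-type vanishing $\RR^i \pr_*(\Schur^\lambda(\sQ)) = 0$ for $i > 0$, and checks this separately by identifying $Z$ with the total space $\VV_\GG(W\otimes_{\sO_\GG}\sR_\GG^\vee)$, applying the Cauchy decomposition of $\Sym^*(W\otimes\sR_\GG^\vee)$, and appealing again to Kempf vanishing via the complete flag bundle. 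Your degree argument shows that this positive-degree vanishing is already forced: $\Kos^\lambda_*(X;\rho)$ lives in cohomological degrees $[-md,0]$, so $H^n(\Kos^\lambda) = 0$ for $n>0$ automatically, and hence so does $\RR^n\pr_*$; while the left-exactness of $\RR^0\pr_*$ on the single sheaf $\Schur^\lambda(\sQ)$ kills the negative degrees of $\Kos^\lambda$. What your version buys is brevity and the observation that, once $E_1$ degenerates to a single row with $s \le 0$, the hypothesis of Weyman's theorem is automatic rather than something to be proved. What the paper's longer computation buys is an argument that does not lean on the degree support of $\Kos^\lambda$ and that stays within the exact framework of Weyman's Theorem 5.1.3, which is better adapted to variants where the $E_1$-page does not concentrate in a single row.
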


\begin{proof} 
By virtue of \cite[Theorem 5.1.3]{Wey}, we only need to show that 
	$$\RR^i \pr_* (\Schur_{\Grass(\sE;d)}^\lambda(\sQ)) (\simeq \RR^i q_*(\iota_* \sO_Z \otimes \Schur^\lambda(\sQ_\GG))) \simeq  0 \quad \text{for all} \quad i>0.$$
From the short exact sequence of vector bundles on $X \times_R \GG$,
	$$0 \to (W^\vee \otimes \sO_X) \boxtimes \sR_{\GG} \to W^\vee \otimes V \otimes \sO_{\GG}  \to (W^\vee \otimes \sO_X) \boxtimes \sQ_{\GG} \to 0,$$
we see that the composite map 
	$Z \xrightarrow{\iota} X \times_R \GG \xrightarrow{\pr_2} \GG$
identifies $Z$ as the vector bundle scheme
	$$ Z \simeq \VV_\GG(W \otimes_{\sO_\GG} \sR_\GG^\vee) = \Spec_{\GG}(\Sym_{\sO_\GG}^*(W\otimes_{\sO_\GG} \sR_\GG^\vee))$$
over $\GG$. Consequently, we have
	$$\RR^i \pr_* (\Schur_{\Grass(\sE;d)}^\lambda(\sQ)) = \H^i(\GG; \Sym_{\sO_\GG}^*(W\otimes_{\sO_\GG} \sR_\GG^\vee) \otimes \Schur^\lambda(\sQ_\GG)) \otimes_R \sO_X.$$
By Cauchy's decomposition formula (Theorem \ref{thm:fil:sym_otimes} \eqref{thm:fil:sym_otimes-1}), there is a canonical filtration on $\Sym_{\sO_\GG}^*(W\otimes_{\sO_\GG} \sR_\GG^\vee)$ whose associated graded module is 
	$\bigoplus_{\mu} \Schur^\mu(W) \otimes_{\sO_\GG} \Schur^\mu(\sR_\GG^\vee)$,
where $\mu$ runs through all partitions. Therefore, we only need to prove:
	\begin{itemize}
		\item[(*)] For any partition $\mu$, $\H^i(\GG; \Schur^\mu(\sR_\GG^\vee) \otimes \Schur^\lambda(\sQ_\GG)) = 0$ for all $i > 0$.
	\end{itemize}
For any given partition $\mu$, consider an integer $N \gg 0$ (for example, let $N > \mu_1$). If $\mu_{n-d+1} \ne 0$, we have $\Schur^\mu(\sR_\GG^\vee)=0$. If $\mu_{n-d+1} = 0$, we have canonical isomorphisms
	$$\Schur^\mu(\sR_\GG^\vee) \simeq \Schur^{\gamma}(\sR_\GG) \otimes (\det \sR_\GG^\vee)^{\otimes N} \simeq \Schur^{\gamma}(\sR_\GG) \otimes (\bigwedge\nolimits^{d} \sQ_\GG)^{\otimes N},$$
where $\gamma$ denotes the partition $(N - \mu_{n-d}, \ldots, N- \mu_1)$. Similar to the proof of Lemma \ref{lem:vanish:G:koszul}, if we consider the forgetful map $\pi_{(d), \underline{n}} \colon \Flag_R(R^n; \underline{n}) \to \GG$, then $\Schur^\mu(\sR_\GG^\vee) \otimes \Schur^\lambda(\sQ_\GG) \simeq (\pi_{(d), \underline{n}})_{*} \sL(\alpha)$, where $\sL(\alpha)$ is the line bundle on $\Flag_R(R^n; \underline{n})$ associated with the partition $\alpha = (N+\lambda_1, \ldots, N + \lambda_{d}, N - \mu_{n-d}, \ldots, N - \mu_{1})$. Consequently, we have
	$$\H^i(\GG; \Schur^\mu(\sR_\GG^\vee) \otimes \Schur^\lambda(\sQ_\GG)) = \H^i(\Flag_R(R^n; \underline{n}); \sL(\alpha)) = 0 \quad \text{for all} \quad i>0,$$ 
by Kempf's vanishing theorem (see \cite{Kempf} or Remark \ref{rmk:Kempf}). Hence assertion $(*)$ is proved. 
\end{proof}

The next result compares the complex $\Kos_*^{\lambda}(X; \rho)$ and the Schur complex $\bSchur^{\lambda}(\rho)$. 

\begin{lemma}
\label{lem:Koszul_vs_bSchur}
In the situation of Lemma \ref{lem:Koszul:lambda:complex}, we consider the  commutative diagram:
	\begin{equation}\label{diag:prop:Koszul_vs_bSchur}
	\begin{tikzcd}
		\sW \otimes \Schur_X^{\lambda/1}(\sV) \ar{d}{f_1} \ar{r}{d_1}& 
	\Schur_X^{\lambda}(\sV) \ar{d}{f_0} \\
		\H^0(\GG; \sW \otimes \sQ_\GG^\vee \otimes \Schur^{\lambda}(\sQ_\GG)) \otimes_R \sO_X \ar{r}{d_1'} & \H^0(\GG; \Schur^{\lambda}(\sQ_\GG)) \otimes_R \sO_X,
	\end{tikzcd}
	\end{equation}
where $d_1$ and $d_1'$ are the differentials between degree-one and degree-zero components of the complexes $\Kos_*^{\lambda}(X; \rho)$ and $\bSchur^{\lambda}(\rho)$, respectively, the map $f_0$ is induced from the pushforward of tautological map $q^*(\Schur_X^\lambda(\sV)) \xrightarrow{\Schur^{\lambda}(\rho)} \Schur_{\GG}^{\lambda}(\sQ_\GG)$, and the map $f_1$ is induced from the composition
	$$\sW \otimes q^*\Schur^{\lambda/1}(\sV) \xrightarrow{\id \otimes \Schur^{\lambda/1}(\rho)} \sW \otimes \Schur^{\lambda/1}(\sQ_\GG) \xrightarrow{\id \otimes g} \sW \otimes \sQ_\GG^\vee \otimes \Schur^{\lambda}(\sQ_\GG).$$
(Here, the map $g$ is defined as in Lemma \ref{lem:lambda/1tolambda}.) Then the maps $f_1$ and $f_0$ are isomorphisms. 
\end{lemma}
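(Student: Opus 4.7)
The plan is to first reduce the problem to a verification over $\Spec R$, then to apply the characteristic-free Borel--Weil theorem for Grassmannian bundles (Corollary \ref{cor:Bott:Grassbundle}) and its skew version (Corollary \ref{cor:Bott:Grassbundle.skew}). Since $X$ is affine over $R$ and the bundles $\sW$, $\sV$ are trivial, all complexes and maps in the diagram \eqref{diag:prop:Koszul_vs_bSchur} are pulled back from $\Spec R$ (using that the groups $\H^i(\GG; \cdot)$ appearing in the bottom row are free $R$-modules by Lemma \ref{lem:vanish:G:koszul}). Hence it will suffice to show that the corresponding $R$-module maps
$$\bar f_0 \colon \Schur^\lambda(V) \to \H^0(\GG; \Schur^\lambda(\sQ_\GG)) \quad \text{and} \quad \bar f_1 \colon W \otimes \Schur^{\lambda/1}(V) \to W \otimes \H^0(\GG; \sQ_\GG^\vee \otimes \Schur^\lambda(\sQ_\GG))$$
are isomorphisms. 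For $\bar f_0$, the map is by construction the canonical one induced by applying $\Schur^\lambda$ to the tautological surjection $V \otimes \sO_\GG \twoheadrightarrow \sQ_\GG$ and taking $\H^0$, so Corollary \ref{cor:Bott:Grassbundle} applies directly.

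For $\bar f_1$, I would first factor out the free $R$-module $W$, reducing to the statement that $\Schur^{\lambda/1}(V) \to \H^0(\GG; \sQ_\GG^\vee \otimes \Schur^\lambda(\sQ_\GG))$ is an iso. Corollary \ref{cor:Bott:Grassbundle.skew} identifies $\Schur^{\lambda/1}(V) \simeq \H^0(\GG; \Schur^{\lambda/1}(\sQ_\GG))$ via the tautological surjection, so this map is identified with $\H^0(\GG; g)$, where $g \colon \Schur^{\lambda/1}(\sQ_\GG) \to \sQ_\GG^\vee \otimes \Schur^\lambda(\sQ_\GG)$ is the natural transformation of Lemma \ref{lem:lambda/1tolambda}.

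The main obstacle will be to verify that $\H^0(\GG; g)$ is an iso. My plan here is to combine the identity $\sQ_\GG^\vee \simeq \bigwedge^{d-1}(\sQ_\GG) \otimes \det(\sQ_\GG)^{-1}$ with the characteristic-free Pieri/Littlewood--Richardson-type filtrations of Corollary \ref{cor:fil:dSchur_LR} to express $\sQ_\GG^\vee \otimes \Schur^\lambda(\sQ_\GG)$ as an iterated extension of Schur bundles $\Schur^\nu(\sQ_\GG)$, indexed by the choices of one row to omit when adding a box to each of the remaining $d-1$ rows (followed by the uniform shift by $-(1,\ldots,1)$). I will then apply $\H^0(\GG; -)$, using Corollary \ref{cor:Bott:Grassbundle} for the dominant (partition) pieces and the characteristic-free Borel--Weil--Bott vanishing (Proposition \ref{prop:dflag:vanishing}, lifted to $\GG$ via the complete-flag forgetful morphism $\Flag(V;\underline{n}) \to \GG$) for the non-dominant pieces, to obtain an iterated extension of the $R$-modules $\Schur^\mu(V)$ indexed by the partitions $\mu$ obtained by removing a removable box of $\lambda$. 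A parallel application of Theorem \ref{thm:fil:dSchur_LR} to $\Schur^{\lambda/1}(V)$ will produce the matching decomposition, and tracing through the construction of $g$ should show that the two identifications agree, thereby establishing the iso.
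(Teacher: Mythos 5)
Your reduction to $\Spec R$ is sound (the $\H^0$ groups are free $R$-modules by Lemma \ref{lem:vanish:G:koszul}, and $f_0$, $f_1$ are pulled back from $\Spec R$, even though the differentials $d_1$, $d_1'$ are not), and your treatment of $f_0$ via Corollary \ref{cor:Bott:Grassbundle} and your factorization of $f_1$ through $\H^0(\GG;g)$ via Corollary \ref{cor:Bott:Grassbundle.skew} match the paper. Where you diverge is the argument that $\H^0(\GG;g)$ is an isomorphism, and your route there has a genuine gap.

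You propose to write $\sQ_\GG^\vee \otimes \Schur^\lambda(\sQ_\GG)$ as $\bigwedge^{d-1}(\sQ_\GG)\otimes \det(\sQ_\GG)^{-1}\otimes \Schur^\lambda(\sQ_\GG)$, filter via the characteristic-free Pieri/LR rule (Corollary \ref{cor:fil:dSchur_LR}), kill the non-dominant pieces with the vanishing proposition, filter $\Schur^{\lambda/1}(\sQ_\GG)$ in parallel via Theorem \ref{thm:fil:dSchur_LR}, and then ``trace through $g$'' to match. The matching step is exactly where the argument breaks. The two filtrations arise from two \emph{a priori} unrelated constructions (tensor-product LR on one side, skew-partition LR on the other), and the natural transformation $g$ of Lemma \ref{lem:lambda/1tolambda} is not a map of Schur functors but a map built from comultiplication of the top exterior power, so there is no formal reason it should be filtered, let alone induce isomorphisms on the matching subquotients. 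In a characteristic-free setting you cannot get away with just counting ranks or checking nonvanishing of the subquotient maps; you would need an explicit compatibility lemma, which is roughly as hard as the result itself. The paper instead proves directly in Lemma \ref{lem:lambda/1tolambda} (by an explicit leading-term argument on standard tableaux) that $g$ is injective with an \emph{explicitly identified} cokernel — either zero (when $\lambda_1^t = d$) or the single Schur bundle $(\bigwedge^d \sQ_\GG)^\vee \otimes \Schur^{(d-1,\lambda^t)^t}(\sQ_\GG)$ — and then kills this one cokernel with Proposition \ref{prop:dflag:vanishing}. That short-exact-sequence route sidesteps any filtration-compatibility issue and is much shorter. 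To repair your version you would in effect have to re-derive the injectivity of $g$ and the identification of its cokernel, at which point you might as well invoke Lemma \ref{lem:lambda/1tolambda} directly as the paper does.
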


\begin{proof}
It follows easily from the definitions of the differentials of $\Kos_*^{\lambda}(X; \rho)$ and $\bSchur^{\lambda}(\rho)$ that diagram \eqref{diag:prop:Koszul_vs_bSchur} is commutative. The fact that $f_0$ is an isomorphism follows from the classical Borel--Weil--Bott theorem for Grassmannians (Corollary \ref{cor:Bott:Grassbundle}). We will now show that $f_1$ is an isomorphism. Lemma \ref{lem:lambda/1tolambda} implies that there is a canonical map $g \colon \Schur^{\lambda/1}(\sQ_\GG) \to \sQ_\GG^\vee \otimes \Schur^{\lambda}(\sQ_\GG)$ such that the cokernel of map
	$\sW \otimes \Schur^{\lambda/1}(\sQ_\GG) \xrightarrow{\id \otimes g} \sW \otimes \sQ_\GG^\vee \otimes \Schur^{\lambda}(\sQ_\GG)$
is either zero (if $\lambda_1 = d$) or the sheaf 
	$\sW \otimes (\bigwedge\nolimits^d \sQ_\GG)^\vee \otimes \Schur^{(d-1,\lambda^t)^t}(\sQ_\GG)$
 (if $\lambda_1 \le d-1$). Assume that $\lambda_1 \le d-1$, and  consider the forgetful map $\pi_{(d), \underline{n}} \colon \Flag_R(R^n; \underline{n}) \to \GG$ as in the proof of Lemma \ref{lem:vanish:G:koszul}. Then we have
	$$ (\bigwedge\nolimits^d \sQ_\GG)^\vee \otimes \Schur^{(d-1,\lambda^t)^t}(\sQ_\GG) \simeq \Schur^{(d-1,\lambda^t)^t}(\sQ_\GG)  \otimes (\bigwedge\nolimits^{n-d} \sR_\GG) \simeq  (\pi_{(d), \underline{n}})_{*} \sL(\alpha),$$
where $\sL(\alpha)$ is the line bundle on $\Flag_R(R^n; \underline{n})$ associated with the partition 
	$$\alpha = \big((d-1,\lambda^t)^t, 1^{n-d} \big) = (1+\lambda_1, \ldots, 1 + \lambda_{d-1}, 0, \underbrace{1, 1, \ldots, 1}_{\text{$(n-d)$ terms}}).$$
Then $\alpha$ satisfies the condition \eqref{prop:dflag:vanishing-a} of Proposition \ref{prop:dflag:vanishing}, and consequently, we have
	$$\H^*\big(\GG; \sW \otimes (\bigwedge\nolimits^d \sQ_\GG)^\vee \otimes \Schur^{(d-1,\lambda^t)^t}(\sQ_\GG)\big) = W \otimes_R \H^*\big(\Flag(R^n;\underline{n}); \sL(\alpha)\big)= 0.$$	
Hence, the map $\id \otimes g$ induces an isomorphism
	$$\H^0\big(\GG; \sW \otimes \Schur^{\lambda/1}(\sQ_\GG)\big) \xrightarrow{\sim} \H^0\big(\GG; \sW \otimes \sQ_\GG^\vee \otimes \Schur^{\lambda}(\sQ_\GG)\big).$$
By the Borel--Weil theorem for skew partitions (Corollary 
\ref{cor:Bott:Grassbundle.skew}), the map
	$\id \otimes \Schur^{\lambda/1}(\rho) \colon \sW \otimes q^*\Schur^{\lambda/1}(\sV) \to \sW \otimes \Schur^{\lambda/1}(\sQ_\GG)$
induces an isomorphism
	$$\sW \otimes \Schur^{\lambda/1}(\sV) \xrightarrow{\sim} \H^0\big(\GG; \sW \otimes \Schur^{\lambda/1}(\sQ_\GG)\big) \otimes_R \sO_X.$$
All combined, the morphism $f_1$ is a composition of isomorphisms
	$$ \sW \otimes \Schur^{\lambda/1}(\sV) \xrightarrow{\sim} \H^0\big(\GG; \sW \otimes \Schur^{\lambda/1}(\sQ_\GG)\big) \otimes_R \sO_X \xrightarrow{\sim}  \H^0\big(\GG; \sW \otimes \sQ_\GG^\vee \otimes \Schur^{\lambda}(\sQ_\GG)\big) \otimes_R \sO_X.$$
This shows that $f_i$ in the diagram \eqref{diag:prop:Koszul_vs_bSchur} are isomorphisms for $i=0,1$. 
\end{proof}

\begin{corollary}
\label{cor:Koszul_vs_bSchur}
In the situation of Lemma \ref{lem:Koszul:lambda:complex}, we have canonical equivalences
	\begin{equation*}
		[\Kos_*^{\lambda}(X; \rho)] \simeq \pr_* (\Schur_{\Grass(\sE;d)}^\lambda(\sQ)) \simeq \RR^0 \pr_* (\Schur_{\Grass(\sE;d)}^\lambda(\sQ))  \simeq [\bSchur_A^{\lambda}(\rho)] \simeq \dSchur_X^{\lambda}(\sE) \simeq \Schur_A^{\lambda}(\sE)
	\end{equation*}
in $\QCoh(X)$. Here: $A=\Sym_R^*(W \otimes V^\vee)$; $[\Kos_*^{\lambda}(X; \rho)]$ and $[\bSchur_A^{\lambda}(\rho)]$ denote the classes of the complexes  $\Kos_*^{\lambda}(X; \rho)$ and $\bSchur_A^{\lambda}(\rho)$ in $\QCoh(X)$, respectively; $\Schur_A^{\lambda}(\sE)$ is the classical Schur module (Definition \ref{def:SchurWeyl}); $\dSchur_X^{\lambda}(\sE)$ is the derived Schur powers of $\sE$ (Definition \ref{def:dSchurWeyl}).
\end{corollary}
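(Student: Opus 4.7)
The plan is to deduce Corollary \ref{cor:Koszul_vs_bSchur} by chaining together the preceding results, each of which supplies one of the six canonical equivalences.

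For the first two equivalences $[\Kos_*^{\lambda}(X; \rho)] \simeq \pr_*\bigl(\Schur^\lambda(\sQ)\bigr) \simeq \RR^0 \pr_*\bigl(\Schur^\lambda(\sQ)\bigr)$, I would begin by observing that the Koszul resolution $\iota_*\sO_Z \simeq \Kos_*(s_\rho)$ together with the projection formula gives
$$\pr_*\bigl(\Schur^\lambda(\sQ)\bigr) \simeq q_*\bigl(\Kos_*(s_\rho) \otimes \Schur^\lambda(\sQ_\GG)\bigr).$$
By Lemma \ref{lem:vanish:G:koszul}, the $E_1$-page of the hypercohomology spectral sequence \eqref{eqn:Koszul:spectral} is concentrated in the row $t=0$, so it degenerates and the hyperdirect image is represented by the single complex $\Kos_*^\lambda(X;\rho)$ constructed in Lemma \ref{lem:Koszul:lambda:complex}. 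This yields $[\Kos_*^\lambda(X;\rho)] \simeq \pr_*(\Schur^\lambda(\sQ))$, and the second equivalence $\pr_*(\Schur^\lambda(\sQ)) \simeq \RR^0\pr_*(\Schur^\lambda(\sQ))$ is then exactly the content of Lemma \ref{lem:Koszul:lambda:acyclic}.

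For the last two equivalences $[\bSchur_A^\lambda(\rho)] \simeq \dSchur_X^\lambda(\sE) \simeq \Schur_A^\lambda(\sE)$, I would first invoke Illusie's equivalence Proposition \ref{prop:dSchur_vs_bSchur}, noting that $\sE = \cofib(\rho)$, to obtain $[\bSchur_A^\lambda(\rho)] \simeq \dSchur_X^\lambda(\sE)$. Under the standing hypothesis $n \ge m+d$, we have $\lambda_1^t \le d \le n-m+1$, so the universal-local acyclicity Theorem \ref{thm:acyclic} \eqref{thm:acyclic-2} applies: the Schur complex $\bSchur_A^\lambda(\rho)$ is a finite projective resolution of the classical Schur module $\Schur_A^\lambda(\Coker\rho) = \Schur_A^\lambda(\sE)$. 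Hence $[\bSchur_A^\lambda(\rho)] \simeq \Schur_A^\lambda(\sE)$ in $\QCoh(X)$, and combining with Illusie's equivalence gives also $\dSchur_X^\lambda(\sE) \simeq \Schur_A^\lambda(\sE)$.

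To close the chain it remains to produce the central equivalence $\RR^0\pr_*(\Schur^\lambda(\sQ)) \simeq \Schur_A^\lambda(\sE)$. Both $\Kos_*^\lambda(X;\rho)$ and $\bSchur_A^\lambda(\rho)$ are acyclic outside homological degree zero (by Lemma \ref{lem:Koszul:lambda:acyclic} and Theorem \ref{thm:acyclic} \eqref{thm:acyclic-2} respectively), so their degree-zero homology is computed as the cokernel of the differential out of the degree-one term. Lemma \ref{lem:Koszul_vs_bSchur} constructs isomorphisms $f_0, f_1$ between the degree-zero and degree-one terms of the two complexes that intertwine $d_1$ and $d_1'$; passing to cokernels produces the required isomorphism $\Schur_A^\lambda(\sE) \simeq \RR^0\pr_*(\Schur^\lambda(\sQ))$.

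All genuine work has already been done in the preceding lemmas; the role of this corollary is organizational. The hard step is Lemma \ref{lem:Koszul_vs_bSchur}, which depends on the classical Borel--Weil--Bott theorem for complete flag bundles (Theorem \ref{thm:Bott:flagbundle}), its skew-partition refinement Corollary \ref{cor:Bott:Grassbundle.skew}, and the non-dominant vanishing Proposition \ref{prop:dflag:vanishing} applied to a specific line bundle on $\Flag_R(R^n;\underline{n})$ whose weight is of hook shape. Since all these inputs are already in place, assembling the six equivalences is routine.
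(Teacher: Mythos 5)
Your proof is correct and follows essentially the same route as the paper: both use Lemma \ref{lem:Koszul:lambda:complex} and Lemma \ref{lem:Koszul:lambda:acyclic} for the Koszul side, Theorem \ref{thm:acyclic}\eqref{thm:acyclic-2} together with Proposition \ref{prop:dSchur_vs_bSchur} for the Schur-complex side, and the isomorphisms $f_0,f_1$ of Lemma \ref{lem:Koszul_vs_bSchur} to join the two acyclic complexes via their degree-zero cokernels. The only difference is cosmetic: you build the two halves of the chain outward from the endpoints and meet in the middle, whereas the paper walks the chain of equivalences in order; the underlying lemmas invoked and the logical content are identical.
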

\begin{proof}
The first equivalences follows from the fact that $\Kos^{\lambda}_*(X; \rho)$ canonically represents the derived pushforwards $q_* (\iota_* \sO_Z \otimes \Schur^{\lambda}(\sQ_{\GG})) \simeq \pr_*(\Schur^{\lambda}_{\Grass(\sE;d)} (\sQ))$. The second and third equivalences follow from the isomorphisms $f_i$ of diagram \eqref{diag:prop:Koszul_vs_bSchur} and the acyclic properties of $\Kos_*^{\lambda}(X; \rho)$ (Lemma \ref{lem:Koszul:lambda:acyclic}) and $\bSchur_A^{\lambda}(\rho)$ (Theorem \ref{thm:acyclic} \eqref{thm:acyclic-2}). The last two equivalences follow from the canonical equivalences $[\bSchur_A^{\lambda}(\rho)] \simeq \dSchur_X^{\lambda}(\cofib(\rho))$ (Proposition \ref{prop:dSchur_vs_bSchur}) and $\pi_0(\dSchur_X^{\lambda}(\sE)) \simeq \Schur_A^{\lambda}(\sE)$ (Proposition \ref{prop:dSchur:classical}) and the acyclic property $\pi_i(\dSchur_X^{\lambda}(\sE) \simeq \H_i(\bSchur_A^{\lambda}(\rho)) \simeq 0$ for $i \ne 0$ (Proposition \ref{prop:dSchur_vs_bSchur} and Theorem \ref{thm:acyclic} \eqref{thm:acyclic-2}).
\end{proof}

\begin{remark}[Koszul-type Complexes $\Kos_*^{\lambda}(X;\rho)$ and Schur Complexes $\bSchur_X^{\lambda}(\rho)$]
\label{rmk:Koszul.v.s.Schur.complexes}
The construction of the complexes $\Kos_*^{\lambda}(X;\rho)$ extends to the case of any scheme $X$ as follows. Let $X$ be a scheme, let $\lambda = (\lambda_1, \ldots, \lambda_d)$ be a partition, and let $\rho \colon \sW \to \sV$ be a morphism between vector bundles over $X$ such that $\rank \sV - \rank \sW \ge d$. We let $q \colon \GG=\Grass_{X}(\sV; d) \to X$ denote the projection from the Grassmannian bundle, and let $0 \to \sR_{\GG} \to q^*(\sV) \to \sQ_{\GG} \to 0$ denote the tautological short exact sequence over $\Grass_{X}(\sV;d)$. We consider the Kosozul $\Kos_*(s_\rho) = \{\bigwedge\nolimits^*(q^*(\sW) \otimes \sQ_{\GG}^\vee)\}$ for the section $s_{\rho} \colon q^*(\sW)^\vee \otimes \sQ_{\GG} \to \sO_{\GG}$ determined by $\rho$. (In general, $\Kos_*(s_\rho)$ represents the structure sheaf of the {\em derived} closed subscheme $\Grass_{X}(\cofib(\rho); d) \subseteq \Grass_X(\sV; d)$ determined by Proposition \ref{prop:Grass:PB}.) We define $\Kos_*^{\lambda}(X;\rho)$ to be the complex which represents the pushforward $q_*(\Kos_*(s_\rho) \otimes \Schur^{\lambda}(\sQ_{\GG}))$ as in Lemma \ref{lem:Koszul:lambda:complex}. More concretely, let $m = \rank \sW$ and $n = \rank \sV$, then Lemma \ref{lem:vanish:G:koszul} implies that
	\begin{enumerate}
		\item [(1)] $\Kos^{\lambda}_*(X; \rho)$ is a complex of vector bundles whose components are given by 
	$$\Kos^{\lambda}_{k}(X; \rho)  = \RR^0 q_* \left(\bigwedge\nolimits^{k}(q^*(\sW) \otimes \sQ_\GG^\vee) \otimes \Schur^{\lambda}(\sQ_\GG) \right) \in {\rm Vect}(X),$$
	and whose differentials are induced by the pushforwards of the differentials of the Koszul complex $\Kos_*(s_{\rho})$. In particular, $\Kos^{\lambda}_k(X; \rho)=0$ for $k \notin [0, md]$.
		\item[(2)] The formation of the complex $\Kos^{\lambda}_*(X; \rho)$ commutes with base change of schemes; that is, let $f \colon Y \to X$ be a morphism of schemes, then there is a canonical isomorphism
			$$f^* \Kos_*^{\lambda}(X; \rho \colon \sW \to \sV) \simeq \Kos_*^{\lambda}(Y; f^*\rho \colon f^* \sW \to f^* \sV).$$
	\end{enumerate}
In the universal local case, we deduce from Corollary \ref{cor:Koszul_vs_bSchur} (and \cite[Proposition 5.2.5]{Wey}) that:
	\begin{enumerate}
		\item[(3)] If $X = \vert \sHom_{\ZZ}(\ZZ^m, \ZZ^n) \vert$ and $\rho \colon \sO_X^m \to \sO_X^n$ is the tautological morphism, $\Kos_*^{\lambda}(X; \rho)$ is a minimal finite free resolution of the discrete coherent sheaf $\Schur^{\lambda}({\rm Coker}(\rho))$.
	\end{enumerate}
Here, ``minimal" means that the differentials have positive degrees with respect to the natural grading of the graded coordinate ring $\Sym^*(\ZZ^m \otimes_{\ZZ}  (\ZZ^n)^\vee) \simeq \ZZ[\{X_{ij}\}_{1 \le i \le m, 1 \le j \le n}]$ of $X$; see \cite[\S 5.2]{Wey} for more details. 
Regarding Schur complexes, by virtue of the universal freeness (Theorem \ref{thm:bSchur:free}) we can generalize the construction and obtain a complex $\bSchur_X^{\lambda}(\rho)$ of vector bundles for any scheme $X$ and any morphism of $\rho \colon \sW \to \sV$ between vector bundles on $X$. These Schur complexes $\bSchur_X^{\lambda}(\rho)$ also have the properties $(2)$ and $(3)$ listed above.
Consequently, we deduce from the uniqueness of minimal resolutions and the base change property that
	\begin{enumerate}
		\item[(4)]  $\Kos^{\lambda}_*(X; \rho)$  is isomorphic to the Schur complex $\bSchur_X^{\lambda}(\rho)$. In particular, there is an isomorphism for each $0 \le k \le md$:
		$$\RR^0 q_* \left(\bigwedge\nolimits^{k}(q^*(\sW) \otimes \sQ_\GG^\vee) \otimes \Schur^{\lambda}(\sQ_\GG) \right)\simeq \bSchur_X^{\lambda}(\rho \colon \sW \to \sV)_k.$$
	\end{enumerate}
The strategy for proving $(4)$ described above is abstract. Providing an explicit isomorphism between these two complexes would be much desirable. 
Here is a direct proof of the isomorphisms of components in (4) in the case of characteristic zero:

Assume that $X$ is defined over the field $\QQ$ of rational numbers. On the one hand, \cite[Theorem 2.4.10]{Wey} (see also Theorem \ref{thm:fil:bSchur_k}) implies that there is an isomorphism 
	$$ \bSchur_X^{\lambda}(\rho \colon \sW \to \sV)_k \simeq \bigoplus_{(\mu, \nu) \, \colon \, |\mu|=k} (\Schur^{\mu^t}(\sW) \otimes \Schur^{\nu}(\sV))^{\oplus c_{\mu,\nu}^{\lambda}},$$
where $c_{\mu,\nu}^{\lambda}$ is the Littlewood--Richardson number. On the other hand, we have
	\begin{align*}
		\Kos^{\lambda}_k(X; \rho) 
		& \simeq \bigoplus_{\mu \colon |\mu|=k} \RR^0 q_* \big( q^* \Schur^{\mu^t}(\sW) \otimes \Schur^{\mu}(\sQ_\GG^\vee) \otimes \Schur^{\lambda}(\sQ_\GG) \big) \\
		& \simeq \bigoplus_{\mu \colon |\mu|=k} \Schur^{\mu^t}(\sW)  \otimes \RR^0 q_* \big( \Schur^{(n-d)- \mu}(\sQ_\GG) \otimes \Schur^{\lambda}(\sQ_\GG) \otimes \det(\sQ_\GG)^{\otimes (n-d)}\big) \\
		& \simeq \bigoplus_{(\mu, \gamma) \colon |\mu|=k} 
		\Schur^{\mu^t}(\sW)  \otimes \RR^0 q_* \big(\Schur^{\gamma}(\sQ_{\GG}) \otimes \det(\sQ_\GG)^{\otimes (n-d)}\big)^{\oplus c_{\lambda, (n-d)-\mu}^{\gamma}}
	\end{align*} 
where $(n-d) - \mu$ denotes $(n-d -\mu_d, \ldots, n-d-\mu_1)$. The proof of Lemma \ref{lem:vanish:G:koszul} shows that the above pushforward is nonzero only if $\gamma = (\gamma_1, \ldots, \gamma_d)$ satisfies $\gamma_d \ge n-d$, that is, only if there is a partition $\nu$ such that $\gamma = (n-d) + \nu = (n-d+\nu_1, \ldots, n-d+\nu_d)$. Hence we have
	\begin{align*}
		\Kos^{\lambda}_k(X; \rho) \simeq  \bigoplus_{(\mu, \nu) \colon |\mu|=k}(\Schur^{\mu^t}(\sW) \otimes \Schur^{\nu}(\sV) )^{\oplus c_{\lambda, (n-d)-\mu}^{(n+d)+\nu}}.
	\end{align*}
To complete the proof of the isomorphism $\Kos^{\lambda}_k(X; \rho) \simeq \bSchur_X^{\lambda}(\rho)_k$ in characteristic zero, it only remains to prove the equality  $c_{\mu, \nu}^{\lambda} = c_{\lambda, (n-d)-\mu}^{(n+d)+ \nu}$. This equality is a consequence of the symmetries of Littlewood--Richardson numbers and can be obtained, for example, from \cite{BZ}.
\end{remark}

\subsubsection{A short exact sequence for skew Schur modules}

For the proof of Lemma \ref{lem:Koszul_vs_bSchur}, we used a general short exact sequence (Lemma \ref{lem:lambda/1tolambda}) for skew Schur modules. The goal of this subsection is to prove Lemma \ref{lem:lambda/1tolambda}; its proof is purely classical and combinatorial, so readers might skip it on first reading since it is independent of other parts of the paper.

Let $R$ be a commutative ring and $E$ a free $R$-module of rank $d \ge 1$. Let $\lambda = (\lambda_1, \ldots, \lambda_d) \ne (0)$ be a partition, with transpose $\lambda^t = (p_1, \ldots, p_s)$ (in particular, we have $d \ge p_1 \ge p_2 \ge \cdots \ge p_s$), where $s = \lambda_1^t$ is a positive integer. Consider the $R$-module homomorphism 
	$$\widetilde{f} \colon \bigwedge\nolimits^d (E)  \otimes \bigwedge\nolimits^{p_1-1}(E)\otimes  
	\cdots \otimes \bigwedge\nolimits^{p_s}(E) \to \bigwedge\nolimits^{d-1} (E)  \otimes \bigwedge\nolimits^{p_1}(E) \otimes \cdots \otimes \bigwedge\nolimits^{p_s}(E)$$
induced by the composite map
	$$\bigwedge\nolimits^d (E) \otimes \bigwedge\nolimits^{p_1-1}(E)  \xrightarrow{\Delta' \otimes 1}  \bigwedge\nolimits^{d-1} (E)  \otimes E \otimes \bigwedge\nolimits^{p_1-1}(E)   \xrightarrow{1 \otimes m'} \bigwedge\nolimits^{d-1} (E)  \otimes \bigwedge\nolimits^{p_1}(E).$$
We wish to show that $\widetilde{f}$ descends to a morphism for the Schur modules. 
We observe that:  
	\begin{enumerate}
		\item The shuffle relations defining $\Schur^{\lambda/1}(E)$ and $\Schur^{\lambda}(E)$ (see Notation \ref{notation:square}) are the same except those from shuffling among the first two columns of $\lambda$ and $\lambda/1$, respectively. Consequently, 
		the morphism $\widetilde{f}$ carries the submodule of relations $\bigwedge\nolimits^d(E) \otimes \Im (\square_{\lambda/1; j})$ to the submodule of relations $\bigwedge\nolimits^{d-1}(E) \otimes \Im (\square_{\lambda; j})$ for all $j \ge 2$.
		\item Regarding the shuffle relations for the first two columns of $\lambda/1$ and $\lambda$, we note that, for each $0 \le u \le p_1-1$, $0 \le v \le p_2$ such that $u+v < p_2 -1$, the morphism $\widetilde{f}$ carries the submodule of relations $\bigwedge\nolimits^d(E) \otimes \Im (\square_{\lambda/1; u, v; 1})$ onto the submodule of relations $\bigwedge\nolimits^{d-1}(E) \otimes \Im (\square_{\lambda; u+1, v; 1})$, where $0 \le u+1 \le p_1$ and $(u+1) + v \le p_1 + p_2$.
	\end{enumerate}
Consequently, the morphism $\widetilde{f}$ descends to an $R$-module homomorphism for Schur modules
	\begin{equation}
		\label{eqn:lem:lambda/1tolambda}
	f \colon \bigwedge\nolimits^d (E) \otimes \Schur^{\lambda/1} (E)   \to  \bigwedge\nolimits^{d-1}(E) \otimes \Schur^{\lambda}(E).
	\end{equation}

\begin{remark}
The morphism $f$ admits the following equivalent characterization based on the isomorphism $\bigwedge^{d}(E) \otimes E^\vee \simeq \bigwedge^{d-1}(E)$. Using the same argument as above, we obtain that multiplication map $E \otimes \bigwedge^{\lambda/1}(E) \to \bigwedge^{\lambda}(E)$ descends to a map
	$E \otimes \Schur^{\lambda/1} (E)  \to \Schur^{\lambda}(E)$; we let $g \colon \Schur^{\lambda/1} (E) \to E^\vee \otimes \Schur^{\lambda}(E)$ denote its induced map. Then the morphism $f$ \eqref{eqn:lem:lambda/1tolambda} is canonically isomorphic to the composition
	$$\bigwedge\nolimits^{d} (E) \otimes \Schur^{\lambda/1} (E) \xrightarrow{\id \otimes g} \bigwedge\nolimits^{d} (E) \otimes   E^\vee \otimes \Schur^{\lambda}(E) \simeq  \bigwedge\nolimits^{d-1}(E)  \otimes \Schur^{\lambda}(E).$$ 
\end{remark}

\begin{lemma}
\label{lem:lambda/1tolambda}
In the above situation, the following statements are true:
\begin{enumerate}
	\item \label{lem:lambda/1tolambda-1}
	If $p_1 = d$, then the map \eqref{eqn:lem:lambda/1tolambda} induces an isomorphism
			$$f\colon \bigwedge\nolimits^d (E) \otimes \Schur^{\lambda/1} (E)   \xrightarrow{\simeq} \bigwedge\nolimits^{d-1}(E) \otimes \Schur^{\lambda}(E).$$
	\item \label{lem:lambda/1tolambda-2}
	If $p_1 \le d-1$, then the map $f$ defined by \eqref{eqn:lem:lambda/1tolambda} is injective, and fits into a short exact sequence of free $R$-modules:
			$$0 \to \bigwedge\nolimits^d (E) \otimes \Schur^{\lambda/1} (E)   \xrightarrow{f} \bigwedge\nolimits^{d-1}(E) \otimes \Schur^{\lambda}(E) \to \Schur^{(d-1,\lambda^t)^t}(E) \to 0.$$
\end{enumerate}
\end{lemma}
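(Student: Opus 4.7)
The approach is to reduce to the universal integral case, identify $f$ with one of the position-$1$ shuffle relations in the defining presentation of $\Schur^{(d-1, \lambda^t)^t}(E)$, and verify via a direct computation in the exterior algebra that this single shuffle generates all the others. First, by the universal freeness of Schur functors (Theorem~\ref{thm:Schur:free}), all modules appearing are finite free $R$-modules whose formation commutes with base change, so it suffices to treat the universal case $R = \ZZ$, $E = \ZZ^d$.

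In Case (2), where $p_1 \le d-1$, the sequence $(d-1, \lambda^t) = (d-1, p_1, \ldots, p_s)$ is a partition, and $\Schur^{(d-1, \lambda^t)^t}(E)$ is the cokernel of the full shuffle map $\square_{(d-1, \lambda^t)}$ (Definition~\ref{def:SchurWeyl}, Notation~\ref{notation:square}). Since the shuffle relations at positions $j \ge 2$ coincide with those defining $\Schur^\lambda(E)$, there is a canonical surjection
$$\pi \colon \bigwedge\nolimits^{d-1}(E) \otimes \Schur^\lambda(E) \twoheadrightarrow \Schur^{(d-1, \lambda^t)^t}(E),$$
whose kernel is the image of the position-$1$ shuffles $\square_{u, v; 1}$ for $u + v = p_1 - 1$ (the range forced by $u + v < p_1$ together with $0 \le d - 1 + p_1 - u - v \le \rank E = d$). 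The map $f$ is the specific shuffle $\square_{0, p_1 - 1; 1}$ in this family, so $\pi \circ f = 0$ holds by construction.

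The key technical observation is the identity
$$\square_{u, v; 1}(e_{I_1} \otimes \omega_0 \otimes e_{I_2}) = \pm\, f(\omega_0 \otimes (e_{I_1} \wedge e_{I_2}))$$
in $\bigwedge\nolimits^{d-1}(E) \otimes \bigwedge\nolimits^{p_1}(E)$, where $\omega_0 = e_1 \wedge \cdots \wedge e_d$ generates $\bigwedge\nolimits^d(E)$ and $I_1, I_2 \subseteq \{1, \ldots, d\}$ are disjoint with $|I_1| = u$, $|I_2| = v$. Unwinding the definition of the exterior comultiplication $\Delta'$ of \S\ref{sec:classical_sym}, both sides evaluate to the same sum $\sum_{k \notin I_1 \cup I_2} \pm\, e_{\{1, \ldots, d\} \setminus \{k\}} \otimes e_{I_1 \cup \{k\} \cup I_2}$. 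Since $I_1 \cup I_2$ ranges over all $(p_1 - 1)$-subsets of $\{1, \ldots, d\}$ as $(I_1, I_2)$ varies, this identity implies $\mathrm{image}(\square_{u,v;1}) = \mathrm{image}(f)$ as submodules of $\bigwedge\nolimits^{d-1}(E) \otimes \bigwedge\nolimits^{p_1}(E)$, and hence $\ker \pi = \mathrm{image}(f)$. Injectivity of $f$ follows from its injectivity after tensoring with $\QQ$ (both sides being free over $\ZZ$), which in turn follows from the classical Pieri decomposition $\bigwedge\nolimits^{d-1}(E) \otimes \Schur^\lambda(E) \simeq \Schur^{(d-1, \lambda^t)^t}(E) \oplus \bigwedge\nolimits^d(E) \otimes \Schur^{\lambda/1}(E)$ in characteristic zero, using $\det(E) \otimes \Schur^\nu(E) \simeq \Schur^{\nu + (1^d)}(E)$ for the corner-removed partitions $\nu \subset \lambda$.

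Case (1), where $p_1 = d$, is handled by the same key identity: it specializes to $f(\omega_0 \otimes e_{\{1,\ldots,d\} \setminus \{j\}}) = \pm\, e_{\{1,\ldots,d\} \setminus \{j\}} \otimes \omega_0$, and as $j$ varies these elements span the entire module $\bigwedge\nolimits^{d-1}(E) \otimes \bigwedge\nolimits^d(E)$, so $f$ surjects onto $\bigwedge\nolimits^{d-1}(E) \otimes \Schur^\lambda(E)$. Combined with the rank equality from characteristic-zero Pieri and universal freeness, $f$ is a bijection. The main obstacle is carefully tracking signs in the exterior-algebra identity above; once this is verified, the remainder of the argument is essentially bookkeeping.
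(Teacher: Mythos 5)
Your proposal is correct and, at the level of the key step (identifying the position-$1$ shuffle relations in $\Schur^{(d-1,\lambda^t)^t}(E)$ with $\mathrm{image}(f)$, thanks to the rank constraint $\rank E = d$ forcing $u+v = p_1-1$), mirrors the paper's. The genuine differences are in two places. First, to show $\mathrm{image}(\square_{u,v;1}) = \mathrm{image}(\widetilde f)$, you do a direct exterior-algebra computation of $\square_{u,v;1}(e_{I_1}\otimes\omega_0\otimes e_{I_2})$ and match it with $f(\omega_0\otimes e_{I_1\cup I_2})$ up to sign, while the paper invokes a reduction lemma of Weyman (\cite[(2.1.3)(f)]{Wey}) to replace the family $\square_{u,v;1}$ by the single member $\square_{0,p_1-1;1}$; your version is more self-contained, and the sign you leave unchecked is harmless since you only need equality of images, not of coefficients. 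Second and more substantively, your injectivity argument for $f$ is different: after establishing $\mathrm{image}(f) = \ker\pi$ over $\ZZ$, you count ranks over $\QQ$ via the Pieri decomposition (using that $\det(E)\otimes\Schur^\nu(E)\simeq\Schur^{\nu+(1^d)}(E)$ for corner-removed $\nu$) and deduce injectivity from torsion-freeness of the source, whereas the paper gives a characteristic-free combinatorial argument via tableau bases and a leading-term analysis in the lexicographic order. Your route is shorter and cleaner conceptually but imports the characteristic-zero decomposition of $\bigwedge^{d-1}E\otimes\Schur^\lambda E$ as an external input; the paper's leading-term argument is longer but internal and purely integral. Both are valid, and by universal freeness (which you correctly invoke to reduce to $R=\ZZ$, $E=\ZZ^d$) the characteristic-zero detour loses nothing. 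One small notational slip: in Case (1) you say the images span $\bigwedge^{d-1}(E)\otimes\bigwedge^d(E)$, which should be read as the statement that $\widetilde f$ already surjects onto the first two tensor factors (and hence onto $\bigwedge^{d-1}(E)\otimes\bigwedge^{\lambda^t}(E)$ and, after quotienting, onto $\bigwedge^{d-1}(E)\otimes\Schur^\lambda(E)$); the argument is fine once read this way.
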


\begin{example}
\begin{enumerate}[label=(\roman*)]
	\item 
	If $\lambda = (k)$, $k \ge 2$, then Lemma \ref{lem:lambda/1tolambda} implies:
		\begin{itemize}
			\item	 If $\rank E= 1$, there is a canonical isomorphism $E \otimes \Sym^{k-1}(E) \simeq \Sym^{k}(E)$.
			\item If $\rank E= d \ge 2$, there is a canonical short exact sequence
				$$0 \to \bigwedge\nolimits^{d} (E) \otimes \Sym^{k-1}(E) \to \bigwedge\nolimits^{d-1} (E) \otimes \Sym^{k}(E) \to \Schur^{(k+1,1^{d-1})}(E) \to 0.$$
		\end{itemize}

	\item If $\lambda = (k)$, $1 \le k \le d$, then Lemma \ref{lem:lambda/1tolambda} implies:
		\begin{itemize}
			\item If $k=d$, there is a canonical isomorphism 
				$\bigwedge\nolimits^d (E) \otimes \bigwedge\nolimits^{d-1} (E)   \simeq \bigwedge\nolimits^{d-1}(E) \otimes \bigwedge\nolimits^{d}(E).$	
			\item  If $k \le d-1$, there is a canonical short exact sequence
				$$0 \to \bigwedge\nolimits^{d} (E) \otimes \bigwedge\nolimits^{k-1}(E) \to \bigwedge\nolimits^{d-1} (E) \otimes \bigwedge\nolimits^{k} (E) \to \Schur^{(d-1,k)^t}(E) \to 0.$$
		\end{itemize}
	\item  If $\lambda = (2,1)$,  then Lemma \ref{lem:lambda/1tolambda} implies:
		\begin{itemize}
			\item If $\rank E =2$, there is a canonical isomorphism 
				$\bigwedge\nolimits^2 (E) \otimes E \otimes E   \simeq \bigwedge\nolimits^{1}(E) \otimes \Schur^{(2,1)}(E).$
			\item If  $\rank E = d \ge 3$, there is a canonical short exact sequence 
	$$0 \to \bigwedge\nolimits^{d} (E) \otimes E \otimes E   \to \bigwedge\nolimits^{d-1}(E) \otimes \Schur^{(2,1)}(E) \to \Schur^{(d-1,2,1)^t}(E) \to 0.$$
		\end{itemize}
\end{enumerate}
\end{example}

\begin{proof}
If $\rank E = d =1$, the statements are trivial. We now assume $\rank E = d \ge 2$.
In case \eqref{lem:lambda/1tolambda-1}, the map $\widetilde{f}$ is an isomorphism and sends the submodule of defining relations $\Im (\square_{\lambda^t/1})$ of $\Schur^{\lambda/1}$ identically to the submodule of defining relations $\Im (\square_{\lambda^t})$ of $\Schur^{\lambda}$ (see Notation \ref{notation:square}) except possibly the shuffling relations for the first and second columns. Therefore, we only need to consider the case where $\lambda^t= (d, p_2)$, $p_2 \le d$, and $f$ is induced from the isomorphism
	$$\widetilde{f} \colon \bigwedge\nolimits^{d} E \otimes \bigwedge\nolimits^{d-1} E \otimes \bigwedge\nolimits^{p_2} E \to \bigwedge\nolimits^{d-1} E \otimes \bigwedge\nolimits^{d} E \otimes \bigwedge\nolimits^{p_2} E.$$
In this case, we have for all $u, v \ge 0$, $u+v < p_2-1$,
	$$\bigwedge\nolimits^{d} E \otimes \left(\sum_{u+v< p_2 -1} \bigwedge\nolimits^{u} E \otimes \bigwedge\nolimits^{p_1+p_2-1-u-v} E \otimes \bigwedge\nolimits^{v} E\right) \otimes \bigwedge\nolimits^{p_2} E = 0 $$
and for all $u, v \ge 0$,  $u+v < p_2$,
	$$\bigwedge\nolimits^{d} E \otimes \left(\sum_{u+v< p_2 } \bigwedge\nolimits^{u} E \otimes \bigwedge\nolimits^{p_1+p_2-u-v} E \otimes \bigwedge\nolimits^{v} \right) \otimes \bigwedge\nolimits^{p_2} E = 0.$$
Therefore, $\widetilde{\bigwedge}^{\lambda^t/1}(E) =0$ and $\widetilde{\bigwedge}^{\lambda^t}(E)=0$ (see Notation \ref{notation:square}). Hence $\widetilde{f} =f$ in the case where $\lambda^t= (d, p_2)$. Then the desired result for general $\lambda$ follows.

Now we consider case \eqref{lem:lambda/1tolambda-2}. We have a commutative diagram of morphisms
	$$
	\begin{tikzcd} 
		\bigwedge\nolimits^d (E) \otimes \bigwedge\nolimits^{\lambda^t/1} (E) \ar[two heads]{d} \ar{r}{\widetilde{f}} &  \bigwedge\nolimits^{d-1}(E) \otimes \bigwedge\nolimits^{\lambda^t}(E)  \ar[two heads]{d}  \ar[equal]{r}  & \bigwedge\nolimits^{d-1}(E) \otimes \bigwedge\nolimits^{\lambda^t}(E) \ar[two heads]{d} \\
		\bigwedge\nolimits^d (E) \otimes \Schur^{\lambda/1} (E)    \ar{r}{f} & \bigwedge\nolimits^{d-1}(E) \otimes \Schur^{\lambda}(E) \ar{r} & \Schur^{(d-1,\lambda^t)^t}(E).
	\end{tikzcd}
	$$
We observe that the submodule $\Im (\square_{(d-1,\lambda^t)})  \subseteq \bigwedge \nolimits^{d-1}(E) \otimes \bigwedge\nolimits^{\lambda^t}(E)$ of defining relations of $\Schur^{(d-1,\lambda^t)^t}(E)$ (Notation \ref{notation:square}) is precisely the sum of the submodule $ \bigwedge\nolimits^{d-1}(E)  \otimes \Im (\square_{\lambda^t}) \subseteq \bigwedge \nolimits^{d-1}(E) \otimes \bigwedge\nolimits^{\lambda^t}(E)$ (of defining relation of $\bigwedge \nolimits^{d-1} E \otimes \Schur^{\lambda}(E)$) and the submodule $\Im(\square_{(d-1,\lambda^t); 1})$ of shuffle relations for the first two columns.
Here, by virtue of \cite[(2.1.3) (f)]{Wey}, the image $\Im(\square_{(d-1,\lambda^t); 1})$ is the sum of the images of the morphisms 
	$$\square_{(d-1,\lambda^t); u=0, v; 1} \colon \bigwedge\nolimits^{d-1+p_{1}-v} (E) \otimes \bigwedge\nolimits^{v} (E) \otimes \bigotimes\nolimits _{j \ge 2}\bigwedge\nolimits^{p_j}(E)  \to  \bigwedge\nolimits^{d-1}(E) \otimes \bigotimes\nolimits _{j \ge 1}\bigwedge\nolimits^{p_j}(E),$$ 
where $v$ runs through $0 \le v \le p_1-1$. However, since $\rank E = d$, the source of the morphism $\square_{(d-1,\lambda^t); u=0, v; 1}$ is nonzero only when $v = p_1-1$; in this case, the morphism $\square_{(d-1,\lambda^t); u=0, v=p_1-1; 1}$ coincides with the morphism $\widetilde{f}$ by definition. Hence we have $\Im(\square_{(d-1,\lambda^t); 1}) = \Im(\widetilde{f})$. Consequently, the canonical surjection
	$\bigwedge\nolimits^{d-1} E \otimes \bigwedge\nolimits^{\lambda^t}(E) \to \Schur^{(d-1,\lambda^t)^t}(E)$
descends to a surjection
	$\bigwedge \nolimits^{d-1} E \otimes \Schur^{\lambda}(E) \to  \Schur^{(d-1,\lambda^t)^t}(E)$ 
whose kernel is precisely given by the image of $f$. 

In order to prove assertion \eqref{lem:lambda/1tolambda-2}, it only remains to show the injectivity of $f$ \eqref{eqn:lem:lambda/1tolambda}; we present an elementary proof of this fact by showing that the induced map
	$$g \colon  \Schur^{\lambda/1}(E) \to E^\vee \otimes \Schur^{\lambda/1}(E)$$
sends a basis of $\Schur^{\lambda/1}(E)$ to a set of linearly independent elements of $E^\vee \otimes \Schur^{\lambda}(E)$. 

We choose a basis $e_1 < e_2 < \ldots < e_d$ of $E$ and let $e_1^\vee < e_2^\vee < \ldots < e_d^\vee$ of $E^\vee$ denote the dual basis. Then the basis of $\Schur^{\lambda/1}(E)$ and $\Schur^{\lambda}(E)$ given by $e_{\lambda/1}(T)$ and $e_{\lambda}(U)$, where $e_{\lambda/1}(T)$ and $e_{\lambda}(U)$ are the elements corresponding to $\tau$-standard tableaux $T \in {\rm ST}^{\tau}(\lambda/1, \{e_i\})$ and $U \in {\rm ST}^{\tau}(\lambda, \{e_i\})$ of shapes $\lambda/1$ and $\lambda$, respectively (\cite[Proposition (2.1.4)]{Wey}). Beware that we are using the {\em opposite} convention as \cite{Wey} such that our $\Schur^{\lambda}(E)$ is the $L_{\lambda^t}(E)$ of {\em loc. cit.}; we say a tableau $T$ of shape $\gamma$ and content $\{e_i\}$ is {\em $\tau$-standard}, and denote $T \in {\rm ST}^{\tau}(\gamma, \{e_i\})$, if its transpose $T^t \in {\rm ST}(\gamma^t, \{e_i\})$ is standard in the sense of \cite[(1.1.12)]{Wey}). 

We order the basis in lexicographical order (note that this order corresponds to the order $\preceq$ of row standard tableaux of \cite[(1.1.14)]{Wey}).
We also label the basis of $E^\vee \otimes \Schur^{\lambda}(E)$ lexicographically, that is, $e_i^\vee \otimes U_1 < e_j ^\vee \otimes U_2$ if $i < j$ or if $i=j$ and $U_1 < U_2$ in ${\rm ST}^{\tau}(\lambda, \{e_i\})$. The map $g$ sends the basis $e_{\lambda/1}(T)$, where $T \in {\rm ST}^{\tau}(\lambda/1, \{e_i\})$, 
to a $\ZZ$-linear combinations of basis 
	$$g(e_{\lambda/1}(T)) = \sum_{i,j} c_{i,j} e_i^\vee \otimes e_{\lambda}(U_j),$$
where $e_{\lambda}(U_j)$ are basis corresponding to $\tau$-standard tableaux $U_j$ of shape $\lambda$. Then the smallest base element $e_{\lambda}(U_j)$ (with respect to the lexicographical order) that appears in the summand of $g(e_{\lambda/1}(T))$ has coefficients $\pm 1$; we will refer to it as the {\em leading} term of $g(e_{\lambda/1}(T))$. 

We will prove the following claim:
\begin{itemize}
	\item[(*)] The leading terms of $g(e_{\lambda/1}(T))$, as $T$ runs through all $\tau$-standard tableaux of shape $\lambda/1$, are pairwise distinct. 
\end{itemize}
The claim $(*)$ implies that $\{g(e_{\lambda/1}(T)) \mid T \in {\rm ST}^{\tau}(\lambda/1, \{e_i\}) \}$ are linearly independent over $R$, and the desired result of the lemma will then follow.

To prove the above claim $(*)$, we observe that, if the first column $\{T(1,2), \ldots, T(1, p_1)\}$ of $T$ does not contain $e_1$, then the leading term of $g(e_{\lambda/1}(T))$ is simply given by the image of $e_1^\vee \otimes e_{\lambda}(U_T)$, where $U_T = e_1 \wedge T$ denotes the tableau of shape $\lambda$ such that $U(1,1) = e_1$ and $U(i,j) = T(i,j)$ for $(i,j) \ne (1,1)$. Since the map $T \mapsto U_T$ preserves the orderings of $\tau$-standard tableaux, the leading terms of these $g(e_{\lambda/1}(T))$ are pairwise distinct. 

More generally, if $i = i(T) \ge 2$ is the smallest number such that $\{e_i\}$ does not appear in the first column $\{T(1,2), \ldots, T(1, p_1)\}$, then $\{T(1,2), \ldots, T(1,p_1)\}$ contains $\{e_1, \ldots, e_{i-1}\}$. Therefore, the leading term of $g(e_{\lambda/1}(T))$ has the form 
	$$(-1)^{i-1} e_i^\vee \otimes (e_1 \wedge \cdots \wedge e_{i} \wedge e_{T(1,i+1)} \wedge \cdots \wedge e_{T(1,p_1)}) \otimes e_{T(2,*)} \otimes e_{T(3,*)}\cdots \otimes e_{T(s,*)}.$$
Consequently, the tableau $U_{i,T}$ of shape $\lambda$ corresponding to the above element (i.e., the tableaux which has first column $\{1, 2, \ldots, i-1, i, T(1,i+1), \ldots, T(1,i-1)\}$ and all other columns same as those of $T$) is $\tau$-standard. Apparently, for any fixed $i$, the map $T \mapsto U_{i,T}$ preserves the ordering on $\tau$-standard tableaux, and hence the corresponding tableaux $U_{i,T}$ are pairwise distinct. Since $e_i^\vee \otimes U_{i,T} < e_j^\vee \otimes U_{j,T'}$ whenever $i<j$, the claim $(*)$ is proved and so is the lemma. \end{proof}

\subsection{Borel--Weil--Bott Theorem for Derived Flag Schemes: Dominant Weights}
\label{sec:Bott.dominant}
Building on the results from earlier subsections, this section establishes  Borel--Weil--Bott theorem for derived Grassmannians and derived flag schemes in the case of  ``dominant weights" (i.e., for partitions). These derived versions of Borel--Weil--Bott theorem  simultaneously generalize the classical Borel--Weil--Bott theorem \cite{Bott} and Kempf's vanishing theorem \cite{Kempf} for dominant weights. The results presented in this subsection are characteristic-free. 

\begin{theorem}[{Borel--Weil--Bott Theorem for Derived Complete Flag Schemes: dominant weights}]  
\label{thm:Bott:dflag}
Let $X$ be a prestack, $\sE$ a perfect complex of rank $\ge n \ge 1$ and Tor-amplitude in $[0,1]$ over $X$, and let $\pr \colon \Flag_X(\sE; \underline{n}) \to X$ denote the derived flag scheme of $\sE$ of type $\underline{n}$ over $X$.
Let $\lambda = (\lambda_1 , \ldots, \lambda_n)$ be a partition and let $\sL(\lambda)$ be the associated line bundle \eqref{eqn:flag:linebundle}. Then there exits a canonical morphism of perfect complexes: 
	$\pr^*(\dSchur^{\lambda}(\sE)) \to \sL(\lambda)$
on $\Flag(\sE;\underline{n})$ 
which induces a canonical equivalence of perfect complexes on $X$:
	$$\dSchur^{\lambda}(\sE) \xrightarrow{\simeq} \pr_*(\sL(\lambda)).$$
\end{theorem}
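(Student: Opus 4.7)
The plan is to establish the theorem by factoring the projection $\pr \colon \Flag(\sE;\underline{n}) \to X$ through the derived Grassmannian $\Grass(\sE;n)$ and reducing to two already-established ingredients: the classical Borel--Weil--Bott theorem for complete flag bundles (Theorem \ref{thm:Bott:flagbundle}), and the preliminary derived Borel--Weil--Bott theorem for derived Grassmannians (Theorem \ref{thm:Bott:dGrass}). The key observation is that the rank hypothesis $\rank\sE \ge n$ ensures that the ``top'' Grassmannian $\Grass(\sE;n)$ is non-empty, and that the universal rank-$n$ quotient there is an honest vector bundle to which the classical theory applies.

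Concretely, I would first consider the forgetful morphism $\pi := \pi_{(n),\underline{n}} \colon \Flag(\sE;\underline{n}) \to \Grass(\sE;n)$. By Corollary \ref{cor:dflag:forget} \eqref{cor:dflag:forget-1} (applied with $\bdd'' = (n)$ and $\bdd' = (1,2,\ldots,n-1)$), this forgetful map canonically identifies $\Flag(\sE;\underline{n})$ with the classical complete flag bundle $\Flag_{\Grass(\sE;n)}(\sQ_n; \underline{n-1}) \simeq \Flag_{\Grass(\sE;n)}(\sQ_n; \underline{n})$ of the universal rank-$n$ locally free quotient $\sQ_n$ on $\Grass(\sE;n)$ (see Example \ref{eg:dflag}). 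Tracing the universal quotient sequences, the line bundles $\sL_i = \Ker(\sQ_i \to \sQ_{i-1})$ on $\Flag(\sE;\underline{n})$ ($1 \le i \le n$) coincide with the analogous line bundles on $\Flag(\sQ_n; \underline{n})$, so that $\sL(\lambda)$ of Construction \ref{constr:dflag} for $\sE$ agrees with the classical $\sL(\lambda)$ for $\sQ_n$ under this identification.

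Next, I would apply Theorem \ref{thm:Bott:flagbundle} to the vector bundle $\sQ_n$ of rank $n$ on $\Grass(\sE;n)$ and the partition $\lambda$, obtaining a canonical morphism $\pi^*(\bigwedge\nolimits^{\lambda^t}\sQ_n) \to \sL(\lambda)$ whose adjoint factorizes through a canonical equivalence $\Schur^\lambda(\sQ_n) \xrightarrow{\sim} \pi_*(\sL(\lambda))$ of vector bundles on $\Grass(\sE;n)$. Then I would invoke Theorem \ref{thm:Bott:dGrass} applied to $\sE$ with $d=n$, which yields the canonical map $\pr_{\Grass}^*(\dSchur^\lambda(\sE)) \to \Schur^\lambda(\sQ_n)$ and the canonical equivalence $\dSchur^\lambda(\sE) \xrightarrow{\sim} (\pr_{\Grass})_*(\Schur^\lambda(\sQ_n))$. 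Composing these produces the canonical morphism
$$\pr^*(\dSchur^\lambda(\sE)) \;=\; \pi^*\pr_{\Grass}^*(\dSchur^\lambda(\sE)) \;\to\; \pi^*(\Schur^\lambda(\sQ_n)) \;\to\; \sL(\lambda),$$
and its adjoint is the composition of equivalences
$$\dSchur^\lambda(\sE) \xrightarrow{\sim} (\pr_{\Grass})_*(\Schur^\lambda(\sQ_n)) \xrightarrow{\sim} (\pr_{\Grass})_*\pi_*(\sL(\lambda)) \simeq \pr_*(\sL(\lambda)),$$
as desired.

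There is no real obstacle here, since the whole point of the preceding sections has been to isolate the two ingredients above; the argument simply splits the derived Borel--Weil--Bott theorem into its ``derived'' and ``classical'' parts, which are handled by Theorem \ref{thm:Bott:dGrass} and Theorem \ref{thm:Bott:flagbundle} respectively. The one point requiring care is the naturality of the line-bundle identification $\sL(\lambda)|_{\Flag(\sE;\underline{n})} \simeq \sL(\lambda)|_{\Flag(\sQ_n;\underline{n})}$ and the compatibility of the canonical maps from the two source theorems, but both amount to direct verifications using that $\sQ_i$ on $\Flag(\sE;\underline{n})$ for $i \le n$ is the pullback of the corresponding universal quotient on the complete flag bundle of $\sQ_n$.
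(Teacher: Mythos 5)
Your proposal is correct and follows essentially the same argument as the paper: factor $\pr$ through the forgetful map $\pi_{(n),\underline{n}} \colon \Flag(\sE;\underline{n}) \to \Grass(\sE;n)$, identify the source as the complete flag bundle of the rank-$n$ quotient $\sQ_n$ via Corollary \ref{cor:dflag:forget}, and compose the pushforward equivalences from Theorem \ref{thm:Bott:flagbundle} (for the flag bundle over $\Grass(\sE;n)$) and Theorem \ref{thm:Bott:dGrass} (for the derived Grassmannian). The extra compatibility remark you flag at the end is the same implicit check the paper makes when asserting that $\sQ \simeq \iota^*(\sQ_\GG)$ and that the universal quotient sequence pulls back correctly.
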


\begin{proof}
Let $\pr_{\Grass} \colon \Grass(\sE; n) \to X$ denote the derived Grassmannian of $\sE$ over $X$ with tautological fiber sequence	 
	$\sR_{\Grass} \to \pr_{\Grass}^* (\sE) \xrightarrow{\rho_{\Grass}}\sQ_{\Grass}$. By virtue of Corollary \ref{cor:dflag:forget} \eqref{cor:dflag:forget-1}, the natural forgetful map
	$\pi_{(n), \underline{n}} \colon \Flag(\sE; \underline{n}) \to \Grass_X(\sE; n)$
canonically identifies $\Flag(\sE; \underline{n})$ as the complete flag bundle of the vector bundle $\sQ_{\Grass}$ over $\Grass(\sE;n)$. Theorem \ref{thm:Bott:flagbundle} implies that there is a canonical morphism $\alpha \colon \pi_{(n), \underline{n}}^*(\dSchur^{\lambda}(\sQ_{\Grass})) \to \sL(\lambda)$ inducing a canonical equivalence 
	$$\beta \colon \dSchur^{\lambda}(\sQ_{\Grass}) \simeq (\pi_{(n), \underline{n}})_* (\sL(\lambda)).$$

Meanwhile, Proposition \ref{thm:Bott:dGrass} implies that the canonical morphism 
	$\Schur^{\lambda}(\rho_{\Grass}) \colon \pr_{\Grass}^*(\dSchur^{\lambda}(\sE))\simeq \dSchur^{\lambda}(\pr_{\Grass}^* \sE) \to \dSchur^{\lambda}(\sQ_{\Grass})$
induces a canonical equivalence 
	$$\gamma \colon \dSchur^{\lambda}(\sE) \xrightarrow{\simeq} (\pr_{\Grass})_* (\dSchur^{\lambda}(\sQ_{\Grass})).$$
Combined, we obtain that the composite morphism
	$$\pr^*(\dSchur^{\lambda}(\sE)) 
	\simeq \pi_{(n), \underline{n}}^*(\dSchur^{\lambda}(\pr_{\Grass}^* \sE)) \xrightarrow{\pi_{(n),\underline{n}}^*(\Schur^{\lambda}(\rho_{\Grass}))} \pi_{(n), \underline{n}}^*(\dSchur^{\lambda}(\sQ_{\Grass})) \xrightarrow{\alpha} \sL(\lambda)$$
induces a sequence of canonical equivalences
	$$\dSchur^{\lambda}(\sE) \xrightarrow{\alpha} (\pr_{\Grass})_* (\dSchur^{\lambda}(\sQ_{\Grass})) \xrightarrow{(\pr_{\Grass})_*(\beta)} (\pr_{\Grass})_* \, (\pi_{(n),\underline{n}})_*(\sL(\lambda)) \simeq \pr_*(\sL(\lambda)).$$
\end{proof}

\begin{remark}
Assume we are in the situation of Theorem \ref{thm:Bott:dflag}. The assignment $(\lambda, m) \mapsto \lambda +m  := (\lambda_1 +m , \ldots, \lambda_n + m)$, where $m \in \ZZ_{\ge 0}$, $\lambda \in \ZZ_{\ge 0}^n$, defines an action of $\ZZ_{\ge 0}$ on all partitions of $n$ entries. The corresponding action on line bundles, $(\sL(\lambda), m) \mapsto \sL(\lambda +m) \simeq \sL(\lambda) \otimes (\sL_1 \otimes \cdots \otimes \sL_{n})^{\otimes m}$, induces canonical morphisms of perfect complexes on $X$:
	$$\dSchur^{\lambda}(\sE) \otimes \big(\bigwedge\nolimits^n \sE \big)^{\otimes m} \to \dSchur^{\lambda}(\sE) \otimes \dSchur^{(m^n)}(\sE) \to \dSchur^{\lambda+m}(\sE).$$
If $\sE$ is a vector bundle of rank $n$, then $\det(\sE) =  \bigwedge\nolimits^n(\sE)$ is a line bundle and both above morphisms are isomorphisms of vector bundles. If $\sE$ is a perfect complex of rank $n$, then generally $\bigwedge\nolimits^{n}(\sE)$ is not a line bundle and the above morphisms are not isomorphisms. 
\end{remark}

\begin{remark}[Kempf's Vanishing Results] 
Similar to Remark \ref{rmk:Kempf}, Theorem \ref{thm:Bott:dflag} includes Kempf-type vanishing results \cite{Kempf} as part of its formulation:
	$$\RR^0 \pr_*(\sL(\lambda)) \simeq \pi_0(\dSchur^\lambda(\sE)) \quad \text{and} \quad \RR^i \pr_*(\sL(\lambda)) \simeq \pi_{-i}(\dSchur^\lambda(\sE)) \simeq 0 \quad \text{for} \quad i > 0 .$$
Be aware that, unlike in the classical case, the derived pushfoward  $\pr_*(\sL(\lambda))$ may have nonzero negative cohomologies as computed by the formula $\RR^{-i} \pr_*(\sL(\lambda)) \simeq \pi_i(\dSchur^\lambda(\sE))$ for $  i \ge 0$. However, in the universal local situation where $X = \vert \sHom_R(R^{m}, R^{m+n})\vert$ and $\rho \colon \sO_X^{m} - \sO_X^{m+n}$ is the tautological morphism, $m \ge 1$, Corollary \ref{cor:Koszul_vs_bSchur} implies that the negative cohomologies vanish and the following strong form of Kempf's vanishing results holds:
	$$\RR^0 \pr_*(\sL(\lambda)) \simeq \Schur^\lambda({\rm Coker}(\rho)) \quad \text{and} \quad \RR^i \pr_*(\sL(\lambda)) =0 \quad \text{for} \quad i \neq 0,$$
where $\Schur^\lambda({\rm Coker}(\rho))$ is the classical Schur module  (Definition \ref{def:SchurWeyl}).
\end{remark}

\begin{corollary}[{Borel--Weil--Bott Theorem for Derived Partial Flag Schemes}]
\label{cor:Bott:dpflag}
Let $X$ be a prestack, $\sE$ a perfect complex of rank $n \ge 1$ and Tor-amplitude in $[0,1]$ over $X$, and $\bdd = (d_1, \ldots, d_k)$ an increasing sequence of integers in $[1,n]$, where $k \ge 1$ is an integer. Consider the derived flag scheme $\pr \colon \Flag_X(\sE; \bdd) \to X$. Let $\lambda = (\lambda_1, \ldots, \lambda_n)$ be a partition, and let $\sV(\lambda)$ denote the associated perfect complex \eqref{eqn:flag:Vlambda} on $\Flag(\sE; \bdd)$. Then there is a canonical map
	 $\pr^*(\dSchur^{\lambda} (\sE)) \to \sV(\lambda)$
in $\Perf(\Flag_X(\sE;\bdd))$ inducing a canonical equivalence in $\Perf(X)$:
	$$\dSchur^{\lambda} (\sE) \xrightarrow{\simeq} \pr_*(\sV(\lambda)).$$
\end{corollary}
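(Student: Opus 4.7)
The plan is to factor the projection $\pr \colon \Flag_X(\sE;\bdd) \to X$ through the derived complete flag scheme and reduce to Theorem \ref{thm:Bott:dflag}. Let $\pr_n \colon \Flag_X(\sE;\underline{n}) \to X$ be the projection and $\pi := \pi_{\bdd,\underline{n}} \colon \Flag_X(\sE;\underline{n}) \to \Flag_X(\sE;\bdd)$ be the forgetful morphism, so that $\pr_n = \pr \circ \pi$. By Theorem \ref{thm:Bott:dflag} there is a canonical equivalence $\dSchur^\lambda(\sE) \simeq (\pr_n)_*(\sL(\lambda))$. It therefore suffices to produce a canonical equivalence $\pi_*(\sL(\lambda)) \simeq \sV(\lambda)$ in $\Perf(\Flag_X(\sE;\bdd))$, compatible with the canonical map $\pr^*\dSchur^\lambda(\sE) \to \sV(\lambda)$ obtained by adjunction from the morphism furnished by Theorem \ref{thm:Bott:dflag}.

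To compute $\pi_*(\sL(\lambda))$, I will invoke Corollary \ref{prop:dflag:forget}, which identifies $\pi$ with the fiber product of the complete flag bundles
$$\Flag(\sR;\underline{n-d_k}) \times_{\Flag(\sE;\bdd)} \Flag(\sV_k;\underline{d_k-d_{k-1}}) \times_{\Flag(\sE;\bdd)} \cdots \times_{\Flag(\sE;\bdd)} \Flag(\sV_1;\underline{d_1}),$$
where $\sR = \fib(\pr^*\sE \to \sQ_k)$ and $\sV_j = \Ker(\sQ_j \to \sQ_{j-1})$, together with the description of $\sL_s$ on $\Flag(\sE;\underline{n})$ as pulled back from the corresponding line bundles on the appropriate factor. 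Using this description, the line bundle $\sL(\lambda)$ corresponds under the above equivalence to the external tensor product of the line bundles $\sL(\lambda^{(j)})$ on $\Flag(\sV_j;\underline{d_j-d_{j-1}})$ for $1 \le j \le k$ and $\sL(\lambda^{(k+1)})$ on $\Flag(\sR;\underline{n-d_k})$, where the partitions $\lambda^{(j)}$ are as in Construction \ref{constr:V.lambda:dpflag} (each is a partition since $\lambda$ is).

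Now apply Theorem \ref{thm:Bott:dflag} separately to each factor. The vector bundles $\sV_j$ have Tor-amplitude $0$, and $\sR$ is perfect of rank $n-d_k$ with Tor-amplitude in $[0,1]$ since $\pr^*\sE$ has Tor-amplitude in $[0,1]$, $\sQ_k$ is a vector bundle, and the map $\pi_0(\pr^*\sE) \to \pi_0(\sQ_k)$ is surjective. Hence Theorem \ref{thm:Bott:dflag} yields canonical equivalences
$$(\pr_{\Flag(\sV_j;\underline{d_j-d_{j-1}})})_*\sL(\lambda^{(j)}) \simeq \dSchur^{\lambda^{(j)}}(\sV_j), \qquad (\pr_{\Flag(\sR;\underline{n-d_k})})_*\sL(\lambda^{(k+1)}) \simeq \dSchur^{\lambda^{(k+1)}}(\sR).$$
Combining these over the fiber product via base change (Proposition \ref{prop:flag:functorial} \eqref{prop:dflag-1}) and projection formula (available because each factor is proper of finite Tor-amplitude by Corollary \ref{cor:dpflag:perfect.amp<=1}), we obtain a canonical equivalence $\pi_*(\sL(\lambda)) \simeq \sV(\lambda)$. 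Applying $\pr_*$ and composing with the equivalence of Theorem \ref{thm:Bott:dflag} produces the desired $\dSchur^\lambda(\sE) \xrightarrow{\simeq} \pr_*(\sV(\lambda))$; the compatibility with the canonical morphism from $\pr^*\dSchur^\lambda(\sE)$ is tracked through the same adjunction chain.

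The main obstacle I anticipate is bookkeeping rather than genuine difficulty: confirming that the identification of $\sL(\lambda)$ on the complete flag scheme with the external product of the line bundles $\sL(\lambda^{(j)})$ on the factors is exactly the one dictated by the grouping in Construction \ref{constr:V.lambda:dpflag} (so that the partitions $\lambda^{(j)}$ match up), and verifying that iterated base change plus projection formula across the $k+1$ factors of the fiber product yields the tensor product $\sV(\lambda)$ with the correct functorial morphism from $\pr^*\dSchur^\lambda(\sE)$.
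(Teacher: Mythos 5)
Your proposal is correct and takes essentially the same route as the paper: factor $\pr$ through the derived complete flag scheme via the forgetful morphism $\pi_{\bdd,\underline{n}}$, use Corollary \ref{prop:dflag:forget} to identify $\Flag(\sE;\underline{n})$ over $\Flag(\sE;\bdd)$ as a fiber product of complete flag bundles on which $\sL(\lambda)$ decomposes as an external tensor product, apply Theorem \ref{thm:Bott:dflag} to each factor, and combine. The paper's proof packs this into two sentences; you have spelled out the intermediate steps (and the hypothesis checks on each factor) that the paper leaves implicit, but there is no difference in substance.
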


\begin{proof}
Let $\Flag(\sE; \underline{n})$ denote the derived flag scheme of $\sE$ of type $\underline{n}$ (Example \ref{eg:dflag}) and consider the natural forgetful morphism  $\pi_{\bdd, \underline{n}} \colon \Flag(\sE; \underline{n}) \to \Flag(\sE; \bdd)$ (\S \ref{sec:dflag:forget}). Let $\sL(\lambda)$ be the associated line bundle \eqref{eqn:flag:linebundle} on $\Flag(\sE; \underline{n})$, then Proposition \ref{prop:dflag:forget} and Theorem \ref{thm:Bott:dflag} implies that there is a canonical equivalence $(\pi_{\bdd, \underline{n}})_{*} (\sL(\lambda)) \simeq \sV(\lambda).$ Hence the desired result follows from Theorem \ref{thm:Bott:dflag}.
\end{proof}

\begin{corollary}[{Borel--Weil--Bott Theorem for Derived Grassmannians}]
\label{cor:Bott:dGrass:dominant}
Let $X$ be a prestack, $\sE$ a perfect complex of rank $n \ge 1$ and Tor-amplitude in $[0,1]$ over $X$, and $d$ be an integer such that $1 \le d \le n$. Let  $\pr \colon \Grass(\sE;d) \to X$ denote the derived Grassmannian, let $\alpha = (\alpha_1, \ldots , \alpha_{d})$ and $\beta = (\beta_1, \ldots, \beta_{n-d})$ be two partitions, and let $\sV(\alpha, \beta)$ denote the associated perfect complex \eqref{eqn:Grass:Vlambda} as in Example \ref{eg:notation:V:dGrass}. Then there is a canonical map 
	$\pr^*(\dSchur^{(\alpha,\beta)} (\sE)) \to \sV(\alpha, \beta)$
 in $\Perf(\Grass(\sE;d))$ which induces a canonical equivalence in $\Perf(X)$:
	$$\dSchur^{(\alpha,\beta)} (\sE) \xrightarrow{\simeq} \pr_*(\sV(\alpha, \beta)).$$
\end{corollary}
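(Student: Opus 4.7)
The plan is to deduce this corollary as a direct specialization of the Borel--Weil--Bott theorem for derived partial flag schemes (Corollary \ref{cor:Bott:dpflag}) applied to the single-entry sequence $\bdd = (d)$. First I would observe that, with this choice of $\bdd$, we have $k=1$, $d_0=0$, $d_1=d$, $d_{k+1}=n$, and $\Flag_X(\sE;\bdd) = \Grass_X(\sE;d)$ by the very definition of derived Grassmannians as the one-step case of derived flag schemes. Under this identification, the universal quotient sequence of $\Flag_X(\sE;\bdd)$ specializes to the tautological fiber sequence $\sR \to \pr^*(\sE) \to \sQ$, so that the universal quotient bundle is $\sQ_1 = \sQ$ of rank $d$, the vector bundle $\sV_1 = \ker(\sQ_1 \to \sQ_0)$ from Notation \ref{not:dflag} \eqref{not:dflag-3} is $\sQ$ itself, and $\fib(\pr^*\sE \xrightarrow{\varphi_1} \sQ_{d_1}) = \sR$.

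Next I would let $\lambda = (\alpha, \beta) \in \ZZ_{\ge 0}^{n}$ denote the concatenation of the two partitions. Under the grouping convention of Construction \ref{constr:V.lambda:dpflag}, the blocks of $\lambda$ are $\lambda^{(1)} = (\lambda_1,\ldots,\lambda_d) = \alpha$ and $\lambda^{(2)} = (\lambda_{d+1},\ldots,\lambda_n) = \beta$; both are partitions by hypothesis, so condition $(*)$ of that construction is satisfied. Substituting into the formula \eqref{eqn:flag:Vlambda} defining $\sV(\lambda)$ yields
$$
\sV(\lambda) \;=\; \dSchur^{\lambda^{(2)}}\!\bigl(\fib(\pr^*\sE \to \sQ)\bigr) \otimes \dSchur^{\lambda^{(1)}}(\sV_1) \;=\; \dSchur^{\beta}(\sR) \otimes \dSchur^{\alpha}(\sQ),
$$
which agrees with the complex $\sV(\alpha,\beta)$ defined in Example \ref{eg:notation:V:dGrass}.

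Finally, I would invoke Corollary \ref{cor:Bott:dpflag} with this $\bdd$ and $\lambda$: it supplies a canonical morphism $\pr^*(\dSchur^{(\alpha,\beta)}(\sE)) \to \sV(\lambda)$ in $\Perf(\Grass(\sE;d))$, and asserts that the induced morphism $\dSchur^{(\alpha,\beta)}(\sE) \to \pr_*(\sV(\lambda))$ is an equivalence in $\Perf(X)$. Combined with the identification $\sV(\lambda) = \sV(\alpha,\beta)$ from the previous step, this is exactly the content of the corollary. There is no real obstacle in this reduction --- everything substantive is already contained in Corollary \ref{cor:Bott:dpflag}, which in turn rests on Theorem \ref{thm:Bott:dflag} together with the pushforward identification $(\pi_{\bdd, \underline{n}})_*\sL(\lambda) \simeq \sV(\lambda)$ used in its proof. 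The only verifications required here are the bookkeeping identifications of $\lambda^{(j)}$, $\sV_j$, and $\fib(\varphi_1)$, all of which are immediate from the conventions of Notation \ref{not:dflag}.
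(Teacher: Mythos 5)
Your proof is correct and takes essentially the same approach as the paper, which simply states ``Apply Corollary~\ref{cor:Bott:dpflag} to the case where $\bdd=(d)$.'' You have merely spelled out the bookkeeping (identifying $\lambda^{(1)}=\alpha$, $\lambda^{(2)}=\beta$, $\sV_1=\sQ$, and $\fib(\varphi_1)=\sR$) that the paper leaves implicit.
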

\begin{proof}
Apply Corollary \ref{cor:Bott:dpflag} to the case where $\bdd=(d)$.
\end{proof}

\subsection{Borel--Weil--Bott Theorem for Derived Flag Schemes: Non-dominant Weights}
\label{sec:Bott.non-dominant}
This subsection establishes derived versions of the Borel--Weil--Bott theorem theorem for non-dominant weights, generalizing classical Borel--Weil--Bott theorem (\cite{Dem, Wey, Lurie}) for flag varieties. 
Most results in this subsection, like in the classical situation, require the characteristic-zero assumption, with the exception of the vanishing results of Proposition \ref{prop:dflag:vanishing}, which are characteristic-free.

Our strategy is similar to the classical situation (\cite{Dem, Wey, Lurie}), using geometry of $\PP^1$-bundle to reduce the non-dominant weight cases to dominant weight cases. 
To implement this strategy, we need to extend the classical result for $\PP^1$-bundles (\cite[Theorem 1]{Dem}, \cite[Proposition 4.2.2]{Wey}, \cite[Theorem 3]{Lurie}) to the case of general prestacks:

\begin{proposition}
\label{prop:P1:isom}
Let $Y$ be a prestack and let $\sV$ be a vector bundle of rank two on $Y$. We let $\pi \colon \PP(\sV) \to Y$ denote the associated projective bundle and let $\sL$ be a line bundle on $\PP(\sV)$ which has degree $d$ over each fiber of $\pi$, where $d \ge -1$ is an integer. Let $\sK$ denote the relative canonical line bundle for $\pi$. If $d=  -1$, then there are canonical isomorphisms 
	$$\pi_* (\sL) \simeq 0 \simeq \pi_*(\sL \otimes \sK^{\otimes (d+1)}) [1].$$
If $d \ge 0$, then there are canonical morphisms in $\Perf(Y)$,
	$$\phi_{\sL} \colon \pi_* (\sL) \to \pi_*(\sL \otimes \sK^{\otimes (d+1)}) [1] \qquad 
	\psi_{\sL} \colon \pi_*(\sL \otimes \sK^{\otimes (d+1)}) [1]  \to \pi_* (\sL), 
	$$
and canonical equivalences of functors
	$$\psi_{\sL} \circ \phi_{\sL} \simeq (d!)^2 \cdot \id_{ \pi_* (\sL) } 
	\qquad 
	 \phi_{\sL} \circ \psi_{\sL} \simeq (d!)^2 \cdot \id_{\pi_*(\sL \otimes \sK^{\otimes (d+1)}) [1]}.$$
In particular, $\phi_{\sL}$ and $\psi_{\sL}$ are isomorphisms if $Y$ is defined over a ring in which $d!$ is invertible. 
\end{proposition}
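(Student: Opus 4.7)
The plan is to reduce to a universal local situation by base change, and then construct the maps via Serre duality together with the comparison between symmetric and divided powers that produces the factor $d!$.

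First, I note that both $\pi_*(\sL)$ and $\pi_*(\sL \otimes \sK^{\otimes(d+1)})[1]$ are perfect complexes on $Y$ whose formation commutes with arbitrary base change of prestacks. Indeed, $\pi$ is proper and flat of relative dimension $1$; Corollary \ref{cor:dpflag:perfect.amp<=1} (applied to $\sV$, which has Tor-amplitude $0$) ensures $\pi_*$ preserves perfect complexes. Moreover, each of the two line bundles has the property that its derived pushforward is fiberwise concentrated in a single cohomological degree (degree $0$ for $\sL$, degree $1$ for $\sL \otimes \sK^{\otimes(d+1)}$ when $d \geq 0$, and both vanishing when $d=-1$), so cohomology commutes with base change in the usual sense. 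Hence it suffices to produce the canonical morphisms $\phi_\sL, \psi_\sL$ functorially in $(Y, \sV, \sL)$, and we may work Zariski-locally on $Y$, trivialize $\sV \simeq \sO_Y^{\oplus 2}$, and use the projection formula to further reduce to $\sL = \sO_{\PP^1_Y}(d)$ pulled back from the universal $\PP^1$-bundle over $\Spec \ZZ$.

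Next, in this universal local situation $\sK \simeq \sO(-2) \otimes \pi^* \det \sV$, and by Theorem \ref{thm:Bott:flagbundle} together with a direct Serre-duality computation we obtain canonical identifications
\begin{equation*}
\pi_*(\sL) \simeq \Sym^d(\sV), \qquad \pi_*(\sL \otimes \sK^{\otimes(d+1)})[1] \simeq \Sym^d(\sV^\vee) \otimes (\det \sV)^{\otimes d},
\end{equation*}
both locally free of rank $d+1$; the case $d=-1$ is then immediate since both pushforwards vanish. For $d \geq 0$, Serre duality for $\pi$ (with relative dualizing complex $\sK[1]$) provides a canonical perfect pairing $\pi_*(\sL) \otimes \pi_*(\sL^\vee \otimes \sK)[1] \to \sO_Y$ built from the cup product and the trace map $\pi_*(\sK)[1] \to \sO_Y$. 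Since $\pi_*(\sL^\vee \otimes \sK)[1] \simeq \Sym^d(\sV^\vee)$, this identifies $\pi_*(\sL) \simeq \Sym^d(\sV^\vee)^\vee \simeq \Gamma^d(\sV)$, the divided power bundle. On the other hand the direct identification gives $\pi_*(\sL) \simeq \Sym^d(\sV)$. The two are related by the natural symmetrization and antisymmetrization morphisms $\Sym^d(\sV) \rightleftarrows \Gamma^d(\sV)$, whose compositions in either direction equal $d! \cdot \id$. We then define $\phi_\sL$ (respectively $\psi_\sL$) as the composition of the Serre duality equivalence with the natural map $\Sym^d(\sV) \to \Gamma^d(\sV)$ (respectively $\Gamma^d(\sV) \to \Sym^d(\sV)$), combined with the rank-$2$ canonical isomorphism $\sV^\vee \otimes \det \sV \simeq \sV$ used to identify $\Sym^d(\sV^\vee) \otimes (\det \sV)^{\otimes d}$ with $\Sym^d(\sV)$. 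The equalities $\phi_\sL \circ \psi_\sL = \psi_\sL \circ \phi_\sL = (d!)^2 \cdot \id$ follow by tracking the factorial factors through the two sides of the construction.

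The main obstacle is establishing the precise normalization so that the factor $(d!)^2$ (rather than some other scalar) arises; this is a coordinate-free repackaging of the classical computation on $\PP^1_\kappa$ from \cite[Proposition 4.2.2]{Wey} or \cite[Theorem 3]{Lurie}, to which we may reduce by pulling back along $\Spec \kappa \to Y$ for $\kappa$ a field using the base-change property established at the outset. Everything else—the reduction to the universal local case, the identification of the two pushforwards with $\Sym^d(\sV)$ and $\Sym^d(\sV^\vee) \otimes (\det \sV)^d$, and the invertibility of the $(d!)^2$-scalars after inverting $d!$—is formal given the derived-geometric machinery already developed in the paper.
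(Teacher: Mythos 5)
Your plan is in the right spirit—reduce to the universal local case, and derive the factorials from the mismatch between symmetric and divided powers—but the key identification is wrong in a way that unravels the argument. You claim $\pi_*(\sL \otimes \sK^{\otimes(d+1)})[1] \simeq \Sym^d(\sV^\vee) \otimes (\det\sV)^{\otimes d}$ and that Serre duality then gives $\pi_*(\sL) \simeq \Gamma^d(\sV)$ as a "second" identification competing with the direct $\Sym^d(\sV)$. But the correct formula (e.g.\ from Serre's theorem for projective bundles, or a direct \v{C}ech computation) is $\pi_*(\sL \otimes \sK^{\otimes(d+1)})[1] \simeq \Sym^d(\sV)^\vee \otimes (\det\sV)^{\otimes d}$—the \emph{outer} dual, which for rank two is canonically $\Gamma^d(\sV)$, not $\Sym^d(\sV)$. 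Outside $\QQ$-algebras, $\Sym^d(\sV^\vee)$ and $\Sym^d(\sV)^\vee$ differ: already over $\ZZ$ with $d=2$, the natural map $\Sym^2(\sV^\vee) \to \Gamma^2(\sV^\vee) = \Sym^2(\sV)^\vee$ sends $(e_1^\vee)^2 \mapsto 2(e_1^2)^\vee$ and is not an isomorphism. And the proposition is characteristic-free, so precisely here the distinction matters. Once the dual is corrected, Serre duality returns $\pi_*(\sL) \simeq (\Sym^d(\sV)^\vee)^\vee = \Sym^d(\sV)$, i.e.\ the \emph{same} identification as the direct one—there is no "mismatch" of the kind you propose to exploit. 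Also, invoking Theorem \ref{thm:Bott:flagbundle} to compute the target is circular in positive characteristic, since the weight $(0,d+2)$ is non-dominant and falls outside the characteristic-free part of the Bott theory developed later in the paper.

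The true comparison is between the \emph{source} $\pi_*(\sL) \simeq \Sym^d(\sV) \otimes \sL_Y$ and the \emph{target} $\pi_*(\sL \otimes \sK^{\otimes(d+1)})[1] \simeq \Sym^d(\sV)^\vee \otimes (\det\sV)^{\otimes d} \otimes \sL_Y$, and the morphisms must interchange a $\Sym^d$ with a $\Gamma^d$. But the naive route $\alpha_d \colon \Sym^d \to \Gamma^d$, $\beta_d \colon \Gamma^d \to \Sym^d$ with $\beta_d \circ \alpha_d = \alpha_d \circ \beta_d = d! \cdot \id$ yields a factor of $d!$, not the stated $(d!)^2$. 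The paper instead constructs an explicit perfect pairing $B \colon \Gamma^d(\sV) \otimes \Gamma^d(\sV) \to (\bigwedge^2\sV)^{\otimes d}$ by dualizing the Cauchy-type comultiplication and applying $\Gamma^d$ to the canonical surjection $\sV \otimes \sV \to \bigwedge^2\sV$, checks perfectness on an explicit basis, and then sets $\phi' = (\alpha_d^\vee \otimes \id) \circ \gamma \circ \alpha_d$ and $\psi' = \beta_d \circ \gamma^{-1} \circ (\beta_d^\vee \otimes \id)$ where $\gamma$ is the isomorphism determined by $B$. The factor $(d!)^2$ then arises from $\alpha_d$ (resp.\ $\beta_d$) appearing twice, once undualized and once dualized. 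Your appeal to the Serre trace pairing plus "tracking the factorials" leaves precisely this normalization unverified, and under a correct identification would seem to give $d!$; to match the statement you would need to show the Serre pairing factors through (or differs from) $B$ by a known multiple, which is the content you are missing.
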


We remind the readers again that in this paper, the functor $\pi_*$ is always {\em derived}; that is, if $Y$ is a classical stack, then $\pi_*$ is the derived functor $\RR \pi_*$ of the classical convention.

\begin{proof} 
We first consider the case where $\sL = \sO_{\PP(\sV)}$. By Serre's theorem for projective bundles \cite[Theorem 5.2 (1)]{J22a}, there is a canonical isomorphism $\pi_* (\sO_{\PP(\sV)}) \simeq \sO_Y$. On the other hand, by virtue of Serre duality \cite[Theorem 5.2 (2)]{J22a},  we have canonical isomorphisms
	$$\sO_Y \simeq \pi_* (\sK) [1] \quad \text{and} \quad \sK \simeq \sO_{\PP(\sV)}(-2) \otimes \pi^*(\bigwedge\nolimits^2 \sV).$$
In this case, we let $\phi_\sO$ denote the composition of the above canonical isomorphisms
	$$\pi_* (\sO_{\PP(\sV)}) \simeq \sO_Y \simeq \pi_* (\sK [1])$$
and let $\psi_{\sO}$ denote an inverse fo $\phi_{\sO}$. If $d=0$, then the canonical map $\pi^*(\pi_* (\sL)) \to \sL$ is an isomorphism (since it induces an isomorphism over each fiber of $\pi$) and thus $\pi_*(\sL)$ is a line bundle. We then let $\phi_\sL$ denote the composition of the canonical isomorphisms:
	$$\pi_* (\sL) \xrightarrow{\phi_\sO \otimes \pi_*(\sL)} \pi_* (\sK [1]) \otimes \pi_*(\sL) \xrightarrow{\simeq} \pi_*(\pi^* (\pi_*(\sL)) \otimes \sK [1]) \xrightarrow{\simeq} \pi_*(\sL \otimes \sK)[1]$$
and let $\psi_{\sL}$ be an inverse of $\phi_{\sL}$. 

In general, we let 
	$\sL_Y = \pi_*(\sL \otimes \sO_{\PP(\sV)}(-d)),$
which is a line bundle by virtue of the above discussion. Then there are canonical isomorphisms
	\begin{align*}
		& \pi^*(\sL_Y) \otimes \sO_{\PP(\sV)}(d) \simeq \sL, \quad \\
		& \pi^*(\sL_Y) \otimes \sO_{\PP(\sV)}(-2-d) \otimes \pi^*((\bigwedge\nolimits^2 \sV)^{\otimes (d+1)}) \simeq \sL \otimes \sK^{\otimes (d+1)}.
\end{align*}
By virtue of Serre's theorem \cite[Theorem 5.2]{J22a}, we have canonical isomorphisms
	\begin{align*}
		&\pi_*(\sL) \simeq \Sym^d(\sV) \otimes \sL_Y , \quad \\ 
		&\pi_*(\sL \otimes \sK^{\otimes (d+1)})[1] \simeq \Sym^d(\sV)^\vee \otimes (\bigwedge\nolimits^2 \sV)^{\otimes d} \otimes \sL_Y.
\end{align*}
Here, $\Sym^d(\sV)$ is the $d$th symmetric product of $\sV$ and is a vector bundle, by virtue of Proposition \ref{prop:dSchur:free}, and $\Sym^d(\sV)^\vee = \sHom_{Y}(\Sym^d(\sV), \sO_Y)$ is the dual vector bundle. If $d=-1$, then both the right-hand sides of the above formulae are zero. 

It only remains to prove the cases where $d \ge 1$. We let $\Gamma^d(\sV)$ denote the $d$th divided power of $\sV$ (\cite[III \S1, p. 248]{Ro}, \cite[Construction 3.15]{J22a}). Then there is a canonical isomorphism $\Gamma^d(\sV) \simeq \Sym^d(\sV^\vee)^\vee$. 
Notice that, by virtue of Propositions \ref{prop:dSchur:basechange} and \ref{prop:dSchur:free} (see also \cite[Proposition 25.2.3.1, Corollary 25.2.3.2]{SAG}), $\Gamma^d(\sV)$ is a vector bundle and behaves exactly the same as the classical divided powers of finite free modules over an ordinary commutative ring (\S \ref{sec:classical_sym}). Therefore, we might assume $X = \Spec R$, $\sV = R^2$, where $R$ is a commutative ring in what follows;  these results could be extended canonically over any base via the standard procedure (see \cite[\S 3]{J22a}). 
We consider the following pairing, defined as the composite map 
	$$B \colon \Gamma^d(\sV) \otimes \Gamma^d(\sV) \xrightarrow{\theta} \Gamma^d(\sV \otimes \sV) \xrightarrow{\Gamma^d (can.)} \Gamma^d(\bigwedge\nolimits^2 \sV) \simeq (\bigwedge\nolimits^2 \sV)^{\otimes d},$$
where the first map $\theta$ is the dual map of the Cauchy embedding 
	$$\Sym^d(\sV^\vee \otimes \sV^\vee) \to \Sym^d(\sV^\vee) \otimes \Sym^d(\sV^\vee),$$
which carries a local element $(x_1 \otimes y_1) \cdots (x_d \otimes y_d) \in \Sym^d(\sV^\vee \otimes \sV^\vee)$ to the local element $(x_1 \cdots x_d) \otimes (y_1 \cdots y_d) \in \Sym^d(\sV^\vee) \otimes \Sym^d(\sV^\vee)$, and $can.$ is the canonical projection $\sV \otimes \sV \to \bigwedge^2 \sV$. (Notice that the morphisms $\theta$ and $\Gamma^d(can.)$ are defined for any vector bundle $\sV$, but the isomorphism $\Gamma^d(\bigwedge\nolimits^2 \sV) \simeq (\bigwedge\nolimits^2 \sV)^{\otimes d}$ is true only when $\sV$ has rank two.) In concrete terms, if $\{e_1, e_2\}$ is a local basis of $\sV$, then $\{e_1^{(h)} e_2^{(d-h)} \otimes e_1^{(\ell)} e_2^{(d-\ell)} \}_{0 \le h,\ell \le d}$ forms a local basis of $\Gamma^d(\sV) \otimes \Gamma^d(\sV)$, and  we have 
	$$\theta(e_1^{(h)} e_2^{(d-h)} \otimes e_1^{(\ell)} e_2^{(d-\ell)}) = \sum\nolimits_{i} (e_1 \otimes e_1)^{(i)} (e_1 \otimes e_2)^{(h-i)} (e_2 \otimes e_1)^{(\ell-i)} (e_2 \otimes e_2)^{(d-h-\ell+i)}$$
	$$B(e_1^{(h)} e_2^{(d-h)} \otimes e_1^{(\ell)} e_2^{(d-\ell)}) = \delta_{h,\ell} \cdot (-1)^{d-h} \cdot (e_1\wedge e_2)^{\otimes d} \in (\bigwedge\nolimits^2 \sV)^{\otimes d}.$$ 	
(Here, $\delta_{h,\ell}$ is the Kronecker delta function: $\delta_{h,\ell} = 1$ if $h=\ell$ and $\delta_{h,\ell} = 0$ if $h \neq \ell$.)	
Therefore, the above pairing $B$ is perfect and induces a canonical isomorphism 
	$$\gamma \colon \Gamma^d(\sV) \simeq \Gamma^d(\sV)^\vee \otimes (\bigwedge\nolimits^2 \sV)^{\otimes d}.$$
	
Next, by virtue of \cite[Proposition III.3, p. 256]{Ro}, there are canonical maps
	$$\alpha_d \colon \Sym^d(\sV) \to \Gamma^d(\sV) \quad \text{and} \quad \beta_d \colon \Gamma^d(\sV) \to \Sym^d(\sV)$$
(for a vector bundle $\sV$ of any rank) for which there are canonical isomorphisms
	\begin{equation}\label{eqn:P1:alpha.beta}
	\beta_d \circ \alpha_d \simeq d! \cdot \id_{\Sym^d(\sV)}  \quad \text{and} \quad  \alpha_d \circ \beta_d \simeq  d! \cdot \id_{\Gamma^d(\sV)}.
\end{equation}
(Here, our map $\beta_d$ is the composite map $q \circ \beta$ of \cite[Proposition III.2, p. 255]{Ro}). In concrete terms, if $\{e_i\}$ is a local basis of $\sV$, then $\alpha_{d}$ carries $e_{i_1} \cdots e_{i_d} \in \Sym^d(\sV)$ to $e_{i_1} \cdots e_{i_d} \in \Gamma^d(\sV)$, and $\beta_{d}$ carries $e_{j_1}^{(\nu_1)} \cdots e_{j_h}^{(\nu_h)} \in \Gamma^d(\sV)$ to $q(e_{j_1}^{\otimes \nu_1} * \cdots * e_{j_h}^{\otimes \nu_h}) \in \Sym^d(\sV)$, where $j_k$'s are pairwise distinct, $\nu_1 + \cdots + \nu_h = d$, the symbol ``$*$" denotes the shuffle product (\cite[III \S5, p. 252-523]{Ro}), and $q$ denotes the canonical projection $\sV^{\otimes d} \to \Sym^d(\sV)$.

We now let $\phi'$ and $\psi'$ denote the compositions of canonical morphisms
	\begin{align*}
	&\phi' \colon \Sym^d(\sV) \xrightarrow{\alpha_d} \Gamma^d(\sV) \xrightarrow{\gamma} \Gamma^d(\sV)^\vee \otimes (\bigwedge\nolimits^2 \sV)^{\otimes d} \xrightarrow{\alpha_{d}^\vee \otimes \id} \Sym^d(\sV)^\vee \otimes (\bigwedge\nolimits^2 \sV)^{\otimes d}\\
	&\psi' \colon \Sym^d(\sV)^\vee \otimes (\bigwedge\nolimits^2 \sV)^{\otimes d}
  \xrightarrow{\beta_d^\vee \otimes \id}  
  \Gamma^d(\sV)^\vee \otimes (\bigwedge\nolimits^2 \sV)^{\otimes d} 
  \xrightarrow{\gamma^{-1}}
   \Gamma^d(\sV) \xrightarrow{\beta_d}
   \Sym^d(\sV)
   \end{align*}
and let $\phi_{\sL}$ and $\psi_{\sL}$ denote the compositions of canonical morphisms
	\begin{align*}
		\phi_{\sL} \colon \pi_*(\sL) \simeq \Sym^d(\sV) \otimes \sL_Y \xrightarrow{\phi' \otimes \id_{\sL_Y}}
		\Sym^d(\sV)^\vee \otimes (\bigwedge\nolimits^2 \sV)^{\otimes d} \otimes \sL_Y \simeq \pi_*(\sL \otimes \sK^{\otimes (d+1)})[1]. \\
		\psi_{\sL} \colon \pi_*(\sL \otimes \sK^{\otimes (d+1)})[1]\simeq \Sym^d(\sV)^\vee \otimes (\bigwedge\nolimits^2 \sV)^{\otimes d} \otimes  \sL_Y 
		\xrightarrow{\psi' \otimes  \id_{\sL_Y}}  \Sym^d(\sV) \otimes \sL_Y \simeq \pi_*(\sL).
	\end{align*}
By virtue of \eqref{eqn:P1:alpha.beta}, there are canonical isomorphisms $\psi' \circ \phi' \simeq (d!)^2$ and $\phi' \circ \psi' \simeq (d!)^2$ which imply the desire canonical isomorphisms. \end{proof}

We recall the following notations from the representation theory of reductive groups. 

\begin{notation}[Roots and Reflections]
Let $\Lambda = \ZZ \varepsilon_1 + \ZZ \varepsilon_2 + \cdots + \ZZ \varepsilon_n \simeq \ZZ^n$ denote a rank $n$ lattice, and we regard a sequence $\lambda$ as an element of $\Lambda$: 
	$$\lambda = (\lambda_1, \ldots, \lambda_n) = \lambda_1 \varepsilon_1 + \cdots + \lambda_n \varepsilon_n \in \Lambda.$$
Let $\foS_n$ denote the permutation group of the entries of $\Lambda$, that is, for a permutation $w \in \foS_n$, 
	$$w(\lambda) = (\lambda_{w(1)}, \ldots, \lambda_{w(n)}) \in \Lambda.$$
We let $\Lambda^\vee = \Hom(\Lambda, \ZZ)$ denote the dual lattice, and let $\{\varepsilon_i^\vee \}$ denote the dual basis of $\{\varepsilon_i\}$. For $\lambda \in \Lambda$, $\alpha^\vee \in \Lambda^\vee$, we let $\langle \lambda, \alpha^\vee \rangle = \alpha^\vee(\lambda) \in \ZZ$ denote the natural pairing. We let $\Phi = \{\varepsilon_i - \varepsilon_j\}_{1 \le i,j \le n} \subseteq \Lambda$, $\Phi_+ =\{\varepsilon_i - \varepsilon_j\}_{1 \le i<j \le n} \subseteq \Phi$, and $\Phi^\vee = \{\varepsilon_i^\vee - \varepsilon_j^\vee\}_{1 \le i,j \le n} \subseteq \Lambda^\vee$. The elements of $\Phi$ and $\Phi^\vee$ are called {\em roots} and {\em coroots}, respectively, and the elements of $\Phi_+$ are called {\em positive roots}. The permutation group $\foS_n$ also acts on $\Lambda$. $\Phi \subseteq \Lambda$ and $\Phi^\vee \subseteq \Lambda^\vee$ are invariant under the action of $\foS_n$. Moreover, there is an isomorphism
	$$\Phi \to \Phi^\vee, \qquad \alpha = \varepsilon_i - \varepsilon_j \mapsto \alpha^\vee: = \varepsilon_i^\vee - \varepsilon_j^\vee$$
which commutes with the action of $\foS_n$. For any element $\alpha \in \Phi$, there is a reflection map 
	$$s_{\alpha} \colon \Lambda \to \Lambda \qquad s_{\alpha}(\lambda) = \lambda - \langle \lambda, \alpha^\vee\rangle \alpha.$$
Then we can identify $s_{\alpha}$ as an element of $\foS_n$. For each $1 \le i < n$, we let $\alpha_i = \varepsilon_i - \varepsilon_{i+1}$ and $\alpha_i^\vee =  \varepsilon_i - \varepsilon_{i+1}$, and let $s_i = s_{\alpha_i}$ denote the resulting reflection (in literatures, these $\alpha_i$ are called {\em simple roots}, and $s_i$ are called {\em simple reflections}). In concrete terms, 
	$$s_i((\lambda_1, \ldots, \lambda_i, \lambda_{i+1}, \ldots, \lambda_n))
	 = (\lambda_1, \ldots, \lambda_{i+1}, \lambda_i, \ldots, \lambda_n).$$
Finally, we let $\rho$ denote the element
	$$\rho = (n-1, n-2, \ldots, 1, 0) \in \Lambda$$
and introduce the ``dot action" of $\foS_n$ on $\Lambda$: for $w \in \foS_n$, $\lambda \in \Lambda$, we let
	$$w \bigdot \lambda = w(\lambda+\rho) - \rho \in \Lambda.$$
In concrete terms, the ``dot action" of simple roots are given by
	\begin{equation}\label{eqn:si.dot}
	s_i \bigdot (\lambda_1, \ldots,\lambda_{i-1}, \lambda_i, \lambda_{i+1},  \lambda_{i+2}, \ldots, \lambda_n)
	 = (\lambda_1, \ldots, \lambda_{i-1}, \lambda_{i+1} -1, \lambda_i +1, \lambda_{i+2}, \ldots, \lambda_n).
\end{equation}
Notice that our choice of $\rho$ is a convenient one and differs from the half-sum of positive roots
	$$\rho_{\rm can} =\frac{1}{2} \sum_{\alpha \in \Phi_+} \alpha= \Big(\frac{n-1}{2}, \frac{n-3}{2}, \ldots, \frac{3-n}{2}, \frac{1-n}{2} \Big)$$
by a translation: $\rho = \rho_{\rm can} + ( \frac{n-1}{2} , \frac{n-1}{2} , \ldots,  \frac{n-1}{2} )$. The translation does not affect the definition of the ``dot action".
\end{notation}

\begin{corollary}\label{cor:P1:flag}
Let $X$ be a prestack, $\sE$ a connective quasi-coherent complex on $X$ which is locally of finite type, and $\pr \colon \Flag_X(\sE; \underline{n}) \to X$ the derived flag scheme of $\sE$ of type $\underline{n}$ over $X$ (Example \ref{eg:dflag}). Assume that $n \ge 2$. Let $\lambda = (\lambda_1 , \ldots, \lambda_n) \in \ZZ^n$ be a sequence of integers, let $\sL(\lambda)$ be the associated line bundle  \eqref{eqn:flag:linebundle}, and let $1 \le i  \le n-1$ be an integer.

\begin{enumerate}[leftmargin=*]
	\item \label{cor:P1:flag-1}
	If $\langle \lambda, \alpha_i^\vee \rangle \equiv \lambda_{i} - \lambda_{i+1} = -1$, then there are canonical isomorphisms
		$$\pr_* (\sL(\lambda)) \simeq 0 \simeq \pr_*(\sL(s_i \bigdot \lambda)) [1] .$$
	\item \label{cor:P1:flag-2}
	If 
	$d := \langle \lambda, \alpha_i^\vee \rangle \equiv \lambda_{i} - \lambda_{i+1} \ge 0,$
then there are canonical morphisms 
	$$\phi_{\sL(\lambda)} \colon \pr_* (\sL(\lambda)) \to \pr_*(\sL(s_i \bigdot \lambda)) [1]  \qquad 
	\psi_{\sL(\lambda)} \colon  \pr_*(\sL(s_i \bigdot \lambda)) [1]  \to \pr_* (\sL(\lambda))
	$$
and canonical equivalences of functors
	$$\psi_{\sL(\lambda)} \circ \phi_{\sL(\lambda)} \simeq (d!)^2 \cdot \id_{ \pr_* (\sL(\lambda)) } \qquad  \phi_{\sL(\lambda)} \circ \psi_{\sL(\lambda)} \simeq (d!)^2 \cdot \id_{\pr_*(\sL(s_i \bigdot \lambda) [1]}).$$
	\item \label{cor:P1:flag-3}
	If 
	$ \langle \lambda, \alpha_i^\vee \rangle \equiv \lambda_{i} - \lambda_{i+1} \le -2$, we let 	
	$d' = \langle s_i \bigdot \lambda, \alpha_i^\vee \rangle \equiv \lambda_{i+1} - \lambda_i -2 \ge 0$, then there are canonical morphisms 
	$$\phi_{\sL(\lambda)}'  \colon \pr_* (\sL(\lambda)) \to \pr_*(\sL(s_i \bigdot \lambda)) [-1]  \qquad 
	\psi_{\sL(\lambda)}' \colon  \pr_*(\sL(s_i \bigdot \lambda)) [-1]  \to \pr_* (\sL(\lambda)), 
	$$
and canonical equivalences of functors
	$$\psi_{\sL(\lambda)}' \circ \phi_{\sL(\lambda)}' \simeq (d'!)^2 \cdot \id_{ \pr_* (\sL(\lambda)) } \qquad  
	 \phi_{\sL(\lambda)}' \circ \psi_{\sL(\lambda)}' \simeq (d'!)^2 \cdot \id_{\pr_*(\sL(s_i \bigdot \lambda) [-1]}).$$
\end{enumerate}	
Consequently, if $X$ is defined over $\QQ$, then there are canonical isomorphisms 
	\begin{align*}
	&\phi_{\sL(\lambda)} \colon \pr_* (\sL(\lambda)) \xrightarrow{\sim} \pr_*(\sL(s_i \bigdot \lambda)) [1]  &\text{if} \quad \lambda_i - \lambda_{i+1} \ge -1; \\
	&\phi'_{\sL(\lambda)} \colon \pr_* (\sL(\lambda)) \xrightarrow{\sim} \pr_*(\sL(s_i \bigdot \lambda)) [-1]  &\text{if} \quad \lambda_i - \lambda_{i+1} \le -1.
	\end{align*}
\end{corollary}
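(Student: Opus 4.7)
The plan is to apply Proposition \ref{prop:P1:isom} to the $\PP^1$-bundle arising from the forgetful morphism
$$\pi_i := \pi_{\underline{n} \setminus \{i\}, \underline{n}} \colon \Flag_X(\sE; \underline{n}) \to \Flag_X(\sE; \underline{n} \setminus \{i\}) =: Y.$$
By Lemma \ref{lem:dflag:forget}\eqref{lem:dflag:forget-3}, $\pi_i$ canonically identifies $\Flag_X(\sE; \underline{n})$ with $\PP(\sV_{i-1,i+1})$, where $\sV_{i-1,i+1} = \fib\big(\sQ_{i+1}^{(\underline{n} \setminus \{i\})} \to \sQ_{i-1}^{(\underline{n} \setminus \{i\})}\big)$ is a rank-$2$ vector bundle on $Y$ (and $\pr' \colon Y \to X$ is proper by Proposition \ref{prop:Flag:finite}). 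Under this identification, $\sL_i$ corresponds to the universal rank-$1$ quotient $\sO_{\PP(\sV_{i-1,i+1})}(1)$, and the short fiber sequence $\sL_{i+1} \to \pi_i^* \sV_{i-1,i+1} \to \sL_i$ gives $\sL_{i+1} \simeq \pi_i^* \det(\sV_{i-1,i+1}) \otimes \sL_i^{-1}$. Consequently, the relative canonical line bundle is $\sK_i \simeq \sO_{\PP(\sV_{i-1,i+1})}(-2) \otimes \pi_i^* \det(\sV_{i-1,i+1}) \simeq \sL_{i+1} \otimes \sL_i^{-1}$, while for $j \ne i, i+1$ the line bundle $\sL_j$ is pulled back from $Y$.

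These identifications yield two crucial facts: the line bundle $\sL(\lambda)$ has fiber-wise degree $d := \lambda_i - \lambda_{i+1}$ over $\pi_i$, and the formula $\sL(\lambda) \otimes \sK_i^{d+1} \simeq \sL(s_i \bigdot \lambda)$ holds by rearranging exponents using \eqref{eqn:si.dot}. With these in place, Proposition \ref{prop:P1:isom} applied to $\pi_i$ and $\sL(\lambda)$ immediately produces assertion \eqref{cor:P1:flag-1} when $d = -1$ (noting from \eqref{eqn:si.dot} that $s_i \bigdot \lambda = \lambda$ in that case, so both pushforwards coincide and vanish). When $d \ge 0$, it produces canonical morphisms $\pi_{i,*}(\sL(\lambda)) \to \pi_{i,*}(\sL(s_i \bigdot \lambda))[1]$ and back, whose compositions equal $(d!)^2 \cdot \id$. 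Pushing these forward along $\pr'$ and using $\pr_* = \pr'_* \circ \pi_{i,*}$ yields the morphisms $\phi_{\sL(\lambda)}, \psi_{\sL(\lambda)}$ and composition identities of assertion \eqref{cor:P1:flag-2}.

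For assertion \eqref{cor:P1:flag-3}, I observe that $\lambda_i - \lambda_{i+1} \le -2$ translates to $(s_i \bigdot \lambda)_i - (s_i \bigdot \lambda)_{i+1} = d' \ge 0$ and that $s_i \bigdot s_i \bigdot \lambda = \lambda$. Applying the already-proved assertion \eqref{cor:P1:flag-2} to $s_i \bigdot \lambda$ and then shifting by $[-1]$ yields the desired $\phi'_{\sL(\lambda)}$ and $\psi'_{\sL(\lambda)}$ satisfying the required composition formulas. The final characteristic-zero statement is immediate since $d!$ and $d'!$ become invertible over $\QQ$. The main obstacle is the convention calibration in the first step: explicitly identifying $\sL_i$ with $\sO_{\PP(\sV_{i-1,i+1})}(1)$ (rather than its dual or a twist) and pinning down $\sK_i \simeq \sL_{i+1} \otimes \sL_i^{-1}$ with the correct signs, so that the shift $\sL(\lambda) \otimes \sK_i^{d+1} \simeq \sL(s_i \bigdot \lambda)$ holds on the nose; once this calibration is secured, the reduction to Proposition \ref{prop:P1:isom} is mechanical.
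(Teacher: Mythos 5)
Your proof is correct and follows essentially the same route as the paper: identify the forgetful morphism $\pi_{\underline{n}\setminus\{i\},\underline{n}}$ with a $\PP^1$-bundle via Lemma \ref{lem:dflag:forget}\eqref{lem:dflag:forget-3}, compute $\sO_{\PP(\sV)}(1)\simeq\sL_i$, $\sK\simeq\sL_{i+1}\otimes\sL_i^\vee$ and the twist relation $\sL(\lambda)\otimes\sK^{\otimes(d+1)}\simeq\sL(s_i\bigdot\lambda)$, then apply Proposition \ref{prop:P1:isom} directly for $d\ge -1$ and deduce case \eqref{cor:P1:flag-3} by applying case \eqref{cor:P1:flag-2} to $s_i\bigdot\lambda$ and shifting. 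You also correctly handle the point that in case \eqref{cor:P1:flag-3} the degree of $\sL(\lambda)$ is $\le -2$, below the range of Proposition \ref{prop:P1:isom}, which forces the detour through $\sL(s_i\bigdot\lambda)$ exactly as the paper does.
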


\begin{proof} 
Let $\pi = \pi_{\underline{n} \backslash \{i\}, \underline{n}} \colon \Flag(\sE; \underline{n}) \to \Flag(\sE; \underline{n} \backslash \{ i \})$ denote the natural forgetful morphism (\S \ref{sec:dflag:forget}), and let $\sV$ denote the rank two vector bundle $\Ker(\sQ_{i+1} \twoheadrightarrow \sQ_{i-1})$ over $\Flag(\sE; \underline{n} \backslash \{i\})$. Then by virtue of Lemma \ref{lem:dflag:forget} \eqref{lem:dflag:forget-3}, $\Flag(\sE; \underline{n})$ is canonically identified with the projective bundle $\PP(\sV)$ over $\Flag(\sE; \underline{n} \backslash \{i\})$ such that there is a canonical equivalence $\sO_{\PP(\sV)}(1) \simeq \sL_{i}$. Consequently, there is a canonical short exact sequence of vector bundles over $\Flag(\sE; \underline{n})$,
	$$0 \to \sL_{i+1} \to \pi^*(\sV) \to \sL_{i} \to 0,$$
and the relative canonical line bundle $\sK$ of $\pi \colon \PP(\sV) \to \Flag(\sE; \underline{n})$ is given by
	$$\sK = \sO_{\PP(\sV)}(-2) \otimes \bigwedge\nolimits^2 \sV \simeq \sL_{i+1} \otimes \sL_{i}^{\vee} \simeq \sL(\varepsilon_{i+1} - \varepsilon_{i}).$$
From the equivalences $\sL_{i} \simeq \sO_{\PP(\sV)}(1)$ and $\sL_{i+1} \simeq \pi^*(\bigwedge^2 \sV) \otimes \sL_i^\vee \simeq \pi^* \det(\sV) \otimes \sO_{\PP(\sV)}(-1)$,  we see that $\sL(\lambda)$ has degree $d = \lambda_i - \lambda_{i+1} \ge -1$ over fibers of $\pi$. Hence we have
	$$\sL(\lambda) \otimes \sK^{\otimes (d+1)} \simeq \sL(\lambda) \otimes (\sL_{i+1} \otimes \sL_{i}^{\vee})^{\otimes (\lambda_i - \lambda_{i+1}+1)} \simeq \sL(s_i \bigdot \lambda),$$
where $s_i \bigdot \lambda$ is the ``dot action", concretely given by the formula \eqref{eqn:si.dot}. Therefore assertions \eqref{cor:P1:flag-1} and \eqref{cor:P1:flag-2} follow from Proposition \ref{prop:P1:isom} by setting $\sL = \sL(\lambda)$. Finally, assertion \eqref{cor:P1:flag-3} follows from assertion \eqref{cor:P1:flag-2} by reversing the roles of $\sL(\lambda)$ and $\sL(s_i \bigdot \lambda)$, that is, by setting $\sL = \sL(s_i \bigdot \lambda)$ in Proposition \ref{prop:P1:isom} and letting $\phi_{\sL(\lambda)}'=\psi_{\sL(s_i \bigdot \lambda)}[-1] $ and $\psi_{\sL(\lambda)}' = \phi_{\sL(s_i \bigdot \lambda)}[-1]$.
\end{proof}

We observe that the vanishing result $\pr_* (\sL(\lambda)) \simeq 0$ of Corollary \ref{cor:P1:flag} \eqref{cor:P1:flag-1} does not require any condition on the characteristic of the ring (over which $X$ is defined) or the quasi-coherent complex $\sE$. It admits the following generalization:

\begin{proposition}[Vanishing Results for Derived Complete Flag Schemes]
\label{prop:dflag:vanishing}
Let $\sE$ be a connective quasi-coherent complex on a prestack $X$ which is locally of finite type, let $\pr \colon \Flag_X(\sE; \underline{n}) \to X$ denote the derived flag scheme of $\sE$ of type $\underline{n}$ (Example \ref{eg:dflag}), where $n \ge 2$ is an integer, and let $\sL(\lambda)$ be the line bundle \eqref{eqn:flag:linebundle} on $\Flag_X(\sE; \underline{n})$ associated with $\lambda = (\lambda_1 , \ldots, \lambda_n) \in \ZZ^n$.
Assume that either one of the following two conditions hold:
	\begin{enumerate}[label=$(\alph*)$, ref=$\alph*$, leftmargin=*]
	\item \label{prop:dflag:vanishing-a}
	There exists integers $k \in \ZZ$, $1 \le \ell \le m \le n-1$ and $1 \le i \le n$ such that $\lambda_i = k-\ell$, $\lambda_{i+1} = \ldots = \lambda_{i+m} = k$; that is, $\lambda$ contains $(m+1)$ consecutive entries of the form 
			$$(k-\ell, \underbrace{k, k, \ldots, k}_{\text{$m$ terms}}) \qquad \text{where} \qquad 1 \le \ell \le m.$$
	\item \label{prop:dflag:vanishing-b}
	There exists integers $k \in \ZZ$, $1 \le \ell \le m \le n-1$ and $1 \le i \le n$ such that $\lambda_{i}= \ldots = \lambda_{i+m-1} = k$ and $\lambda_{i+m} = k+\ell$; that is,  $\lambda$ contains $(m+1)$ consecutive entries of the form 
			$$(\underbrace{k, k, \ldots, k}_{\text{$m$ terms}}, k+ \ell) \qquad \text{where} \qquad 1 \le \ell \le m.$$
	\end{enumerate}
Then $\pr_*(\sL(\lambda)) \simeq 0$. 
\end{proposition}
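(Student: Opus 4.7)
The plan is a two-step reduction: first factor $\pr$ through a partial flag scheme to localize the question on the complete flag bundle of an honest rank-$(m+1)$ vector bundle, then reduce that local problem to Serre's vanishing for a projective bundle. I describe case (a) in detail; case (b) is symmetric.

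For case (a), let $\bdd$ be the subsequence of $\underline{n}$ obtained by deleting the positions $i, i+1, \ldots, i+m-1$, and consider the forgetful morphism $\pi \colon \Flag_X(\sE; \underline{n}) \to \Flag_X(\sE; \bdd)$. Applying the appropriate part of Corollary \ref{cor:dflag:forget} (namely \eqref{cor:dflag:forget-3} if $1 < i$ and $i+m < n$, or \eqref{cor:dflag:forget-1}--\eqref{cor:dflag:forget-2} at the boundaries, with the conventions $\sQ_0 = 0$ and $\sQ_n$ included whenever needed), the morphism $\pi$ is canonically identified with the relative complete flag bundle $\Flag(V; \underline{m})$ of the rank-$(m+1)$ vector bundle $V := \fib(\sQ_{i+m} \to \sQ_{i-1})$ over $\Flag_X(\sE; \bdd)$. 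The octahedral axiom applied to the filtration $\sQ_{i-1} \subseteq \sQ_i \subseteq \cdots \subseteq \sQ_{i+m}$ identifies each $\sL_j$ for $j \in \{i,\ldots,i+m\}$ with the internal line bundle $\sL'_{j-i+1}$ of the flag of $V$, while the remaining $\sL_j$ pull back from $\Flag_X(\sE; \bdd)$. Using $\sL'_1 \otimes \cdots \otimes \sL'_{m+1} \simeq \pi^* \det V$ (from the successive short exact sequences of the flag) and the projection formula, the vanishing $\pr_*(\sL(\lambda)) \simeq 0$ reduces to
$$\pi_*\!\big((\sL'_1)^{\otimes(-\ell)}\big) \simeq 0 \quad \text{for} \quad 1 \le \ell \le m.$$

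To verify this, further factor $\pi = p_V \circ q$, where $q \colon \Flag(V; \underline{m}) \to \PP(V)$ is the forgetful map remembering only the rank-$1$ quotient of $V$ (so $\sL'_1 \simeq q^* \sO_{\PP(V)}(1)$) and $p_V \colon \PP(V) \to \Flag_X(\sE; \bdd)$ is the projection. By Example \ref{eg:forget:flag.to.Grass}, $q$ is canonically the complete flag bundle of the rank-$m$ tautological subbundle on $\PP(V)$, so $q_*\sO \simeq \sO$ by the $\lambda=0$ case of Theorem \ref{thm:Bott:flagbundle}. The projection formula then gives $q_*((\sL'_1)^{\otimes(-\ell)}) \simeq \sO_{\PP(V)}(-\ell)$, and Serre's theorem for derived projectivizations (\cite[Theorem 5.2]{J22a}) yields $p_{V,*}\sO_{\PP(V)}(-\ell) \simeq 0$ precisely when $-\rank V < -\ell < 0$, i.e., exactly in the range $1 \le \ell \le m$.

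For case (b), the same factoring through $\bdd = \underline{n} \setminus \{i,\ldots,i+m-1\}$ applies, but the internal factor reduces (after twisting out $\pi^*(\det V)^{\otimes k}$) to $(\sL'_{m+1})^{\otimes \ell}$, requiring $\pi_*((\sL'_{m+1})^{\otimes \ell}) \simeq 0$ for $1 \le \ell \le m$. Now $\sL'_{m+1}$ is the pullback under $\Flag(V; \underline{m}) \to \Grass(V; m)$ of the tautological rank-$1$ subbundle $\sR_V \subseteq V$, and under the canonical identification $\Grass(V; m) \simeq \PP(V^\vee)$ one has $\sR_V \simeq \sO_{\PP(V^\vee)}(-1)$; the Serre-theorem argument then applies verbatim to $\sO_{\PP(V^\vee)}(-\ell)$. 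No essential obstacle is anticipated; the only delicate point is the bookkeeping of line-bundle identifications through the forgetful morphisms, which follows from the octahedral axiom together with Corollary \ref{prop:dflag:forget}.
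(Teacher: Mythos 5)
Your proof is correct and takes essentially the same route as the paper's. Both arguments factor $\pr$ through the partial flag scheme $\Flag_X(\sE; \underline{n}\setminus\{i,\ldots,i+m-1\})$ to identify $\pi$ as the complete flag bundle of the rank-$(m+1)$ bundle $\sV = \fib(\sQ_{i+m}\to\sQ_{i-1})$, then further factor through $\PP_{\FF}(\sV)$ (respectively $\PP_{\FF}(\sV^\vee)$ in case (b)) and invoke Serre's vanishing for $\sO_{\PP(\sV)}(-\ell)$ with $1\le\ell\le m = \rank\sV - 1$; the only cosmetic difference is that you track the internal line bundles via $\sL'_1\otimes\cdots\otimes\sL'_{m+1}\simeq\pi^*\det\sV$ and make the $q_*\sO\simeq\sO$ step explicit, while the paper uses $\sL_{i+1}\otimes\cdots\otimes\sL_{i+m}\simeq\pi'^*\det(\Omega^1_{\PP(\sV)}(1))$ and leaves that step implicit.
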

\begin{proof}
Consider the derived partial flag scheme 
	$$\pr_{\FF} \colon \FF := \Flag_X(\sE; \underline{n} \backslash \{i,i+1,\ldots,i+m-1\}) \to X$$
and let $\phi_{i-1,i+m} \colon \sQ_{i+m} \to \sQ_{i-1}$ denote the tautological morphism between from tautological quotient bundles over $\FF$ of ranks $i+m$ and $i-1$. Then $\sV=\Ker (\phi_{i-1,i+1})$ is a vector bundle over $\FF$ of rank $m+1$. By virtue of Corollary \ref{cor:dflag:forget} \eqref{cor:dflag:forget-3}, the forgetful map
	$$\pi: = \pi_{\underline{n} \backslash \{i,i+1,\ldots,i+m-1\}, \underline{n}} \colon \Flag_X(\sE; \underline{n}) \to \FF =  \Flag_X(\sE; \underline{n} \backslash \{i,i+1,\ldots,i+m-1\})$$
canonically identifies $\Flag_X(\sE; \underline{n})$ as the complete flag bundle $\Flag_{\FF}(\sV; \underline{m})$ over $\FF$. 

Assume condition \eqref{prop:dflag:vanishing-a} holds. Let $\sL_\FF = \sL_{1}^{\lambda_1} \otimes \cdots \otimes \sL_{i-1}^{\lambda_{i-1}} \otimes \sL_{i+m+1}^{\lambda_{i+m+1}} \otimes \cdots \otimes \sL_{n}^{\lambda_n}$ be the line bundle on $\FF$ corresponding to the ``leftover entries" $(\lambda_1, \ldots, \lambda_{i-1}, \lambda_{i+m+1}, \ldots, \lambda_{n})$ of $\lambda$, then 
	$$\sL(\lambda) \simeq \pi^*(\sL_\FF) \otimes \sL_{i}^{k-\ell} \otimes (\sL_{i+1} \otimes \cdots \otimes \sL_{i+m})^{k}.$$ 
By virtue of Corollary \ref{cor:dflag:forget} \eqref{cor:dflag:forget-2}, there is a canonical factorization
	$$\pi \colon \Flag_X(\sE; \underline{n}) \simeq \Flag_{\FF}(\sV; \underline{m})  \xrightarrow{\pi'} \Flag_{\FF}(\sV; 1) \simeq \PP_{\FF}(\sV) \xrightarrow{\pr_{\PP(\sV)}} \FF,$$
where $\sL_i \simeq \pi'^{*} (\sO_{\PP(\sV)}(1))$ (Remark \ref{rem:forget:linebundle}) and the forgetful morphism $\pi'$ canonically identifies $\Flag_X(\sE; \underline{n})$ as the complete flag bundle $\Flag_{\PP_\FF(\sV)}(\Omega_{\PP(\sV)}^1(1); \underline{m-1})$ over $\PP_{\FF}(\sV)$ (where $\Omega_{\PP(\sV)}^1(1)$ is the rank $m$ vector bundle over $\PP_{\FF}(\sV)$ which fits into the short exact sequence $0 \to \Omega_{\PP(\sV)}^1(1) \to \pr_{\PP(\sV)}^* (\sV) \to \sO_{\PP(\sV)}(1) \to 0$). Consequently, we have 
	$$\sL_{i+1} \otimes \cdots \otimes \sL_{i+m} \simeq \pi'^* \det (\Omega_{\PP(\sV)}^1(1)) \simeq \pi'^*(\sO_{\PP(\sV)}(-1)) \otimes \pi^* (\det \sV).$$
Therefore, we obtain canonical equivalences
	\begin{align*}
	\sL(\lambda) & \simeq  \pi^*(\sL_{\FF})  \otimes \pi'^*\big(\sO_{\PP(\sV)}(k-\ell)\big)  \otimes \pi'^*\Big( \pr_{\PP(\sV)}^*(\det \sV)^{\otimes k} \otimes (\sO_{\PP(\sV)}(-k)\Big) \\
			& \simeq   \pi^*\big(\sL_{\FF} \otimes \det(\sV)^{\otimes k}\big) \otimes \pi'^*\big(\sO_{\PP(\sV)}(-\ell)\big).
\end{align*}
As a result, $\pi_*(\sL(\lambda)) \simeq (\sL_{\FF} \otimes \det(\sV)^{\otimes k}) \otimes \pr_{\PP(\sV)}(\sO_{\PP(\sV)}(-\ell)) \simeq 0$, where we use Serre's vanishing $\pr_{\PP(\sV)}(\sO_{\PP(\sV)}(-\ell)) \simeq 0$ ($1 \le \ell \le m = \rank \sV-1$) for the projective bundle $\PP_{\FF}(\sV)$ (\cite[III, 2.1.15]{EGA}, \cite[Theorem 5.2 (2i)]{J22a}). Hence $\pr_*(\sL(\lambda)) \simeq (\pr_{\FF})_* \, \pi_*(\sL(\lambda)) \simeq 0$. 

The proof of the result under condition \eqref{prop:dflag:vanishing-b} is similar: we consider another factorization,
	$$\pi \colon \Flag_X(\sE; \underline{n}) \simeq \Flag_{\FF}(\sV; \underline{m})  \xrightarrow{\pi''} \Flag_{\FF}(\sV; m) \simeq \PP_{\FF}(\sV^\vee) \xrightarrow{\pr_{\PP(\sV^\vee)}} \FF,$$
where $\sL_{i+m} \simeq \pi''^*(\sO_{\PP(\sV^\vee)}(-1))$ and $\sL_{i} \otimes \cdots \otimes \sL_{i+m-1} \simeq \pi''^*(\sO_{\PP(\sV^\vee)}(1)) \otimes \pi^*(\det \sV)$; the desired vanishing result follows from Serre's vanishing for the projective bundle $\PP_{\FF}(\sV^\vee)$.
\end{proof}

\begin{theorem}[Borel--Weil--Bott Theorem for Derived Complete Flag Schemes]
\label{thm:BBW:dflag}
Let $X$ be a prestack defined over $\QQ$, $\sE$ a perfect complex of rank $n \ge 1$ and Tor-amplitude in $[0,1]$ over $X$, and let $\pr \colon \Flag_X(\sE; \underline{n}) \to X$ denote the derived flag scheme of $\sE$ of type $\underline{n}$ over $X$. Let $\lambda = (\lambda_1 , \ldots, \lambda_n) \in \ZZ_{\ge 0}^n$ and let $\sL(\lambda)$ be the associated line bundle \eqref{eqn:flag:linebundle}. 
\begin{enumerate}
	\item \label{thm:BBW:dflag-1}
	If there exists a pair integers $1 \le i < j \le n-1$ such that $\lambda_i  - \lambda_j = i - j$, then 
		$$\pr_* (\sL(\lambda)) \simeq 0.$$
	\item \label{thm:BBW:dflag-2}
	If there exists a unique permutation $w \in \foS_n$ such that $w(\lambda+ \rho)$ is strictly decreasing (equivalently, $w \bigdot \lambda$ is non-increasing), then there is a canonical equivalence
		$$\pr_* (\sL(\lambda)) \simeq \dSchur^{w \bigdot \lambda}(\sE) [- \ell(w)].$$
\end{enumerate}
Moreover, for any $\lambda \in \ZZ_{\ge 0}^n$, one (and only one) of these two cases occurs.
\end{theorem}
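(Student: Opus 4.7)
Write $\mu := \lambda + \rho = (\lambda_1 + n - 1, \lambda_2 + n - 2, \ldots, \lambda_n) \in \ZZ_{\ge 0}^n$. A direct check shows that Case~\eqref{thm:BBW:dflag-1} is equivalent to $\mu$ having at least two equal entries, whereas Case~\eqref{thm:BBW:dflag-2} is equivalent to the entries of $\mu$ being pairwise distinct; these two conditions are manifestly mutually exclusive and exhaustive, so the final ``moreover'' clause is automatic. Under the identification of $\foS_n$ with permutations of the entries, the dot action $s_k \bigdot \lambda$ corresponds precisely to the adjacent transposition $s_k$ acting on $\mu$.

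The plan is to induct on the number of strict inversions $N(\mu) := \#\{(i,j) : i < j,\ \mu_i < \mu_j\}$. For the base case $N(\mu) = 0$, the tuple $\mu$ is weakly decreasing: if $\mu_k = \mu_{k+1}$ for some $k$, then $\lambda_k - \lambda_{k+1} = -1$ and Corollary~\ref{cor:P1:flag}~\eqref{cor:P1:flag-1} gives $\pr_*(\sL(\lambda)) \simeq 0$, which is Case~\eqref{thm:BBW:dflag-1}; otherwise $\mu$ is strictly decreasing, so $w = e$, $\lambda = w \bigdot \lambda$ is already a partition, and Theorem~\ref{thm:Bott:dflag} yields $\pr_*(\sL(\lambda)) \simeq \dSchur^{\lambda}(\sE)$.

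For the inductive step $N(\mu) \ge 1$, choose $k$ with $\mu_k < \mu_{k+1}$, equivalently $\lambda_k - \lambda_{k+1} \le -2$ (which also forces $\lambda_{k+1} \ge 2$, so $s_k \bigdot \lambda \in \ZZ_{\ge 0}^n$). Corollary~\ref{cor:P1:flag}~\eqref{cor:P1:flag-3} then supplies a canonical equivalence
$$\pr_*(\sL(\lambda)) \simeq \pr_*(\sL(s_k \bigdot \lambda))\,[-1],$$
and since $\mu' := s_k(\mu)$ satisfies $N(\mu') = N(\mu) - 1$, the inductive hypothesis applies to $s_k \bigdot \lambda$. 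If $\mu'$ (hence $\mu$) has a repeated entry, the induction gives $\pr_*(\sL(s_k \bigdot \lambda)) \simeq 0$, and therefore $\pr_*(\sL(\lambda)) \simeq 0$. If $\mu'$ has distinct entries, its sorting permutation $w' \in \foS_n$ has length $\ell(w') = N(\mu')$, and $w := w' s_k$ is then the sorting permutation for $\mu$ with $\ell(w) = N(\mu) = \ell(w') + 1$; combining with $w \bigdot \lambda = w' \bigdot (s_k \bigdot \lambda)$, the induction closes to give $\pr_*(\sL(\lambda)) \simeq \dSchur^{w \bigdot \lambda}(\sE)[-\ell(w)]$.

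The main obstacle is the bookkeeping of permutations and lengths across the induction, together with the check that $w \bigdot \lambda$ is a genuine partition so that $\dSchur^{w \bigdot \lambda}(\sE)$ is governed by Definition~\ref{def:dSchurWeyl} and the dominant-weight Theorem~\ref{thm:Bott:dflag} can be invoked at the base: if $w(\mu) = (\nu_1 > \cdots > \nu_n)$, then $\nu_n \ge 0$ (as the entries of $\mu$ are non-negative and $w$ merely permutes them), whence $(w \bigdot \lambda)_i - (w \bigdot \lambda)_{i+1} = \nu_i - \nu_{i+1} - 1 \ge 0$ and $(w \bigdot \lambda)_n = \nu_n \ge 0$, so $w \bigdot \lambda$ is indeed a partition and the induction terminates cleanly in the dominant regime.
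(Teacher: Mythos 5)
Your proof is correct and takes a structurally different, though mathematically equivalent, route from the paper's; both are driven entirely by Corollary~\ref{cor:P1:flag}. The paper works ``top-down'': it fixes a reduced word $w = s_{i_1}\cdots s_{i_{\ell(w)}}$ for a sorting permutation of $\lambda+\rho$ and then verifies, via the Coxeter-theoretic fact that $w_j(\alpha_{i_j})$ is a positive root for $w_j = s_{i_1}\cdots s_{i_{j-1}}$, that every intermediate pairing $\langle s_{i_{j+1}}\cdots s_{i_{\ell(w)}} \bigdot \lambda, \alpha_{i_j}^\vee\rangle$ is $\le -1$, so the corollary applies step by step; the vanishing case is separated at the very end. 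Your proof is ``bottom-up'': you induct on the inversion count $N(\mu)$ of $\mu = \lambda+\rho$, greedily choose an adjacent inverted pair (forcing $\lambda_k - \lambda_{k+1} \le -2$ so Corollary~\ref{cor:P1:flag}~\eqref{cor:P1:flag-3} applies immediately), and recurse. You replace the positive-root lemma by the more elementary fact that a permutation's length equals its inversion count, and you fold the vanishing case into the induction rather than treating it at the end.

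Two bookkeeping details your route forces you to track, and which you correctly handle, but which the paper's route sidesteps: the inductive hypothesis requires $s_k \bigdot \lambda \in \ZZ_{\ge 0}^n$, so you must check $\lambda_{k+1} \ge 2$ (the paper never re-invokes the theorem and so never needs this, since Corollary~\ref{cor:P1:flag} is stated for all of $\ZZ^n$); and you need that an out-of-order pair $\mu_i < \mu_j$ always produces an adjacent out-of-order pair so the greedy choice never stalls. One more remark: your identification of Case~\eqref{thm:BBW:dflag-1} with ``$\mu$ has a repeated entry'' tacitly corrects what is almost certainly a typo in the statement --- the range should read $1 \le i < j \le n$ rather than $1 \le i < j \le n-1$ for the two cases to be exhaustive (for instance with $n=2$ and $\lambda=(0,1)$, neither case as literally stated applies). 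The paper's own proof, like yours, in fact treats the full condition.
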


\begin{proof} 
We follow the same approach as in the classical case.
There exists a permutation $w \in \foS_n$ such that $w(\lambda + \rho)$ is dominant (that is, non-increasing). Let $w = s_{i_1} s_{i_2} \cdots s_{i_{\ell(w)}}$ be a reduced expression, where $s_{i_j}$ are simple reflections around the simple roots $\alpha_{i_j} = \varepsilon_{i_j} - \varepsilon_{i_j+1}$, and $\ell(w)$ is the length of $w$. We claim that, for each $j = 1, 2, \ldots, \ell(w)$, 
	$$\langle s_{i_{j+1}} \cdots s_{i_{\ell(w)}} (\lambda + \rho), \alpha_{i_j}^\vee\rangle \le 0.$$
(Here, we set $s_{i_{j+1}} \cdots s_{i_{\ell(w)}} (\lambda + \rho) = \lambda + \rho$ if $j=\ell(w)$.) To prove the claim, we let $w_{j} = s_{i_1} \cdots s_{i_{j-1}}$ (where, we set $w_{1} = \id$ if $j=1$). Then from the identity 
	$$\ell(w_j s_{i_j}) = \ell(w_j) + 1$$
we obtain that $w_j(\alpha_{i_j})$ is a positive root (see, for example, \cite[Part II, 1.5(3)]{Jan}). 
Therefore,
	\begin{align*}
		&\langle s_{i_{j+1}} \cdots s_{i_{\ell(w)}} (\lambda + \rho), \alpha_{i_j}^\vee \rangle = \langle s_{i_j}^{-1} w_j^{-1} w (\lambda + \rho), \alpha_{i_j}^\vee \rangle \\ 
		&= - \langle w_j^{-1} (w (\lambda + \rho)), \alpha_{i_j}^\vee  \rangle = - \langle w (\lambda + \rho),  w_j(\alpha_{i_j}^\vee)\rangle  \le 0.
	\end{align*}
Hence the claim is proved, and consequently, we have
	$$\langle s_{i_{j+1}} \cdots s_{i_{\ell(w)}} \bigdot \lambda, \alpha_{i_j}^\vee \rangle = \langle s_{i_{j+1}} \cdots s_{i_{\ell(w)}} (\lambda + \rho), \alpha_{i_j}^\vee \rangle - 1 \le -1.$$
Therefore, we can apply Corollary \ref{cor:P1:flag} and obtain canonical isomorphisms
	$$\pr_*(\sL(s_{i_{j+1}} \cdots s_{i_{\ell(w)}} \bigdot \lambda)) \simeq \pr_*( \sL(s_{i_j} s_{i_{j+1}} \cdots s_{i_{\ell(w)}} \bigdot \lambda))[-1]$$
for all $1 \le j \le \ell(w)$. Using the above isomorphisms repeatedly, we obtain canonical isomorphisms
	$$\pr_*(\sL(\lambda)) \simeq \pr_*(\sL(s_{\ell(w)} \bigdot \lambda))[-1] \simeq \cdots \simeq \pr_*( \sL(w\bigdot \lambda))[- \ell(w)].$$
If $w (\lambda +\rho) $ is not strictly decreasing, then there exists a simple root $\alpha_i$ such that $\langle w (\lambda+\rho), \alpha_i^\vee \rangle = 0$, that is, $\langle w \bigdot \lambda, \alpha_i^\vee \rangle = -1$. Then we obtain from Corollary \ref{cor:P1:flag} \eqref{cor:P1:flag-1} that
	 $$\pr_*(\sL(\lambda))  \simeq \pr_*( \sL(w\bigdot \lambda))[-\ell(w)] \simeq 0.$$
If $w (\lambda +\rho) $ is strictly decreasing (that is, $w \bigdot \lambda$ is non-increasing), then we obtain from Theorem \ref{thm:Bott:dflag} canonical equivalences
	$$\pr_*(\sL(\lambda)) \simeq \pr_*( \sL(w\bigdot \lambda))[-\ell(w)] \simeq \dSchur^{w \bigdot \lambda}(\sE) [-\ell(w)].$$
Hence the theorem is proved.
\end{proof}

\begin{remark}[Negative Entries]
Theorem \ref{thm:BBW:dflag} can be extended to the case where $\lambda$ has negative entries under certain conditions. Assume that we are in the situation of Theorem \ref{thm:BBW:dflag} except we allow the entries of $\lambda$ to be negative. In this case, the proof of Theorem \ref{thm:BBW:dflag} shows that assertion \eqref{thm:BBW:dflag-1} remains true, while assertion \eqref{thm:BBW:dflag-2} remains true if all entries of $w \bigdot \lambda$ are nonnegative. 
The case where $w \bigdot \lambda$ has negative entries is not covered by Theorem \ref{thm:BBW:dflag}. 
\end{remark}

\begin{variant}[{Compare with \cite[Part II, \S 5, Corollary 5.5]{Jan}}] 
\label{variant:BBW:dflag}
Assume that we are in the situation of Theorem \ref{thm:BBW:dflag} except that we now assume $X$ is defined over a field $\kappa$ which is not necessarily of characteristic zero. Assume furthermore:
\begin{enumerate}
	\item[$(a)$] Either $\kappa$ has characteristic zero and 
	$$\langle \lambda, \alpha_i^\vee\rangle :=  \lambda_{i} - \lambda_{i+1} \ge -1 \quad \text{for all} \quad 1 \le i \le n-1;$$
	\item[$(b)$] Or $\kappa$ has positive characteristic ${\rm char}(\kappa)$ and 
	$$0 \le \lambda_{i} - \lambda_{j} - (i-j) \le {\rm char}(\kappa) \quad \text{for all} \quad 1 \le i < j \le n-1.$$
\end{enumerate}
Then one of the following two mutually exclusive cases occurs:
 \begin{enumerate}
	\item There exists $i$ such that $\lambda_{i} - \lambda_{i+1} = -1$. In this case, for all permutations $w \in \foS_n$,
		 $$\pr_* (\sL(w \bigdot \lambda)) \simeq 0.$$
	\item The sequence $\lambda$ is non-increasing (that is, $\lambda_1 \ge \ldots \ge \lambda_n$). In this case, there are canonical equivalences for all permutations $w \in \foS_n$:
		$$ \pr_* (\sL(w \bigdot \lambda)) \simeq \dSchur^{\lambda}(\sE) [- \ell(w)].$$
\end{enumerate}	
\end{variant}
\begin{proof}
The proof is similar to Theorem \ref{thm:BBW:dflag}, except that we use the equivalences of Corollary \ref{cor:P1:flag} in the reverse direction. We will only prove the case $(b)$; the proof for $(a)$ is similar. Let $w = s_{i_1} s_{i_2} \cdots s_{i_{\ell(w)}}$ be a reduced expression, where $s_{i_j}$ are simple reflections around the simple roots $\alpha_{i_j} = \varepsilon_{i_j} - \varepsilon_{i_j+1}$. We claim that, for each $j=1, \ldots, \ell(w)$, the following holds:
	$$-1 \le \langle s_{i_{j+1}} \cdots s_{i_{\ell(w)}} \bigdot \lambda, \alpha_{i_j}^\vee \rangle \le {\rm char}(\kappa)-1.$$
In fact, let $v_j = s_{i_{j+1}} \cdots s_{i_{\ell(w)}}$, then $\ell(s_{i_j} v_j) = \ell(v_j) +1$ implies that $v_j^{-1}(\alpha_{i_j})$ is a negative root.
\begin{align*}
		 \langle s_{i_{j+1}} \cdots s_{i_{\ell(w)}} \bigdot \lambda, \alpha_{i_j}^\vee \rangle = \langle s_{i_j} v_j  (\lambda + \rho) - \rho, \alpha_{i_j}^\vee \rangle = - \langle \lambda + \rho, v_j^{-1}(\alpha_{i_j}^\vee) \rangle -1 \in [-1, {\rm char}(\kappa)-1].
	\end{align*}	
Therefore, we could apply Corollary \ref{cor:P1:flag} \eqref{cor:P1:flag-1} \& \eqref{cor:P1:flag-2} and obtain canonical equivalences
	$$\pr_*( \sL(s_{i_j} s_{i_{j+1}} \cdots s_{i_{\ell(w)}} \bigdot \lambda)) \simeq \pr_*(\sL(s_{i_{j+1}} \cdots s_{i_{\ell(w)}} \bigdot \lambda)) [-1]$$
for $j=1, \ldots, \ell(w)$. Using the above equivalences repeatedly, we obtain canonical equivalences
	$$\pr_*(\sL(w \bigdot \lambda)) \simeq \pr_*(\sL(s_{i_2} \cdots s_{i_{\ell(w)}} \bigdot \lambda))[-1] \simeq \cdots \simeq \pr_*(\sL(\lambda))[-\ell(w)].$$
Therefore, the desired equivalences follow from Corollary \ref{cor:P1:flag} \eqref{cor:P1:flag-1} and Theorem \ref{thm:Bott:dflag}.
\end{proof}

\begin{corollary}[Borel--Weil--Bott Theorem for Derived Partial Flag Schemes]
\label{cor:BBW:dpflag}
Let $X$ be a prestack over $\QQ$, $\sE$ a perfect complex of rank $n \ge 1$ and Tor-amplitude in $[0,1]$ over $X$, and $\bdd = (d_1, \ldots, d_k)$ an increasing sequence of integers in $[1,n]$. Let 
	$\pr \colon \Flag_X(\sE; \bdd) \to X$
denote the derived flag scheme.
Let $\lambda = (\lambda_1, \ldots , \lambda_n)$ be a sequence of non-negative integers such that the subsequences $\lambda^{(j)}= (\lambda_{d_{j-1}+1}, \ldots, \lambda_{d_{j}})$ are partitions for all $j=1, \ldots, k, k+1$ (that is, the condition $(*)$ of Construction \ref{constr:V.lambda:dpflag} is satisfied) and let $\sV(\lambda)$ be the associated perfect complex \eqref{eqn:flag:Vlambda}. Then one of the following two mutually exclusive cases occurs:
\begin{enumerate}
	\item There exists a pair of integers $1 \le i < j \le n-1$ such that $\lambda_i  - \lambda_j = i - j$. In this case,  
		$$\pr_* (\sV(\lambda)) \simeq 0.$$
	\item There exists a unique permutation $w \in \foS_n$ such that $w \bigdot \lambda$ is non-increasing. In this case, there is a canonical equivalence
		$$\pr_* (\sV(\lambda)) \simeq \dSchur^{w \bigdot \lambda}(\sE) [- \ell(w)].$$
\end{enumerate}
\end{corollary}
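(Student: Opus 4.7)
The plan is to reduce Corollary~\ref{cor:BBW:dpflag} to Theorem~\ref{thm:BBW:dflag} (the derived complete flag case) by pushing forward along the forgetful morphism. Concretely, let $\pi := \pi_{\bdd, \underline{n}} \colon \Flag_X(\sE; \underline{n}) \to \Flag_X(\sE; \bdd)$ and set $\pr_n := \pr \circ \pi \colon \Flag_X(\sE; \underline{n}) \to X$. The key step I would carry out first is to establish, under hypothesis $(*)$, a canonical equivalence
$$\pi_*(\sL(\lambda)) \simeq \sV(\lambda) \in \Perf(\Flag_X(\sE; \bdd)).$$
The argument is the one already sketched in the proof of Corollary~\ref{cor:Bott:dpflag}; I would observe that it uses only the fact that each subsequence $\lambda^{(j)}$ is a partition, not that $\lambda$ is globally non-increasing. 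In detail, Proposition~\ref{prop:dflag:forget} identifies $\pi$ with an iterated fiber product over $\Flag_X(\sE; \bdd)$ of the derived complete flag scheme $\Flag(\sR; \underline{n-d_k})$ of $\sR = \fib(\pr_{\Flag(\sE;\bdd)}^*\sE \to \sQ_{d_k})$ with the classical complete flag bundles $\Flag(\sV_j; \underline{d_j - d_{j-1}})$ for $j=1,\ldots,k$, and moreover exhibits $\sL(\lambda)$ as the external tensor product of line bundles on these factors corresponding to the subweights $\lambda^{(j)}$.

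Applying Theorem~\ref{thm:Bott:flagbundle} on each classical flag-bundle factor (legitimate since each $\sV_j$ is a vector bundle and each $\lambda^{(j)}$, $1 \le j \le k$, is a partition) yields pushforward $\Schur^{\lambda^{(j)}}(\sV_j)$; and applying Theorem~\ref{thm:Bott:dflag} on the derived complete flag factor (legitimate since $\sR$ inherits Tor-amplitude in $[0,1]$ from $\sE$ and $\lambda^{(k+1)}$ is a partition) yields pushforward $\dSchur^{\lambda^{(k+1)}}(\sR)$. Iterated base change and the projection formula then assemble these pushforwards into the tensor product \eqref{eqn:flag:Vlambda}, which is exactly $\sV(\lambda)$.

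Given this equivalence, the corollary follows at once: from $\pr_* \circ \pi_* \simeq \pr_{n,*}$ we have $\pr_*(\sV(\lambda)) \simeq \pr_{n,*}(\sL(\lambda))$, and Theorem~\ref{thm:BBW:dflag} supplies the dichotomy verbatim. If some $\lambda_i - \lambda_j = i-j$ for $1 \le i < j \le n-1$, then $\pr_{n,*}(\sL(\lambda)) \simeq 0$ and hence $\pr_*(\sV(\lambda)) \simeq 0$; otherwise a unique $w \in \foS_n$ with $w \bigdot \lambda$ non-increasing exists, yielding $\pr_*(\sV(\lambda)) \simeq \dSchur^{w \bigdot \lambda}(\sE)[-\ell(w)]$. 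The mutual exclusivity of the two cases depends only on $\lambda \in \ZZ_{\ge 0}^n$ and is inherited directly from Theorem~\ref{thm:BBW:dflag}.

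I do not anticipate any substantive obstacle. The only step requiring care is verifying in the key equivalence that the fiber-product decomposition of $\pi$ from Proposition~\ref{prop:dflag:forget} and the accompanying description of $\sL(\lambda)$ match the grouping $\lambda = (\lambda^{(1)}, \ldots, \lambda^{(k+1)})$ used in Construction~\ref{constr:V.lambda:dpflag}, so that iterated projection formulas reproduce $\sV(\lambda)$ on the nose; this is a mechanical unwinding of definitions rather than a real difficulty.
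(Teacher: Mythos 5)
Your argument is correct and matches the paper's proof essentially verbatim: establish $(\pi_{\bdd,\underline{n}})_*(\sL(\lambda)) \simeq \sV(\lambda)$ via Proposition~\ref{prop:dflag:forget} together with the dominant-weight theorems on each factor of the fiber-product decomposition, then reduce to Theorem~\ref{thm:BBW:dflag} by functoriality of pushforward. The only difference is that you spell out the iterated projection-formula bookkeeping in detail, whereas the paper delegates it to the (identical) argument in the proof of Corollary~\ref{cor:Bott:dpflag}.
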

\begin{proof}
Let $\sL(\lambda)$ be the associated line bundle \eqref{eqn:flag:linebundle} on $\Flag(\sE; \underline{n})$ and let $\pi_{\bdd, \underline{n}} \colon \Flag(\sE; \underline{n}) \to \Flag(\sE; \bdd)$ denote the forgetful morphism (\S \ref{sec:dflag:forget}). By virtue of Proposition \ref{prop:dflag:forget} and Theorem \ref{thm:Bott:dflag}, there is a canonical equivalence 
	$(\pi_{\bdd, \underline{n}})_{*} (\sL(\lambda)) \simeq \sV(\lambda).$
Hence there is a canonical equivalence $\pr_*(\sV(\lambda)) \simeq (\pr_{\Flag(\sE;\underline{n})})_*(\sL(\lambda))$ and the desired result follows from Theorem \ref{thm:BBW:dflag}. 
\end{proof}

\begin{corollary}[Borel--Weil--Bott theorem for derived Grassmannians]
\label{cor:BBW:dGrass}
Let $X$ be a prestack over $\QQ$, $\sE$ a perfect complex of rank $n \ge 1$ and Tor-amplitude in $[0,1]$ over $X$. Let $d$ be an integer such that $1 \le d \le n$ and let $\pr \colon \Grass(\sE;d) \to X$ denote the derived Grassmannian.  Let $\alpha = (\alpha_1, \ldots , \alpha_{d})$ and $\beta = (\beta_1, \ldots, \beta_{n-d})$ be two partitions and let $\lambda = (\alpha,\beta)$ be their concatenation. Let $\sV(\alpha, \beta)$ denote the associated perfect complex \eqref{eqn:Grass:Vlambda} (see Example \ref{eg:notation:V:dGrass}). Then one of the following two  mutually exclusive cases occurs:
\begin{enumerate}
	\item There exists $1 \le i < j \le n-1$ such that $\lambda_i  - \lambda_j = i - j$. Then 
		$\pr_* (\sV(\alpha,\beta)) \simeq 0.$
	\item There exists a unique permutation $w \in \foS_n$ such that $w \bigdot (\alpha,\beta)$ is non-increasing. In this case, there is a canonical equivalence
		$$\pr_* (\sV(\alpha,\beta)) \simeq \dSchur^{w \bigdot (\alpha,\beta)}(\sE) [- \ell(w)].$$
\end{enumerate}		
\end{corollary}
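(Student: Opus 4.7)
The plan is to derive Corollary \ref{cor:BBW:dGrass} as a direct specialization of Corollary \ref{cor:BBW:dpflag}, which has already been established in the excerpt. The setup is: take $\bdd = (d)$ in Corollary \ref{cor:BBW:dpflag}, so that $k=1$ and $\Flag_X(\sE; \bdd) = \Grass_X(\sE; d)$. Under this identification, the extended universal quotient sequence has the form $\pr^*(\sE) \xrightarrow{\varphi_1} \sQ_1 \to \sQ_0 = 0$ with $\sQ_1 = \sQ$ the tautological rank-$d$ quotient, and $\sV_1 = \Ker(\sQ_1 \to \sQ_0) = \sQ$.

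First I would verify that the hypothesis of Construction \ref{constr:V.lambda:dpflag} is satisfied for the sequence $\lambda = (\alpha, \beta) \in \ZZ_{\ge 0}^n$. With the convention $d_0 = 0$, $d_1 = d$, $d_2 = n$, the subsequences are $\lambda^{(1)} = (\lambda_1, \ldots, \lambda_d) = \alpha$ and $\lambda^{(2)} = (\lambda_{d+1}, \ldots, \lambda_n) = \beta$, which are both partitions by assumption. Hence the condition $(*)$ of Construction \ref{constr:V.lambda:dpflag} holds, and the associated perfect complex $\sV(\lambda)$ is well defined.

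Next I would identify $\sV(\lambda)$ with the complex $\sV(\alpha, \beta)$ of Example \ref{eg:notation:V:dGrass}. By formula \eqref{eqn:flag:Vlambda},
\[
\sV(\lambda) \;=\; \dSchur^{\lambda^{(2)}}\!\big(\fib(\pr^*\sE \xrightarrow{\varphi_1} \sQ_1)\big) \otimes \dSchur^{\lambda^{(1)}}(\sV_1) \;=\; \dSchur^{\beta}(\sR) \otimes \dSchur^{\alpha}(\sQ),
\]
which matches \eqref{eqn:Grass:Vlambda}. Finally I would invoke Corollary \ref{cor:BBW:dpflag} applied to $\bdd = (d)$ and $\lambda = (\alpha, \beta)$: the dichotomy on $\lambda$ (existence of a pair $i<j$ with $\lambda_i - \lambda_j = i-j$, versus existence of a unique $w \in \foS_n$ making $w \bigdot \lambda$ non-increasing) transfers verbatim, and the conclusion either $\pr_*(\sV(\alpha,\beta)) \simeq 0$ or $\pr_*(\sV(\alpha,\beta)) \simeq \dSchur^{w \bigdot (\alpha,\beta)}(\sE)[-\ell(w)]$ is immediate. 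Since Corollary \ref{cor:BBW:dpflag} guarantees the two cases are exhaustive and mutually exclusive for any element of $\ZZ_{\ge 0}^n$, no additional argument is needed. There is no real obstacle here: the entire content is the translation between the notation $\sV(\lambda)$ of the partial flag setting and the notation $\sV(\alpha,\beta)$ used in the Grassmannian setting, and this translation is purely formal.
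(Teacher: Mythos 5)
Your proposal is correct and takes essentially the same approach as the paper: the paper's proof of this corollary is the one-line reduction ``Let $\bdd = (d)$ in Corollary \ref{cor:BBW:dpflag},'' and you have simply spelled out the bookkeeping behind that specialization (verifying condition $(*)$ and matching $\sV(\lambda)$ with $\sV(\alpha,\beta)$), which is all accurate.
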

\begin{proof}
Let $\bdd = (d)$ in Corollary \ref{cor:BBW:dpflag}.
\end{proof}

\addtocontents{toc}{\vspace{\normalbaselineskip}}

\end{document}